\documentclass[11pt]{report}
\usepackage[utf8]{inputenc}
\usepackage[T1]{fontenc}
\usepackage{graphicx}
\graphicspath{ {images/} }
\usepackage{caption}
\usepackage{subcaption}
\usepackage{float}
\usepackage[width=150mm,top=35mm,bottom=25mm,bindingoffset=6mm]{geometry}
\usepackage{fancyhdr}
\usepackage{setspace}
\pagestyle{fancyplain}% <- use fancyplain instead fancy
\fancyhf{}
\fancyhead[R]{\thepage}
\renewcommand{\headrulewidth}{0pt}
\setlength{\headheight}{14pt}

\usepackage[utf8]{inputenc}
\usepackage{tikz}
\usetikzlibrary{matrix}
\usepackage{amsfonts, amssymb, amsmath, amsthm}
\usepackage{graphicx}
\usepackage{float, comment}
\usepackage[hidelinks]{hyperref}
\usepackage{verbatim}
\usepackage{mathtools, nccmath}

\usepackage{enumitem}
\usepackage{verbatim}

%\usepackage[style=authoryear,sorting=none]{biblatex}
%\addbibresource{references.bib}

\newtheorem{teor}{Theorem}[section]
\newtheorem{lema}[teor]{Lemma}
\newtheorem{prop}[teor]{Proposition}
\newtheorem{fact}[teor]{Fact}

\newtheorem{defin}[teor]{Definition}
\newtheorem*{claim}{Claim}
\newtheorem{claimnum}{Claim}
%\numberwithin{claim}{theorem} %% <-- This is another alternative if you like little difference.
\usepackage{etoolbox}
\AtEndEnvironment{proof}{\setcounter{claimnum}{0}}

\newtheorem*{prop*}{Proposition}
\newtheorem{cor}[teor]{Corollary}
\newtheorem{remark}[teor]{Remark}
\newtheorem{problem}[teor]{Problem}
\newtheorem{question}[teor]{Question}
\newtheorem{external claim}[teor]{Claim}
\newtheorem{notation}[teor]{Notation}

\newtheorem{example}[teor]{Example}
\newtheorem{counterexample}[teor]{Counterexample}

\newcommand{\bigslant}[2]{{\raisebox{.2em}{$#1$}\left/\raisebox{-.2em}{$#2$}\right.}}
\newcommand{\bslant}[2]{{\raisebox{.2em}{$#1$}/\raisebox{-.2em}{$#2$}}}

\newcommand{\C}{{\mathfrak C}}
\newcommand{\R}{\mathbb{R}}

\DeclareMathOperator{\tp}{{tp}}
\DeclareMathOperator{\qftp}{{tp^{qf}}}

\DeclareMathOperator{\Th}{{Th}}
\DeclareMathOperator{\aut}{{Aut}}
\DeclareMathOperator\dcl{dcl}
\DeclareMathOperator\auto{Aut}
\DeclareMathOperator\stab{Stab}
\DeclareMathOperator\dom{dom}
\DeclareMathOperator{\med}{{med}}
\DeclareMathOperator{\EM}{{EM}}
\DeclareMathOperator{\Lin}{{Lin}}
\newcommand{\mL}{\mathcal{L}}
\newcommand{\I}{\mathcal{I}}
\newcommand{\II}{\mathbf{I}}
\newcommand{\fwapp}{F'_{\textrm{WAP}}}
\newcommand{\fwap}{F_{\textrm{WAP}}}
\newcommand{\ftaa}{F'_{\textrm{Tame}}}
\newcommand{\fta}{F_{\textrm{Tame}}}

\DeclareMathOperator{\emtp}{{EMtp}}
\DeclareMathOperator{\age}{{Age}}
\DeclareMathOperator{\Sym}{{Sym}}
\DeclareMathOperator{\End}{{End}}
\DeclareMathOperator{\Image}{{Im}}
\DeclareMathOperator{\Ind}{{Ind}}
\DeclareMathOperator{\cl}{{cl}}

\newcommand{\notiff}{%
		\mathrel{{\ooalign{\hidewidth$\not\phantom{"}$\hidewidth\cr$\iff$}}}}

\usepackage[
backend=bibtex,
style=alphabetic,
]{biblatex}
\addbibresource{references.bib}

\usepackage{stackengine}
\stackMath
%%%%%%%%%%%%%%%%%%%%%%%%%%% def for texmaker
\DeclareMathOperator*\dcap{{\stackinset{r}{-0.35ex}{c}{-1.9pt}{\downarrow}
		{\bigcap}\mkern2mu}}
\DeclareMathOperator*\acup{{\stackinset{r}{-0.35ex}{c}{1.9pt}{\uparrow}
		{\bigcup}\mkern2mu}}
%%%%%%%%%%%%%%%%%%%%%%%%%%%% def for arxiv
%\DeclareMathOperator*\dcap{{\stackinset{r}{-1ex}{c}{-1.9pt}{\downarrow}
%		{\bigcap}\mkern2mu}}
%\DeclareMathOperator*\acup{{\stackinset{r}{-1ex}{c}{1.9pt}{\uparrow}
%		{\bigcup}\mkern2mu}}

\title{Canonical quotients in Model Theory}
\author{Adrián Portillo Fernández}
\date{Day Month Year}

\begin{document}
\pagenumbering{roman} 

\begin{titlepage}
    \begin{center}
        \vspace*{1cm}
        
        \Huge
        \textbf{Kanoniczne ilorazy w teorii modeli}
        
        \vspace{1.5cm}
        \LARGE
        %Thesis Subtitle\\
        %\\
        Rozprawa doktorska\\
        \textbf{Adrián Portillo Fernández}

        \vspace{2cm}

        Instytut Matematyczny\\
        Uniwersytet Wroc\l awski
        %Praca doktorska z\l \.ozona\\w cz\c e\'sciowym spe\l nieniu wymaga\'n\\
        %dla uzyskania stopnia doktora nauk\\
        %Doctor of Philosophy\\
        %Matematyka\\

        \vspace{2cm}
        Promotor: Prof. dr hab. Krzysztof Krupi\'nski
        \vfill
        Wroc\l aw, 2024
        %\Large
        %at the\\
        %Uniwersytet Wroc\l awski\\
        %2024\\
        %\vspace{1.0cm}
        %\begin{flushleft}
        %\large
        %Date of Final Oral Exam:  12/20/2020\\
        %The dissertation is approved by the following members of the Final Oral Committee: \\
        %\setlength{\parindent}{10ex}
        %Im A FacultyMember, Professor, Mathematics\\Im A FacultyMember, Associate Professor, Mathematics\\ Im A FacultyMember, Assistant Professor, Statistics\\ Im A FacultyMember, Professor, Computer Science\\
        %\end{flushleft}
        
    \end{center}
    
\end{titlepage}
\begin{titlepage}
    \begin{center}
        \vspace*{1cm}
        
        \Huge
        \textbf{Canonical quotients in model theory}
        
        \vspace{1.5cm}
        \LARGE
        %Thesis Subtitle\\
        Doctoral Thesis\\
    
        \textbf{Adrián Portillo Fernández}
        
        \vspace{2cm}
                Institute of Mathematics\\
        University of Wroc\l aw
        
        \vspace{2cm}
Supervised by: Prof. Krzysztof Krupi\'nski

        \vfill
        Wroc\l aw, 2024
        
        %\Large
        %at the\\University of Wroc\l aw\\
        %2024\\
        %\vspace{1.0cm}
        %\begin{flushleft}
        %\large
       % Date of Final Oral Exam:  12/20/2020\\
        %The dissertation is approved by the following members of the Final Oral Committee: \\
        %\setlength{\parindent}{10ex}
        %Im A FacultyMember, Professor, Mathematics\\Im A FacultyMember, Associate Professor, Mathematics\\ Im A FacultyMember, Assistant Professor, Statistics\\ Im A FacultyMember, Professor, Computer Science\\
        %\end{flushleft}
        
    \end{center}
    
\end{titlepage}

% !TeX encoding = UTF-8
% !TeX spellcheck = en_US
%,a4paper

%\title{Remarks on the second version of Chapter 6 of the thesis by Adrian Portillo}

%\maketitle

%\fancyhf{} % clear all header and footer fields
%\fancyhead[RO,R]{\thepage} %RO=right odd, RE=right even
%\renewcommand{\headrulewidth}{0pt}

\begin{center}
    %\Large
    %\textbf{Thesis Title}
    
    %\vspace{0.4cm}
    %\large
    %Thesis Subtitle
    
    %%\vspace{0.4cm}
    %\textbf{Author Name}
    
   % \vspace{0.9cm}
    \textbf{Streszczenie}
\end{center}

Badamy pewne kanoniczne ilorazy w teorii modeli, głównie stabilne ilorazy grup typowo definiowalnych oraz typów niezmienniczych w teoriach z własnością NIP.

Główne wyniki rozprawy są następujące:

\begin{itemize}
    \item Rozwiązujemy dwa problemy z pracy \cite{MR3796277} dotyczące maksymalnych stabilnych ilorazów grup typowo definiowalnych w teoriach z własnością NIP. Pierwszy wynik mówi, że jeśli $G$ jest typowo definiowalną grupą w teorii dystalnej, to $G^{st}=G^{00}$ (gdzie $G^{st}$ jest najmniejszą typowo definiowalną podgrupą o stabilnym ilorazie $G/G^{st}$, a $G^{00}$ jest najmniejszą typowo definiowalną podgrupą o ograniczonym indeksie). Aby go uzyskać, dowodzimy, że dystalność jest zachowana przy przejściu od teorii $T$ do hiperurojonego rozszerzenia $T^{heq}$. Drugim wynikiem jest przykład grupy $G$ definiowalnej w niedystalnej teorii z własnością NIP, dla której $G=G^{00}$, ale $G^{st}$ nie jest przekrojem grup definiowalnych. Naszym przykładem jest nasycone rozszerzenie $(\mathbb{R},+,[0,1])$. Ponadto poczyniliśmy pewne obserwacje dotyczące pytania, czy istnieje taki przykład, który jest grupą o skończonym wykładniku. Podajemy też kilka charakteryzacji stabilności zbiorów hiperdefiniowalnych, m.in. w terminach logiki ciągłej.

    \item Dla teorii $T$ z własnością NIP, dostatecznie nasyconego modelu $\C$ teorii $T$ (tzw. modelu monstrum) oraz niezmienniczego (nad pewnym małym podzbiorem $\C$) globalnego typu $p$ dowodzimy, że istnieje najdrobniejsza relatywnie typowo definiowalna nad małym zbiorem parametrów z $\C$ relacja równoważności na zbiorze realizacji typu $p$, która ma stabilny iloraz. Jest to odpowiednik w kontekście relacji równoważności głównego wyniku z pracy \cite{MR3796277} o istnieniu maksymalnych stabilnych ilorazów grup typowo definiowalnych w teoriach z własnością NIP. Nasz dowód adaptuje idee dowodu tego wyniku, używając relatywnie typowo definiowalnych podzbiorów grupy automorfizmów modelu monstrum w sensie \cite{hrushovski2021order}.

    \item Definiujemy ciągłą własność modelowania dla struktur pierwszego rzędu i pokazujemy, że struktura pierwszego rzędu ma tę własność wtedy i tylko wtedy, gdy jej wiek ma własność Ramseya. Używamy uogólnionych ciągów nieodróżnialnych w logice ciągłej do badania i charakteryzowania $n$-zależności dla teorii ciągłych oraz dla zbiorów hiperdefiniowalnych (w logice pierwszego rzędu) w terminach kolapsu ciągów nieodróżnialnych.

    \item Niech $T$ będzie teorią zupełną, $\C$ jej modelem monstrum, a $X$ zbiorem typowo definiowalnym nad $\emptyset$. Badamy maksymalne ilorazy $\aut(\C)$-potoku $S_X(\C)$, które są WAP lub oswojone (ang. {\em tame}) w sensie dynamiki topologicznej. Mianowicie, niech $\fwap\subset S_X(\C)\times S_X(\C)$ będzie najmniejszą domkniętą $\aut(\C)$-niezmienniczą relacją równoważności na $S_X(\C)$ taką, że potok $(\aut(\C), S_X(\C)/\fwap)$ jest WAP, i niech $\fta\subset S_X(\C)\times S_X(\C)$ będzie najmniejszą domkniętą $\aut(\C)$-niezmienniczą relacją równoważności na $S_X(\C)$ taką, że potok $(\aut(\C), S_X(\C)/\fwap)$ jest oswojony. Wykazujemy dobre zachowanie $\fwap$ i $\fta$ przy zmianie modelu monstrum $\C$. Mianowicie, dowodzimy, że jeśli $\C'\succ \C$ jest większym modelem monstrum, a $\fwapp$ i $\ftaa$ są odpowiednikami $\fwap$ i $\fta$ obliczonymi dla $\C'$ i $r: S_X(\C')\to S_X(\C)$ jest funkcją obcięcia, to $r[\fwapp]=\fwap$ i $r[\ftaa]=\fta$. Korzystając z tych wyników, pokazujemy, że grupy Ellisa potoków $( \aut(\C), S_X(\C)/\fwap )$ i $( \aut(\C), S_X(\C)/\fta )$ nie zależą od wyboru modelu monstrum $\C$.
\end{itemize}

Wyniki zawarte w pierwszym, drugim i czwartym punkcie zostały uzyskane wspólnie z Krzysztofem Krupińskim, a w trzecim punkcie przeze mnie. Rezultaty z pierwszego punktu pochodzą z pracy \cite{10.1215/00294527-2022-0023}, z drugiego z pracy \cite{KRUPIŃSKI_PORTILLO_2023}, z trzeciego będą zawarte w mojej samodzielnej pracy, a z czwartego w przyszłej wspólnej pracy z K. Krupińskim.

\newpage

\fancyhf{} % clear all header and footer fields
\fancyhead[RO,R]{\thepage} %RO=right odd, RE=right even
\renewcommand{\headrulewidth}{0pt}

\begin{center}
%    \Large
%    \textbf{Thesis Title}
%    
%    \vspace{0.4cm}
%    \large
%    Thesis Subtitle
%    
%    \vspace{0.4cm}
%    \textbf{Author Name}
%    
%    \vspace{0.9cm}
    \textbf{Abstract}
\end{center}
We study canonical quotients in model theory, mainly stable quotients of type-definable groups and invariant types in NIP theories.

The main results of the thesis are the following:
\begin{itemize}
\item We solve two problems from \cite{MR3796277} concerning maximal stable quotients of groups type-definable in NIP theories. The first result says that if $G$ is a type-definable group in a distal theory, then $G^{st}=G^{00}$ (where $G^{st}$ is the smallest type-definable subgroup with $G/G^{st}$ stable, and $G^{00}$ is the smallest type-definable subgroup of bounded index). In order to get it, we prove that distality is preserved under passing from a theory $T$ to the hyperimaginary expansion $T^{heq}$. The second result is an example of a group $G$ definable in a non-distal, NIP theory for which $G=G^{00}$ but $G^{st}$ is not an intersection of definable groups. Our example is a saturated extension of $(\mathbb{R},+,[0,1])$. Moreover, we make some observations on the question whether there is such an example which is a group of finite exponent. We also take the opportunity and give several characterizations of stability of hyperdefinable sets, involving continuous logic.
\item 	For a NIP theory $T$, a sufficiently saturated model $\C$ of $T$ (so-called monster model), and an invariant (over some small subset of $\C$) global type $p$, we prove that there exists a finest relatively type-definable over a small set of parameters from $\C$ equivalence relation on the set of realizations of $p$ which has stable quotient. This is a counterpart for equivalence relations of the main result of \cite{MR3796277} on the existence of maximal stable quotients of type-definable groups in NIP theories. Our proof adapts the ideas of the proof of that result, working with relatively type-definable subsets of the group of automorphisms of the monster model as defined in \cite{hrushovski2021order}.
\item We define the continuous modelling property for first-order structures and show that a first-order structure has the continuous modelling property if and only if its age has the embedding Ramsey property. We use generalized indiscernible sequences in continuous logic to study and characterize $n$-dependence  for continuous theories and first-order hyperdefinable sets in terms of the collapse of indiscernible sequences.
\item We study maximal WAP and tame (in the sense of topological dynamics) quotients of $S_X(\C)$, where $\C$ is a monster model of a complete theory $T$ and $X$ is an $\emptyset$-type-definable set. Namely, let $\fwap\subset S_X(\C)\times S_X(\C)$ be the finest  closed $\aut(\C)$-invariant equivalence relation on $S_X(\C)$ such that the flow $( \aut(\C), S_X(\C)/\fwap )$ is WAP, and let $\fta\subset S_X(\C)\times S_X(\C)$ be the finest closed $\aut(\C)$-invariant equivalence relation on $S_X(\C)$ such that the flow $( \aut(\C), S_X(\C)/\fta )$ is tame. We show good behaviour of $\fwap$ and $\fta$ under changing the monster model $\C$. Namely, we prove that if $\C'\succ \C$ is a bigger monster model, $\fwapp$ and $\ftaa$ are the counterparts for $\fwap$ and $\fta$ computed for $\C'$, and $r:S_X(\C')\to S_X(\C)$ is the restriction map, then $r[\fwapp]=\fwap$ and $r[\ftaa]=\fta$ Using these results, we show that the Ellis groups of $( \aut(\C), S_X(\C)/\fwap )$ and  $( \aut(\C), S_X(\C)/\fta )$ do not depend on the choice of the monster model $\C$.
\end{itemize}
The results contained in the first, second and fourth bullets are joint with Krzysztof Krupi\'nski and the ones contained in the third bullet are mine alone. The results in the first bullet come from  \cite{10.1215/00294527-2022-0023}, in the second from \cite{KRUPIŃSKI_PORTILLO_2023}, the results in the third bullet will be contained in a future paper by myself, and the results in the last bullet will be contained in a future joint paper with Krzysztof Krupi\'nski.

%\chapter*{}
%\vspace*{0.1\paperheight}
%\begin{center}
%    For those that cannot be here, in memoriam.
%\end{center}
%\vspace*{\fill}
%\hfill \emph{El que no sabe gozar de la ventura cuando le viene, no debe quejarse si se pasa.}

%\hfill Don Quijote

%\chapter*{Declaration}

%I declare that..

\chapter*{Acknowledgements}

To my advisor, Krzysztof Krupi\'nski, for his support and guidance throughout these years, as well as his indispensable help in polishing this thesis and our papers.\\

\noindent To my colleagues in the Wroc\l aw model theory seminar, for all their knowledge and insights, as well as the attentive care and interest that they showed towards my work.\\

\noindent To the anonymous referees for helpful comments about my papers.\\

\noindent To my family and friends, for their support and encouragement through the challenges.\\

\noindent To Eli, I could not have undertaken this journey without you, and no words could convey how deeply thankful I am for everything you did.\\

\vfill
\noindent This research was supported by the Narodowe Centrum Nauki grant no. 2016/22/E/ST1/00450 from where I was receiving Ph.D. scholarship.

\tableofcontents

%\listoffigures

%\listoftables

%\doublespacing
\chapter{Introduction}
\pagenumbering{arabic} 
%\section*{Introduction}
	
The core of model-theory is stability theory, developed in the 70's and 80's of the previous century. In the past three decades, one of the main goals of model theory has become finding extensions of stability theory to various unstable contexts, covering many mathematically interesting examples. One either tries to impose some general global assumptions on the theory in question (such as NIP or simplicity) or some local ones (e.g work with a stable definable set or generically stable type), and prove some structural results. 
%It is important  that the imposed assumptions are satisfied in interesting concrete examples or even in some more general contexts. 
It is also natural and ubiquitous in model theory to look at a ``global-local'' situation, namely quotients by type-definable equivalence relations and assume (or prove) their good properties (e.g. boundedness) to get some further conclusions. Recall that a {\em hyperimaginary} is a class of a type-definable equivalence relation, and a {\em hyperdefinable set} is a quotient of a type-definable set by such a relation. While bounded quotients have played an important role in model theory and its applications (e.g. to approximate subgroups) for many years, stable quotients have not been studied thoroughly. This project originates with a talk by Anand Pillay in Lyon in 2009 on finest stable hyperdefinable quotients in NIP theories and was continued with the paper \cite{MR3796277}, where stable quotients of group first appeared. 
%%%Krzys: I added ``(although not written in any published paper, up to our knowledge)''.
%%%Krzys3: I removed ``up'' in ``up to'', as it is probably more natural in this context.
%%%Adrian: removed ``(although not written in any published paper, up to our knowledge)'', added (see \cite{2010,conant2021separation} and [\cite{hanson2020thesis}, chapter 3] in the language of continuous logic and \cite{10.2178/jsl/1122038916} in the language of CATs) and also added that the way you treat hyperimaginaries as cl imaginaries is via a definable pseudometric.
%%%Krzys revised: \cite[Chapter 3]{hanson2020thesis} instead of  [\cite{hanson2020thesis}, chapter 3]
It is folklore that hyperimaginaries can be treated as imaginaries in continuous logic via a definable pseudometric (see \cite{2010,conant2021separation} and \cite[Chapter 3]{hanson2020thesis} in the language of continuous logic and \cite{10.2178/jsl/1122038916} in the language of CATs), so in a sense stability of hyperdefinable sets is equivalent to stability (of imaginary sorts) in continuous logic developed in \cite{MR2657678,MR2723787}. This is an additional motivation to consider stable quotients. So in Section \ref{section: characterizations of stability} we take the opportunity and give several characterizations of stability of hyperdefinable sets in various terms, involving continuous logic, including generically stable types. %which seem to be not considered (or even defined) so far in this context.

In the main parts of the thesis, however, we will study stability of hyperdefinable sets %(mostly groups)  
without referring to continuous logic, just using the definition from \cite{MR3796277} which we recall below, or a characterization via bounds on the number of types observed in Section \ref{section: characterizations of stability} (see Theorem \ref{equiv of stab intro} below). 

Let $T$ be a complete theory, $\C \models T$ a monster (i.e., $\kappa$-saturated and strongly $\kappa$-homogeneous for a strong limit cardinal $> |T|$) model in which we are working, and $A \subset \C$ a small set of parameters (i.e., $|A| < \kappa$); a cardinal $\gamma$ is {\em bounded} if $\gamma < \kappa$. Recall that for a hyperdefinable set $X/E$, the complete type over $A$ of an element of $X/E$ can be defined as the $\aut(\C/A)$-orbit of that element, or the preimage of this orbit under the quotient map, or the partial type defining this preimage.

		\begin{defin}\label{def stability intro}
		A hyperdefinable (over $A$) set $X/E$ is \emph{stable} if for every $A$-indiscernible sequence $(a_i,b_i)_{i<\omega}$ with $a_i\in X/E$ for all (equivalently, some) $i<\omega$, we have  
		$$\tp(a_i,b_j/A) = \tp(a_j,b_i/A)$$ for all (some) $i\neq j < \omega$.
	\end{defin}
	
We introduce $\mathcal{F}_{X/E}$, a special family of functions related to the hyperdefinable set $X/E$ (see Section \ref{section: characterizations of stability}). The family $\mathcal{F}_{X/E}$ allows us to study properties (stability, NIP) of hyperdefinable sets using continuous logic. In particular, it allows us to prove the following characterizations of stability for hyperdefinable sets with NIP (see Definitions \ref{gen stable type}, \ref{def weakly stable} and Section \ref{section: characterizations of stability} for the definitions of the terms used in the formulation): 
\begin{teor}\label{equiv of stab intro}
	Assume $X/E$ has NIP. The following conditions are equivalent:
	\begin{enumerate}
		\item $X/E$ is stable.
		\item $\forall$ $M\models T$ $\forall f\in \mathcal{F}_{X/E}$ $\forall p\in S_f(M)$ $($$p$ is definable$)$.
		\item $\exists \lambda\geq \lvert T\rvert$ $\forall M\models T$ $(\lvert M\rvert\leq \lambda \implies \lvert S_{X/E}(M)\rvert\leq \lambda)$.
		\item Any indiscernible sequence of elements of $X/E$ is totally indiscernible.
		\item Any global invariant $($over some $A$$)$ type $p \in S_{X/E}(\C)$ is generically stable.
              \item $X/E$ is weakly stable.
	\end{enumerate}
\end{teor}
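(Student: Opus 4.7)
The plan is to prove the equivalences by using the family $\mathcal{F}_{X/E}$ as a bridge from the hyperdefinable set $X/E$ to continuous logic, and then invoke continuous-logic analogues of Shelah's classical characterizations of stable formulas, with NIP used to collapse the various weaker characterizations to the strong one. A clean order is to first establish the definability/counting block (1) $\iff$ (2) $\iff$ (3), then the indiscernibility/invariant-type block (1) $\iff$ (4) $\iff$ (5), and finally (1) $\iff$ (6) via the unpacking of weak stability.

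For (1) $\iff$ (2), I would unravel the construction of $\mathcal{F}_{X/E}$ and check that the swap condition of Definition \ref{def stability intro} translates, through the pseudometric picture, into each $f \in \mathcal{F}_{X/E}$ being a stable continuous predicate in the sense of \cite{MR2657678,MR2723787}; then the classical continuous-logic fact that stability of $f$ is equivalent to definability of every $p \in S_f(M)$ yields (2). The equivalence (2) $\iff$ (3) is the standard counting argument: definability over $M$ bounds $|S_{X/E}(M)|$ by the number of $\mathcal{F}_{X/E}$-definitions, which can be made $\le \lambda$ for a suitable $\lambda$; conversely, failure of definability of some $f$-type over $M$ produces, by a Vapnik–Chervonenkis style blow-up along an order configuration, more than $\lambda$ types over a model of size $\lambda$.

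For the indiscernibility block, (1) $\Rightarrow$ (4) follows by iterating the swap condition on pairs $(a_i, a_j)$ extracted from an indiscernible sequence in $X/E$, together with Ramsey to promote single swaps to arbitrary permutations. The reverse (4) $\Rightarrow$ (1) is where NIP enters: if $X/E$ were not stable, some $f \in \mathcal{F}_{X/E}$ would have the order property, and under NIP one extracts from an order-witnessing array an indiscernible sequence inside $X/E$ that still witnesses the order and so fails total indiscernibility. For (4) $\iff$ (5), I would use that under NIP every indiscernible sequence comes from an average/limit invariant type, and the Morley sequences of a global invariant type are totally indiscernible precisely when that type is generically stable; this matches the two conditions on the nose. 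The equivalence (1) $\iff$ (6) is then obtained by unfolding the definition of weak stability and noting, again via extraction under NIP, that the weak swap condition propagates to the full one.

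The main obstacle is likely the careful continuous-logic bookkeeping through $\mathcal{F}_{X/E}$: one must verify that complete types in $S_{X/E}(M)$ are faithfully recovered from the continuous $f$-types for $f \in \mathcal{F}_{X/E}$, so that definability, counting, and order-property configurations all transfer cleanly between the two settings. A further delicate point is to keep track of exactly where NIP is used; several implications ((1) $\Rightarrow$ (2), (1) $\Rightarrow$ (4), (1) $\Rightarrow$ (5)) should go through for general $T$, but the converses in (4) $\Rightarrow$ (1), (5) $\Rightarrow$ (1), and (6) $\Rightarrow$ (1) genuinely need NIP to prevent an independence-property configuration from masquerading as instability that survives any of the weakened conditions.
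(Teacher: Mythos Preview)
Your overall architecture is close to the paper's, but there is one concrete error and one route that is both unnecessary and shaky.

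The error is in your final paragraph: you say that $(5)\Rightarrow(1)$ ``genuinely needs NIP''. It does not. The paper proves $(5)\Rightarrow(2)$ directly and without any NIP assumption: a generically stable $p\in S_{X/E}(\C)$ is definable via the median-value connective (Proposition~\ref{2.21} and Corollary~\ref{corollary: gen stab implies definable}), and then, given any $f\in\mathcal{F}_{X/E}$ and $p\in S_f(M)$, one extends $p$ to a global coheir, which is $M$-invariant, hence generically stable by $(5)$, hence definable; restricting the $f$-definition to $M$ gives definability of $p$. Thus $(1),(2),(3),(5)$ are all equivalent without NIP, and this block implies $(4)$; NIP is used \emph{only} for $(4)\Rightarrow(1)$ (and hence for $(6)\Rightarrow(1)$ via $(6)\Rightarrow(4)$).

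This also means your proposed direct equivalence $(4)\Leftrightarrow(5)$ is superfluous, and the supporting claims you invoke (``under NIP every indiscernible sequence comes from a limit invariant type'', and the characterization of generic stability via total indiscernibility of Morley sequences) are results about globally NIP theories; here NIP is only assumed for the hyperdefinable set $X/E$, so you cannot simply import them. The paper never attempts this equivalence directly.

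Finally, your sketch of $(4)\Rightarrow(1)$ misses the actual mechanism. The paper argues: if $X/E$ is unstable, take an indiscernible $(a_i/E,b_i)_{i\in\mathbb{Z}}$ with $\tp(a_i/E,b_j)\ne\tp(a_j/E,b_i)$; by $(4)$, $(a_i/E)_i$ is totally indiscernible; then the two sets $\{i:\tp(a_i/E,b_0)=\tp(a_1/E,b_0)\}$ and $\{i:\tp(a_i/E,b_0)=\tp(a_{-1}/E,b_0)\}$ are both infinite, contradicting the hyperdefinable finite--cofinite lemma (Lemma~\ref{fincofin}), which is where NIP of $X/E$ enters. Your ``extract an order-witnessing indiscernible sequence inside $X/E$'' does not obviously produce this; the contradiction comes from a type-level pigeonhole, not from a single formula's alternation.
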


%Fix an arbitrary theory $T$ and a saturated model $\mathfrak{C}$. 
Let $G$ be a $\emptyset$-type-definable group. There is always a smallest $A$-type-definable subgroup of $G$ of bounded index, which is denoted by $G^{00}_A$. Under NIP, the group $G_A^{00}$ does not depend on the choice of $A$ (see \cite{MR2361885}) and is denoted by $G^{00}$. So $G^{00}$ is the smallest type-definable (over parameters) subgroup of $G$ of bounded index, and it is in fact $\emptyset$-type-definable and normal. 
%%%Krzys3: I added the def. of $G^0$, as it was missed.
Staying in the NIP context, $G^0$ is defined as the intersection of all relatively definable subgroups of bounded index, and it turns out to be $\emptyset$-type-definable and normal. Regarding stable quotients, since stability of hyperdefinable sets is closed under taking products and type-definable subsets (see \cite[Remark 1.4]{MR3796277}; for the proof see Proposition \ref{stable preserved product} in the appendix), it is clear that there always exists a smallest $A$-type-definable subgroup $G^{\textrm{st}}_A$ such that the quotient $G/G^{\textrm{st}}_A$ is stable. The main result of \cite{MR3796277} says that under NIP, $G^{\textrm{st}}_A$ does not depend on $A$, and so it is the smallest type-definable (over parameters) subgroup with stable quotient $G/G^{\textrm{st}}$, and it is in fact $\emptyset$-type-definable and normal. 
%%%Krzys2: Instead of ``whose quotients are stable'' I introduced $H$ and wrote `` such that $G/H$ is stable''.
Under NIP, there is also a $\emptyset$-type-definable subgroup $G^{\textrm{st},0}$ which is defined as the intersection of all relatively definable (with parameters) subgroups $H$ of $G$ such that $G/H$ is stable. It is interesting to study those canonical ``components'' as well as quotients by them. To give an unstable example, consider a monster model $K$ of ACVF, and $G:=(V,+)$, where $V$ is the valuation ring of $K$. Then $G^{\textrm{st}}=G^{\textrm{st},0}$ is precisely the additive group of the maximal ideal of $V$, and $G/G^{\textrm{st}}$ is the additive group of the residue field.

In \cite{MR3796277}, the authors suggested that it should be true that for groups definable in o-minimal theories, and, more generally, in distal theories (see Definition \ref{definition: distality}), $G^{\textrm{st}}=G^{00}$. This agrees with the intuition that distality should be thought of as something at the opposite pole from stability. As an illustration, consider the unit circle in the monster model of RCF: then $G^{\textrm{st}}=G^{00}$ is the group of infinitesimals and $G^0=G$.  In Section \ref{section: distal theories}, we prove this conjecture in the following more general form (see Corollary \ref{corollary: stable iff bounded}).

\begin{prop}\label{proposition: distal implies bdd}
If $T$ is distal, then every stable hyperdefinable set is bounded.
\end{prop}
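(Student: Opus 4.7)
The plan is to combine two ingredients: the characterization of stability of NIP hyperdefinable sets via indiscernibles given by Theorem \ref{equiv of stab intro}, together with the fact that in a distal theory any totally indiscernible sequence of hyperimaginaries must be constant.

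First, since $T$ is distal it is in particular NIP, and $X/E$ inherits NIP from $T$, so Theorem \ref{equiv of stab intro} applies. The equivalence $(1)\iff (4)$ then says that stability of $X/E$ is equivalent to the assertion that every $A$-indiscernible sequence of elements of $X/E$ (over any small $A$) is totally indiscernible.

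Next, I would prove that distality is preserved under passing from $T$ to its hyperimaginary expansion $T^{heq}$; this is the main technical step, already announced in the introduction. Granting this, a standard fact about distal theories, namely that any totally indiscernible sequence is constant, applied in $T^{heq}$ yields that any totally indiscernible sequence of elements of $X/E$ is constant.

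Combining these two observations, every $A$-indiscernible sequence of elements of $X/E$ is constant. To finish, suppose for contradiction that $X/E$ is unbounded, and fix a small set $A$ over which $X/E$ is hyperdefinable. By a standard Erd\H{o}s--Rado/extraction argument applied to a family of pairwise distinct elements of $X/E$, one obtains an infinite $A$-indiscernible sequence of pairwise distinct elements of $X/E$, which is in particular non-constant; this contradicts the previous step, so $X/E$ must be bounded.

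The main obstacle is the preservation of distality under the passage from $T$ to $T^{heq}$: this requires a careful analysis of how indiscernible sequences and the distal ``one-point extension'' property behave with hyperimaginary parameters, since classical formulations of distality are stated in terms of real tuples. Once this is established, the remaining steps are a routine combination of Theorem \ref{equiv of stab intro} with the classical fact that totally indiscernible sequences in distal theories are constant, together with standard extraction of indiscernibles.
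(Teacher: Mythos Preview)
Your proposal is correct and follows essentially the same route as the paper's proof of Corollary~\ref{corollary: stable iff bounded}: extract a non-constant indiscernible sequence in $X/E$ from unboundedness, use the implication $(1)\Rightarrow(4)$ of Theorem~\ref{equivteor} to get total indiscernibility, and then invoke distality of $T^{\textrm{heq}}$ (Theorem~\ref{teordistal}) together with the fact that non-constant totally indiscernible sequences are not distal. Your identification of the passage from $T$ to $T^{\textrm{heq}}$ as the main technical step is exactly right; the paper also records a short alternative argument that bypasses the full strength of Theorem~\ref{teordistal}, but your outline matches the primary proof.
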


This is deduced from the following result (see Theorem \ref{teordistal}), where $T^{\textrm{heq}}$ denotes the ``expansion'' of $T$ by all hyperimaginary sorts which are quotients by $\emptyset$-type-definable equivalence relations (we just mean here that in the definition of distality one also allows indiscernible sequences of hyperimaginaries).

%%%Adrian wrote here the full theorem instead of only the part abot T and T^heq
\begin{teor}\label{proposition: preservation of distality}
If $(a_i)_{i\in \I}$ is a (dense) distal sequence of tuples from $\C^\lambda$, then $(a_i/E)_{i\in \I}$ is a distal sequence of hyperimaginaries. Thus, if $T$ is distal, then $T^{\textrm{heq}}$ is distal (by which we mean that all dense indiscernible sequences of hyperimaginaries are distal).
\end{teor}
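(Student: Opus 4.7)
Proof plan.

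The theorem has two parts: (i) if $(a_i)_{i \in \I}$ is a dense distal tuple-indiscernible sequence then $(a_i/E)_{i \in \I}$ is distal as a sequence of hyperimaginaries, and (ii) $T$ distal implies $T^{\textrm{heq}}$ distal. For (ii), every dense indiscernible sequence of hyperimaginaries arises as $(a_i/E)_{i \in \I}$ for some dense tuple-indiscernible $(a_i)_{i \in \I}$ (pick representatives and apply a Ramsey extraction preserving the induced hyperimaginary EM-type), and distality of $T$ makes this tuple sequence distal; then (i) finishes the job. So I focus on (i).

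For (i) I would use the gap-filling characterization of distality: an indiscernible sequence $(c_i)_{i \in \I}$ indexed by a dense order is distal iff for every decomposition $\I = I_1 + \{c\} + I_2$ with $I_1, I_2$ infinite and without endpoints at the gap, and every element $b$, the implication
\[
(c_i)_{i \in I_1 + I_2}\ \text{indiscernible over}\ b \implies (c_i)_{i \in \I}\ \text{indiscernible over}\ b
\]
holds. This formulation applies uniformly whether $b$ is a tuple or a hyperimaginary.

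Assume $(a_i/E)_{i \in \I}$ is not distal, witnessed by a decomposition $\I = I_1 + \{c\} + I_2$ and a hyperimaginary $b = b^*/F$ (with $F$ a $\emptyset$-type-definable equivalence relation on a suitable product sort and $b^* \in \C^\mu$) such that $(a_i/E)_{i \in I_1 + I_2}$ is indiscernible over $b$ while $(a_i/E)_{i \in \I}$ is not. If I can produce a representative $b^{**}$ of the $F$-class of $b^*$ (so $F(b^{**}, b^*)$) such that the tuple sequence $(a_i)_{i \in I_1 + I_2}$ is tuple-indiscernible over $b^{**}$, then applying distality of $(a_i)_{i \in \I}$ with parameter $b^{**}$ forces $(a_i)_{i \in \I}$ to be tuple-indiscernible over $b^{**}$. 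Quotienting by $E$ and using that $b \in \dcl^{\textrm{heq}}(b^{**})$, we get $(a_i/E)_{i \in \I}$ indiscernible over $b$, contradicting the choice of witness.

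The whole weight of the argument is thus a lifting lemma: given a tuple-indiscernible sequence $(a_i)_{i \in J}$ such that $(a_i/E)_i$ is indiscernible over the hyperimaginary $b = b^*/F$, there is $b^{**}$ with $F(b^{**}, b^*)$ so that $(a_i)_i$ is tuple-indiscernible over $b^{**}$. The plan is to realize, by compactness, the partial type
\[
\Sigma(x) := \{F(x, b^*)\} \cup \bigl\{\phi(a_{\bar i}, x) \leftrightarrow \phi(a_{\bar j}, x) : \phi\ \text{a formula},\ \bar i, \bar j\ \text{same-shape in}\ J\bigr\}.
\]
For a finite fragment of $\Sigma(x)$, the tool at hand is that for each same-shape pair $(\bar i, \bar j)$ the hyperimaginary indiscernibility over $b$ supplies $\sigma \in \aut(\C)$ with $E(\sigma(a_{\bar i}), a_{\bar j})$ (coordinatewise) and $F(\sigma(b^*), b^*)$. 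The principal obstacle is that individual first-order formulas can separate $E$-equivalent tuples, so $\sigma$ does not directly produce the required equivalence $\phi(a_{\bar i}, x) \leftrightarrow \phi(a_{\bar j}, x)$. Overcoming this is the technical heart of the proof; I expect it to combine density of $\I$ (which allows one to finely reselect $\bar i, \bar j$ inside $I_1 + I_2$), an iterative refinement of candidate representatives $b^{**}$, and a careful compactness-based patching of local witnesses, possibly aided by a preliminary Ramsey extraction of an auxiliary sequence $(a'_i)_i$ that is tuple-indiscernible over $b^*$ and shares the hyperimaginary type of $(a_i)_i$ over $b$.
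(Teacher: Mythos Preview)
Your reduction to the lifting lemma is where the argument breaks. As stated, that lemma is false: take $E$ to be the total relation and $F$ to be equality. Then $(a_i/E)_J$ is a constant sequence and hence indiscernible over every $b=b^*$, while in, say, DLO with $(a_i)$ increasing and $b^*$ interleaved between two of the $a_i$'s, the tuple sequence $(a_i)_J$ is certainly not indiscernible over the forced representative $b^{**}=b^*$. This case is harmless for the theorem (a constant quotient sequence is trivially distal), but it shows that no general lemma of the shape you wrote can hold; some extra hypothesis tying $E$ to the tuple sequence is needed, and nothing in your ``density, iterative refinement, Ramsey extraction'' sketch supplies one. The obstacle you name is exactly the point: hyperimaginary indiscernibility over $b$ gives automorphisms sending $a_{\bar\imath}/E$ to $a_{\bar\jmath}/E$ and fixing $[b^*]_F$, with no control over where the actual tuples $a_{\bar\imath}$ go, and you leave this open.

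The paper sidesteps the problem by lifting in the opposite direction. It stays with the two-cut definition and, given hyperimaginary fillers $d_1,d_2$ of two Dedekind cuts of $I'=(a_i/E)$, produces real representatives $b_1,b_2$ that fill the corresponding cuts of $I=(a_i)$. The device is Lemma~\ref{2.8'}, a hyperimaginary analogue of Simon's base-change lemma: using NIP (a finite-alternation argument), one first replaces $(d_1,d_2)$ by $(d_1',d_2')\equiv_{I'}(d_1,d_2)$ with the property that any definable set containing $\pi_E^{-1}(d_k')$ is not avoided by cofinally many classes $[a_j]_E$ near the cut. That coherence property is precisely what makes a compactness argument produce real lifts $b_k\in[d_k']_E$ that fill the cuts of $I$; distality of $I$ then yields indiscernibility of $I\cup\{b_1,b_2\}$, which pushes down to $I'\cup\{d_1,d_2\}$. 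So the paper lifts the internal cut-fillers rather than the external parameter --- a different, and provable, move.
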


We prove the theorem above by elaborating on some arguments from \cite{MR3001548}. 

By Hrushovski's theorem (i.e. \cite[Ch. 1, Lemma 6.18]{pillay1996geometric}), we know that a type-definable group in a stable theory is an intersection of definable groups. 
%%%Krzys2: I changed the next sentence, as one could think that an extension of Hrushovski's theorem to hyperdefinable context should say that a hyperdefinable stable group is an intersection of supergroups that could be called ``definable groups'' (e.g. groups of the form $X/E$ where $X$ is definable ???). It would be very interesting to determine whether it is true.
%
%This is false in stable hyperdefinable groups, e.g. in the above example with the unite circle, $G/G^{00}$ is stable but $G^{st}=G^{00}$ is not an intersection of definable groups. 
However, although $G/G^{\textrm{st}}$ is stable, it may happen that $G^{\textrm{st}}$ is not an intersection of relatively definable subgroups of $G$, e.g. in the above example with the unite circle, $G^{\textrm{st}}=G^{00}$ is not an intersection of definable groups. 
In \cite{MR3796277}, the authors stated as a problem to find an example of a definable group $G$ where $G^{00}=G$ but $G^{\textrm{st}} \ne G^{\textrm{st},0}$ (i.e. $G^{\textrm{st}}$ is not an intersection of definable groups). In Section \ref{section: main example}, we give such an example: it is  the monster model of $\Th((\mathbb{R},+,[0,1])$.

%We also consider the existence of the finest type-definable equivalence relation on a type-definable set with stable quotient. 
When we lack the group structure, a natural counterpart of taking the quotient by a subgroup is to take %change 1.8 taking
	 the quotient by an equivalence relation. 
	%%%Krzys: ``ask'' in place of ``wonder''. I also wrote ``thus'' in place of ``Hence''.
	Thus, it is natural to ask whether results similar to those appearing in \cite{MR3796277} hold outside of the context of type-definable groups. 
%%%Krzys: ``counterpart'' in place of ``restatement''	
However, the naive counterpart of \cite[Theorem 1.1]{MR3796277} is easily seen to be false. 
%%%Krzys: What was written below was not correct. If $X$ is stable (even within a non-stable theory!), then the naive statement trivially holds. Also, the proposition below was essentially the same statement, and the non-stability assumption was missed. There were also other missed assumptions in the text below. I made several corrections and changes. 
%In general, for any non stable theory $T$ and arbitrary type-definable (over any set of parameters) set $X$, the finest type-definable (over any set of parameters)  equivalence relation on $X$ with stable quotient does not exist. The reason for this is that, given an equivalence relation $E$ on $X$ that is not equality, we can find an $E$ class which contains at least two distinct elements $a$ and $b$. Then, the equivalence relation on $X$ given by being in the same $E$ class and having the same type over $a$ is strictly finer that $E$ and has stable quotient by \cite[Remark 1.4]{MR3796277}.
	%%%Adrian: Here everything works because we are working in only one monster model.
Namely, in general, for any non-stable type-definable set $X$ (e.g. the home sort of a non-stable theory), a finest type-definable (over an arbitrary small set of parameters)  equivalence relation on $X$ with stable quotient does not exist. The reason is that given any type-definable equivalence relation $E$ on $X$ with stable quotient, $E$ is not the relation of  equality, so we can find an $E$-class which contains at least two distinct elements $a$ and $b$. Then, the equivalence relation on $X$ being the intersection of $E$ and the relation $\equiv_a$ of having the same type over $a$ is strictly finer that $E$ and has stable quotient by \cite[Remark 1.4]{MR3796277} (as both $X/E$ and $X/\!\!\equiv_a$ are stable).

Let $\C\prec \C'$ be two monster models of a NIP theory $T$ 
	%(where $\C'$ is %change instead of monster model with respect to \C 
	%$\lvert\C\rvert$-saturated and strongly $\lvert\C\rvert$-homogeneous). 
	such that $\C$ is small in $\C'$.
	Recall that a {\em relatively type-definable over a (small) set of parameters $B$} subset of a set $Y$ is the intersection of $Y$ with a set which is type-definable over $B$. 
%%%Krzys rev: I modified the next sentence.
	The main result of this thesis is the following theorem, which will be proved in Section \ref{section: main theorem}.

%%%Krzys: I incorporated the notation $E^{st}$. I decreased the degree of saturation by removing +.
	\begin{teor}\label{main theorem intro}
		Assume NIP. Let $p(x)\in S(\mathfrak{C})$ be an $A$-invariant type. Assume that $\C$ is at least $\beth_{(\beth_{2}(\lvert x \rvert+\lvert T \rvert + \lvert A\rvert))^+}$-saturated. Then, there exists a finest equivalence relation $E^{\textrm{st}}$ on $p(\C')$ %change 1.10a ommited the following (the set of realizations of $p$ in the bigger monster model $\C'$) 
		relatively type-definable over a small %change 1.10b added the following 
		(relative to $\C$) set of parameters of $\C$ and with stable quotient $p(\C')/E^{\textrm{st}}$.
	\end{teor}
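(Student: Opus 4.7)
The plan is to define $E^{\textrm{st}}$ as the intersection of the family $\mathcal F$ of all equivalence relations on $p(\C')$ which are relatively type-definable over some small subset of $\C$ and have stable quotient, and then to show that this intersection is realized by a small subfamily of $\mathcal F$; this will force $E^{\textrm{st}}$ itself to be relatively type-definable over a small parameter set of $\C$. The main obstacle is that a priori $\mathcal F$ is a proper class, parametrized by all small subsets of $\C$, so one must cut it down to a bounded subfamily, and this is exactly where the large saturation assumption on $\C$, together with NIP, should come into play, mirroring the group-theoretic arguments of \cite{MR3796277}.

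Closure of $\mathcal F$ under small intersections is the routine direction. Given any set-indexed subfamily $(E_i)_{i\in I}\subseteq\mathcal F$ with each $E_i$ relatively type-definable over a small $B_i\subset\C$, put $B=\bigcup_{i\in I} B_i$ (still small) and $E=\bigcap_{i\in I} E_i$. Then $E$ is relatively type-definable over $B$, the diagonal map $p(\C')/E\hookrightarrow\prod_{i\in I} p(\C')/E_i$ is injective, and stability of hyperdefinable sets is preserved under small products and passage to type-definable subsets (Remark 1.4 of \cite{MR3796277}), so $p(\C')/E$ is stable and $E\in\mathcal F$.

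The crux is the reduction to a bounded subfamily. Following \cite{MR3796277} and working in the framework of \cite{hrushovski2021order}, to each $E\in\mathcal F$ relatively type-definable over small $B\supseteq A$ I attach the subgroup
\[
H_E \;=\; \{\sigma\in\aut(\C'/B): \sigma(a)\mathrel{E} a\text{ for all }a\in p(\C')\},
\]
which is a relatively type-definable subset of $\aut(\C')$ in the sense of \cite{hrushovski2021order}, and $E$ is recoverable from $H_E$ via the transitive action of $\aut(\C'/B)$ on $p(\C')$. Using the indiscernibility-based definition of stability, stability of $p(\C')/E$ imposes a ``collapse'' on long $B$-indiscernible sequences of realizations of $p$ modulo $E$. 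Combining NIP with the fact that $\C$ is $\beth_{(\beth_2(|x|+|T|+|A|))^+}$-saturated, an Erd\H{o}s--Rado-style extraction argument should then produce a single small $B_0\subset\C$ (of cardinality at most $\beth_2(|x|+|T|+|A|)$) such that every $E\in\mathcal F$ contains some $E'\in\mathcal F$ relatively type-definable over $B_0$. Since over a fixed small $B_0$ there are only boundedly many such $E'$, their intersection lies in $\mathcal F$ by the previous paragraph and coincides with $\bigcap\mathcal F$, giving the desired $E^{\textrm{st}}$. The hardest step is this bounding argument --- extracting the uniform $B_0$ from the stability of $p(\C')/E$ via a careful combinatorial analysis of the $\aut(\C'/B)$-action on realizations of $p$ --- which is the analogue in the present setting of the NIP-based control of $G^{00}$ exploited in \cite{MR3796277}.
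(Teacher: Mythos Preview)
Your high-level strategy is correct --- define $E^{\textrm{st}}$ as the intersection of $\mathcal F$ and argue it coincides with a small subintersection --- and the closure-under-small-intersections paragraph is fine. But the rest is a plan, not a proof: you yourself flag the bounding step as ``should then produce'' and ``the hardest step'', and indeed that step carries essentially all the content.

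Several concrete things are missing or off. First, the claim that one can find a single $B_0$ of size at most $\beth_2(|x|+|T|+|A|)$ is not what the argument actually yields; the paper only shows that the intersection of $\mathcal F$ agrees with an intersection of fewer than $\nu:=\beth_{(\beth_2(|x|+|T|+|A|))^+}$ countable pieces, so the parameter set one obtains has size below $\nu$, not below $\beth_2$. Second, your $H_E$ is not the object that does the work: it depends on the chosen $B$ (not just on $E$), and the paper instead uses stabilizers $A_{\pi;a;a_0}(\C'/a_0)$ of a \emph{single} $E$-class, whose key feature is the compactness-type inclusion $A_\pi\cdot A_\theta\cdot A_\pi\subseteq A_\varphi$ (for suitable $\theta$ given $\varphi$), which is what ultimately produces an IP formula. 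Third, ``stability imposes a collapse on long $B$-indiscernible sequences'' is too vague to be usable; in the actual proof, stability is invoked at one precise moment: after extracting an $A$-indiscernible sequence and locating a special index $\beta$ such that dropping the $\beta$-th equivalence relation does not enlarge the intersection, one inserts an $\omega$-sequence at position $\beta$ and uses stability of the quotient by $\bigcap_{\alpha\ne\beta} E_{a'_\alpha}$ to symmetrize the relations $e'_j E_i f'_j$ from $j<i$ to all $j\ne i$. Only then does one land in the configuration that contradicts NIP.

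What you are missing, structurally, is: (a) a decomposition lemma reducing every $E\in\mathcal F$ to an intersection of pieces each relatively defined by a \emph{countable} type $\pi(x,y,a_0)$ over a \emph{countable} tuple $a_0$, so that there are only $2^{|x|+|T|}\cdot 2^{|T|+|A|}$ possible ``shapes'' $(\pi,\tp(a_0/A))$; (b) for each fixed shape, a chain condition of length $\nu$ --- i.e., there is no sequence $(a_i)_{i<\nu}$ with $a_i\equiv_A a_0$ and $\bigcap_{j<i} E_{a_j}\not\subseteq E_{a_i}$ --- which together with (a) forces the full intersection to be captured by $<\nu$ pieces; and (c) a proof of (b) by contradiction, extracting an $A$-indiscernible sequence, finding the special index $\beta$, and producing IP via the stabilizer computations in $\aut(\C')$. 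None of these three ingredients is visible in your sketch.
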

%%%Krzys: I added ``finest'', and slightly changed the sentence. Finally, I removed this sentence from here.
%This finest equivalence relation will be denoted by $E^{st}.$

%%%Krzys: Slight modification below.
	Our proof is via a non-trivial adaptation of the ideas from the proof of the main theorem of \cite{MR3796277}, using relatively type-definable subsets of the group of automorphisms of the monster model (as defined in \cite{hrushovski2021order}).

%%%Krzys: Added paragraph below.	
	We do not know whether $E^{\textrm{st}}$ is relatively type-definable over $A$. At the end of Section \ref{section: main theorem}, we will observe that if it was true, then the specific (large) saturation degree assumption  in the above theorem could be removed. Another question is whether one could drop the invariance of $p$ hypothesis from the above theorem. If such a strengthening is true, a proof would probably require some new tricks.

%and a type $p\in S(\C)$ invariant over a small $A\subset \C$. 
%%%Krzys: ``(the set of realizations of $p$ in the bigger monster model $\C'$)''. But I decided to remove this sentence, as it is then repeated in the statement of the theorem. Instead I wrote: ``and we prove the following main theorem of this paper.''	
%	We prove that if $\C$ is sufficiently saturated, then there exists a finest relatively type-definable over a small subset of $\C$ equivalence relation on $p(\C')$ (the set of realizations of $p$ in the bigger monster model $\C'$) with stable quotient.

  We devote Chapter \ref{Chapter 5} to continuous model theory. Continuous model theory is a growing area of model theory that has been developing very fast in recent years. Many of the most important dividing lines for first-order theories have been also defined for continuous theories (Stability \cite{MR2657678, MR2723787}, NIP \cite{ContVCclasses}, Distality \cite{anderson2023fuzzy}). One invaluable tool for the characterization of dividing lines in first-order theories is (generalized) indiscernible sequences (See \cite[Chaper VII]{Shelah1982-SHECTA-5}, \cite{SCOW20121624}, \cite{Chernikov2014OnN}, \cite{Guingona2019}). We present natural continuous counterparts of generalized indiscernibles and the modeling property (where the index structures are still first-order) and show that a first-order structure has the continuous modeling property (see Definition \ref{definition: CMP}) if and only if its age has the embedding Ramsey property (Theorem \ref{CMP iff Ramsey}). Several notions around this topic have been also defined in positive logic. Dobrowolski and Kamsma (see \cite{Dobrowolski2021KimindependenceIP}) proved that $s$-trees have the (positive logic version of) modeling property in thick theories, later in  \cite[Theorems 1.2, 1.2 and 1.3]{kamsma2023positive} it was shown that $str$-trees, $str_0$-trees (the reduct of $str$-trees that forgets the length comparison relation) and arrays also have the modeling property in positive thick theories.
 
The notion of a dependent theory was first introduced by Shelah in \cite{Shelah1982-SHECTA-5}. 
%MENTION SOME APPLICATIONS. 
In later work \cite{Shelah2005StronglyDT, Shelah2007DefinableGF}, Shelah introduced the more general notion of $n$-dependence. This notion was studied in depth in \cite{Chernikov2014OnN}, where the authors give a characterization of $n$-dependent theories in terms of the collapse of indiscernible sequences (See \cite[Theorem 5.4]{Chernikov2014OnN}). In the continuous context, the definition of $n$-dependence was introduced in \cite{Chernikov2020HypergraphRA} using a generalization of the $VC_n$ dimension. Section 10 of the aforementioned paper is dedicated to several operations which preserve $n$-dependence.  The proof of \cite[Theorem 5.4]{Chernikov2014OnN} $(3)\implies (2)$ is not fully correct\footnote{After sending the manuscript to the authors of \cite{Chernikov2014OnN}, they acknowledged that there is a mistake and proposed a short correction that we discuss at the end of Section \ref{section:n-dep}}; we provide a counterexample to the key claim (see Counterexample \ref{counterexample}). Using  the tools developed in Section \ref{section: modelling property} we give an alternative proof of the theorem, obtaining generalizations of \cite[Theorem 5.4]{Chernikov2014OnN} to continuous logic theories (Theorem \ref{n-dep and collapsing intro}) and hyperdefinable sets (Theorem \ref{ndep an collapsing hyperim intro}).

Let $G_{n+1,p}$ be the Fraïssé limit of the class of ordered $(n+1)$-partite $(n+1)$-uniform hypergraphs and $G_{n+1}$ be the Fraïssé limit of the class of ordered $(n+1)$-uniform hypergraphs. By a $G_{n+1,p}$-indiscernible sequence $(a_g)_{g_\in G_{n+1,p}}$ we mean that for any $W,W'\subseteq G_{n+1,p}$ if the quantifier free types of $W$ and $W'$ coincide (in the language $\mL_{opg}$ defined in Section \ref{section:n-dep}), then the types of the tuples $(a_g)_{g\in W}$ and $(a_g)_{g\in W'}$ also coincide (similarly for $G_{n+1}$-indiscernibility, see Definition \ref{defin: gen. indisc.}). We say that the sequence $(a_g)_{g_\in G_{n+1,p}}$ is $\mL_{op}$-indiscernible if for any $W,W'\subseteq G_{n+1,p}$ with the same quantifier free type in the language $\mL_{op}$ (see Section \ref{section:n-dep}), the types of the tuples $(a_g)_{g\in W}$ and $(a_g)_{g\in W'}$ also coincide. 
\begin{teor} \label{n-dep and collapsing intro}
    Let $T$ be a complete continuous logic theory. The following are equivalent:
    \begin{enumerate}
        \item $T$ is $n$-dependent.
        \item Every $G_{n+1,p}$-indiscernible is $\mL_{op}$-indiscernible.
        \item Every $G_{n+1}$-indiscernible is order indiscernible.
    \end{enumerate}
\end{teor}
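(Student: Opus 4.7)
My plan is to establish the chain $(1) \Rightarrow (3) \Rightarrow (2) \Rightarrow (1)$, following the overall scheme of \cite{Chernikov2014OnN} but using the continuous modeling property (CMP) of Section \ref{section: modelling property} in place of the first-order extraction arguments. Both index Fraïssé classes---ordered $(n+1)$-partite $(n+1)$-uniform hypergraphs and ordered $(n+1)$-uniform hypergraphs---have the embedding Ramsey property by Nešetřil--Rödl, so by Theorem \ref{CMP iff Ramsey} the CMP applies, and from any array of parameters one may extract a $G_{n+1,p}$- or $G_{n+1}$-indiscernible sequence realising the same continuous EM-type.

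For $(2) \Rightarrow (1)$ I would argue contrapositively. Suppose $T$ fails continuous $n$-dependence, so by the $VC_{n}$-dimension characterization of \cite{Chernikov2020HypergraphRA} there is a continuous formula $\varphi(x;y_{1},\dots,y_{n})$, an $(n+1)$-dimensional parameter array, and reals $r<s$ such that the value of $\varphi$ on a tuple indexed by a hyperedge is $\le r$ while on a tuple that is not a hyperedge it is $\ge s$. Reindexing by the vertex set of an ordered $(n+1)$-partite $(n+1)$-uniform hypergraph and invoking the CMP yields a $G_{n+1,p}$-indiscernible sequence retaining this separation under $\varphi$, and such a sequence is visibly not $\mL_{op}$-indiscernible.

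The direction $(1) \Rightarrow (3)$ goes the other way: from a $G_{n+1}$-indiscernible which fails order-indiscernibility one reads off, using the homogeneity of $G_{n+1}$ and the resulting freedom to embed arbitrary multipartite patterns, an alternating $(n+1)$-dimensional configuration realising positive $VC_{n}$-dimension for some continuous formula, contradicting $n$-dependence. For $(3) \Rightarrow (2)$, the plan is to take a $G_{n+1,p}$-indiscernible sequence $(a_{g})$, choose a generic order-preserving injection $\sigma\colon G_{n+1} \to G_{n+1,p}$ hitting each part cofinally, and use the CMP to arrange that $(a_{\sigma(g)})_{g \in G_{n+1}}$ is $G_{n+1}$-indiscernible; applying $(3)$ then collapses the hyperedge relation on the image, and the density of such generic embeddings lifts the collapse back to $\mL_{op}$-indiscernibility of the original sequence.

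The main obstacle I anticipate is precisely the implication $(3) \Rightarrow (2)$, since this is where the original proof in \cite{Chernikov2014OnN} contained a gap, as witnessed by Counterexample \ref{counterexample}. The naive restriction along a single embedding does not preserve the full $\mL_{opg}$-quantifier-free structure of the partite index, so one must instead apply the CMP to a sufficiently rich family of embeddings simultaneously and exploit the added freedom provided by Ramsey-based extraction. Transplanting the argument into continuous logic requires phrasing ``distinguishing'' in terms of strict inequality of EM-type values in $[0,1]$ rather than as a boolean witness, and it is here that the continuous CMP provides the cleanest way to close the gap.
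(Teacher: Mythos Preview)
Your proposed chain $(1)\Rightarrow(3)\Rightarrow(2)\Rightarrow(1)$ has a genuine gap at $(3)\Rightarrow(2)$, and the fix you sketch does not close it. Embedding $G_{n+1}$ into $G_{n+1,p}$ via some $\sigma$ and then applying CMP to $(a_{\sigma(g)})_{g\in G_{n+1}}$ runs into precisely the obstruction of Counterexample~\ref{counterexample}: the quantifier-free $\mL_{og}$-type of a tuple in $G_{n+1}$ does not determine the quantifier-free $\mL_{opg}$-type of its image in $G_{n+1,p}$. For instance, a pair $h_q<h_{q'}$ in $G_{n+1}$ carries no hyperedge information when $n\geq 1$, yet the corresponding pair in $G_{n+1,p}$ records all the hyperedges among the coordinates of the underlying $(n+1)$-tuples. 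Thus the extracted $G_{n+1}$-indiscernible need not reflect the $\mL_{opg}$-structure of the original sequence, and collapsing it to order-indiscernibility tells you nothing about $\mL_{op}$-indiscernibility of $(a_g)_{g\in G_{n+1,p}}$. Invoking ``a sufficiently rich family of embeddings simultaneously'' does not help: the obstruction is structural, not a matter of density, and CMP only guarantees agreement with the EM-type, which is exactly what fails to be rich enough here.

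The paper avoids this implication entirely, proving instead $(1)\Leftrightarrow(2)$ and $(1)\Leftrightarrow(3)$ independently. The novel content is $(3)\Rightarrow(1)$, whose key ingredient is Proposition~\ref{IP_n iff coding nonpartite}: if $f$ has $IP_n$, then the symmetrized formula $\psi_f$ (the minimum of $f$ over all permutations of the $n+1$ argument slots, see Notation~\ref{notation psi_f}) encodes every finite $(n+1)$-uniform \emph{non-partite} hypergraph. This is established by an explicit combinatorial construction of witness tuples from a $G_{n+1,p}$-indiscernible witness for $f$, using a carefully chosen function $c\colon\omega\to\omega$ so that only the identity permutation can achieve the minimum. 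With this in hand, $\neg(1)\Rightarrow\neg(3)$ is immediate: $\psi_f$ encodes $G_{n+1}$ via a $G_{n+1}$-indiscernible sequence that cannot be order-indiscernible. Your outline is missing this symmetrization idea, which is exactly what replaces the broken $(3)\Rightarrow(2)$ route. (The paper's $(1)\Rightarrow(2)$ and $(1)\Rightarrow(3)$ are also more concrete than your sketch: they use the adjacency reduction of Fact~\ref{ Sequence of adjacent graphs} and the embedded copy of $G_{n+1,p}$ from Fact~\ref{Isomorphic copy of random partite hypergraph}, rather than a direct appeal to $VC_n$.)
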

  
%   \begin{teorsin} \label{n-dep and collapsing}
%    Let $T$ be a continuous logic theory. The following are equivalent:
%    \begin{enumerate}
%        \item $T$ is $n$-dependent.
%        \item Every $G_{n+1,p}$-indiscernible is $\mL_{op}$-indiscernible.
%        \item Every $G_{n+1}$-indiscernible is order indiscernible.
%    \end{enumerate}
%\end{teorsin}

We introduce the following definition of $n$-dependent hyperdefinable sets:
\begin{defin} The hyperdefinable set $X/E$ has the $n$-independence property, $IP_n$ for short, if for some $m<\omega$ there exist two distinct complete types $p,q\in S_{X/E\times \C^m}(\emptyset)$ and a sequence $(a_{0,i},\dots, a_{n-1,i})_{i< \omega}$ from anywhere such that for every finite $w\subset \omega^n$ there exists $b_w\in X/E$ such that $$\tp(b_w, a_{0,i_0},\dots, a_{n-1,i_{n-1}}  )=p \iff (i_0,\dots, i_{n-1})\in w$$ $$\tp(b_w, a_{0,i_0},\dots, a_{n-1,i_{n-1}}  )=q \iff (i_0,\dots, i_{n-1})\notin w. $$ 
\end{defin}
Using the tools developed in Sections \ref{section: modelling property} and \ref{section:n-dep}, we prove the theorem below, %main result of Section \ref{section: hyperdef n-dep}
 which is a counterpart for hyperdefinable sets of Theorem \ref{n-dep and collapsing intro}. Here, $\Psi^{n+1}_{\mathcal{F}_{X/E}}$  is a special family of functions (see Notation \ref{notation PSI} in Section \ref{section: hyperdef n-dep}).
\begin{teor}\label{ndep an collapsing hyperim intro}
 The following are equivalent:
    \begin{enumerate}
        \item $X/E$ is $n$-dependent.
        \item Every $G_{n+1,p}$-indiscernible $(a_g)_{g\in G_{n+1,p}}$, where for every $g\in P_0(G_{n+1,p})$ we have $a_g\in X/E$, is $\mL_{op}$-indiscernible.
        \item For every $m\in \mathbb{N}$, every $G_{n+1}$-indiscernible with respect to $\Psi^{n+1}_{\mathcal{F}_{X/E}}$ sequence of elements of $\C^m\times X$  is order indiscernible with respect to $ \Psi^{n+1}_{\mathcal{F}_{X/E}}$.
    \end{enumerate}
\end{teor}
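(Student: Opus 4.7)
The approach is to prove (1) $\Leftrightarrow$ (3) by transferring the problem to continuous logic via the family $\mathcal{F}_{X/E}$ introduced in Section \ref{section: characterizations of stability} and invoking Theorem \ref{n-dep and collapsing intro}, and to prove (1) $\Leftrightarrow$ (2) by a direct Ramsey argument in the hyperdefinable setting, following the scheme of \cite[Theorem 5.4]{Chernikov2014OnN}.

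For (1) $\Leftrightarrow$ (3): the functions in $\mathcal{F}_{X/E}$ encode complete types of tuples involving hyperimaginaries from $X/E$ in terms of continuous-logic formulas, and $\Psi^{n+1}_{\mathcal{F}_{X/E}}$ is the natural $(n+1)$-variable family associated with them. Under this translation, alternation between the two distinct types $p$ and $q$ in the definition of $IP_n$ for $X/E$ corresponds exactly to separation by some function in $\Psi^{n+1}_{\mathcal{F}_{X/E}}$ taking two distinct values on the alternating pattern, and order-indiscernibility with respect to $\Psi^{n+1}_{\mathcal{F}_{X/E}}$ matches genuine order-indiscernibility of $(n+1)$-tuples from $\C^m \times X$. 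Consequently, an instance of $IP_n$ for $X/E$ translates into an $n$-independence phenomenon for the continuous formulas in $\Psi^{n+1}_{\mathcal{F}_{X/E}}$, and the equivalence follows by applying Theorem \ref{n-dep and collapsing intro} (in its local form for a family of formulas) to $\Psi^{n+1}_{\mathcal{F}_{X/E}}$.

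For (1) $\Leftrightarrow$ (2): for $\neg$(1) $\Rightarrow$ $\neg$(2), start from $IP_n$ witnesses $(a_{k,i})$ and $(b_w)$; embed them into a finite sub-hypergraph of $G_{n+1,p}$ by placing each $a_{k,i}$ in an appropriate vertex class and each $b_w$ in $P_0(G_{n+1,p})$, with hyperedges chosen so that the hyperedge/non-hyperedge pattern encodes the $p/q$-pattern of $IP_n$. Apply the continuous modeling property for $G_{n+1,p}$-indiscernibles from Section \ref{section: modelling property}, adapted to the $\Psi^{n+1}_{\mathcal{F}_{X/E}}$-EM-type so that it handles hyperimaginaries, to extract a $G_{n+1,p}$-indiscernible $(a_g)_{g \in G_{n+1,p}}$ with $a_g \in X/E$ for $g \in P_0(G_{n+1,p})$ that still realizes distinct types $p$ and $q$ on hyperedges versus non-edges; such a sequence is manifestly not $\mL_{op}$-indiscernible, since $\mL_{op}$ does not see the hyperedge predicate. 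Conversely, from a $G_{n+1,p}$-indiscernible $(a_g)$ that fails $\mL_{op}$-indiscernibility, two transversal $(n+1)$-tuples of equal $\mL_{op}$-type but different $\mL_{opg}$-type (one a hyperedge, one not) realize distinct complete types $p \ne q$. The extension property of $G_{n+1,p}$ then supplies, for each finite $w \subset \omega^n$, a witness $b_w$ in $P_0(G_{n+1,p})$ whose hyperedge pattern onto fixed $\omega$-chains in the remaining vertex classes realizes exactly $w$; this reconstructs the $IP_n$ array.

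The main obstacle is verifying that the continuous modeling property for $G_{n+1,p}$-indiscernibles of Section \ref{section: modelling property} extends cleanly to sequences of hyperimaginaries through $\mathcal{F}_{X/E}$: one must show that the $\Psi^{n+1}_{\mathcal{F}_{X/E}}$-EM-type is fine enough to separate distinct complete types of $(n+1)$-tuples from $\C^m \times X$, yet preserved under the Ramsey-based extraction, so that the resulting sequence is genuinely $G_{n+1,p}$-indiscernible in the sense of Definition \ref{defin: gen. indisc.}. A secondary point is selecting a combinatorial embedding of the $IP_n$ array into $G_{n+1,p}$ whose hyperedge structure matches the $p/q$-pattern exactly; this should reduce to the ordered partite Fra\"iss\'e extension property.
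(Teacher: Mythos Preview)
Your sketch has a genuine gap in the direction $\neg(2)\Rightarrow\neg(1)$. Failure of $\mL_{op}$-indiscernibility only gives you finite subsets $W,W'\subseteq G_{n+1,p}$ of some arbitrary size $m$ with the same $\mL_{op}$-type but with $\tp((a_g)_{g\in W})\ne\tp((a_g)_{g\in W'})$. You assert that you immediately obtain ``two transversal $(n+1)$-tuples\ldots one a hyperedge, one not'', but nothing forces $m=n+1$, and the difference between $W$ and $W'$ may involve many hyperedges simultaneously. The paper handles this by the adjacency machinery from \cite{Chernikov2014OnN} (Facts \ref{ Sequence of adjacent graphs} and \ref{Isomorphic copy of random partite hypergraph}): first reduce to $V$-adjacent $W=g_0\dots g_nV$ and $W'=g'_0\dots g'_nV$ differing by exactly one edge, then locate an embedded copy of $G_{n+1,p}$ inside $G_{n+1,p}$ on which a formula $f'(x_0,\dots,x_n,(a_g)_{g\in V})$ with the parameters $V$ absorbed encodes the edge relation. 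Only after this reduction does one get a function in $\mathcal{F}_{X/E}$ with $IP_n$; the extension property of $G_{n+1,p}$ alone does not produce it.

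Your plan for $(1)\Leftrightarrow(3)$ is also off target. There is no ``local form'' of Theorem \ref{n-dep and collapsing intro} for an arbitrary family of formulas: the proof of that theorem uses the same adjacency reduction, which introduces parameters and hence leaves any fixed family. The paper's argument is different and hinges on the specific design of $\Psi^{n+1}_{\mathcal{F}_{X/E}}$: each $\psi_f$ is \emph{symmetric} and has \emph{exactly} $n+1$ arguments in $\C^m\times X$. Consequently, any witnesses $W,W'$ to failure of order-indiscernibility with respect to $\Psi^{n+1}_{\mathcal{F}_{X/E}}$ automatically have $|W|=|W'|=n+1$, so $V=\emptyset$ in the adjacency picture, and by symmetry of $\psi_f$ and of $R$ the single failing instance already shows that $\psi_f$ encodes $G_{n+1}$. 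Proposition \ref{Hyperim: IP_n iff coding nonpartite} (the hyperdefinable version of ``encoding nonpartite hypergraphs $\Leftrightarrow$ $IP_n$'') then finishes. Example \ref{example optimal} shows this is sharp: if you enlarge the family to $\Psi^{n'+1}_{\mathcal{F}_{X/E}}$ with $n'>n$, or to general functions in $\mathcal{F}_{(\C^m\times X/E)^{n+1}}$, the equivalence fails, so your proposed transfer to a generic continuous-logic statement cannot succeed.
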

Here $P_0(G_{n+1,p})$ is the first part of the partition of $G_{n+1,p}$, and by a $G_{n+1}$-indiscernible sequence $(a_g)_{g_\in G_{n+1}}$ with respect to $\Psi^{n+1}_{\mathcal{F}_{X/E}}$  we mean that for any $W,W'\subseteq G_{n+1,p}$ if the quantifier free types of $W$ and $W'$ coincide (in the language $\mL_{og}$ defined in Section \ref{section:n-dep}), then the $\Psi^{n+1}_{\mathcal{F}_{X/E}}$-types of the tuples $(a_g)_{g\in W}$ and $(a_g)_{g\in W'}$ also coincide.

Item $(3)$ of Theorem \ref{ndep an collapsing hyperim intro} cannot be improved by replacing indiscernibility with respect to $\Psi^{n+1}_{\mathcal{F}_{X/E}}$ by indiscernibility with respect to more general families of functions from $\mathcal{F}_{(\C^m \times X/E)^{n+1}}$ or by replacing $\mathcal{F}_{(\C^m \times X/E)^{n+1}}$ by $\Psi^{n'+1}_{\mathcal{F}_{X/E}}$ with $n'>n$ as showed by Example \ref{example optimal}.

 In the last chapter, we apply topological dynamics methods to study maximal WAP and tame quotients of flows of the form $(\aut(\C),S_X(\C))$, where $X$ is an $\emptyset$-type-definable set. Topological dynamics methods were introduced in model theory by Newelski \cite{10.2178/jsl/1231082302, 10.1112/jlms/jdr075}, with the goal to extend results from stable group theory to the unstable context. Since then, the topic has been broadened by a multitude of authors: Chernikov, Hrushovski, Krupi\'nski, Newelski, Pillay, Rzepecki, Simon and others (e.g. \cite{definably_Amenable_nip, doi:10.1142/S0219061319500120, KPR18, 959fd4d1-73ad-3b7b-bc69-e998be32d2cf}). It is well known that, in various contexts, stability corresponds to WAP and NIP to tameness. We check that stable and NIP quotients by type-definable equivalence relations indeed yield respectively WAP and tame quotients of the space of types by the corresponding closed equivalence relations. However, we show that for an arbitrary $\emptyset$-type-definable set $X$, the finest closed $\aut(\C)$-invariant equivalence relation on $S_X(\C)$ with WAP quotient might not be induced by an $\emptyset$-type-definable equivalence relation on $X$ with stable quotient. Similarly,  for an arbitrary $\emptyset$-type-definable set, the finest closed $\aut(\C)$-invariant equivalence relation on $S_X(\C)$ with tame quotient is might not be induced by an $\emptyset$-type-definable equivalence relation on $X$ with NIP quotient (see Proposition \ref{fwap strictly finer}). %present an example illustrating that not every WAP quotient is induced by a stable quotient by a type-definable equivalence relation, and similarly for tame quotients and NIP. It is interesting to see whether maximal such quotients lead to some new model-theoretic invariants. %For a given theory $T$, flows of the form $(\aut(\C), S(\C))$ for $\C$ a monster model of $T$ and their Ellis group carry important information about the theory. From a model-theoretic perspective, an important question is to whether these objects are independent of the choice of the monster model of theory and to what extent if they are.

 The Ellis groups of flows play a very important role both in abstract topological dynamics and in model theory, e.g., to get new information about model-theoretic invariants such as $G/G^{000}$ or $\textrm{GAL}_L(T)$ (see \cite{KRUPIŃSKI_PILLAY_2017, KPR18}) and in recent applications to additive combinatorics (see \cite{krupiński2023generalized}). The main result of \cite{doi:10.1142/S0219061319500120} shows that the Ellis group of a given theory is absolute (i.e., does not depend on $\C$). We show that the same is true for the Ellis groups of the maximal WAP and tame quotients. 

Let $\C \prec \C'$ be models of a complete theory $T$ which are $\kappa$-saturated and strongly $\kappa$-homogeneous, where $\kappa$ is specified in the statements below. % such that $\C$ is at least $\kappa$-saturated 
%and strongly $\kappa$-homogeneous 
%with a strong limit cardinal $\kappa>\lvert T\rvert$,  and $\C'$ is $\kappa'$-saturated and strongly $\kappa'$-homogeneous with a strong limit cardinal $\kappa'\geq k$. 
Let $X$ be an $\emptyset$-type-definable subset of $\C^\lambda$. Let $F'$ be a closed, $\aut(\C')$-invariant equivalence relation defined on $S_X(\C')$, and $F$ a closed, $\aut(\C)$-invariant equivalence relation defined on $S_X(\C)$. We say that $F'$ and $F$ are compatible if $r[F']=F$, where $r: S_X(\C')\to S_X(\C)$ is the restriction map.

  Let $\fwap\subset S_X(\C)\times S_X(\C)$ be the finest closed $\aut(\C)$-invariant equivalence relation on $S_X(\C)$ such that the flow $( \aut(\C), S_X(\C)/\fwap )$ is WAP, and let $\fta\subset S_X(\C)\times S_X(\C)$ be the finest closed $\aut(\C)$-invariant equivalence relation on $S_X(\C)$ such that the flow $( \aut(\C), S_X(\C)/\fta )$ is tame. 

\begin{cor}\label{intro: compatible fwap and fta}
    The Ellis groups of $S_X(\C)/\fwap$ and  $( \aut(\C), S_X(\C)/\fta )$ (treated as topological groups with the $\tau$-topology) do not depend on the choice of $\C$ as long as $\C$ is $(\aleph_0+\lambda)^+$-saturated and strongly $(\aleph_0+\lambda)^+$-homogeneous.
    %The Ellis groups of $( \aut(\C), S(\C)/\fwap )$ and  $( \aut(\C), S(\C)/\fta )$ do not depend on the choice of the monster model $\C$.
\end{cor}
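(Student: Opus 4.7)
The plan is to deduce Corollary \ref{intro: compatible fwap and fta} from two ingredients: the compatibility results $r[\fwapp]=\fwap$ and $r[\ftaa]=\fta$ established earlier in the chapter, together with the absoluteness machinery of \cite{doi:10.1142/S0219061319500120}. First, I would fix two models $\C\prec\C'$ both $(\aleph_0+\lambda)^+$-saturated and strongly $(\aleph_0+\lambda)^+$-homogeneous, with $\C$ small in $\C'$. The compatibility $r[\fwapp]=\fwap$ says exactly that the restriction map $r:S_X(\C')\to S_X(\C)$ descends to a well-defined continuous surjection $\tilde r:S_X(\C')/\fwapp\to S_X(\C)/\fwap$. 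Moreover, since every automorphism of $\C$ extends to an automorphism of $\C'$ (by strong homogeneity in $\C'$ of a sufficient size) and restriction yields a group homomorphism $\rho:\aut(\C')\to\aut(\C)$, the map $\tilde r$ is $\rho$-equivariant.

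Second, an equivariant continuous surjection between compact $G$-flows induces a continuous semigroup homomorphism between the Ellis semigroups. So one gets $\pi:E(S_X(\C')/\fwapp)\to E(S_X(\C)/\fwap)$ which I claim sends a minimal left ideal to a minimal left ideal, and idempotents in the image to idempotents. Choosing compatibly a minimal left ideal $\mathcal{M}'$ and a minimal idempotent $u'\in\mathcal{M}'$ upstairs, their images $\mathcal{M}=\pi[\mathcal{M}']$ and $u=\pi(u')$ serve as the corresponding data downstairs, and the restriction of $\pi$ gives a continuous surjective group homomorphism $u'\mathcal{M}'\twoheadrightarrow u\mathcal{M}$.

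Third, to promote this surjection to a topological isomorphism in the $\tau$-topology, I would adapt the absoluteness argument of \cite{doi:10.1142/S0219061319500120}: one takes an even larger monster $\C''\succ\C'$ and shows that both comparison maps $u''\mathcal{M}''\to u'\mathcal{M}'$ and $u''\mathcal{M}''\to u\mathcal{M}$ are $\tau$-topological isomorphisms, so that the middle map is too. Injectivity is the heart of the matter and is obtained, as in \emph{loc.\ cit.}, by lifting elements of the kernel across $r$ using saturation of $\C'$ and the fact that $\fwapp$ is the \emph{finest} closed $\aut(\C')$-invariant equivalence relation with WAP quotient, which forces two preimages sharing the same image modulo $\fwap$ to already be $\fwapp$-equivalent. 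For the $\tau$-topology, I would use its description via $\circ$-closures of sets $uA$ for $A\subseteq\mathcal{M}$, noting that such closures are preserved under the equivariant continuous surjection $\tilde r$. The same argument applies word-for-word with $\fwap,\fwapp$ replaced by $\fta,\ftaa$.

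The step I expect to be the main obstacle is the injectivity / kernel triviality in the third paragraph: one must argue that an element of $u'\mathcal{M}'$ mapped to the identity of $u\mathcal{M}$ is already the identity, and this requires combining the characterization of $\fwapp$ as the finest WAP-producing closed invariant equivalence relation with a careful lifting argument via $r$. Once that is in place, the $\tau$-topological group structure transfers by the standard techniques from \cite{doi:10.1142/S0219061319500120}; the rest of the proof should be largely bookkeeping.
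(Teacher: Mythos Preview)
Your proposal has two genuine gaps that prevent it from going through as written.

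First, there is no group homomorphism $\rho:\aut(\C')\to\aut(\C)$ given by restriction: a typical $\sigma'\in\aut(\C')$ does not fix $\C$ setwise, so $\sigma'|_\C$ is not even a map $\C\to\C$. What is true is that every $\sigma\in\aut(\C)$ \emph{extends} (non-canonically) to some $\sigma'\in\aut(\C')$, but this does not give the equivariance you need to induce a semigroup homomorphism $E(S_X(\C')/\fwapp)\to E(S_X(\C)/\fwap)$ between the Ellis semigroups of the two flows with their genuinely different acting groups.

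Second, your injectivity argument asserts that ``two preimages sharing the same image modulo $\fwap$ are already $\fwapp$-equivalent'', i.e.\ $r^{-1}[\fwap]\subseteq\fwapp$. Compatibility $r[\fwapp]=\fwap$ says the opposite inclusion $\fwapp\subseteq r^{-1}[\fwap]$ (plus a surjectivity statement); the relation $r^{-1}[\fwap]$ need not even be $\aut(\C')$-invariant, as the paper itself notes when proving Theorem~\ref{fwapp and fwap}. So the ``finest'' property of $\fwapp$ cannot be invoked in this way.

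The paper bypasses both obstacles by never comparing the quotient flows directly. Instead it works upstairs with $E(S_X(\C))$ and $E(S_X(\C'))$ via the infinitary definability patterns structure: the Ellis group of $S_X(\C)$ is identified with $\aut(I_\C)$ for a model-independent ``core'' $I_\C\cong I_{\C'}$, and a section $s:S_X(\C)\to S_X(\C')$ built from strong heirs realizes $I_{\C'}$ as $s[I_\C]$. The Ellis group of each quotient flow is then a quotient of $\aut(I_\C)$ (via Fact~\ref{epimorphism semigroup to group}), and one only needs $r^{-1}[\fwap]\subseteq\fwapp$ on pairs from $s[I_\C]$ --- which \emph{does} hold, by a content argument: any $p\in s[I_\C]$ has the same content as $r(p)$, so an arbitrary $F'$-related pair over a given $F$-related pair can be pushed onto $(p,q)\in s[I_\C]^2$ by some $\eta\in E(S_X(\C'))$. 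This localized version of the implication you wanted is exactly what the core machinery buys.
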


This follows from the following more general result:

\begin{teor}\label{compatible relations teor intro}
    Assume that $\C$ and $\C'$ are $\aleph_0$-saturated and strongly-$\aleph_0$ homogeneous. If $F'$ and $F$ are compatible equivalence relations respectively on $S_X(\C')$ and $S_X(\C)$, then the Ellis group of the flow $(\aut(\C'),S_X(\C')/F')$ is topologically isomorphic to the Ellis group of the flow  $(\aut(\C),S_X(\C)/F)$.
\end{teor}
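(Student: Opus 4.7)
The strategy is to adapt the absoluteness argument for the full type-space flow from \cite{doi:10.1142/S0219061319500120} to the quotient setting. Write $Y := S_X(\C)/F$ and $Y' := S_X(\C')/F'$, and let $r: S_X(\C') \to S_X(\C)$ be the restriction map. The compatibility assumption $r[F']=F$ is exactly what is needed so that $r$ descends to a continuous surjection $\bar r : Y' \to Y$. Using that $\C'$ is strongly $\aleph_0$-homogeneous, I would fix a set-theoretic section $j: \aut(\C) \to \aut(\C')$ extending each $\sigma \in \aut(\C)$ to some $\tilde\sigma \in \aut(\C')$; then $\bar r$ is equivariant modulo $j$ in the sense that $\bar r \circ j(\sigma) = \sigma \circ \bar r$ for every $\sigma \in \aut(\C)$.

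Next I would construct a continuous semigroup epimorphism $\varphi$ from a suitable closed subsemigroup of the Ellis semigroup $E(\aut(\C'), Y')$ (which will contain a minimal left ideal of it) to $E(\aut(\C), Y)$, imitating the construction of \cite{doi:10.1142/S0219061319500120}. Concretely, for $\eta'$ in the closure of $j(\aut(\C))$ inside $(Y')^{Y'}$, the formula $\varphi(\eta')(y) := \bar r(\eta'(y'))$ is independent of the choice of $y' \in \bar r^{-1}(y)$: if $\eta' = \lim_i j(\sigma_i)$ pointwise, then $\bar r(\eta'(y')) = \lim_i \sigma_i \bar r(y') = \lim_i \sigma_i y$, which depends only on $y$. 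This is precisely where the compatibility $r[F']=F$ is used, since it guarantees that two $F'$-classes lying over the same $F$-class continue to sit over a common $F$-class after applying $\eta'$, so that $\bar r \circ \eta'$ factors through $\bar r$. I would then verify that $\varphi$ is a continuous semigroup homomorphism, pick a minimal left ideal $\mathcal{M}'$ of $E(\aut(\C'), Y')$ sitting inside that subsemigroup together with an idempotent $u' \in \mathcal{M}'$, and check that $\varphi(\mathcal{M}')$ is a minimal left ideal of $E(\aut(\C), Y)$ containing the idempotent $\varphi(u')$. This yields a continuous surjective homomorphism $\bar\varphi: u'\mathcal{M}' \to \varphi(u')\varphi(\mathcal{M}')$ of Ellis groups, with surjectivity coming from lifting nets in $\aut(\C)$ through $j$ and extracting convergent subnets in the compact space $(Y')^{Y'}$.

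The main obstacle will be the injectivity of $\bar\varphi$, and this is where I would most carefully follow the template of \cite{doi:10.1142/S0219061319500120}: given two elements of $u'\mathcal{M}'$ with the same image in the Ellis group of $Y$, one should construct, using the description of the Ellis group in terms of suitable ``strong-type-like'' data over $\C'$, witnessing automorphisms of $\C'$ showing they already act identically on $Y'$. The compatibility hypothesis is critical here: it guarantees that any difference visible in $Y'$ is already reflected in $Y$ after restriction, so two Ellis-group elements of $Y'$ indistinguishable in $Y$ must coincide. With injectivity in hand, $\bar\varphi$ is a continuous bijective homomorphism between the Ellis groups, and the topological isomorphism in the $\tau$-topology follows from the standard principle that a group homomorphism between Ellis groups induced by a factor of flows is automatically a homeomorphism for the $\tau$-topology on either side.
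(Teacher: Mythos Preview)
Your approach differs substantially from the paper's and contains a real gap. You try to build a semigroup map $\varphi$ directly between (pieces of) $E(\aut(\C'),Y')$ and $E(\aut(\C),Y)$. But your formula $\varphi(\eta')(y)=\bar r(\eta'(y'))$ is only well-defined for $\eta'$ in the closure of $j(\aut(\C))$ inside $(Y')^{Y'}$: for a general $\eta'\in E(\aut(\C'),Y')$, the approximating automorphisms $\sigma'\in\aut(\C')$ need not preserve $\C$ setwise, so $\bar r\circ\sigma'$ need not factor through $\bar r$. You then assert that this closed subsemigroup contains a minimal left ideal of the full $E(\aut(\C'),Y')$, but give no argument; in general the Ellis semigroup of a subgroup action need not contain a minimal left ideal of the Ellis semigroup of the ambient group action. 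If instead you take a minimal left ideal of the subsemigroup itself, you are computing the Ellis group of $(\aut(\C),Y')$, not of $(\aut(\C'),Y')$. Your injectivity sketch is also too loose: compatibility says $F$-equivalence lifts to $F'$-equivalence for \emph{some} lifts, not all, so ``any difference visible in $Y'$ is reflected in $Y$'' does not follow.

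The paper avoids all this by never comparing $E(Y')$ and $E(Y)$ directly. It works upstairs in $E(S_X(\C))$ and $E(S_X(\C'))$ via the infinitary definability patterns structure: the Ellis group $u\mathcal{M}$ of the full flow is identified with $\aut(I_\C)$ for the ip-minimal core $I_\C=\Image(u)$, and a strong-heir section $s:S_X(\C)\to S_X(\C')$ carries $I_\C$ isomorphically (in the patterns language) onto $s[I_\C]=\Image(u')$, giving $\aut(I_\C)\cong\aut(s[I_\C])\cong u'\mathcal{M}'$. The Ellis groups of $Y$ and $Y'$ are then obtained as quotients of $u\mathcal{M}$ and $u'\mathcal{M}'$ via the flow epimorphisms $S_X(\C)\to Y$ and $S_X(\C')\to Y'$ (Fact~\ref{epimorphism semigroup to group}), and the whole proof reduces to matching the two kernels. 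One inclusion uses $r[F']\subseteq F$; the other uses $r[F']\supseteq F$ together with a content argument: given $p\in s[I_\C]$ with $r(\sigma(p))\,F\,r(p)$, lift to $s_1\,F'\,s_2$ with the correct restrictions, observe $c(\sigma(p),p)=c(r(\sigma(p)),r(p))\subseteq c(s_1,s_2)$ since $s$ preserves content, and push $(s_1,s_2)$ to $(\sigma(p),p)$ by some $\eta\in E(S_X(\C'))$. This content step is exactly what your direct approach is missing.
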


In order to deduce Corollary \ref{intro: compatible fwap and fta}, we prove the following
\begin{teor}
    Assume $\C$ and $\C'$ are $(\aleph_0+\lambda)^+$-saturated and strongly $(\aleph_0+\lambda)^+$-homogeneous. Then $\fwapp$ is compatible with $\fwap$ and $\ftaa$ is compatible with $\fta$ (where $\fwapp$ and $\ftaa$ are the counterparts of $\fwap$ and $\fta$ computed for $\C')$.
\end{teor}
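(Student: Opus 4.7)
The plan is to prove $r[\fwapp] = \fwap$ by mutual inclusion, with the identical argument structure yielding $r[\ftaa] = \fta$.

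\textbf{Setup.} By strong $(\aleph_0+\lambda)^+$-homogeneity of $\C'$, each $\sigma\in\aut(\C)$ lifts to some $\tilde\sigma\in\aut(\C')$, and $r(\tilde\sigma_*q)=\sigma_*(r(q))$ independently of the chosen lift, so $r:S_X(\C')\to S_X(\C)$ is an $\aut(\C)$-equivariant continuous surjection. Under the saturation assumptions, the closed $\aut(\C)$-invariant equivalence relation $\fwap$ corresponds, via pullback along the projection $S_{X\times X}(\C)\to S_X(\C)\times S_X(\C)$, to a partial $\emptyset$-type $\Sigma_{\fwap}(x,y)$ with the property that $\Sigma_{\fwap}(\alpha,\beta)$ holds (for tuples in any elementary extension) iff $(\tp(\alpha/\C),\tp(\beta/\C))\in\fwap$; analogously one obtains $\Sigma_{\fwapp}$. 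The crucial point is that although $\fwap$ is only $\aut(\C)$-invariant, its type-level description is over $\emptyset$, hence preserved under any ambient automorphism.

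\textbf{The inclusion $\fwap\subseteq r[\fwapp]$.} I would show that $r[\fwapp]$ is a closed $\aut(\C)$-invariant equivalence relation on $S_X(\C)$ whose quotient flow is WAP, and conclude by minimality of $\fwap$. Closedness follows from compactness, and $\aut(\C)$-invariance from the equivariance of $r$ combined with $\aut(\C')$-invariance of $\fwapp$. Transitivity is the delicate step: given $(p_1,p_2),(p_3,p_4)\in\fwapp$ with $r(p_2)=r(p_3)$, I would realize the $p_i$ as $a_i$ in a larger monster $\C''$, use strong homogeneity of $\C''$ over $\C$ to find $\tau\in\aut(\C''/\C)$ with $\tau(a_2)=a_3$, and then deduce $\Sigma_{\fwapp}(\tau(a_1),a_3)$ (by $\emptyset$-type-definability), so that transitivity of $\fwapp$ yields $\Sigma_{\fwapp}(\tau(a_1),a_4)$; since $\tau$ fixes $\C$ pointwise, $r(\tp(\tau(a_1)/\C'))=r(p_1)$, closing the chain. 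For the WAP property, $r$ descends to a continuous $\aut(\C)$-equivariant surjection $S_X(\C')/\fwapp\to S_X(\C)/r[\fwapp]$, exhibiting the target as an $\aut(\C)$-factor of the WAP $\aut(\C')$-flow $S_X(\C')/\fwapp$; as WAP is preserved both under restricting the acting group and under taking factors, the quotient is WAP.

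\textbf{The inclusion $r[\fwapp]\subseteq\fwap$.} I would work with $r^{-1}[\fwap]$, which is closed and automatically an equivalence relation. Its $\aut(\C')$-invariance uses $\Sigma_{\fwap}$ directly: for $(p,q)\in r^{-1}[\fwap]$ with realizations $a\models p,\,b\models q$ in a larger monster, $\Sigma_{\fwap}(a,b)$ holds, and for any $\sigma'\in\aut(\C')$ extended to $\sigma''$ on the larger monster, $\Sigma_{\fwap}(\sigma''(a),\sigma''(b))$ also holds since $\Sigma_{\fwap}$ is over $\emptyset$, giving $(r(\sigma'_*p),r(\sigma'_*q))\in\fwap$. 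The induced map $\bar r:S_X(\C')/r^{-1}[\fwap]\to S_X(\C)/\fwap$ is a homeomorphism; since $r(\sigma'_*p)$ is always $\emptyset$-type-equivalent to $r(p)$, the transported $\aut(\C')$-action has the same orbits as the $\aut(\C)$-action, and the WAP structure lifts to the enlarged action by the absoluteness of the $\emptyset$-type description of $\fwap$. Minimality of $\fwapp$ gives $\fwapp\subseteq r^{-1}[\fwap]$, and surjectivity of $r$ yields $r[\fwapp]\subseteq\fwap$. The argument for $r[\ftaa]=\fta$ proceeds identically, using that tameness is likewise preserved under restricting the acting group and under taking factors, with its own analogous partial-type description.

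\textbf{Main obstacle.} The hardest point is the inclusion $r[\fwapp]\subseteq\fwap$ (and its tame analogue): verifying that the quotient of $S_X(\C')$ by the pullback relation genuinely carries a WAP $\aut(\C')$-flow structure, even though the acting group is potentially strictly larger than $\aut(\C)$. This is precisely where the large saturation hypotheses on $\C$ and $\C'$ are essential, ensuring that the $\emptyset$-type description of $\fwap$ is robust enough that the enlarged action respects the same WAP (respectively tame) structure as the $\aut(\C)$-action on $S_X(\C)/\fwap$.
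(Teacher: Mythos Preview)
Your setup contains a fatal error: you assert that the closed $\aut(\C)$-invariant relation $\fwap$ on $S_X(\C)$ pulls back to a partial \emph{$\emptyset$-type} $\Sigma_{\fwap}(x,y)$ on $S_{X\times X}(\C)$. Closed $\aut(\C)$-invariant subsets of $S_n(\C)$ are not in general $\emptyset$-type-definable. For a simple counterexample, in any theory with infinitely many realized $1$-types, the set of non-realized types in $S_1(\C)$ is closed (its complement $\bigcup_{c\in\C}[x=c]$ is open) and $\aut(\C)$-invariant, yet every non-realized type has the same restriction to $\emptyset$ as some realized one, so no $\emptyset$-type can cut it out. In fact the paper explicitly notes that $r^{-1}[\fwap]$ ``might not be $\aut(\C')$-invariant'', which is exactly what your $\Sigma_{\fwap}$ argument would have proved if it were sound. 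Both of your key moves rest on this mistaken premise: the transitivity of $r[\fwapp]$ (via $\Sigma_{\fwapp}$) and the $\aut(\C')$-invariance of $r^{-1}[\fwap]$ (via $\Sigma_{\fwap}$). The subsequent claim that the transported $\aut(\C')$-action on $S_X(\C')/r^{-1}[\fwap]$ ``has the same orbits as the $\aut(\C)$-action'' is likewise unsupported once you drop the $\emptyset$-type description.

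The paper avoids any type-definability claim for $\fwap$. For transitivity of $r[\fwapp]$, it chooses an enumeration of $S_X(\C)$, takes a strong heir extension to $\C'$, and uses the content inequality $c(p_i',p_j')=c(p_i,p_j)\subseteq c(\alpha,\beta)$ together with Fact~\ref{content and ellis semigroup} to push pairs around by elements of $E(S_X(\C'))$. For the hard inclusion $r[\fwapp]\subseteq\fwap$, since $r^{-1}[\fwap]$ need not be $\aut(\C')$-invariant, the paper passes to $F:=\bigcap_{\sigma\in\aut(\C')}\sigma(r^{-1}[\fwap])$ and shows directly that $(\aut(\C'),S_X(\C')/F)$ is WAP by contradiction: a non-WAP $f$ yields, via the double-limit criterion, countable witnessing data which (by $(\aleph_0+\lambda)^+$-saturation) can be moved into $\C$; one then builds a continuous $h$ on the set of $N$-invariant types in $S_X(\C)$, checks it respects $\fwap$ via a content argument, and Tietze-extends to contradict WAPness of $S_X(\C)/\fwap$. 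Your correct intuition that this direction is the heart of the matter is right, but the actual mechanism is this countable-reduction-plus-content argument, not an $\emptyset$-type transfer.
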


%The notion of a dependent theory was first introduced by Shelah in \cite{Shelah1982-SHECTA-5}.
%MENTION SOME APPLICATIONS. 
%In later work \cite{Shelah2005StronglyDT, Shelah2007DefinableGF}, Shelah introduced the more general notion of $n$-dependence. This notion was studied in depth in \cite{Chernikov2014OnN}, where they give a characterisation of $n$-dependent theories in terms of the collapse of indiscernible sequences (See \cite[Theorem 5.4]{Chernikov2014OnN}). In the continuous context, the definition $n$-dependent formulae was introduced in \cite{Chernikov2020HypergraphRA} using a generalisation of the $VC_n$ dimension. Section 10 of the aforementioned paper is dedicated to several operations which preserve $n$-dependence. In the hyperdefinable context, hyperdefinable sets with NIP were studied extensively in \cite{10.1215/00294527-2022-0023}. 

%It is folklore that hyperimaginaries can be treated as imaginaries in continuous logic via a definable pseudometric (see \cite{2010,conant2021separation} and \cite[Chapter 3]{hanson2020thesis} in the language of continuous logic and \cite{10.2178/jsl/1122038916} in the language of CATs). However, for our purposes, it is more convenient to look at the connection with continuous logic in a different way i.e. the same approach as \cite{10.1215/00294527-2022-0023}.

\section*{Structure of the thesis}

Chapter \ref{Chapter 2} contains the preliminaries. It is divided in the following parts:
\begin{itemize}
    \item Model theory.
    \item Continuous model theory.
    \item Hyperdefinable sets.
    \item Ramsey theory.
    \item Topological dynamics.
\end{itemize}

In Chapter \ref{Chapter 3} (based mostly on \cite{10.1215/00294527-2022-0023} and slightly in \cite{KRUPIŃSKI_PORTILLO_2023}), we study stability of hyperdefinable sets and stable quotients of type-definable groups. In Section \ref{section: characterizations of stability} we give several characterizations for stability and NIP of hyperdefinable sets in various terms, involving continuous logic. Section \ref{section: distal theories} confirms the conjecture stated by Haskell and Pillay in \cite{MR3796277} that in a distal theory $T$ the groups $G^{\textrm{st}}$ and $G^{00}$ coincide. In the same paper, the authors stated as a problem to find a definable group $G$ where $G^{00}=G$ but $G^{\textrm{st}}$ is not an intersection of definable groups. Such an example is presented in Section \ref{section: main example}. However, it is not clear to us how to find an example of a torsion (equivalently, finite exponent) group $G$ with those properties, or just satisfying $G^{00}\neq G^{\textrm{st}}\neq G^{\textrm{st},0}$. In Section \ref{section 5}, we make some observations on this problem, 
%yielding a strategy of a potential construction.
describing what should be constructed in order to find such an example. Dropping the requirement that $G$ is a torsion group, we give a  large class of examples where $G^0\neq G^{00}\neq G^{\textrm{st}}\neq G^{\textrm{st},0}$; this does not include an example of finite exponent, as $G$ being of  finite exponent implies that  $G^0 =G^{00}$ by general topological reasons (i.e. compact torsion groups are profinite).

Chapter \ref{Chapter 4} (based fully on \cite{KRUPIŃSKI_PORTILLO_2023}) is dedicated to study the existence of finest relatively type-definable equivalence relations on invariant types with stable quotients. In Section \ref{section: basics}, we prove several basic results concerning the existence of finest relatively type-definable equivalence relations on invariant types with stable quotients, some of which are used in Section \ref{section: main theorem}, and we discuss the transfer of the existence of finest relatively type-definable equivalence relations with stable quotients between models. Section \ref{section: main theorem} contains a proof of Theorem \ref{main theorem intro}, the main theorem of the thesis. In the last section of Chapter \ref{Chapter 4}, we compute $E^{\textrm{st}}$ in two concrete examples which are expansions of local orders. In fact, in these examples, we give full classifications of all relatively type-definable over a small subset of $\C$ equivalence relations on $p(\C')$ for a suitable invariant type $p \in S(\C)$.

Chapter \ref{Chapter 5} is dedicated to the study of generalized indiscernibles in continuous logic and its applications to $n$-dependence. In Section \ref{section: modelling property} we prove that the first-order structure $\I$ has the continuous modeling property if and only if $\age(\I)$ has the embedding Ramsey property.  In Section \ref{section:n-dep}, we use this result to give a characterization of $n$-dependence in continuous logic through the collapse of indiscernible sequences analogous to \cite[Theorem 5.4]{Chernikov2014OnN}. Finally, in Section \ref{section: hyperdef n-dep}, we define and characterize $n$-dependent hyperdefinable sets.

In Chapter \ref{Chapter 6} we use topological dynamics methods to study the maximal WAP and tame quotients of the flow $(\aut(\C),S_X(\C))$, where $\C$ is a monster model of a theory $T$ and $X$ is an $\emptyset$-type-definable set. Section \ref{section: prelim top dyn} contains the necessary known results about topological dynamics needed for the chapter. In Section \ref{section: compatible quotients} we prove Theorem \ref{compatible relations teor intro}, the main result of the chapter. In Section \ref{section: nip stable tame wap} we study the finest closed $\aut(\C)$-invariant equivalence relation with WAP and tame quotients, as well as the finest $\emptyset$-type-definable equivalence relations on $X$ with stable and NIP quotient and show that they fall under the hypothesis of Theorem \ref{compatible relations teor intro}. As a conclusion we get Corollary \ref{intro: compatible fwap and fta}. In the last section of the chapter we compare finest closed $\aut(\C)$-invariant equivalence relations with WAP quotient with the one induced by the finest $\emptyset$-type-definable equivalence relations on $X$ with stable quotient and show that the former is always strictly finer than the latter for any unstable $\emptyset$-type-definable set $X$. Similarly for the tame and NIP case.

The appendix contains proofs that stability and NIP are preserved under taking (possibly infinite) Cartesian products.

%Namely, let $\fwap$ be the finest  closed $\aut(\C)$-invariant equivalent relation on $S_X(\C)$ such that the flow $( \aut(\C), S_X(\C)/\fwap )$ is WAP, and let $\fta\subset S_X(\C)\times S_X(\C)$ be the finest closed $\aut(\C)$-invariant equivalent relation on $S_X(\C)$ such that the flow $( \aut(\C), S_X(\C)/\fta )$ is tame. We show that the Ellis groups of $( \aut(\C), S_X(\C)/\fwap )$ and  $( \aut(\C), S_X(\C)/\fta )$ do not depend on the choice of the monster model $\C$.

\chapter{Background} \label{Chapter 2}
\section{Model theory}
In this section, we recall some model-theoretic basic facts, definitions and conventions. This is not a fully comprehensive introduction, for more in-depth explanations, see e.g. \cite{Tent_Ziegler_2012, Hodges_1993}.
%CITE [Hod93], [Mar02], [TZ12] for the elementary and [CLPZ01], [GN08], [KP97], [KPS13], [LP01], [Zie02] for the more advanced topics.

Let $\mL$ be a first-order language and $T$ be a complete first-order theory. By an abuse of notation, we write $\varphi\in \mL$ to indicate that $\varphi$ is an $\mL$-formula.

\begin{defin}
    A \emph{partial $\mL$-type} of $T$ is a consistent relative to $T$ collection of $\mL$-formulas. We say that an $\mL$-type $\Sigma$ is complete if it is maximal with respect to the inclusion.
\end{defin}

\begin{defin}
\begin{itemize}
    \item We say that an $\mL$-formula \emph{$\varphi(x)$ is satisfied by the tuple $a\in\mathcal{M}$} if $\mathcal{M}\models \varphi(a)$, where $\mathcal{M}$ is a model of $T$. We say $\varphi$ is \emph{satisfiable} if there exists some tuple $a$ from some model $\mathcal{M}$ of $T$ satisfying $\varphi$.
       \item  We say that an $\mL$-type \emph{$\Sigma(x)$ is satisfied by the tuple $a\in\mathcal{M}$} if all formulas in $\Sigma$ are satisfied by $a$, where $\mathcal{M}$ is a model of $T$. We say $\Sigma$ is \emph{satisfiable} if there exists some tuple $a$  from some model $\mathcal{M}$ of $T$ satisfying $\Sigma$.
       \item  We say that an $\mL$-type $\Sigma(x)$ is finitely satisfied if for every finite $\Sigma_0\subseteq \Sigma$ there exists some tuple $a$ from some model $\mathcal{M}$ of $T$ satisfying $\Sigma_0$.
\end{itemize}
\end{defin}

\begin{defin}
    Let $\mathcal{M}\models T$ be a model, $A\subseteq \mathcal{M}$ a set and $x$ a possibly infinite tuple of variables. We denote by $S_x(A)$ the space of complete types over $A$ in variables $x$.
\end{defin}
We can endow the space of types with a topology where the basic clopen sets are of the form $$ [\varphi]:=\{ p: p\in S_x(A), \varphi\in p \}$$ for $\varphi$ an $\mL(A)$-formula. This topology is Hausdorff and zero-dimensional. By the next theorem, it is also compact.
\begin{teor}[Compactness theorem]
     Let $\Sigma$ be a partial type. Then $\Sigma$ is satisfiable if and only if it is finitely satisfiable.
\end{teor}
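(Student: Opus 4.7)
The forward direction is immediate: any tuple $a$ in a model $\mathcal{M}\models T$ which satisfies $\Sigma$ satisfies every finite subset of $\Sigma$. So the plan concentrates on the converse, and the first reduction is to get rid of the free variables. I would expand $\mL$ to $\mL^+:=\mL\cup\{c_i:i\in I\}$ by adding a fresh constant for each free variable in $x=(x_i)_{i\in I}$ and let $\Sigma^+:=\{\varphi(\bar c):\varphi(x)\in\Sigma\}$, viewed as a set of $\mL^+$-sentences. Then finite satisfiability of $\Sigma$ translates verbatim into finite satisfiability of the $\mL^+$-theory $T\cup\Sigma^+$, and any $\mL^+$-model of $T\cup\Sigma^+$ produces, via the interpretations of the $c_i$, an $\mL$-model of $T$ containing a tuple that realizes $\Sigma$. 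Thus it suffices to prove the compactness theorem for arbitrary finitely satisfiable sets of sentences.

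For a finitely satisfiable set of $\mL$-sentences $T_0$, my plan is the Henkin construction. First, build an increasing chain $\mL=\mL_0\subseteq\mL_1\subseteq\cdots$ of languages where at each step one adjoins a fresh constant $c_{\varphi}$ for every $\mL_n$-formula $\varphi(x)$ in one free variable, and let $\mL^*=\bigcup_n \mL_n$. Enlarge $T_0$ to $T_1$ by throwing in the Henkin axioms $\exists x\,\varphi(x)\to\varphi(c_\varphi)$ for all such $\varphi$; a direct check shows that finite satisfiability is preserved at each level, since any finitely many new axioms mention only finitely many of the fresh constants, which can be interpreted one by one in an existing model of the finite fragment of $T_0$ that is relevant. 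Second, apply Zorn's lemma to extend $T_1$ to a maximal finitely satisfiable $\mL^*$-theory $T^*$; maximality forces $T^*$ to be complete and deductively closed with respect to propositional combinations. Third, define the canonical term model $\mathcal{M}$ with universe the set of closed $\mL^*$-terms modulo the congruence $t\sim s \iff (t=s)\in T^*$ and with relations and functions read off from $T^*$.

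The essential step is then to prove by induction on the complexity of an $\mL^*$-sentence $\psi$ that $\mathcal{M}\models\psi$ iff $\psi\in T^*$. Atomic formulas and Boolean connectives are handled by the definition of $\mathcal{M}$ and by maximal completeness of $T^*$. The only genuinely nontrivial case is the existential quantifier: if $\exists x\,\varphi(x)\in T^*$, the corresponding Henkin axiom forces $\varphi(c_\varphi)\in T^*$, which by induction hypothesis yields $\mathcal{M}\models\varphi(c_\varphi)$, hence $\mathcal{M}\models\exists x\,\varphi(x)$. Restricting to $\mL$, one obtains an $\mL$-model of $T_0$.

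I expect the main obstacle, and the step most susceptible to a slip, to be verifying that finite satisfiability is preserved after adjoining all Henkin axioms — one must be careful that the process of naming witnesses does not accidentally force inconsistency among the fresh constants — together with the Zorn's lemma argument producing the maximal finitely satisfiable extension. Once these are in hand, the induction on complexity that realizes $T^*$ in the term model is routine. As an alternative route, and perhaps cleaner if one is willing to assume Łoś's theorem, one may instead take for each finite $\Sigma_0\subseteq\Sigma$ a model $\mathcal{M}_{\Sigma_0}$ with a realizing tuple $a_{\Sigma_0}$, pick an ultrafilter $\mathcal{U}$ on the set of finite subsets of $\Sigma$ extending the filter generated by the sets $\{\Sigma_0:\varphi\in\Sigma_0\}$ for $\varphi\in\Sigma$, and read off directly that $[a_{\Sigma_0}]_{\mathcal{U}}$ realizes $\Sigma$ in $\prod_{\Sigma_0}\mathcal{M}_{\Sigma_0}/\mathcal{U}$.
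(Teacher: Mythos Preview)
Your proposal is a correct and standard proof of the compactness theorem via the Henkin construction, with a correct alternative via ultraproducts. The paper, however, does not prove this statement at all: it is listed in the background chapter as a well-known fact and no argument is given. So there is nothing to compare against; your write-up simply supplies a proof where the paper (appropriately) assumes the result.
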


Fix a strong limit cardinal $\kappa$ larger than $\lvert T \rvert$.

\begin{defin}
    A \emph{monster model} is a model $\C\models T$ which is 
    \begin{itemize}
        \item $\kappa$-saturated: Every type over an arbitrary set of parameters from $\C$ of size less than $\kappa$ is realized in $\C$
        \item strongly-$\kappa$-homogeneous: Every elementary map between subsets of $\C$ of cardinality less than $\kappa$ extends to an automorphism of $\C$.
    \end{itemize}
    We call $\kappa$ the \emph{degree of saturation} of $\C$.
    \end{defin}

\begin{fact}
    Monster models exist for every $\kappa$ as above.
\end{fact}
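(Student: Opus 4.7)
The plan is to produce $\C$ in two stages, first building a $\kappa$-saturated elementary extension of a model of $T$, then upgrading to one that is also strongly $\kappa$-homogeneous. For $\kappa$-saturation the basic extension lemma is a direct compactness argument: any $M \models T$ admits an $N \succ M$ realizing every complete type over every parameter set in $M$ of size $<\kappa$, since the disjoint union of all such types (each written in its own fresh copy of the variable) is finitely satisfiable in $M$, hence consistent with the elementary diagram of $M$, and therefore realized in some $N \succ M$. Iterating this along an ordinal $\lambda$ of cofinality at least $\kappa$ and taking unions at limit stages yields a $\kappa$-saturated model, because any $A \subseteq M_\lambda$ with $|A|<\kappa$ sits in some $M_\alpha$ by the cofinality assumption, so every type over $A$ is already realized in $M_{\alpha+1}$.

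For strong $\kappa$-homogeneity the clean route is through \emph{special models}: a model of cardinality $\mu$ is special if it is the union of a continuous elementary chain $(N_\nu)_{\nu<\mu}$ with each $N_\nu$ being $\nu^+$-saturated. The existence of special models of appropriate cardinalities is a standard consequence of iterating the basic saturation extension above while controlling cardinalities via the strong-limit hypothesis (which ensures $2^{<\mu}=\mu$ at the relevant step). A classical theorem then states that every special model of cardinality $\mu$ is $\mathrm{cf}(\mu)$-saturated and strongly $\mathrm{cf}(\mu)$-homogeneous, the homogeneity part coming from a back-and-forth that exploits the ascending saturation degrees along the chain. Choosing $\mu$ to be a sufficiently large strong limit with $\mathrm{cf}(\mu) \ge \kappa$ then gives the desired $\C$.

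The main obstacle I anticipate is strong $\kappa$-homogeneity itself, since a naive back-and-forth inside a single $\kappa$-saturated model breaks down: once the partial elementary map being extended grows beyond size $\kappa$, the next extension step would require saturation strictly beyond $\kappa$, which the model itself does not provide. The special-model stratification circumvents this by arranging that each step of the back-and-forth takes place inside a submodel whose saturation degree exceeds the current partial map's domain size. The strong-limit hypothesis on $\kappa$ (and on the chosen $\mu$) is precisely what allows the iterated compactness construction to fit inside a bounded target cardinality and ensures $2^{<\kappa}=\kappa$, bounding both the number of types that need to be realized and the number of partial elementary maps that must be extended at each stage.
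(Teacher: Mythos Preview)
The paper does not prove this statement; it is recorded as a standard fact from model theory and left without argument. Your sketch via special models is one of the classical routes and is essentially correct. A special model of cardinality $\mu$ (with $\mu$ a strong limit $>|T|$) is indeed both $\mathrm{cf}(\mu)$-saturated and strongly $\mathrm{cf}(\mu)$-homogeneous: given a partial elementary map $f$ with domain of size $<\mathrm{cf}(\mu)$, both $(M,\bar a)$ and $(M,f(\bar a))$ remain special of cardinality $\mu$ in the language expanded by $<\mu$ constants, and the uniqueness theorem for special models yields the required automorphism. Choosing $\mu=\beth_{\kappa^+}$ (or $\beth_\kappa$ when $\kappa$ is regular) gives a strong limit with $\mathrm{cf}(\mu)\ge\kappa$, so the resulting special model is a monster model in the paper's sense.

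One small remark on presentation: your ``two stages'' framing is slightly misleading, since the special-model construction already delivers saturation and strong homogeneity simultaneously; the preliminary paragraph on iterating compactness for $\kappa$-saturation alone is not really needed as a separate step. Also, your definition of special indexes the chain by ordinals $\nu<\mu$ rather than cardinals; the standard formulation indexes by infinite cardinals $\lambda<\mu$ with $N_\lambda$ being $\lambda^+$-saturated, which is what makes the cardinality bookkeeping (via $2^\lambda<\mu$) clean.
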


We will consider $\kappa$ to be bigger than the cardinality of all the objects we are interested in. We call an object \emph{small} or \emph{$\C$-small} if its cardinality is smaller than the degree of saturation of $\C$. By $\kappa$-saturation, we may assume that all small models that appear are elementary substructures of the monster model in which we are working. The next fact remarks the utility of working with monster models.

\begin{notation}
Let $A\subseteq \C$ be a small set. We denote by $\equiv_A$ the equivalence relation of having the same type over $A$.
\end{notation}

\begin{fact}
    If $\C$ is a monster model, then for every small $A\subseteq \C$ and any small tuples $a,b$ of elements of $\C$ we have $\tp(a/A)=\tp(b/A)$ if and only if there is $\sigma \in \aut(\C/A)$ such that $\sigma(a)=b$.
\end{fact}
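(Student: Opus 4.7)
The plan is to prove the two directions separately, with the non-trivial one relying on strong $\kappa$-homogeneity. Both directions are essentially unpacking the definitions, so I expect no real obstacle; the point is just to verify that the cardinality bounds line up so that strong homogeneity is applicable.

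For the easy direction, I would fix $\sigma \in \aut(\C/A)$ with $\sigma(a)=b$, and argue that for any $\mL(A)$-formula $\varphi(x)$ we have $\C \models \varphi(a) \iff \C \models \varphi(b)$. This is immediate because automorphisms preserve the truth of $\mL$-formulas and $\sigma$ fixes every parameter from $A$ by hypothesis; hence $\tp(a/A)=\tp(b/A)$.

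For the converse, I would assume $\tp(a/A)=\tp(b/A)$ and consider the map $f\colon A\cup a \to A\cup b$ defined as the identity on $A$ and sending the components of $a$ to the corresponding components of $b$. By the definition of type-equality, $f$ is an elementary map between small subsets of $\C$: indeed, for every $\mL$-formula $\varphi(x,y)$ and every tuple $\bar c$ from $A$, $\C\models \varphi(a,\bar c)$ iff $\C\models \varphi(b,\bar c)$, which is precisely the statement that $\tp(a/A)=\tp(b/A)$.

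The final step is to check that the domain and range of $f$ are small, so that strong $\kappa$-homogeneity applies. Since $A$ is small and $a,b$ are small tuples (both of cardinality $<\kappa$), the sets $A\cup a$ and $A\cup b$ have cardinality $<\kappa$. Thus $f$ extends to some $\sigma\in\aut(\C)$, and by construction $\sigma$ fixes $A$ pointwise and satisfies $\sigma(a)=b$, as required. The only subtlety worth flagging is that when $a$ and $b$ are infinite tuples one should think of them as enumerated in a compatible way; the elementarity of $f$ is then the standard characterization of $\tp(a/A)=\tp(b/A)$ for tuples of arbitrary length, and the argument goes through unchanged.
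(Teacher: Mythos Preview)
Your proof is correct and is the standard argument; the paper does not actually prove this statement, treating it as a well-known background fact, so there is nothing to compare against.
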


In light of the previous fact, we can also interpret $\equiv_A$ as the equivalence relation of lying in the same $\aut(\C/A)$ orbit.

We define three families of sets which we are going to use extensively.

\begin{defin}
    Let $A\subseteq \C$ be small and let $X$ be a subset of a fixed product of sorts.
    \begin{itemize}
        \item We say that $X$ is \emph{$A$-definable} if there is $\varphi\in \mL(A)$ such that $X=\varphi(\C)$. If we do not specify a set of parameters $A$ and simply say $X$ is definable, we mean that it is $A$-definable for some small $A\subseteq \C$.
        \item We say that $X$ is \emph{$A$-type-definable} if there is a partial type $\Sigma\subseteq \mL(A)$ such that $X=\Sigma(\C)$. If we do not specify a set of parameters $A$ and simply say $X$ is type-definable, we mean that it is $A$-type-definable for some small $A\subseteq \C$.
        \item We say that $X$ is \emph{$A$-invariant} if it is fixed setwise under the action of $\aut(\C/A)$. If we simply say invariant, we mean that the set $X$ is fixed setwise under the action of $\aut(\C)$.
    \end{itemize}
\end{defin}
\begin{remark}
    Let $A\subseteq \C$ be small and let $X$ be a subset of a fixed product of sorts. 
    \begin{itemize}
        \item If a set $X$ is definable and $A$-invariant then it is $A$-definable.
        \item If a set $X$ is type-definable and $A$-invariant then it is $A$-type-definable.
    \end{itemize}
\end{remark}

\begin{defin}Let $p\in S(\mathcal{M})$ where $\mathcal{M}\models T$, and $q\in S(B)$ an extension of $p$ to $B\supseteq \mathcal{M}$.
\begin{itemize}
    \item $q$ is a \emph{coheir} of $p$ if it is finitely satisfiable in $\mathcal{M}$.
    \item $q$ is a \emph{heir} of $p$ if for every $\varphi(x,y)\in \mL(\mathcal{M)}$ such that $\varphi(x,b)\in q$ for some $b\in B$ there is some $m\in \mathcal{M}$ with $\varphi(x,m)\in p$.
\end{itemize}
\end{defin}

\subsection{Indiscernible sequences}
\begin{defin}
    Let $A\subseteq \C$ be small and $(I,\leq)$ be a totally ordered set.
    \begin{itemize}
        \item We say that a sequence $(a_i)_{i\in I}$ of tuples of elements from $\C$ is \emph{(order) indiscernible} over $A$ if for each $n<\omega$ and for all $i_1<\cdots<i_n$, $j_1<\cdots< j_n$ increasing subsequences of $I$ we have $$ a_{i_1}\dots a_{i_n} \equiv_A a_{j_1}\dots a_{j_n} .$$
        \item  We say that a sequence $(a_i)_{i\in I}$  of tuples of elements from $\C$ is \emph{totally indiscernible} over $A$ if for each $n<\omega$ and for all $i_1,\cdots,i_n$, $j_1,\cdots, j_n$ arbitrary elements of $I$ we have $$ a_{i_1}\dots a_{i_n} \equiv_A a_{j_1}\dots a_{j_n} .$$
    \end{itemize}
    In both cases, if we do not specify the set $A$ we mean (totally) indiscernible over $\emptyset$.
\end{defin}

\begin{defin}  Let $A\subseteq \C$ be small, $(I,\leq)$ be a totally ordered set and $\mathbf{I}=(a_i)_{i\in I}$ a sequence of tuples from $\C$. The \emph{Ehrenfeucht-Mostowski type of $\mathbf{I}$ over $A$}, denoted by $\EM(\mathbf{I}/A)$, is the set of all formulas $\varphi(x_1,\dots,x_n)$ such that $\C\models \varphi(a_{i_1},\dots,a_{i_n})$ for all $i_1<\cdots<i_n\in I$, where $n$ ranges over $\omega$.
\end{defin}

The following result shows that indiscernible sequences exist. It follows from the fact that finite linear orders form a Ramsey class (i.e. classical Ramsey theorem) and the compactness theorem (see \cite[Lemma 5.1.3]{Tent_Ziegler_2012} for a detailed proof).

\begin{fact}
    Let $(I,\leq)$ and $(J,\leq)$ be two infinite linear orders and $ (a_i )_{i\in I}$ a sequence of elements of $\C$. Then there an indiscernible sequence $(b_j )_{j\in J}$ realizing the Ehrenfeucht–Mostowski type of $(a_i )_{i\in I}$.
\end{fact}

There are situations when a stronger version is needed. We will refer to the following result as ``extracting indiscernibles''. A proof can be found in \cite[Lemma 1.2]{doi:10.1142/S0219061303000297}.

\begin{fact}\label{extracting indiscernibles}
    Let $A\subseteq \C$ be small. Fix a cardinal $\lambda <\kappa$ and let $\nu \geq \lvert S_{\lambda}(A) \rvert.$ Set $\mu=\beth_{\nu^+}$. Then for any sequence $(a_i)_{i\in \mu}$ of tuples from $\C$ of length $\lambda$ there is an $A$-indiscernible sequence $(b_i)_{i\in \omega}$ such that for all $n<\omega$ there are $i_0<\dots<i_{n-1}\in \mu$ for which $$ \tp(b_0,\dots,b_{n-1}/A)=\tp (a_{i_0},\dots,a_{i_{n-1}}/A ).$$
\end{fact}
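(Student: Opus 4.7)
The plan is to combine the Erd\H os--Rado partition theorem (applied iteratively) with a standard compactness/saturation argument. First I would bound the number of colors: for each $n<\omega$, the coloring $c_n \colon [\mu]^n \to S_n^\lambda(A)$ sending $\{i_0<\cdots<i_{n-1}\}$ to $\tp(a_{i_0},\ldots,a_{i_{n-1}}/A)$ has at most $\nu$ colors, because an $n$-tuple of $\lambda$-tuples is a tuple of length $\lambda \cdot n = \lambda$ (for infinite $\lambda$), so $|S_\lambda(A)|\leq \nu$ absorbs the bound; the case of finite $\lambda$ is subsumed since $\nu \geq \aleph_0$.

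\textbf{Iterated Erd\H os--Rado.} Erd\H os--Rado gives $\beth_n(\nu)^+ \to (\nu^+)^{n+1}_\nu$ for every finite $n$. Since $\mu = \beth_{\nu^+}$ is a strong limit cardinal lying strictly above every $\beth_n(\nu)^+$ (as $n<\omega<\nu^+$), I would build a decreasing chain $\mu = H_0 \supseteq H_1 \supseteq H_2 \supseteq \cdots$ of subsets of $\mu$ with the property that $H_n$ is simultaneously $c_k$-monochromatic for all $k \leq n$, and $|H_n|$ remains large enough to feed the next application. At each stage one applies Erd\H os--Rado to $c_{n+1}$ restricted to $H_n$, taking the output high enough in the $\beth$-hierarchy for the next step; since any finite number of $\beth$-iterations stays below $\nu^+$, the cofinality of $\mu$ accommodates the whole process. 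Denote by $p_n \in S_n^\lambda(A)$ the common $c_n$-color on $H_n$.

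\textbf{Compactness.} I would then consider the partial type $\Phi((x_i)_{i<\omega})$ over $A$ which expresses (i) $A$-indiscernibility of $(x_i)_{i<\omega}$ (the usual schema of biconditionals $\varphi(x_{i_0},\ldots,x_{i_{n-1}}) \leftrightarrow \varphi(x_{j_0},\ldots,x_{j_{n-1}})$ for all formulas $\varphi\in\mL(A)$ and all increasing tuples $i_0<\cdots<i_{n-1}$, $j_0<\cdots<j_{n-1}$ in $\omega$), together with (ii) the clauses $\varphi(x_0,\ldots,x_{n-1})$ for each $n$ and each $\varphi \in p_n$. Finite satisfiability: any finite fragment involves only formulas of arities $\leq N$ and finitely many indices from $\omega$, say drawn from $\{0,\ldots,M-1\}$ with $M\geq N$; picking any increasing $M$-tuple from $H_N$ and substituting its entries for the variables realizes the fragment, because all sub-tuples of arities $k\leq N$ lie in $H_N\subseteq H_k$ and so realize $p_k$, which automatically gives both indiscernibility across the finite set of formulas and the type-defining clauses. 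Since $|\Phi|\leq |T|+|A| < \kappa$, the $\kappa$-saturation of $\C$ yields a realization $(b_i)_{i<\omega}$ inside $\C$.

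\textbf{Conclusion and main obstacle.} By construction $(b_i)_{i<\omega}$ is $A$-indiscernible, and for each $n$ we have $\tp(b_0,\ldots,b_{n-1}/A)=p_n=\tp(a_{i_0},\ldots,a_{i_{n-1}}/A)$ for any choice of $i_0<\cdots<i_{n-1}$ in $H_n$, giving the required matching. The main technical point is the cardinal-arithmetic bookkeeping in the iterated Erd\H os--Rado step: one must arrange the cardinalities of the $H_n$ so the process proceeds through all finite stages without exhausting $\mu$, which is exactly where the hypothesis $\mu = \beth_{\nu^+}$ is used in an essential way (exploiting that $\nu^+$ has uncountable cofinality and absorbs any finite tower of $\beth$'s above $\nu$). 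The compactness step itself is routine once the types $p_n$ are in place.
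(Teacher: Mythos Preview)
The paper does not prove this statement; it records it as a fact with a citation to the literature. Your outline via iterated Erd\H{o}s--Rado and compactness is the standard argument (and essentially what one finds in the cited source), and is correct.
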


To finish the section, we recall two of the most important properties of formulas.

\begin{defin}
We say that a formula $\varphi(x,y)$ is \emph{independent} (or has IP) if there is an indiscernible sequence $(a_i)_{i\in\omega}$ and a tuple $b$ such that $$\models \varphi(a_i,b)\iff \text{i is even}.$$
Otherwise, we say that $\varphi$ is \emph{dependent} or NIP. We say that the $\mL$-theory $T$ is NIP if every $\varphi(x,y)\in \mL$ is NIP.
\end{defin}
\begin{defin}
    We say that a formula $\varphi(x,y)$ is \emph{stable} if there no sequence $(a_i,b_i)_{i\in\omega}$  such that for all $i,j\in \omega$ $$\models \varphi(a_i,b_j)\iff i<j.$$
    We say that the $\mL$-theory $T$ is stable if every $\varphi(x,y)\in \mL$ is stable.
\end{defin}

\subsection{Type-definable group components}
The following families of subgroups have played a crucial role in model theory and have been studied thoroughly, specially the connected components.
\begin{defin}[Connected components] Let $A$ be a small set of parameters and $G$ an $\emptyset$-type-definable group. The following subgroups of $G$ always exist and are known as the (model-theoretic) \emph{connected components} of $G$:
\begin{itemize}
    \item $G^0_A$ is the intersection of all relatively $A$-definable subgroups of $G$ of finite index.
    \item $G^{00}_A$ is the smallest $A$-type-definable subgroup of $G$ of bounded index (i.e. $<\kappa$).
\end{itemize}
\end{defin}
The following is due to Shelah(see \cite{MR2361885, Shelah2007DefinableGF}).
\begin{fact}
    If $T$ is a NIP theory, the subgroups above do not depend on the choice of the set of parameters and are denoted by $G^0$ and $G^{00}$, respectively.
\end{fact}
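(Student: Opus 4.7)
The plan is to prove, for any small parameter sets $A\subseteq B$, the equalities $G^{00}_A=G^{00}_B$ and $G^0_A=G^0_B$. The inclusions $G^{?}_B\subseteq G^?_A$ are automatic: in each case $G^?_A$ is $A$-type-definable, hence $B$-type-definable, and satisfies the defining property on the $B$-side, so it is among the competitors for $G^?_B$.

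For the nontrivial direction in the case of $G^{00}$, I would introduce
\[
H:=\bigcap_{\sigma\in\aut(\C/A)}\sigma(G^{00}_B).
\]
The orbit $\{\sigma(G^{00}_B):\sigma\in\aut(\C/A)\}$ has at most $|S(A)|$ elements, since each conjugate depends only on the $A$-type of the parameters defining $G^{00}_B$; hence $H$ is a bounded intersection of type-definable sets and therefore type-definable, and it is $A$-invariant by construction. By the remark on invariant type-definable sets quoted just above the statement, $H$ is $A$-type-definable. If one can also show that $H$ has bounded index in $G$, then by minimality of $G^{00}_A$, and because the choice $\sigma=\mathrm{id}$ gives $H\subseteq G^{00}_B$, we obtain $G^{00}_A\subseteq H\subseteq G^{00}_B$, whence equality.

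The main obstacle, and the point at which NIP enters, is showing $[G:H]$ is bounded. The key tool is the Baldwin--Saxl theorem: in a NIP theory, for every formula $\varphi(x;y)$ there is $n_\varphi<\omega$ such that any intersection of $\varphi$-definable subgroups of $G$ equals the intersection of at most $n_\varphi$ of them. Presenting each $\sigma(G^{00}_B)$ as an intersection of relatively definable subgroups and applying Baldwin--Saxl formula by formula to the union of these families across all conjugates, one reduces $H$ to an intersection of boundedly many relatively definable subgroups; each of these subgroups contains some $\sigma(G^{00}_B)$ and thus has bounded index, so the intersection $H$ has bounded index as well. This is essentially Shelah's argument from \cite{Shelah2007DefinableGF}.

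The case of $G^0$ is treated by a variant of the same construction. Set $H':=\bigcap_{\sigma\in\aut(\C/A)}\sigma(G^0_B)$ and rewrite $H'=\bigcap_{K}K^*$, where $K$ ranges over relatively $B$-definable finite-index subgroups of $G$ and $K^*:=\bigcap_\sigma\sigma(K)$. Baldwin--Saxl again reduces each $K^*$ to a finite intersection $\sigma_1(K)\cap\cdots\cap\sigma_m(K)$, which is therefore of finite index and, being $A$-type-definable of finite index, is in fact $A$-definable (its complement is a finite union of type-definable cosets, so both $K^*$ and its complement are type-definable). Hence every $K^*$ contains $G^0_A$, and so does $H'$, giving $G^0_A\subseteq H'\subseteq G^0_B$ and the desired equality.
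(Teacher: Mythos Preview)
The paper does not give its own proof of this fact; it is stated with a citation to Shelah \cite{MR2361885, Shelah2007DefinableGF}. Your argument for $G^0$ via Baldwin--Saxl is correct and is the standard one. Your argument for $G^{00}$, however, contains a genuine gap.

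You write that one can present each $\sigma(G^{00}_B)$ as an intersection of relatively definable subgroups and then apply Baldwin--Saxl. But $G^{00}_B$ is \emph{not} in general such an intersection: any relatively definable subgroup of $G$ containing $G^{00}_B$ has bounded, hence (by compactness) finite, index, and therefore contains $G^0_B$. So your assumption forces $G^{00}_B=G^0_B$, which fails, for instance, for the circle group in a real closed field --- precisely the kind of example the paper discusses. Baldwin--Saxl is a statement about definable \emph{subgroups} and does not apply to the definable \emph{sets} whose intersection cuts out $G^{00}_B$.

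There is also a problem with your bound on the number of conjugates. You claim the orbit $\{\sigma(G^{00}_B):\sigma\in\aut(\C/A)\}$ has at most $|S(A)|$ elements because ``each conjugate depends only on the $A$-type of the parameters.'' But for $\sigma\in\aut(\C/A)$ the parameters $\sigma(b)$ all have the \emph{same} $A$-type as $b$; the conjugate depends on the actual realization $\sigma(b)$, of which there may be unboundedly many. Consequently, neither type-definability of $H$ over a small set nor boundedness of $[G:H]$ follows from your argument. Shelah's proof for $G^{00}$ is genuinely more delicate and does not reduce to Baldwin--Saxl.
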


\begin{defin}[Stable components]
Let $A$ be a small set of parameters and $G$ an $\emptyset$-type-definable group. The following subgroups of $G$ always exist and are known as the \emph{stable components} of $G$:
\begin{itemize}
    \item $G^{\textrm{st},0}_A$  the intersection of all relatively $A$-definable subgroups $H$ of $G$ for which the quotient $G/H$ is stable as in Definition \ref{def stability intro}.
    \item $G^{\textrm{st},00}_A$ or just $G^{\textrm{st}}_A$ is the smallest $A$-type-definable subgroup of $G$ for which the quotient $G/H$ is stable as in Definition \ref{def stability intro}.
\end{itemize}
\end{defin}
The following is due to Haskell and Pillay (see \cite[Theorem 1.1]{MR3796277}).
\begin{fact}
    If $T$ is an NIP theory, the subgroups above do not depend on the choice of the set of parameters and are denoted by $G^{0,st}$ and $G^{\textrm{st}}$, respectively.
\end{fact}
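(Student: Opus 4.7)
I would treat the two assertions separately; in both cases the strategy is to show that the given subgroup is in fact $\emptyset$-invariant and defined over $\emptyset$, after which independence of $A$ follows from a minimality comparison. The common tool will be the closure of stability of hyperdefinable sets under products and type-definable subsets (Remark 1.4 of \cite{MR3796277} / Proposition \ref{stable preserved product}).

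For $G^{\textrm{st}}_A$, I would fix a small $A$ and set $H := \bigcap_{\sigma \in \aut(\C)} \sigma(G^{\textrm{st}}_A)$. Since stability and type-definability transfer along automorphisms of $\C$, each conjugate $\sigma(G^{\textrm{st}}_A)$ is a $\sigma(A)$-type-definable subgroup of $G$ with stable quotient, so $H$ is a subgroup defined by a (possibly large) partial type. By construction $H$ is $\aut(\C)$-invariant, and a standard compactness argument shows that an $\aut(\C)$-invariant set defined by a partial type with arbitrary parameters is already $\emptyset$-type-definable. The diagonal embedding $G/H \hookrightarrow \prod_{\sigma} G/\sigma(G^{\textrm{st}}_A)$ exhibits $G/H$ as a type-definable subset of a product of stable hyperdefinable sets, which is itself stable. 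Minimality of $G^{\textrm{st}}_A$ among $A$-type-definable subgroups with stable quotient then forces $G^{\textrm{st}}_A \subseteq H$, while the reverse inclusion is tautological; hence $G^{\textrm{st}}_A = H$ is $\emptyset$-type-definable. The same argument applied to any other small $B$ shows $G^{\textrm{st}}_B$ is $\emptyset$-type-definable; comparing by minimality in both directions yields $G^{\textrm{st}}_A = G^{\textrm{st}}_B$. Normality follows by conjugating by $g \in G$ and invoking minimality once more, since $gG^{\textrm{st}}g^{-1}$ is $g$-type-definable with stable quotient.

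For $G^{\textrm{st},0}_A$, I would use NIP essentially, via a Baldwin--Saxl argument. Given any $A$-definable subgroup $H \leq G$ with $G/H$ stable, the family $\{\sigma(H) : \sigma \in \aut(\C)\}$ is uniformly definable (by the defining formula of $H$, with parameters ranging over the $\aut(\C)$-orbit of the canonical parameter of $H$). Under NIP, Baldwin--Saxl collapses the intersection: $\bigcap_{\sigma} \sigma(H) = \bigcap_{i=1}^k \sigma_i(H)$ for some finite $\sigma_1, \dots, \sigma_k$, and this finite intersection is definable and, being $\aut(\C)$-invariant, $\emptyset$-definable. The embedding $G / \bigcap_{i} \sigma_i(H) \hookrightarrow \prod_{i=1}^k G/\sigma_i(H)$ realizes the quotient as a type-definable subset of a finite product of stable hyperdefinable sets, hence stable. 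Therefore every $A$-definable subgroup of $G$ with stable quotient contains a $\emptyset$-definable one with the same property, giving $G^{\textrm{st},0}_\emptyset \subseteq G^{\textrm{st},0}_A$; the reverse inclusion is automatic.

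The main obstacles are technical rather than conceptual. For $G^{\textrm{st}}_A$ the delicate points are that the (possibly unbounded) Cartesian product of stable hyperdefinable sets is still stable and that the compactness argument for $\emptyset$-type-definability goes through for partial types over arbitrarily many parameters; both reduce via finite projections / finite sub-types to the finite product closure in Remark 1.4 of \cite{MR3796277}. For $G^{\textrm{st},0}_A$ the crux is the applicability of Baldwin--Saxl to the uniformly definable family of conjugates of $H$, which is precisely the step where NIP is indispensable: without NIP one loses the collapse of the intersection to a finite sub-intersection and the $\emptyset$-definability argument breaks down.
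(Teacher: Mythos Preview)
The paper does not prove this fact; it is cited from \cite[Theorem~1.1]{MR3796277}. Your treatment of $G^{\textrm{st},0}_A$ via Baldwin--Saxl is correct and is the standard argument.

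Your argument for $G^{\textrm{st}}_A$, however, has a genuine gap: it never invokes NIP. The intersection $H=\bigcap_{\sigma\in\aut(\C)}\sigma(G^{\textrm{st}}_A)$ is taken over an unbounded family---the $\aut(\C)$-orbit of the parameter tuple for $G^{\textrm{st}}_A$---so $H$ is defined by a partial type over all of $\C$, not over a small set. The assertion that ``an $\aut(\C)$-invariant set defined by a partial type with arbitrary parameters is already $\emptyset$-type-definable'' is not a standard compactness fact and does not follow from finite sub-types: writing $H=\{g:\forall a'\equiv a\ \pi(g,a')\}$, for each $\varphi\in\pi$ the set $\{g:\forall a'\equiv a\ \varphi(g,a')\}$ is only $\bigvee$-definable over $\emptyset$, and an intersection of $\bigvee$-definable sets need not be type-definable. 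Without knowing that $H$ is type-definable over a small set, $G/H$ is not even a hyperdefinable set, so the stability claim is not well-posed; and your product argument likewise embeds into a product with monster-sized index set, which is not a legitimate hyperdefinable object.

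What NIP actually buys---in the original proof and in its adaptation in this thesis (see Lemma~\ref{IP condition} and the proof of Theorem~\ref{Est exists})---is a type-definable analogue of Baldwin--Saxl: one shows that there is no long strictly descending chain $\bigcap_{j<i}\sigma_j(G^{\textrm{st}}_A)$, because such a chain would allow one to code IP. This bounds the intersection by a small sub-intersection, after which your remaining steps (invariance, finite-product stability, minimality) go through. So the missing idea is precisely the chain-condition argument, and NIP is not merely used ``differently'' than you suggest---it is used for $G^{\textrm{st}}_A$ in an essential way that your outline omits.
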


\section{Continuous model theory}

We present some basic definitions and facts about continuous logic, we refer the reader to \cite{MR2657678}  and \cite{MR2723787} for a more detailed exposition.

\begin{defin}
    A \emph{continuous (metric) signature} consists of:
    \begin{itemize}
        \item A collection of function symbols f, together with their arity $n_f<\omega$.
        \item A collection of predicate symbols P, together with their arity $n_P<\omega$.
        \item A binary predicate symbol, denoted by $d$, specified as the distinguished \emph{distance symbol}.
        \item For each $n$-ary symbol $s$ and $i<n$ a continuity modulus $\delta_{s,i}$, called the \emph{uniform continuity modulus of } $s$ with respect to the $i$th argument.
    \end{itemize}
\end{defin}

Given a continuous signature $\mL$, the collection of $\mL$-terms and atomic $\mL$-formulas are constructed as usual. In the continuous context, the quantifiers $\sup_x$ and $\inf_x$ play the roles of $\forall$ and $\exists$, respectively. The issue of connectives is a bit more delicate and we refer the reader to \cite{MR2657678} for an in depth treatment. Depending of the context, we would like to consider all uniformly continuous functions $u:[0,1]^n\to [0,1]$ for all $n<\omega$ as connectives or just some finite subset of such functions.

A \emph{condition} is an expression of the form $\varphi=0$ where $\varphi$ is a formula. Note that expressions of the form $\varphi\geq r$ and $\varphi\leq r$ can be expressed as conditions. 

We now introduce the semantics of continuous logic.

\begin{defin}
    Let $\mL$ be a continuous signature. An $\mL$-structure is a set $M$ equipped with:
    \begin{itemize}
        \item A complete metric $d^M:M^2\to[0,1]$;
        \item A mapping $f^M: M^n\to M$ for every $n$-ary function symbol;
        \item A mapping $P^M: M^n\to [0,1]$ for every $n$-ary predicate symbol
    \end{itemize}
    satisfying the following sets of axioms,
    
    Pseudometric axioms:
    \begin{align*}
       &\sup_x d(x,x)=0 \\
        &\sup_{xy} d(x,y)\dot{-}d(y,x)=0 \\
        &\sup_{xyz} ( d(x,z)\dot{-}d(y,z) )\dot{-} d(x,y)=0
    \end{align*}
    
    Uniform continuity axioms:
    \begin{align*} 
        \sup_{x_{<i},y_{<n-i-1},z,w}( \delta_{f,i}(\varepsilon ) \dot{-} d(z,w) ) \wedge ( d(f(\overline{x},z,\overline{y}), f(\overline{x},w,\overline{y})) \dot{-}\varepsilon )=0\\
        \sup_{x_{<i},y_{<n-i-1},z,w}( \delta_{P,i}(\varepsilon ) \dot{-} d(z,w) ) \wedge ( (P(\overline{x},z,\overline{y})\dot{-} P(\overline{x},w,\overline{y})) \dot{-}\varepsilon )=0
    \end{align*}
\end{defin}

\begin{defin}
    \begin{itemize}
        \item Let $\tau(x_{<n})$ be a term, $\mathcal{M}$ a $\mL$-structure. The \emph{interpretation of $\tau$ in $\mathcal{M}$} is a function $\tau^M:M^n\to M$ defined inductively as:
        \begin{itemize}
            \item If $\tau=x_i$ then $\tau^M(\overline{a})=a_i$.
            \item If $\tau=f(\sigma_0,\dots,\sigma_{m-1})$ then $\tau^M(\overline{a})=f^M(\sigma_0^M(\overline{a}),\dots\sigma_{m-1}^M(\overline{a}))$.
        \end{itemize}
        \item Let $\varphi(x_{<n})$ be a term, $\mathcal{M}$ a $\mL$-structure. The \emph{interpretation of $\varphi$ in $\mathcal{M}$} is a function $\varphi^M:M^n\to [0,1]$ defined inductively as:
         \begin{itemize}
            \item If $\varphi=P(\tau_0,\dots,\tau_{m-1})$ is atomic then $\varphi^M(\overline{a})=P^M(\tau_0^M(\overline{a}),\dots\tau_{m-1}^M(\overline{a}))$.
            \item If $\varphi=\lambda(\psi_0,\dots,\psi_{m-1})$ where $\lambda$ is a continuous logic connective then $\varphi^M(\overline{a})=\lambda(\psi_0^M(\overline{a}),\dots\psi_{m-1}^M(\overline{a}))$.
            \item If $\varphi=\inf_y\psi(x,y)$ then $\varphi^M(\overline{a})=\inf_{b\in M}\psi^M(\overline{a},b)$ and similarly for the supremum.
        \end{itemize}
    \end{itemize}
\end{defin}

A \emph{continuous $\mL$-theory} $T$ is a consistent (i.e. it has a model) set of $\mL$-conditions $\varphi=0$ where $\varphi$ is a sentence. A continuous theory $T$ is complete if its set of logical consequences is maximal with respect to the inclusion.% (we will only consider consistent theories). 
From now on let $\mL$ be a continuous logic signature and $T$ a complete $\mL$-theory.

\begin{defin}
    An \emph{$\mL$-type} is a consistent relative to $T$ collection of $\mL$-conditionseith a distinguised set of free variables. We say that an $\mL$-type $\Sigma$ is complete if it is maximal with respect to the inclusion.
\end{defin}

\begin{defin}
\begin{itemize}
    \item We say that an $\mL$-condition \emph{$\varphi(x)=0$ is satisfied by the tuple $a\in \mathcal{M}$} if $\mathcal{M}\models \varphi(a)=0$, where $\mathcal{M}$ is a model of $T$. We say $\varphi=0$ is \emph{satisfiable} if there exists some tuple $a$ from some model $\mathcal{M}$ of $T$ satisfying $\varphi=0$.
       \item  We say that an $\mL$-type \emph{$\Sigma(x)$ is satisfied by the tuple $a \in \mathcal{M}$} if all conditions in $\Sigma$ are satisfied by $a$, where $\mathcal{M}$ is a model of $T$. We say $\Sigma$ is \emph{satisfiable} if there exists some tuple $a$ from some model $\mathcal{M}$ of $T$ satisfying $\Sigma$.
       \item  We say that an $\mL$-type $\Sigma(x)$ is finitely satisfiable if for every finite $\Sigma_0\subseteq \Sigma^+$ there exist some tuple $a$ from some model $\mathcal{M}$ of $T$ satisfying $\Sigma_0$, where $$\Sigma^+:=\{ \varphi\leq \frac{1}{n}: n\in\omega, (\varphi=0)\in \Sigma \}  $$
\end{itemize}
\end{defin}

    \begin{defin}
    Let $\mathcal{M}\models T$ be a model and $A\subseteq \mathcal{M}$. A \emph{complete type over $A$} in variables $x$ is a maximal satisfiable set of $\mL(A)$-conditions with free variables contained in $x$. The space of all types in variables $x$ is denoted by $S_{x}(A)$. If $x=(x_1,\dots,x_n)$, we denote $S_{x}$ by $S_{n}(A)$.
    \end{defin}

    \begin{fact}
        The space $S_{x}(A)$ is a compact Hausdorff space when equipped with the finest topology for which all continuous formulas $\varphi\in \mL(A)$ are continuous functions $\varphi:S_{x}(A)\to[0,1]$. In this topology, the sets of the form $$ [\varphi \leq r]:=\{ p: p\in S_{x}(A), (\varphi\leq r) \in p \}$$ where $r$ ranges in $[0,1]$ are the basic closed sets.
    \end{fact}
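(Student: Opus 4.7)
The plan is to identify $S_x(A)$ with a closed subspace of the Tychonoff cube $[0,1]^F$, where $F$ denotes the collection of $\mL(A)$-formulas with free variables among $x$, and then read off the three assertions of the fact. I would first define $\Phi:S_x(A)\to[0,1]^F$ by setting $\Phi(p)(\varphi)=r^p_\varphi$, where $r^p_\varphi\in[0,1]$ is the unique value such that the condition $\varphi=r^p_\varphi$ belongs to $p$; existence and uniqueness follow from completeness of $p$ together with the density of the conditions of the form $\varphi\leq r$ and $\varphi\geq r$ in the definition of a complete type. The map $\Phi$ is injective, since distinct complete types must disagree on the value assigned to some formula, so Hausdorffness of $S_x(A)$ transfers from Hausdorffness of the product.

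For compactness, I would show that $\Phi[S_x(A)]$ is closed in the product topology and then invoke Tychonoff. The image consists of those $f\in[0,1]^F$ that arise as evaluation maps of complete types, and such $f$ can be characterized by a family of pointwise conditions, each closed in the product topology: compatibility with every continuous connective $u$, namely $f(u(\varphi_1,\dots,\varphi_n))=u(f(\varphi_1),\dots,f(\varphi_n))$; the axioms of $T$, imposing $f(\theta)=0$ for every sentence $\theta$ with $(\theta=0)\in T$; and a coherence condition capturing the quantifiers, saying that $f(\inf_y\varphi(x,y))$ and $f(\sup_y\varphi(x,y))$ agree with the infimum and supremum of $f(\varphi(x,b))$ over instantiations by elements $b$ of a sufficiently saturated auxiliary structure. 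Each such constraint defines a closed subset of $[0,1]^F$, so the image is an intersection of closed sets, hence closed, hence compact.

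For the last assertion, I would observe that the topology transferred onto $S_x(A)$ from $[0,1]^F$ via $\Phi$ coincides with the initial topology generated by the evaluation maps $p\mapsto r^p_\varphi$ valued in $[0,1]$, which is the topology described in the statement. A sub-basis for the closed sets of $[0,1]$ consists of the intervals $[0,r]$; pulling these back along $\pi_\varphi\circ\Phi$ yields $\{p:r^p_\varphi\leq r\}=[\varphi\leq r]$, exactly the claimed basic closed sets. The main obstacle is the quantifier clause of the closedness argument: ensuring that $f(\inf_y\varphi)$ is actually realized as an infimum over some model requires either a separate appeal to the compactness theorem for continuous logic or a direct construction of a witnessing type out of finite approximations, and in practice the compactness of type spaces and the compactness theorem for continuous logic are proved in tandem.
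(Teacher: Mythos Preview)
The paper does not prove this statement; it is stated as a background \emph{Fact} drawn from the standard references on continuous logic (e.g.\ \cite{MR2657678}), so there is no paper proof to compare against. Your outline is the right shape and the Hausdorff and topology--description parts are fine; the only substantive issue is the compactness step, and you have correctly put your finger on it.

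The attempted characterisation of $\Phi[S_x(A)]$ by closed conditions does not quite go through. Condition (c) is problematic on two counts. First, the formulas $\varphi(x,b)$ for $b$ in an auxiliary saturated structure need not lie in $F$ at all, since $F$ consists of $\mL(A)$-formulas and $b$ may not be in $A$. Second, even when it makes sense, the ``witness'' half of the quantifier clause --- that for each $\varepsilon>0$ there exists $b$ with $f(\varphi(x,b))<f(\inf_y\varphi)+\varepsilon$ --- is an existential statement, hence generally open rather than closed in the product. More fundamentally, any $f\in[0,1]^F$ satisfying your syntactic constraints still has to be shown to be \emph{satisfiable} in order to be a type, and that is precisely the content of the compactness theorem for continuous logic.

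The clean route, which is what the references do, is to take the compactness theorem as input: given a family $\{[\varphi_i\le r_i]\}_{i\in I}$ of basic closed sets with the finite intersection property, the set of conditions $\{\varphi_i\le r_i:i\in I\}$ is finitely satisfiable, hence satisfiable, and any realisation yields a complete type in the intersection. This gives compactness of $S_x(A)$ in one line once the compactness theorem is available. Your embedding $\Phi$ then exhibits the space concretely as a closed subspace of $[0,1]^F$, but closedness is established \emph{a posteriori} (a compact subspace of a Hausdorff space is closed) rather than by an independent syntactic characterisation.

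One small slip in the final paragraph: the intervals $[0,r]$ alone do not form a sub-basis for the closed sets of $[0,1]$; one also needs $[r,1]$. The statement of the Fact is nonetheless correct because, allowing $\varphi$ to range over all formulas, $[\varphi\ge r]$ is the same set as $[1\dot-\varphi\le 1-r]$ (using the connective $\neg$), so the family $\{[\varphi\le r]:\varphi,r\}$ is closed under this operation and does generate the topology.
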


Fix a strong limit cardinal $\kappa$ larger than $\lvert T \rvert$. As in the first-order case,  monster models exist for every $\kappa$ as above. We will consider $\kappa$ to be bigger than the cardinality of all the objects we are interested in. We call an object \emph{small} if its cardinality is smaller than the degree of saturation of $\C$. By $\kappa$-saturation, we may assume that all small models that appear are elementary substructures of the monster model we are working with.

\begin{defin}
    An \emph{$A$-definable predicate} $f$ in variables $x$ is a continuous function $f:S_{x}(A)\to[0,1]$.
\end{defin}

By \cite[Proposition 3.10]{MR2657678}, when dealing with a finite tuple of variables $x$, this is equivalent to the forced limit construction.

The following was proven in \cite[Proposition 3.4]{MR2657678} for a finite number of variables but the proof also applies for an infinite tuple $x$.

\begin{fact}
    Definable predicates in variables $x$ can be uniformly approximated by continuous logic formulas in variables contained in $x$. That is, for every $\varepsilon>0$ we can find a formula $\varphi(x)$ such that $\lVert \varphi -f\rVert_\infty\leq \varepsilon$.
\end{fact}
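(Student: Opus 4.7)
The plan is to reduce the infinite-variable case to the already-known finite-variable case \cite[Proposition 3.4]{MR2657678} by means of an inverse-limit plus Stone--Weierstrass argument. Let $f : S_x(A) \to [0,1]$ be the given $A$-definable predicate, with $x$ possibly infinite, and fix $\varepsilon > 0$.

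First I would identify $S_x(A)$, as a compact Hausdorff space, with the inverse limit $\varprojlim_{x' \subseteq x} S_{x'}(A)$, where $x'$ ranges over the directed family of finite subtuples of $x$ and the bonding maps are the natural type restrictions. Indeed, two types in $S_x(A)$ agree iff they agree on every finite subtuple, and any compatible family of finite-variable types glues into a complete type on $x$ by the continuous-logic compactness theorem. Consequently, the projections $\pi_{x,x'} : S_x(A) \to S_{x'}(A)$ are continuous and surjective. Next, consider the subalgebra $\mathcal{A} \subseteq C(S_x(A); \mathbb{R})$ consisting of functions of the form $g \circ \pi_{x,x'}$ with $x'$ a finite subtuple of $x$ and $g : S_{x'}(A) \to \mathbb{R}$ continuous. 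This $\mathcal{A}$ contains the constants, is closed under sums, scalar multiples, products and lattice operations, and separates points of $S_x(A)$ (since distinct complete types always disagree on some formula, which uses only finitely many variables from $x$). By the Stone--Weierstrass theorem, $\mathcal{A}$ is uniformly dense in $C(S_x(A); \mathbb{R})$. Applying this to $f$ furnishes a finite $x' \subseteq x$ and a continuous $g : S_{x'}(A) \to \mathbb{R}$ with $\|f - g \circ \pi_{x,x'}\|_\infty \leq \varepsilon / 2$, and truncating $g$ by $\max(0, \min(1, \cdot))$ preserves the bound because $f$ is already $[0,1]$-valued and truncation is $1$-Lipschitz.

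Finally, the finite-variable case \cite[Proposition 3.4]{MR2657678} applied to this truncated $g$ produces an $\mL(A)$-formula $\varphi(x')$ with $\|g - \varphi\|_\infty \leq \varepsilon / 2$; regarded as a formula whose free variables lie among $x$, this $\varphi$ satisfies $\|f - \varphi\|_\infty \leq \varepsilon$, as required. The only step that deserves care is the Stone--Weierstrass reduction to cylinder functions, i.e.\ the inverse-limit identification of $S_x(A)$ together with point separation by finite-variable formulas; both are routine, so no substantive obstacle is expected.
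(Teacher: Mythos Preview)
Your argument is correct. The paper does not give its own proof of this Fact; it simply cites \cite[Proposition 3.4]{MR2657678} for the finite-variable case and asserts that ``the proof also applies for an infinite tuple $x$.'' Your write-up makes this assertion explicit: the original proof is a Stone--Weierstrass argument (formulas form a point-separating unital subalgebra of $C(S_n(A))$), and your inverse-limit reduction is exactly the observation needed to see that the same Stone--Weierstrass step goes through when $x$ is infinite, since formulas in finite subtuples of $x$ still separate points of $S_x(A)$. In substance the two approaches coincide; you have simply supplied the details the paper omits.
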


This result relates this approach to continuous logic to the formalism from \cite[Subsection 3.1]{hrushovski2021amenability} and \cite[Section 3]{hrushovski2021order} (see Section \ref{section: characterizations of stability} for more details). Using this formalism, we will usually also refer to definable predicates (even in infinite variables) as formulas. We need to allow the domain of definable predicate to be an infinite Cartesian power of $\C$ to deal with hyperdefinable sets $X/E$ for which $X$ is contained in an infinite product of sorts.%, but still the results from \cite{MR2657678} and \cite{MR2723787}  which we will be using are valid for such generalised continuous logic formulas.

We end this section with a short discussion on indiscernible sequences in continuous logic. Indiscernible and totally indiscernible sequences are defined exactly as in first-order logic.

%\begin{defin}
%    Let $A\subseteq \C$ be small and $(I,\leq)$ be a totally ordered set.
%    \begin{itemize}
%        \item We say that a sequence $(a_i)_{i\in I}$ of tuples of elements from $\C$ is \emph{(order) indiscernible} over $A$ if for each $n<\omega$ and for all $i_1<\cdots<i_n$, $j_1<\cdots< j_n$ increasing sequences of $I$ we have $$ a_{i_1}\dots a_{i_n} \equiv_A a_{j_1}\dots a_{j_n} .$$
%        \item  We say that a sequence $(a_i)_{i\in I}$  of tuples of elements from $\C$ is \emph{totally indiscernible} over $A$ if for each $n<\omega$ and for all $i_1,\cdots,i_n$, $j_1,\cdots, j_n$ arbitrary elements of $I$ we have $$ a_{i_1}\dots a_{i_n} \equiv_A a_{j_1}\dots a_{j_n}.$$
%    \end{itemize}
%    If we don't specify the set $A$ we mean indiscernible over $\emptyset$.
%\end{defin}

\begin{defin}
Let $A\subseteq \C$ be small, $(I,\leq)$ be a totally ordered set and $\mathbf{I}=(a_i)_{i\in I}$ a sequence of tuples from $\C$. The \emph{Ehrenfeucht-Mostowski type of $\mathbf{I}$ over $A$}, denoted by $\EM(\mathbf{I}/A)$, is the set of all conditions $\varphi(x_1,\dots,x_n)=0$ such that $\C\models \varphi(a_{i_1},\dots,a_{i_n})=0$ for all $i_1<\cdots<i_n\in I$, where $n$ ranges over $\omega$.
\end{defin}

As in the first order case, indiscernible sequences exist.

\begin{fact}
    Let $(I,\leq)$ and $(J,\leq)$ be two infinite linear orders and $ (a_i )_{i\in I}$ a sequence of elements of $\C$. Then there an indiscernible sequence $(b_j )_{j\in J}$ realizing the Ehrenfeucht–Mostowski type of $(a_i )_{i\in I}$.
\end{fact}

%The result about ``extracting indiscernibles'' also holds verbatim.

%\begin{fact}
%Let $A\subseteq \C$ be small. Fix a cardinal $\lambda <\kappa$ and let $\nu \geq \lvert S_{\lambda}(A) \rvert.$ Set $\mu=\beth_{\nu^+}$. Then for any sequence $(a_i)_{i\in \mu}$ of tuples from $\C$ of length $\lambda$ there is an $A$-indiscernible sequence $(b_i)_{i\in \omega}$ such that for all $n<\omega$ there are $i_0<\dots<i_{n-1}\in \mu$ for which $$ \tp(b_0,\dots,b_{n-1}/A)=\tp (a_{i_0},\dots,a_{i_{n-1}}/A ).$$    
%\end{fact}

\section{Hyperdefinable sets}
We dedicate this section to basic facts about hyperdefinable sets. Hyperimaginaries and their types were first introduced in \cite{coordinatisation}. For a more in-depth exposition see \cite[Chapters 15 and 16]{casanovas2011simple} and \cite[Chapter 3]{Wagner2002-WAGST-2}.

Let $T$ be a complete, first order theory, and $\C \models T$ a monster model. In this section %$E$ is a $\emptyset$-$\bigwedge$-definable 
we consider $\emptyset$-type-definable equivalence relations defined on $\emptyset$-type-definable subsets of $\C^\lambda$ (or a product of sorts), where $\lambda< \kappa$ (where $\kappa$ is the degree of saturation of $\C$). 

\begin{defin}
    A {\em hyperdefinable set} $X/E$ is a quotient of a type-definable set by a type-definable equivalence relation. If both $X$ and $E$ are $A$-type-definable we say that $X/E$ is hyperdefinable over $A$. A {\em hyperimaginary} is an element of a hyperdefinable set.
\end{defin}

Whenever the equivalence relation $E$ plays an important role, we write $a/E$ or $[a]_E$ to emphasize it. Sometimes, the domain of the equivalence relation is not very important and we assume that $E$ is an equivalence relation in the full product of sorts. The following result allows us to consider $X/E$ as simply some type-definable subset of $\C^\lambda/E$ for the appropriate $\lambda$. 

\begin{remark}
    Let $\Sigma(x)$ be a partial type over $A$. If $E$ is an $A$-type-definable equivalence relation on the set of realizations of $\Sigma$, then the $A$-type-definable equivalence relation $E'(x,y):=(\Sigma(x)\wedge \Sigma(y) \wedge E(x,y))\vee (x=y)$ is defined for all sequences of length $\lvert x\rvert$ and agrees with $E$ on $\Sigma(\C)$.
\end{remark}

We now introduce complete types of hyperimaginary elements.

\begin{defin}
    Let $A \subset \C$ be small. The \emph{complete types over $A$ of elements of $X/E$} can be defined as the $\aut(\C/A)$-orbits on $X/E$,  or the preimages of these orbits under the quotient map, or the partial types defining these preimages. The space of all such types is denoted by $S_{X/E}(A)$. This space is naturally a quotient of $S_X(A)$ and the topology on $S_{X/E}(A)$ is the quotient topology (See Remark \ref{2.7} for more details).
\end{defin}

We can also define the complete type of a hyperimaginary element syntactically.

\begin{defin}
Let $a/E$ and $b/F$ be hyperimaginaries. For each formula $\varphi(x,y)\in \mL$ let $$\Delta_{\varphi}(x,y):= \exists x',y'( E(x,x') \wedge F(y,y') \wedge \varphi (x',y') ).$$    
We define $\tp([a]_E /[b]_F)$ as the union of all partial types $\Delta_{\varphi}(x,b)$ such that $\models \varphi(a,b)$.
\end{defin}

We say that an automorphism $\sigma\in \aut(\C)$ \emph{fixes} a hyperimaginary $a/E$ if $\sigma(a/E)=a/E$, that is, $\models E(a,\sigma(a))$. As in the first section of this chapter, one of the advantages of working inside a monster model is the following:

\begin{notation}
Let $A\subseteq \C$ be a small set (possibly containing hyperrimaginaries). We denote by $\equiv_A$ the equivalence relation of having the same type over $A$.
\end{notation}

\begin{fact}
    Let $d$ be a hyperimaginary. If $\tp([a]_E/d)=tp([b]_E/d)$ then there is $\sigma\in \aut(\C/d)$ such that $\models E(\sigma(a),b)$.
\end{fact}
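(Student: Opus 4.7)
The plan is to unpack the syntactic definition of types of hyperimaginaries, reduce to a question about a single first-order type, and then apply compactness together with strong homogeneity of $\C$.

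First I would write $d=[c]_F$ where $c$ is a (possibly infinite) tuple from $\C$ and $F$ is a $\emptyset$-type-definable equivalence relation. Note that an automorphism $\sigma\in\aut(\C)$ fixes $d$ if and only if $\models F(\sigma(c),c)$, so the conclusion I want is: there exists $\sigma\in\aut(\C)$ with $\models F(\sigma(c),c)$ and $\models E(\sigma(a),b)$. By strong $\kappa$-homogeneity of $\C$, this is equivalent to finding a pair $(a^*,c^*)$ of elements of $\C$ such that $(a^*,c^*)\equiv(a,c)$ (ordinary equality of first-order types over $\emptyset$) with $\models E(a^*,b)$ and $\models F(c^*,c)$: given such a pair, any $\sigma\in\aut(\C)$ sending $(a,c)$ to $(a^*,c^*)$ does the job.

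The existence of such $(a^*,c^*)$ is the consistency of the partial type
$$\Sigma(x,y)\;:=\;\tp(a,c)\;\cup\;E(x,b)\;\cup\;F(y,c).$$
By compactness (and saturation of $\C$) it suffices to check that every finite fragment of $\Sigma$ is realized in $\C$, which in turn reduces to the following: for every formula $\varphi(x,y)$ with $\models\varphi(a,c)$, there exist $x,y$ in $\C$ with $\varphi(x,y)\wedge E(x,b)\wedge F(y,c)$. But this last statement is exactly the assertion that $\models\Delta_\varphi(b,c)$ in the notation introduced just before the fact.

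Now I would close the argument using the hypothesis. Since $\models\varphi(a,c)$, the witnesses $x=a$, $y=c$ show $\models\Delta_\varphi(a,c)$, so $\Delta_\varphi(x,c)\in\tp([a]_E/d)$. By assumption $\tp([a]_E/d)=\tp([b]_E/d)$, hence $\Delta_\varphi(x,c)\in\tp([b]_E/d)$, i.e. $\models\Delta_\varphi(b,c)$, as required. The only potential obstacle is a careful handling of $F$ inside the definition of $\Delta_\varphi$ when $y$ (and hence $c$) is an infinite tuple: one must read $E(x,b)$, $F(y,c)$ as partial types and ensure that the compactness step above is legitimate, but this is standard once the $\emptyset$-type-definability of $E$ and $F$ is in place, and the saturation of $\C$ absorbs the (small) size of these partial types.
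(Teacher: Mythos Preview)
Your argument is correct and is essentially the standard proof (as found e.g.\ in Casanovas's \emph{Simple Theories and Hyperimaginaries}, Proposition~15.15). The paper itself does not supply a proof of this fact; it is stated as background with a reference to the literature, so there is nothing further to compare.
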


We say that $a/E$ is a \emph{countable hyperimaginary} if $a$ is a tuple of countable length. The following result allows us to consider sequences of hyperimaginaries as a single hyperimaginary and vice versa.

\begin{defin}
    Let $a$ and $b$ be (possibly infinite) tuples of hyperimaginaries. We say that $a$ and $b$ are interdefinable if any automorphism fixing $a$ fixes $b$ and vice versa.
\end{defin}

The following result can be found in \cite[Lemmas 15.3 and 15.4]{casanovas2011simple}:
\begin{fact}\label{hyperim as count hyperim}
    Any hyperimaginary is interdefinable with a sequence of countable hyperimaginaries. Any sequence of hyperimaginaries is interdefinable with a hyperimaginary.
\end{fact}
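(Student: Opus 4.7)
I will handle the two assertions in turn. The second (collapsing a sequence to a single hyperimaginary) is the more straightforward: given $(a_j/E_j)_{j\in J}$, I would set $a := (a_j)_{j\in J}$ (the concatenation of the tuples on disjoint variable blocks) and define the equivalence relation $E((x_j)_{j}, (y_j)_{j}) := \bigwedge_{j\in J} E_j(x_j, y_j)$. Each $E_j$ is type-definable and the blocks are disjoint, so $E$ is a type-definable equivalence relation; moreover $\sigma \in \aut(\C)$ fixes $a/E$ iff $E_j(a_j, \sigma a_j)$ holds for every $j$, iff $\sigma$ fixes every $a_j/E_j$, giving interdefinability.

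For the first assertion, let $e = a/E$ with $a = (a_i)_{i<\lambda}$ and $E$ type-definable by a partial type $\Sigma(x,y) = \{\varphi_\alpha\}_\alpha$, each $\varphi_\alpha$ involving only the finitely many coordinates $I_\alpha \subseteq \lambda$. For each finite $s \subseteq \lambda$, I would define
\[E_s(x,y) := \{\psi(x,y) \in \mL : T \vdash \forall X, Y \in \C^\lambda\, (E(X,Y) \to \psi(X|_s, Y|_s))\},\]
the partial type of all formulas in $s$-variables implied by $E$. Granting that $E_s$ is a type-definable equivalence relation (addressed below), I would show that $a/E$ is interdefinable with the sequence $(a_s/E_s)_{s \text{ finite}}$, a sequence of finitary (hence countable) hyperimaginaries. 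One direction is automatic: if $E(a, \sigma a)$, then every $\psi \in E_s$ gives $\psi(a_s, \sigma a_s)$, so $\sigma$ fixes $a_s/E_s$. For the converse, suppose $\sigma$ fixes every $a_s/E_s$; for each $\varphi_\alpha \in \Sigma$ the formula $\varphi_\alpha$ trivially belongs to $E_{I_\alpha}$ (being itself implied by $E$), so $E_{I_\alpha}(a_{I_\alpha}, \sigma a_{I_\alpha})$ yields $\varphi_\alpha(a, \sigma a)$, and conjoining over $\alpha$ produces $E(a, \sigma a)$.

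The main obstacle is verifying that the partial type $E_s$ genuinely defines an equivalence relation. Reflexivity is immediate from reflexivity of $E$; symmetry follows because $\psi \in E_s$ implies $\psi^*(x,y) := \psi(y,x) \in E_s$ (using symmetry of $E$). Transitivity is the delicate point: given $E_s(a,b) \wedge E_s(b,c)$ and a target $\psi \in E_s$, by compactness and transitivity of $E$ there is a finite $\Phi \subseteq \Sigma$ with $\Phi(X,Y) \wedge \Phi(Y,Z) \to \psi(X|_s, Z|_s)$; the existentially quantified formula $\chi_\Phi(x,y) := \exists u, v\, \Phi((x,u), (y,v))$ (with $u, v$ ranging over the finite set of coordinates of $\Phi$ outside $s$) lies in $E_s$, so from $E_s(a,b)$ and $E_s(b,c)$ one obtains witnessing extensions of $a, b$ and of $b, c$ to $\lambda$-tuples realizing $\Phi$. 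The two extensions of $b$ may disagree outside $s$, and reconciling them to a single common intermediate extension—so that transitivity of $E$ can be applied to produce $\psi(a,c)$—is the technical heart of the argument, carried out via a further compactness argument together with the $\kappa$-saturation of $\C$. Once $E_s$ is known to be a type-definable equivalence relation, the interdefinability argument above completes the proof.
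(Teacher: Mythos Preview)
Your argument for the second assertion is correct and standard.

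For the first assertion there is a genuine gap: your relation $E_s$ need not be transitive, and the ``further compactness argument together with $\kappa$-saturation'' you invoke cannot repair this. Take the theory of $(\mathbb{Z},S)$ with $S$ the successor, $\lambda=2$, $s=\{0\}$, and let $E$ be the $\emptyset$-definable equivalence relation on pairs given by $(x_0,x_1)\,E\,(y_0,y_1)$ iff either $x_1=y_1$ and $x_0,y_0\in\{x_1,S(x_1)\}$, or $(x_0,x_1)=(y_0,y_1)$. Unwinding your definition (via compactness), $E_s(a,b)$ holds precisely when there exist extensions $A,B$ of $a,b$ with $E(A,B)$, which here amounts to $a=b$ or $b=S(a)$ or $a=S(b)$. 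This is the ``distance $\le 1$'' relation: $E_s(a,S(a))$ and $E_s(S(a),S^2(a))$ hold while $E_s(a,S^2(a))$ fails. The two extensions of the middle element~--- namely $(S(a),a)$ and $(S(a),S(a))$, which lie in different $E$-classes~--- simply cannot be reconciled.

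The proof in Casanovas (which the paper cites; the same device appears in the paper's own Proposition~\ref{decomposition of equivalence relations}) avoids projecting to a fixed finite coordinate set. Instead, for each formula $\varphi_0$ in the type $\pi$ defining $E$, one uses compactness together with transitivity and symmetry of $E$ to build a countable chain $\varphi_0,\varphi_1,\ldots$ of finite conjunctions from $\pi$ satisfying $\varphi_{n+1}(x,y)\wedge\varphi_{n+1}(y,z)\vdash\varphi_n(x,z)$ and $\varphi_{n+1}(x,y)\vdash\varphi_n(y,x)$. The conjunction $E_{\varphi_0}:=\bigwedge_n\varphi_n$ is then an equivalence relation \emph{by construction}, uses only countably many coordinates, and $E=\bigcap_{\varphi_0\in\pi}E_{\varphi_0}$; restricting each $E_{\varphi_0}$ to its countable set of active coordinates yields the required sequence of countable hyperimaginaries.
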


We present indiscernibility of hyperimaginary elements, which will be used extensively throughout the thesis.

\begin{defin}
    Let $A$ be a small set (possibly containing hyperimaginaries) and $(I,\leq)$ be a totally ordered set. We say that a sequence $(a_i)_{i\in I}$ of hyperimaginaries is \emph{(order) indiscernible} over $A$ if for each $n<\omega$ and for all $i_1<\cdots<i_n$, $j_1<\cdots< j_n$ increasing sequences of $I$ we have $$ a_{i_1}\dots a_{i_n} \equiv_A a_{j_1}\dots a_{j_n} .$$ 
\end{defin}
Note that indiscernibility implies that for each $i\in I$ the hyperimaginary $a_i$ is of the form $a'_i/E$ for a fixed type-definable equivalence relation $E$ and $a_i'$ a tuple of real elements.

A proof of the next two facts can be found in \cite[Lemma 16.2]{casanovas2011simple} and \cite[Proposition 16.3]{casanovas2011simple} respectively.

\begin{fact}
    Let $d$ be a hyperimaginary, and let $(I,\leq)$ and $(J,\leq)$ be two infinite linear orders. If $ (a_i )_{i\in I}$ is a indiscernible over $d$ sequence of hyperimaginaries, then there is a indiscernible over $d$ sequence $(b_j )_{j\in J}$ such that for each $n<\omega$ and for all increasing sequences $i_1<\cdots<i_n\in I$, $j_1<\cdots< j_n\in J$ we have $$ a_{i_1}\dots a_{i_n} \equiv_d b_{j_1}\dots b_{j_n} .$$
\end{fact}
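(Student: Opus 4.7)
The plan is to establish this stretching result by reducing to the classical stretching lemma for indiscernible sequences, working in continuous logic (or equivalently in $T^{\textrm{heq}}$) so that hyperimaginary conditions are handled symmetrically, including their negations.

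First, by Fact 2.20, I reduce to the case where each $a_i = a'_i/E$ for a single $\emptyset$-type-definable equivalence relation $E$, and $d = d'/F$ for a type-definable $F$. Hyperimaginary $d$-indiscernibility of $(a_i)_{i\in I}$ is equivalent to saying that for every first-order formula $\varphi$ the truth value of $\models\Delta_\varphi(a'_{i_1},\ldots,a'_{i_n},d')$ is independent of the increasing choice $i_1<\cdots<i_n$ in $I$; consequently there is a well-defined hyperimaginary EM-type $\Sigma$ of $(a_i)_{i\in I}$ over $d$. Let $S$ denote the set of those $\varphi$ for which $\models\Delta_\varphi(a'_{i_1},\ldots,a'_{i_n},d')$ holds on any (equivalently, every) increasing $i_1<\cdots<i_n$ in $I$. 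The goal is to produce a hyperimaginary $d$-indiscernible sequence $(b_j)_{j\in J}$ realizing \emph{exactly} $\Sigma$.

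The naive first-order compactness approach would define $\Pi((y_j)_{j\in J})$ as the partial type listing $\Delta_\varphi(y_{j_1},\ldots,y_{j_n},d')$ for each $\varphi\in S$ and each increasing $j_1<\cdots<j_n$ in $J$, verify finite consistency by substituting increasing tuples from $(a'_i)_{i\in I}$ (which works since $I$ is infinite and $(a_i)_{i\in I}$ is hyperimaginary indiscernible), and realize via compactness (in $\C$ if $|J|<\kappa$, otherwise in a larger monster). This produces a real sequence $(b'_j)_{j\in J}$ whose images $(b'_j/E)_{j\in J}$ are hyperimaginary $d$-indiscernible with EM-type \emph{containing} $\Sigma$, but possibly strictly larger if some extraneous $\Delta_\psi$ with $\psi\notin S$ is accidentally satisfied. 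The main obstacle is enforcing the negative conditions $\neg\Delta_\psi$ for $\psi\notin S$, which are not first-order partial types but rather infinite disjunctions of negated formulas.

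I overcome this by passing to the continuous-logic framework from Section 2.2, where each hyperimaginary sort $\C^\lambda/E$ becomes a continuous-logic sort endowed with its canonical pseudometric, and each $\Delta_\varphi$ corresponds to a continuous $[0,1]$-valued predicate $\tilde\varphi$ for which both $\tilde\varphi\leq r$ and $\tilde\varphi\geq r$ are closed conditions. The hyperimaginary EM-type $\Sigma$ then becomes a complete continuous-logic type, expressible as a partial type of closed conditions fixing the value of each predicate, which symmetrically captures both positive and negative hyperimaginary information. The standard stretching lemma for continuous-logic indiscernibles (whose proof mirrors that of Fact 2.4 via finite realizability and compactness of continuous-logic type spaces) now yields $(b_j)_{j\in J}$ that is continuous-logic, hence hyperimaginary, $d$-indiscernible with the same EM-type as $(a_i)_{i\in I}$, giving $a_{i_1}\ldots a_{i_n}\equiv_d b_{j_1}\ldots b_{j_n}$ for all increasing tuples as desired.
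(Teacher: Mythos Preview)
The paper does not give its own proof of this fact; it cites \cite[Lemma 16.2]{casanovas2011simple}. Your continuous-logic route is valid, but the detour is unnecessary because the ``main obstacle'' you identify is illusory.

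The syntactic partial type $\tp(a_{i_1},\ldots,a_{i_n}/d)(\bar x)$---namely the union of all $\Delta_\varphi(\bar x,d')$ with $\varphi\in\tp(a'_{i_1},\ldots,a'_{i_n},d')$---is already \emph{complete} in the hyperimaginary sense: if a real tuple $\bar c=(c_1,\ldots,c_n)$ realizes it, then automatically $(c_1/E,\ldots,c_n/E)\equiv_d(a_{i_1},\ldots,a_{i_n})$, so no extraneous $\Delta_\psi$ can hold of $\bar c$. The reason is a direct compactness argument. Since $\tp(\bar a',d')$ is closed under conjunction, the hypothesis that $\Delta_\varphi(\bar c,d')$ holds for every $\varphi\in\tp(\bar a',d')$ says exactly that the partial type
\[
\tp(\bar a',d')(\bar x,y)\;\cup\;\bigcup_{k\le n} E(x_k,c_k)\;\cup\;F(y,d')
\]
is finitely satisfiable, hence consistent. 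Any realization $(\bar a'',d'')$ yields, via strong homogeneity, an automorphism $\sigma\in\aut(\C)$ with $\sigma(\bar a',d')=(\bar a'',d'')$; then $\sigma(d')\,F\,d'$ gives $\sigma\in\aut(\C/d)$, and $\sigma(a'_{i_k})\,E\,c_k$ gives $\sigma(a_{i_k})=c_k/E$ for each $k$. Thus your partial type $\Pi$ already pins down the hyperimaginary EM-type exactly, and the naive compactness argument you describe (finite satisfiability via increasing tuples from $I$) directly produces the required $(b_j)_{j\in J}$, with no need for the continuous-logic machinery.
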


\begin{fact}\label{fact: indisc representatives}
    If $(a_i)_{i\in I}$ is a sequence of hyperimaginaries indiscernible over a hyperimaginary $b$, then there is a sequence $(a'_i)_{i\in I}$ consisting of representatives of elements of the sequence $(a_i)_{i\in I}$  and a representative and $b'$ of $b$ such that $(a'_i)_{i\in I}$ is indiscernible over $b'$.
\end{fact}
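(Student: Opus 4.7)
The plan is to fix arbitrary representatives of the hyperimaginaries, extract an ordinary indiscernible subsequence via Fact~\ref{extracting indiscernibles}, verify that its projection to hyperimaginaries recovers the correct EM-type, and finally transport a re-indexed version by an automorphism fixing $b$.

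First I would write $a_i = [a_i^*]_E$ and $b = [b^*]_F$ for some fixed representatives, and, by compactness, extend $(a_i)_{i\in I}$ to a sequence $(a_i)_{i\in I'}$ indiscernible over $b$ with $I\subseteq I'$ and $|I'|$ large enough for Fact~\ref{extracting indiscernibles} to apply. After fixing representatives $a_i^*$ for all $i\in I'$, applying that extraction fact to $(a_i^*)_{i\in I'}$ over the parameter $b^*$ yields an ordinary indiscernible sequence $(c_k)_{k<\omega}$ over $b^*$ such that for each $n<\omega$ there exist $i_1<\cdots<i_n$ in $I'$ with $\tp(c_1,\dots,c_n/b^*) = \tp(a_{i_1}^*,\dots,a_{i_n}^*/b^*)$.

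The key observation is then that, since any automorphism fixing $b^*$ in particular fixes the hyperimaginary $b$, projecting to hyperimaginaries gives $\tp([c_1]_E,\dots,[c_n]_E/b) = \tp(a_{i_1},\dots,a_{i_n}/b)$; by the hyperimaginary indiscernibility of $(a_i)_{i\in I'}$ over $b$, this type is independent of the choice of increasing tuple in $I'$. Hence $([c_k]_E)_{k<\omega}$ is hyperimaginary indiscernible over $b$ with the same EM-type over $b$ as $(a_i)_{i\in I}$. I would then realize the EM-type of $(c_k)_{k<\omega}$ over $b^*$ in the order type $I$ to obtain an ordinary indiscernible sequence $(d_j)_{j\in I}$ over $b^*$ with the same EM-type; its projection $([d_j]_E)_{j\in I}$ is hyperimaginary indiscernible over $b$ and, having the same EM-type over $b$ as $(a_j)_{j\in I}$, shares its type as a long tuple.

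By strong homogeneity of $\C$ over the hyperimaginary $b$, I can pick $\sigma\in\aut(\C)$ fixing $b$ with $\sigma([d_j]_E)=a_j$ for every $j\in I$. Setting $a_j' := \sigma(d_j)$ and $b' := \sigma(b^*)$ then gives $[a_j']_E = a_j$ and $[b']_F = b$, while $(a_j')_{j\in I}$ is ordinary indiscernible over $b'$ because $\sigma$ is an automorphism and $(d_j)_{j\in I}$ is ordinary indiscernible over $b^*$. The main obstacle is verifying that the ordinary extraction over a real parameter $b^*$ preserves the correct \emph{hyperimaginary} EM-type over $b$, which is precisely the point at which the hyperimaginary indiscernibility of the original sequence enters the argument.
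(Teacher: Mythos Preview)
Your argument is correct. The paper does not supply its own proof of this fact, referring instead to \cite[Proposition~16.3]{casanovas2011simple}, so there is no in-paper argument to compare against. Your route---extend the hyperimaginary sequence to a long index set, extract ordinary indiscernibles over a fixed real representative $b^*$ via Fact~\ref{extracting indiscernibles}, check that the projection to hyperimaginaries recovers the correct EM-type over $b$ using the original hyperimaginary indiscernibility, then re-index and transport back by an automorphism over $b$---is a standard and valid way to obtain the result. A minor remark: you do not actually need $I\subseteq I'$ in the extension step; any long sequence with the same hyperimaginary EM-type over $b$ suffices, since in the end you only match EM-types and then invoke an automorphism.
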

\subsection{Hyperimaginaries as continuous logic imaginaries}\label{section: hyperim as cl im}
We make explicit the connection between hyperimaginaries and continuous logic imaginaries mentioned in the introduction. 

Let $T$ be a first-order theory, and let $T'$ be its continuous logic counterpart (i.e., any model of $T'$  is a model of $T$ with the discrete metric). Recall that, by Fact \ref{hyperim as count hyperim}, a hyperimaginary is always interdefinable with some sequence of countable hyperimaginaries. Moreover, any hyperdefinable set $X/E$ can be identified with the diagonal of some product of hyperdefinable sets $\prod_{i\in I} X/E_i$, where each $E_i$ is an equivalence relation on $X$ relatively type-definable by a countable type $\pi_i(x,y)$.

Each of the hyperdefinable sets $X/E_i$ of the product $\prod_{i\in I} X/E_i$ can be interpreted as a type-definable set of a continuous logic product of imaginary sorts by the following fact (see \cite[Lemma 3.4.4]{hanson2020thesis}):
%HANSON 3.4.4.
\begin{fact}
    Let $x$ and $y$ be countable tuples of variables. If $E(x,y)$ is an equivalence relation defined by a countable type $\pi(x,y)$, then there is a continuous logic formula $\rho(x,y)$ for which $E(x,y)\cong_{T'} (\rho(x,y)=0)$ and moreover $\rho$ defines a pseudo-metric in all models $\mathcal{M}\models T$.
\end{fact}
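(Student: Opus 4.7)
My plan is to construct $\rho$ from $\pi$ in two stages. First, I would enumerate $\pi(x,y)=\{\varphi_n(x,y):n<\omega\}$ and, using compactness together with the reflexivity, symmetry, and transitivity of $E$, iteratively refine it into a filtration $(\psi_n)_{n<\omega}$ where each $\psi_n$ is a finite conjunction of formulas from $\pi$, symmetrized in the pair $(x,y)$ and strengthened so that $T \vdash \psi_n(x,x)$, with the properties
\[
\psi_{n+1}\vdash\psi_n,\qquad \bigwedge_n\psi_n(x,y)\equiv E(x,y),\qquad \psi_{n+1}(x,y)\wedge\psi_{n+1}(y,z)\vdash\psi_n(x,z).
\]
The quasi-transitivity in the last item comes from the fact that $\pi(x,y)\cup\pi(y,z)\vdash\varphi_n(x,z)$ for each $n$, so a finite subtype suffices and can be absorbed into $\psi_{n+1}$ together with the conjuncts needed for the other requirements.

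Next I would pass to $T'$, where each $\psi_n$ becomes a $\{0,1\}$-valued predicate $P_n(x,y)$ (equal to $0$ precisely when $\psi_n$ holds, under the discrete-metric convention), and set
\[
\rho_0(x,y) := \sum_{n=1}^{\infty} 2^{-n} P_n(x,y).
\]
The series converges uniformly, so $\rho_0$ is a continuous-logic definable predicate with values in $[0,1]$; symmetry, $\rho_0(x,x)=0$, and $\{\rho_0=0\}=E$ are immediate from the properties of the filtration. Since the $P_n$ are monotonically non-decreasing in $n$, quasi-transitivity translates into the quasi-ultrametric bound $\rho_0(x,z)\leq 2\max(\rho_0(x,y),\rho_0(y,z))$, so $\rho_0$ satisfies a relaxed triangle inequality but is not yet a genuine pseudometric.

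To upgrade $\rho_0$ to an honest pseudometric I would apply the Frink chain construction,
\[
\rho(x,y) := \inf\Bigl\{\sum_{i=0}^{k-1}\rho_0(z_i,z_{i+1}) : k\geq 1,\ z_0=x,\ z_k=y\Bigr\},
\]
which is automatically symmetric, reflexive, and satisfies the triangle inequality. Frink's chain lemma, applied with quasi-ultrametric constant $2$, yields a universal $C$ with $\rho_0\leq C\rho\leq C\rho_0$, so $\rho$ and $\rho_0$ have the same zero set $E$, which is the set-theoretic property we want.

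The main obstacle I anticipate is verifying that this $\rho$ is itself a continuous-logic definable predicate rather than merely a function on $\C^{|x|}\times\C^{|x|}$. My plan is to realize $\rho$ as the monotone decreasing limit of the iterated inf-convolutions $\rho^{(0)}:=\rho_0$, $\rho^{(k+1)}(x,y):=\inf_z\bigl(\rho^{(k)}(x,z)+\rho^{(k)}(z,y)\bigr)$, each of which is manifestly definable via the continuous-logic $\inf$-quantifier applied to a definable predicate. Uniform convergence $\rho^{(k)}\to\rho$ will follow from the Frink comparison $\rho_0\leq C\rho$ together with the quasi-ultrametric bound, which forces the optimal chain length in the inf to be controlled uniformly by the desired accuracy; $\rho$ is then a uniform limit of definable predicates, hence itself definable.
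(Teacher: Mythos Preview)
The paper does not prove this statement; it is quoted as a fact from \cite[Lemma 3.4.4]{hanson2020thesis}, so there is no ``paper's own proof'' to compare against. Your outline is the standard route and is on the right track, but two points need repair.

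First, with only $\psi_{n+1}(x,y)\wedge\psi_{n+1}(y,z)\vdash\psi_n(x,z)$ you get the quasi-ultrametric bound $\rho_0(x,z)\le 2\max(\rho_0(x,y),\rho_0(y,z))$, and the Frink chain argument does \emph{not} close at constant $2$: in the inductive step one splits a chain of sum $S$ into three pieces of $\rho_0$-size $\le CS/2,\ S,\ CS/2$, and two applications of the quasi-ultrametric inequality force $C\ge 2C$, which is impossible. You should instead arrange $\psi_{n+1}(x,y)\wedge\psi_{n+1}(y,z)\wedge\psi_{n+1}(z,w)\vdash\psi_n(x,w)$ (equally easy by compactness); then the three pieces lie in $U_m^3\subseteq U_{m-1}$ and Frink gives $\rho_0\le 2\rho$.

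Second, and more seriously, your justification that $\rho^{(k)}\to\rho$ uniformly is not an argument. From $\rho\le\rho^{(k)}\le\rho_0\le 2\rho$ you only get $\rho^{(k)}-\rho\le\rho$, which is small when $\rho$ is small but says nothing when $\rho$ is bounded away from $0$; the Frink comparison alone does not ``force the optimal chain length to be controlled by the accuracy.'' Concretely, replacing a long chain of sum $S$ by a short one via Frink can double the sum, so repeated shortening never brings you below $2\rho$. Equivalently, $\rho=\inf_k\rho^{(k)}$ is upper semicontinuous on $S_{xy}(\emptyset)$, but you have not shown lower semicontinuity, and without it Dini does not apply. One clean way around this is to avoid the chain metric entirely: take a countable family $(\varphi_n)_{n<\omega}$ of $[0,1]$-valued $E$-invariant definable predicates dense in the algebra of all such predicates, and set $\rho(x,y)=\sum_n 2^{-n}\,|\varphi_n(x)-\varphi_n(y)|$. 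This is manifestly a definable predicate (uniform limit of formulas), a pseudometric by the triangle inequality for $|\cdot|$, and its zero set is exactly $E$ since $E$-invariant predicates separate $E$-classes.
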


Therefore, by combining these facts, the hyperdefinable set $X/E$ can be interpreted as a type-definable set of tuples (maybe of infinite length) of continuous logic imaginary elements. Note that, as in \cite{hanson2020thesis}, we allow quotients of a countable product of sorts as continuous logic imaginaries.
 \section{The embedding Ramsey property}
Let $\mL'$ be a first-order language. Given $\mL'$-structures $A\subseteq B$, we write $\binom{B}{A}$ for the set of all embeddings from $A$ into $B$.   
\begin{defin}\label{defin: arrow notation}
    Let $A\subseteq B \subseteq C$ be $\mL'$-structures and let $r\in\mathbb{N}$. We write $$ C\to (B)^A_r$$ if for each coloring $\chi: \binom{C}{A}\to r$ there exists some $f\in \binom{C}{B}$ such that $\chi\upharpoonright_{f\circ \binom{B}{A}}$ is constant.
\end{defin}

\begin{defin}\label{defin ERP}
    Let $\mL'$ be a first-order language and $\mathcal{C}$ be a class of finite $\mL'$-structures. We say that $\mathcal{C}$ has the \emph{embedding Ramsey property}, \emph{ERP} for short, if for every $A\subseteq B \in \mathcal{C}$ and $r<\omega$ there is $C\in \mathcal{C}$ such that $C\to (B)^A_r$.
\end{defin}
%\begin{defin}
%    Let $\mL'$ be a first-order language and $\mathcal{C}$ be a class of finite $\mL'$-structures. We say that $\mathcal{C}$ is a \emph{Ramsey class} if it satisfies the following properties:
%    \begin{enumerate}
%        \item \emph{Hereditary Property} (HP): If $A\in \mathcal{C}$ and $B\subseteq A$ then $B\in \mathcal{C}$.
%        \item \emph{Joint Embedding Property} (JEP): For every $A,B\in \mathcal{C}$ there exists $C\in \mathcal{C}$ in which we can embed $A$ and $B$. 
%        \item \emph{Embedding Ramsey Property} (ERP): For every $A\subseteq B \in \mathcal{C}$ and $r<\omega$ there is $C\in \mathcal{C}$ such that $C\to (B)^A_r$.
%    \end{enumerate}
%\end{defin}

Note that the following holds (for a reference, see \cite[Lemma 2.9, Corollary 2.10]{Bodirsky_2015}):
\begin{fact}
    If a class of finite $\mL'$-structures has $ERP$, then all the structures of $\mathcal{C}$ are rigid (i.e. they have no nontrivial automorphisms).
\end{fact}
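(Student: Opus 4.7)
The plan is to prove the contrapositive: if some $A\in\mathcal{C}$ admits a nontrivial automorphism, then $\mathcal{C}$ does not have ERP. So assume $\aut(A)\neq\{\mathrm{id}\}$. I will apply ERP with the choice $B:=A$ and $r:=2$, and for any candidate $C\in\mathcal{C}$ furnished by ERP I will exhibit a $2$-coloring of $\binom{C}{A}$ with no monochromatic copy of the form $f\circ\binom{A}{A}=f\circ\aut(A)$, contradicting $C\to(A)^A_2$.

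First I would observe that the right action of $\aut(A)$ on $\binom{C}{A}$ by precomposition partitions this set into orbits, where two embeddings lie in the same orbit iff they have the same image in $C$. Each orbit has exactly $|\aut(A)|$ elements: if $f\circ\sigma=f\circ\sigma'$, then injectivity of $f$ gives $\sigma=\sigma'$. This is the structural fact that links the quantity $\binom{A}{A}=\aut(A)$, which appears in the ERP condition, to equivalence classes of embeddings we want to shatter by color.

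Next, I fix an enumeration $a_1,\ldots,a_n$ of $A$ and an arbitrary linear order on the underlying set of $C$, inducing the lexicographic order on $C^n$. To each embedding $f\in\binom{C}{A}$ I associate the tuple $\tau(f):=(f(a_1),\ldots,f(a_n))\in C^n$. Within one $\aut(A)$-orbit the assignment $f\mapsto\tau(f)$ is injective: if $\sigma\neq\sigma'$ in $\aut(A)$, then $\sigma(a_i)\neq\sigma'(a_i)$ for some $i$, and the injectivity of $f$ forces $f(\sigma(a_i))\neq f(\sigma'(a_i))$, so the $i$-th coordinates of $\tau(f\circ\sigma)$ and $\tau(f\circ\sigma')$ differ. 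Hence $\tau$ restricts to a strict total order on each orbit. Define $\chi(f):=0$ if $\tau(f)$ is the lexicographic minimum over the orbit of $f$, and $\chi(f):=1$ otherwise.

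Since every orbit has at least $|\aut(A)|\geq 2$ elements, exactly one of which is the lex-minimum, both colors occur in every orbit. But ERP would yield $f\in\binom{C}{A}$ with $\chi$ constant on $f\circ\binom{A}{A}=f\circ\aut(A)$, which is precisely the orbit of $f$, a contradiction. The argument presents no real obstacle; the only point requiring care is the choice $B:=A$, because this makes the ``copies of $B$'' singled out by ERP coincide with the $\aut(A)$-orbits that are forced to be two-colored by construction. A minor cosmetic issue is whether one prefers to fix one $C$ from ERP applied to $(A,A,2)$ and derive a contradiction there, or to argue that no $C\in\mathcal{C}$ can satisfy $C\to(A)^A_2$; both versions go through identically.
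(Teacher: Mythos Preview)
Your proof is correct. The paper does not give its own argument for this fact; it simply cites \cite[Lemma 2.9, Corollary 2.10]{Bodirsky_2015}. Your contrapositive argument---taking $B:=A$, observing that $\binom{A}{A}=\aut(A)$ since $A$ is finite, and coloring each embedding by whether it is lexicographically minimal in its $\aut(A)$-orbit---is the standard proof and matches what one finds in the cited reference.
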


\begin{remark}
    Sometimes, the symbol $\binom{B}{A}$ is used to denote the set of all isomorphic copies of $A$ in $B$. If the class $\mathcal{C}$ consists of finite $\mL'$-structures which are rigid, then coloring embeddings from $A$ into $B$ is equivalent to coloring substructures $A\subseteq B$. 
\end{remark}
\begin{defin}\label{defin: locally finite and age}
 Given a $\mL'$-structure $\mathcal{M}$ and a subset $A\subset \mathcal{M}$
\begin{itemize}
    \item We denote by $\langle A \rangle_\mathcal{M}$ the substructure of $\mathcal{M}$ generated by $A$.
    \item We say that $\mathcal{M}$ is \emph{locally finite} if for any finite $A\subseteq \mathcal{M}$ the substructre $\langle A \rangle_\mathcal{M}$ is finite.
    \item The \emph{age} of $\mathcal{M}$, denoted by $\age(\mathcal{M})$, is the class of isomorphism types of finitely generated substructures of $\mathcal{M}$
\end{itemize}
\end{defin}

\section{Topological dynamics}
We introduce some basic definitions and state some facts about topological dynamics. For a more in depth study of the topic see e.g. \cite{auslander1988minimal} and \cite{Glasner1976}.
\begin{defin}\label{defin: ellis semigroup}
\begin{itemize}
    \item A \emph{$G$-flow} is a pair $(G,X)$ consisting of a topological group $G$ that acts continuously on a compact Hausdorff space $X$.
    \item If $(G,X)$ is a $G$-flow, then its \emph{Ellis semigroup} $E(X)$ is the pointwise closure in $X^X$ of the set of functions $\pi_g: x \to g\cdot x$ for $g\in G$.
\end{itemize}
\end{defin}

\begin{fact}
    The Ellis semigroup of a $G$-flow $(G,X)$ is a compact left topological semigroup with composition as its semigroup operation. Moreover, $E(X)$ is itself a $G$-flow equipped with the action $g\eta:=\pi_g \circ \eta$ for $g\in G$ and $\eta \in E(X)$.
\end{fact}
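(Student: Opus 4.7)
The plan is to verify the three assertions in order: compactness of $E(X)$, that composition is a well-defined semigroup operation with continuity in the left variable, and that the prescribed $G$-action on $E(X)$ is continuous.

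\textbf{Compactness.} The space $X^X$ endowed with the product topology is compact Hausdorff by Tychonoff's theorem, since $X$ is. By definition $E(X)$ is the closure of $\{\pi_g : g\in G\}$ in $X^X$, hence closed in a compact space, so compact.

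\textbf{Semigroup structure and one-sided continuity.} The first step is the observation that for each fixed $\xi\in X^X$, the map $R_\xi:X^X\to X^X$, $\eta\mapsto \eta\circ\xi$, is continuous: if $\eta_i\to\eta$ pointwise, then for every $x\in X$, $\eta_i(\xi(x))\to\eta(\xi(x))$. This already yields the left-topological property, once closure of $E(X)$ under composition is established. For that, fix $\xi\in E(X)$ and set $S_\xi := R_\xi^{-1}[E(X)]$, which is closed in $X^X$ by continuity of $R_\xi$. Given any $g\in G$, pick a net $\pi_{g_i}\to \xi$; since $\pi_g$ is continuous on $X$, we have $\pi_g\circ \pi_{g_i}\to \pi_g\circ \xi$ pointwise, but $\pi_g\circ \pi_{g_i}=\pi_{gg_i}$, so $\pi_g\circ \xi\in E(X)$ and $\pi_g\in S_\xi$. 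Thus $S_\xi$ is a closed subset of $X^X$ containing the dense subset $\{\pi_g:g\in G\}$ of $E(X)$, whence $E(X)\subseteq S_\xi$, i.e.\ $\eta\circ \xi\in E(X)$ for all $\eta,\xi\in E(X)$.

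\textbf{The $G$-flow structure.} Given the above, $g\cdot \eta:=\pi_g\circ \eta\in E(X)$ is well-defined. To verify continuity of the action $G\times E(X)\to E(X)$ at a point $(g_0,\eta_0)$, I work locally: a subbasic neighborhood of $\pi_{g_0}\circ \eta_0$ is specified by a point $x\in X$ and an open set $V\ni g_0\cdot \eta_0(x)$. Continuity of the original action $G\times X\to X$ at $(g_0,\eta_0(x))$ yields a neighborhood $W$ of $g_0$ in $G$ and an open $O\ni \eta_0(x)$ in $X$ with $W\cdot O\subseteq V$; then the product of $W$ with the subbasic neighborhood of $\eta_0$ determined by $x$ and $O$ is mapped into the prescribed neighborhood. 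Intersecting finitely many such neighborhoods handles general basic open sets.

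\textbf{Main obstacle.} The only conceptual care needed is keeping track of the asymmetry: for fixed $\xi\in E(X)$ the map $\eta\mapsto \xi\circ\eta$ is in general \emph{not} continuous, because elements of $E(X)$ need not be continuous functions on $X$. It is precisely this asymmetry that makes $E(X)$ only left topological (continuous in the left argument of $\circ$) rather than jointly topological, and the proof is arranged to exploit continuity of $\pi_g$ (for the density argument) while relying only on the safe direction of one-sided continuity everywhere else.
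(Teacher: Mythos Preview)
Your proof is correct and is the standard argument. The paper itself does not supply a proof of this statement: it is recorded as a background \emph{Fact} with implicit reference to the general sources \cite{auslander1988minimal} and \cite{Glasner1976}, so there is nothing to compare against beyond noting that your verification follows the usual route one finds in those texts (Tychonoff for compactness, continuity of $R_\xi$ plus density of $\{\pi_g\}$ for closure under composition, and continuity of the original action for the $G$-flow structure on $E(X)$).
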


Recall that a left ideal $I$ of a semigroup $S$, written as $I\unlhd S$, is a set such that $SI\subseteq I$.

The following is due to Ellis:
\begin{fact}\label{Ellis theorem}
 Minimal left ideals of $E(X)$ exist and coincide with the minimal subflows of $(G,E(X))$. If $\mathcal{M}\unlhd E(X)$ is a minimal left ideal then:
\begin{itemize}
    \item The ideal $\mathcal{M}$ is closed and for every $u\in \mathcal{M}$ we have $\mathcal{M}=E(X)u$.
    \item The set of idempotents of $\mathcal{M}$, $\mathcal{J}(\mathcal{M})\subseteq \mathcal{M}$, is nonempty. Moreover, $\mathcal{M}=\bigsqcup_{u\in\mathcal{J}(\mathcal{M})} u\mathcal{M}$.
    \item For every $u\in\mathcal{J}(\mathcal{M})$, $u\mathcal{M}$ is a group with identity element $u$. Moreover, the isomorphism type of this group does not depend on the choice of $u$ and $\mathcal{M}$. It is called the $\emph{Ellis group}$ of $X$ (We note that this is the convention of model theorists, mathematicians working on topological dynamics usually mean something different by "Ellis group").
    \item For every $u\in\mathcal{J}(\mathcal{M})$ and $s\in \mathcal{M}$, $su=s$.
    \item For every minimal left ideal $\mathcal{N}\unlhd E(X)$ and for every $u\in \mathcal{J}(\mathcal{M})$, $v\in \mathcal{J}(\mathcal{N})$ we have $u\mathcal{M}\cong v\mathcal{N}$.
\end{itemize} 
\end{fact}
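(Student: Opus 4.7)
The plan is to follow Ellis's classical approach to the structure of compact semigroups in which each right multiplication $\rho_t(s):=st$ is continuous (which is what the paper's ``left topological semigroup'' convention amounts to). The argument repeatedly uses two tools: Zorn's lemma applied to families of closed left ideals or closed subsemigroups, justified by compactness of $E(X)$ (decreasing chains of nonempty closed sets have nonempty intersection), and Ellis's idempotent lemma.

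First I would establish the existence and basic properties of minimal left ideals. Zorn's lemma yields a minimal closed left ideal $\mathcal{M}\subseteq E(X)$. For any $u\in\mathcal{M}$, the set $E(X)u=\rho_u(E(X))$ is a closed left ideal contained in $\mathcal{M}$, so $\mathcal{M}=E(X)u$; in particular $\mathcal{M}$ is minimal among all left ideals and is closed. The correspondence with minimal subflows is immediate: since the $G$-action on $E(X)$ is left multiplication by $\pi_g\in E(X)$, the density of $\{\pi_g:g\in G\}$ together with continuity of $\rho_y$ identifies closed $G$-invariant subsets with closed left ideals. For Ellis's idempotent lemma, I apply Zorn's lemma to nonempty closed subsemigroups of $\mathcal{M}$ to obtain a minimal one $T$. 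For any $u\in T$, $Tu$ is a closed subsemigroup of $T$, hence $Tu=T$, giving $v\in T$ with $vu=u$; the set $\{w\in T: wu=u\}$ is a closed subsemigroup equal to $T$ by minimality, so $u^2=u$.

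The law $su=s$ for $u\in\mathcal{J}(\mathcal{M})$ and $s\in\mathcal{M}$ then follows because $\mathcal{M}u$ is a left ideal of $E(X)$ contained in $\mathcal{M}$, hence $=\mathcal{M}$ by minimality; writing $s=tu$ yields $su=tu^2=tu=s$. For the group structure on $u\mathcal{M}$: it is a subsemigroup with left identity $u$, and each element $us\in u\mathcal{M}$ has a left inverse, because minimality gives $\mathcal{M}(us)=\mathcal{M}$, so $v(us)=u$ for some $v$, and $uv\in u\mathcal{M}$ satisfies $(uv)(us)=u(vus)=u\cdot u=u$. A semigroup with left identity and left inverses is a group. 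For the partition $\mathcal{M}=\bigsqcup_{u\in\mathcal{J}(\mathcal{M})}u\mathcal{M}$, covering follows by applying the idempotent lemma to the closed subsemigroup $\{r\in\mathcal{M}: rs=s\}$ to produce an idempotent $u$ with $s=us\in u\mathcal{M}$. Disjointness is the subtlest point: if $s\in u_1\mathcal{M}\cap u_2\mathcal{M}$, then multiplying $u_2 s=s$ on the right by the group inverse $s^{-1}\in u_1\mathcal{M}$ yields $u_2 u_1=u_1$, while the law $su=s$ applied with $s=u_2$ and $u=u_1$ yields $u_2 u_1=u_2$; hence $u_1=u_2$.

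For the isomorphism $u\mathcal{M}\cong v\mathcal{N}$ across minimal left ideals, I would first observe that $\mathcal{M}v$ is a left ideal of $E(X)$ contained in $\mathcal{N}$ (since $\mathcal{N}$ is a left ideal and $v\in\mathcal{N}$), hence $\mathcal{M}v=\mathcal{N}$; symmetrically $\mathcal{N}u=\mathcal{M}$. When $\mathcal{M}=\mathcal{N}$, the map $\phi(s)=u_2 s$ is directly verified to be a group isomorphism $u_1\mathcal{M}\to u_2\mathcal{M}$: it is a homomorphism via $(u_2 s_1)(u_2 s_2)=u_2 s_1 u_2 s_2=u_2 s_1 s_2$ (using $s_1 u_2=s_1$), it sends identity to identity since $u_2 u_1=u_2$, and bijectivity follows from similar calculations using $u_1 u_2=u_1$. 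For distinct $\mathcal{M},\mathcal{N}$, one constructs mutually inverse group homomorphisms between $u\mathcal{M}$ and $v\mathcal{N}$ by combining the two right-multiplication maps $\rho_v,\rho_u$ with the one-sided identity laws in each ideal. I expect this last part and the disjointness step in the partition to be the main obstacles, requiring careful exploitation of the right-topological property and the interplay between group inverses in $u\mathcal{M}$ and the law $su=s$.
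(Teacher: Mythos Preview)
Your reconstruction of Ellis's classical argument is correct, but you should be aware that the paper does not actually prove this statement: it is recorded as a \textbf{Fact} attributed to Ellis (with the reference to \cite{auslander1988minimal} and \cite{Glasner1976} given at the start of the topological dynamics subsection), and no proof is supplied in the paper itself. So there is nothing to compare against; you have simply filled in a result the paper imports from the literature.

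On the content of your write-up: the main steps (Zorn for minimal closed left ideals, the idempotent lemma, the law $su=s$, the group structure on $u\mathcal{M}$, and the disjoint partition) are all carried out correctly. One small remark on the left-inverse step: when you write ``$uv\in u\mathcal{M}$'', this relies on having chosen $v\in\mathcal{M}$ rather than merely $v\in E(X)$; this is fine because you used $\mathcal{M}(us)=\mathcal{M}$ rather than $E(X)(us)=\mathcal{M}$, but it is worth making explicit. For the final isomorphism $u\mathcal{M}\cong v\mathcal{N}$ across distinct minimal left ideals, your sketch is a little loose. The cleanest way to make it precise is to first produce an idempotent $w\in\mathcal{J}(\mathcal{N})$ \emph{equivalent} to $u$, meaning $uw=w$ and $wu=u$: take $v\in\mathcal{J}(\mathcal{N})$ with $vu=u$ (exists since $\{n\in\mathcal{N}:nu=u\}$ is a nonempty closed subsemigroup), and set $w:=uv$; then $w$ is idempotent in $\mathcal{N}$ with the required properties. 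Right multiplication by $w$ and by $u$ then gives mutually inverse group isomorphisms between $u\mathcal{M}$ and $w\mathcal{N}$, and you finish by the within-one-ideal case you already handled.
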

    
The Ellis group has an inherited topology from $E(X)$. However, there exists another important topology on $E(X)$, usually called the \emph{$\tau$-topology}. We define it now. First, for any $a\in E(X)   $ and $B\subseteq E(X)$ let $a\circ B$ be the set of all limits of the nets $(g_ib_i)_{i\in \I}$ such that $g_i\in G$, $b_i\in B$ and $\lim g_i=a$. We define a closure operator $cl_\tau$ given by $cl_\tau(B)=u\mathcal{M}\cap (u\circ B)$ where $B\subseteq u\mathcal{M}$. The $\tau$-topology is the topology induced on $u\mathcal{M}$ by the closure operator $cl_\tau$.

The following fact is \cite[Proposition 5.41]{rzepecki2018bounded}.

\begin{fact}\label{epimorphism semigroup to group}
    Let $(G,X)$ and $(G,Y)$ be two $G$-flows, and let $\Phi: X\to Y$ be a $G$-flow epimorphism. Then $\Phi_*:E(X)\to E(Y)$ given by $$\Phi_*(\eta)(\Phi(x)):=\Phi(\eta(x))$$ is a continuous epimorphism.  
    
    If $\mathcal{M}$ is a minimal left ideal of $E(X)$ and $u\in \mathcal{J}(\mathcal{M})$ then:
    \begin{itemize}
        \item $\mathcal{M}':=\Phi_*[\mathcal{M}]$ is a minimal left ideal of $E(Y)$ and $u'=\Phi_*(u)\in \mathcal{J}(\mathcal{M}')$.
        \item $\Phi_*\!\!\upharpoonright_{u\mathcal{M}}: u\mathcal{M}\to u'\mathcal{M}'$ is a group epimorphism and a quotient map in the $\tau$-topologies.
    \end{itemize}
    Moreover, if $\Phi: X\to Y$ is a $G$-flow isomorphism then $\Phi_*\!\!\upharpoonright_{u\mathcal{M}}: u\mathcal{M}\to \Phi(u)\Phi[\mathcal{M}]$ is a group isomorphism and a homeomorphism in the $\tau$-topologies.
\end{fact}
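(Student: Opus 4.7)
The verification splits into a sequence of routine checks, with the only delicate point being the quotient-map claim in the $\tau$-topology. First I would confirm that $\Phi_*$ is well defined and lands in $E(Y)$. Writing any $\eta\in E(X)$ as a pointwise limit of $\pi_{g_i}$'s, the $G$-equivariance and continuity of $\Phi$ give, for any $x,x'$ with $\Phi(x)=\Phi(x')$,
\[
\Phi(\eta(x)) \;=\; \lim \Phi(g_i x) \;=\; \lim g_i \Phi(x) \;=\; \lim g_i \Phi(x') \;=\; \Phi(\eta(x')),
\]
and the same computation shows $\Phi_*(\eta)$ is the pointwise limit of $\pi_{g_i}$ on $Y$. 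Continuity of $\Phi_*$ in the product topologies is immediate; it is a semigroup homomorphism by unfolding $\Phi_*(\eta_1\eta_2)(\Phi(x))=\Phi(\eta_1\eta_2(x))$; and it is surjective because any $\eta'=\lim \pi_{g_i}\in E(Y)$ lifts to an accumulation point of $(\pi_{g_i})$ in the compact space $E(X)$.

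Next, $\mathcal{M}':=\Phi_*[\mathcal{M}]$ is compact and $G$-invariant, hence a closed left ideal of $E(Y)$. For minimality, any nonempty closed left ideal $N\subseteq \mathcal{M}'$ pulls back to the closed $G$-invariant nonempty set $\Phi_*^{-1}(N)\cap \mathcal{M}\subseteq \mathcal{M}$, which equals $\mathcal{M}$ by minimality, so $N=\mathcal{M}'$. Idempotency of $u':=\Phi_*(u)$ is automatic, and $\Phi_*[u\mathcal{M}]=u'\mathcal{M}'$ together with $\Phi_*$ being a semigroup homomorphism sending the identity $u$ to the identity $u'$ makes the restriction a group epimorphism.

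The main obstacle is that $\Phi_*\!\!\upharpoonright_{u\mathcal{M}}: u\mathcal{M}\to u'\mathcal{M}'$ is a quotient map for the $\tau$-topologies. The easy inclusion $\Phi_*(cl_\tau(B))\subseteq cl_\tau(\Phi_*(B))$ yields $\tau$-continuity: if $\eta=\lim g_i b_i$ with $g_i\in G$, $b_i\in B$, $g_i\to u$, then $\Phi_*(\eta)=\lim g_i\Phi_*(b_i)$ and $g_i\to \Phi_*(u)=u'$ in $E(Y)$. For the reverse inclusion, given $\eta'=\lim g_i b'_i\in u'\circ \Phi_*(B)$ with $b'_i=\Phi_*(b_i)$ for some $b_i\in B$, compactness of $E(X)$ gives a subnet along which $g_i b_i\to \eta_0\in \mathcal{M}$, and then $\Phi_*(\eta_0)=\eta'$. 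To push $\eta_0$ into $u\mathcal{M}$ one multiplies on the left by $u$, obtaining $u\eta_0=\lim u g_i b_i$ by left-continuity in $E(X)$, and $\Phi_*(u\eta_0)=u'\eta'=\eta'$ since $u'$ is the identity of $u'\mathcal{M}'$. The technical subtlety is that $u g_i$ is not in $G$, so exhibiting $u\eta_0$ as an element of $u\circ B$ requires approximating each $u g_i$ by elements of $G$ (using density of $G$ in $E(X)$) together with a standard diagonal subnet argument producing a net of the required form whose $G$-weights tend to $u$.

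Finally, if $\Phi$ is a $G$-flow isomorphism, functoriality gives $(\Phi^{-1})_*=(\Phi_*)^{-1}$, so $\Phi_*$ is a continuous bijection between compact Hausdorff spaces and thus a homeomorphism; applying the previous paragraph's argument to both $\Phi_*$ and its inverse shows $\Phi_*\!\!\upharpoonright_{u\mathcal{M}}$ is also a homeomorphism for the $\tau$-topologies, completing the proof.
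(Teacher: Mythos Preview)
The paper does not supply a proof of this fact; it is quoted from \cite[Proposition 5.41]{rzepecki2018bounded}, so there is no in-paper argument to compare against. Your outline is correct through the construction of $\Phi_*$, its semigroup and continuity properties, the minimality of $\mathcal{M}'$, and $\tau$-continuity of $\Phi_*\!\!\upharpoonright_{u\mathcal{M}}$. The genuine gap is in the reverse inclusion $cl_\tau(\Phi_*[B])\subseteq \Phi_*[cl_\tau(B)]$. The diagonal argument you sketch does not produce $G$-weights tending to $u$: passing to a subnet with $g_i\to v$ in $E(X)$ (compactness) one only knows $\Phi_*(v)=u'$, and any approximants of $ug_i$ by elements of $G$ will accumulate at $uv$, not at $u$. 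So all you have exhibited is $u\eta_0\in (uv)\circ B$, and nothing forces $uv=u$.

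The missing ingredient is a correction using the group structure of $u\mathcal{M}$, together with the reformulation of the $\tau$-closure via $\mathcal{M}$ rather than $G$ (this is exactly Lemma~\ref{tau closure equivalence} in the appendix): $cl_\tau(A)$ is the set of limits in $u\mathcal{M}$ of nets $\zeta_ia_i$ with $\zeta_i\in\mathcal{M}$, $a_i\in A$, $\zeta_i\to u$. Using this on the $Y$ side and lifting to $\mathcal{M}$, one obtains $\eta_0=\lim\eta_ib_i$ with $\eta_i\in\mathcal{M}$, $b_i\in B$, $\eta_i\to w\in\mathcal{M}$, $\Phi_*(w)=u'$, $\Phi_*(\eta_0)=\eta'$. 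Now set $z:=(uw)^{-1}$ in the group $u\mathcal{M}$; since $\Phi_*(uw)=u'u'=u'$ one gets $\Phi_*(z)=u'$, hence $\Phi_*(zu\eta_0)=u'\eta'=\eta'$, while $zu\eta_0=\lim(zu\eta_i)b_i$ with $zu\eta_i\in\mathcal{M}$ and $zu\eta_i\to zuw=u$. Thus $zu\eta_0\in cl_\tau(B)$ and the inclusion follows. Your ``standard diagonal subnet'' remark handles the passage from $G$ to $\mathcal{M}$ but not this group-theoretic correction, which is where the actual content lies.
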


From Ellis' theorem we easily deduce the following:
\begin{remark}\label{restrictio to image is monomorphism}
    If $X$ is a $G$-flow, $\mathcal{M}$ a minimal left ideal in $E(X)$, and $u \in \mathcal{M}$ an idempotent, then the map $f \colon u\mathcal{M} \to \Sym(\Image(u))$ given by $f(\eta):=\eta\!\!\upharpoonright_{\Image(u)}$ is a group monomorphism.
\end{remark}

We now recall the definition of \emph{content}. This definition was originally introduced in \cite[Definition 3.1]{Krupiski2017BoundednessAA}.

\begin{defin}\label{defin: content} Fix $A\subseteq B$.
\begin{itemize}
    \item For $p(x)\in S(B)$, the \emph{content} of $p$ over $A$ is the following set:
    $$ c_A(p):= \{ (\varphi(x,y),q(y))\in \mL(A)\times S_y(A) : \varphi(x,b)\in p(x) \text{ for some } b\models q  \} .$$
    \item Similarly, the content of a sequence $p_0(x),\dots,p_n(x)\in S(B)$ over $A$, $c_A( p_0,\dots,p_n )$, is defined as the set of all $ (\varphi_0(x,y),\dots ,\varphi_n(x,y),q(y))\in \mL(A)^n\times S_y(A)$ such that for some $b\models q$ and for every $i\leq n$ we have $\varphi_i(x,b)\in p_i$.
\end{itemize}
    If $A=\emptyset$ we simply omit it.
\end{defin}
The fundamental connection between contents and the Ellis semigroup of the flow $(\aut(\C),S_\pi(\C))$ is the following.
\begin{fact}\label{content and ellis semigroup}
    Let $\pi(x)$ be a partial type over $\emptyset$, $S_\pi(\C)$ the set of complete types over $\C$ extending $\pi$, and $(p_0,\dots,p_n)$ and $(q_0,\dots,q_n)$ sequences from $S_{\pi}(\C)$. Then $c(q_0,\dots,q_n)\subseteq c(p_0,\dots, p_n)$ if and only if there exists $\eta\in E(S_\pi(\C))$ such that $\eta(p_i)=q_i$ for every $i\leq n$.
\end{fact}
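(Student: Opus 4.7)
The plan is to prove both implications by exploiting the correspondence between pointwise convergence in the Ellis semigroup $E(S_\pi(\C))$ and convergence in the product $S_\pi(\C)^{n+1}$, while translating basic open neighborhoods in $S_\pi(\C)^{n+1}$ into content-theoretic data. The compactness of $S_\pi(\C)^{S_\pi(\C)}$ (Tychonoff) will be used to extract a subnet whose pointwise limit lies in $E(S_\pi(\C))$.

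For the easy direction $(\Leftarrow)$, assume there is $\eta\in E(S_\pi(\C))$ with $\eta(p_i)=q_i$ for all $i\leq n$. By definition of the Ellis semigroup, there is a net $(g_\alpha)$ in $\aut(\C)$ with $\pi_{g_\alpha}\to \eta$ pointwise, hence $g_\alpha p_i \to q_i$ in $S_\pi(\C)$ for each $i$. Given $(\varphi_0,\dots,\varphi_n,q)\in c(q_0,\dots,q_n)$, pick a witness $b\models q$ with $\varphi_i(x,b)\in q_i$ for each $i$. Since the basic clopen $[\varphi_i(x,b)]$ is a neighborhood of $q_i$ and there are only finitely many indices, for some sufficiently large $\alpha$ we have $\varphi_i(x,b)\in g_\alpha p_i$ for all $i$, i.e. $\varphi_i(x,g_\alpha^{-1}b)\in p_i$. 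Setting $b':=g_\alpha^{-1}b$ gives $b'\models q$ and $\varphi_i(x,b')\in p_i$, so $(\varphi_0,\dots,\varphi_n,q)\in c(p_0,\dots,p_n)$.

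For the nontrivial direction $(\Rightarrow)$, assume $c(q_0,\dots,q_n)\subseteq c(p_0,\dots,p_n)$. The plan is to build a net of automorphisms whose induced maps push $(p_0,\dots,p_n)$ arbitrarily close to $(q_0,\dots,q_n)$. A basic open neighborhood $V$ of $(q_0,\dots,q_n)$ in $S_\pi(\C)^{n+1}$ has the form $[\psi_0(x,c_0)]\times\cdots\times[\psi_n(x,c_n)]$ with $\psi_i(x,c_i)\in q_i$. Bundling the parameters into a single tuple $b:=(c_0,\dots,c_n)$ and defining $\varphi_i(x,y_0,\dots,y_n):=\psi_i(x,y_i)$, the tuple $(\varphi_0,\dots,\varphi_n,\tp(b))$ lies in $c(q_0,\dots,q_n)$. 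By the content inclusion hypothesis, there exists $b'\models \tp(b)$ with $\varphi_i(x,b')\in p_i$ for every $i$. Choosing $g_V\in \aut(\C)$ with $g_V(b')=b$, we obtain $g_V p_i\in [\psi_i(x,c_i)]$ for all $i$, i.e. $(g_V p_0,\dots,g_V p_n)\in V$. Ordering neighborhoods by reverse inclusion gives a net $(g_V)_V$ with $g_V p_i\to q_i$ for each $i$, and by compactness of $S_\pi(\C)^{S_\pi(\C)}$ a subnet of $(\pi_{g_V})$ converges pointwise to some $\eta\in E(S_\pi(\C))$ with $\eta(p_i)=q_i$ for all $i\leq n$.

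The conceptually subtle step, and the main (mild) obstacle, is the translation between basic open neighborhoods in $S_\pi(\C)^{n+1}$, which may involve distinct parameter tuples $c_i$ in different coordinates, and elements of the content, which correspond to a \emph{single} parameter type $q$. The trick of bundling all the coordinate parameters into one tuple $b=(c_0,\dots,c_n)$ and redefining the formulas to only depend on the relevant block resolves this cleanly and is what makes the content inclusion directly applicable.
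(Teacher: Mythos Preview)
Your proof is correct. The paper does not supply its own argument for this fact, citing instead \cite[Proposition 3.5]{Krupiski2017BoundednessAA}; your proof is essentially the standard one found there (approximate $(q_0,\dots,q_n)$ by automorphic images of $(p_0,\dots,p_n)$ via the content inclusion, then pass to a convergent subnet in the Ellis semigroup by compactness).
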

The proof of this fact can be found in \cite[Proposition 3.5]{Krupiski2017BoundednessAA}.

\begin{defin}\label{defin: content order}
    Let $\pi(x)$ be a partial type over $\emptyset$ and $(p_0,\dots,p_n)$ and $(q_0,\dots,q_n)$ sequences from $S_{\pi}(\C)$. We write $(q_0,\dots,q_n)\leq^c (p_0,\dots, p_n)$ if $c(q_0,\dots,q_n)\subseteq c(p_0,\dots, p_n)$.
\end{defin}

For the rest of the section we fix a $G$-flow $(G,X)$. Let $C(X)$ denote the space of all continuous real-valued maps on $X$. Given $f\in C(X)$ and $g\in G$, we denote $gf:=f(g^{-1} x)$.

We recall two important classes of flows: \emph{weakly almost periodic flows} and \emph{tame flows}. For a more in depth treatment of the topic we recommend \cite{55bad191-0781-3b5e-a214-06bd2e8fc797} for weakly almost periodic flows and  \cite{Glasner2018} for tame ones.

Recall that the \emph{weak topology} on $C(X)$ is defined as the coarsest topology such that for all $\ell$ continuous (with respect to the original topology) linear functional $\ell:C(X)\to \R$, the map $$\ell:C(X)\to \R$$
is continuous.

\begin{defin}
    We say that a function $f\in C(X)$ is \emph{weakly almost periodic} (WAP) if $(gf:g\in G)$ is relatively compact in the weak topology on $C(X)$. A flow $(G,X)$ is WAP if every $f\in C(X)$ is WAP.
\end{defin}

The following fact is due to Grothendieck \cite{f935e45f-f66d-33ab-9bf8-2d48a5e00aee}.
\begin{fact} \label{double limit}
    Let $X_0$ be any dense subset of $X$. Let $f\in C(X)$. The following are equivalent:
    \begin{itemize}
        \item $f$ is WAP.
        \item  $(gf:g\in G)$ is relatively compact in the topology of pointwise convergence on $C(X)$.
        \item For any sequences $(g_nf)_{n<\omega}\subset (gf:g\in G)$ and $(x_n)_{n<\omega}\subset X_0$ we have $$\lim_n\lim_m g_nf(x_m)=\lim_m\lim_n g_nf(x_m) $$ whenever both limits exits.
    \end{itemize}
\end{fact}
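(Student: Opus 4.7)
The plan is to prove the cycle $(1) \Rightarrow (2) \Rightarrow (3) \Rightarrow (1)$. The first two implications are direct; the last is the substantive content of Grothendieck's classical double-limit theorem.

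For $(1) \Rightarrow (2)$, I would simply note that each point evaluation $\delta_x \colon C(X) \to \R$, $\delta_x(h) = h(x)$, is a continuous linear functional, so the topology of pointwise convergence is coarser than the weak topology on $C(X)$. Hence any weakly relatively compact subset is relatively compact in the topology of pointwise convergence.

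For $(2) \Rightarrow (3)$, fix sequences $(g_n f)$ and $(x_n) \subset X_0$ for which both iterated limits exist. By (2), I may pass to a subsequence so that $g_{n_k} f \to h$ pointwise on $X$ for some $h \in C(X)$. Because the full limit $\lim_n g_n f(x_m)$ exists for each $m$, it coincides with the limit along any subsequence, so $\lim_n g_n f(x_m) = h(x_m)$, giving $\lim_m \lim_n g_n f(x_m) = \lim_m h(x_m)$. A symmetric diagonal extraction, using that $(f(x_m))$ and $(g_n f(x_m))$ are uniformly bounded real sequences, identifies $\lim_n \lim_m g_n f(x_m)$ with the same value $\lim_m h(x_m)$.

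For $(3) \Rightarrow (1)$, I would invoke the Eberlein--\v{S}mulian theorem and show that every sequence $(g_n f)$ has a weakly convergent subsequence in $C(X)$. First, density of $X_0$ together with (3) forces the pointwise closure of $\{gf : g \in G\}$ inside $\R^X$ to lie in $C(X)$: any pointwise limit $h$ must agree with a continuous function on a dense set, and equality extends by a double-limit argument showing no continuity failure can occur. Extract a pointwise convergent subsequence $g_{n_k} f \to h \in C(X)$ and then upgrade pointwise to weak convergence by testing against an arbitrary regular Borel measure $\mu$ on $X$; the interchange of $\lim_k$ and $\int \cdot \, d\mu$ is legitimized by a Fubini-type argument whose key hypothesis is precisely (3) (applied to sequences extracted from a dense subset of the support of $\mu$ that meets $X_0$). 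The main obstacle is this last upgrade: (3) only controls iterated limits of real numbers along countable sequences in $X_0$, whereas weak convergence demands convergence of integrals against arbitrary finite signed measures. Bridging that gap is the heart of Grothendieck's theorem, and I expect that reproducing the argument carefully — in particular justifying why the density of $X_0$ in $X$ suffices, rather than requiring $X_0 = X$ — will be the delicate step of the proof.
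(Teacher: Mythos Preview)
The paper does not prove this statement: it is recorded as a \emph{Fact} and attributed to Grothendieck with a citation, with no argument given. There is therefore nothing in the paper to compare your proposal against.

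As for the content of your sketch: the overall architecture $(1)\Rightarrow(2)\Rightarrow(3)\Rightarrow(1)$ is standard and correct in spirit, and you rightly identify $(3)\Rightarrow(1)$ as the substantive Grothendieck step. Your $(2)\Rightarrow(3)$ argument, however, is imprecise. Relative compactness in the pointwise topology on $C(X)$ does not by itself give sequential compactness, so ``pass to a subsequence converging pointwise on $X$ to some $h\in C(X)$'' needs justification; one typically argues first on the countable set $\{x_m\}$ via a diagonal extraction, and the phrase ``symmetric diagonal extraction'' for the other iterated limit is too vague to count as a proof. None of this is fatal --- the classical proofs fill these gaps --- but if you intend to supply a self-contained argument rather than cite Grothendieck, those steps need to be written out carefully.
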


The next two facts will be useful throughout Chapter \ref{Chapter 6}:
\begin{fact}\label{wap closed unital}
    For any flow $(G,X)$, the WAP functions form a closed (with the supremum norm) unital subalgebra of $C(X)$. 
\end{fact}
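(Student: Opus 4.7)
The plan is to verify each closure property separately, relying on the double limit characterization from Fact \ref{double limit}. The easiest observation is that constant functions are WAP, because for a constant $c \in C(X)$ the orbit $\{gc : g \in G\}$ is the singleton $\{c\}$, which is trivially relatively compact in any topology on $C(X)$. Hence the algebra contains the unit.

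Next, I would establish that the WAP functions form a linear subspace. If $f_1, f_2$ are WAP, then $\{gf_1 : g \in G\}$ and $\{gf_2 : g \in G\}$ are relatively compact in the topology of pointwise convergence on $C(X)$ (by Fact \ref{double limit}). Since the $G$-action is linear, $g(\alpha f_1 + \beta f_2) = \alpha(gf_1) + \beta(gf_2)$, and the image of the relatively compact set $\overline{\{gf_1\}} \times \overline{\{gf_2\}}$ under the continuous map $(h_1, h_2) \mapsto \alpha h_1 + \beta h_2$ is again relatively compact and contains all orbit elements of $\alpha f_1 + \beta f_2$; so $\alpha f_1 + \beta f_2$ is WAP. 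For sup-norm closedness, if $f_n \to f$ uniformly with each $f_n$ WAP, I would use an $\varepsilon/3$ argument with the double-limit criterion: since $\|gf - gf_n\|_\infty = \|f - f_n\|_\infty$ uniformly in $g$, the iterated limits of $gf$ along any sequences $(g_k), (x_\ell)$ differ from the corresponding iterated limits of $gf_n$ by at most $2\|f - f_n\|_\infty$, so equality of the iterated limits for each $f_n$ transfers to $f$.

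The main obstacle is closure under pointwise multiplication, since multiplication is not linear in each variable's action. My plan here is again to invoke the double-limit criterion of Fact \ref{double limit}. Given sequences $(g_k) \subset G$ and $(x_\ell) \subset X_0$ for which both iterated limits $\lim_k \lim_\ell (g_k(f_1 f_2))(x_\ell)$ and $\lim_\ell \lim_k (g_k(f_1 f_2))(x_\ell)$ exist, after passing to subsequences one may assume (using that $f_1, f_2, f_1 f_2$ are all uniformly bounded by $M := \max(\|f_1\|_\infty, \|f_2\|_\infty, 1)$, so the relevant nets live in a compact subset of $\mathbb{R}$) that the four iterated limits of $g_k f_i(x_\ell)$ for $i = 1, 2$ all exist. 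By hypothesis each $f_i$ is WAP, so these iterated limits coincide, and the iterated limits of the product factor as a product of the iterated limits, giving equality for $f_1 f_2$. Hence $f_1 f_2$ is WAP.

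Combining these four steps yields the claim. The one subtle point I would be careful about is the subsequence extraction in the multiplicative step: one has to ensure that all four relevant iterated limits can be forced to exist simultaneously on a common subsequence, which requires a standard diagonal argument using boundedness of the functions involved.
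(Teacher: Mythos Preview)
Your argument is correct. The paper does not supply its own proof of this fact; it simply states the result and refers the reader to \cite{ibarlucia2016dynamical} (after Fact~2.1 there) for details. Your direct verification via the double-limit criterion is exactly the standard route such a reference would take, and the one subtlety you flag --- the simultaneous subsequence extraction needed for the multiplicative step --- is real but handled by the routine diagonal argument you describe.
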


 Details about this result can be found in \cite{ibarlucia2016dynamical} after Fact 2.1. Combining it with the Stone-Weierstrass theorem, we obtain the second fact:

\begin{fact} \label{separating points: WAP}
    If $\mathcal{A}\subseteq C(X)$ is a family of functions that separate points, then $(G,X)$ is WAP if and only if every $f\in \mathcal{A}$ is WAP.
\end{fact}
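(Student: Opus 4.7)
The plan is to use Fact \ref{wap closed unital} together with the Stone--Weierstrass theorem, exactly as the hint in the excerpt suggests. The right-to-left implication (which is the content of the statement) will follow by showing that the closed unital subalgebra of WAP functions, once it contains a point-separating family, must exhaust all of $C(X)$.

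First I would dispose of the trivial direction: if $(G,X)$ is WAP then by definition every $f\in C(X)$ is WAP, and in particular every $f\in\mathcal{A}$ is. So the whole content lies in the converse.

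For the converse, assume every $f\in\mathcal{A}$ is WAP. Let $W\subseteq C(X)$ denote the set of WAP functions. By Fact \ref{wap closed unital}, $W$ is a closed unital subalgebra of $C(X)$ (with respect to the supremum norm). Hence $W$ contains the unital subalgebra $\mathcal{B}$ generated by $\mathcal{A}$, and therefore also contains its closure $\overline{\mathcal{B}}$. Since $X$ is compact Hausdorff and $\mathcal{A}\subseteq\mathcal{B}$ separates points of $X$, the subalgebra $\mathcal{B}$ separates points and contains the constants; by the (real) Stone--Weierstrass theorem, $\overline{\mathcal{B}}=C(X)$. Thus $W=C(X)$, which is exactly the statement that $(G,X)$ is WAP.

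There is no real obstacle here: the only subtle point is making sure we apply Stone--Weierstrass to a \emph{unital} subalgebra (which is why we invoke unitality in Fact \ref{wap closed unital}) and that $W$ is closed under the sup norm (rather than merely under the weak topology on $C(X)$), which is again supplied by Fact \ref{wap closed unital}. If the $C(X)$ under consideration were the complex-valued one, one would additionally need $W$ to be closed under complex conjugation, but since we are working with real-valued continuous functions this is automatic.
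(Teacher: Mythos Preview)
Your proof is correct and follows exactly the approach indicated in the paper: combine Fact \ref{wap closed unital} (the WAP functions form a closed unital subalgebra) with the Stone--Weierstrass theorem to conclude that this subalgebra is all of $C(X)$ once it contains a point-separating family. The paper itself only states this combination without spelling out the details, so your write-up is, if anything, more complete.
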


\begin{defin}\label{defin: indep functons}
    We say that a sequence of functions $(f_n)_{n<\omega} \in C(X)$ is \emph{independent} if there are $r<s\in \R$ such that $$ \bigcap_{n\in P}f_n^{-1}(-\infty,r) \cap \bigcap_{n\in M}f_n^{-1}(s,\infty)\neq \emptyset$$ for all finite disjoint $P,M\subset\omega$. Given a dense $X_0\subseteq X$, we can equivalently require $$ \bigcap_{n\in P}f_n^{-1}(-\infty,r) \cap \bigcap_{n\in M}f_n^{-1}(s,\infty)\cap X_0\neq \emptyset$$ for all finite disjoint $P,M\subset\omega$.
\end{defin}

\begin{defin}
    Let $\{ f_n: X\to \R \}_{n\in \mathbb{N}}$ be a uniformly bounded sequence of functions. We say that this sequence is an $\ell_1$-sequence on $X$ if there exists a real constant $a>0$ such that for all $n\in \mathbb{N}$ and choices of real numbers $c_1,\dots,c_n$ we have $$ a\cdot \sum_{i=1}^n \lvert c_i\lvert \leq \left\lVert \sum_{i=1}^n c_i f_i \right\rVert_\infty$$
\end{defin}

The following equivalences can be found in \cite[Theorem 2.4]{Glasner2018}

\begin{fact}\label{l_1 and indep seq}
    The following are equivalent for a bounded $F\subseteq C(X)$:
\begin{itemize}
    \item $F$ does not contain an independent sequence.
    \item  $F$ does not contain an $\ell_1$-sequence.
    \item Each sequence in $F$ has a pointwise convergent subsequence in $\R^X$.
\end{itemize}
\end{fact}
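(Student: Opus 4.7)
My plan is to prove the three implications $(1) \Rightarrow (2) \Rightarrow (3) \Rightarrow (1)$. The first is a direct combinatorial estimate, the last a compactness argument, and the middle is the substantive step invoking Rosenthal's $\ell_1$ theorem.

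For $(1) \Rightarrow (2)$, I would argue the contrapositive: an independent sequence $(f_n)\subseteq F$ with witnesses $r<s$ is automatically an $\ell_1$-sequence with constant $a=(s-r)/2$. Given scalars $c_1,\dots,c_n$, split the indices as $P=\{i:c_i<0\}$ and $M=\{i:c_i>0\}$. Applying independence twice I would pick $x$ with $f_i(x)<r$ for $i\in P$ and $f_i(x)>s$ for $i\in M$, and $x'$ achieving the opposite. A short estimate on $\sum c_i f_i(x)-\sum c_i f_i(x')$ yields $(s-r)\sum|c_i|$, hence $\|\sum c_i f_i\|_\infty \ge \tfrac{s-r}{2}\sum|c_i|$.

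For $(3) \Rightarrow (1)$, I would again argue the contrapositive. Suppose $(f_n)\subseteq F$ is independent with witnesses $r<s$; I want to show that no subsequence converges pointwise. Any subsequence $(f_{n_k})$ inherits independence, so for each $N$ the closed set $\bigcap_{k\le N,\,k\text{ even}}\{x:f_{n_k}(x)\le r\}\cap\bigcap_{k\le N,\,k\text{ odd}}\{x:f_{n_k}(x)\ge s\}$ is nonempty. By compactness of $X$ and the finite intersection property, these closed sets share a common point $x$, at which the sequence $f_{n_k}(x)$ oscillates between values $\le r$ and $\ge s$, contradicting pointwise convergence.

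The main obstacle is $(2) \Rightarrow (3)$, which is essentially Rosenthal's $\ell_1$ dichotomy: if a bounded set $F\subseteq C(X)$ contains no $\ell_1$-sequence, then every sequence in $F$ has a weakly Cauchy subsequence in the Banach space $(C(X),\|\cdot\|_\infty)$. Testing such a subsequence against the Dirac evaluation functionals $\delta_x$ (which are continuous and linear on $C(X)$ since $X$ is compact Hausdorff) shows it is pointwise Cauchy, and hence pointwise convergent in $\R^X$. I would invoke Rosenthal's theorem as a black box rather than reproduce its Nash-Williams/Galvin-Prikry Ramsey-type proof, since this is the deep analytic ingredient and the rest of the equivalence is purely formal.
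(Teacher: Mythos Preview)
The paper does not prove this fact; it is simply cited from \cite[Theorem 2.4]{Glasner2018}, so there is no in-paper argument to compare against.

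That said, your cycle of implications has a genuine gap. Write the conditions as (1) no independent sequence, (2) no $\ell_1$-sequence, (3) every sequence has a pointwise convergent subsequence. The argument you label ``$(1)\Rightarrow(2)$ by contrapositive'' shows that an independent sequence is automatically an $\ell_1$-sequence; that is $\neg(1)\Rightarrow\neg(2)$, which is the contrapositive of $(2)\Rightarrow(1)$, not of $(1)\Rightarrow(2)$. So what you have actually established is $(2)\Rightarrow(1)$, $(2)\Rightarrow(3)$ (Rosenthal, Banach-space form), and $(3)\Rightarrow(1)$ --- three implications all terminating at $(1)$, and nothing of the form $(1)\Rightarrow\cdots$. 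The circle does not close.

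The missing direction is not a trivial relabeling. To obtain $(1)\Rightarrow(3)$ you need: a bounded sequence of functions with no pointwise convergent subsequence has an independent subsequence. This is precisely Rosenthal's combinatorial lemma in its original function-space form, and the Banach-space dichotomy you quote (``no $\ell_1$-subsequence $\Rightarrow$ weakly Cauchy subsequence'') is \emph{derived} from that lemma by composing it with exactly the ``independent $\Rightarrow$ $\ell_1$'' estimate you proved first. In other words, the black box you invoke already contains the step you are missing, but only after it has been packaged with the step you reproved and pointed in the wrong direction. The cleanest fix is to cite Rosenthal's lemma in its function-space formulation to get $(1)\Rightarrow(3)$ directly; then your compactness argument gives $(3)\Rightarrow(1)$, your $\ell_1$-estimate gives $\neg(1)\Rightarrow\neg(2)$, and the remaining $\neg(2)\Rightarrow\neg(3)$ follows since an $\ell_1$-sequence has no weakly Cauchy (hence no bounded pointwise convergent) subsequence by the Schur property and weak sequential completeness of $\ell_1$.
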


\begin{defin}
    We say that a function $f\in C(X)$ is \emph{tame} if $(gf:g\in G)$ does not contain an independent subsequence. A flow $(G,X)$ is tame if every $f\in C(X)$ is tame.
\end{defin}

The next two facts will be useful throughout Chapter \ref{Chapter 6}:

\begin{fact}\label{tame closed unital}
    For any flow $(G,X)$, the tame functions form a closed (with the supremum norm) unital subalgebra of $C(X)$. 
\end{fact}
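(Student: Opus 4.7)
The plan is to work with the characterization of tameness coming from Fact \ref{l_1 and indep seq}: a function $f\in C(X)$ is tame iff every sequence in the orbit $\{gf:g\in G\}$ admits a pointwise convergent subsequence in $\R^X$. Since $X$ is compact, every $f\in C(X)$ is bounded and $\|gf\|_\infty = \|f\|_\infty$, so orbits (and the uniformly bounded sets we form from them) are uniformly bounded, which is what Fact \ref{l_1 and indep seq} requires. The unit $1\in C(X)$ is trivially tame since its $G$-orbit is $\{1\}$.

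For closure under linear combinations and products, I would argue by successive extraction. Suppose $f_1,f_2$ are tame and $\alpha,\beta\in\R$. Given any sequence $(g_n)$ in $G$, first extract a subsequence along which $(g_n f_1)$ converges pointwise (using tameness of $f_1$); then further extract a subsequence along which $(g_n f_2)$ converges pointwise (using tameness of $f_2$). On this final subsequence, both $\alpha g_n f_1 + \beta g_n f_2 = g_n(\alpha f_1+\beta f_2)$ and $(g_n f_1)(g_n f_2)=g_n(f_1 f_2)$ converge pointwise, by continuity of addition and multiplication on $\R$. Hence $\alpha f_1+\beta f_2$ and $f_1 f_2$ are tame, so the tame functions form a unital subalgebra.

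For norm closure, suppose $(f^{(k)})_{k<\omega}$ is a sequence of tame functions with $f^{(k)}\to f$ uniformly; I want to show $f$ is tame. Fix a sequence $(g_n)_{n<\omega}$ in $G$. By tameness of each $f^{(k)}$ and a standard diagonal argument, I can extract a subsequence $(g_{n_j})_j$ along which $(g_{n_j}f^{(k)})_j$ converges pointwise for every $k$. I then claim $(g_{n_j}f)_j$ is pointwise Cauchy: for any $x\in X$ and $\varepsilon>0$, choose $k$ with $\|f-f^{(k)}\|_\infty<\varepsilon/3$; noting $\|g_{n_j}f-g_{n_j}f^{(k)}\|_\infty = \|f-f^{(k)}\|_\infty$, the familiar $3\varepsilon$-argument together with Cauchyness of $(g_{n_j}f^{(k)}(x))_j$ gives Cauchyness of $(g_{n_j}f(x))_j$ in $\R$. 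Thus $(g_{n_j}f)_j$ converges pointwise, so $f$ is tame.

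The only mildly delicate step is the norm closure, where one must keep track that the uniform bound on $\|f^{(k)}\|_\infty$ propagates to a uniform bound on the orbits so that the $3\varepsilon$-estimate applies uniformly over all $j$; once that is set up, the argument is routine. The algebra and linearity parts are straightforward successive extraction.
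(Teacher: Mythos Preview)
Your proof is correct. The paper does not actually prove this fact; it merely cites \cite[Fact 2.72]{rzepecki2018bounded}, so there is no in-paper argument to compare against. Your approach---using the Rosenthal-type characterization from Fact \ref{l_1 and indep seq} (tameness $\Leftrightarrow$ every sequence in the orbit has a pointwise convergent subsequence), then closing under algebraic operations by successive extraction and under uniform limits by a diagonal plus $3\varepsilon$ argument---is the standard route and entirely sound. One minor comment: your final remark about needing a uniform bound on $\|f^{(k)}\|_\infty$ is slightly off-target; the $3\varepsilon$ step only uses that the $G$-action is isometric (so $\|g_{n_j}f - g_{n_j}f^{(k)}\|_\infty = \|f - f^{(k)}\|_\infty$), which you already noted, and no separate uniform bound on the approximants is required.
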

A proof can be found in \cite[Fact 2.72]{rzepecki2018bounded}.

From the previous fact and Stone-Weierstrass theorem we obtain the following:

\begin{fact} \label{separating points: tame}
    If $\mathcal{A}\subseteq C(X)$ is a family of functions that separate points, then $(G,X)$ is tame if and only if every $f\in \mathcal{A}$ is tame.
\end{fact}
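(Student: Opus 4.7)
The forward direction is immediate: if the flow $(G,X)$ is tame, then by definition every continuous function on $X$ is tame, so in particular every $f \in \mathcal{A}$ is tame. Thus only the converse requires work, and my plan is to mimic the Stone--Weierstrass argument that gives Fact \ref{separating points: WAP} from Fact \ref{wap closed unital}, substituting Fact \ref{tame closed unital} for Fact \ref{wap closed unital}.

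Assume that every $f \in \mathcal{A}$ is tame, and let $\mathcal{T}(X) \subseteq C(X)$ denote the set of tame functions on $X$. By Fact \ref{tame closed unital}, $\mathcal{T}(X)$ is a closed unital subalgebra of $C(X)$ (with respect to the supremum norm). Since $\mathcal{A} \subseteq \mathcal{T}(X)$, the closed unital subalgebra $\mathcal{B}$ of $C(X)$ generated by $\mathcal{A}$ is contained in $\mathcal{T}(X)$.

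Now I would invoke the Stone--Weierstrass theorem. The subalgebra $\mathcal{B}$ contains the constant functions and, since it contains $\mathcal{A}$ which by hypothesis separates points of $X$, the subalgebra $\mathcal{B}$ itself separates points of the compact Hausdorff space $X$. Hence, by Stone--Weierstrass, $\mathcal{B}$ is dense in $C(X)$, and being closed we conclude $\mathcal{B} = C(X)$. Combined with $\mathcal{B} \subseteq \mathcal{T}(X) \subseteq C(X)$, this gives $\mathcal{T}(X) = C(X)$, i.e.\ every continuous function on $X$ is tame, which means $(G,X)$ is a tame flow.

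The proof involves no genuine obstacle once Fact \ref{tame closed unital} is in hand; the only point that requires a moment of care is checking the hypotheses of Stone--Weierstrass for $\mathcal{B}$, namely that it is a real unital subalgebra that separates points. Unitality comes from the construction of $\mathcal{B}$, and separation of points is inherited from $\mathcal{A}$ by the trivial observation that any two distinct points already separated by some $f \in \mathcal{A} \subseteq \mathcal{B}$ are separated in $\mathcal{B}$.
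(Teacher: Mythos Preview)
Your proof is correct and follows exactly the approach indicated in the paper, which simply says the fact follows ``from the previous fact and Stone-Weierstrass theorem,'' where the previous fact is Fact \ref{tame closed unital}. You have merely spelled out the details of that one-line justification.
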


Let $T$ be a complete, first-order theory, and $\C \models T$ a monster model. Let $E$ be a $\emptyset$-type-definable equivalence relation on a $\emptyset$-type-definable subset $X$ of $\C^\lambda$ (or a product of sorts), where $\lambda< \kappa$ (from the definition of $\C$). We can define a closed $\aut(\C)$-invariant equivalence relation $\Tilde{E}$ on $S_X(\C)$ given by $$ p\Tilde{E}q\iff \exists a\models p, b\models q \left( aEb \right),$$ or equivalently, $$ p\Tilde{E}q\iff \exists a\models p, b\models q \left( \tp(\bslant{a/E}{\C})=\tp(\bslant{b/E}{\C})  \right) .$$ The equivalence relation $\Tilde{E}$ satisfies that $S_{X/E}(\C)\cong S_X(\C)/\Tilde{E}$ as $\aut(\C)$-flows. Note that $a$ and $b$ above are from a bigger monster model, so by $E$ we mean the interpretation of $E$ on this bigger monster model.

\chapter{Maximal stable quotients of type-definable groups in NIP theories}\label{Chapter 3}

%%%Krzys: ``stability of'' instead of ``stability oh'' :)

\section{Characterizations of stability of hyperdefinable sets}\label{section: characterizations of stability}

%Let $T$ be a complete, first order theory, and $\C \models T$ a monster model. Let $E$ be a $\emptyset$-$\bigwedge$-definable equivalence relation on a $\emptyset$-$\bigwedge$-definable subset $X$ of $\C^n$ (or a sort). In this section, for simplicity we assume that $n$ is finite; if it was an infinite cardinal, one could also get similar results to the ones below, but for that one would have to adapt (rather than apply) various results from \cite{MR2657678}, sometimes working with sufficiently saturated models in place of arbitrary models. 

Let $T$ be a complete, first-order theory, and $\C \models T$ a monster model. Let $E$ be a $\emptyset$-type-definable equivalence relation on a $\emptyset$-type-definable subset $X$ of $\C^\lambda$ (or a product of sorts), where $\lambda< \kappa$ (from the definition of $\C$). Recall that $E$ is said to be {\em bounded} if $|X/E| < \kappa$.

%%%Krzys3: I added ``first order''.	
In this section, we give some characterizations of stability of the hyperdefinable set $X/E$, analogous to classical characterizations of stability of a first-order theory. This involves continuous logic (CL). Assuming NIP, we also give a characterization using generically stable types, which we introduce in the context of $X/E$ (which in fact could be also done in a general CL context). 

Since finite models are trivially stable, we will assume that $T$ has infinite models.

%%%Krzys3: I removed ``up'' in ``up to''.
%%%Adrian: removed ``(althought not contained in any publised paper, to our knowledge)'' to ``(see \cite{2010,conant2021separation} and [\cite{hanson2020thesis}, chapter 3] in the language of continuous logic and \cite{10.2178/jsl/1122038916} in the language of CATs)''
%%%Krzys revised: \cite[Chapter 3]{hanson2020thesis} instead of  [\cite{hanson2020thesis}, chapter 3]
It is folklore that $E$ yields a pseudometric (or a set of pseudometrics) on $X$ (see \cite{2010,conant2021separation} and \cite[Chapter 3]{hanson2020thesis} in the language of continuous logic and \cite{10.2178/jsl/1122038916} in the language of CATs), which in turn leads to a presentation of $X/E$ as a type-definable set of imaginaries in the sense of continuous logic as presented in Section \ref{section: hyperim as cl im}.
%%%Adrian: added the following line as the review recommended
Note that in this translation hyperdefinable sets do not translate to continuous logic definable sets. However, for our purposes, it is more convenient to look at the connection with continuous logic in a different way.

We will focus on the first-order theory $T$ and treat it as a continuous logic theory, as the aim of this thesis is to talk about $X/E$ rather than develop continuous logic in general. We will use the formalism from \cite[Subsection 3.1]{hrushovski2021amenability} and \cite[Section 3]{hrushovski2021order} and adapt some results from \cite{MR2657678}. In particular, by a {\em CL-formula over $A$} we mean a continuous function $\varphi \colon S_n(A)\to \mathbb{R}$.  If $\varphi$ is such a CL-formula, then for any $\bar b\in M^n$ (where $M \models T$) by $\varphi(\bar b)$ we mean $\varphi(\tp(\bar b/A))$; note that the range of every CL-formula is compact. So a CL-formula can be thought of as a function from $\C^n$ to $\mathbb{R}$ which factors through $S_n(A)$ via a continuous map $S_n(A) \to \mathbb{R}$.
What are called {\em definable predicates}, in finitely many variables and without parameters,  in \cite{MR2657678} are precisely CL-formulas over $\emptyset$, but where the range is contained in $[0,1]$. In any case, a CL-formula can be added as a new CL-predicate and then it becomes a legitimate formula in the sense of continuous logic. It is not so if we allow the domain of a CL-formula to be an infinite Cartesian power of $\C$ (which is necessary to deal with $X/E$ in the case when $\lambda$ is infinite), but still the results from \cite{MR2657678} 
%and \cite{MR2723787} 
which we will be using are valid for such generalized continuous logic formulas.

Let $M$ be a model, and $\varphi(x,y)$ a CL-formula over $M$. Let $a\in \C^{|x|}$. 
Then $\tp_{\varphi}(a/M)$ is the function taking $b\in M^{|y|}$ (or $\varphi(x,b)$) to $\varphi(a,b)$, and is called a {\em complete $\varphi(x,y)$-type over $M$}. The space of all complete $\varphi$-types over $M$ is denoted by $S_\varphi(M)$ (it is naturally a quotient of $S(M)$, and the topology on $S_\varphi(M)$ is the quotient topology).
The type $\tp_{\varphi}(a/M)$ is {\em definable} if it is the restriction to $M^{|y|}$ of a CL-formula $\psi(y)$ over $M$, i.e. $\varphi(a,b)=\psi(b)$ for $b \in M^{|y|}$.

%To deal with $X/E$ in full generality (where $X\subseteq \C^\lambda$ is $\emptyset$-$\bigwedge$-definable), we need to extend the context by allowing CL-formulas defined only on a $\emptyset$-$\bigwedge$-definable subset $Z$ of a possibly infinite Cartesian $\C^\mu$. Then in the above definition of complete $\phi$-types and $S_\phi(M)$, we use $a \in Z$. In this extended context, CL-formulas cannot be added as predicates in the usual continuous logic formalism, but the results from  \cite{MR2657678} which we are going to use go through working over sufficiently saturated models in place of arbitrary models.

%		\begin{defin}
%		A definable $\varphi$-predicate is a definable predicate $\psi(x)$ possibly with parameters, which is equivalent to an infinitary continuous combination of instances of $\varphi$:
%		$$\psi(x)\equiv \theta((\varphi(x,b_n))_{n\in \mathbb{N}}), \hspace{15pt} \theta:[0,1]^\mathbb{N}\to[0,1] continuous $$
%	\end{defin}

%	\begin{defin}
%		Let $p(x) \in S_\varphi(M)$. A definition for p is an $M$-definable $\varphi$-predicate $\psi(y)$
%		satisfying $\varphi(x, b)^p = \psi^{M}(b)$ for all $b \in M$.
%	\end{defin}

%	The following family will be crucial. 

From now on, let $\mathcal{F}_{X/E}$ be the family of all functions $f : X \times \mathfrak{C}^m \to \mathbb{R}$ which factor through $X/E\times\mathfrak{C}^m$ and can be extended to a CL-formula $\C^{\lambda} \times \C^m \to \mathbb{R}$ over $\emptyset$, where $m$ ranges over $\omega$. 
%%%Krzys3: ``Tietze'' instead of ``Tietz''.
(Note that, by Tietze's extension theorem, a function $f : X \times \C^m \to \mathbb{R}$ extends to a CL-formula over $\emptyset$ if and only if it factors through the type space $S_{X \times \C^m}(\emptyset)$ via a continuous function $S_{X \times \C^m}(\emptyset) \to \mathbb{R}$.)
%(We could call such functions {\em CL-formulas on $X \times \C^m$}.) 
For $f\in \mathcal{F}_{X/E}$, a {\em complete $f$-type over $M$} is the function taking  $f(x,b)$ (for $b\in M^{|y|}$) to $f(a,b)$ for some fixed $a \in X$, and is denoted by $\tp_f(a/M)$. We get the space $S_f(M)$ of all complete $f$-types over $M$. A {\em complete $\mathcal{F}_{X/E}$-type over $M$} is 
%the tuple $(\tp_f(a/M))_{f \in  \mathcal{F}_{X/E}}$ for some $a \in X$, and we have the space  $S_{\mathcal{F}_{X/E}}(M)$ of all complete  $\mathcal{F}_{X/E}$-types over $M$.
the union $\bigcup_{f \in \mathcal{F}_{X/E}} \tp_f(a/M)$ for some $a \in X$, and  $S_{\mathcal{F}_{X/E}}(M)$ is the space of all complete  $\mathcal{F}_{X/E}$-types over $M$. The definition of $\tp_f(a/M)$ being definable is the same as in the previous paragraph; a type in $S_{\mathcal{F}_{X/E}}(M)$ is {\em definable} if its restriction to any $f \in \mathcal{F}_{X/E}$ is  definable.

%%%Krzys3: I moved the paragraph that was before Rem. 2.5 to here, as we use types in Prop. 2.1. I also replaced $M$ by any (small) $A \subseteq \C$, as we use below types over the empty set. In Lemma 3.3, we also use types over hyperimaginaries, but I think one can guess the definition.
%Recall that two hyperimaginaries {\em have the same type} if they lie in the same orbit under $\aut(\C)$.
Let $A \subset \C$ (be small). Recall that the complete types over $A$ of elements of $X/E$ can be defined as the $\aut(\C/A)$-orbits on $X/E$,  or the preimages of these orbits under the quotient map, or the partial types defining these preimages, or the classes of the equivalence relation on $S_X(A)$ given in the proof of Remark \ref{2.7}. The space of all such types is denoted by $S_{X/E}(A)$.

\begin{prop}\label{proposition: F_X/E separates points}
	For any $a_1=a'_1/E$, $a_2=a'_2/E$ in $X/E$ and $b_1,b_2\in \mathfrak{C}^m$ 
	$$\tp(a_1,b_1)\neq \tp(a_2,b_2)\iff (\exists f\in \mathcal{F}_{X/E})(f(a'_1,b_1)\neq f(a'_2,b_2)) $$
\end{prop}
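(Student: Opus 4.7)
The plan is to tackle the two directions separately. The $(\Leftarrow)$ direction should be straightforward by $\aut(\C)$-invariance of CL-formulas over $\emptyset$, while $(\Rightarrow)$ will require a topological argument on the hyperimaginary type space.

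For $(\Leftarrow)$, I argue contrapositively: if $\tp(a_1,b_1)=\tp(a_2,b_2)$, pick $\sigma\in\aut(\C)$ with $E(\sigma(a'_1),a'_2)$ and $\sigma(b_1)=b_2$. For any $f\in\mathcal{F}_{X/E}$, choose a CL-formula $\varphi$ over $\emptyset$ extending $f$; since $\varphi$ factors through $S(\emptyset)$, it is $\aut(\C)$-invariant, so $f(a'_1,b_1)=\varphi(\sigma(a'_1),b_2)$. The remaining equality $\varphi(\sigma(a'_1),b_2)=f(a'_2,b_2)$ then follows because $f$ factors through $X/E\times\C^m$ and $E(\sigma(a'_1),a'_2)$.

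For $(\Rightarrow)$, I pass to the hyperimaginary type space $S_{X/E\times\C^m}(\emptyset)$, which is the image of $S_{X\times\C^m}(\emptyset)$ under the natural quotient map $\pi$ sending $\tp(a',b)$ to $\tp(a'/E,b)$. The key fact needed is that $S_{X/E\times\C^m}(\emptyset)$ is compact Hausdorff; compactness is immediate from surjectivity of $\pi$, while Hausdorff-ness is a standard property of hyperimaginary type spaces (or, alternatively, follows from the identification of $X/E$ with a type-definable set of continuous-logic imaginaries discussed in Section \ref{section: hyperim as cl im}). Since $\tp(a_1,b_1)$ and $\tp(a_2,b_2)$ are distinct points of this Hausdorff space, Urysohn's lemma supplies a continuous $g\colon S_{X/E\times\C^m}(\emptyset)\to\R$ separating them.

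The final step is to turn $g$ into an $f\in\mathcal{F}_{X/E}$ via $g\circ\pi$ followed by Tietze: pull $g$ back to a continuous function on the closed subspace $S_{X\times\C^m}(\emptyset)\subset S_{\C^\lambda\times\C^m}(\emptyset)$, then extend by Tietze to a continuous function $\varphi$ on the ambient type space, i.e.\ a CL-formula over $\emptyset$. Its restriction $f$ to $X\times\C^m$ automatically factors through $X/E\times\C^m$, so $f\in\mathcal{F}_{X/E}$, and by construction $f(a'_1,b_1)\neq f(a'_2,b_2)$. The main obstacle, worth double-checking, is the Hausdorff property of $S_{X/E\times\C^m}(\emptyset)$; once that is in hand, everything else is a routine application of Urysohn and Tietze.
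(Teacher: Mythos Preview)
Your proof is correct and follows essentially the same approach as the paper. The paper phrases the $(\Rightarrow)$ direction in terms of the logic topology on the quotient $(X\times\C^m)/E'$ by the bounded $\emptyset$-type-definable equivalence relation $E'$ of having the same $(X/E\times\C^m)$-type over $\emptyset$, while you work with the hyperimaginary type space $S_{X/E\times\C^m}(\emptyset)$; these are the same compact Hausdorff space, and both arguments apply Urysohn followed (implicitly in the paper, explicitly in your version) by Tietze. Your $(\Leftarrow)$ via automorphism invariance is a minor variant of the paper's argument via type-definable level sets.
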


\begin{proof}
	Let us define an equivalence relation $E'$ on $X\times \mathfrak{C}^{m}$ by $$ (x_1,y_1) {E'} (x_2,y_2)\iff (x_1/E,y_1)\equiv(x_2/E,y_2). $$
	Note that $E'$ is a $\emptyset$-type-definable, bounded equivalence relation.
	
	($\Leftarrow$) Assume $r_1:=f(a_1',b_1)\neq f(a'_2,b_2)=:r_2$ for some $f\in \mathcal{F}_{X/E}$. Since the sets $f^{-1}(r_1)$ and $f^{-1}(r_2)$ are  $\emptyset$-type-definable and they are unions of ($E \times \{=\}$)-classes, they are unions of $E'$-classes. But they are also disjoint. Hence, $ (a_1',b_1)$ is not $E'$-related to $(a_2',b_2)$, i.e. $\tp(a_1,b_1) \ne \tp(a_2,b_2)$. 
	%Let $\varphi_1(x,y)$ and $\varphi_2(x,y)$ be the classical formulas defining those sets, by construction we have $\varphi_2(X,\mathfrak{C}^{\lvert y \rvert})\subseteq \neg\varphi_1(X,\mathfrak{C}^{\lvert y \rvert})$ . From the fact that $f$ factors through $X/E\times \mathfrak{C}^{\lvert y\rvert}$ and the last inclusion we deduce $$tp(a_1,b_1)(X,\mathfrak{C}^{\lvert y \rvert})\subseteq\varphi_1(X,\mathfrak{C}^{\lvert y \rvert})$$ and $$tp(a_2,b_2)(X,\mathfrak{C}^{\lvert y \rvert})\subseteq \neg \varphi_1(X,\mathfrak{C}^{\lvert y \rvert}).$$ Hence, $tp(a_1,b_1)\neq tp(a_2,b_2)$.
	
	($\Rightarrow$)  Since $E'$ is $\emptyset$-type-definable and bounded, $\bigslant{(X\times \mathfrak{C}^{m})}{E'}$ 
	is a compact  (Hausdorff) topological space (with the {\em logic topology}, in which closed sets are those whose preimages by the quotient map are type-definable). Since we assume that $\tp(a_1,b_1) \ne \tp(a_2,b_2)$, we have $[(a_1',b_1)]_{E'} \ne [(a_2',b_2)]_{E'}$ in $\bigslant{(X\times \mathfrak{C}^{m})}{E'}$. The space $\bigslant{(X\times \mathfrak{C}^{m})}{E'}$ is $T_{3+\frac{1}{2}}$, so the above two distinct points can be separated by a continuous function $$ h:\bigslant{(X\times \mathfrak{C}^{m})}{E'}\to \mathbb{R} $$ such that $h([(a_1',b_1)]_{E'})=0$ and $h([(a_2',b_2)]_{E'})=1$. Let $\pi_{E'} : X \times \C^m \to  \bigslant{(X\times \mathfrak{C}^{m})}{E'}$ be the quotient map. We conclude that the function $$f:=h\circ \pi_{E'}: X\times \mathfrak{C}^{m} \to \mathbb{R}  $$ satisfies the required conditions.
\end{proof}

We say that $f \in \mathcal{F}_{X/E}$ is {\em stable} if for all $\varepsilon > 0$ there do not exist $a_{i}, b_{i}$ for $i<\omega$ with $a_i \in X$ for each $i$, such that for all $i<j$, $|f(a_{i},b_{j}) - f(a_{j},b_{i})| \geq \varepsilon$ (see \cite[Definition 7.1]{MR2657678} and \cite[Definition 3.8]{hrushovski2021amenability}). By Ramsey's theorem and compactness, $f$ is stable if and only if whenever $(a_{i},b_{i})_{i<\omega}$ is indiscernible (with $a_i \in X$), then $f(a_{i},b_{j}) = f(a_{j},b_{i})$ for all (some) $i< j$. 

\begin{cor}\label{2.4}
	$X/E$ is stable as a hyperdefinable set if and only if every  $f\in \mathcal{F}_{X/E}$ is stable.
\end{cor}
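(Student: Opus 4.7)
The plan is to deduce this immediately from Proposition \ref{proposition: F_X/E separates points}, since $\mathcal{F}_{X/E}$ separates types of pairs $(a,b)\in X\times \C^m$. Both sides of the equivalence are phrased in terms of indiscernible sequences $(a_i,b_i)_{i<\omega}$ with $a_i\in X$ (equivalently, $a_i/E\in X/E$), and both ask for a symmetry condition between $(a_i,b_j)$ and $(a_j,b_i)$ for $i<j$. The whole point is that stability of $X/E$ is the statement ``$\tp(a_i/E,b_j)=\tp(a_j/E,b_i)$'', and, by Proposition \ref{proposition: F_X/E separates points}, this equality of types is equivalent to ``$f(a_i,b_j)=f(a_j,b_i)$ for every $f\in \mathcal{F}_{X/E}$'', which is precisely the indiscernible-sequence formulation of stability of every $f$.

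For the forward direction, assume $X/E$ is stable and fix $f\in\mathcal{F}_{X/E}$. Let $(a_i,b_i)_{i<\omega}$ be an indiscernible sequence with $a_i\in X$. By stability of $X/E$ applied to $(a_i/E,b_i)_{i<\omega}$ (which is still indiscernible, as $E$ is $\emptyset$-type-definable), we have $\tp(a_i/E,b_j)=\tp(a_j/E,b_i)$ for $i<j$. Since $f$ factors through $X/E\times \C^m$ and the values of $f$ depend only on the type of its argument, Proposition \ref{proposition: F_X/E separates points} gives $f(a_i,b_j)=f(a_j,b_i)$, so $f$ is stable in the sense recalled just before the corollary.

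For the converse, assume every $f\in\mathcal{F}_{X/E}$ is stable, and let $(a_i,b_i)_{i<\omega}$ be indiscernible with $a_i\in X$. Fix $i<j$. By stability of each $f$ (in the indiscernible-sequence form recalled before the corollary), $f(a_i,b_j)=f(a_j,b_i)$ for every $f\in\mathcal{F}_{X/E}$. By the contrapositive of Proposition \ref{proposition: F_X/E separates points}, this forces $\tp(a_i/E,b_j)=\tp(a_j/E,b_i)$, which is exactly what stability of $X/E$ requires.

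I do not anticipate a serious obstacle: the real content has already been put into Proposition \ref{proposition: F_X/E separates points}, and the corollary is essentially a dictionary translation. The only minor point to keep straight is the matching of quantifier conventions in the two definitions (``for all $i\neq j$'' vs.\ ``for all $i<j$''), which is automatic by indiscernibility and symmetry in swapping $i$ and $j$.
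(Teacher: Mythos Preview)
Your forward direction is fine and matches the paper. In the converse, however, there is a real gap. The definition of stability of $X/E$ (Definition~\ref{def stability intro}) quantifies over indiscernible sequences $(a_i,b_i)_{i<\omega}$ with $a_i\in X/E$: the first coordinates are \emph{hyperimaginaries}, and indiscernibility is meant for the sequence of hyperimaginary pairs. You instead start from an indiscernible sequence of real tuples $(a_i,b_i)_{i<\omega}$ with $a_i\in X$. An indiscernible sequence $(c_i,b_i)_{i<\omega}$ with $c_i\in X/E$ does not come for free with representatives $a_i\in c_i$ such that $(a_i,b_i)_{i<\omega}$ is indiscernible as a sequence of real tuples; producing such representatives is exactly the content of Fact~\ref{fact: indisc representatives}, and the paper's proof explicitly invokes it at this step. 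Without it, you have only verified the symmetry condition for those hyperimaginary indiscernible sequences that happen to lift to an indiscernible sequence of real representatives, which is a priori weaker than stability of $X/E$. Once you insert the appeal to Fact~\ref{fact: indisc representatives}, your argument is complete and coincides with the paper's.
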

\begin{proof}
	%		By remark 3.9 (i) \cite{hrushovski2021amenability} A continuous logic formula (over $M$) $f(x,y)$ is stable if and only if whenever $(a_i,b_i)_{i<\omega}$ is indiscernible (over $M$), then $$f(a_i,b_j)=f(a_j,b_i)$$ for all $i<j$.
	
	%		We can assume that the sequence $(a_i/E,b_i)_{i<\omega}$ is indiscernible if and only if the sequence $(a_i,b_i)_{i<\omega}$ is.
	
	($\Rightarrow$) Suppose that there is an unstable $f\in \mathcal{F}_{X/E}$. Then there is an indiscernible sequence $(a_i,b_i)_{i<\omega}$ with $a_i \in X$ such that $$f(a_i,b_j)\neq f(a_j,b_i)$$ for all $i<j$.
	Hence, by Proposition \ref{proposition: F_X/E separates points}, $\tp(a_i/E,b_j)\neq \tp(a_j/E,b_i)$ for all $i<j$. Since the sequence $(a_i/E,b_i)_{i<\omega}$ is indiscernible, we conclude that $X/E$ is not stable.
	
	($\Leftarrow$) Suppose that $X/E$ is not stable. Then, there is an indiscernible sequence $(a_i/E,b_i)_{i<\omega}$ with $a_i \in X$ such that $$\tp(a_i/E,b_j)\neq \tp(a_j/E,b_i).$$
	for all $i<j$.
	By Fact \ref{fact: indisc representatives}, we can assume that the sequence  $(a_i,b_i)_{i<\omega}$ is indiscernible.
	
	By Proposition \ref{proposition: F_X/E separates points} and indiscernibility of the sequence $(a_i,b_i)_{i<\omega}$, we conclude that there is $f\in \mathcal{F}_{X/E}$ such that $f(a_i,b_j)\neq f(a_j,b_i)$ for all $i<j$. Hence, $f$ is not stable.
\end{proof}

%	Next, we present some results about characterizations of stability in continuous logic.
%Using the last corollary and \cite[Proposition 7.7]{MR2657678}, we get the next corollary containing some characterizations of the stability of $X/E$. 
The next result follows from \cite[Proposition 7.7]{MR2657678} and its proof. However, one should be a bit careful here. In the case when $\lambda$ is finite and $X=\C^\lambda$, one just applies \cite[Proposition 7.7]{MR2657678}, but in general one should say that the proof of \cite[Proposition 7.7]{MR2657678} goes through working with $f \in \mathcal{F}_{X/E}$ in place of a legitimate continuous logic formula $\varphi$. Also, since we are working in the first-order theory $T$ treated as a continuous logic theory, models are discrete spaces and the density characters 
of models are just cardinalities. 
%But here also there is a minor issue that could be missed: in the comment preceding \cite[Proposition 7.7]{MR2657678}, it is said that we add sorts for canonical parameters for certain definable predicates, which do not change the density character of a model $M$  provided that the density character of $M$ is at least $|T|$. But the proof of  \cite[Proposition 7.7]{MR2657678} works if we just add one additional sort for canonical parameters of a certain particular definable predicate, so we do not need the extra assumption abut the density character of $M$ being at least $|T|$. And so, in our first order situation,  in \cite[Proposition 7.7]{MR2657678} we can just work with the discrete models $M$, hence $||M||=|M|$ (where $||\cdot||$ stands for the density character). The density character of $S_f(M)$ is computed with respect to a certain metric on $S_f(M)$ defined after Definition 6.1 in \cite{MR2657678}.
The density character of $S_f(M)$ (denoted by $||S_f(M)||$) is computed with respect to a certain metric on $S_f(M)$ defined after Definition 6.1 in \cite{MR2657678}.

%%%Krzys3: In the last two items below, I replaced ``whenever'' by ``when'' in order to save a line in each case.
\begin{fact}[\cite{MR2657678}, Proposition 7.7]\label{equiv1}
	Let $f\in \mathcal{F}_{X/E}$. The following conditions are equivalent:
	\begin{enumerate}
		\item $f$ is stable.	
		\item For every $M\models T$, every $p\in S_f(M)$ is definable.
		\item For every $M\models T$, $||S_f(M)||\leq | M |$.
		\item For every $M\models T$, $|S_f(M)|\leq | M |^{\aleph_0}$.
		\item There is $\mu \geq |T|$ such that when $M\models T$ and $|M| \leq \mu$, then $||S_f(M)|| \leq \mu$.
		\item For every $\mu =\mu^{\aleph_0} \geq |T|$, when $M\models T$ and $|M| \leq \mu$, then $|S_f(M)| \leq \mu$.
	\end{enumerate}
\end{fact}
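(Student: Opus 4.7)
The plan is to establish the standard cycle of implications $(1) \Rightarrow (2) \Rightarrow (3) \Rightarrow (4) \Rightarrow (6) \Rightarrow (5) \Rightarrow (1)$, borrowing directly from the classical first-order template and adapting it to continuous-logic inputs (definable predicates, real-valued formulas, metric type spaces). The trivial links are $(3) \Rightarrow (5)$ and $(4) \Rightarrow (6)$ (weakening the hypothesis on $M$), and $(3) \Rightarrow (4)$ (since a compact metric space of density character $\kappa$ has cardinality at most $\kappa^{\aleph_0}$, and every point of $S_f(M)$ is the limit of a countable sequence from any dense subset).

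For $(1) \Rightarrow (2)$, I would fix a model $M$, $p \in S_f(M)$, and try to show that $b \mapsto p(\varphi(x,b)) := \lim_{a \models p} f(a,b)$ is a CL-formula over $M$. The key is a Cantor--Bendixson-style argument: if the candidate defining predicate failed to be continuous at some $q_0 \in S_{|y|}(M)$, one could pick $\varepsilon > 0$ and find, for every finite fragment, pairs $b,b'$ realizing types arbitrarily close to $q_0$ on which the values differ by at least $\varepsilon$. Iterating this in a back-and-forth manner inside $M$ (using saturation of $M$ if needed and compactness) produces an indiscernible sequence $(a_i,b_i)_{i<\omega}$ with $a_i \in X$ witnessing instability of $f$ at level $\varepsilon$. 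The passage from a rich family of witnesses to a genuine indiscernible sequence uses the CL version of Ramsey/extraction of indiscernibles.

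For $(2) \Rightarrow (3)$, assume every $p \in S_f(M)$ is definable by a CL-formula $\psi_p(y)$ over $M$. Since CL-formulas over $M$ are uniformly approximable by ordinary continuous combinations of instances $f(a,-)$ with $a \in M$ (via Tietze plus Stone--Weierstrass on $S_{|y|}(M)$), the space of defining predicates is metrically separable of density character $\leq |M|$; pulling back through the definability schema bounds $\|S_f(M)\|$ by $|M|$. The direction $(5) \Rightarrow (1)$ (and similarly $(6) \Rightarrow (1)$) is the contrapositive: if $f$ is unstable, there exists $\varepsilon > 0$ and an indiscernible sequence $(a_i,b_i)_{i<\omega}$ with $|f(a_i,b_j)-f(a_j,b_i)|\geq \varepsilon$ for $i<j$. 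As in the classical case, one codes binary sequences of length $\mu$ by arranging the $b_i$'s to separate the $a$-types in a tree pattern, producing $2^\mu$ pairwise $\varepsilon/2$-separated $f$-types over an appropriate $M$ with $|M|\leq \mu$, so $\|S_f(M)\| \geq 2^\mu > \mu$, contradicting (5); an analogous counting refutes (6).

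The main obstacle I expect is $(1) \Rightarrow (2)$: the usual first-order proof exploits Boolean combinations to locate an exact defining formula, whereas in the CL setting one must produce a genuine real-valued continuous function and verify continuity uniformly in $y$-types. The delicate point is turning the quantitative failure of continuity into an honest $\varepsilon$-indiscernible pattern; this typically requires care with the metric on $S_f(M)$ and with the order in which one extracts indiscernibles so as to preserve the $\varepsilon$-gap between $f(a_i,b_j)$ and $f(a_j,b_i)$. Everything else reduces either to metric topology on compact spaces or to transparent counting arguments.
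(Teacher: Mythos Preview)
The paper does not prove this statement at all: it is recorded as a \emph{Fact} and simply cited from \cite[Proposition 7.7]{MR2657678}, with the remark that the proof there goes through verbatim when one replaces a legitimate continuous-logic formula by an $f\in\mathcal{F}_{X/E}$ (and that, since the first-order $T$ is being treated as a continuous theory, density characters of models are just cardinalities). Your outline is essentially the standard continuous-logic argument that appears in that reference, so there is nothing to compare against in the present paper.
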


%%%Krzys3: I removed this paragraph, as this bound was not correct. What we get is a dense subalgebra of $|\mathcal{F}_{X/E}|$ of cardinality at most $|T| + \lambda$, and so $|\mathcal{F}_{X/E}| \leq  (|T|+\lambda)^{\alpeph_0}$ (as the space of continuous functions is a metric space). So it is a bit more delicate to get $|T|+\lambda$ rather than $(|T|+\lambda)^{\alpeph_0}$ in the statements below. Because of that, I added a proof of (1) -->(4) below.
%In a standard way, this gives us the next corollary. To prove it, one should also use the fact that $|\mathcal{F}_{X/E}| \leq  |T|+\lambda$ which follows from Stone-Weierstrass theorem.  

\begin{cor}\label{corollary: 2.4}
	The following conditions are equivalent:
	\begin{enumerate}
		\item $\forall f \in \mathcal{F}_{X/E}$, $f$ is stable.
		\item $\forall M\models T$ $\forall f\in \mathcal{F}_{X/E}$ $\forall p\in S_f(M)$, $p$ is definable.
		\item $\exists \mu\geq \lvert T \rvert$ s.t. $\forall M\models T$ if  $M\models T$ and $|M| \leq \mu$, then $| S_{\mathcal{F}_{X/E}}(M)| \leq \mu$.
		\item $\forall \mu = \mu ^{|T|+\lambda} \geq \lvert T \rvert$ $\forall M\models T$ if  $M\models T$ and $|M| \leq \mu$, then $| S_{\mathcal{F}_{X/E}}(M)| \leq \mu$.
	\end{enumerate}
\end{cor}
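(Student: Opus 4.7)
The equivalence (1)$\Leftrightarrow$(2) is immediate from the corresponding equivalence (1)$\Leftrightarrow$(2) in Fact \ref{equiv1} applied to each $f\in\mathcal{F}_{X/E}$ individually. The implication (4)$\Rightarrow$(3) is obtained by witnessing $\mu:=2^{|T|+\lambda}$, which satisfies $\mu^{|T|+\lambda}=2^{(|T|+\lambda)^2}=\mu$. For (3)$\Rightarrow$(1), fix $f\in\mathcal{F}_{X/E}$ and observe that the restriction map $S_{\mathcal{F}_{X/E}}(M)\twoheadrightarrow S_f(M)$ is surjective, whence $\|S_f(M)\|\leq |S_f(M)|\leq|S_{\mathcal{F}_{X/E}}(M)|\leq\mu$ for all $M$ with $|M|\leq\mu$; applying Fact \ref{equiv1}(5)$\Rightarrow$(1) gives stability of $f$.

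The substantive direction is (1)$\Rightarrow$(4). Fix $\mu=\mu^{|T|+\lambda}\geq|T|$ and $M\models T$ with $|M|\leq\mu$; note $\mu=\mu^{\aleph_0}$ since $\aleph_0\leq|T|+\lambda$. For each $f\in\mathcal{F}_{X/E}$, Fact \ref{equiv1}(6) yields $|S_f(M)|\leq\mu$. The key extra step is to produce a subfamily $\mathcal{F}_0\subseteq\mathcal{F}_{X/E}$ of cardinality at most $|T|+\lambda$ that is dense in $\mathcal{F}_{X/E}$ in the supremum norm. Granted this, if $p,q\in S_{\mathcal{F}_{X/E}}(M)$ agree on every $f\in\mathcal{F}_0$, then for any $f\in\mathcal{F}_{X/E,m}$ and $b\in M^m$, choosing $f_n\in\mathcal{F}_0\cap\mathcal{F}_{X/E,m}$ with $f_n\to f$ uniformly gives
$$p(f(x,b))=\lim_n p(f_n(x,b))=\lim_n q(f_n(x,b))=q(f(x,b)),$$
so the restriction $p\mapsto(p\!\!\upharpoonright_f)_{f\in\mathcal{F}_0}$ is injective on $S_{\mathcal{F}_{X/E}}(M)$, and
$$|S_{\mathcal{F}_{X/E}}(M)|\leq \prod_{f\in\mathcal{F}_0}|S_f(M)|\leq \mu^{|\mathcal{F}_0|}\leq\mu^{|T|+\lambda}=\mu.$$

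To construct $\mathcal{F}_0$, work $m$-by-$m$. For each $m<\omega$, by Proposition \ref{proposition: F_X/E separates points} the family $\mathcal{F}_{X/E,m}$ separates the points of the compact Hausdorff type space $S_{(X/E)\times\C^m}(\emptyset)$; being also unital and closed under uniform convergence (as CL-formulas are), by Stone--Weierstrass it coincides with $C(S_{(X/E)\times\C^m}(\emptyset))$. Since this type space has topological weight at most $|T|+\lambda+m=|T|+\lambda$, the algebra $C(K)$ has density character at most $|T|+\lambda$ (the rational $*$-algebra generated by Urysohn functions separating pairs of basic closed sets is dense). Pick a dense subset $\mathcal{F}_{0,m}\subseteq\mathcal{F}_{X/E,m}$ of that size and set $\mathcal{F}_0:=\bigcup_m\mathcal{F}_{0,m}$, which has $|\mathcal{F}_0|\leq\aleph_0\cdot(|T|+\lambda)=|T|+\lambda$.

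The main obstacle is precisely this sup-norm density estimate; once Stone--Weierstrass identifies $\mathcal{F}_{X/E,m}$ with $C(K)$ on a compact space $K$ of weight $\leq|T|+\lambda$, the rest is routine cardinal arithmetic combined with uniform-continuity in the formula variable. A naive bound such as $|S_{\mathcal{F}_{X/E}}(M)|\leq\prod_{f\in\mathcal{F}_{X/E}}|S_f(M)|\leq\mu^{|\mathcal{F}_{X/E}|}$ blows up to $2^\mu$, so exploiting separability of $\mathcal{F}_{X/E}$ is essential.
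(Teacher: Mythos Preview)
Your proof is correct and follows essentially the same approach as the paper: the easy directions are dispatched via Fact~\ref{equiv1}, and for $(1)\Rightarrow(4)$ both arguments produce a sup-norm dense subfamily $\mathcal{F}_0\subseteq\mathcal{F}_{X/E}$ of size at most $|T|+\lambda$ and then finish with the product bound $\mu^{|T|+\lambda}=\mu$. The only cosmetic difference is in how the density bound is obtained: the paper observes directly that $\mathcal{F}^m_{X/E}$ sits as a subspace of the Banach algebra $\mathcal{B}_m$ of all CL-formulas on $X\times\C^m$, where the first-order formulas already form a dense set of size $\leq |T|+\lambda$, so the subspace inherits this density character; you instead identify $\mathcal{F}_{X/E,m}$ with all of $C(S_{(X/E)\times\C^m}(\emptyset))$ via Stone--Weierstrass and then bound the density of $C(K)$ by the weight of $K$. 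Both routes are valid and amount to the same thing.
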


%%%Krzys3: Added proof.
\begin{proof}
	This follows easily from Fact \ref{equiv1}. Only (1) $\Rightarrow$ (4) is a bit more delicate, which we will explain. So assume (1). Then we have (6) from Fact \ref{equiv1}. 
	
	Fix $m < \omega$. By the Stone-Weierstrass theorem, the first-order formulas restricted to $X \times \C^m$ generate a dense subalgebra $\mathcal{A}_m$ of cardinality at most $|T|+ \lambda$ of the Banach algebra $\mathcal{B}_m$ of all functions $f: X \times \C^m \to \R$ which extend to a CL-formula from $\C^\lambda \times \C^m$ to $\R$. As the family $\mathcal{F}^m_{X/E}$ of those functions from $\mathcal{B}_m$ which factor through  $X/E \times \C^m$ is a subspace of $\mathcal{B}_m$, it also has a dense subset $\mathcal{D}_m$ of cardinality at most $|T|+\lambda$. Since clearly $\mathcal{F}_{X/E} = \bigcup_{m<\omega} \mathcal{F}^m_{X/E}$, we get that the complete $\mathcal{F}_{X/E}$-type over $M$ of an element $a \in \C^\lambda$ is determined by $\bigcup_{m<\omega} \bigcup_{f \in \mathcal{D}_m} \tp_f(a/M)$. Using this and (6) from Fact \ref{equiv1}, one easily gets (4) in Corollary \ref{corollary: 2.4}.
\end{proof}

%%%Krzys3: I removed the redundant ``on'' in ``the classes on of the''. Then I moved this paragraph before Prop. 2.1.
%Let $M \models T$. Recall that the complete types over $M$ of elements of $X/E$ can be defined as the $\aut(\C/M)$-orbits on $X/E$,  or the preimages of these orbits under the quotient map, or the partial types defining these preimages, or the classes of the equivalence relation on $S_X(M)$ given in the proof of the next remark. The space of all such types is denoted by $S_{X/E}(M)$.

\begin{remark}\label{2.7}
	For any model $M$ of $T$ there is a natural bijection $$ S_{X/E}(M)\to S_{\mathcal{F}_{X/E}}(M) $$ that sends $$\tp([a]_E/M) \to \bigcup_{f \in \mathcal{F}_{X/E}} \tp_f(a/M).$$
\end{remark}
\begin{proof}
	$S_{\mathcal{F}_{X/E}}(M)$ can be seen as $\bigslant{S_X(M)}{\sim_{\mathcal{F}_{X/E}}}$,
	where for every $p,q \in S_X(M)$ and some (equivalently, any) $a_1' \models p$ and $a_2' \models q$:
	$$p\sim_{\mathcal{F}_{X/E}} q \iff (\forall f(x,y)\in \mathcal{F}_{X/E}) (\forall b\in M^{|y|})( f(a_1',b)=f(a_2',b)).$$
	On the other hand, $S_{X/E}(M)=\bigslant{S_X(M)}{\sim_E}$, where for every $p,q \in S_X(M)$ and some (equivalently, any) $a_1' \models p$ and $a_2' \models q$:
	%%%Krzys3: (a_1'/E,b)\equiv (a_2'/E,b) instead of a_1'/Eb\equiv a_2'/Eb.
	$$p\sim_Eq\iff a_1'/E\equiv_M a_2'/E \iff (\forall m<\omega)(\forall b\in M^m)( (a_1'/E,b)\equiv (a_2'/E,b) ).$$
	By Proposition \ref{proposition: F_X/E separates points}, $p\sim_{\mathcal{F}_{X/E}} q$ if and only if $p\sim_Eq$. Hence, the conclusion follows.
\end{proof}

From the previous results,
%we can deduce an analogous to proposition \ref{equiv1} in the case of hyperdefinable sets.
we get some characterizations of stability of $X/E$.
\begin{cor}\label{2.8}
	The following conditions are equivalent:\begin{enumerate}
		\item $X/E$ is stable.
		\item $\forall f\in \mathcal{F}_{X/E}$ $(f$ is stable$)$.
		\item $\forall$ $M\models T$ $\forall f\in \mathcal{F}_{X/E}$ $\forall p\in S_f(M)$ $(p$ is definable$)$.
		\item $\exists \mu\geq \lvert T\rvert$ $\forall M\models T$ $(\lvert M\rvert\leq \mu \implies \lvert S_{\mathcal{F}_{X/E}}(M)\rvert\leq \mu)$.  
		\item $\exists \mu\geq \lvert T\rvert$ $\forall M\models T$ $(\lvert M\rvert\leq \mu \implies \lvert S_{X/E}(M)\rvert\leq \mu)$. 
		\item $\forall \mu = \mu ^{|T|+\lambda} \geq \lvert T \rvert$ $\forall M\models T$ $(\lvert M\rvert\leq \mu \implies \lvert S_{X/E}(M)\rvert\leq \mu)$.
	\end{enumerate}
\end{cor}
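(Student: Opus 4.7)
The plan is to observe that this corollary is essentially a packaging of the preceding three results, so almost no new work is required. The implications (1) $\iff$ (2) are precisely the content of Corollary \ref{2.4}. The chain (2) $\iff$ (3), together with the equivalence between (2) and item (4) of Corollary \ref{corollary: 2.4}, handles the ``formula/type-counting'' side in terms of $\mathcal{F}_{X/E}$. Thus what remains is to convert statements about $S_{\mathcal{F}_{X/E}}(M)$ into statements about $S_{X/E}(M)$.

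The key observation is Remark \ref{2.7}, which provides a natural bijection $S_{X/E}(M) \to S_{\mathcal{F}_{X/E}}(M)$ for every $M\models T$. Hence $|S_{X/E}(M)| = |S_{\mathcal{F}_{X/E}}(M)|$ for every $M$, which immediately converts clauses phrased in terms of $S_{\mathcal{F}_{X/E}}(M)$ into the corresponding clauses phrased in terms of $S_{X/E}(M)$. Therefore (4) in Corollary \ref{corollary: 2.4} is equivalent to (4) of the present corollary, which in turn is equivalent to (5) simply because they are the same statement after applying the bijection. Similarly, item (4) of Corollary \ref{corollary: 2.4} together with the bijection yields (6).

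So my proof will be short: first cite Corollary \ref{2.4} for (1) $\iff$ (2); then cite Corollary \ref{corollary: 2.4} to get the equivalence of (2) with (3), with the statement that there exists $\mu \geq |T|$ bounding $|S_{\mathcal{F}_{X/E}}(M)|$ by $\mu$ for $|M|\leq \mu$, and with the ``for all $\mu=\mu^{|T|+\lambda}$'' version; finally invoke Remark \ref{2.7} to replace $S_{\mathcal{F}_{X/E}}(M)$ by $S_{X/E}(M)$ throughout, obtaining (4), (5), and (6). The only step that might deserve a brief comment is the passage from (5) to (6), since (5) is existential in $\mu$ while (6) is universal over all $\mu=\mu^{|T|+\lambda}\geq |T|$; but this is automatic because (5) (via the bijection) implies clause (5) of Fact \ref{equiv1} for every $f\in\mathcal{F}_{X/E}$, hence by that fact clause (6) of Fact \ref{equiv1} holds for every such $f$, and then the Stone–Weierstrass density argument used in the proof of Corollary \ref{corollary: 2.4} (which shows $\mathcal{F}_{X/E}$ has a dense subset of cardinality at most $|T|+\lambda$) lifts the type bound from individual $f$'s to $S_{\mathcal{F}_{X/E}}(M)$, and finally the bijection gives (6). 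There is no real obstacle, the main care being only to keep track of the $|T|+\lambda$ cardinal arithmetic that the dense-subfamily argument requires.
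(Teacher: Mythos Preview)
Your proposal is correct and follows essentially the same route as the paper: (1)$\iff$(2) from Corollary~\ref{2.4}, (2)$\iff$(3)$\iff$(4) from Corollary~\ref{corollary: 2.4}, and then Remark~\ref{2.7} to pass between $S_{\mathcal{F}_{X/E}}(M)$ and $S_{X/E}(M)$ for (5) and (6). Your extra detour through Fact~\ref{equiv1} for (5)$\Rightarrow$(6) is unnecessary, since Corollary~\ref{corollary: 2.4}(4) already gives the universal $\mu=\mu^{|T|+\lambda}$ bound on $|S_{\mathcal{F}_{X/E}}(M)|$ directly from (2), and the bijection finishes.
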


\begin{proof}
	The equivalence between (1), (2), (3), and (4) follows from Corollaries \ref{2.4} and \ref{corollary: 2.4}. The equivalence of (4) and (5) follows from Remark \ref{2.7}. The equivalence of (2) and (6) follows from Corollary \ref{corollary: 2.4} and Remark \ref{2.7}.
	%		The equivalence between $(1), (2)$ and $(3)$ follows from Corollary \ref{2.4} and Fact \ref{equiv1}. The equivalence between $(4)$ and $(5)$ follows proposition \ref{2.7}, the only thing left to prove is why we can substitute the density character of the sets $M$ and $S_{\mathcal{F}_{X/E}}(M)$ and by their cardinal. Remember that even though we are working with continuous logic, the structure $\mathcal{M}$ is a classical structure, so we understand it as a metric structure with the discrete metric. Being discrete is a property that can be expressed using first order continuous logic language, so any elementary extension of $\mathcal{M}$ is also  an discrete structure, hence we have $\lVert M\rVert=\lvert M\rvert$ for any $\mathcal{M}\models T$. Also, note that if $\lVert S_f(M)\lVert \leq \lambda$, then $\lvert S_f(M)\rvert \leq \lambda^\omega$. From this observations we deduce that taking an appropriate $\lambda$, proposition \ref{equiv1} $(3)$ implies $\exists \lambda\geq \lvert T\rvert$ $\forall \mathcal{M}\models T$ $(\lvert \mathcal{M}\rvert\leq \lambda \implies \lvert S_{f}(M)\rvert\leq \lambda)$ and of course this fact implies proposition \ref{equiv1} $(4)$.
\end{proof}

As an application of the characterization from Corollary \ref{2.8}(6), we give a quick proof of Remark 2.5(iii) from \cite{MR3796277} that $G^{\textrm{st}}$ does not have proper hyperdefinable, stable quotients (which was left to the reader in  \cite{MR3796277}). 
%%%Krzys3: I discussed the issue of parameters. 
Namely, suppose $H < G^{\textrm{st}}$ is a proper $A$-type-definable subgroup for some $A$; add all elements of $A$ as new constants.
%by compactness, w.l.o.g. $H$ is $A$-type-definable for a countable $A$. 
We need to show that $G^{\textrm{st}}/H$ is unstable. By minimality of $G^{\textrm{st}}$, $G/H$ is unstable. So, by Corollary \ref{2.8}(6), there is $\mu = \mu ^{|T|+\lambda} \geq \lvert T \rvert$, a model $M$ of $T$ of cardinality $\mu$, and a sequence $(g_i)_{i<\mu^+}$ in $G$ such that $\tp(g_iH/M) \ne \tp(g_jH/M)$ for all $i \ne j$. Since $G/G^{\textrm{st}}$ is stable, by Corollary \ref{2.8}(6), there is a subset $I$ of $\mu^+$ of cardinality $\mu^+$ such that $g_iG^{\textrm{st}}\equiv_M g_jG^{\textrm{st}}$ for all $i,j \in I$. Fix $i_0 \in I$ and put $I_0:=I \setminus \{i_0\}$. Mapping all $g_i$, $i \in I_0$, by automorphisms over $M$, we can assume that they are all in the coset $g_{i_0}G^{\textrm{st}}$. Then $g_i':= g_{i_0}^{-1}g_{i} \in G^{\textrm{st}}$ for all $i \in I_0$. Moreover, take any $N \succ M$ containing $g_{i_0}$ and with $|N|= \mu$. Then $\tp(g_i'H/N) \ne \tp(g_j'H/N)$ for every distinct $i,j \in I_0$. Hence,  by Corollary \ref{2.8}, $G^{\textrm{st}}/H$ is unstable.

Next, we recall the definition of NIP for a hyperdefinable set, given in \cite[Remark 2.3]{MR3796277}, and we introduce the notion of generic stability 
for hyperimaginary types.
%for types of heperimaginaries.
\begin{defin}\label{nip hyperdef}
	A hyperdefinable set $X/E$ has {\em NIP} if there	do not exist an indiscernible sequence $(b_i)_{i<\omega}$ and $d\in X/E$ such that
	$((d, b_{2i}, b_{2i+1}))_{i<\omega}$ is indiscernible and $tp(d, b_0) \neq tp(d, b_1)$. $($Note that
	the $b_i$ can be anywhere, not necessarily in $X/E$.$)$
\end{defin}

Generically stable types in NIP theories were introduced by Shelah \cite{Shelah1982-SHECTA-5}, and then thoroughly studied by Hrushovski and Pillay \cite{Hrushovski2011} and Usvyatsov \cite{10.2178/jsl/1231082310}. We extend the notion of a generically stable type to our hyperdefinable context (and more generally to continuous logic).

Let $p \in S_{X/E}(\C)$ be invariant over $A$. A {\em Morley sequence in $p$ over $A$} is a sequence $(a_i)_i$ of elements of $X/E$ such that $a_i \models p |_{Aa_{<i}}$. As in the home sort, by a standard argument, one can check that Morley sequences (of a given length) in $p$ over $A$  are $A$-indiscernible and have the same type over $A$.

\begin{defin}\label{gen stable type}
	An $A$-invariant type $p \in S_{X/E}(\C)$ is {\em generically stable} if every  Morley sequence $(a_i/E)_{i<\omega+\omega}$ in $p$ over $A$ satisfies $(\forall \varepsilon >0)$ $(\forall r\in \mathbb{R})$ $(\forall s\leq r-\varepsilon)$ $(\forall f(x,y)\in \mathcal{F}_{X/E})$ $(\forall b\in \mathfrak{C}^{\lvert y \rvert})$
	\useshortskip
	\begin{equation*} 
		\begin{gathered}
			\{i<\omega+\omega: f(a_i,b)\leq s\} \text{ is finite}\\
			\text{or}\\
			\{i<\omega+\omega: f(a_i,b)\geq r\}\text{ is finite.}
		\end{gathered}
	\end{equation*}
\end{defin}

Generic stability of $p$ does not depend on the choice of $A$ over which $p$ is invariant. Using the compactness theorem, one can show the following characterization.

\begin{prop}\label{proposition: characterization of gen. stab.}
	An $A$-invariant type  $p\in S_{X/E}(\C)$ is generically stable if and only if for every $\varepsilon>0$ and $f(x,y)\in\mathcal{F}_{X/E}$ there exists $N(f,\varepsilon)\in \mathbb{N}$ for which there is no Morley sequence $(a_i/E)_{i<\omega}$ in $p$ over $A$, disjoint subsequences $R,S\subseteq \omega$ each of which is of length at least $N(f,\varepsilon)$, and $b\in \C^{|y|}$ such that  $\lvert f(a_i,b)-f(a_j,b)\rvert \geq \varepsilon$ for all $a_i/E\in R$ and $a_j/E\in S$.
\end{prop}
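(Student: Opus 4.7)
The plan is to prove both directions via compactness, using that $p^{(\alpha)}|_A$ is a complete type over $A$ for every ordinal $\alpha$, so being a Morley sequence in $p$ over $A$ is type-definable over $A$.

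For $(\Leftarrow)$, assume the uniform bound $N(f,\varepsilon)$ exists and suppose for contradiction that generic stability fails, witnessed by some $\varepsilon$, $r$, $s$, $f$, $b$ and a Morley sequence $(a_i/E)_{i<\omega+\omega}$ with $I_1:=\{i: f(a_i,b)\leq s\}$ and $I_2:=\{i: f(a_i,b)\geq r\}$ both infinite. I choose disjoint $R\subset I_1$, $S\subset I_2$ each of size $N(f,\varepsilon)$ and pick an order-preserving injection $\sigma:\omega\to\omega+\omega$ whose image contains $R\cup S$ (such $\sigma$ exists since $R\cup S$ is finite). Then $(a_{\sigma(i)}/E)_{i<\omega}$ is a Morley sequence of length $\omega$ in $p$ over $A$, because subsequences of Morley sequences inherit Morley-ness. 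On it, $\sigma^{-1}(R)$ and $\sigma^{-1}(S)$ are disjoint subsets of $\omega$ of size $N(f,\varepsilon)$ satisfying $|f(a_{\sigma(i)},b)-f(a_{\sigma(j)},b)|\geq r-s\geq\varepsilon$, contradicting the uniform bound.

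For $(\Rightarrow)$, I argue by contrapositive. Assume the uniform bound fails for some $(f,\varepsilon)$: for each $N$, there exist a Morley sequence $(a^N_i/E)_{i<\omega}$, disjoint $R^N, S^N\subset\omega$ of size $\geq N$, and $b^N$ with $|f(a^N_i,b^N)-f(a^N_j,b^N)|\geq \varepsilon$ for $i\in R^N$, $j\in S^N$. The first step is a combinatorial reduction: comparing the $\lfloor N/2\rfloor$-th smallest elements of $R^N$ and $S^N$ in $\omega$, one finds $R'\subseteq R^N$, $S'\subseteq S^N$ each of size $\geq\lfloor N/2\rfloor$ with either all of $R'$ preceding all of $S'$ or vice versa. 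A second pigeonhole over $N$ fixes one direction, and passing to the subsequence $(a^N_i)_{i\in R'\cup S'}$ (a Morley sequence) yields, for every $M$, a Morley sequence $(c^M_0,\ldots,c^M_{2M-1})$ of length $2M$ in $p$ over $A$ and $b^M$ with $|f(c^M_i,b^M)-f(c^M_j,b^M)|\geq \varepsilon$ whenever $i<M\leq j<2M$. Now apply compactness to the partial type over $A$
\[ p^{(\omega+\omega)}|_A \cup \{|f(x_i,y)-f(x_j,y)|\geq \varepsilon : i<\omega\leq j<\omega+\omega\}, \]
which is finitely realized by these tuples; by saturation of $\C$ we obtain a Morley sequence $(a_i/E)_{i<\omega+\omega}$ in $p$ over $A$ and $b\in \C^{|y|}$ with $|f(a_i,b)-f(a_j,b)|\geq \varepsilon$ for all $i<\omega\leq j$.

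To conclude, observe that $f$ has bounded range, being continuous on the compact type space $S_{X\times\C^{|y|}}(\emptyset)$. By Bolzano-Weierstrass, after passing to subsequences within each half of $\omega+\omega$ (which still yields a Morley sequence of order type $\omega+\omega$), we may assume $f(a_i,b)\to L_1$ in the first half and $f(a_j,b)\to L_2$ in the second, with $|L_1-L_2|\geq\varepsilon$. Writing $\delta:=|L_1-L_2|$ and, up to swapping halves, assuming $L_1>L_2$, set $\varepsilon':=\delta/3$, $r':=L_2+2\delta/3$, $s':=L_2+\delta/3$: then $s'=r'-\varepsilon'$ while both $\{i: f(a_i,b)\geq r'\}$ and $\{j: f(a_j,b)\leq s'\}$ contain cofinal tails of the two halves, hence are infinite, contradicting generic stability. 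The main technical obstacle is the combinatorial reduction: subsequences of Morley sequences remain Morley, but arbitrary rearrangements do not, so one cannot directly package ``disjoint $R, S$ of size $\geq N$'' into a single type-definable condition suitable for compactness without first arranging $R$ entirely before $S$.
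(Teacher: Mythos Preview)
Your argument is correct and carries out in full the compactness argument that the paper only gestures at (the paper merely asserts that the proposition follows ``using the compactness theorem'' and gives no proof). The one phrase that could be tightened is ``up to swapping halves'': you do not actually need to swap the two blocks of the Morley sequence (which would not remain Morley), only to relabel $L_1,L_2$ and choose $r',s'$ accordingly, exactly as your formulas already do.
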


The following definition is the hyperimaginary analog of \cite[Definition 1.2]{https://doi.org/10.1002/malq.200610046}.
\begin{defin}\label{def weakly stable}
	A hyperdefinable (over $A$) set $X/E$ is \emph{weakly stable} if for every $A$-indiscernible sequence $(a_i,b_i,c)_{i<\omega}$ with $a_i,b_i\in X/E$ for all (equivalently, some) $i<\omega$, we have  
	$$\tp(a_i,b_j,c/A) = \tp(a_j,b_i,c/A)$$ for all (some) $i\neq j < \omega$.
\end{defin}

Our next goal is to extend Corollary \ref{2.8} to:

\begin{teor}\label{equivteor}
	Assume $X/E$ has NIP. The following conditions are equivalent:
	\begin{enumerate}
		\item $X/E$ is stable.
		\item $\forall$ $M\models T$ $\forall f\in \mathcal{F}_{X/E}$ $\forall p\in S_f(M)$ $($$p$ is definable$)$.
		\item $\exists \lambda\geq \lvert T\rvert$ $\forall M\models T$ $(\lvert M\rvert\leq \lambda \implies \lvert S_{X/E}(M)\rvert\leq \lambda)$.
		\item Any indiscernible sequence of elements of $X/E$ is totally indiscernible.
		\item Any global invariant $($over some $A$$)$ type $p \in S_{X/E}(\C)$ is generically stable.
            \item $X/E$ is weakly stable.
	\end{enumerate}
\end{teor}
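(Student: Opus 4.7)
The equivalences $(1) \Leftrightarrow (2) \Leftrightarrow (3)$ are contained in Corollary \ref{2.8} and require no NIP hypothesis. I therefore plan to establish, under NIP, the cycle $(1) \Rightarrow (6) \Rightarrow (4) \Rightarrow (5) \Rightarrow (2)$, closing the loop via $(2) \Rightarrow (1)$ from Corollary \ref{2.8}. Among these arrows, NIP will only be essential for $(4) \Rightarrow (5)$ and $(5) \Rightarrow (2)$.

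The implication $(1) \Rightarrow (6)$ is immediate: given an indiscernible $(a_i, b_i, c)_{i<\omega}$ with $a_i, b_i \in X/E$, set $b_i' := (b_i, c)$ and apply Definition \ref{def stability intro} to $(a_i, b_i')_{i<\omega}$ (which has $a_i \in X/E$) to obtain $\tp(a_i, b_j, c) = \tp(a_j, b_i, c)$. For $(6) \Rightarrow (4)$, let $(a_i)_{i<\omega}$ be an indiscernible sequence in $X/E$. Extending by compactness to an indiscernible sequence indexed by $\omega+1$ with a right endpoint $a_\omega$ makes $(a_i)_{i<\omega}$ indiscernible over $a_\omega$. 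Apply (6) to the disjoint subsequence $(a_{2i}, a_{2i+1})_{i<\omega}$ (both coordinates in $X/E$) with extra parameter $c := a_\omega$ to obtain $\tp(a_0, a_3, a_\omega) = \tp(a_2, a_1, a_\omega)$. Combining with indiscernibility over $a_\omega$---which identifies $(a_0, a_3) \equiv_{a_\omega} (a_0, a_1)$ and $(a_2, a_1) \equiv_{a_\omega} (a_1, a_0)$---yields $\tp(a_0, a_1, a_\omega) = \tp(a_1, a_0, a_\omega)$. By indiscernibility, an automorphism sending $a_\omega$ to any $a_k$ (with $k \geq 2$) while fixing $a_0, a_1$ then translates this to the pair-exchange $\tp(a_0, a_1, a_k) = \tp(a_1, a_0, a_k)$, and by iterating (placing $a_\omega$ at various positions and suitably shifting the subsequence) one obtains pair-exchange over arbitrary compatible parameters. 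The standard decomposition of permutations into adjacent transpositions then upgrades this to total indiscernibility.

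For $(4) \Rightarrow (5)$, let $p \in S_{X/E}(\C)$ be $A$-invariant and let $(a_i/E)_{i<\omega+\omega}$ be a Morley sequence of $p$ over $A$; it is $A$-indiscernible and lies in $X/E$, so by $(4)$ it is totally $A$-indiscernible. Under NIP, the classical Hrushovski--Pillay equivalence between generic stability of an invariant type and total indiscernibility of its Morley sequences---which transfers to our hyperdefinable setting via Corollary \ref{2.4} and the characterization of Proposition \ref{proposition: characterization of gen. stab.}---then yields the required oscillation bound. For $(5) \Rightarrow (2)$, fix $f \in \mathcal{F}_{X/E}$, $M \models T$ and $p \in S_f(M)$; extend $p$ to some $\hat p \in S_{X/E}(M)$ and then to a global coheir $\tilde p \in S_{X/E}(\C)$ over $M$, so that $\tilde p$ is $M$-invariant. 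By (5), $\tilde p$ is generically stable, and under NIP the Morley-sequence oscillation bound for $\tilde p$ propagates to arbitrary $M$-parameters, forcing the $f$-part of $\tilde p$ to be definable over $M$, and hence $p$ itself to be definable. The main obstacle is precisely this last transfer: converting the Morley-sequence oscillation bound of Definition \ref{gen stable type} into honest $M$-definability of the $f$-part of $\tilde p$. This is the hyperdefinable continuous-logic analogue of the classical NIP fact that generically stable types are definable, and relies crucially on Definition \ref{nip hyperdef} to control $f(a, b)$ as $b$ ranges over arbitrary $M$-tuples rather than only along a Morley sequence.
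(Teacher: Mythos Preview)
Your cycle $(1)\Rightarrow(6)\Rightarrow(4)\Rightarrow(5)\Rightarrow(2)$ differs from the paper's organisation, and the difference is not cosmetic: the paper proves $(1)\Leftrightarrow(2)\Leftrightarrow(3)\Leftrightarrow(5)$ \emph{without} NIP and uses NIP only for $(4)\Rightarrow(1)$ (via the finite--cofinite Lemma~\ref{fincofin}). In particular, the implication you flag as ``the main obstacle'', $(5)\Rightarrow(2)$, is handled in the paper with no NIP hypothesis at all: Proposition~\ref{2.21} and Corollary~\ref{corollary: gen stab implies definable} show directly, using the median value connective $\med_N$, that any generically stable type in $S_{X/E}(\C)$ is definable. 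Your plan to ``propagate the Morley-sequence oscillation bound to arbitrary $M$-parameters under NIP'' reinvents a result already available in the paper and mislocates the role of NIP.

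Your $(4)\Rightarrow(5)$ is also a genuine gap as written. You assert that total indiscernibility of Morley sequences yields generic stability ``via Corollary~\ref{2.4} and Proposition~\ref{proposition: characterization of gen. stab.}'', but neither of these gives that implication; some form of the finite--cofinite argument (i.e.\ Lemma~\ref{fincofin}, which is where NIP actually enters) is needed to rule out two infinite sets $\{i:f(a_i,b)\leq s\}$ and $\{i:f(a_i,b)\geq r\}$. The paper avoids this by proving $(1)\Rightarrow(5)$ directly instead.

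Finally, your $(6)\Rightarrow(4)$ argument is incomplete. You obtain $\tp(a_0,a_1,a_\omega)=\tp(a_1,a_0,a_\omega)$ and then transport $a_\omega$ to a single later parameter $a_k$, but total indiscernibility requires the adjacent-transposition identity over \emph{all} remaining parameters simultaneously (including ones on both sides of the transposed pair), and your ``iterating'' sentence does not explain how to achieve this. The paper's argument is cleaner: pass to index set $\mathbb{Q}$, locate a failure of transposition of $a_j,a_{j+1}$ over the finite set $A$ of remaining parameters, pick an increasing sequence inside the interval $(j,j+1)$ to get an $A$-indiscernible sequence $(b_i)_{i<\omega}$ in $X/E$ with $\tp(b_i,b_j/A)\neq\tp(b_j,b_i/A)$, and feed $(b_i,b_i,a)_{i<\omega}$ (with $a$ an enumeration of $A$) into the definition of weak stability.
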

%%%Adrian: added the comment about what is the NIP hypothesis used for
%%%Krzys revised: I very slightly modified this sentence.
From the proof of this theorem, it will be clear that $(1)$, $(2)$, $(3)$, and $(5)$ are equivalent and imply $(4)$ without the NIP assumption; NIP is used to prove the implication from $(4)$ to $(1)$.

%In order to do so, we recall some definitions and results about definability of types in continuous logic as well as prove results about hyperdefinable sets with NIP.
%%%Krzys revised: I changed and extended the next sentence.
%In order to do so, we will prove some results about hyperdefinable sets with NIP.
In order to prove Theorem \ref{equivteor}, we will first prove some results about hyperdefinable sets with NIP and about generically stable types.

From now on, EM will stand for Ehrenfeucht-Mostowski.  By the {\em EM-type of a sequence $I=(a_i/E)_{i \in \mathcal{I}}$} (symbolically, $\EM(I)$) we mean the set of all formulas $\varphi(x_1,\dots,x_n)$ such that for every $i_1 < \dots<i_n \in \mathcal{I}$, $\varphi(a_{i_1}',\dots,a_{i_n}')$ holds for all $a_{i_1}' \in [a_{i_1}]_E, \dots, a_{i_n}' \in [a_{i_n}]_E$, where $n$ ranges over $\omega$. We say that an indiscernible sequence $J$ {\em satisfies the EM-type of $I$} if $\EM(I) \subseteq \EM(J)$. By Ramsey's theorem and compactness, for every sequence $I$ there is an indiscernible sequence $J$ satisfying $\EM(I)$; we can even require that $J$ has an indiscernible sequence of representatives.
%From now on, EM will stand for Erenfeucht-Mostowski.  By the {\em EM-type of a sequence $I=(a_i/E)_{i \in \mathcal{I}}$} (symbolically, $\EM(I)$) we mean a partial type in $(x_i)_{i<\omega}$ consisting of the formulas $\varphi(x_{i_1},\dots, x_{i_n})$ where $i_1<\dots<i_n < \omega$ and for every $j_1 < \dots<j_n \in \mathcal{I}$, $\varphi(a_{j_1}',\dots,a_{j_n}')$ holds for all $a_{j_1}' \in [a_{j_1}]_E, \dots, a_{j_n}' \in [a_{j_n}]_E$. We say that an indiscernible sequence $J$ {\em satisfies the EM-type of $I$} if $\EM(I) \subseteq \EM(J)$.

In the next three lemmas and in Corollary \ref{corollary: switched role of X/E and anything}, $X/E$ and $Y/F$ are arbitrary $\emptyset$-hyperdefinable sets.

%%%Krzys3: It is shorter to state and prove the next lemma without using the negations. So I did it.

\begin{comment} %%%%%%%%%%%%%%%%%%%%%%%%%%%%%%%%%%%%%%%%%%%%%%%%%%%%%%%%%%%%
%%%%%%%%%%%%%%%%%%%%%%%%%%%%%%%%%%%%%%%%%%%%%%%%%%%%%%%%%%%%%%%%%%%
\begin{lema}\label{lemma: 2.11} The following conditions are equivalent:
\begin{enumerate}
\item       There	do not exist an indiscernible sequence $(b_i)_{i<\omega}$ in $Y/F$ and $d\in X/E$ such that
$((d, b_{2i}, b_{2i+1}))_{i<\omega}$ is indiscernible and $\tp(d, b_0) \neq \tp(d, b_1)$.

\item 	There  do not exist an  indiscernible sequence $(b_i)_{i<\omega}$ in $Y/F$ and $d\in X/E$ such that 
\useshortskip
\begin{align*}
\tp(d,b_i)=\tp(d,b_0)&\iff i \text{ even}\\
\tp(d,b_i)=\tp(d,b_1)&\iff i \text{ odd}
\end{align*}
(and $\tp(d,b_0)\neq \tp(d,b_1)$).
\end{enumerate}
\end{lema}

\begin{proof}
$(2) \Rightarrow (1)$ is clear.

$(1) \Rightarrow (2)$ Assume the negation of of (2), i.e. there exists an indiscernible sequence $(b_i)_{i<\omega}$ in $Y/F$ and some element $d\in X/E$ satisfying: \begin{align*}
\tp(d,b_i)=\tp(d,b_0)&\iff i \text{ even}\\
\tp(d,b_i)=\tp(d,b_1)&\iff i \text{ odd}.
\end{align*}
Then clearly $\tp(d,b_0)\neq \tp(d,b_1)$.
The sequence $(b_{2i},b_{2i+1})_{i<\omega}$ is also indiscernible. Choose  an indiscernible sequence $(\tilde{d},b_{2i},b_{2i+1} )_{i<\omega}$ satisfying the EM-type of the sequence $(d,b_{2i},b_{2i+1})_{i<\omega}$. Then, $\tilde{d}\in X/E$ and $(b_i)_{i<\omega}$ witness the negation of (1).
\end{proof}

\end{comment}

\begin{lema}\label{lemma: 2.11} The following conditions are equivalent:
	\begin{enumerate}
		\item       There	exists an indiscernible sequence $(b_i)_{i<\omega}$ in $Y/F$ and $d\in X/E$ such that
		$((d, b_{2i}, b_{2i+1}))_{i<\omega}$ is indiscernible and $\tp(d, b_0) \neq \tp(d, b_1)$.
		
		\item 	There exists an  indiscernible sequence $(b_i)_{i<\omega}$ in $Y/F$ and $d\in X/E$ such that 
		\useshortskip
		\begin{align*}
			\tp(d,b_i)=\tp(d,b_0)&\iff i \text{ even}\\
			\tp(d,b_i)=\tp(d,b_1)&\iff i \text{ odd}.
		\end{align*}
		%	(and so $\tp(d,b_0)\neq \tp(d,b_1)$).
	\end{enumerate}
\end{lema}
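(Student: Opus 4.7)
The direction $(1) \Rightarrow (2)$ will be essentially immediate while the substantial direction $(2) \Rightarrow (1)$ will require a standard extraction of indiscernibles applied to an appropriate sequence of triples. Throughout I will pass freely between hyperimaginaries and their representatives using Fact~\ref{fact: indisc representatives}.

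For $(1) \Rightarrow (2)$ I just unpack: the indiscernibility of $((d, b_{2i}, b_{2i+1}))_{i<\omega}$ forces $\tp(d, b_{2i}) = \tp(d, b_0)$ for all $i$ and $\tp(d, b_{2i+1}) = \tp(d, b_1)$ for all $i$, and together with $\tp(d, b_0) \neq \tp(d, b_1)$ this immediately yields both biconditionals in (2).

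For $(2) \Rightarrow (1)$ I apply the extraction of indiscernibles (Fact~\ref{extracting indiscernibles}, at the level of representatives) to the sequence of triples $I := ((d, b_{2i}, b_{2i+1}))_{i<\omega}$. This produces an indiscernible sequence $J := ((d'_i, c^0_i, c^1_i))_{i<\omega}$ realizing $\EM(I)$. Because the first coordinate is constantly $d$ in $I$, the type-definable equivalence $E$ applied to the first coordinates of any two triples belongs to $\EM(I)$, so all $d'_i$ collapse to a single $d' \in X/E$. Any formula $\varphi$ separating $\tp(d,b_0)$ from $\tp(d,b_1)$ produces an element of $\EM(I)$ distinguishing the second from the third coordinate relative to $d'$, whence $\tp(d', c^0_0) \neq \tp(d', c^1_0)$. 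Re-indexing $b'_{2i} := c^0_i$ and $b'_{2i+1} := c^1_i$ gives immediately that $((d', b'_{2i}, b'_{2i+1}))_{i<\omega}$ is indiscernible and $\tp(d', b'_0) \neq \tp(d', b'_1)$.

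The main obstacle will be that mere indiscernibility of $J$ as a sequence of triples does not automatically give indiscernibility of the unpaired sequence $(b'_i)_{i<\omega}$, since two increasing $n$-tuples of $\omega$-indices may induce different combinatorial shapes of triples and coordinate selections. To handle this, for any formula $\varphi$ and any two strictly increasing $n$-tuples $(k_j)$ and $(l_j)$ of $\omega$, I will write $k_j = 2p_j + \epsilon_j$ and $l_j = 2q_j + \delta_j$ with $\epsilon_j, \delta_j \in \{0,1\}$, let $P$ bound all $p_j, q_j$, and consider the formula in triple variables
$$\tilde{\varphi}(x_0, \ldots, x_P) := \varphi(x^{(\epsilon_1)}_{p_1}, \ldots, x^{(\epsilon_n)}_{p_n}) \leftrightarrow \varphi(x^{(\delta_1)}_{q_1}, \ldots, x^{(\delta_n)}_{q_n}),$$
where $x^{(0)}$ and $x^{(1)}$ denote the second and third coordinates of a triple variable $x$. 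The key combinatorial observation is that for any strictly increasing $r_0 < \cdots < r_P$ in $\omega$, the reindexed tuples $(2 r_{p_j} + \epsilon_j)_j$ and $(2 r_{q_j} + \delta_j)_j$ remain strictly increasing in $\omega$ (the shape is preserved), so indiscernibility of the original $(b_i)$ forces $\tilde{\varphi}(I_{r_0}, \ldots, I_{r_P})$ to hold for all such $r_0 < \cdots < r_P$. Hence $\tilde{\varphi} \in \EM(I)$ and, transferred to $J$, yields $\varphi(b'_{k_1}, \ldots, b'_{k_n}) \leftrightarrow \varphi(b'_{l_1}, \ldots, b'_{l_n})$, proving that $(b'_i)_{i<\omega}$ is indiscernible and completing the construction.
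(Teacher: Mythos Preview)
Your proof is correct, but you work harder than necessary in the $(2)\Rightarrow(1)$ direction. The paper's argument exploits a shortcut you missed: since $(b_i)_{i<\omega}$ is already indiscernible, so is the paired sequence $(b_{2i},b_{2i+1})_{i<\omega}$; hence after extracting an indiscernible sequence from $(d,b_{2i},b_{2i+1})_{i<\omega}$, the projection onto the second and third coordinates has the same type as the original indiscernible $(b_{2i},b_{2i+1})_{i<\omega}$, and one can apply an automorphism to arrange that it literally \emph{is} $(b_{2i},b_{2i+1})_{i<\omega}$. Thus the paper lands on $(\tilde d, b_{2i}, b_{2i+1})_{i<\omega}$ indiscernible with the \emph{original} $(b_i)$, whose indiscernibility is given by hypothesis --- no further work required. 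Your route instead produces a genuinely new sequence $(b'_i)$ and then verifies its indiscernibility from scratch via the $\tilde\varphi$-in-$\EM(I)$ argument; this argument is correct (your combinatorial observation that the reindexed tuples remain strictly increasing is exactly what is needed), but it is extra labor that the automorphism trick avoids entirely.
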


\begin{proof}
	$(1) \Rightarrow (2)$ is clear.
	
	$(2) \Rightarrow (1)$ Assume (2). 
	Then $\tp(d,b_0)\neq \tp(d,b_1)$.
	As $(b_i)_{i<\omega}$ is indiscernible, so is $(b_{2i},b_{2i+1})_{i<\omega}$. Choose  an indiscernible sequence $(\tilde{d},b_{2i},b_{2i+1} )_{i<\omega}$ satisfying the EM-type of $(d,b_{2i},b_{2i+1})_{i<\omega}$. Then, $\tilde{d}\in X/E$ and $(b_i)_{i<\omega}$ witness (1).
\end{proof}

%%%%%%%%%%%%%%%%%%%%%%%%%%%%%%%%%%%%%%%%%%%%%%%%%%%%%%%%%%%%%%%%%%%%
%%%%%%%%%%%%%%%%%%%%%%%%%%%%%%%%%%%%%%%%%%%%%%%%%%%%%%%%%%%%%%%%%%%%
\begin{comment}

\begin{prop}
$X/E$ has NIP if and only if there  do not exist an  indiscernible sequence $(b_i)_{i<\omega}$ and $d\in X/E$ such that \begin{align*}
\tp(b_i,d)=\tp(b_0,d)&\iff i \text{ even}\\
\tp(b_i,d)=\tp(b_1,d)&\iff i \text{ odd}
\end{align*}
(and $\tp(b_0,d)\neq \tp(b_1,d)$).
\end{prop}

\begin{proof}
($\Leftarrow$) follows from the definition of NIP.

($\Rightarrow$) Assume the negation of the right hand side, that is, there exists an indiscernible sequence $(b_i)_{i<\omega}$ and some element $d\in X/E$ satisfying: \begin{align*}
\tp(d, b_i)=\tp(d, b_0)&\iff i \text{ even}\\
\tp(d,b_i)=\tp(d,b_1)&\iff i \text{ odd};
\end{align*}
then clearly $\tp(b_0,d)\neq \tp(b_1,d)$.
The sequence $(b_{2i},b_{2i+1})_{i<\omega}$ is also indiscernible. Let $(\tilde{d},b_{2i},b_{2i+1} )_{i<\omega}$ be an indiscernible sequence, extracted using Ramsey's theorem, realizing the Erenfeucht–Mostowski type of $(d,b_{2i},b_{2i+1})_{i<\omega}$. Then, $\tilde{d}\in X/E$ and $(b_i)_{i<\omega}$ witness that $X/E$ has IP.
\end{proof}

\end{comment}
%%%%%%%%%%%%%%%%%%%%%%%%%%%%%%%%%%%%%%%%%%%%%%%%%%%%%%%%%%%%%%%%%%%%%%
%%%%%%%%%%%%%%%%%%%%%%%%%%%%%%%%%%%%%%%%%%%%%%%%%%%%%%%%%%%%%%%%%%%%%%

\begin{lema}\label{lemma: equivalences NIP}
	Let $p, q \in S_{X/E \times Y/F}(\emptyset)$ be distinct types. 
	%each of which implies $x \in X/E$ and $y \in Y/F$. 
	Then the following conditions are equivalent:
	\begin{enumerate}
		\item There exists an indiscernible sequence $(b_i)_{i<\omega}$ in $Y/F$ and an element $d\in X/E$ with: 
		%satisfying $tp(d,b_0)\neq tp(d,b_1)$ and 
		\useshortskip
		\begin{align*}
			\tp(d,b_i)=p&\iff i \text{ even}\\
			\tp(d,b_i)=q&\iff i \text{ odd}.
		\end{align*}
		\item There is a sequence $(b_i)_{i<\omega}$ in $Y/F$ (not necessarily
		indiscernible) which is shattered by $(p,q)$ in the sense that for every
		$I \subseteq \omega$ there is $d_I \in X/E$ with 
		\useshortskip
		\begin{align*} \tp(d_I,b_i) =p \iff i
			\in I\\ \tp(d_I,b_i)=q \iff i \notin I.
		\end{align*}
	\end{enumerate}
\end{lema}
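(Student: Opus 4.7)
The plan is to handle the two implications by somewhat different techniques, both ultimately resting on compactness together with the indiscernibility/extraction machinery from the preliminaries.

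For $(2)\Rightarrow(1)$, my strategy is to start with a shattered sequence and extract an indiscernible sequence satisfying the same EM-type, then observe that shattering is preserved. Concretely: by compactness, a sequence $(b_i)_{i<\omega}$ is shattered by $(p,q)$ if and only if for every finite $F\subseteq\omega$, every $\sigma\colon F\to\{p,q\}$ and every finite selection of formulas $\varphi_k(x,y)\in\sigma(i_k)$ (for $i_k\in F$), the sentence $\exists x\,\bigwedge_k\varphi_k(x,b_{i_k})$ holds. Each such sentence is a first-order formula in the tuple $(b_{i_k})_k$ (after choosing representatives) and hence belongs to the EM-type of $(b_i)_{i<\omega}$. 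Invoking Fact~\ref{fact: indisc representatives} together with the usual Ramsey/compactness extraction of indiscernibles, one obtains an indiscernible sequence $(b_i')_{i<\omega}$ of hyperimaginaries whose EM-type contains all these existential sentences, so $(b_i')_{i<\omega}$ is still shattered. Taking $d:=d_I$ for $I=\{i<\omega : i \text{ even}\}$ gives an element with $\tp(d,b_i')=p$ for $i$ even and $\tp(d,b_i')=q$ for $i$ odd, yielding~(1).

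For $(1)\Rightarrow(2)$, the plan is to keep the very sequence $(b_i)_{i<\omega}$ from (1) and realise every finite $\{p,q\}$-pattern on it by a suitable automorphic image of $d$. Given a finite $F=\{i_1<\dots<i_n\}\subseteq\omega$ and a function $\sigma\colon F\to\{p,q\}$, pick indices $j_1<\dots<j_n$ with $j_k$ even iff $\sigma(i_k)=p$; this is possible because both the even and odd natural numbers are cofinal in $\omega$. Then $\tp(d,b_{j_k})=\sigma(i_k)$ by the alternation hypothesis, and by indiscernibility of $(b_i)_{i<\omega}$ there exists $\tau\in\aut(\C)$ sending $(b_{j_1},\dots,b_{j_n})$ to $(b_{i_1},\dots,b_{i_n})$; setting $d_{F,\sigma}:=\tau(d)$, which behaves correctly because $p,q$ are $\emptyset$-types, witnesses the pattern $\sigma$ on $F$. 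A compactness argument over the fixed sequence $(b_i)_{i<\omega}$ then shows that for every $I\subseteq\omega$ the partial type $\bigcup_{i\in I}p(x,b_i)\cup\bigcup_{i\notin I}q(x,b_i)$ is consistent, producing in the monster a single $d_I$ with $\tp(d_I,b_i)=p\iff i\in I$ and $\tp(d_I,b_i)=q\iff i\notin I$, which is exactly~(2).

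The most delicate point is the preservation of shattering in $(2)\Rightarrow(1)$: one must recognise that the existence of a witness realising a given finite $\{p,q\}$-pattern is expressible by a single first-order formula on the representatives of the $b_i$'s, so the condition survives the passage to any sequence realising the same EM-type. In the hyperimaginary setting this requires the customary step of first passing to representatives, extracting there, and then projecting back via Fact~\ref{fact: indisc representatives}; everything else is an essentially formal compactness/automorphism manipulation.
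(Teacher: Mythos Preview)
Your proof is correct and follows the same overall strategy as the paper's. The only notable difference is in $(1)\Rightarrow(2)$: the paper avoids your compactness step by observing that for any $I\subseteq\omega$ one can choose an \emph{increasing} injection $\tau\colon\omega\to\omega$ with $\tau(i)$ even iff $i\in I$, so that by indiscernibility a single automorphism $\sigma$ sends $(b_i)_{i<\omega}$ to $(b_{\tau(i)})_{i<\omega}$, and $d_I:=\sigma^{-1}(d)$ works directly for the whole of $I$ at once. Your finite-pattern-plus-compactness route is equally valid, just slightly less direct; for $(2)\Rightarrow(1)$ the two arguments are essentially identical.
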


%%%Krzys3: Shortened first sentence of the proof of each implication.
\begin{proof}
	$(1)\Rightarrow (2)$ Let $(b_i)_{i<\omega}$ in $Y/F$ and $d\in X/E$ witness $(1)$. Let $I\subseteq \omega$. We can find an increasing one to one map $\tau:\omega \to \omega$ such that for all $i\in \omega$, $\tau(i)$ is even if and only if $i\in I$. By indiscernibility, the map sending $b_i$ to $b_{\tau(i)}$ for all $i\in \omega$ can be extended to an automorphism $\sigma$. The element $d_I:=\sigma^{-1}(d)$ satisfies the conditions in $(2)$.
	
	$(2)\Rightarrow (1)$ Let $(b_i)_{i<\omega}$ witness $(2)$. We can find an indiscernible sequence $(c_i)_{i<\omega}$ in $Y/F$ satisfying the EM-type of $(b_i)_{i<\omega}$. It follows that for any two disjoint finite sets $I_0,I_1\subseteq \omega$, the partial type 
	$$\{ p(x;c_i): i\in I_0\}\cup \{q(x;c_i): i\in I_1\}$$ 
	is consistent. By compactness, the sequence $(c_i)_{i<\omega}$ is shattered by $(p,q)$. In particular, there is $d\in X/E$ such that $\tp(d,c_i)=p$ if and only if $i$ is even and $\tp(d,c_i)=q$ if and only if $i$ is odd.
\end{proof}

\begin{lema}
	Let $p,q \in S_{X/E \times Y/F}(\emptyset)$ be distinct. Then there exists an infinite sequence in $Y/F$ shattered by $(p,q)$ if and only if there exists an infinite sequence in $X/E$ shattered by $(p^{opp},q^{opp})$, where $p^{opp}(x,y):=p(y,x)$.
\end{lema}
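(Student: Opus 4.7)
The plan is to prove the nontrivial direction (say $\Rightarrow$; the reverse is symmetric since $(p^{opp})^{opp}=p$) by combining a finite combinatorial ``dual shattering'' trick in the style of Sauer--Shelah with a straightforward compactness argument at the end.

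First I would strengthen the hypothesis by compactness: if some infinite sequence in $Y/F$ is shattered by $(p,q)$, then for every $n<\omega$ there exist $b_0,\dots,b_{n-1}\in Y/F$ together with witnesses $d_\sigma\in X/E$ for every $\sigma\subseteq\{0,\dots,n-1\}$ such that $\tp(d_\sigma,b_i)=p$ iff $i\in\sigma$, and $\tp(d_\sigma,b_i)=q$ iff $i\notin\sigma$. (This is just the usual observation that an infinite shattered sequence allows a shattered family of any finite size.)

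The heart of the argument is the following finite dualization, which I would carry out for each $k<\omega$. Set $n:=2^k$ and view $\{0,\dots,n-1\}$ as the set of binary strings of length $k$. For each $j<k$ put
\[
\sigma_j:=\{\,i<n : \text{the $j$-th binary bit of $i$ is $1$}\,\}\subseteq\{0,\dots,n-1\},
\]
and define $e_j:=d_{\sigma_j}$. Given $J\subseteq\{0,\dots,k-1\}$, let $i_J:=\sum_{j\in J}2^j$; then for every $j<k$ we have $i_J\in\sigma_j$ iff $j\in J$, so
\[
\tp(e_j,b_{i_J})=\tp(d_{\sigma_j},b_{i_J})=p\iff j\in J,
\]
and likewise $=q$ iff $j\notin J$. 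Setting $b_J^{\ast}:=b_{i_J}\in Y/F$, we see that the tuple $(e_0,\dots,e_{k-1})$ in $X/E$ is shattered by $(p^{opp},q^{opp})$, using the $b_J^{\ast}$ as witnesses (recall that $\tp(b,e)=p^{opp}$ iff $\tp(e,b)=p$).

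To conclude, I would apply compactness to the partial type $\Pi$ in variables $(d_i)_{i<\omega}$ and $(b_J)_{J\subseteq\omega}$ asserting that each $d_i\in X/E$, each $b_J\in Y/F$, and $\tp(d_i,b_J)=p$ if $i\in J$, and $=q$ if $i\notin J$. The previous paragraph shows that every finite fragment of $\Pi$ (involving only finitely many $d_i$ and finitely many $b_J$) is consistent, since a finite shattering of size $k$ of a tuple in $X/E$ by $(p^{opp},q^{opp})$ exhibits precisely such a finite realization. Hence $\Pi$ is realized in $\C$, yielding an infinite sequence in $X/E$ shattered by $(p^{opp},q^{opp})$. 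The main obstacle I anticipate is only recognizing the correct indexing by the ``$j$-th bit'' subsets $\sigma_j$; once this combinatorial trick is set up, both the stretching step and the compactness step are routine.
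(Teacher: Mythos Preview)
Your proof is correct and uses the same combinatorial duality as the paper: your ``$j$-th bit'' subsets $\sigma_j$ are exactly the paper's sets $I_j=\{J:j\in J\}$ once $\{0,\dots,2^k-1\}$ is identified with $\mathcal{P}(k)$. The only organizational difference is that the paper applies compactness first (stretching the shattered sequence to be indexed by $\mathcal{P}(\omega)$) and then reads off the dual infinite sequence directly, whereas you run the finite duality for each $k$ and apply compactness at the end; this makes the paper's version slightly shorter since it avoids the final type $\Pi$, but neither approach has any real advantage over the other.
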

\begin{proof}
	Let $(b_i)_{i<\omega}$ be a sequence in $Y/F$ shattered by $(p,q)$. By compactness, we can find a sequence $(c_i)_{i\in \mathcal{P}(\omega)}$ in $Y/F$ which is shattered by $(p,q)$ as witnessed by the family $\{ d_I: I\subseteq \mathcal{P}(\omega) \} \subseteq X/E$. Consider the sequence $(d_j)_{j< \omega}$ in $X/E$, where $d_j:=d_{I_j}$ and $I_j:=\{X\subseteq \omega: j\in X\}$. Then for any $J\subseteq \omega$ we have: 
	\begin{align*} \tp(d_j,c_J) =p \iff j
		\in J\\ \tp(d_j,c_J)=q \iff j \notin J,
	\end{align*}
	because $j\in J$ if and only if $J\in I_j$. 
	%%%Krzys3: $(d_j)_{j<\omega}$ instead of $(c_j)_{j<\omega}$
	Thus, $(d_j)_{j<\omega}$ is shattered by $(p^{opp},q^{opp})$.
	
	The converse follows by symmetry.
\end{proof}

From the last three lemmas, one easily deduces the following

\begin{cor}\label{corollary: switched role of X/E and anything}
	$X/E$ has NIP if and only if there do not exist an indiscernible sequence $(b_i)_{i<\omega}$ of elements of $X/E$ and $d$ (from anywhere) such that the sequence $(d,b_{2i},b_{2i+1})_{i<\omega}$ is indiscernible and $\tp(d,b_0)\neq \tp(d,b_1)$.
\end{cor}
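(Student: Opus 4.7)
The plan is to chain together the three preceding lemmas, exploiting the symmetry provided by Lemma \ref{3.13} (which asserts that shatterability of a sequence in $Y/F$ by $(p,q)$ is equivalent to shatterability of a sequence in $X/E$ by the opposite types $(p^{opp},q^{opp})$). The observation is that both the definition of NIP and the condition in the corollary can be translated into the same kind of ``shattering'' condition from Lemma \ref{lemma: equivalences NIP}, and the two resulting shattering conditions are related exactly by passing to opposites.

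More concretely, I would first use the definition of NIP together with Lemma \ref{lemma: 2.11} (for arbitrary $\emptyset$-hyperdefinable $Y/F$, with $(b_i)$ in $Y/F$ and $d\in X/E$) to reformulate the failure of NIP for $X/E$ as: there exist $Y/F$, an indiscernible sequence $(b_i)_{i<\omega}$ in $Y/F$, and $d\in X/E$ realizing the alternating pattern $\tp(d,b_i)=p$ iff $i$ is even, $\tp(d,b_i)=q$ iff $i$ is odd, for some distinct $p,q\in S_{X/E\times Y/F}(\emptyset)$. By Lemma \ref{lemma: equivalences NIP}, this in turn is equivalent to the existence of some distinct $p,q\in S_{X/E\times Y/F}(\emptyset)$ and some sequence $(b_i)_{i<\omega}$ in $Y/F$ shattered by $(p,q)$ with witnesses in $X/E$.

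Next, I would apply Lemma \ref{3.13} to swap the roles of $X/E$ and $Y/F$: the above shattering condition is equivalent to the existence of distinct $p^{opp},q^{opp}\in S_{Y/F\times X/E}(\emptyset)$ and a sequence in $X/E$ shattered by $(p^{opp},q^{opp})$ with witnesses in $Y/F$. Reapplying Lemma \ref{lemma: equivalences NIP} (this time with $X/E$ of the lemma played by our $Y/F$ and vice versa), this becomes: there exist $Y/F$, an indiscernible sequence $(b_i')_{i<\omega}$ in $X/E$, and $d'\in Y/F$ realizing the alternating pattern with the opposite types. Finally, Lemma \ref{lemma: 2.11} with the same role swap yields the existence of an indiscernible sequence in $X/E$ and $d'$ from somewhere such that $(d',b_{2i}',b_{2i+1}')_{i<\omega}$ is indiscernible and $\tp(d',b_0')\neq\tp(d',b_1')$ --- which is exactly the negation of the right-hand side of the corollary.

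I do not foresee a real obstacle: all three lemmas are stated for arbitrary $\emptyset$-hyperdefinable $X/E$ and $Y/F$, so the role swap in the last two applications is legitimate. The only subtlety to flag is the meaning of ``from anywhere'' in the statement, which should be interpreted as ``in some hyperdefinable set $Y/F$'' (equivalently, a tuple of hyperimaginaries), matching the usage in Definition \ref{nip hyperdef}; with this understood, the chain of equivalences above completes the proof.
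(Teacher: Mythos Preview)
Your proposal is correct and is precisely the chain of equivalences the paper has in mind: the text says only ``From the last three lemmas, one easily deduces the following,'' and your argument spells out exactly that deduction (Lemma~\ref{lemma: 2.11} $\Leftrightarrow$ alternating pattern, Lemma~\ref{lemma: equivalences NIP} $\Leftrightarrow$ shattering, the swap lemma to exchange the roles of $X/E$ and $Y/F$, then back again). Your remark that ``from anywhere'' should be read as ``in some $\emptyset$-hyperdefinable $Y/F$'' (e.g.\ $\C^m$ with equality) is the right way to make the applications of the lemmas legitimate.
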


The next lemma is analogous to the finite-cofinite lemma for NIP theories.

\begin{lema}\label{fincofin}
	Suppose that $X/E$ has NIP. Let $(a_i)_{i\in I}$ be an infinite, totally indiscernible sequence of elements of $X/E$ and $b$ any tuple from $\C$. Then, for any $j_0,j_1\in I$, whenever $\tp(a_{j_0},b)\neq \tp(a_{j_1},b)$, either
	
	\begin{equation*} 
		\begin{gathered}
			I_0:=\{i\in I: \tp(a_i,b)=\tp(a_{j_0},b)\} \text{ is finite}\\
			\text{or}\\
			I_1:=\{i\in I:\tp(a_i,b)=\tp(a_{j_1},b)\} \text{ is finite.}
		\end{gathered}
	\end{equation*}
\end{lema}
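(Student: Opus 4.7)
Assume for contradiction that both $I_0$ and $I_1$ are infinite; set $p := \tp(a_{j_0},b)$ and $q := \tp(a_{j_1},b)$. The strategy is to produce, inside $X/E$, a configuration witnessing IP: an indiscernible sequence $(c'_i)_{i<\omega}$ of elements of $X/E$ together with the parameter $d := b$ such that $(d, c'_{2k}, c'_{2k+1})_{k<\omega}$ is indiscernible and $\tp(d,c'_0)\neq\tp(d,c'_1)$, which contradicts NIP of $X/E$ via Corollary \ref{corollary: switched role of X/E and anything}.

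First, using total indiscernibility of $(a_i)_{i\in I}$ over $\emptyset$ and the infiniteness of $I_0$ and $I_1$, a standard finite-satisfiability argument produces a sequence $(c_i)_{i<\lambda}$ of elements of $X/E$ (with $\lambda$ as large as needed for the extraction below) that realizes the full $\emptyset$-type of $(a_i)_{i\in I}$ --- so $(c_i)$ is totally indiscernible over $\emptyset$ --- and in addition satisfies $\tp(c_{2k},b)=p$ and $\tp(c_{2k+1},b)=q$ for every $k$. Finite fragments of the required partial type are realized inside $(a_i)_{i\in I}$ by picking indices alternately from $I_0$ and $I_1$, and total indiscernibility ensures that these choices realize the correct $\emptyset$-type. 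Next, apply Fact \ref{extracting indiscernibles} to the pair-sequence $((c_{2k},c_{2k+1}))_{k<\lambda}$ to extract a pair-sequence $((c'_{2k},c'_{2k+1}))_{k<\omega}$ indiscernible over $b$, with finite initial pair-tuples realizing the $b$-type of some subsequence of the original. This preserves the alternation $\tp(c'_{2k},b)=p$, $\tp(c'_{2k+1},b)=q$, and pair-indiscernibility over $b$ translates exactly to indiscernibility of $(b, c'_{2k}, c'_{2k+1})_{k<\omega}$.

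The main obstacle is that the extraction is carried out on pairs and over $b$, so a priori the reassembled singleton sequence $(c'_i)_{i<\omega}$ need not be indiscernible over $\emptyset$. To establish this, observe first that the extracted pair-sequence has the same pair-EM-type over $\emptyset$ as the original pair-sequence (extraction preserves types over $b\supseteq\emptyset$), and the original pair-sequence is pair-indiscernible over $\emptyset$ since $(c_i)$ is totally indiscernible over $\emptyset$. Given any increasing indices $i_1<\cdots<i_n$, group them according to which pair $(2k,2k+1)$ they fall into, and let $k_1<\cdots<k_m$ enumerate those pairs. Pair-indiscernibility of $((c'_{2k},c'_{2k+1}))_{k<\omega}$ over $\emptyset$ together with the matching of the initial pair-tuple's $\emptyset$-type with that of $(c_0,c_1,\ldots,c_{2m-1})$ shows that $\tp(c'_{i_1},\ldots,c'_{i_n}/\emptyset)$ coincides with $\tp(c_{i'_1},\ldots,c_{i'_n}/\emptyset)$ for some increasing $i'_1<\cdots<i'_n$; by total indiscernibility of $(c_i)$ this type depends only on $n$. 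Hence $(c'_i)_{i<\omega}$ is indiscernible over $\emptyset$. Applying Corollary \ref{corollary: switched role of X/E and anything} with $(c'_i)$ and $d=b$ now yields the desired contradiction, since $\tp(b,c'_0)=p\neq q=\tp(b,c'_1)$.
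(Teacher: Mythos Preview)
Your argument is correct, but it is considerably more elaborate than necessary, and the extra work you do is precisely what Lemma~\ref{lemma: 2.11} already packages for you.

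The paper's proof is two lines: pick pairwise distinct indices $i_k$ with $i_k\in I_0$ for $k$ even and $i_k\in I_1$ for $k$ odd (possible since both sets are infinite). Total indiscernibility of $(a_i)_{i\in I}$ immediately makes $(a_{i_k})_{k<\omega}$ indiscernible over $\emptyset$, and the types $\tp(a_{i_k},b)$ alternate between $p$ and $q$ by construction. This is exactly condition~(2) of Lemma~\ref{lemma: 2.11} (with the roles of $X/E$ and $Y/F$ swapped), which yields condition~(1), and then Corollary~\ref{corollary: switched role of X/E and anything} gives the contradiction with NIP.

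What you do instead is re-derive the implication $(2)\Rightarrow(1)$ of Lemma~\ref{lemma: 2.11} from scratch, and in a heavier way: you use Erd\H{o}s--Rado extraction (Fact~\ref{extracting indiscernibles}) over $b$ rather than a Ramsey/EM argument over $\emptyset$, which forces you to first stretch the sequence to a large cardinal length by compactness, and then to carry out a nontrivial verification that the extracted singleton sequence remains indiscernible over $\emptyset$. All of this is sound --- your pair-indiscernibility and subtuple-matching argument for the last point is fine --- but none of it is needed once you notice that the alternating $\omega$-sequence picked directly from $(a_i)_{i\in I}$ already is indiscernible over $\emptyset$, and that Lemma~\ref{lemma: 2.11} handles the passage to the ``$(d,b_{2i},b_{2i+1})$ indiscernible'' form.
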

\begin{proof}
	Otherwise, we can build a sequence $(i_k)_{k<\omega}$ of pairwise distinct elements of $I$ so that $i_k\in I_0$ if and only if $k$ is even, and $i_k\in I_1$ if and only if $k$ is odd. Then, since $(a_i)_{i\in I}$ was totally indiscernible, the sequence $(a_{i_k})_{k<\omega}$ is indiscernible and $$\tp(a_{i_k},b)=\tp(a_{j_0},b)\iff k \text{ even}$$ $$\tp(a_{i_k},b)=\tp(a_{j_1},b)\iff k \text{ odd},$$
	which by Lemma \ref{lemma: 2.11} and Corollary \ref{corollary: switched role of X/E and anything} imply that $X/E$ does not have NIP, a contradiction. 
\end{proof}

\begin{defin}
	%%%Krzys: $w \subseteq 2n-1$ instead of $w \subseteq 2n$.
	%%%Adrian: changed $\wedge$ and $\vee$ to max and min as the review indicates 
	The median value connective $\med_n:[0,1]^{2n-1}\to[0,1]$ is defined by $$ \med_n(t_{<2n-1})=\max_{\substack{w\subseteq 2n-1 \\ \lvert w\rvert=n}}\min_{i\in w} t_i= \min_{\substack{w\subseteq 2n-1 \\ \lvert w\rvert=n}}\max_{i\in w} t_i. $$
\end{defin}

%%%Krzys revised: ``computes`` instead of ``compute'' and ``list'' instead of ``lists''.	
This connective, as its name indicates, literally computes the median value of the list of arguments.

%	Notation: $d^\varepsilon f(y,x_{<2N(f,\varepsilon)-1})=\med_{N(f,\varepsilon)}(f(x_i,y): i<2N(f,\varepsilon)-1)$
%%%Krzys: The notation was incorrect, as $\varepsilon$ did not appear on the right hand side and it was not said what is $N$. So I modified this notation.
%Notation: $d^\varepsilon f(y,x_{<2N-1})=\med_{N}(f(x_i,y): i<2N-1)$.
For  $f(x,y) \in \mathcal{F}_{X/E}$ and $N \in \mathbb{N}^+$, put  
$$d^N f(y,x_{<2N-1}):=\med_{N}(f(x_i,y): i<2N-1).$$	

As in classical model theory, generically stable types are definable, which follows from the next proposition. For  $p \in S_{X/E}(\C)$, $f(x,y) \in \mathcal{F}_{X/E}$, and $b \in \C^{|y|}$, by $f(x,b)^p$ we mean the value of $p$ at $f(x,b)$ for $p$ treated as an element of $S_{\mathcal{F}_{X/E}}(\C)$ as explained in Remark \ref{2.7} (in other words, it is $f(a,b)$ for $a/E \models p$). We say that $p$ is {\em definable} if it is so as a type in $S_{\mathcal{F}_{X/E}}(\C)$.

%%%Krzys: In the proposition d^{N(f,\varepsilon)} instead of d^\varepsilon.
\begin{prop}\label{2.21}
	If $p \in S_{X/E}(\C)$ is generically stable over $A$, then for any $f(x,y)\in \mathcal{F}_{X/E}$, $\varepsilon>0$, $b\in \mathfrak{C}^{\lvert y\rvert}$, and $(a_i/E)_{i<\omega}$ a Morley sequence in $p$ over $A$, $$ \lvert f(x,b)^p- d^{N(f,\varepsilon)} f(b,a_{<2N(f,\varepsilon)-1})\rvert \leq \varepsilon,$$
	where $N(f,\varepsilon)$ is a number as in Proposition \ref{proposition: characterization of gen. stab.}. 
\end{prop}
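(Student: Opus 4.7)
I would set $N := N(f, \varepsilon)$, $r := f(x,b)^p$, and $m := d^{N}f(b, a_{<2N-1}) = \med_N(f(a_i, b) : i < 2N-1)$, and argue by contradiction, supposing $|m - r| > \varepsilon$. By symmetry it suffices to rule out $m > r + \varepsilon$: because $m$ is the median of $2N-1$ reals, this would force at least $N$ of the values $f(a_i, b)$ for $i < 2N-1$ to satisfy $f(a_i, b) \geq m$; collect these indices in a set $R \subseteq \{0, \ldots, 2N-2\}$ with $|R| \geq N$.

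The key move is to extend the given Morley sequence past position $2N-1$ by elements that ``know'' about $b$: for each $j < N$, using saturation of $\C$, pick $a^*_j/E$ realizing $p|_{A a_{<2N-1} a^*_{<j} b}$. Because each such realization agrees with $p$ on all CL-formulas over its parameter set (which contains $b$), and $f(x,b)^p = r$, we get $f(a^*_j, b) = r$ for every $j < N$. Now glue the pieces together: set $c_i := a_i$ for $i < 2N-1$, $c_{2N-1+j} := a^*_j$ for $j < N$, and for $i \geq 3N-1$ continue with any realization of $p|_{A c_{<i}}$. Since $c_i/E \models p|_{A c_{<i}}$ at every step, $(c_i/E)_{i<\omega}$ is a Morley sequence in $p$ over $A$.

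Put $S := \{2N-1, \ldots, 3N-2\}$, of size $N$. The sets $R$ and $S$ are disjoint subsets of $\omega$ of size at least $N$, and for every $i \in R$ and $j \in S$
$$f(c_i, b) - f(c_j, b) \;\geq\; m - r \;>\; \varepsilon,$$
contradicting Proposition \ref{proposition: characterization of gen. stab.} applied to the Morley sequence $(c_i/E)_{i<\omega}$, the pair $(R,S)$, and the parameter $b$. The symmetric case $m < r - \varepsilon$ is handled identically: take $R$ to be a set of $N$ indices in $\{0, \ldots, 2N-2\}$ where $f(a_i, b) \leq m$ and use $f(c_j, b) - f(c_i, b) \geq r - m > \varepsilon$. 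I do not expect any serious obstacle here; the only point that deserves care is verifying that the re-engineered tail keeps $(c_i/E)_{i<\omega}$ a Morley sequence in $p$ over $A$, and this is immediate because realizing $p$ over the larger parameter set $A c_{<i} b$ is stronger than what the Morley condition over $A$ demands.
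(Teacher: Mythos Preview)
Your proof is correct and follows essentially the same approach as the paper's: both argue by contradiction, use the median to extract a set of $N$ indices where $f(a_i,b)$ lies on one side of $f(x,b)^p + \varepsilon$, then extend the Morley sequence over $Ab$ to produce $N$ further indices where the value is exactly $f(x,b)^p$, and invoke Proposition~\ref{proposition: characterization of gen. stab.}. Your write-up is somewhat more explicit about gluing the extended tail onto the full initial segment and verifying the result is still a Morley sequence, while the paper simply says ``taking a Morley sequence in $p$ over $Ab(a_i)_{i\in w}$'' and leaves the concatenation implicit; but the underlying idea is identical.
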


\begin{proof}
	%%%Krzys: I added ``Write $N$ for $N(f,\varepsilon)$. ``.
	Suppose this is not true. Write $N$ for $N(f,\varepsilon)$. Then, we have two cases:
	
	%%%Krzys: In both cases below, d^N instead of d^\varepsilon.
	%%%Krzys: Below $w \subseteq 2N-1$ instead of $w \subseteq 2N$.
	%%%Krzys: Also, a_{<2N-1}, as this is a standard notation.
	%%%Adrian: changed $\wedge$ and $\vee$ to max and min as the review indicates 
	1) $d^N f(b,a_{<2N-1})-\varepsilon>f(x,b)^p$. This implies $$\max_{\substack{w\subseteq 2N -1 \\ \lvert w\rvert=N}}\min_{i\in w} f(a_i,b)>f(x,b)^p+\varepsilon.$$
	%%%Krzys: I added ``of size $N$''.
	Hence, there is $w$ of size $N$ such that for all $i\in w$ we have $f(a_i,b)>f(x,b)^p + \varepsilon$. Taking a Morley sequence in $p$ over $Ab(a_i)_{i\in w}$, we get a contradiction with the choice of $N(f,\varepsilon)$.
	
	2)	$d^N f(b,a_{<2N-1})+\varepsilon<f(x,b)^p$. This case is analogous to the previous one, using the other definition of the median value connective.
\end{proof}

\begin{cor}\label{corollary: gen stab implies definable}
	All generically stable types in $S_{X/E}(\C)$ are definable.
\end{cor}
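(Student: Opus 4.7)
The plan is to derive the corollary directly from Proposition \ref{2.21} by approximating the map $b \mapsto f(x,b)^p$ uniformly by CL-formulas with parameters from a fixed Morley sequence, and then invoking the fact that uniform limits of CL-formulas are CL-formulas (i.e. definable predicates).

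Fix a generically stable type $p \in S_{X/E}(\C)$, say invariant over $A$. To show that $p$ is definable, it suffices to show that for every $f(x,y) \in \mathcal{F}_{X/E}$, the function $F_f \colon \C^{|y|} \to \mathbb{R}$ defined by $F_f(b) := f(x,b)^p$ is a CL-formula over some small set of parameters. Fix such an $f$ and fix once and for all a Morley sequence $(a_i/E)_{i<\omega}$ in $p$ over $A$, together with a representative sequence $(a_i)_{i<\omega}$. For each $\varepsilon > 0$, let $N=N(f,\varepsilon)$ be as in Proposition \ref{proposition: characterization of gen. stab.}, and set
$$\psi_\varepsilon(y) := d^{N} f(y, a_{<2N-1}) = \med_{N}(f(a_i, y) : i < 2N-1).$$
Since $f \in \mathcal{F}_{X/E}$ extends to a CL-formula and the median connective $\med_N$ is a continuous function, $\psi_\varepsilon$ is a CL-formula over the parameters $a_0, \dots, a_{2N-2}$ (hence over a small subset of $\C$).

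By Proposition \ref{2.21}, for every $b \in \C^{|y|}$ we have $|F_f(b) - \psi_\varepsilon(b)| \leq \varepsilon$. Hence the family $(\psi_\varepsilon)_{\varepsilon > 0}$ converges uniformly to $F_f$. Since uniform limits of CL-formulas are CL-formulas (one may choose a sequence $\varepsilon_n \to 0$ and apply a Cauchy-type construction, equivalently using Fact \ref{equiv1}-type reasoning, or simply the standard fact that definable predicates are exactly the uniform limits of formulas, as recalled in Section 2.2 following \cite[Proposition 3.4]{MR2657678}), it follows that $F_f$ is itself a CL-formula over a small set of parameters in $\C$. As this holds for every $f \in \mathcal{F}_{X/E}$, the type $p$, viewed as an element of $S_{\mathcal{F}_{X/E}}(\C)$ via Remark \ref{2.7}, is definable.

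The only delicate point is the passage from uniform approximability by CL-formulas to being itself a CL-formula in the generalized sense used here (with possibly infinitely many variables); this is routine in continuous logic and already used tacitly in the excerpt when CL-formulas are identified with definable predicates. No additional obstacle arises from the hyperdefinable setting because $F_f$ depends only on the finite tuple $y$ of real variables and because the representatives $a_i$ can be taken from $\C$.
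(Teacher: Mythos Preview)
Your proof is correct and follows essentially the same approach as the paper: both fix a Morley sequence, invoke Proposition~\ref{2.21} to get uniform approximation of $b \mapsto f(x,b)^p$ by the median formulas $d^{N(f,\varepsilon)}f(y,a_{<2N(f,\varepsilon)-1})$, and then pass to the uniform limit. The only cosmetic difference is that the paper packages the uniform limit via the forced limit construction of \cite[Definitions~3.6,~3.8 and Lemma~3.7]{MR2657678}, whereas you appeal directly to the completeness of the space of CL-formulas under the sup norm; these are the same step.
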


\begin{proof}
	Consider any generically stable (over $A$) $p \in S_{X/E}(\C)$ and $f(x,y)\in \mathcal{F}_{X/E}$. Let $(a_i/E)_{i<\omega}$ be  a Morley sequence in $p$ over $A$.
	%Let $df(y)$ be the CL-formula $\lim_n d^{2^{-n}} f(y,a_{<2N(f,2^{-n})-1})$. By the last proposition, we get that  
	%$df(y)$ is the $f$-definition of $p$.
	%%%Krzys:   d^{N(f,2^{-n})} instead of d^{2^{-n}}
	%%%Krzys3: There was a mistake here. The limit does not have to exist. That is way Ben-Yaacov and Usvyatsov use forced limits. So I corrected it, referring to BU for the definition. 
	% Let $df(y,z)$ be the CL-formula $\lim_n d^{N(f,2^{-n})} f(y,x_{<2N(f,2^{-n})-1})$. By the last proposition, we get that $df(y,(a_{i})_{i<\omega})$ 
	Define a CL-formula $df(y,z)$ to be the forced limit of the sequence $(d^{N(f,2^{-n})} f(y,x_{<2N(f,2^{-n})-1}))_{n<\omega}$ (see Definitions 3.6 and 3.8 in \cite{MR2657678}). By the last proposition and \cite[Lemma 3.7]{MR2657678}, we get that $df(y,(a_{i})_{i<\omega})$ 
	is the $f$-definition of $p$
\end{proof}

Let $M \prec \C$ (small), $f \in \mathcal{F}_{X/E}$, $p \in S_f(M)$, and $q \in S_X(\C)$. It is clear what it means that $q$ extends $p$, namely: for $d \models q$ and for all $b \in M^{|y|}$, $f(x,b)^p = f(d,b)$ (here $f$ is understood as a function $f:X(\C')\times \C'^m\to \R$  for $\C'$ a bigger monster model to which $d$ belongs; note that there is a unique function that can be extended to a CL-formula over $\emptyset$). 
%%%Krzys3: I replaced first ``the'' in ``saying the the'' by ``that''.
This is equivalent to saying that the partial type over $M$ defining $\{d \in \C^{|x|}: f(d,b) =f(x,b)^p \text{ for all } b \in M^{|y|}\}$ is contained in $q$. 
%%%Krzys3: I wrote explicitly ``finitely satisfiable'' in case the reader does not remember what a coheir is.
Thus, using the well-known fact that each partial type over $M$ (even in infinitely many variables) extends to a global coheir over $M$ (i.e. global type finitely satisfiable in $M$), we have:

\begin{fact}\label{fact: instead of BY}
	Every type $p \in S_f(M)$ has an extension to a global type $q \in S_X(\C)$ finitely satisfiable in $M$.
\end{fact}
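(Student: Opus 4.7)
The strategy is to reduce the statement to the classical fact about extending partial types over a model to global coheirs. Concretely, I will associate to $p$ a first-order partial type $\pi(x)$ over $M$ whose realizations are precisely the elements of $X$ with $f$-type $p$ over $M$, verify that $\pi$ is finitely satisfiable in $M$ using elementarity, and then invoke the cited coheir-extension fact to produce the desired $q$.

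For the first step, fix $b \in M^{|y|}$ and put $r_b := f(x,b)^p \in \R$. Since $f$ extends to a CL-formula, the map $S_\lambda(M) \to \R$, $\tp(a/M) \mapsto f(a,b)$, is continuous, so the preimage of $\{r_b\}$ is closed in $S_\lambda(M)$ and hence type-definable over $M$; let $\pi_b(x)$ be a partial type over $M$ defining it. Define
\[
\pi(x) := \Sigma_X(x) \cup \bigcup_{b \in M^{|y|}} \pi_b(x),
\]
where $\Sigma_X$ is the partial type defining $X$. By the discussion preceding the statement, the realizations of $\pi$ in any model extending $M$ are exactly the elements of $X$ whose $f$-type over $M$ equals $p$; in particular $\pi$ is consistent, since $p \in S_f(M)$ is represented by some $a \in X \subseteq \C^{\lambda}$ (using saturation of $\C$).

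The key step is finite satisfiability of $\pi$ in $M$. A finite subset $\pi_0 \subseteq \pi$ is a finite conjunction $\psi(x,\bar m)$ of first-order formulas with parameters $\bar m$ from $M$, involving only finitely many coordinates of $x$. Since $\pi$ is consistent, so is $\psi(x,\bar m)$, i.e. $\C \models \exists x\, \psi(x,\bar m)$. By $M \prec \C$, we obtain $M \models \exists x\, \psi(x,\bar m)$, so $\pi_0$ is realized in $M^{|x|}$. Hence $\pi$ is finitely satisfiable in $M$.

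Finally, apply the cited well-known fact: any partial type over $M$ (even in infinitely many variables) that is finitely satisfiable in $M$ extends to a global type $q \in S_X(\C)$ finitely satisfiable in $M$ (the standard Zorn's lemma argument works in the infinitary setting). Since $q \supseteq \pi$, for every $b \in M^{|y|}$ the type $q$ entails $\pi_b(x)$, hence $f(x,b)^q = r_b = f(x,b)^p$. By the characterization of $q$ extending $p$ recalled just before the statement, $q$ is the required extension. The only delicate point is Step~1, where one must convert the continuous-logic condition $f(x,b) = r_b$ into a genuine first-order partial type; this is routine since first-order theories treated as CL theories have CL-formulas corresponding to continuous functions on the (totally disconnected) Stone space of types.
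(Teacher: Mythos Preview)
Your proof is correct and follows essentially the same approach as the paper: the paper observes that ``$q$ extends $p$'' amounts to $q$ containing the partial type over $M$ defining $\{d:\; f(d,b)=f(x,b)^p \text{ for all } b\in M^{|y|}\}$, and then simply cites the well-known fact that any partial type over a model extends to a global coheir. Your write-up is just a more explicit version of this, spelling out the construction of $\pi$ and the elementarity argument for finite satisfiability that the paper leaves as folklore.
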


Finally, we present the proof of the main result of this section.

\begin{proof}[Proof of Theorem \ref{equivteor}] $(1)\Leftrightarrow (2)\Leftrightarrow (3)$ is a part of Corollary \ref{2.8}. The implication $(1)\implies (6)$ follows by definition.
	
	$(1)\Rightarrow (4)$ Suppose that the sequence $(a_i)_{i<\omega}$ in $X/E$ is a counter-example. Without loss of generality we can replace $\omega$ by $\mathbb{Q}$. Then, there are rational numbers $i_0<\dots <i_{n-1}$ and a natural number $j<n-1$ such that $$\tp(a_{i_j},a_{i_{j+1}}/A)\neq \tp(a_{i_{j+1}},a_{i_j}/A),$$ where $A$ is the set of all $a_{i_k}$ for $k<n$ such that $j\neq k \neq j+1$. Choose any rationals $l_0<l_1<\dots$ in the interval $(i_j,i_{j+1})$. Let $b_i=a_{l_i}$ for $i<\omega$. Then, the sequence $(b_i)_{i<\omega}$ is $A$-indiscernible and $\tp(b_i,b_j/A)\neq \tp(b_j,b_i/A)$ for all $i<j<\omega$. This contradicts the stability of $X/E$.
	
	$(4)\Rightarrow (1)$ Assume that $X/E$ is unstable and $(4)$ holds. Since $X/E$ is unstable, there exists an indiscernible sequence $(a_i/E,b_i)_{i\in\mathbb{Z}}$ such that $\tp(a_i/E,b_j)\neq \tp(a_j/E,b_i)$ for $i<j\in \mathbb{Z}$. The sequence $(a_i/E)_{i\in\mathbb{Z}}$ is totally indiscernible and the sets $\{i\in\mathbb{Z}: \tp(a_i/E,b_0)=\tp(a_1/E,b_0)\}$ and $\{i\in\mathbb{Z}: \tp(a_i/E,b_0)=\tp(a_{-1}/E,b_0)\}$ are both infinite, contradicting Lemma \ref{fincofin} and the assumption that $X/E$ has NIP.
	
	$(1)\Rightarrow (5)$ Let $p\in S_{X/E}(\C)$ be invariant (over some A) but not generically stable. Then, there exists a Morley sequence $(a_i/E)_{i\in \omega + \omega}$ in $p$ over $A$, $\varepsilon>0$, $r\in \mathbb{R}$, $s\leq r-\varepsilon$, $f(x,y)\in \mathcal{F}_{X/E}$, and $b \in \C^{|y|}$ such that the sets $ \{i\in \omega + \omega: f(a_i,b)\leq s\} $ and $ \{i\in \omega + \omega: f(a_i,b)\geq r\}$ are both infinite. There are two possible cases: either there are infinitely many alternations, or after removing a finite number of elements $a_i$: 
	$$ (f(a_i,b)\leq s \iff i<\omega) \; \text{ or } \; (f(a_i,b)\geq r \iff i<\omega).$$
	By the indiscernibility of $(a_i/E)_{i \in \omega +\omega}$, Ramsey's theorem and compactness, in each case we get a contradiction with the stability of $f$.
	
	$(5)\Rightarrow (2)$ Consider any $f\in \mathcal{F}_{X/E}$ and $p\in S_{f}(M)$. By Fact \ref{fact: instead of BY}, choose $q'\in S_{X}(\mathfrak{C})$ extending $p$ which is a coheir over $M$; let $q \in S_{X/E}(\C)$ be induced by $q'$. Being a coheir over $M$, $q'$ is $M$-invariant; so $q$ is $M$-invariant, hence generically stable by (5). By Corollary \ref{corollary: gen stab implies definable}, $q$ is definable. Denote the $f$-definition of $q$ by $\psi$. Since $q$ is $M$-invariant, so is the CL-formula $\psi$. Therefore, $\psi$ is definable over $M$ (i.e. a CL-formula over $M$). Hence, $p$ is definable by the CL-formula $\psi$.
	%	Consider any $f\in \mathcal{F}_{X/E}$ and $p\in S_{f}(M)$. Since $f$ is a restriction of a CL-formula $f'$, $p \in S_{f'}(M)$. By Fact \ref{Fact 3.3 from BY10}(4), choose $q'\in S_{x}(\mathfrak{C})$ extending $p$ which is a coheir over $M$; let $q \in S_{X/E}(\C)$ be induced by $q'$. By Fact \ref{2.19}, $q'$ is $M$-invariant; so $q$ is $M$-invariant, hence generically stable by (5). By Fact \ref{2.21}, $q$ is definable. Denote the $f$-definition of $q$ by $\psi$. Since $q$ is $M$-invariant, so is the definable predicate (or rather CL-formula) $\psi$. Therefore, by Fact \ref{2.22}, $\psi$ is definable over $M$. Hence, $p$ is definable by the CL-formula $\psi$.

	$(6)\implies (4)$ %Without loss of generality, assume that both $X$ and $E$ are type-definable over the empty set. 
 %We have shown that under the $NIP$ assumption, the stability of $X/E$ is equivalent to the fact that every indiscernible sequence of elements of $X/E$ is totally indiscernible. 
	%%%Krzys: ``property'' in place of ``fact''.
	%Hence, it is enough to show that weak stability of $X/E$ also implies this property.
	Suppose that the sequence $(a_i)_{i<\omega}$ in $X/E$ is indiscernible but not totally indiscernible. Let us, without loss of generality, replace $\omega$ by $\mathbb{Q}$. Then, there exist a natural number $n$ and %change 2.3 use 1,...,n instead of i_1,...,i_n  rational numbers $i_0<\dots <i_{n-1}$ and 
	$j<n-1$ such that $$\tp(a_{j},a_{{j+1}}/A)\neq \tp(a_{{j+1}},a_{j}/A),$$ where $A$ is the set of all $a_{k}$ for $k<n$ distinct from $j$ and $j+1$. Choose any rationals $l_0<l_1<\dots$ in the interval $(j,{j+1})$. Let $b_i:=a_{l_i}$ for $i<\omega$. Then, the sequence $(b_i)_{i<\omega}$ is $A$-indiscernible and $\tp(b_i,b_j/A)\neq \tp(b_j,b_i/A)$ for all $i<j<\omega$.
	%%%Krzys: I removed \overline over $a$, as we do not use \bar's in this paper.
	%%%Krzys rev: I changed the next two sentences following to suggestion 2.3(b).
	%		 Moreover, this is witnessed by some finite tuple $a\subseteq A$. 
	Let $a$ be an enumeration of $A$.
	We conclude that the sequence $(b_i,b_i,a)_{i<\omega}$ contradicts the weak stability of $X/E$.
\end{proof}
The equivalence under $NIP$ of stability and weak stability extends \cite[Proposition 4.2]{https://doi.org/10.1002/malq.200610046}  to the  hyperdefinable context.

\section{Distal hyperimaginary sequences}\label{section: distal theories}

%%%Adrian: Changeed proposition to theorem
The goal of this section is to prove Theorem \ref{proposition: preservation of distality} and deduce Proposition \ref{proposition: distal implies bdd}, which in turn confirms the prediction from \cite{MR3796277} that in a distal theory $G^{\textrm{st}}=G^{00}$.

%	The main result in this section is that the the theory $T^{heq}$ inherits the distality from the theory $T$. As a corollary, we obtain that for any distal theory $T$, for any $\emptyset$-$\bigwedge$-definable group $G$ we have $G^{st}=G^{00}$. 

We work in a monster model $\C$ of a complete, classical first-order theory $T$ with NIP.	 This section is based on \cite{MR3001548}, in particular the next two definitions are from there.

\begin{defin}
	For any indiscernible sequence $I$, if $I = I_1 + I_2$ (the concatenation of $I_1$ and $I_2$), we say that $\mathfrak{c}=(I_1, I_2)$ is a {\em cut} of $I$.
	
	We write $(I_1',I_2') \unlhd (I_1,I_2)$ if $I_1'$ is an end segment of $I_1$ and $I_2'$ an initial
	segment of $I_2$.
	
	If $J \subset I$ is a convex subsequence, a cut $\mathfrak{c} = (I_1, I_2)$ is said to be {\em interior to $J$} if $I_1 \cap J$ and $I_2 \cap  J$ are infinite.
	
	A cut is {\em Dedekind} if both $I_1$ and $I^*_2$  ($I_2$ with the reversed order) have infinite cofinality.
	
	A {\em polarized cut} is a pair $(\mathfrak{c},\varepsilon)$, where $\mathfrak{c}$ is a cut $(I_1,I_2)$ and
	$\varepsilon \in \{1,2\}$ is such that $I_\varepsilon$ is infinite. We say that the cut is \emph{left polarized} if $I_1$ is infinite and \emph{right polarized} if $I_2$ is infinite.
	
	%%%Krzys3: I changed ``the'' to ``a'' and put the dot at the end of the sentence.
	If $\mathfrak{c} = (I_1, I_2)$ is a cut, we say that a tuple $b$ {\em fills} $\mathfrak{c}$ if $I_1 + b + I_2$ is indiscernible.
\end{defin}

%Sometimes, if it is clear that the tuple $b$ is filling some cut $\mathfrak{c}=(I_1,I_2)$ of $I$, we will write $I+b$ or $I\cup \{b\}$ instead of $I_1+b+I_2$.
Sometimes, if it is clear that the tuple $b$ fills some cut $\mathfrak{c}=(I_1,I_2)$ of $I$, we will write $I\cup \{b\}$ instead of $I_1+b+I_2$. And similarly, in the case of two elements $a,b$ filling respectively distinct cuts $\mathfrak{c}_1, \mathfrak{c}_2$, abusing notation, we will  write $I \cup \{a\} \cup \{b\}$ for the associated concatenation $I_1+a+I_2+b+I_3$.

\begin{defin}\label{definition: distality}
	A dense indiscernible sequence $I$ is {\em distal} if for any distinct Dedekind cuts $\mathfrak{c}_1,\mathfrak{c}_2$, if $a$ fills $\mathfrak{c}_1$ and $b$ fills $\mathfrak{c}_2$, then $I \cup\{a\} \cup \{b\}$ is indiscernible.
	
	The theory $T$ is {\em distal} if all dense indiscernible sequences (of tuples from the home sort) are distal.
	
	%%%Krzys4: I added the next definition. It was mentioned in  Prop. 1.3, but we shoudl emphasize that we look at hyperimaginaries given by relation type-definable over the emptyset.
	We say that {\em $T^{\textrm{heq}}$ is distal} if all dense indiscernible sequences $(a_i/E)_{i \in \mathcal{I}}$ of hyperimaginaries (where $E$ is $\emptyset$-type-definable) are distal. 
\end{defin}

Let $E$ be a $\emptyset$-type-definable equivalence relation on $\C^\lambda$, and let $\pi_E: \C^\lambda \to \C^\lambda /E$ be the quotient map. 

The next lemma is a variant of \cite[Lemma 2.8]{MR3001548} for hyperimaginaries.

%%%Krzys revised: I slightly changed the formulation. The point is that I moved ``polarized'' later to express that the cuts are distinct as cuts. Saying that they are distinct as polarized cuts is weaker, and the proof is written for the version with the stronger assumption.
\begin{lema}\label{2.8'}
	Let $I=(a_i/E)_{i\in \mathcal{I}}$ be a dense indiscernible sequence and $A\subset \C$ a (small) set of parameters. Let $(\mathfrak{c}_i)_{i<\alpha}$ be a sequence of pairwise distinct Dedekind cuts in $I$. For each $i<\alpha$ let $d_i$ fill the cut $\mathfrak{c_i}$. Fix a polarization of each $\mathfrak{c_i}$, $i < \alpha$. Then there are $(d_i')_{i<\alpha}$ satisfying $(d_i')_{i<\alpha}\equiv_I (d_i)_{i<\alpha}$  such that for every formula  $\theta$ with parameters from $A$ and $i<\alpha$: %and $t\in \mathcal{T}$ ($E=\cap_{t\in \mathcal{T}}R_t$)
	if $$\pi_E^{-1}(d_i')\subseteq %\neg(\exists y)(yR_t x\wedge \varphi(y))(\mathfrak{C})=
	\theta(\mathfrak{C}),$$
	then $$ \pi_E^{-1}(a_j/E)\not\subseteq \neg \theta(\mathfrak{C}) $$
	for $a_j/E$ from a co-final fragment of  the left part of $\mathfrak{c}_i$ if $\mathfrak{c}_i$ is left-polarized, or from a co-initial fragment of the right part of $\mathfrak{c}_i$ if $\mathfrak{c}_i$ is right-polarized.
\end{lema}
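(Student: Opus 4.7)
The argument will closely follow the proof of Lemma~2.8 of \cite{MR3001548}, the main twist being that a condition ``$\models\theta(d)$'' there is to be replaced with the hyperimaginary condition ``$\pi_E^{-1}(d)\subseteq\theta(\C)$'' here. The overall strategy is to reduce to the real-sort case by choosing compatible real representatives, run the classical argument at the level of representatives, and then project the conclusion back down.

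First, using Fact~\ref{fact: indisc representatives}, the plan is to fix a real $\emptyset$-indiscernible lifting $(a_j)_{j\in\I}$ of $(a_j/E)_{j\in\I}$ together with a real representative $c_i$ of $d_i$ for each $i<\alpha$ such that $(a_j)_{j\in I_1^i}+c_i+(a_j)_{j\in I_2^i}$ is $\emptyset$-indiscernible in the home sort, where $\mathfrak{c}_i=(I_1^i,I_2^i)$. The simultaneous existence of such representatives across all $\alpha$ cuts will use the pairwise distinctness and Dedekindness of the $\mathfrak{c}_i$ together with a compactness argument that decouples the local information near each cut.

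Next, I would construct $c'_i$ by transfinite induction on $i<\alpha$. Assume without loss of generality that $\mathfrak{c}_i$ is left-polarized (the other case is symmetric). Given $(c'_j)_{j<i}$, choose a global type $q_i(x)\in S(\C)$ extending $\tp(c_i/I\cup A\cup (c'_j)_{j<i})$ and finitely satisfiable in $(a_j)_{j\in I_1^i}$; let $c'_i\models q_i|_{I\cup A\cup (c'_j)_{j<i}}$. By finite satisfiability of $q_i$ together with $\emptyset$-indiscernibility of the lifting, for any $A$-formula $\theta(x)$ with $\models\theta(c'_i)$ there will be cofinally many $j\in I_1^i$ with $\models\theta(a_j)$.

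Finally, set $d'_i:=c'_i/E$. The automorphism fixing $(a_j)_{j\in\I}$ that witnesses $(c'_i)_{i<\alpha}\equiv_{(a_j)_{j\in\I}}(c_i)_{i<\alpha}$ descends to one fixing $I$ and sending $(d_i)$ to $(d'_i)$, giving $(d'_i)_{i<\alpha}\equiv_I(d_i)_{i<\alpha}$. To verify the desired property, suppose $\pi_E^{-1}(d'_i)\subseteq\theta(\C)$ for some $A$-formula $\theta$. Since $c'_i\in\pi_E^{-1}(d'_i)$, we get $\models\theta(c'_i)$, whence (by the previous paragraph) $\models\theta(a_j)$ for $a_j$ from a cofinal fragment of $I_1^i$. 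Each such $a_j$ is a representative of $a_j/E$ lying in $\theta(\C)$, so $\pi_E^{-1}(a_j/E)\not\subseteq\neg\theta(\C)$, as required.

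\textbf{The main obstacle} is the simultaneous choice of compatible real representatives in the first step: given a single real $\emptyset$-indiscernible lifting of $I$, one needs, for each cut, a real representative of the filling hyperimaginary that together with the lifting is still $\emptyset$-indiscernible in the home sort. Different real liftings of $I$ can have different real EM-types, so the compatibility across the $\alpha$-many cuts is not automatic; it will rest on the distinctness and Dedekindness of the cuts, Fact~\ref{fact: indisc representatives} applied separately at each cut, and homogeneity plus compactness to glue the local choices. Once this setup is in place, the inductive construction in the second paragraph and the final verification are essentially routine adaptations of the classical argument.
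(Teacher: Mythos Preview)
Your plan hinges entirely on the lifting step, and your proposed resolution of it does not work. The difficulty is not merely organizational: even for two cuts it can genuinely fail. Given a fixed real indiscernible lifting $(a_j)_{j\in\I}$ of $I$, a hyperimaginary $d$ filling a cut $\mathfrak c$ need \emph{not} admit a representative $c\in d$ such that $(a_j)_{j\in I_1}+c+(a_j)_{j\in I_2}$ is real-indiscernible; all one can extract from Fact~\ref{fact: indisc representatives} and homogeneity is a representative of some $d''\equiv_I d$ filling the real cut in some (possibly different) lifting. For a single cut that is harmless, since replacing $d$ by $d''$ is permitted. But for two cuts the adjustments needed for $d_0$ and for $d_1$ may force different real liftings of $I$, and there is no mechanism to reconcile them: distinctness and Dedekindness of the cuts do not help, and compactness cannot glue local liftings whose real EM-types disagree. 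In fact, in the paper the existence of such a real representative is precisely the content of the Claim in the proof of Theorem~\ref{teordistal}, and that claim is proved \emph{using} Lemma~\ref{2.8'}; so your reduction is circular.

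A second, related gap is in the inductive coheir step: you ask for $q_i$ extending $\tp(c_i/I\cup A\cup (c'_j)_{j<i})$ and finitely satisfiable in $(a_j)_{j\in I_1^i}$, but without joint real indiscernibility of $(a_j)$ together with all the $c_j$'s there is no reason $\tp(c_i/I\cup(c_j)_{j<i})$ is finitely satisfiable in that tail, so neither the existence of $q_i$ nor the preservation of the joint type $(c'_i)_{i<\alpha}\equiv_{(a_j)}(c_i)_{i<\alpha}$ is justified.

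The paper's proof avoids lifting altogether. It works directly at the hyperimaginary level: after reducing to finitely many cuts by compactness, it assumes the conclusion fails, extracts from the resulting inconsistency a single formula $\varphi_0$ and a finite $I_0\subset I$, and then iteratively inserts hyperimaginary fillers at cuts interior to suitable segments (using that the cuts are Dedekind to move fillers around while preserving the type over $I_0$). This produces an indiscernible sequence of hyperimaginaries along which $\varphi_0$ alternates infinitely often; passing to real representatives of this \emph{final} indiscernible sequence (which is unproblematic, unlike your initial lifting) contradicts NIP. The NIP hypothesis enters exactly here, whereas in your coheir scheme it is never invoked --- another sign that something is missing.
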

\begin{proof}
	
	To simplify notation, let all the cuts $\mathfrak{c}_i$ be left-polarized. 
	%%%Krzys3: I removed here ``Suppose the conclusion fails. ``, as first we want to reduce the statement to finite $\alpha$.
	%Suppose the conclusion fails.  
	%%%Krzys3: Instead of ``Then'' I wrote ``The negation of the conclusion says that''
	The negation of the conclusion says that for every $(d_i')_{i<\alpha} \equiv_{I} (d_i)$ there exists  $i< \alpha$ and a formula $\theta(x)$ over $A$ such that $$\pi_E^{-1}(d_i') \subseteq \theta(\mathfrak{C})$$ and $$ \pi_E^{-1}(a_j/E)\subseteq \neg \theta(\mathfrak{C}) $$ for all $a_j/E$ in some end segment of $\mathfrak{c}_i$. For $i<\alpha$ put $C_i(x_i):=\{\varphi(x_i) \in L(A) :  \pi_E^{-1}(a_j/E)\subseteq \varphi(\C) \text{ for all $a_j/E$ in some end segment of } \mathfrak{c}_i  \}$. Note that these sets are closed under conjunction. 
	%%%Krzys3: Instead of ``By the previous conclusion,' I wrote ``The negation of the conclusion is equivalent to'', and I also incorporated ``is inconsistent'' to (*).
	%By the previous conclusion, 
	The negation of the conclusion is equivalent to
	$$(*) \;\;\;\;\;\;\;\;\; \tp((d_i)_{i<\alpha})/I)\cup \bigcup_{i<\alpha}C_i(x_i) \text{ is inconsistent,}$$ 
	%%%Krzys3: I changed a bit the rest of this sentence.
	which in turn is equivalent to the existence of a finite subset $J$ of $\alpha$ such that $\tp((d_i)_{i\in J})/I)\cup \bigcup_{i \in J}C_i(x_i)$ inconsistent. Therefore, without loss of generality, $\alpha<\omega$. We will show a detailed proof for $\alpha=2$; the proof for an arbitrary $\alpha \in \omega$ is the same.

	%		Suppose the conclusion fails. By compactness, we may assume that $\alpha$ is finite. We will show a detailed proof for $\alpha=2$; the proof for an arbitrary $n\in \omega$ is the same. Without loss of generality let all the cuts that appear in the proof be left-polarized. If the conclusion is false for $\alpha=2$ then for all $d'_1,d'_2\equiv_{I} d_1,d_2$ there exists  $i\in \{1,2\}$ and a formula $\theta(x)$ over $A$ such that $$\pi_E^{-1}(d_i') \subseteq \theta(\mathfrak{C})$$ and $$ \pi_E^{-1}(a_j/E)\subseteq \neg \theta(\mathfrak{C}) $$ for all $a_j/E$ in some end segment of $\mathfrak{c}_i$. For $i=1,2$ put $C_i(x):=\{\neg \theta(x) \in L(A) :  \pi_E^{-1}(a_j/E)\subseteq \neg \theta(\C) \text{ for all $a_j/E$ in some end segment of } \mathfrak{c}_i  \}$. Note that these sets are closed under conjunction.
	%		By the previous conclusion, $\tp(d_1,d_2/I)\cup C_1(x)\cup C_2(y)$ is inconsistent. 
	%%%Krzys3: I added  ``Suppose the conclusion fails.''.
	Suppose the conclusion fails. By $(*)$, choose a finite subsequence $I_0$ of $I$ so that $\tp(d_0,d_1/I_0)\cup C_0(x_0)\cup C_1(x_1)$ is inconsistent, and let $\varphi_0(x_0)\in C_0(x_0)$ and  $\varphi_1(x_1) \in C_1(x_1)$ be formulas witnessing it.  Now, for $i \in \{0,1\}$ take $(J_i,J_i')\unlhd \mathfrak{c}_i$ such that $\pi_E^{-1}(a_j/E)\subseteq \varphi_i(\mathfrak{C})$ for all $a_j/E\in J_i$, $J_i\cup J_i'$ contains no element of $I_0$, and $(J_0 \cup J_0') \cap (J_1 \cup J_1') =\emptyset$.
	
	\begin{claim}
		For every two cuts $\mathfrak{d}_0,\mathfrak{d}_1$ in $I$ respectively interior to $J_0,J_1$ we can find hyperimaginaries $e_0$ filling $\mathfrak{d}_0$ and $e_1$ filling $\mathfrak{d}_1$ such that $\tp(e_0,e_1/I_0)=\tp(d_0,d_1/I_0)$.
	\end{claim}
	
	%		\begin{proof}[Proof of claim]
	%			It is enough to prove that for any finite $K\subseteq I$  we can find $e_1,e_2$ such that \begin{align*}
	%				\tp(e_1,K)&\subset EM(I)\\
	%				\tp(e_2,K)&\subset EM(I)\\
	%				\tp(e_1,e_2/I_0)&=\tp(d_1,d_2/I_0).
	%			\end{align*}
	\begin{proof}[Proof of claim]
		Consider any finite  $K\subseteq I$. For $i \in \{0,1\}$, the cut $\mathfrak{d}_i$ decomposes $K$ into $L_i^- +L_i^+$. It is enough to find $e_0,e_1$ such that \begin{align*}
			\tp(L_1^-,e_0,L_1^+)&\subset \EM(I),\\
			\tp(L_2^-,e_1,L_2^+)&\subset \EM(I),\\
			\tp(e_0,e_1/I_0)&=\tp(d_0,d_1/I_0),
		\end{align*}
		where $\EM(I)$ denotes the Erenfeucht-Mostowski type of $I$.
		
		%			Fix any finite $K\subseteq I$, 
		We can decompose $K$ into sequences $K_0\subseteq J_0+J_0'$, $K_1\subseteq J_1+J_1'$, and $K_2\subseteq I \setminus ((J_0+J_0') \cup (J_1+J_1'))$. Next, we construct new finite sequences $K_0'$, $K_1'$, $K_2'$, and $K'$ in the following way: $K_2'=K_2$; for every element $a\in K_0$ we take $a'\in J_0 \cup J_0'$ such that $a'$ is in the same relative position to $\mathfrak{c}_0$ as $a$ was to $\mathfrak{d}_0$ and also preserving the order between elements, and we define $K_0'$ to be the constructed sequence of the $a'$'s; for $K_1'$ we proceed in an analogous manner; finally, $K':=K_0'\cup K_1'\cup K_2'$ written as a sequence in an obvious order provided by the construction. By the indiscernibility of the sequence $I$, there is $\sigma\in \aut(\C)$ such that $\sigma(KI_0)=K'I_0$. The elements $e_0:=\sigma^{-1}(d_0)$ and $e_1:=\sigma^{-1}(d_1)$ satisfy the desired conditions.
	\end{proof}

	Fix $\mathfrak{d}_0,\mathfrak{d}_1$ as in the claim, and choose $e_0=:e^0_0$ and $e_1=:e^0_1$ provided by the claim. By the choice of $\varphi_i(x)$, $\pi_E^{-1}(e^0_0)\subseteq \neg \varphi_0(\mathfrak{C})$ or $\pi_E^{-1}(e^0_1)\subseteq \neg \varphi_1(\mathfrak{C})$. For example, $\pi_E^{-1}(e^0_0)\subseteq \neg \varphi_0(\mathfrak{C})$. Set $I^0:=I\cup \{e^0_0\}$; this is an indiscernible sequence. 
	%%%Krzys3: I replaced ``final'' by ``end'', as we use this terminology elsewhere. I also removed ``infinite'', as the end segment is infinite in a Dedekind cut.		
	Let $J_0^0$ be an end segment of $J_0$ not containing  $\mathfrak{d}_0$, and $J_1^0:=J_1$. 
	By the same argument as in the above claim, we get
	
	\begin{claim}
		For every two cuts $\mathfrak{d}^1_0,\mathfrak{d}^1_1$ in $I^0$ respectively interior to $J_0^0,J_1^0$ we can find hyperimaginaries $e^1_0$ filling $\mathfrak{d}^1_0$ and $e_1^1$ filling $\mathfrak{d}^1_1$ (seen as cuts in $I^0$) such that $\tp(e^1_0,e^1_1/I_0)=\tp(d_0,d_1/I_0)$.
	\end{claim} 
	
	%%%%%%%%%%%%%%%%%%%%%%%%%%%%%%%%%%%%%%%%%%%%%%%%%%%%%%%%%%%%%%%%%%
	%%%%%%%%%%%%%%%%%%%%%%%%%%%%%%%%%%%%%%%%%%%%%%%%%%%%%%%%%%%%%%%%%%%
	\begin{comment}
	\begin{proof}[Proof of claim]
	It is enough to prove that for any finite $K\subseteq I^1$ we can find $e_1^1,e_2^1$ such that \begin{align*}
	tp(e^1_1,K)&\subset EM(I^1)\\
	tp(e^1_2,K)&\subset EM(I^1)\\
	tp(e^1_1,e^1_2/I_0)&=tp(d_1,d_2/I_0).
	\end{align*}
	Fix any finite $K\subseteq I^1$, we can decompose $K$ into $K_1\subseteq J^1_1+J'^1_1$, $K_2\subseteq J^1_2+J'^1_2$  and $K_3\subseteq I \setminus (J^1_2+J'^1_2 \cup J^1_1+J'^1_1)$. Next, we construct new finite sets $K_1'$, $K_2'$, $K_3'$ and $K'$ in the following way: $K_3'=K_3$; for every element $a\in K_1$ we take $a'\in I^1$ such that $a'$ is in the same relative position to $\mathfrak{c}$ as $a$ was to $\mathfrak{d}_1^1$ and also preserving the order between elements; for $K_2'$ we proceed in an analogous manner; finally, $K'=K_1'\cup K_2'\cup K_3'$. By indiscernibility of the sequence $I^1$ there is $\sigma\in Aut(\mathcal{C})$ such that $\sigma(KI_0)=K'I_0$. The elements $e^1_1=\sigma^{-1}(d_1)$ and $e^1_2=\sigma^{-1}(d_2)$ satisfy the desired conditions.
	\end{proof}
	
	\end{comment}
	%%%%%%%%%%%%%%%%%%%%%%%%%%%%%%%%%%%%%%%%%%%%%%%%%%%%%%%%%%%%%%%%%
	%%%%%%%%%%%%%%%%%%%%%%%%%%%%%%%%%%%%%%%%%%%%%%%%%%%%%%%%%%%%%%%%%%
	
	Fix $\mathfrak{d}^1_0,\mathfrak{d}^1_1$ as in the claim, and choose $e^1_0, e^1_1$ provided by the claim. Then $\pi_E^{-1}(e^1_0)\subseteq \neg \varphi_0(\mathfrak{C})$ or $\pi_E^{-1}(e^1_1)\subseteq \neg \varphi_1(\mathfrak{C})$. For example,  $\pi_E^{-1}(e^1_1)\subseteq \neg \varphi_1(\mathfrak{C})$. Set $I^1 := I^0 \cup \{e^1_1\}$; this is again an indiscernible sequence. 
	%%%Krzys3: Again, I removed ``infinite'', as it holds.
	Let $J_0^1: =J_0^0$, and $J_1^1$ be an end segment of $J_1^0$ not containing $\mathfrak{d}_1^1$. 
	
	Iterating this process $\omega$ times, we get a sequence $(\varepsilon_k)_{k<\omega}$ of 0's and 1's and a sequence of hyperimaginaries $(e^k_{\varepsilon_k})_{k<\omega}$ such that $I\cup \{ e^k_{\varepsilon_k}\}_{k<\omega}$ is indiscernible, the  $e^k_{\varepsilon_k}$'s with $\varepsilon_k=0$ fill pairwise distinct cuts in $J_0$, the  $e^k_{\varepsilon_k}$'s with $\varepsilon_k=1$ fill pairwise distinct cuts in $J_1$, and $\pi_E^{-1}(e_{\varepsilon_k}^k)\subseteq \neg \varphi_{\varepsilon_k}(\mathfrak{C})$ for all $k<\omega$. W.l.o.g. $\varepsilon_k=0$ for all $k<\omega$.
	
	Finally, by Ramsey's theorem and compactness, we can find an indiscernible sequence of representatives of the hyperimaginaries from the indiscernible sequence $J_0\cup \{ e_0^k\}_{k<\omega}$. In this way, we have produced an indiscernible sequence for which $\varphi_0(x_0)$ has infinite alternation rank, which contradicts NIP.
\end{proof}

\begin{teor}\label{teordistal}
	If $(a_i)_{i \in \mathcal{I}}$ is a (dense) distal sequence of tuples from $\C^\lambda$, then $(a_i/E)_{i \in \mathcal{I}}$ is a distal sequence of hyperimaginaries.	Thus, if $T$ is distal, then $T^{\textrm{heq}}$ is distal.
\end{teor}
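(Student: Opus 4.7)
The plan is to derive distality of $I=(a_i/E)_{i\in\mathcal{I}}$ from distality of $(a_i)_{i\in\mathcal{I}}$ by producing, for any two distinct Dedekind cuts $\mathfrak{c}_1,\mathfrak{c}_2$ with hyperimaginary fillers $d_1,d_2$, representatives $\tilde{e}_1\in d_1$ and $\tilde{e}_2\in d_2$ that fill $\mathfrak{c}_1,\mathfrak{c}_2$ in the real sequence $(a_i)_{i\in\mathcal{I}}$, and then invoking distality of $(a_i)_{i\in\mathcal{I}}$ to conclude that the extended real sequence is indiscernible, which then projects via $\pi_E$ to the required indiscernibility of $I\cup\{d_1\}\cup\{d_2\}$. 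Using the indiscernibility of $I$, $I\cup\{d_1\}$ and $I\cup\{d_2\}$ (the latter two being automatic from the filling hypothesis), the verification of indiscernibility of $I\cup\{d_1\}\cup\{d_2\}$ reduces to: for every first-order $\emptyset$-formula $\theta(\bar x,y_1,y_2)$ that is $E$-invariant in each coordinate and every pair of increasing subsequences $\bar a/E,\bar a'/E$ of $I$ having matching numbers of elements in each of the three intervals determined by $\mathfrak{c}_1,\mathfrak{c}_2$, we have $\theta(\bar a/E,d_1,d_2)\Leftrightarrow\theta(\bar a'/E,d_1,d_2)$.

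Suppose toward a contradiction that this fails for some specific $\theta,\bar a,\bar a'$. I would apply Lemma \ref{2.8'} with the parameter set $A$ enumerating real representatives of $\bar a,\bar a'$ and a choice of polarizations of $\mathfrak{c}_1,\mathfrak{c}_2$, obtaining hyperimaginary fillers $d_1',d_2'$ with $(d_1',d_2')\equiv_I(d_1,d_2)$ and the transfer conclusion: for every formula $\eta(x)$ over $A$ and each $i\in\{1,2\}$, if $\pi_E^{-1}(d_i')\subseteq\eta(\C)$, then $\pi_E^{-1}(a_j/E)\not\subseteq\neg\eta(\C)$ for $a_j/E$ ranging over some cofinal fragment of the polarized side of $\mathfrak{c}_i$. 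Since $(d_1',d_2')\equiv_I(d_1,d_2)$, the same non-indiscernibility witnessed by $\theta,\bar a,\bar a'$ persists with $(d_1',d_2')$ in place of $(d_1,d_2)$.

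The heart of the argument, and what I expect to be the main obstacle, is to lift $d_1',d_2'$ to representatives $\tilde{e}_1\in d_1'$ and $\tilde{e}_2\in d_2'$ such that $\tilde{e}_1$ fills $\mathfrak{c}_1$ in $(a_i)_{i\in\mathcal{I}}$ and $\tilde{e}_2$ fills $\mathfrak{c}_2$ in $(a_i)_{i\in\mathcal{I}}$. Individually, each such lifting is afforded by Fact \ref{fact: indisc representatives} applied to the indiscernible hyperimaginary sequences $I\cup\{d_1'\}$ and $I\cup\{d_2'\}$, followed by using strong homogeneity of $\C$ to conjugate the resulting indiscernible real sequences back onto $(a_i)_{i\in\mathcal{I}}$ via an automorphism fixing $I$ pointwise; this produces $\tilde{e}_i$ filling $\mathfrak{c}_i$ in $(a_i)$ with $\tilde{e}_i/E\equiv_I d_i$. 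The delicate point is the \emph{joint} $\equiv_I$-compatibility $(\tilde{e}_1/E,\tilde{e}_2/E)\equiv_I(d_1,d_2)$, which is exactly what the transfer property of Lemma \ref{2.8'} is designed to enforce: choosing $A$ large enough and running a compactness argument, the transfer property shows that the partial type saying simultaneously that $x_i\in d_i'$ and $x_i$ fills $\mathfrak{c}_i$ in $(a_i)_{i\in\mathcal{I}}$, for $i=1,2$, is consistent.

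With $\tilde{e}_1,\tilde{e}_2$ in hand, distality of $(a_i)_{i\in\mathcal{I}}$ yields indiscernibility of $(a_i)_{i\in\mathcal{I}}\cup\{\tilde{e}_1\}\cup\{\tilde{e}_2\}$ in $\C^\lambda$; applied to $\theta$ and the same-shape subsequences $\bar a,\bar a'$, this gives $\theta(\bar a,\tilde{e}_1,\tilde{e}_2)\Leftrightarrow\theta(\bar a',\tilde{e}_1,\tilde{e}_2)$. Since $\theta$ is $E$-invariant in $y_1,y_2$ and $(\tilde{e}_1/E,\tilde{e}_2/E)\equiv_I(d_1,d_2)$, this collapses to $\theta(\bar a/E,d_1,d_2)\Leftrightarrow\theta(\bar a'/E,d_1,d_2)$, contradicting the failure we assumed. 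The ``Thus'' part then follows immediately: if $T$ is distal and $(c_i)_{i\in\mathcal{I}}$ is any dense indiscernible sequence of hyperimaginaries, by Fact \ref{fact: indisc representatives} we can write $c_i=a_i/E$ for some $\emptyset$-type-definable $E$ and some dense indiscernible sequence $(a_i)_{i\in\mathcal{I}}$ of real tuples (reindexing densely if necessary), and $(a_i)_{i\in\mathcal{I}}$ is then distal by distality of $T$, so the first half of the theorem gives distality of $(c_i)_{i\in\mathcal{I}}$.
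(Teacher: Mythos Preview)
Your overall strategy matches the paper's: apply Lemma \ref{2.8'} to replace $(d_1,d_2)$ by $(d_1',d_2')$ with the transfer property, lift each $d_i'$ to a real tuple filling the corresponding cut in $(a_i)_{i\in\mathcal{I}}$, invoke distality of the real sequence, and project back via $\pi_E$. However, your execution of the lifting step --- which you correctly identify as the main obstacle --- contains the essential gap.

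First, your choice of the parameter set $A$ is inconsistent: you start with $A$ a finite set (representatives of $\bar a,\bar a'$), then later say ``choosing $A$ large enough''. The paper takes $A$ to be the set of all coordinates of all $a_i$, $i\in\mathcal{I}$; this is necessary because the transfer conclusion must control formulas over the entire real sequence in order to force a representative of $d_i'$ to fill the real cut. Second, your detour through Fact \ref{fact: indisc representatives} plus conjugation is misguided: that produces a $\tilde e_i$ filling the real cut with $\tilde e_i/E\equiv_I d_i'$, not $\tilde e_i/E=d_i'$, and then you are left with exactly the ``joint compatibility'' problem you flag --- and no tool to solve it. The paper avoids this entirely by proving (its Claim) that for each $i$ there exists $b_i$ with $\pi_E(b_i)=d_i'$ \emph{exactly} and $b_i$ filling $\mathfrak{c}_i$ in $(a_j)$. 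This is done by compactness: assuming the relevant type is inconsistent yields a formula $\theta(x)$ over $A$ with $\pi_E^{-1}(d_i')\subseteq\theta(\C)$, and the transfer property gives, for cofinal $j$ on the polarized side, a representative $a'\in[a_j/E]$ with $\models\neg\theta(a')$; plugging $y:=a_j$ into the witnessing EM-formula contradicts indiscernibility of $(a_j)$. You allude to this compactness argument in one sentence but do not carry it out, and it is the entire technical content of the theorem. Note finally that once $b_i\in d_i'$ exactly for each $i$ separately, no joint-compatibility argument is needed at all: real distality gives indiscernibility of $(a_j)\cup\{b_1,b_2\}$, whose $\pi_E$-image is literally $I\cup\{d_1',d_2'\}$, and $(d_1',d_2')\equiv_I(d_1,d_2)$ finishes.
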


\begin{proof}
	The fact that the first part implies the second follows from the observation that for any indiscernible sequence of hyperimaginaries we can find an indiscernible sequence of representatives. So let us prove the first part.
	
	Assume $I:=(a_i)_{i \in \mathcal{I}}$ is a (dense) distal sequence of tuples from $\C^\lambda$, and let $I'=(a_i/E)_{i \in \mathcal{I}}$. Present $E$ as $\dcap_{t\in\mathcal{T}}R_t$ for some $\emptyset$-definable (not necessarily equivalence) relations $R_t$. Consider any distinct Dedekind cuts $\mathfrak{c}'_1$ and $\mathfrak{c}'_2$ of $I'$, 
	%%%Adrian: I added , 'say $\mathfrak{c}_1'$ is on the left from $\mathfrak{c}_2'$'
	say $\mathfrak{c}_1'$ is on the left from $\mathfrak{c}_2'$. They partition $I'$ into $I_1'$, $I_2'$, and $I_3'$. The cuts $\mathfrak{c}'_1$ and $\mathfrak{c}'_2$ induce Dedekind cuts $\mathfrak{c}_1$ and $\mathfrak{c}_2$ of $I$ which partition $I$ into $I_1$, $I_2$, and $I_3$. Take any $d_1$ and $d_2$ filling the cuts  $\mathfrak{c}'_1$ and $\mathfrak{c}'_2$, respectively. Apply Lemma \ref{2.8'} to this data  (taking left-polarization of $\mathfrak{c}_1'$ and $\mathfrak{c}_2'$) and $A$ being the set of all coordinates of all tuples $a_i$, $i \in \mathcal{I}$. This yields $d_1'=e_1'/E$ and $d_2'=e_2'/E$ satisfying the conclusion of Lemma \ref{2.8'}.
	
	%	We prove that for any $\bigwedge$-definable equivalence relation $E=\dcap_{t\in\mathcal{T}}R_t$ any dense hyperimaginary sequence $I'=(a_j/E)_{j\in J}$ is distal. Without loss of generality assume that the real sequence $I=(a_j)_{j\in J}$ is indiscernible. Hence, distal by hypothesis.
	%	Let $\mathfrak{c}'_1$ and $\mathfrak{c}'_2$ be distinct Dedekind cuts of $I'$ dividing it in $I_1'$, $I_2'$ and $I_3'$, these cuts naturally induce cuts $\mathfrak{c}_1$, $\mathfrak{c}_2$ of $I$ dividing it in $I_1$, $I_2$ and $I_3$. Take $d_1,d_2$ filling  the cuts $\mathfrak{c}'_1$, $\mathfrak{c}'_2$ respectively. Take $d_1'=e_1'/E$, $d_2'=e_2'/E$ given by lemma \ref{2.8'}.
	\begin{claim}
		For every $i \in \{1,2\}$ there is  $b_i$ filling the cut $\mathfrak{c}_i$ such that $\pi_{E}(b_i)=d_i'$.
	\end{claim}
	\begin{proof}[Proof of the claim] It is enough to consider $i=1$.
		By compactness, it suffices to show that for any formula $\varphi(x_1,\dots, x_n)\in \EM(I)$, $i_1<\dots <i_{k-1}\in \mathcal{I}_1$, and $i_{k+1}<\dots <i_{n}\in \mathcal{I}_2+\mathcal{I}_3$ there is $b_1$ such that $\models \varphi(a_{i_1},\dots, a_{i_{k-1}},b_1,a_{i_{k+1}},\dots,a_{i_n})$ and $\pi_{E}(b_1)=d_1'$. By compactness, it is enough to show that for every $t\in \mathcal{T}$ 
		$$\models \exists y(yR_te_1' \wedge \varphi(a_{i_1},\dots, a_{i_{k-1}},y,a_{i_{k+1}},\dots,a_{i_n}) ).$$ 
		Assume this fails. Choosing $t'\in \mathcal{T}$ such that $R_{t'}\circ R_{t'}\subseteq R_t$, we get $$\pi_{E}^{-1}(d_1')\subseteq\neg(\exists y)( yR_{t'}x\wedge \varphi(a_{i_1},\dots, a_{i_{k-1}},y,a_{i_{k+1}},\dots,a_{i_n}) )(\mathfrak{C})=:\theta(\mathfrak{C}).$$
		By the choice of $d_1'$, for $a_j/E$ from a co-final fragment of the left part of $\mathfrak{c}_1$ we have $\pi_{E}^{-1}(a_j/E)\not\subseteq \neg\theta(\mathfrak{C})$. So, for any of these indices $j$, there is $a'E a_j$ such that $$ \models \neg\exists y ( y R_{t'} a' \wedge \varphi(a_{i_1},\dots, a_{i_{k-1}},y,a_{i_{k+1}},\dots,a_{i_n}) ).$$ 
		As the set of such indices $j$ is co-final in $\mathcal{I}_1$, we can find such an index $j \in \mathcal{I}_1$ with $j>i_{k-1}$. Then $y:=a_j$ contradicts the last formula (by the indiscernibility of $I$). 	
	\end{proof}
	
	By the distality of $I$, the sequence $I_1+b_1+I_2+b_2+I_3$ is indiscernible. Hence, the sequence $I_1'+d_1'+I_2'+d_2'+I_3'$  is also indiscernible. On the other hand, by our choice of $d_1',d_2'$, we know that $d_1d_2\equiv_{I'}d_1'd_2'$. Thus, the sequence $I_1'+d_1+I_2'+d_2+I_3'$ is indiscernible, too. As $d_1,d_2$ were arbitrary, we conclude that $I'$ is a distal sequence.
\end{proof}

\begin{cor}\label{corollary: stable iff bounded}
	For a distal theory $T$, a hyperdefinable set $X/E$ is stable if and only if $E$ is a bounded equivalence relation. In particular, for a group $G$ type-definable in a distal theory, $G^{\textrm{st}}=G^{00}$.
\end{cor}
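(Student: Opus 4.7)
The ``if'' direction follows from item (5) of Corollary \ref{2.8}: if $|X/E| \leq \nu < \kappa$, set $\mu := \max(|T|, \nu)$; then for every $M \models T$ with $|M| \leq \mu$ one has $|S_{X/E}(M)| \leq |X/E| \leq \mu$, and hence $X/E$ is stable.

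For the converse, assume $T$ is distal and $X/E$ is stable, and suppose toward contradiction that $|X/E| \geq \kappa$. Starting from a sufficiently long sequence of pairwise distinct elements of $X/E$ and invoking Fact \ref{extracting indiscernibles}, extract an $\emptyset$-indiscernible sequence $(b_i)_{i<\omega}$ in $X/E$; by arranging $\tp(b_0,b_1)$ to coincide with the type of some distinct pair from the original sequence, one ensures that the $b_i$ are pairwise distinct as hyperimaginaries. Stability of $X/E$ together with Theorem \ref{equivteor}(4) upgrades $(b_i)_{i<\omega}$ to a totally indiscernible sequence. Realizing its permutation-invariant EM-type over $\mathbb{Q}$ yields a dense totally indiscernible sequence $I = (a_q)_{q \in \mathbb{Q}}$ of pairwise distinct elements of $X/E$. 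Theorem \ref{teordistal} then delivers distality of $T^{\textrm{heq}}$, so every dense indiscernible sequence of hyperimaginaries (in particular $I$ and any of its dense indiscernible subsequences) is distal.

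The contradiction is obtained as follows. Let $I' := I \setminus \{a_0, a_2\}$, which is still a dense indiscernible sequence of hyperimaginaries, and let $\mathfrak{c}_0, \mathfrak{c}_2$ denote the two distinct Dedekind cuts in $I'$ at the removed positions. The hyperimaginary $a_0$ fills $\mathfrak{c}_0$ trivially (inserting it back recovers the subsequence $I \setminus \{a_2\}$ of $I$), and $a_0$ fills $\mathfrak{c}_2$ because total indiscernibility of $I$ permits swapping $a_0$ and $a_2$ without changing the EM-type. Applying distality of $I'$ with $a=b=a_0$ then yields an indiscernible sequence $J$ obtained by simultaneously inserting $a_0$ at both $\mathfrak{c}_0$ and $\mathfrak{c}_2$. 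In $J$, positions $0$ and $2$ both carry the single hyperimaginary $a_0$, while positions $3$ and $4$ carry the distinct hyperimaginaries $a_3, a_4$; indiscernibility of $J$ would then force $\tp(a_0, a_0) = \tp(a_3, a_4)$, contradicting the fact that the first pair is $E$-equal while the second is not.

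For the ``in particular'' clause, apply the equivalence to the coset equivalence relation of $G^{\textrm{st}}$: stability of $G/G^{\textrm{st}}$ forces $G/G^{\textrm{st}}$ to be bounded, and hence $G^{00} \subseteq G^{\textrm{st}}$; conversely $G/G^{00}$ is bounded and therefore stable by the ``if'' direction, so minimality of $G^{\textrm{st}}$ gives $G^{\textrm{st}} \subseteq G^{00}$. The main technical hurdle is the final distality step: one must justify that distality genuinely allows the same hyperimaginary to play the roles of both $a$ and $b$ filling two distinct cuts, and that the resulting ``doubly inserted'' sequence, which distality forces to be indiscernible, truly contradicts the pairwise distinctness of the terms of $I$ via the type-definable equality relation $E$.
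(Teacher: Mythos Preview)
Your proof is correct and follows essentially the same strategy as the paper's first proof: extract a dense, totally indiscernible, non-constant sequence in $X/E$, invoke Theorem~\ref{teordistal} to get distality of $T^{\textrm{heq}}$, and derive a contradiction. Where the paper simply cites \cite[Corollary 2.15]{MR3001548} for the fact that a non-constant totally indiscernible sequence is never distal, you prove this directly by your ``doubly insert $a_0$'' trick, which is exactly the standard argument behind that citation. Your worry about the ``main technical hurdle'' is unfounded: the definition of distality (Definition~\ref{definition: distality}) places no constraint $a\neq b$, and since equality of hyperimaginaries is the $E$-relation (which is $\emptyset$-type-definable), indiscernibility of $J$ forces $\tp(a_0,a_0)=\tp(a_3,a_4)$, contradicting $a_3\not\mathrel{E} a_4$.

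Two minor remarks. For the ``if'' direction, the paper uses a one-line argument (any indiscernible sequence in a bounded $X/E$ is constant) rather than counting types via Corollary~\ref{2.8}; both work. More interestingly, the paper also records a second, self-contained proof that bypasses Theorem~\ref{teordistal} entirely: one works with real representatives $(a_i)_{i\in\mathbb{Q}}$, uses stability of $X/E$ to find two fillers of the irrational cuts at $\sqrt{2}$ and $\sqrt{3}$ whose $E$-classes coincide over $I$, and contradicts distality of $T$ (not $T^{\textrm{heq}}$) directly. Your argument trades that elementary but slightly longer computation for the heavier Theorem~\ref{teordistal}; the paper's first proof makes the same trade.
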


\begin{proof}
	If $E$ is bounded, then $X/E$ is stable (as each indiscernible sequence in $X/E$ is constant). To prove the other implication, assume that $X/E$ is stable. 
	%%%Krzys4: I added the next setnece.
	Since distality is preserved under naming parameters, w.l.o.g. both $X$ and $E$ are $\emptyset$-type-definable. If $X/E$ is not bounded, taking a very long sequence of pairwise distinct elements of $X/E$, by extracting indiscernibles, there exists a dense indiscernible sequence of pairwise distinct elements of $X/E$. By stability and Theorem \ref{equivteor}, this sequence is totally indiscernible.  Since non-constant, totally indiscernible sequences are not distal (see \cite[Corollary 2.15]{MR3001548}), we get a contradiction with the distality of $T^{\textrm{heq}}$ (which we have by Theorem \ref{teordistal}).
\end{proof}

%Confirming the conjecture that for distal theories $G^{st}=G^{00}$ discussed in \cite{MR3796277}.
We find very natural to see Corollary \ref{corollary: stable iff bounded} as an easy consequence of  Theorem \ref{teordistal} which in turn is a fundamental result concerning distality and hyperimaginaries. However, there is a short direct (i.e. not using Theorem \ref{teordistal}) proof of Corollary \ref{corollary: stable iff bounded}.

\begin{proof} (Of Corollary \ref{corollary: stable iff bounded})
If $E$ is bounded, then $X/E$ is stable (as each indiscernible sequence in $X/E$ is constant). 

Assume now that $X/E$ is stable and $E$ is not bounded. Since $X/E$ is not bounded, we can find an indiscernible sequence $(a_i)_{i\in\mathbb{Q}}$ such that $([a_i]_E)_{i\in \mathbb{Q}}$ consists of pairwise elements (using Fact \ref{extracting indiscernibles}). Let $$I_0:=(a_i)_{i\in \mathbb{Q}\cap (-\infty,\sqrt{2}]},$$ $$I_1:=(a_i)_{i\in \mathbb{Q}\cap [\sqrt{2},\sqrt{3}]},$$ $$I_1:=(a_i)_{i\in \mathbb{Q}\cap [\sqrt{3},+\infty)},$$ and take $a_{\sqrt{2}},a_{\sqrt{3}}\in X$ such that $I_0+a_{\sqrt{2}}+I_1+a_{\sqrt{3}}+I_2$ is indiscernible.

From the definition of stability of $X/E$, applying it to the sequence $([a_i]_E,a_i)_{i\in\mathbb{Q}\cup\{\sqrt{2},\sqrt{3} \} }$, it is easy to see that $$ \tp([a_{\sqrt{2}}]_E/I)=\tp([a_{\sqrt{3}}]_E/I).$$ Hence, there is $\sigma\in\aut(\C/I)$ such that $f([a_{\sqrt{2}}]_E)=f([a_{\sqrt{3}}]_E)$. Taking $a'_{\sqrt{2}}=f(a_{\sqrt{3}})$ we obtain that the sequences $$ I_0+I_1+a'_{\sqrt{2}}+I_2 $$ and $$I_0+a_{\sqrt{2}}+I_1+I_2$$ are both indiscernible. However, the sequence $$ I_0+a_{\sqrt{2}}+I_1+a'_{\sqrt{2}}+I_2 $$ is not indiscernible since $[a_{\sqrt{2}}]_E=[a'_{\sqrt{2}}]_E$ but for any rationals $i\neq j<\sqrt{2}$ and $a_i,a_j\in I_0$ we have $[a_i]_E\neq  [a_j]_E$. This contradicts the distality of the theory $T$.
\end{proof}

\section{An example of $G^{\textrm{st},0}\neq G^{\textrm{st}}\neq G^{00}=G$}\label{section: main example}

Our objective is to find a definable group $G$ in a NIP theory $T$ satisfying $G^{\textrm{st},0}\neq G^{\textrm{st}}\neq G^{00}=G$. In this section, we will change the notation: the group interpreted in the monster model will be denoted by $G^*$ instead of $G$.

Consider the structure $\mathcal{M}:=(\mathbb{R},+,I)$, where $I:=[0,1]$. Let $T:=\Th(\mathcal{M})$ and $G:=(\mathbb{R},+)$. 
%%%Krzys3: I removed $\cdot$ in $(\mathbb{R}^*,+,\cdot,I^*)$. I  added ``which expands to a monster model (with the same $\kappa$) $(\mathbb{R}^*,+,\leq ,1)$ of $\Th(\mathbb{R},+,\leq,1)$'', as we refer to this model in the proofs below.
%%%Krzys4: I was more precise in the next setence (previously it was not said for example that the order agrees with the ususal one on $\R$).
Let $\mathcal{M}^*=(\mathbb{R}^*,+,I^*) \succ \mathcal{M}$ be a monster model 
($\kappa$-saturated and strongly $\kappa$-homogeneous for large $\kappa$)
which expands to a monster model  $(\mathbb{R}^*,+,\leq ,1) \succ (\mathbb{R},+,\leq,1)$ (with the same $\kappa$), and $G^*:=(\R^*,+)$. Denote by $\mu$ the subgroup of infinitesimals, i.e. $\bigcap_{n \in \mathbb{N}^+} [-1/n,1/n]^*$. 

%(note that each $[-1/n,1/n]$ is definable in $\mathcal{M}$ as explained in the proof of Proposition \ref{proposition: NIP and unstable}).

%%%Krzys revised: ``from'' instead of ``form'' and ``statements'' instead of ``statement''.
Some observations below may follow from more general statements in the literature, but we want to be self-contained and as elementary as possible in the analysis of this example.

\begin{prop}\label{proposition: NIP and unstable}
	T has NIP and is unstable.
\end{prop}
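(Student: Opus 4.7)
My plan is to exploit that $T$ is a reduct of $\mathrm{Th}(\mathbb{R}, +, \le, 1)$, since $I = \{x : 0 \le x \le 1\}$ is $\emptyset$-definable from $\le$ and $1$. By the setup of this section, the monster $\mathcal{M}^*$ already expands to a monster of $\mathrm{Th}(\mathbb{R}, +, \le, 1)$, so any IP-witnessing configuration in $\mathcal{M}^*$ for a $\{+, I\}$-formula $\varphi$ is also an IP-witnessing configuration for the same formula in the expanded monster. Hence it will suffice for NIP to verify that the expanded theory is NIP.

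For NIP, I would note that $\mathrm{Th}(\mathbb{R}, +, \le, 1)$ is (after naming the constant $1$) the theory of divisible ordered abelian groups, a classical example of an o-minimal theory, and in particular NIP. Since every $\{+, I\}$-formula is also a formula in the expanded language, NIP transfers to $T$ by the reduct argument above.

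For unstability, I would directly exhibit the order property for $\psi(x, y) := y - x \in I$. Working in $\mathcal{M}^*$, fix a positive infinitesimal $\varepsilon \in \mu$ and set $a_i := -i\varepsilon$ and $b_j := 1 - j\varepsilon$ for $i, j < \omega$. Then
\[
b_j - a_i = 1 + (i - j)\varepsilon,
\]
which lies in $I^* = [0,1]^*$ iff $(i-j)\varepsilon \le 0$, i.e., iff $i \le j$: the lower bound $b_j - a_i \ge 0$ is automatic because $|(j-i)\varepsilon|$ is infinitesimal and hence smaller than $1$, while for $i > j$ we have $1+(i-j)\varepsilon > 1$, so $\psi$ fails. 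Thus $\psi(a_i, b_j)$ holds if and only if $i \le j$, which is the order property for $\psi$. Therefore $T$ is unstable.

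There is no substantive obstacle here; the key is to notice the reduct/expansion relationship with $(\mathbb{R}, +, \le, 1)$, from which NIP is inherited via o-minimality, while the formula $y - x \in I$ still retains enough ``order information'' at the infinitesimal scale to witness OP by a simple one-line shift.
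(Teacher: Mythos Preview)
Your proof is correct. The NIP argument is identical to the paper's: $T$ is a reduct of the o-minimal theory $\Th(\mathbb{R},+,\le,1)$, hence NIP.

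For instability the two arguments diverge slightly. The paper observes that for $\varepsilon\in(0,\tfrac12)$ one has $[-\varepsilon,\varepsilon]=(I-\varepsilon)\cap(I+(\varepsilon-1))$, so the formula $\varphi(x,y):=\text{``}x\in I-y\wedge x\in I+(y-1)\text{''}$ defines the nested family $\{[-y,y]:0<y<\tfrac12\}$ and hence has SOP, already in the standard model. You instead exhibit the order property for $\psi(x,y):=y-x\in I$ by an explicit infinitesimal shift in the monster. Both are short and valid; the paper's SOP argument stays entirely inside $\mathbb{R}$ and needs no saturation, while your OP argument is perhaps more direct but relies on the availability of infinitesimals (which is fine given the section's setup that $\mathcal{M}^*$ expands to a monster of $(\mathbb{R},+,\le,1)$).
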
 
\begin{proof}
	The structure $\mathcal{M}$ is the reduct of the o-minimal structure $(\mathbb{R},+,\leq ,1)$, hence $T$ has NIP. 
	
	Note that for $\varepsilon\in(0,\frac{1}{2})$ we can write the interval $[-\varepsilon,\varepsilon]$ as $(I-\varepsilon)\cap (I+(\varepsilon -1))$. Hence, the formula $\varphi(x,y):= \text{``}x\in I-y \wedge x\in I+(y-1)\text{''}$ has SOP.
\end{proof}

%%%Adrian: I added the following remark.
\begin{remark}\label{1 in dcl}
	$0,1 \in \dcl^{\mathcal{M}}(\emptyset)$.
\end{remark}
%%%Krzys revised: I gave a more general reason for $0$.
\begin{proof}
	%	The formula $\varphi_0(x):=\text{``}x\in I \wedge \forall y (y\in I \implies (y-x)\in I)\text{''}$ defines $0$ in $\mathcal{M}$. In a similar way, the formula $\varphi_1(x):=\text{``}x\in I \wedge \forall y (y\in I \implies (x-y)\in I)\text{''}$ defines $1$  in $\mathcal{M}$.
	$0 \in \dcl^{\mathcal{M}}(\emptyset)$ as the neutral element of $(\mathbb{R},+)$. To see that $1 \in \dcl^{\mathcal{M}}(\emptyset)$, note that $1$ is defined by the formula $\varphi(x):=\text{``}x\in I \wedge \forall y (y\in I \implies (x-y)\in I)\text{''}$.
\end{proof}

%%%Adrian: I added the following lemma and its proof:
%%%Krzys revised: I added a comma before ``and $G^*$'' in items (1), (2). In (1) it was important, as otherwise it is confusing.
\begin{lema} \label{lemma: subgroups of G}
	\begin{enumerate}
		\item The only invariant subgroups of $G^*$ are: $\{0\}$, the subgroups of $\mathbb{Q}$, the subgroups of the form $\mu+R$ where $R$ is a subgroup of $\mathbb{R}$, and $G^*$.
		\item The only $\emptyset$-type-definable subgroups of $G^*$ are $\{0\}$, $\mu$, and $G^*$.
		\item The only definable (over parameters) subgroups of $G^*$ are $\{0\}$ and $G^*$.
	\end{enumerate}
\end{lema}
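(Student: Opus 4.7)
The proof will exploit the fact that $\mathcal{M}^*$ is a reduct of the o-minimal structure $(\R^*, +, \le, 1)$. First I will record that $\dcl^{\mathcal{M}^*}(\emptyset) = \mathbb{Q}$ (from Remark \ref{1 in dcl} and additivity) and that each interval $[0,q]^*$ with $q \in \mathbb{Q}_{>0}$ is $\emptyset$-definable in $\mathcal{M}^*$: writing $q = m/n$, one has $x \in [0,q]^*$ iff $\exists y_1, \dots, y_m \in I^*$ with $\underbrace{x + \cdots + x}_{n} = y_1 + \cdots + y_m$, and $[-q,0]^* = -[0,q]^*$ is handled likewise. In particular $\mu = \bigcap_{n \ge 1}[-1/n, 1/n]^*$ is $\emptyset$-type-definable. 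Write $\mu^+ := \mu \cap (0, \infty)^*$ and $\mu^- := -\mu^+$. Using preservation of the bounded rational-endpoint intervals by $\aut(\mathcal{M}^*)$, each orbit of a finite element of $\R^*$ lies in a $\mathbb{Q}$-cut-class: $\{q\}$ for $q \in \mathbb{Q}$, $q + \mu^+$ or $q + \mu^-$ for $q \in \mathbb{Q}$, or $\alpha + \mu$ for irrational $\alpha \in \R$; the set of infinite elements is a union of one or two orbits, closed under $x \mapsto -x$.

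For Part (3), I will use o-minimality directly: any definable subgroup $H \le G^*$ is a finite union of intervals and points. If $H \ne \{0\}$, subtraction inside $H$ gives an open interval $(-a, a) \subseteq H$, and closure under addition forces $H \supseteq \bigcup_n (-na, na)$. Being a finite union of intervals, $H$ must then contain an unbounded ray $(c, \infty)$ and hence (by the subgroup symmetry) also $(-\infty, -c)$; since $(c, \infty) + (-\infty, -c) = \R^*$, we get $H = G^*$.

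For Part (1), any invariant subgroup $H$ is a union of orbits, and I will split into three cases. If $H$ contains an infinite element, saturation supplies, for each $a \in \R^*$, a positive infinite $N' \in H$ with $N' > |a|$; then $N' - a$ is still positive infinite and hence in $H$, so $a = N' - (N' - a) \in H$, giving $H = G^*$. If $H$ is bounded, then $nh \in H$ staying bounded for every $n$ forces $h \in \mu$ for all $h \in H$, so $H \subseteq \mu$; inside $\mu$ the orbits are $\{0\}, \mu^+, \mu^-$ and $\mu^+ - \mu^+ = \mu$ via a short saturation argument, so $H \in \{\{0\}, \mu\}$. Otherwise $H \subseteq \mathrm{Fin}$ is standardly unbounded; let $R := \pi(H) \le \R$ via the standard-part homomorphism $\pi \colon \mathrm{Fin} \to \R$. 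If $R$ contains an irrational $\alpha$, then $H$ contains the orbit $\alpha + \mu$, so $H \supseteq (\alpha + \mu) - (\alpha + \mu) = \mu$ and hence $H = \mu + R$. If $R \subseteq \mathbb{Q}$ and $H$ contains a nonzero infinitesimal, the same orbit-closure yields $\mu \subseteq H$ and $H = \mu + R$. Otherwise any nonzero infinitesimal part of an $h \in H$ would force $\mu \subseteq H$, so every $h$ is rational and $H = R \le \mathbb{Q}$.

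Part (2) will follow from Part (1): $\{0\}, G^*, \mu$ are all $\emptyset$-type-definable. For any other invariant subgroup (a nontrivial $R \le \mathbb{Q}$ or $\mu + R$ with nontrivial $R \le \R$), note that $R$ is unbounded since $nr \in R$ for all $n \in \mathbb{Z}$ and any fixed nonzero $r$. Any $\emptyset$-definable $D$ containing $R$ (or $\mu + R$) is a finite union of rational-endpoint intervals, unbounded above and below, and so contains a ray $(a, \infty)^*$ for some $a \in \mathbb{Q}$; therefore $D$ contains every positive infinite element. Any small intersection $\bigcap_i D_i$ of such sets then contains every positive infinite element as well, contradicting that the invariant subgroup contains no infinite elements. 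So the only $\emptyset$-type-definable subgroups are $\{0\}, \mu, G^*$. The main obstacle will be the orbit-and-closure analysis of Part (1), specifically showing that $H$ absorbs all of $\mu$ as soon as $H$ contains an element with nonzero infinitesimal part or with irrational standard part.
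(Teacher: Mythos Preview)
Your overall strategy matches the paper's: analyze the $\aut(\mathcal{M}^*)$-orbits via the o-minimal overstructure $(\R^*,+,\le,1)$, then do a case split. However, there is a genuine gap in Part~(1). You establish only that each orbit is \emph{contained in} a $\mathbb{Q}$-cut-class (via preservation of rational-endpoint intervals), but your arguments repeatedly use the converse---that each such cut-class \emph{is} a single orbit. For instance, ``$H$ contains the orbit $\alpha+\mu$'', ``inside $\mu$ the orbits are $\{0\},\mu^+,\mu^-$'', and the infinite-element case (which needs every positive infinite element to lie in $H$ once one does) all presuppose this. The paper fills this by invoking quantifier elimination in $(\R,+,\le,1)$: two elements realizing the same $\mathbb{Q}$-cut have the same type over $\emptyset$ in the o-minimal structure, hence in the reduct $\mathcal{M}^*$, hence lie in the same orbit by strong homogeneity. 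Your claim $\dcl^{\mathcal{M}^*}(\emptyset)=\mathbb{Q}$ likewise needs this for the inclusion $\subseteq$.

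Relatedly, your appeal to ``saturation'' in the infinite case is misplaced: $H$ is merely invariant, not type-definable, so saturation does not produce elements of $H$; what is actually being used is invariance together with the (currently unjustified) fact that all positive infinite elements form a single orbit. Your Part~(2) argument is a correct alternative to the paper's shorter compactness route (if $H$ contains no $a>\R$, the type defining $H$ implies $x\le n$ for some $n$, and the group property then forces $H\subseteq\mu$), provided you justify that $\emptyset$-definable sets in $\mathcal{M}^*$ are finite unions of intervals with endpoints in $\mathbb{Q}\cup\{\pm\infty\}$; this again follows from o-minimality and q.e.\ in the expanded language.
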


\begin{proof}
	%%%Krzys revised: I gave more details. What you wrote was not sufficient to deduce invariance of these groups! Also, the analysis of $H$ was not fully explained.
	$(1)$ 
	%We look at $\tp(a)$ for $a\in \mathbb{R}^*$. All elements $c>\mathbb{R}$ have the same type over $\emptyset$. For any $a\in \mathbb{R}\setminus\mathbb{Q}$, all the elements in $a+\mu$ have the same type over $\emptyset$. For any $a\in\mathbb{Q}$, $\tp(a)$ is determined by $x=a$. Moreover, for any $h$ positive infinitesimal all the elements $a+h$ have the same type over $\emptyset$ and the same holds true for all $a-h$. From here we can deduce that the groups in the lemma are invariant. Let $H$ be an invariant subgroup, if it contains any $c>\mathbb{R}$ then $H=G^*$. If $H$ contains some element $a+h$ with $a\in \mathbb{Q}$ and $h$ a positive infinitesimal, then $H$ contains all the elements of that form. Since $H$ is a group, it will contain the negative infinitesimals (because we can subtract two elements $a+h$, $a+h'$) and hence it will contain $a$ and the whole set $a+\mu$. if $H$ contains some element of the form $a-h$ where $h$ is a positive infinitesimal and $a\in \mathbb{Q}$ que can proceed in an analogous manner. This implies that any invariant group is one of the groups listed.
	%We look at $\tp(a)$ for $a\in \mathbb{R}^*$. 
	By q.e. in $(\R,+, \leq,1)^*$, Remark \ref{1 in dcl}, and the fact that the order restricted to any interval $[-r,r]$ (where $r \in \mathbb{N}^+$) is $\emptyset$-definable in $\mathcal{M}$ (see Lemma \ref{proposition: interdefinability}), the following holds in $\mathcal{M}^*$: 
	\begin{enumerate}[label=(\roman*)]
		\item All elements $a>\mathbb{R}$ have the same type over $\emptyset$.
		\item %For any $a\in\mathbb{Q}$, $\tp(a)$ is realized only by $a$.
		$\dcl^{\mathcal{M}}(\emptyset)=\mathbb{Q}$.
		\item %For any $a\in \mathbb{R}\setminus\mathbb{Q}$, all the elements in $a+\mu$ have the same type over $\emptyset$.  
		For any $a\in \mathbb{R}\setminus\mathbb{Q}$, $a+\mu$ is the set of all realizations of a type in $S_1(\emptyset)$.
		\item For any $a \in \mathbb{Q}$,  all the elements $a+h$, where $h$ ranges over positive infinitesimals, form the set of realizations of a type in $S_1(\emptyset)$; and the same is true for all elements $a-h$. 
	\end{enumerate}
	This easily implies that the groups in the lemma are indeed invariant. 
	
	For the converse, let $H$ be an invariant subgroup. If it contains some $a>\mathbb{R}$ or $a < \mathbb{R}$, then $H=G^*$ by (i). So suppose that $H \subseteq \mathbb{R} + \mu$. 
	If $H$ contains an element from $a + \mu$ for some $a \in \mathbb{R} \setminus \mathbb{Q}$, then, by (iii), it contains $a + \mu$ and so $\mu$ as well; thus, $H$ is of the form $\mu + R$, where $R$ is a subgroup of $\mathbb{R}$.
	If $H$ contains some element $a+h$ with $a\in \mathbb{Q}$ and $h$ a positive infinitesimal, then, by (iv),  $H$ contains all the elements of that form. Since $H$ is a group, it contains $\mu$ (because we can subtract any two elements $a+h$, $a+h'$); thus, $H$ is again of the form $\mu + R$, where $R$ is a subgroup of $\mathbb{R}$.  If $H$ contains some element of the form $a-h$ with $a\in \mathbb{Q}$ and  $h$ a positive infinitesimal, we proceed in an analogous manner. The only remaining case is that $H$ is a subgroup of $\mathbb{Q}$.
	
	%%%Krzys revised. Also corrected. I referred directly to (1)	
	$(2)$ 
	%A $\emptyset$-$\bigwedge$-definable group $H$ either contains $c>\mathbb{R}$, in which case $H=G^*$ or the type defining $H$ implies the formula $x\leq n$ for some $n\in\mathbb{N}$. This implies that $H$ is contained in $\mu$, from the analysis of types above we conclude that either $H=\{0\}$ or $H=\mu$.
	A $\emptyset$-type-definable subgroup $H$ either contains some $a>\mathbb{R}$, in which case $H=G^*$, or the type defining $H$ implies the formula $x\leq n$ for some $n\in\mathbb{N}$. This implies that $H$ is contained in $\mu$, so, by (1), either $H=\{0\}$ or $H=\mu$.
	
	%%%Krzys revised: I do not see how you get a real interval. So I shortened it. The argument I see is by looking at the what happens around the left or right end of one of the intervals in the decomposition of our definable group (one easily gets a contradiction that it is indeed an end; so the set must be finite (and then we get \{0\}) or everything (i.e. no end points)).	
	$(3)$ 
	%By o-minimality, any definable subgroup (over parameters) $H$ of $G^*$ is a finite union of points and intervals. If $H$ contains no intervals, then it must be $\{0\}$. If $H$ contains some interval, then it contains an interval with real ends, hence the group is $G^*$.
	By o-minimality of $(\R,+, \leq,1)^*$, any definable subgroup (over parameters) $H$ of $G^*$ is a finite union of points and intervals, so the conclusion easily follows. 	
\end{proof}

%%%Krzys revised: I removed the next sentence and added short proofs of both corollaries.
%From the previous lemma we can deduce the following corollaries. 

%%%Adrian: I changed lemma to corollary and hid the proof
\begin{cor}
	$G^*=G^{*0}=G^{*00}=G^{*000}$
\end{cor}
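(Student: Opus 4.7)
The plan is to exploit the chain $G^{*000}\subseteq G^{*00}\subseteq G^{*0}\subseteq G^*$ and show that each of the three ``components'' equals $G^*$ by running through the proper subgroups supplied by Lemma \ref{lemma: subgroups of G} and checking that, in each case, none of them lies in the appropriate class (finite-index definable, bounded-index type-definable, or bounded-index invariant).

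For $G^{*0}$, part (3) of Lemma \ref{lemma: subgroups of G} leaves only $\{0\}$ as a candidate proper definable subgroup; since $G^*$ is $\kappa$-saturated and $T$ has infinite models, $|G^*|\geq \kappa$, so $\{0\}$ has infinite (in particular non-finite) index, giving $G^{*0}=G^*$.

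For $G^{*00}$, part (2) of Lemma \ref{lemma: subgroups of G} reduces the task to ruling out $\mu$, i.e., to showing $[G^*:\mu]\geq \kappa$. I would use $\kappa$-saturation to produce a family $(a_\alpha)_{\alpha<\kappa}$ of elements of $G^*$ with $|a_\alpha-a_\beta|>1$ for $\alpha\neq \beta$ (the corresponding partial type is finitely satisfiable in $(\mathbb{R},+)$); the cosets $a_\alpha+\mu$ are then pairwise distinct. This yields $G^{*00}=G^*$ and is the only non-bookkeeping step of the whole argument.

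Finally, for $G^{*000}$, part (1) of Lemma \ref{lemma: subgroups of G} lists the remaining proper invariant subgroups: $\{0\}$, subgroups of $\mathbb{Q}$, and groups of the form $\mu+R$ with $R\leq \mathbb{R}$. Subgroups of $\mathbb{Q}$ have cardinality at most $\aleph_0<\kappa$, hence unbounded index. For $\mu+R$, the identity $R\cap \mu=\{0\}$ yields a group isomorphism $G^*/(\mu+R)\cong (G^*/\mu)/R$; since $|R|\leq 2^{\aleph_0}<\kappa$ (using that $\kappa$ is a strong limit) and $|G^*/\mu|\geq \kappa$ by the previous step, one obtains $[G^*:\mu+R]\geq \kappa$. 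Therefore no proper invariant subgroup has bounded index, and $G^{*000}=G^*$, completing the chain of equalities.
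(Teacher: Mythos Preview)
Your proof is correct and uses the same classification lemma as the paper, but you do more work than necessary. The paper observes that the chain $G^{*000}\subseteq G^{*00}\subseteq G^{*0}\subseteq G^*$ reduces everything to the single statement $G^{*000}=G^*$; it then handles this in one stroke by noting from Lemma~\ref{lemma: subgroups of G}(1) that the \emph{largest} proper invariant subgroup is $\mu+\mathbb{R}$, and that this already has unbounded index in $G^*$. Your separate treatments of $G^{*0}$ and $G^{*00}$, and your case-by-case check of every invariant subgroup, are therefore redundant (though your index computation for $\mu$ is exactly what justifies the paper's one-line claim about $\mu+\mathbb{R}$).
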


\begin{proof}
	Since $G^*\geq G^{*0}\geq G^{*00}\geq G^{*000}$, it is enough to show that $G^{*000}=G^*$. But this follows from Lemma \ref{lemma: subgroups of G}(1), as the index of $\mathbb{R} + \mu$ in $G^*$ is unbounded.
\end{proof}

\begin{cor}
	${G^*}^{\textrm{st},0}=G^*$
\end{cor}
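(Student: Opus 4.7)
The plan is to observe that ${G^*}^{\textrm{st},0}$ is by definition the intersection of all relatively definable (with parameters) subgroups $H \leq G^*$ such that the quotient $G^*/H$ is stable as a hyperdefinable set, and then to show that the only such $H$ is $G^*$ itself.

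First, I would invoke Lemma \ref{lemma: subgroups of G}(3), which states that the only definable (with parameters) subgroups of $G^*$ are $\{0\}$ and $G^*$. The quotient $G^*/G^*$ is trivial, hence obviously stable, so $G^*$ itself is always one of the groups in the intersection defining ${G^*}^{\textrm{st},0}$. It therefore suffices to rule out $H = \{0\}$, i.e., to show that $G^*/\{0\} = G^*$ is not stable as a hyperdefinable set.

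To do this, I would reuse the unstable formula exhibited in the proof of Proposition \ref{proposition: NIP and unstable}: the formula $\varphi(x,y) := \text{``}x \in I - y \wedge x \in I + (y-1)\text{''}$, whose free variable $x$ ranges over $G^*$, has SOP in $T$. By extracting an indiscernible sequence from a sequence witnessing SOP, one obtains an indiscernible $(a_i, b_i)_{i<\omega}$ in $G^* \times \C$ for which $\models \varphi(a_i, b_j)$ if and only if $i < j$, so $\tp(a_i, b_j) \ne \tp(a_j, b_i)$ for $i \ne j$. By Definition \ref{def stability intro}, this means the hyperdefinable set $G^* = G^*/\{=\}$ is unstable. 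Hence $G^*/\{0\}$ is not stable, so $\{0\}$ is excluded from the intersection, and we conclude ${G^*}^{\textrm{st},0} = G^*$.

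There is no real obstacle here: the argument is a direct combination of the classification of definable subgroups from Lemma \ref{lemma: subgroups of G}(3) with the unstable formula already produced in Proposition \ref{proposition: NIP and unstable}.
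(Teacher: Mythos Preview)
Your proposal is correct and follows exactly the same approach as the paper, which simply cites Lemma~\ref{lemma: subgroups of G}(3) and Proposition~\ref{proposition: NIP and unstable}. You have merely unpacked the latter citation in more detail (and correctly so, noting that the unstable formula lives on $G^*$, which here coincides with the home sort).
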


\begin{proof}
	It follows directly from Lemma \ref{lemma: subgroups of G}(3) and Proposition \ref{proposition: NIP and unstable}.
\end{proof}

Let $\mathcal{N}:=(\mathbb{R}, +,-, R_r)_{r\in \mathbb{N}^+}$, where $R_r(x,y)$ holds if and only if $0\leq y-x\leq r$. Let $T':=\Th(\mathcal{N})$. 

\begin{remark}\label{R} The family $(R_r)_{r\in \mathbb{N}^+}$ satisfy the following conditions:
	\begin{enumerate}
		\item $R_r(x,y)\iff R_r(0, y-x)$;
		\item $R_r(x,y)\iff R_r(-y,-x)$ ;
		\item $R_r(x,y)\iff R_{nr}(nx,ny)$ (where $n \in \mathbb{N}^+$);
		\item $R_r(x,y)\iff R_1(x,y) \vee R_1(x,y-1)\vee \dots \vee R_1(x,y-(r-1)) $.
	\end{enumerate}
\end{remark}
%\begin{proof}
%	Statements $(1)$, $(2)$, and $(3)$ follow directly from the definition of the family $(R_r)_{r\in \mathbb{N}^+}$. Statement $(4)$ follows from the fact that %$$0\leq y-x\leq r \iff (0\leq y-x\leq 1)\vee (1\leq y-x\leq 2)\vee \dots \vee (r-1\leq y-x\leq r).$$
%\end{proof}

%%%Krzys3: I wrote ``over \{1\}$ instead of over $\{0,1\}$, and I extended the first sentence of the proof a little bit to take care of $0$.	
%%%Krzys3: More importantly, I think that  $\mathcal{N}$ is not qf-definable in  $\mathcal{M}$: the problem is that $R_1(x,y)\iff y-x\in [0,1]$ is not qf-definable in $\mathcal{M}$, because we did not include $-$ in $\mathcal{M}$ (the fact that $-$ is qf-definable in $\mathcal{M}$ does not help). Just look at the the subset on the real plane defined by $R_1(x,y)$ and note that one cannot present it as a qf-definable set in $\mathcal{M}$. Do you agree with that? That is way I removed all the qf's in the statement and in the proof of the next proposition.
%%%Krzys revised: I changed proposition to lemma (also in all refs). I also added ``over $\emptyset$''.
\begin{lema}\label{proposition: interdefinability}
	The structures $\mathcal{M}$ and $\mathcal{N}$ are interdefinable over $\emptyset$.
	%%%Adrian: eliminated over $\{1\}$.
	% without using quantifiers.
\end{lema}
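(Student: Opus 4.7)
My plan is to establish interdefinability over $\emptyset$ by showing that each primitive of $\mathcal{M}$ is $\emptyset$-definable in $\mathcal{N}$, and vice versa. The addition $+$ is a primitive in both structures, so nothing is needed there; and subtraction (which is a primitive of $\mathcal{N}$) is $\emptyset$-definable in any group by the formula $x-y=z \leftrightarrow z+y=x$. So the substantive content is the mutual definability between $I$ on the one hand and the family $(R_r)_{r\in\mathbb{N}^+}$ on the other.

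For the direction $\mathcal{N}$ is $\emptyset$-definable in $\mathcal{M}$, I will first handle $r=1$: since $R_1(x,y)$ just says $y-x\in I$, it is defined by the $\mathcal{M}$-formula $I(y-x)$ (using that $-$ is $\emptyset$-definable in $\mathcal{M}$ as above). For general $r\geq 2$, I will invoke Remark \ref{R}(4), which expresses $R_r$ as a finite disjunction $R_1(x,y)\vee R_1(x,y-1)\vee\dots\vee R_1(x,y-(r-1))$; this is a legitimate $\emptyset$-formula in $\mathcal{M}$ because the constant $1$ belongs to $\dcl^{\mathcal{M}}(\emptyset)$ by Remark \ref{1 in dcl} (so each term $y-k$ for $k\in\{0,1,\dots,r-1\}$ is $\emptyset$-definable), and we have already defined $R_1$.

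For the reverse direction, $\mathcal{M}$ is $\emptyset$-definable in $\mathcal{N}$, the only nontrivial task is to define the unary predicate $I$. The neutral element $0$ is $\emptyset$-definable in any group as the unique $z$ with $z+z=z$, so the formula $R_1(0,x)$ is an $\emptyset$-formula of $\mathcal{N}$, and by the definition of $R_1$ this formula holds exactly when $0\leq x-0\leq 1$, i.e.\ when $x\in I$. Hence $I(x)\leftrightarrow R_1(0,x)$, completing this direction.

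Neither step presents a real obstacle: the argument is just a bookkeeping exercise, with the only subtlety being that writing $R_r$ from $R_1$ in $\mathcal{M}$ requires a named constant for the integer $1$, which is exactly what Remark \ref{1 in dcl} supplies. Conceptually, once one knows that $I=[0,1]$ together with $+$ allows one to recover all translates of $[0,1]$ over $\emptyset$, the family $(R_r)_r$ is nothing but an axiomatization of the ``bounded part'' of the order by sums of $r$ unit intervals, which is why the two structures end up interdefinable.
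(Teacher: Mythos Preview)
Your proof is correct and follows essentially the same approach as the paper: both directions use $R_1(x,y)\leftrightarrow I(y-x)$ and $I(x)\leftrightarrow R_1(0,x)$, and then Remark~\ref{R}(4) together with $1\in\dcl^{\mathcal{M}}(\emptyset)$ (Remark~\ref{1 in dcl}) to handle the remaining $R_r$.
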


\begin{proof}
	$\mathcal{M}$ is definable over $\emptyset$ in $\mathcal{N}$, because $x\in[0,1]$ if and only if $R_1(0,x)$ if and only if $R_1(x,2x)$.
	
	To see that $\mathcal{N}$ is definable over $\{1\}$ in $\mathcal{M}$, note that the function $-$ can be defined using $+$ as usual, $R_1(x,y)\iff y-x\in [0,1]$, and then use the last property in Remark \ref{R} to conclude that  all $R_r$, $r\in\mathbb{N}^+$, are definable over $\{1\}$ in $\mathcal{M}$.
	%%%Adrian: added the following line
	%%%Krzys revised: Slightly changed.
	%since $1\in\dcl^{\mathcal{M}}(\emptyset)$, the conclusion follows.
	Since $1\in\dcl^{\mathcal{M}}(\emptyset)$, we conclude that $\mathcal{N}$ is definable over $\emptyset$ in $\mathcal{M}$.
\end{proof}

The result above shows that the theory $T'$ also has NIP and is unstable.

%%%Krzys revised: It seems to me that \{1\} cannot be defined by a qf-formula without constants (just looking at all such formulas in one variable and the sets that they define). This would mean that there is no q.e. without expansion by 1.	
\begin{prop}\label{proposition: qe}
	$T'$ has quantifier elimination after expansion by the constant $1$.
\end{prop}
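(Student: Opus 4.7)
The proof plan is a standard back-and-forth. Fix $\aleph_1$-saturated models $\mathcal{N}_1,\mathcal{N}_2\models T'_1$, an isomorphism $f\colon A\to A'$ between countable substructures, and $b\in\mathcal{N}_1$; I must find $c\in\mathcal{N}_2$ realizing $f(\qftp(b/A))$. Substructures in this language are subgroups of the ambient group containing $\mathbb{Z}$ (since $1,+,-$ are in the language), so $\langle A,b\rangle = A+\mathbb{Z}b$. By Remark \ref{R}(1), every atomic formula with parameters from $A$ in the variable $b$ reduces either to a linear equation $nb=a$ ($n\in\mathbb{Z}$, $a\in A$) or to an atom $R_r(0,nb+a)$. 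Hence $\qftp(b/A)$ is determined by the satisfied linear equations together with the Boolean pattern of those $R_r$-atoms.

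I split into two cases. If $n_0 b \in A$ for some nonzero $n_0 \in \mathbb{Z}$, divisibility and torsion-freeness of $(\mathbb{R}^*,+)$ provides a unique $c \in \mathcal{N}_2$ with $n_0 c = f(n_0 b)$; the required $R_r$-atoms about $c$ then follow from those about $b$ via Remark \ref{R}(3) and the fact that $f$ already preserves atomic formulas on $A$. Otherwise $\mathbb{Z} b \cap A = \{0\}$, and I consider the pulled-back partial type $\Sigma(x)$ over $A'$; by $\aleph_1$-saturation of $\mathcal{N}_2$ it suffices to prove finite satisfiability of $\Sigma$. Given a finite $\Sigma_0\subseteq\Sigma$, Remark \ref{R}(3) lets me rescale all $x$-coefficients appearing in $\Sigma_0$ to a common value $N$, and divisibility of $\mathcal{N}_2$ permits the substitution $z=Nx$; thus $\Sigma_0$ becomes a finite system of signed atoms $R_{s_i}(0,z+b_i)$ and disequalities $z\neq b_j$ over $A'$, which is satisfied in $\mathcal{N}_1$ by $z=Nb$.

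The task is now to transfer solvability of this system from $\mathcal{N}_1$ to $\mathcal{N}_2$ across $f$. The key observation is that each positive atom $R_{s_i}(0,z+b_i)$ confines $z$ to a concrete bounded interval $[-b_i,s_i-b_i]$ of length $s_i$, and within any such bounded region the ambient order on each $\mathcal{N}_k$ is itself QF-definable via the $R_r$'s; consequently, the pattern of intersection of these intervals (and exclusion by the negative atoms) is a Boolean combination of $R_r$-atoms among the $b_i$'s, which is preserved by $f$. The main obstacle is the delicate sub-case in which $\Sigma_0$ contains no positive $R_r$-atom: here the constraints merely exclude finitely many bounded intervals and finitely many points, and I must exhibit an element of $\mathcal{N}_2$ outside all of them; this is handled by picking a sufficiently unbounded $z\in\mathcal{N}_2$ secured by $\aleph_1$-saturation. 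Since elements of $A$ that are globally far apart in $\mathcal{N}_1$ may be transported by $f$ into elements of $\mathcal{N}_2$ whose global ``order-sign'' differs, the bounded-interval reduction is exactly what bypasses this obstruction: all effective constraints are local and hence determined by $R_r$-data preserved by $f$.
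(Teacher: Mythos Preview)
Your approach differs from the paper's: the paper performs a direct syntactic elimination of one existential quantifier by listing the finitely many possible relative positions of the interval-endpoint terms $t_j(\bar x)$, $t_j(\bar x)\pm r_j$ inside a bounding interval and reading off a quantifier-free disjunction, whereas you run a back-and-forth between $\aleph_1$-saturated models. Your reductions via Remark \ref{R}, the algebraic Case 1, and the no-positive-atom sub-case of Case 2 are all correct.

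The positive-atom sub-case is where your plan is thin. You correctly observe that one positive atom confines $z$ to an interval $I$ of standard length $s_1$, that the order is QF-definable on bounded regions, and (in your last paragraph) that constraints coming from intervals far from $I$ are ineffective since such intervals, having standard length, are disjoint from $I$. What you have not done is turn ``the local configuration of endpoints is the same in $\mathcal{N}_2$'' into ``a solution exists in $\mathcal{N}_2$''. The missing observation is that $z_0=Nb\notin A$ (this is Case 2), so $z_0$ cannot be an endpoint of the allowed region, all of whose endpoints lie in $A$; hence $z_0$ lies strictly inside a component, whose image in $\mathcal{N}_2$ is then a nondegenerate interval, nonempty by density (a first-order property of $T'$). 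That final step is exactly the interval combinatorics the paper's direct argument carries out explicitly; both proofs reduce to it, and your sketch names the right ingredients but stops short of assembling them into a proof of the transfer.
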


\begin{proof}
	We argue by induction on the length of the formula. So the proof boils down to showing that a primitive formula $(\exists y) \varphi(y,\bar x)$ is $T'$-equivalent to a quantifier free formula, assuming that all shorter formulas are $T'$-equivalent to qf-formulas. Recall that $(\exists y) \varphi(y,\bar x)$ being primitive means that $\varphi(y,\bar x)$ is a conjunction of atomic formulas and negations of such, i.e. $\varphi(y,\bar x) = \bigwedge_{j=1}^m R^{\varepsilon_j}_{r_j}(t^l_j(y,\bar x), t^r_j(y,\bar x))$, where $\varepsilon_j \in \{\pm 1,\pm 2\}$, $r_j \in \mathbb{N}^+$, $t^l_j(y,\bar x)$ and $t^r_j(y,\bar x)$ are terms, and: $R_{r_j}^{-2}(t,z):= \neg R_{r_j}(t,z)$, $R_{r_j}^2(t,z):= R_{r_j}(t,z)$, $R_{r_j}^{-1}(t,z): = \neg (t=z)$, $R_{r_j}^1(t,z): = (t=z)$.  
	
	Using $+$, $-$, multiplying by suitable integers, Remark \ref{R}, and induction hypothesis, we can assume  that there is an integer $n \ne 0$  such that for every $j$: either $t^l_j(y,\bar x) =0$ and $t^r_j(y,\bar x) = ny - t_j(\bar x)$, or $t^l_j(y,\bar x) = ny - t_j(\bar x)$ and  $t^r_j(y,\bar x)=0$.
	
	If some $\varepsilon_j=1$, one gets $ny= t_j(\bar x)$ and the quantifier $\exists y$ can be eliminated. So assume that all $\varepsilon_j \ne 1$. If additionally all $\varepsilon_j \ne 2$, then $(\exists y) \varphi(y,\bar x)$ is $T'$-equivalent to $\top$. So assume that some $\varepsilon_j= 2$, e.g. $\varepsilon_1=2$. Then $R^{\varepsilon_1}_{r_1}(t^l_1(y,\bar x), t^r_1(y,\bar x))$ either says (in $\mathcal{N}$) that $ny \in [t_1(\bar x) -r_1, t_1(\bar x)]$, or that $ny \in [t_1(\bar x), t_1(\bar x) +r_1]$. Suppose the latter case holds.  Consider all (finitely many) possibilities taking into account: 
	\begin{itemize}
		\item which terms $t_j(\bar x)-r_j, t_j(\bar x), t_j(\bar x) +r_j$ (for $j\in \{2,\dots,m\}$) belong to $[t_1(\bar x) -r_1, t_1(\bar x)]$; 
		%%%Krzys3: ``which belong'' instead of ``who belong''.
		\item for those which belong to this interval, how they are ordered by $R_{r_1}$;
		%%%Krzys revised: I added  ``or as ``$ny \notin I$'''', as this is the case when $\varepsilon_j =-2$.
		\item for $\varepsilon_j \ne -1$, writing  $R^{\varepsilon_j}_{r_j}(t^l_j(y,\bar x), t^r_j(y,\bar x))$ as ``$ny \in I$'' or as ``$ny \notin I$'', where $I:= [t_j(\bar x) -r_j, t_j(\bar x)]$ or $I:=[t_j(\bar x), t_j(\bar x) +r_j]$, we should specify which of the terms $t_1(\bar x)$ and $t_1(\bar x) +r_1$ belong to $I$.
	\end{itemize}
	
	Each of these possibilities is clearly a qf-definable condition on $\bar x$ (using finitely many integers, but they are terms, as 1 was added to the language). On the other hand, $(\exists y) \varphi(y,\bar x)$ is $T'$-equivalent to the disjunction of some subfamily of these conditions (by a simple combinatorics on intervals). Therefore, $(\exists y) \varphi(y,\bar x)$ is $T'$-equivalent to a qf-formula.
\end{proof}

\begin{prop}\label{proposition: stable quotient}
	The quotient $G^*/\mu$ is stable (in $\mathcal{M}$, equivalently in $\mathcal{N}$).
\end{prop}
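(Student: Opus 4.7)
The plan is to invoke Theorem~\ref{equivteor} via its equivalence $(1)\Leftrightarrow (4)$: since $T$ is NIP (Proposition~\ref{proposition: NIP and unstable}), which transfers to NIP of the hyperdefinable set $G^*/\mu$, it suffices to show that every indiscernible sequence of elements of $G^*/\mu$ is totally indiscernible. So I would take an indiscernible sequence $([\alpha_i])_{i\in\mathbb{Q}}$ in $G^*/\mu$ and, by Fact~\ref{fact: indisc representatives}, lift it to representatives $(\alpha_i)_{i\in\mathbb{Q}}$ that are themselves indiscernible in $\mathcal{M}^*$. All computations below are then carried out in the $\mathcal{N}$-language via Lemma~\ref{proposition: interdefinability}.

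If $\alpha_1-\alpha_0\in\mu$ then by indiscernibility every $\alpha_j-\alpha_i$ lies in $\mu$, the sequence $([\alpha_i])_{i\in\mathbb{Q}}$ is constant, and there is nothing to prove. So assume $\alpha_1-\alpha_0\notin\mu$. The main structural step is to establish the following \emph{strong $\mathbb{Z}$-independence}: every non-trivial integer linear combination $c_0+\sum_{k=1}^n c_k\alpha_{i_k}$, with $i_1<\cdots<i_n$ in $\mathbb{Q}$ and some $c_k\neq 0$ for $k\geq 1$, lies outside $[-r,r]$ for every $r\in\mathbb{N}^+$. First, I would argue that $\alpha_1-\alpha_0$ itself is unbounded in this sense: otherwise it would be finite with nonzero standard part $s$ (detected by its quantifier-free type via the $R_r(0,\cdot)$ atoms), indiscernibility would force $\alpha_2-\alpha_1$ to have the same standard part $s$, so $\alpha_2-\alpha_0=(\alpha_1-\alpha_0)+(\alpha_2-\alpha_1)$ would have standard part $2s$, while indiscernibility of pairs would also force it to have standard part $s$, yielding $s=0$ and contradicting $\alpha_1-\alpha_0\notin\mu$. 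Next, if some non-trivial combination were bounded in $[-r,r]$ on one tuple, the bound would hold uniformly on all such tuples; letting $k^*=\max\{k:c_k\neq 0\}$ and replacing $i_{k^*}$ by another $i'_{k^*}\in(i_{k^*-1},i_{k^*+1})$ (available by the density of $\mathbb{Q}$), subtracting yields $c_{k^*}(\alpha_{i_{k^*}}-\alpha_{i'_{k^*}})\in[-2r,2r]$, which is impossible since $\alpha_{i_{k^*}}-\alpha_{i'_{k^*}}$ has the same type as $\alpha_1-\alpha_0$ and is therefore unbounded.

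With strong $\mathbb{Z}$-independence in place, quantifier elimination of $\mathcal{N}$ after adjoining $1$ (Proposition~\ref{proposition: qe}) completes the argument: the q.f.\ type of $(\alpha_{i_1},\ldots,\alpha_{i_n})$ is determined by the atomic conditions $c_0+\sum c_k x_k=0$ and $R_r(0,\,c_0+\sum c_k x_k)$, and on our tuple each such condition holds iff all $c_k=0$ (for $k\geq 1$) together with the appropriate integer constraint on $c_0$—a condition that depends only on the coefficient vector, not on the choice or order of the indices. Hence $(\alpha_i)_{i\in\mathbb{Q}}$ is totally indiscernible in $\mathcal{M}^*$, and a fortiori $([\alpha_i])_{i\in\mathbb{Q}}$ is totally indiscernible in $G^*/\mu$. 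The main obstacle is the index-swapping step establishing strong $\mathbb{Z}$-independence; once that is settled, the rest is a clean unwinding via q.e.\ and Theorem~\ref{equivteor}.
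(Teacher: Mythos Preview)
Your argument is correct and constitutes a genuinely different route from the paper's.

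The paper invokes the type-counting equivalence from Corollary~\ref{2.8}/Theorem~\ref{equivteor} rather than the total-indiscernibility one: it fixes $\lambda=\mathfrak{c}$ and shows $|S_{G^*/\mu}(A)|\leq\mathfrak{c}$ for every $A$ of size at most $\mathfrak{c}$. Setting $V_A:=\Lin_{\mathbb{Q}}(A\cup\mathbb{R})$ and $\tilde V_A:=V_A+\mu$, quantifier elimination (Proposition~\ref{proposition: qe}) gives that any $a\in G^*\setminus\tilde V_A$ realizes the unique ``infinite in every direction relative to $V_A$'' type, whence $|S_{G^*/\mu}(A)|\leq|\tilde V_A/\mu|+1\leq\mathfrak{c}$. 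Your proof instead uses item~(4) (legitimate here since $T$ has NIP), lifts to indiscernible representatives, and proves the structural \emph{strong $\mathbb{Z}$-independence} of the $\alpha_i$ by the index-swap trick; then the same q.e.\ forces the q.f.\ type of any $n$-tuple to be the fixed ``all nontrivial combinations unbounded'' type, giving total indiscernibility already at the level of representatives. Both proofs ultimately hinge on Proposition~\ref{proposition: qe}; the paper's is a two-line cardinal count, while yours trades that for an elementary sequence argument and avoids any cardinality bookkeeping. The only cosmetic point: your interval $(i_{k^*-1},i_{k^*+1})$ should be read with the obvious conventions at the endpoints ($k^*=1$ or $k^*=n$), which you clearly intend.
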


\begin{proof}
	
	By Theorem \ref{equivteor}, it is enough to show that for every $A \subseteq \mathcal{M}^*$ with $|A| \leq \mathfrak{c}$ we have $|S_{G^*/\mu}(A)| \leq \mathfrak{c}$. 
	
	By Lemma \ref{proposition: interdefinability}, $\mathcal{M}^*$ can be treated as an elementary extension of $\mathcal{N}$.
	%	Let us first show that $(\forall\mathcal{M}$ \rotatebox[origin=c]{180}{$\prec$}$\mathbb{R})$ $(\lvert M\rvert\leq\mathfrak{c} \implies \lvert S_{G(M)/E}(M)\rvert \leq \mathfrak{c})$ holds. 
	
	%%%Krzys3: $\Lin_{\mathbb{Q}}(A\cup \mathbb{R})$ instead of $\Lin_{\mathbb{R}}(A\cup \mathbb{R})$, as the last one is not defined in the monster model (unless we refer to a richer structure including multiplication, which we could do), while the former is well-defined since the monster model is a torsion-free divisible group.
	Consider an arbitrary set $A$ as above. Put $V_A:=\Lin_{\mathbb{Q}}(A\cup \mathbb{R})$ and $\tilde{V}_A:=V_A+\mu$. First, note that for any $a,a'\in G^*$, if $a-a'\in \mu$, then trivially $\tp((a+\mu)/A)=\tp((a'+\mu)/A$). Now, consider any $a\in G^*\setminus \tilde{V}_A$. Then $a$ satisfies the formulas $$kx\neq t$$ and $$\neg R_r(t,kx)$$ for all $k \in \mathbb{Z} \setminus \{0\}$, $r\in \mathbb{N}^+$, and $t\in V_A$. By Remark \ref{R}, Lemma \ref{proposition: interdefinability} and Proposition \ref{proposition: qe}, these formulas completely determine $\tp(a/V_A)$. Hence, any $a,a'\in G^*\setminus \tilde{V}_A$ have the same type over $A$. Therefore, $|S_{G^*/\mu}(A)| \leq |\tilde{V}_A/\mu| +1 \leq|V_A| +1 = \mathfrak{c}$.	
	%	Consider an arbitrary set $A$ as above. Put $V_A:=\Lin_{\mathbb{R}}(A\cup \mathbb{R})$ and $\tilde{V}_A:=V_A+\mu$. First, note that for any $a,a'\in G^*$, if $a-a'\in \mu$, then trivially $\tp((a+\mu)/A)=\tp((a'+\mu)/A$). Now, consider any $a\in G^*\setminus \tilde{V}_A$. Then $a$ satisfies the formulas $$kx\neq t$$ and $$\neg R_r(kx,t)\wedge \neg R_r(t,kx)$$ for all $k,r\in \mathbb{N}^+$ and $t\in V_A$. By Proposition \ref{proposition: qe}, these formulas completely determine $\tp(a/V_A)$. Hence, any $a,a'\in G^*\setminus \tilde{V}_A$ have the same type over $A$. Therefore, $|S_{G^*/\mu}(A)| \leq |\tilde{V}_A/\mu| \leq|V_A| \leq \mathfrak{c}$
\end{proof}

%%%Adrian: I changed proposition to corollary, added the new proof using the new lemmas and hid the previous proof
%%%Krzys revised: You did not change proposition to corollary. So I did it.
\begin{cor}
	$\mu= G^{*st}$.
\end{cor}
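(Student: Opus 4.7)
The plan is to combine the stability of $G^*/\mu$ just established with the classification of $\emptyset$-type-definable subgroups of $G^*$ from Lemma \ref{lemma: subgroups of G}(2), and then rule out the only remaining possibility by exhibiting unstability of $G^*$ itself.

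First, since $T$ is NIP (Proposition \ref{proposition: NIP and unstable}), the main result of \cite{MR3796277} tells us that ${G^*}^{\textrm{st}}_A$ does not depend on the choice of the set of parameters $A$; in particular, $G^{*\textrm{st}}$ is $\emptyset$-type-definable. Proposition \ref{proposition: stable quotient} shows $G^*/\mu$ is stable, so by minimality of $G^{*\textrm{st}}$ we obtain $G^{*\textrm{st}} \subseteq \mu$.

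Next, I would invoke Lemma \ref{lemma: subgroups of G}(2), which lists all $\emptyset$-type-definable subgroups of $G^*$ as $\{0\}$, $\mu$, and $G^*$. Combined with the inclusion $G^{*\textrm{st}} \subseteq \mu$, this reduces the problem to showing $G^{*\textrm{st}} \neq \{0\}$, i.e., that $G^*$ itself is not stable as a hyperdefinable set.

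The last step is the key point. If $G^{*\textrm{st}} = \{0\}$, then $G^* = G^*/\{0\}$ would be a stable hyperdefinable set. But the proof of Proposition \ref{proposition: NIP and unstable} exhibits a formula $\varphi(x,y)$ with $x$ a variable ranging over the home sort $G^*$ that has the strict order property. By the standard argument (picking a witnessing sequence with $\models \varphi(a_i,b_j) \iff i<j$ and applying Ramsey and compactness), we obtain an indiscernible sequence $(a_i,b_i)_{i<\omega}$ with $a_i \in G^*$ and $\tp(a_i,b_j)\neq \tp(a_j,b_i)$ for $i\neq j$, directly contradicting Definition \ref{def stability intro} for $X/E = G^*/\{0\}$. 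Hence $G^{*\textrm{st}} = \mu$.

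No real obstacle is expected; the argument is a clean three-line combination of the preceding results, and the only subtlety is verifying that the witness to unstability of $T$ produced in Proposition \ref{proposition: NIP and unstable} genuinely witnesses unstability of $G^*$ as a hyperdefinable set, which is immediate because $G^*$ is the home sort and the SOP formula is of the form $\varphi(x,y)$ with $x$ a single variable of the home sort.
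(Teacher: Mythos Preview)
Your proposal is correct and follows essentially the same approach as the paper: the paper's proof simply cites Lemma \ref{lemma: subgroups of G}(2), Proposition \ref{proposition: stable quotient}, and Proposition \ref{proposition: NIP and unstable}, and you have correctly spelled out how these three ingredients combine. Your added verification that the SOP formula from Proposition \ref{proposition: NIP and unstable} witnesses instability of $G^*$ as a hyperdefinable set (ruling out $G^{*\textrm{st}}=\{0\}$) is a reasonable elaboration of what the paper leaves implicit.
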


\begin{proof}
	It follows from Lemma \ref{lemma: subgroups of G}(2), Proposition \ref{proposition: stable quotient}, and Proposition \ref{proposition: NIP and unstable}.
\end{proof}
To summarize, we have proved that $G^*$ is a $\emptyset$-definable group in a monster model of a NIP theory such that ${G^*}^{\textrm{st}}\neq {G^*}^{00}$ and ${G^*}^{\textrm{st},0}={G^*}^{00}=G^*$.

\section{How to construct examples with $G^{\textrm{st}} \ne G^{\textrm{st},0}$?}\label{section 5}

Our context is that $G$ is a $\emptyset$-definable group in a monster model $\C$ of a complete theory $T$ with NIP. We use the symbol $\dcap$ to indicate that it is a directed intersection.

\begin{prop}\label{proposition: based on Hrushovski}
	If $G^{\textrm{st}}=G^{\textrm{st},0}$, then any type-definable subgroup $H$ of $G$ with $G/H$ stable is an intersection of definable subgroups.
\end{prop}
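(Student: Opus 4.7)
Plan: Since $G/H$ is stable, the minimality of $G^{\textrm{st}}$ gives $H\supseteq G^{\textrm{st}}$, and the hypothesis then yields $H\supseteq G^{\textrm{st},0}=\dcap_{i\in I} K_i$, where $(K_i)_{i\in I}$ enumerates the relatively definable subgroups of $G$ with $G/K_i$ stable. Since products of stable hyperdefinable sets are stable (Proposition~\ref{stable preserved product}), the intersection of two such subgroups is again one, so we may assume the family $(K_i)_{i\in I}$ is directed downward under intersection.

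A standard compactness argument in the monster model then shows $H=\dcap_{i} HK_i$, and also that each $HK_i$ is type-definable (the condition $g\in HK_i$ being equivalent, by saturation, to the finite satisfiability of the partial type $H(x)\cup\{x^{-1}g\in K_i\}$). For the nontrivial inclusion, if $g\in HK_i$ for every $i$, then the partial type $\pi(h):=H(h)\cup\{h^{-1}g\in K_i\}_{i\in I}$ is finitely satisfiable by directedness; a realization $h\in H$ yields $h^{-1}g\in\bigcap_i K_i=G^{\textrm{st}}\subseteq H$, whence $g\in H$.

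It remains to express each $HK_i$ as an intersection of definable subgroups of $G$. When $K_i$ is normal in $G$, the set $HK_i$ is a genuine subgroup, $HK_i/K_i$ is a type-definable subgroup of the stable definable group $G/K_i$, and Hrushovski's theorem (\cite[Ch.~1, Lemma~6.18]{pillay1996geometric}) gives $HK_i/K_i=\bigcap_{\alpha} L_{i,\alpha}/K_i$ with each $L_{i,\alpha}$ a definable subgroup of $G$ containing $K_i$; combining with the previous paragraph then yields $H=\bigcap_{i,\alpha} L_{i,\alpha}$, as required. The main obstacle is thus arranging the $K_i$ to be normal in $G$: the natural normal core $K_i^G:=\dcap_{g\in G} gK_ig^{-1}$ satisfies $\dcap_i K_i^G=G^{\textrm{st}}$ (by normality of $G^{\textrm{st}}$) but is only type-definable, not definable. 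I would attempt to circumvent this either by showing that under the hypothesis $G^{\textrm{st}}=G^{\textrm{st},0}$ the group $G^{\textrm{st}}$ is already realized as an intersection of normal definable subgroups with stable quotient, or by replacing $HK_i$ with the subgroup $HK_i^H$, where $K_i^H:=\dcap_{h\in H} hK_ih^{-1}$ is the largest $H$-invariant subgroup of $K_i$ (making $HK_i^H$ a bona fide subgroup), and then tracking how the hypothesis lets one promote the resulting type-definability to genuine definability.
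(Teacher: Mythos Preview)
Your overall structure matches the paper's: from $G/H$ stable get $H \supseteq G^{\textrm{st}} = \dcap_j G_j$, prove $H = \dcap_j HG_j$ by compactness and directedness, and then apply Hrushovski's theorem inside each stable definable group $G/G_j$ once the $G_j$ are normal. You also correctly isolate the obstacle as normality of the $G_j$, and your first suggested fix is the right one. What is missing is the reason it succeeds.

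The point is that the normal core $G_j^G = \dcap_{g \in G} g G_j g^{-1}$ is in fact \emph{definable}, not merely type-definable. The conjugates $\{gG_jg^{-1} : g \in G\}$ form a uniformly definable family, and since $G/G^{\textrm{st}}$ is a stable hyperdefinable group, a Baldwin--Saxl-type chain condition (the paper invokes \cite[Remark 2.5(iv)]{MR3796277}) forces the intersection of all conjugates to be a \emph{bounded} subintersection; a short compactness argument (an $\omega$-chain of strictly descending finite intersections would yield one of unbounded length) then makes it a \emph{finite} subintersection. Replacing each $G_j$ by this finite intersection of conjugates gives a definable normal subgroup still with stable quotient, and your outline goes through verbatim: $HG_j/G_j$ is type-definable in the stable definable group $G/G_j$, Hrushovski's theorem writes it as an intersection of definable subgroups $K_j^i/G_j$, and $H = \dcap_j HG_j = \dcap_{j,i} K_j^i$.

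Your second suggestion, passing to $K_i^H = \dcap_{h \in H} hK_ih^{-1}$, does not help: $HK_i^H$ is again only type-definable a priori, and you still lack a stable \emph{definable} ambient group in which to invoke Hrushovski's theorem.
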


\begin{proof}
	Assume that $G^{\textrm{st}}=G^{\textrm{st},0}=\dcap_{j\in J}G_j$, where $G_j$ are definable groups. Since $G/G^{\textrm{st},0}$ is a stable hyperdefinable group, by \cite[Remark 2.5(iv)]{MR3796277}, the intersection of all the conjugates of each $G_j$ is a bounded subintersection. By compactness, there is a chain of unbounded length $H_{1}\supset H_{2}\supset \cdots$ consisting of conjugates of $G_j$ such that $H_i\subset \bigcap_{j<i} H_j$ if and only if there is such a chain of length $\omega$. Thus, the intersection of all the conjugates of each $G_j$ is a finite subintersection.  Replacing each $G_j$ by such a finite subintersection, we can assume that all the $G_j$'s are normal subgroups of $G$. 
	Hence, for every $j\in J$ we have  $G_j\leq H\cdot G_j\leq G$ and  
	$$\bigslant{H\cdot G_j}{G_j}\leq \bigslant{G}{G_j}.$$
	%%%Krzys2: I added ``Ch. 1'' below. 
	Using Hrushovski's theorem (see \cite[Ch. 1, Lemma 6.18]{pillay1996geometric}) inside the (definable) stable group $G/G_j$, we get $$\bigslant{H\cdot G_j}{G_j} = \dcap_{i\in I_j} \bigslant{ K^i_j}{G_j},$$ for some definable subgroups $K_j^i$ of $G$ such that $K_j^i\cdot G_j=K^i_j$ for all $i \in I_j$.
	
	Since $G/H$ is assumed to be stable, $G^{\textrm{st}} \leq H$. Thus,
	$$H=H \cdot \dcap_{j\in J}G_j= \dcap_{j\in J}H\cdot G_j=\dcap_{j\in J}\dcap_{i\in I_j}K^i_j.$$
\end{proof}

If we do not require that $G^{00} = G^0$, then it is easy to find
examples where $G^{00} \ne G^{\textrm{st}} \ne G^{\textrm{st},0}$; that is why  \cite{MR3796277}
required in this problem also $G^{00}=G$. However, the requirement $G^{00} =
G^0$ seems sufficiently interesting. The next proposition yields a whole class of examples where $G^{00} \ne G^{\textrm{st}} \ne G^{\textrm{st},0}$ (but without the requirement that $G^{00}=G^0$).

\begin{prop}\label{4.10}
	Let $G$ be definably isomorphic to a definable semidirect
	product of definable groups $H$ and $K$ (symbolically, $G
	\cong_{def} H \ltimes K$) such that $H^{00}\ne H^0$ and $K^{\textrm{st}} \ne
	K^{00}$. Then $G^0 \ne G^{00} \ne G^{\textrm{st}} \ne G^{\textrm{st},0}$.
\end{prop}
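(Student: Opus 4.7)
The plan is to prove the three strict inequalities $G^0 \ne G^{00}$, $G^{00} \ne G^{\textrm{st}}$, and $G^{\textrm{st}} \ne G^{\textrm{st},0}$ separately, exploiting how each canonical subgroup of $G$ interacts with the factors $H$ and $K$. Using the definable semidirect product isomorphism, I identify $H$ and $K$ with subgroups of $G$ so that $G = H \cdot K$, $H \cap K = \{1\}$, $K \trianglelefteq G$, and each $g \in G$ decomposes uniquely with definable $K$-part $[g]_K$ and $H$-part $[g]_H$. Since $K^0$, $K^{00}$, $K^{\textrm{st}}$ are characteristic in $K$, they are normal in $G$, so products such as $K^{\textrm{st}} \cdot H$ and $K^{\textrm{st}} \cdot H^{00}$ are genuine subgroups.

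For $G^0 \ne G^{00}$, I will observe that $H^0 \le G^0$, since for every relatively definable finite-index $L \le G$ the intersection $L \cap H$ is relatively definable of finite index in $H$, hence contains $H^0$. On the other hand, $K \cdot H^{00}$ is a type-definable subgroup of $G$ of bounded index $|H : H^{00}|$, so $G^{00} \le K \cdot H^{00}$ and thus $G^{00} \cap H \le (K \cdot H^{00}) \cap H = H^{00}$. Any $h \in H^0 \setminus H^{00}$ (guaranteed by hypothesis) then satisfies $h \in G^0 \setminus G^{00}$.

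For $G^{00} \ne G^{\textrm{st}}$, the core of the argument is to show that $G^{00} \cap K = K^{00}$ and $G^{\textrm{st}} \cap K = K^{\textrm{st}}$, so that any $k \in K^{00} \setminus K^{\textrm{st}}$ lies in $G^{00} \setminus G^{\textrm{st}}$. The first identity follows because $K^{00} \cdot H$ is type-definable of bounded index $|K : K^{00}|$ in $G$, forcing $G^{00} \le K^{00} \cdot H$ and intersecting with $K$. For the second, I introduce the type-definable subgroup $L := K^{\textrm{st}} \cdot H$ and show $G/L$ is stable by producing a hyperdefinable bijection $G/L \xrightarrow{\sim} K/K^{\textrm{st}}$ induced by $g \mapsto [g]_K \cdot K^{\textrm{st}}$. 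Well-definedness on $L$-cosets uses that $H$ normalizes $K^{\textrm{st}}$: if $g_2 = g_1 \cdot k' h'$ with $k' \in K^{\textrm{st}}$, $h' \in H$, then $[g_2]_K = [g_1]_K \cdot ([g_1]_H \, k' \, [g_1]_H^{-1})$, and the latter factor lies in $K^{\textrm{st}}$. Stability of $K/K^{\textrm{st}}$ then forces $G^{\textrm{st}} \le L$ and $G^{\textrm{st}} \cap K \le K^{\textrm{st}}$; the reverse containment is automatic since $K/(G^{\textrm{st}} \cap K)$ embeds into the stable $G/G^{\textrm{st}}$.

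For $G^{\textrm{st}} \ne G^{\textrm{st},0}$, I will argue by contradiction via Proposition \ref{proposition: based on Hrushovski}: assuming $G^{\textrm{st}} = G^{\textrm{st},0}$, every type-definable subgroup of $G$ with stable quotient is an intersection of definable subgroups. I apply this to $L' := K^{\textrm{st}} \cdot H^{00}$ (a subgroup since $H^{00} \le H$ normalizes $K^{\textrm{st}}$). The map $g \mapsto ([g]_K \, K^{\textrm{st}},\, [g]_H \, H^{00})$ will descend to a hyperdefinable bijection $G/L' \xrightarrow{\sim} K/K^{\textrm{st}} \times H/H^{00}$ by the same twisted computation as before; both factors are stable (the second because $H^{00}$ has bounded index), and products of stable hyperdefinable sets are stable by Remark~1.4 of \cite{MR3796277} (cf.\ the appendix), so $G/L'$ is stable. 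Writing $L' = \bigcap_i D_i$ with $D_i \le G$ definable and intersecting with $H$ yields $H^{00} = L' \cap H = \bigcap_i (D_i \cap H)$; each $D_i \cap H$ is a definable subgroup of $H$ containing $H^{00}$ of bounded, hence finite, index. So $H^{00} \supseteq H^0$, contradicting $H^{00} \ne H^0$. The principal technical obstacle is verifying the two hyperdefinable bijections in Steps 2 and 3: both rely on the asymmetry between $H$ and $K$ being absorbed by the $H$-invariance of $K^{\textrm{st}}$, and once these are in place the rest of the argument is formal.
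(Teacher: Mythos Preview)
Your proof is correct and follows essentially the same strategy as the paper's, relying on the $H$-invariance of $K^{00}$ and $K^{\textrm{st}}$, invariant bijections to stable (products of) hyperdefinable sets, and Proposition~\ref{proposition: based on Hrushovski} for the last inequality. The only differences are cosmetic: the paper quotes the exact identities $G^{00}=H^{00}\ltimes K^{00}$ and $G^{0}=H^{0}\ltimes K^{0}$ where you use intersection arguments, and for $G^{\textrm{st}}\ne G^{\textrm{st},0}$ the paper works with the simpler witness $H^{00}\ltimes K$ (whose quotient $H/H^{00}$ is bounded, hence trivially stable) in place of your $K^{\textrm{st}}\cdot H^{00}$.
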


\begin{proof}
	W.l.o.g. $G = H \ltimes K$ and $H$, $K$, and the action of $H$ on $K$ are $\emptyset$-definable. Recall that by NIP, the 00-components exist (i.e. do not depend on the choice of parameters over which they are computed). Hence, $K^{00}$ is invariant under all definable automorphisms, in  particular under the action of $H$. So $G^{00} = H^{00} \ltimes K^{00}$ (e.g. by Corollary 4.11 in \cite{gismatullin2020bohr}).
	%%%Krzys2: I removed parentheses in $G^{\textrm{st}} \leq (H^{00} \ltimes K^{\textrm{st}})$.
	But $G^{\textrm{st}} \leq H^{00} \ltimes K^{\textrm{st}}$, because the map $(h,k)/ (H^{00} \ltimes K^{\textrm{st}}) \mapsto h/H^{00} \times k/K^{\textrm{st}}$ is an invariant bijection from 
	%%%Adrian: Changed '$G^{st} / (H^{00} \ltimes K^{st})$ to $G^{st}/H^{00} \times K/K^{st}$' to what is written now, this was a typo
	$G / (H^{00} \ltimes K^{\textrm{st}})$ to $H/H^{00} \times K/K^{\textrm{st}}$ and the last set is stable as a product of stable sets.
	Thus, since $K^{\textrm{st}}$
	is a proper subgroup of $K^{00}$, we get that $G^{00} \ne G^{\textrm{st}}$.
	
	%%%Krzys2: I removed parentheses in  $(H^{00} \ltimes K) \leq G$
	To see that $G^{\textrm{st}} \ne G^{\textrm{st},0}$,  it is enough to note that $H^{00} \ltimes K \leq
	G$ and that  $H^{00} \ltimes K$ is not an
	intersection of definable groups (because $G/(H^{00} \ltimes K) \cong_{def}
	H/H^{00}$ is not profinite in the logic topology as $H^{00} \ne H^0$). Indeed, having this, since $G/(H^{00} \ltimes K)$  is bounded and so stable, by Proposition \ref{proposition: based on Hrushovski}, we conclude that  $G^{\textrm{st}} \ne G^{\textrm{st},0}$.
	
	The fact that $G^{00} \ne G^{0}$ follows  from $H^{00}\ne H^0$, as $G^{0}= H^0 \ltimes K^0$.
\end{proof}

\begin{remark}
	The assumption of Proposition \ref{4.10} is equivalent to saying
	that $G$ has a definable, normal  subgroup $K$ with $K^{\textrm{st}} \ne
	K^{00}$
	and $(G/K)^{00} \ne (G/K)^{0}$ such that the quotient map $G \to
	G/K$ has a section which is a definable homomorphism.
\end{remark}

The proof of Proposition \ref{4.10} can be easily modified to get the following variant.

\begin{remark}
	The conclusion of Proposition \ref{4.10} remains true with the assumption ``$H^{00}\ne H^0$ and $K^{\textrm{st}} \ne
	K^{00}$'' replaced by ``$H^{\textrm{st}}\ne H^{00}$ and $K^{00} \ne
	K^{0}$''.
\end{remark}

One can find many examples satisfying the assumptions of Proposition \ref{4.10}.
For instance, take any group $H$ (definable in a monster model of a NIP
theory $T_1$) with $H^{00} \ne H^0$ (e.g. the circle group in the
theory of real closed fields) and any group $K$  (definable in a monster
model of a NIP theory $T_2$; where $T_1$ and $T_2$ are in disjoint
languages) with $K^{\textrm{st}} \ne K^{00}$ (e.g. $T_2$ is stable and $K$ is
infinite). Consider $T$ being the union of $T_1$ and $T_2$ living on two
disjoint sorts. Then $G:=H \times K$ satisfies the assumptions of Proposition \ref{4.10}
as a group definable in $T$.

One could still ask if it is possible to find examples satisfying the condition $G^0 =
G^{00} \ne G^{\textrm{st}} \ne G^{\textrm{st},0}$ by finding a definable, normal subgroup
$K$ satisfying $K^{00}\ne K^0$ where $G \to G/K$ does not have a definable
section. 
%%%Krzys revised: In the rest of this paragraph, I replaced $H$ by $K$, as it is more coherent with the first part of this paragraph.
However, there is no chance for this potential method to work for groups
of finite exponent, as for any torsion (equivalently finite exponent)
group $K$ definable in a monster model, we have $K^{00}=K^0$. This is
because $K/K^{00}$ is a compact torsion group, and such groups are known
to be profinite (see \cite[Theorem 8.20]{hewitt2013abstract}). 
%and \cite[Corollary 1.8]{gismatullin2020bohr} in the abelian case).  

%The fact from the last part of the previous paragraph tells us that for $G$ of finite exponent, $G^{00}=G^0$ always holds. 

%%%Krzys2: ``at the end'' instead of ``at the and''.
\begin{problem}
	Construct $G$ of finite exponent with $G^{00} \ne G^{\textrm{st}} \ne G^{\textrm{st},0}$. (The equality $G^{0}=G^{00}$ always holds by the fact at the end of the last paragraph.)
\end{problem}

%%%Krzys2: ``this approach'' instead of ``it''.
In the final part of this section, we describe how one could try to construct examples where  $G^{00} \ne G^{\textrm{st}} \ne G^{\textrm{st},0}$. In fact, originally we used this approach to find the example in Section \ref{section: main example}. We will also point out a difference between the situation in the example from Section \ref{section: main example} and the finite exponent case.

\begin{prop}\label{4.1}
	The conditions $G^{\textrm{st},0}\neq G^{\textrm{st}}$ and $G^{\textrm{st}}\neq G^{00}$ are equivalent to the existence of a type-definable subgroup $H$ of  $G$ such that:
	\begin{enumerate}
		\item $H$ is a countable  intersection $\dcap_{n < \omega} D_n$ of definable subsets of $G$ satisfying $D_{n+1}D_{n+1}\subseteq D_{n}$ and symmetric (i.e. $D_{n}^{-1}=D_{n}$ and $e \in D_n$);
		\item $[G:H]$ is unbounded;
		\item $H$ is not an intersection of definable subgroups;
		\item $G/H$ is stable.
	\end{enumerate}
\end{prop}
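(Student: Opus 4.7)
The proof splits into two directions; the reverse implication is essentially a direct appeal to Proposition \ref{proposition: based on Hrushovski}, while the forward direction requires constructing a suitable $H$.

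For $(\Leftarrow)$, suppose an $H$ satisfying (1)--(4) exists. By (4) and the minimality of $G^{\textrm{st}}$, $G^{\textrm{st}} \subseteq H$. Property (3) together with the contrapositive of Proposition \ref{proposition: based on Hrushovski} immediately yields $G^{\textrm{st}} \neq G^{\textrm{st},0}$. From (2), $H$ has unbounded index, so $G^{00} \not\subseteq H$ (as $G^{00}$ is the smallest type-definable subgroup of bounded index); combined with $G^{\textrm{st}} \subseteq H$, this gives $G^{\textrm{st}} \neq G^{00}$.

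For $(\Rightarrow)$, assume $G^{\textrm{st},0} \neq G^{\textrm{st}}$ and $G^{\textrm{st}} \neq G^{00}$. First observe that $G^{\textrm{st}} \subseteq G^{00}$: indeed $G/G^{00}$ is bounded, so stable by Corollary \ref{2.8} (its type spaces have bounded cardinality), and minimality of $G^{\textrm{st}}$ gives the inclusion. Fix witnesses $g_1 \in G^{\textrm{st},0} \setminus G^{\textrm{st}}$ and $g_2 \in G^{00} \setminus G^{\textrm{st}}$. The plan is to build, by a standard compactness recursion, a countably type-definable subgroup $H = \bigcap_{n < \omega} D_n$ containing $G^{\textrm{st}}$ yet missing both $g_1$ and $g_2$. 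For $D_0$, using the partial type defining $G^{\textrm{st}}$ and the fact that it excludes $g_1, g_2, g_1^{-1}, g_2^{-1}$, compactness yields a definable symmetric set containing $G^{\textrm{st}}$ and missing $g_1, g_2$. Given $D_n$, the product map sends $G^{\textrm{st}} \times G^{\textrm{st}}$ into $G^{\textrm{st}} \subseteq D_n$, so compactness yields a definable symmetric $D_{n+1}$ containing $G^{\textrm{st}}$ with $D_{n+1} D_{n+1} \subseteq D_n$. Symmetry and the shrinking-product condition make $H := \bigcap_n D_n$ a type-definable subgroup containing $G^{\textrm{st}}$, which establishes (1).

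It remains to verify (2)--(4). Property (4) holds because $G/H$ is a quotient of the stable hyperdefinable group $G/G^{\textrm{st}}$: any indiscernible in $G/H$ lifts to one in $G$ whose image in $G/G^{\textrm{st}}$ is totally indiscernible by Theorem \ref{equivteor}, hence the original is totally indiscernible in $G/H$, giving stability. Property (2) is immediate since $g_2 \in G^{00} \setminus H$ shows $H \not\supseteq G^{00}$, forcing unbounded index. The heart of the argument is (3): if $H$ were an intersection of definable subgroups $K_i$, each $K_i$ would contain $H \supseteq G^{\textrm{st}}$, hence $G/K_i$ would be a stable quotient of $G/G^{\textrm{st}}$, placing each $K_i$ among the subgroups whose intersection defines $G^{\textrm{st},0}$; this would force $H \supseteq G^{\textrm{st},0} \ni g_1$, contradicting $g_1 \notin H$. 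The main technical point is thus the coordinated choice of the single starting set $D_0$ excluding both $g_1$ and $g_2$: this one compactness step suffices because all subsequent $D_{n+1}$ shrink inside $D_n$, so exclusion of $g_1, g_2$ propagates automatically, and properties (2) and (3) are simultaneously secured.
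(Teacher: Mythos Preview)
Your proof is correct and follows essentially the same approach as the paper. The only organizational difference is in the forward direction: the paper argues that $G^{\textrm{st}}=\dcap\{H:H\text{ satisfies (1),(2),(4)}\}$ and then observes that at least one such $H$ must satisfy (3) (else $G^{\textrm{st},0}\subseteq G^{\textrm{st}}$), whereas you pick witnesses $g_1\in G^{\textrm{st},0}\setminus G^{\textrm{st}}$ and $g_2\in G^{00}\setminus G^{\textrm{st}}$ up front and build a single $H$ excluding both. These are the same compactness construction, just packaged differently; your version is a direct construction, the paper's is a pigeonhole over the whole family. Your verification of (4) via the totally-indiscernible criterion from Theorem~\ref{equivteor} is fine in the ambient NIP setting, though the more immediate route (which the paper leaves implicit) is simply that $G/H$ is a quotient of the stable $G/G^{\textrm{st}}$.
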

\begin{proof}
	If $G^{\textrm{st}}\neq G^{00}$, then, by compactness, $G^{\textrm{st}}=\dcap \{H: H \text{ satisfies (1), (2), (4)}\}.$ Hence, assuming additionally that $G^{\textrm{st},0}\neq G^{\textrm{st}}$, at least one of those groups $H$ has to also satisfy condition $(3)$.
	
	Assume now that $H\leq G$ satisfies conditions $(1)$, $(2)$, $(3)$, and $(4)$. Since $G^{\textrm{st}} \leq H$, we get $G^{\textrm{st}}\neq G^{00}$. The fact that $G^{\textrm{st},0}\neq G^{\textrm{st}}$ follows from Proposition \ref{proposition: based on Hrushovski}. 
\end{proof}

%%%Krzys2: I added the next sentence.
Note that assuming (1), the negation of (3) is equivalent to saying that for every $n<\omega$ there is $m>n$ and a definable subgroup $K$ of $G$ such that $D_m \subseteq H \subseteq D_n$.

\begin{remark}
	If we have a situation as in the last proposition, then the same holds for $G$ treated as a group definable in $(G,\cdot, (D_n)_{n<\omega})$.
\end{remark}

So an idea is to look for a group $G$ and a decreasing sequence $(D_n)_{n<\omega}$ of symmetric subsets of $G$ with $D_{n+1}D_{n+1} \subseteq D_n$ for all $i<\omega$, such that for $M:=(G,\cdot, (D_n)_{n<\omega}))$ and $G^*:= G(\C)$ (where $\C =(G^*,+, (D_n^*)_{n<\omega}) \succ M$ is a monster model), the group $H:=\dcap_{n < \omega} D_n^*$ satisfies (1)-(4) from the last proposition (with $*$ added everywhere). In the example from Section \ref{section: main example}, $G := (\R,+)$ and as $D_n$ we can take $[-1/2^n,1/2^n]$. Then the $D_n$'s are definable in $(\R,+,[0,1])$,
%%%Adrian: The proposition cited here has been hidden. I did not explain why the sets $D_n$ are definable. should I do it?: 
%as explained in the proof of Proposition \ref{proposition: T has NIP}
%%%Krzys revised: This is fine as it is.
hence $M$ is interdefinable with $(\R,+,[0,1])$, and so we focused on the latter structure. In the proof of stability of $G^*/\mu$ (see Proposition \ref{proposition: stable quotient}), for the counting argument to work it was important that $D_{n+1}$ is generic in $D_{n}$ (i.e. finitely many translates of $D_{n+1}$ cover $D_n$), 
%%%Krzys revised: I changed $D_0^*$ to $D_1^*$. Both generated groups are the same in this example, but in general we get bounded index in the group generated by $D_1^*$ (as D_1^* is an approximate subgroups, while $D_0^*$ need not be; see the next comment).
as this guarantees that $\mu = \bigcap D_n^*$ has bounded index in the subgroup generated by $D_1^*$.  The next proposition shows that for abelian groups of finite exponent this genericity condition always fails.

%It is also impossible to construct an example of finite exponent the same way we did in characteristic 0 as the following lemma shows.

\begin{prop}
	If $G$ is abelian of finite exponent, then there is no sequence $(D_n)_{n<\omega}$ of definable sets such that: \begin{enumerate}
		\item $D_n$ is symmetric and  $D_{n+1}+D_{n+1}\subseteq D_n$ for all $n<\omega$;
		%		\item $D_{n+1}+D_{n+1}\subseteq D_n$ for all $n<\omega$;
		\item $D_{n+1}$ is generic in $D_n$ for all $n<\omega$;
		\item $\dcap_{n<\omega} D_n$ is not an intersection of definable groups. 
	\end{enumerate}
\end{prop}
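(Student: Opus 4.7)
The plan is to assume such a sequence $(D_n)_{n<\omega}$ exists in an abelian group of finite exponent $e$ and derive a contradiction with~(3) by showing that $H := \dcap_n D_n$ must be an intersection of definable subgroups of $G$. From~(1), $H$ is automatically a subgroup: it contains $0$, is symmetric, and if $h,h' \in H$ then $h + h' \in D_{n+1}+D_{n+1} \subseteq D_n$ for every $n$.

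The decisive step will be to prove that $K := \langle D_1\rangle$ is itself a \emph{definable} subgroup of $G$. I would apply the genericity of $D_2$ in $D_1$ to write $D_1 \subseteq D_2 + F$ for some finite $F\subseteq G$, and after discarding translates missing $D_1$, I may assume each $a \in F$ lies in $D_1 - D_2 \subseteq D_1+D_1 \subseteq K$. Here is where the finite exponent hypothesis becomes essential: since $G$ is abelian of exponent $e$, the subgroup $\langle F\rangle$ is finite (of order at most $e^{|F|}$). Combining $D_1 \subseteq D_2 + F$ with the chain condition $D_2+D_2 \subseteq D_1$ one gets $2D_1 \subseteq 2D_2 + 2F \subseteq D_1 + \langle F\rangle$, and a straightforward induction on $k$ then yields $kD_1 \subseteq D_1 + \langle F\rangle$ for all $k \ge 1$. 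Hence $K = D_1 + \langle F\rangle$ is a finite union of translates of the definable set $D_1$, and so is a definable subgroup of $G$. I expect this to be the main obstacle of the proof: without finite exponent, $\langle F\rangle$ can be infinite and $K$ need not be definable, as in the example of Section~\ref{section: main example}.

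Next I will show $[K:H]$ is bounded. Iterating the genericity gives finite sets $F_n$ with $D_1 \subseteq D_n + F_n$ for every $n$; choosing, for each $g\in D_1$, some $a_n(g) \in F_n$ with $g - a_n(g) \in D_n$ yields a map $\phi\colon D_1 \to \prod_n F_n$. If $\phi(g)=\phi(g')$, then $g-g' \in D_n + D_n \subseteq D_{n-1}$ for every $n\ge 1$, so $g - g' \in H$. Hence the image of $D_1$ in $K/H$ has cardinality at most $2^{\aleph_0}$, and since $K = D_1 + \langle F\rangle$ from the previous step, $|K/H| \le 2^{\aleph_0}$, which is bounded in the monster model.

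To conclude, I will invoke the observation recalled earlier in the section that $K^{00}=K^0$ for any definable group of finite exponent in a monster model (under NIP this is parameter-independent). Since $[K:H]$ is bounded, $K^{00}\subseteq H$. The quotient $K/K^{00}$ is a compact abelian torsion group, hence profinite, so every closed subgroup of $K/K^{00}$ — in particular $H/K^{00}$ — is the intersection of its clopen overgroups. Each such clopen overgroup has the form $V/K^{00}$ for a finite-index $V\le K$ which is both type-definable and co-type-definable in the definable group $K$, hence definable in $G$ and a subgroup of $G$. This presents $H$ as an intersection of definable subgroups of $G$, contradicting~(3).
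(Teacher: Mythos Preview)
Your proof is correct and takes a genuinely different route from the paper's. The paper never shows that $\langle D_1\rangle$ is definable; instead it works with the $\bigvee$-definable group $\langle D_0\rangle$, forms the \emph{locally compact} quotient $\langle D_0\rangle/\bigcap_k D_k$ with the logic topology, invokes the structure theory of locally compact torsion groups to get a basis of clopen subgroups at the identity, and then pulls these back and uses compactness to see they are definable. Your key extra observation is that finite exponent forces $\langle F\rangle$ to be finite, so $K=\langle D_1\rangle=D_1+\langle F\rangle$ is outright definable; this lets you pass directly to the \emph{compact} quotient $K/K^{00}_A$, which is profinite, and finish with the standard fact that closed subgroups of profinite groups are intersections of open ones. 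Both arguments ultimately rest on the same torsion $\Rightarrow$ (pro)finite phenomenon, but yours trades the locally compact/$\bigvee$-definable machinery for a concrete approximate-subgroup computation and the familiar $K^{00}$ formalism. One small point: you only need $K^{00}_A$ over the parameters of the $D_n$'s, so NIP is not actually required for your argument (though it is ambient in the section).
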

\begin{proof}
	%%%Krzys2: I moved $(D_n)_{n<\omega}$ to an earlier position.
	%%%Krzys revised: I added the sentence that shifting $D_n$, we can assume that $D_0$ is an approximate subgroup. This is needed to have that $\dcap_{k<\omega}D_k\leq \langle D_0\rangle$ is of bounded index.
	Assume that there is such a sequence  $(D_n)_{n<\omega}$ of definable sets. Replacing $D_n$ by $D_{n+1}$ if necessary, we can assume that $D_0$ is an approximate subgroup (i.e. finitely many translates of $D_0$ cover $D_0+D_0$), because $D_1$ is an approximate subgroup by (1) and (2).
	%%%Krzys2: I added ``, by (1) and (2),''.
	%%%Krzys revised: I added that we use the fact that $D_0$ is an appr. subgroup.
	We denote $D_0^{+n}:=D_0+\overset{n}{\dots}+D_0$. Then, $\langle D_0\rangle =\acup\limits_{n<\omega}D_0^{+n}$ is a $\bigvee$-definable group and, by (1), (2), and the assumption that $D_0$ is an approximate subgroup, we see that $\dcap_{k<\omega}D_k\leq \langle D_0\rangle$ is a type-definable subgroup of bounded index. Hence, $$H:=\bigslant{\langle D_0\rangle}{\dcap_{k<\omega}D_k}$$ is a locally compact group with the logic topology (in which closed sets are defined as those whose preimages under the quotient map have type-definable intersections with all sets $D_0^{+n}$, $n<\omega$; see \cite[Lemma 7.5]{MR2373360}). 
	%Since $H$ is a torsion group, it follows that $H$ is totally disconnected (see \cite[Theorem 3.5]{Armacost????}). By van Dantzig's theorem, the group $H$ has a basis of neighbourhoods of the identity consisting of clopen subgroups. 
	Since $H$ is a torsion group, it follows from \cite[Theorem 3.5]{MR637201} that $H$ has a basis  $(H_i)_{i\in I}$ of neighbourhoods of the identity consisting of clopen subgroups. 
	%Say $(H_i)_{i\in I}$ is such a basis. 
	Since each $\bigslant{D_n}{\dcap_{k<\omega}D_k}$ is a neighborhood of the identity, there is $H_n\subseteq \bigslant{D_n}{\dcap_{k<\omega}D_k}$ which is a clopen subgroup of $H$. Let $\pi: \langle D_0\rangle \to H$ be the quotient map. Then, $$\pi^{-1}[H_n]\subseteq D_n+\dcap_{k<\omega}D_k\subseteq D_n+D_n\subseteq D_{n-1}$$ is a type-definable group. Since $\pi^{-1}[H_n]^c\cap D_{n-1}$ is also type-definable, we deduce that $\pi^{-1}[H_n]$ is a definable group laying between $\dcap_{k<\omega} D_k$ and $D_{n-1}$. 
	%Hence, there exists some $k\geq n-1$ such that $D_k \subseteq \pi^{-1}[ H_n]\subseteq D_{n-1}$. 
	Since this is true for any $n>0$, we get a contradiction with $(3)$.
\end{proof}

%%%Krzys2: Instead of ``Our last result gives`` I wrote ``The following corollary yields''.
The following corollary yields some hints on how an example of finite exponent could be constructed.

\begin{cor}
	If $G$ is abelian of finite exponent, then the condition $G^{\textrm{st},0}\neq G^{\textrm{st}}\neq G^{00}$ is equivalent to the existence of a sequence $(D_n)_{n<\omega}$ of definable sets such that:
	\begin{enumerate}
		\item $D_n$ is symmetric and  $D_{n+1}+D_{n+1}\subseteq D_n$, for all $n<\omega$;
		\item $D_{n+1}$ is not generic in $D_n$ for all $n<\omega$;
		\item $\dcap_{n<\omega} D_n$ is not an intersection of definable groups;
		\item $[G: \dcap_{n<\omega} D_n]$ is unbounded;
		\item $\bigslant{G}{\dcap_{n<\omega} D_n}$ is stable.
	\end{enumerate}
\end{cor}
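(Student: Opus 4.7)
The plan is to derive the corollary by combining Proposition \ref{4.1} with the preceding proposition (the one ruling out sequences with consecutive genericity in abelian groups of finite exponent). The reverse direction is essentially immediate: if a sequence $(D_n)_{n<\omega}$ satisfies conditions (1), (3), (4), (5) of the corollary, then $H := \dcap_{n<\omega} D_n$ satisfies exactly conditions (1)--(4) of Proposition \ref{4.1} (the non-genericity condition is not needed here), so $G^{\textrm{st},0}\neq G^{\textrm{st}}\neq G^{00}$ follows directly.

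For the forward direction, assume $G^{\textrm{st},0}\neq G^{\textrm{st}}\neq G^{00}$. Proposition \ref{4.1} supplies a sequence $(D_n)_{n<\omega}$ satisfying conditions (1), (3), (4), (5) of the corollary. The strategy is to thin it out to a subsequence $(D_{n_k})_{k<\omega}$ so that $D_{n_{k+1}}$ is not generic in $D_{n_k}$ for every $k$. Since $e \in D_{n+1}$ gives $D_{n+1} \subseteq D_{n+1}+D_{n+1} \subseteq D_n$, the original sequence is decreasing, so passing to any strictly increasing index sequence preserves the intersection $H$ and hence conditions (3), (4), (5); moreover, if $n_{k+1} > n_k$, then $D_{n_{k+1}} + D_{n_{k+1}} \subseteq D_{n_{k+1}-1} \subseteq D_{n_k}$, so condition (1) is preserved as well.

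The subsequence is built inductively by setting $n_0 := 0$ and, given $n_k$, choosing any $n_{k+1} > n_k$ for which $D_{n_{k+1}}$ is not generic in $D_{n_k}$. The main step is to argue that such an $n_{k+1}$ exists. Suppose for contradiction that $D_m$ is generic in $D_{n_k}$ for every $m > n_k$, i.e.\ for each such $m$ there are finitely many elements $g_1,\dots,g_l\in G$ with $D_{n_k} \subseteq g_1 D_m \cup \cdots \cup g_l D_m$. Then for every $j$ with $n_k \leq j < m$ we have $D_j \subseteq D_{n_k} \subseteq g_1 D_m \cup \cdots \cup g_l D_m$, so $D_m$ is also generic in $D_j$. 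Specializing to $j = n_k+i$ and $m = n_k+i+1$, the tail sequence $E_i := D_{n_k+i}$ satisfies the chain-and-symmetry condition, consecutive genericity, and $\dcap_i E_i = H$ is not an intersection of definable groups. This contradicts the preceding proposition, since $G$ is abelian of finite exponent.

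The obstacle I anticipated was producing non-genericity at \emph{every} consecutive level rather than merely at one level; the elementary observation that genericity of $D_m$ in $D_{n_k}$ automatically transfers to genericity in every intermediate $D_j$ (because $D_j \subseteq D_{n_k}$) is what allows a single failure of the inductive step to be amplified into the forbidden consecutive-genericity configuration ruled out by the preceding proposition, closing the argument.
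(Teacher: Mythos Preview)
Your proof is correct and follows the same approach as the paper. The paper's argument is simply the two-line observation that Proposition~\ref{4.1} gives the equivalence with conditions (1), (3), (4), (5), and that the preceding proposition forces any such sequence to admit an infinite subsequence satisfying (2); you have filled in exactly the details of this subsequence extraction that the paper leaves implicit.
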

\begin{proof}
	From Proposition \ref{4.1}, we obtain that the condition $G^{\textrm{st},0}\neq G^{\textrm{st}}\neq G^{00}$ is equivalent to the existence of a sequence  $(D_n)_{n<\omega}$ satisfying  $(1)$, $(3)$, $(4)$, and $(5)$. Furthermore, by the previous proposition, such a sequence $(D_n)_{n<\omega}$ must contain an (infinite) subsequence satisfying $(2)$. 
\end{proof}

%%%Krzys2: I added the next remark.

\begin{remark}
	This section could be naturally generalized to the context of a type-definable group $G$. This would require checking a few things, mainly that Hrushovski's theorem (i.e. \cite[Ch. 1, Lemma 6.18]{pillay1996geometric}) is valid for a stable type-definable group (not necessarily living in a stable theory). We leave it to the reader.
\end{remark}

\chapter{Maximal stable quotients of invariant types in NIP theories} \label{Chapter 4}

We present the general framework of this chapter.	
	%%%Krzys: I added ``in a language $L$'', as we use it later.
%%%Krzys rev. You missed some things below, e.g. you did not write that $\kappa'$ is a strong limit cardinal. I made a few corrections.
	Let $T$ be a complete first-order theory %change 2.1c %%%Krzys rev: I removed "and strongly $\kappa$-homogeneous" but I left that it is a strong limit cardinal, as we use it in the proof of the claim in the main theorem.
	of infinite models in a language $L$. Let $\C\prec\C'$ be models of  $T$ such that $\C$ is $\kappa$-saturated 
%and strongly $\kappa$-homogeneous 
with a strong limit cardinal $\kappa>\lvert T\rvert$,  and $\C'$ is $\kappa'$-saturated and strongly $\kappa'$-homogeneous with a strong limit cardinal $\kappa'>\lvert \C\rvert$. We say that $\kappa$ is the {\em degree of saturation of $\C$} and $\kappa'$ is the {\em degree of saturation of $\C'$}. We say that a set is {\em $\C$-small} if its cardinality is smaller than $\kappa$ and {\em $\C'$-small} if its cardinality is smaller than $\kappa'$.
	%%%Krzys: Added next sentence.	
	%Abusing notation, the set of all formulas in $L$ will also be denoted by $L$; in particular, $|L|$ will stand for the cardinality of the set of all formulas.
	Note that $|T|$ is the cardinality of the set of all formulas in $L$.
	%%%Krzys: Minor changes, and added the sentence ``Whenever ...'', as this notation is used many times.
	Unless stated otherwise, $p(x)$ will always be a type in $S_x(\C)$ invariant over some $\C$-small $A\subseteq \C$, where $x$ is a $\C$-small tuple of variables. (In fact, instead of assuming that $\kappa$ is a strong limit cardinal, in Section \ref{section: basics} it is enough to assume that $\kappa >2^{|T|+|A|}$ and in Section \ref{section: main theorem} that $\kappa \geq \beth_{(2^{2^{|T| + |A|} +\lvert x \rvert})^+}$.) Whenever $B \subseteq \C'$, by $p \!\upharpoonright_B$ we mean the restriction to $B$ of the unique extension of $p$ to an $A$-invariant type in $S(\C')$. If $E$ is a type-definable equivalence relation and $a$ is an element of its domain, $[a]_E$ denotes the $E$-class of $a$.

\section{Basic results and transfers between models}\label{section: basics}

%%%Krzys: ``a useful'', and I added the sentence about the transfer. I referred to particular results in this section. The last sentence slightly changed.	
%%%Krzys rev: I reordered the next sentence.
The goal of this section is to present a useful criterion that allows us to check whether a relatively type-definable over a $\C$-small $B\subseteq \C$ equivalence  relation $E$  on $p(\C')$ with stable quotient is, in fact, the finest one (see Lemma \ref{equivalence for being the finest}). As a corollary, we get the transfer to elementary extensions of $\C$ of the property of being the finest relatively type-definable equivalence relation on $p(\C')$ (see Corollary \ref{corollary: from C to C_1}). 

We present a definition that we use throughout the whole section. This definition first appeared in \cite[Definition 3.2]{doi:10.1142/S0219061319500120}.
%%%Krzys:  \overline{b}\subseteq B in place of  \overline{b}\in B. And I removed \overline.  Also, ``extension'' was missing.
\begin{defin}\label{strong heirs}
	Let $A\subseteq \mathcal{M}\subseteq B$ and $q(x)\in S(B)$.
	We say that $q(x)$ is a \emph{strong heir extension over A} of $q\!\upharpoonright_\mathcal{M}(x)$ if   for all finite $m\subseteq \mathcal{M}$ 
	$$(\forall \varphi(x,y)\in L)(\forall b\subseteq B)[ \varphi(x,b)\in q(x) \implies (\exists b'\subseteq \mathcal{M})(\varphi(x,b')\in q(x)\wedge b\underset{Am}{\equiv}b') ].$$
\end{defin}

%%%Krzys: I added the next sentence and the word ``Conversely,'', and changed the degrees of saturation.
Note that if $q \in S(\C)$ is a strong heir extension over $A$ of $q\!\upharpoonright_\mathcal{M}(x)$, then $\mathcal{M}$ is an $\aleph_0$-saturated model in the language $L_A$ (i.e., $L$ expanded by constants from $A$). The converse is also true (see \cite[Lemma 3.3]{doi:10.1142/S0219061319500120} for a proof):
\begin{fact}\label{strong heir exists}
    If $\mathcal{M}$ is an $\aleph_0$-saturated model in $L_A$ and $q(x)\in S(\mathcal{M})$, there always exists $q'(x)\in S(B)$ which is a strong heir over $A$ of $q$  
\end{fact}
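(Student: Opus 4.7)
The plan is to produce $q'$ by compactness, as any completion of a carefully chosen partial type over $B$. Define
$$\Sigma(x) := q(x) \cup \{\neg \varphi(x,b) : \varphi(x,y) \in L,\ b \subseteq B,\ \text{and some finite } m \subseteq \mathcal{M} \text{ witnesses that no } b' \subseteq \mathcal{M} \text{ satisfies } \varphi(x,b') \in q \text{ and } b \equiv_{Am} b'\}.$$
Any completion $q' \in S(B)$ of $\Sigma$ is automatically a strong heir of $q$ over $A$: if $\varphi(x,b) \in q'$ then $\neg\varphi(x,b) \notin \Sigma$, so for every finite $m \subseteq \mathcal{M}$ there is some $b' \subseteq \mathcal{M}$ with $\varphi(x,b') \in q \subseteq q'$ and $b \equiv_{Am} b'$, which is exactly Definition~\ref{strong heirs}. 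So the whole proof reduces to showing that $\Sigma(x)$ is consistent.

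Consistency will be shown by contradiction using compactness together with the $\aleph_0$-saturation of $\mathcal{M}$ in $L_A$. Suppose $q(x) \cup \{\neg \varphi_i(x,b_i) : i \leq k\}$ is inconsistent for some finitely many formulas in the second half of $\Sigma$. By compactness, pick $\psi(x,\bar m) \in q$ with $\bar m \subseteq \mathcal{M}$ such that
$$\psi(x,\bar m) \vdash \varphi_1(x,b_1) \vee \cdots \vee \varphi_k(x,b_k).$$
For each $i$, pick a finite $m_i \subseteq \mathcal{M}$ witnessing that $\neg \varphi_i(x,b_i)$ belongs to the second half of $\Sigma$, and let $\bar m' := \bar m \cup m_1 \cup \cdots \cup m_k$. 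By $\aleph_0$-saturation of $\mathcal{M}$ in $L_A$, realize the type $\tp(b_1,\dots,b_k / A\bar m')$ by some tuple $(b_1',\dots,b_k') \subseteq \mathcal{M}$.

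Since $(b_1',\dots,b_k') \equiv_{A\bar m} (b_1,\dots,b_k)$ and the $L_A$-sentence above has parameters from $A\bar m b_1 \cdots b_k$, it transfers to yield $\psi(x,\bar m) \vdash \bigvee_{i \leq k} \varphi_i(x,b_i')$. As $\psi(x,\bar m) \in q$ and $q$ is a complete type over $\mathcal{M}$, some $\varphi_{i_0}(x,b_{i_0}') \in q$. But $b_{i_0}' \equiv_{A m_{i_0}} b_{i_0}$ (because $m_{i_0} \subseteq \bar m'$), which directly contradicts the choice of $m_{i_0}$ as a witness for $\neg \varphi_{i_0}(x,b_{i_0})$. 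Hence $\Sigma$ is consistent, and any completion to a type in $S(B)$ works.

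The non-routine step is the saturation move: one needs $\mathcal{M}$ to realize the \emph{joint} type of $(b_1,\dots,b_k)$ over the combined parameter set $A\bar m'$, not merely the individual types $\tp(b_i/Am_i)$. This is precisely where $\aleph_0$-saturation in $L_A$ is used, and it is what makes the transfer of the compactness-produced entailment from $(b_i)$ to $(b_i')$ legitimate. Everything else is bookkeeping with the definition of strong heir.
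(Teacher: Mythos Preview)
Your proof is correct. The paper does not give its own argument for this fact but simply cites \cite[Lemma 3.3]{doi:10.1142/S0219061319500120}; your compactness argument is the standard one and is presumably essentially what appears there.
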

%This can be deduced from \cite[Lemma 3.3]{doi:10.1142/S0219061319500120}.

%%%Krzys: With the above added comment in mind, I shortened the statement of the next lemma, and I also shortened the proof.	
\begin{lema}\label{strong heir invariance}
	Assume that $q(x)\in S(\mathcal{M})$ is $A$-invariant (for some $A\subseteq \mathcal{M}$) and $q'(x)\in S(\mathfrak{C})$ is a strong heir extension over $A$ of $q(x)$. 
	%Then, $q'(x)$ is also $A$-invariant. Moreover, if $\mathcal{M}$ is $(\lvert A \rvert +\aleph_0)^+$-saturated, then $q'(x)$ is the unique global $A$-invariant extension of $q(x)$. 
	Then $q'(x)$ is the unique global $A$-invariant extension of $q(x)$.
\end{lema}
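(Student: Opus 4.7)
The plan is to prove the statement in two stages: first show that $q'$ itself is $A$-invariant, and then show that any other $A$-invariant global extension of $q$ must coincide with $q'$. Both stages will be short arguments that combine the strong-heir property (applied with $m = \emptyset$) with the $A$-invariance of $q$.

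For $A$-invariance of $q'$, I would fix $\sigma \in \aut(\mathfrak{C}/A)$ and a formula $\varphi(x,b) \in q'$, and aim to show $\varphi(x, \sigma(b)) \in q'$. Toward a contradiction, assume $\neg \varphi(x, \sigma(b)) \in q'$. Applying the strong-heir property to $\varphi(x,b) \in q'$ produces a tuple $b' \subseteq \mathcal{M}$ with $b \equiv_A b'$ and $\varphi(x, b') \in q' \!\upharpoonright_{\mathcal{M}} = q$; applying it to $\neg\varphi(x, \sigma(b)) \in q'$ produces $b'' \subseteq \mathcal{M}$ with $\sigma(b) \equiv_A b''$ and $\neg \varphi(x, b'') \in q$. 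Since $b' \equiv_A b \equiv_A \sigma(b) \equiv_A b''$, the $A$-invariance of $q$ forces $\varphi(x, b') \in q \iff \varphi(x, b'') \in q$, which contradicts what we just obtained.

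For uniqueness, let $q'' \in S(\mathfrak{C})$ be any $A$-invariant global extension of $q$. Given $\varphi(x,b) \in q'$, the strong-heir property supplies $b' \subseteq \mathcal{M}$ with $b \equiv_A b'$ and $\varphi(x, b') \in q$. Since $q''$ extends $q$, $\varphi(x, b') \in q''$, and $A$-invariance of $q''$ then gives $\varphi(x, b) \in q''$. Thus $q' \subseteq q''$, and completeness of both types forces $q' = q''$.

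There is no substantial obstacle in this plan; the essential observation is that the strong-heir condition is tailor-made for replacing an arbitrary parameter $b \subseteq \mathfrak{C}$ by an $A$-equivalent one from $\mathcal{M}$ without changing membership of the corresponding formula in the type, after which the $A$-invariance of the starting type $q$ does all the remaining work. The only minor point to keep in mind is that in applying the strong-heir property we need to use $m = \emptyset$ (which is allowed, as $m$ ranges over all finite subsets of $\mathcal{M}$), so that the resulting $b'$ is $A$-equivalent and not merely $Am$-equivalent to $b$.
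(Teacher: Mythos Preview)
Your proof is correct and follows essentially the same approach as the paper. The invariance argument is identical in substance; for uniqueness, the paper is terser (it simply cites that $\mathcal{M}$ is $\aleph_0$-saturated in $L_A$, which is implied by the existence of a strong heir extension), while you give the explicit argument---but the underlying mechanism is the same: replace an arbitrary parameter tuple by an $A$-equivalent one in $\mathcal{M}$ and use $A$-invariance.
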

\begin{proof}
	%		Let $\sigma\in\auto(\C/A)$ and $\varphi(x,a)\in q'(x)$. Assume that $\neg \varphi(x,\sigma(a))\in q'(x)$. Then, for all $\overline{m}\subseteq \mathcal{M}$ finite, there exists $a'\in \mathcal{M}$ such that $a'\underset{A\overline{m}}{\equiv}\sigma(a)$ and $\neg \varphi(x,a')\in q(x)$. However, we also know that for all $\overline{m}\subseteq \mathcal{M}$ finite, there is $a''\in \mathcal{M}$ such that $a''\underset{A\overline{m}}{\equiv}a$ and $\varphi(x,a'')\in q(x)$. Taking $\overline{m}$ as the empty tuple, we contradict the $A$ invariance of $q(x)$.
	
	%		Uniqueness follows from the fact that if $\mathcal{M}$ is $(\lvert A \rvert +\aleph_0)^+$-saturated, then there is an unique global $A$-invariant extension of $q(x)$.
	%%%Krzys rev: Since we removed the assumption that $\C$ is strongly $\kappa$-homog., I removed any automorphisms from this proof.
	%To show $A$-invariance, suppose for a contradiction that for some $\sigma\in\auto(\C/A)$ and $\varphi(x,a)\in q'(x)$ we have $\neg \varphi(x,\sigma(a))\in q'(x)$. Then, there exist $a',a''\in \mathcal{M}$ such that $a'\underset{A}{\equiv} a$ and $a'' \underset{A}{\equiv} \sigma(a)$ for which $\varphi(x,a') \in q(x)$ while $\neg \varphi(x,a'')\in q(x)$. Then $a' \underset{A}{\equiv} a''$, which contradicts the $A$-invariance of $q(x)$.
	To show $A$-invariance, suppose for a contradiction that there are $a,b$ and $\varphi(x,a)\in q'(x)$ with $a \underset{A}\equiv b$ and $\neg \varphi(x,b)\in q'(x)$. Then, there exist $a',b'\in \mathcal{M}$ such that $a'\underset{A}{\equiv} a$ and $b' \underset{A}{\equiv} b$ for which $\varphi(x,a') \in q(x)$ while $\neg \varphi(x,b')\in q(x)$. Then $a' \underset{A}{\equiv} b'$, which contradicts the $A$-invariance of $q(x)$.
	
	Uniqueness follows from the fact that $\mathcal{M}$ is $\aleph_0$-saturated in $L_A$.
	%%%Adrian: If we want the definition of invariance with automorphisms we need also strong homogeneity. Here I am thinking about the definition of invariance using only having the same type over $A$.
\end{proof}

%change 2.4 added relatively to some of the definitions, this will apply through the whole paper, when parameters are not specified ww just say type-definable
%%%Krzys: ``is an equivalence relations'' instead of ``is the set of realizations of an equivalence relation''.
Given a partial type (possibly over a non-small set of parameters from $\C$) $\pi(x,y)$, we say that $\pi(x,y)$ \emph{relatively defines an equivalence relation} on a type-definable (over an arbitrary set of parameters from $\C$) set $X$ if $\pi(\C',\C')\cap X(\C')^2$ is an equivalence relation. 
%%%Krzys: ``a partial type'' instead of ``the partial type'', as it is not unique even up to equivalence. And I replace ``the set of realizations of'' by equality.
Given a type-definable equivalence relation $E$ on a type-definable set $X$, a \emph{partial type relatively defining} $E$ is any partial type $\pi(x,y)$ such that $\pi(\C',\C')\cap X(\C')^2 = E$. 
%%%Krzys: I replaced ``countable'' by ``countably defined'', as countable equivalence relation in model theory suggests countably many classes. 
%%%Krzys rev: I added the second part of the next sentence (as suggested by the referee in 2.4(b)), and slightly extended the first part.
We say that a type-definable equivalence relation $E$ on a type-definable set $X$ is \emph{countably relatively defined} if some partial type $\pi(x,y)$ relatively defining it consists of countably many formulas, and we say that $E$ is {\em relatively type-definable over $B$} (or {\em $B$-relatively type-definable}) if it is relatively defined by a partial type over $B$.

%%%Krzys: ``gives us a useful'' instead of ``gives an useful''.
%%%Krzys final: ``an equivalence'' in place of ``the equivalence''.
%%%Krzys rev: I removed ``$\C$-small'' as you removed it from the statement of the lemma.
Lemma \ref{reduction to a small model} gives us a useful stability criterion when an equivalence relation on $p(\C')$ is relatively type-definable over a sufficiently saturated model.

\begin{lema}\label{reduction to a small model}
	Let $\mathcal{M}\prec \mathfrak{C}$ be $\aleph_0$-saturated in $L_A$, and $\pi(x,y)$ a partial type over $\mathcal{M}$ relatively defining an equivalence relation on $p\!\upharpoonright_{\mathcal{M}}(\mathfrak{C}')$. Then, $\pi$ relatively defines an equivalence relation on $p(\mathfrak{C}')$ with stable quotient if and only if it relatively defines an equivalence relation on $p\!\upharpoonright_{\mathcal{M}}(\mathfrak{C}')$ with stable quotient.
\end{lema}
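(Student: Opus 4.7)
My plan is to follow the strategy of the commented-out proof of Lemma 2.5, now adapted to $\pi(x,y)$ over a general $\aleph_0$-saturated (in $L_A$) model $\mathcal{M}$ instead of a single enumeration $a_0$. Throughout, set $E := \pi(\C',\C') \cap p(\C')^2$ and $E' := \pi(\C',\C') \cap p\!\upharpoonright_\mathcal{M}(\C')^2$; since $p(\C') \subseteq p\!\upharpoonright_\mathcal{M}(\C')$, the relation $E$ is simply the restriction of $E'$ to $p(\C')$. The key preliminary observation is that for $c, c' \in p(\C')$ and arbitrary tuples $b, b' \subseteq \C'$, we have $\tp([c]_E, b/\C) = \tp([c']_E, b'/\C)$ if and only if $\tp([c]_{E'}, b/\C) = \tp([c']_{E'}, b'/\C)$. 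Indeed, any $\sigma \in \aut(\C'/\C)$ permutes $p(\C')$ (because $p \in S(\C)$), so the condition ``$\sigma(c) E' c'$'' for such $\sigma$ coincides with ``$\sigma(c) E c'$''.

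The direction $(\Leftarrow)$ is immediate from this observation: if unstability of $p(\C')/E$ is witnessed by a $\C$-indiscernible sequence $(c_i, b_i)_{i<\omega}$ with $c_i \in p(\C')$ and $\tp([c_i]_E, b_j/\C) \ne \tp([c_j]_E, b_i/\C)$ for $i \ne j$, then the very same sequence witnesses unstability of $p\!\upharpoonright_\mathcal{M}(\C')/E'$, since $c_i \in p\!\upharpoonright_\mathcal{M}(\C')$ and the corresponding $E'$-types over $\C$ also differ.

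For $(\Rightarrow)$ the plan is the standard strong-heir transfer. Assume $p\!\upharpoonright_\mathcal{M}(\C')/E'$ is unstable, witnessed by an $\mathcal{M}$-indiscernible sequence $(c_i, b_i)_{i<\omega}$ with $c_i \in p\!\upharpoonright_\mathcal{M}(\C')$ and $\tp([c_i]_{E'}, b_j/\mathcal{M}) \ne \tp([c_j]_{E'}, b_i/\mathcal{M})$ for $i \ne j$. Let $q := \tp((c_i, b_i)_{i<\omega}/\mathcal{M})$; by Fact \ref{strong heir exists} (using the $\aleph_0$-saturation of $\mathcal{M}$ in $L_A$) extend $q$ to a global strong heir $q'$ over $A$, and realize $q'$ by $(c_i', b_i')_{i<\omega}$. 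I will then verify three things: first, that $(c_i', b_i')_{i<\omega}$ is $\C$-indiscernible — any failure would be a formula $\varphi(\bar x, \bar y, d)$ with $d \subseteq \C$ distinguishing two increasing subtuples, and the strong heir property lets us replace $d$ by some $d' \subseteq \mathcal{M}$ realizing the same type, contradicting $\mathcal{M}$-indiscernibility of $(c_i, b_i)_{i<\omega}$; second, that each $c_i'$ realizes $p$, because $\tp(c_i'/\C)$ is itself a strong heir extension over $A$ of $p\!\upharpoonright_\mathcal{M}$ and $p\!\upharpoonright_\mathcal{M}$ is $A$-invariant, so Lemma \ref{strong heir invariance} identifies this extension with the unique $A$-invariant one, namely $p$; third, that $\tp([c_i']_{E'}, b_j'/\mathcal{M}) \ne \tp([c_j']_{E'}, b_i'/\mathcal{M})$, which is inherited from $(c_i, b_i)_{i<\omega}$ because $q' \supseteq q$. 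The first and third items promote to the corresponding $E$-types over $\C$ by the observation of the first paragraph, so $p(\C')/E$ is unstable.

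I expect the main technical subtlety to be step one of the $(\Rightarrow)$ verification: carefully checking that strong-heir-ness (which is phrased for tuples in $\mathcal{M}$) yields genuine $\C$-indiscernibility rather than merely heir-over-$A$ behaviour. This is routine once one recalls that strong heirs allow one to replace a parameter $d \subseteq \C$ by some $d' \subseteq \mathcal{M}$ while preserving membership in $q'$ and conjugating $d$ to $d'$ over $A$ together with any finitely many parameters from $\mathcal{M}$; the conjugation clause is exactly what converts an alleged failure of $\C$-indiscernibility into a failure of $\mathcal{M}$-indiscernibility.
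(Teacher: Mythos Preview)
Your proposal is correct and follows essentially the same approach as the paper's proof: the same strong-heir transfer for $(\Rightarrow)$ with the same three verifications, and the same immediate observation for $(\Leftarrow)$. One small refinement worth noting: in step (1) of $(\Rightarrow)$ the paper observes that only the ordinary heir property (not the full strong-heir clause) is needed to pull a witness $d \subseteq \C$ of non-indiscernibility back to some $d' \subseteq \mathcal{M}$, so your anticipated ``main technical subtlety'' is milder than you expect.
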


\begin{proof}
	%%%Krzys rev: I removed ``then'' from ``since ... then''.
	Firstly, note that since $\pi$ relatively defines an equivalence relation on the set $p\!\upharpoonright_{\mathcal{M}}\!\!(\C')$,  it relatively defines an equivalence relation on $p(\C')$. Let $E$ be the equivalence relation relatively defined by $\pi(x,y)$ on $p(\C')$ and let $E'$ be the equivalence relation relatively defined by $\pi(x,y)$ on $p\!\upharpoonright_{\mathcal{M}}\!\!(\C')$.
	
	%%%Krzys: I shortened the proof of this easy implication.	
	Assume first that $p(\C')/E$ is unstable. Then, there exists a $\C$-indiscernible sequence $(c_i,b_i)_{i<\omega}$ such that $c_i\in p(\C')$ for all $i<\omega$ and for all $i\neq j$ %change 2.6 d, I applied the bigger parenthesis only to this line so you can compare and tell me the one you think looks the best
	%%%Krzys rev: $E$ instead of $E_{a_0}$ in the exposed formula. Moreover, in the next line you used \left(. I tried to do it in other places, but please check if everywhere in the paper.
	$$\tp\left(\bigslant{[c_i]_{E},b_j}{\C}\right)\neq \tp\left(\bigslant{[c_j]_{E},b_i}{\C}\right).$$ 
	This implies that for all $i\neq j$ we have $$ \tp\left(\bigslant{[c_i]_{E'},b_j}{\C}\right)\neq \tp\left(\bigslant{[c_j]_{E'},b_i}{\C}\right),$$
	and so  $p\!\upharpoonright_{\mathcal{M}}\!\!(\mathfrak{C}')/E'$ is unstable.
	%	Otherwise, there is $\sigma\in\auto(\C'/\C)$ such that $\sigma([c_i]_{E'_{a_0}},b_j)=([c_j]_{E'_{a_0}},b_i)$. Since $\sigma(c_i)\in p(\C')$, this means that $\sigma([c_i]_{E_{a_0}},b_j)=([c_j]_{E_{a_0}},b_i)$, which is a contradiction. As stability does not depend on the set of parameters, we conclude that $\bigslant{p\upharpoonright_{a_0}(\mathfrak{C}')}{\pi(\mathfrak{C}',\mathfrak{C}',a_0)\cap p\upharpoonright_{a_0}(\mathfrak{C}')^2}$ is unstable.
	
	Assume now that $p\!\upharpoonright_{\mathcal{M}}\!\!(\mathfrak{C}')/E'$ is unstable. This is witnessed by an $\mathcal{M}$-indiscernible sequence $(c_i,b_i)_{i<\omega}$ such that $c_i\in p\!\upharpoonright_{\mathcal{M}}\!\!(\C')$ for all $i<\omega$ and for all $i\neq j$ $$ \tp\left(\bigslant{[c_i]_{E'},b_j}{\mathcal{M}}\right)\neq \tp\left(\bigslant{[c_j]_{E'},b_i}{\mathcal{M}}\right).$$ 
	Consider $q:=\tp\left(\bigslant{(c_i,b_i)_{i<\omega}}{\mathcal{M}}\right)$ and let $q'\in S(\C)$ be a strong heir extension over $A$ of $q$, which exists by Fact \ref{strong heir exists}. Let $(c_i',b_i')_{i<\omega}$ be a realization of $q'$. Then, 
	%%%Krzys final: ``$\C$-indiscernible'' in place of ``$\C$ indiscernible''. More importantly, $\tp(c_i'/\C)=p(x)$ in place of $\tp(c_i')=p(x)$.
	\begin{enumerate}
		\item $(c_i',b_i')_{i<\omega}$ is $\C$-indiscernible;
		\item $\tp(c_i'/\C)=p(x)$ for all $i<\omega$;
		%%%Krzys rev: I added ``for all $i \ne j$''.
		\item $\tp\left(\bigslant{[c'_i]_{E},b'_j}{\C}\right)\neq \tp\left(\bigslant{[c_j']_{E},b'_i}{\C}\right)$ for all $i \ne j$.
	\end{enumerate}
	
	%%%Krzys: I stared the proofs of (1), (2), (3) from new paragraphs. I removed \overline. Also, a few minor changes.
	$(1)$ Suppose for a contradiction that $(1)$ does not hold. Then, it is witnessed by a formula (with parameters $d$ from $\C$) of the form $\varphi(x_{i_1},y_{i_1},\dots,x_{i_n},y_{i_n},d)\wedge \neg \varphi(x_{j_1},y_{j_1},\dots,x_{j_n},y_{j_n},d)$, for some $i_1<\dots<i_n$ and $j_1<\dots<j_n$. 
	%%%Krzys: I wrote ``d' \subseteq \mathcal{M}_0$''.
	Now, using that $q'$ is a heir extension (we do not need that it is in fact strong heir extension) over $A$ of $q$, we can find $d'\subseteq \mathcal{M}$ such that $$\varphi(x_{i_1},y_{i_1},\dots,x_{i_n},y_{i_n},d')\wedge \neg \varphi(x_{j_1},y_{j_1},\dots,x_{j_n},y_{j_n},d')\in q,$$ contradicting the $\mathcal{M}$-indiscernibility of  $(c_i,b_i)_{i<\omega}$. 
	
	%%%Krzys rev: I added ``over $A$''.	
	$(2)$ follows from the fact that $\tp(c_i'/\C)$ is a strong heir extension over $A$ of $p\!\upharpoonright_{\mathcal{M}}\!\!(x)$, which has to be $p(x)$ by Lemma \ref{strong heir invariance}. 
	
	%%%Krzys rev: I added ``for some $i \ne j$''. I switched $E'$ and $E$ in the second sentence. A few other minor changes.
	$(3)$ Suppose $(3)$ fails for some $i \ne j$. As $E$ is the restriction to $p(\C)$ of the equivalence relation $E'$, we see that $\tp\left(\bigslant{[c'_i]_{E},b'_j}{\C}\right)= \tp\left(\bigslant{[c_j']_{E},b'_i}{\C}\right)$ implies $\tp\left(\bigslant{[c'_i]_{E'},b'_j}{\C}\right)= \tp\left(\bigslant{[c_j']_{E'},b'_i}{\C}\right)$. So $\tp\left(\bigslant{[c'_i]_{E'},b'_j}{\mathcal{M}}\right)= \tp\left(\bigslant{[c_j']_{E'},b'_i}{\mathcal{M}}\right)$,
	%%%Krzys rev: I chnaged the end of the proof.
	%A contradiction with $q'$ being an extension of $q$
	and hence $\tp\left(\bigslant{[c_i]_{E'},b_j}{\mathcal{M}}\right)= \tp\left(\bigslant{[c_j]_{E'},b_i}{\mathcal{M}}\right)$ because $q \subseteq q'$ and $\pi(x,y)$ is over $\mathcal{M}$. This is a contradiction.
	
	%%%Krzys: I added the next sentence.
	By (1), (2), and (3),  $p(\C')/E$ is unstable.
\end{proof}

%%%Krzys: I added ``at first glance''. Also, I think it should be |T| in place of $\aleph_0$, so I changed it. Also, a few other minor changes. Then I moved this comment to the proposition.
Even though at first glance the requirement that $\pi(x,y)$ relatively defines an equivalence relation on $p\!\upharpoonright_{\mathcal{M}}\!\!(\mathfrak{C}')$ might not seem very natural, the following result shows that this can always be assumed.
%change2.7 f
%%%Krzys rev: I slightly changed the new sentence. Now, I realized that we do not need this directness, so I removed it everywhere.
% We use the symbol $\dcap$ to indicate that the correspondent intersection is the intersection of a directed family.
% Throughout the paper, the symbol $\dcap$ is used to indicate that the family of sets that we intersect is directed.

%if we have a $B$-relatively-type definable equivalence relation $E$ on $p(\C')$ (with $B \subset \C$) we can always find a set $B' \subset \C$ of cardinality $\lvert B \rvert +\lvert x\rvert +|T|$  such that the partial type associated to $E$ also defines an equivalence relation on $p\!\upharpoonright_{B'}(\C')$. 
%%%Adrian: The extra countably many parameters come from the parameters of the collection of formulas of p(x) in the next proof. The process ends in at most |x|+|L(B)| steps.

%%%Krzys: I slightly reformulated the next proposition.
%%%Adrian2: this is the only place where we use the notation $B$-relatively-type-definable instead of relatively type-definable over $B$, should I change it?
%%%Krzys final: Actually it appears even twice and when I changed it, it did not look so nice, so maybe we can keep it as it is.
\begin{prop}\label{decomposition of equivalence relations}
	%%%Krzys rev: I added "for some $B \subseteq \C$". Otherwsie, we cannot require that $B_i\subseteq \C$ and $B'\subseteq \C$.
	Let $E$ be a $B$-relatively type-definable equivalence relation on $p(\C')$, for some $B \subseteq \C$. 
	%Then, $E=\dcap_{i<I}E_i$ where each $E_i$ is a countable $B_i$-relatively-type-definable equivalence relation on $p(\C')$. Moreover, the partial type associated to each $E_i$ defines an equivalence relation on $p\upharpoonright_{B_i}(\C')$.
	%%%Krzys Feb 1: I added the moreover part, as we need it in the proof of the main theorem.
	Then, %change 2.7b 
	%%%Krzys rev: $i \in I$ (not $i<I$). I prefer to stay with B_i$ (we use the them in the proof of 2.8) and they are not confusing at all. Also ``$B_i$-relatively defined ``. I also replaced $D$ by $B'$. The moreover part was no compatible with the changed previous part, so I changed it. I also removed the extra assumptions on $\pi(x,y)$, as they are not needed.
	%%%Krzys rev: You forgot to intersect $E_i$ with $p(\C')^2$!
	$E=\bigcap_{i\in I}E_i \cap p(\C')^2$, where $|I| \leq |B| +|x| + |T|$, and for
	%(in fact, $|I| \leq$ the minimal size of a partial type  to $E$), 
	each $i\in I$ there is a countable $B_i\subseteq \C$ such that $E_i$ is a countably $B_i$-relatively defined equivalence relation on $p\!\upharpoonright_{B_i}\!\!(\C')$. %Thus, for $B':= \bigcup_{i \in I} D_i$ we have  $\vert B' \rvert \leq |B| +|x| + |T|$ 
	%(in fact, $|B'| \leq$ the minimal size of a partial type associated to $E$) 
	%and a partial type over $B'$ associated to $E$ defines an equivalence relation on $p\!\upharpoonright_{B'}\!\!(\C')$.
	%change 2.7c rewrote the above part
	Thus, $E$ is the restriction to $p(\C')$ of a $B'$-type-definable equivalence relation $F$ on $p\!\upharpoonright_{B'}\!\!(\C')$ for some $B'\subseteq \C$ with $\lvert B'\rvert \leq |B| +|x| + |T|$.
	
	%Moreover, if we start from a given partial type $\pi$ relatively defining $E$ consisting of reflexive and symmetric formulas and closed under conjunction, then the resulting partial type in the last sentence is precisely $\pi$, and $|D| \leq |\pi|$.
	Moreover, if we start from a given partial type $\pi(x,y)$ over $B$ relatively defining $E$, 
	%consisting of reflexive and symmetric formulas and closed under conjunction, 
	then $B'$ and $F$ in the previous sentence can be taken so that $|B'|\leq |\pi|+\aleph_0$ and $F$ is $B'$-type-definable on $p\!\upharpoonright_{B'}\!\!(\C')$ by $\pi(x,y)$.
\end{prop}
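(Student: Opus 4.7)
The plan is to decompose $\pi(x,y)$ into countably generated local pieces and then glue their parameter sets. First, applying compactness to the reflexivity, symmetry, and transitivity of $E$ on $p(\C')$, I fix once and for all, for every $\psi \in \pi$, witnesses: a finite $p_\psi^r \subseteq p$ with $p_\psi^r(x) \vdash \psi(x,x)$; finite $p_\psi^s \subseteq p$ and $\pi_\psi^s \subseteq \pi$ with $p_\psi^s(x) \wedge p_\psi^s(y) \wedge \pi_\psi^s(x,y) \vdash \psi(y,x)$; and finite $p_\psi^t \subseteq p$, $\pi_\psi^t \subseteq \pi$ realizing the analogous transitivity implication. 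Indexing $\pi = \{\varphi_i : i \in I\}$, and observing that each formula uses finitely many parameters and variables, we have $|I| \leq |B| + |x| + |T|$.

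For each $i \in I$, I build $\pi_i \subseteq \pi$ and $B_i \subseteq \C$ by closing under these witnesses. Set $\pi_i^0 := \{\varphi_i\}$ and $B_i^0$ to be the (finite) parameter set of $\varphi_i$. At stage $n+1$, for every $\psi \in \pi_i^n$, add $\pi_\psi^s \cup \pi_\psi^t$ to $\pi_i^{n+1}$ and all parameters of $p_\psi^r, p_\psi^s, p_\psi^t, \pi_\psi^s, \pi_\psi^t$ to $B_i^{n+1}$. The limits $\pi_i := \bigcup_n \pi_i^n$ and $B_i := \bigcup_n B_i^n$ are countable, and by construction every $\psi \in \pi_i$ has its three witnesses inside $\pi_i$ using parameters from $B_i$. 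Hence $\pi_i$ relatively defines an equivalence relation $E_i$ on $p\!\upharpoonright_{B_i}(\C')$. Since $\varphi_i \in \pi_i \subseteq \pi$ for each $i$, a direct two-way inclusion gives
\[
\bigcap_{i \in I} E_i \cap p(\C')^2 \;=\; \pi(\C',\C') \cap p(\C')^2 \;=\; E,
\]
which establishes the first assertion.

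For the ``Thus'' clause, set $B' := B \cup \bigcup_{i \in I} B_i$, so $|B'| \leq |B| + |I| \cdot \aleph_0 \leq |B| + |x| + |T|$, and define $F := \pi(\C',\C') \cap p\!\upharpoonright_{B'}(\C')^2$. The same fixed witness families now all have parameters inside $B'$, so the same verification shows that $F$ is an equivalence relation on $p\!\upharpoonright_{B'}(\C')$, and trivially $F \cap p(\C')^2 = E$. For the moreover clause, since the parameters actually appearing in $\pi$ number at most $|\pi|$, shrinking $B$ at the outset to those parameters yields $|B'| \leq |\pi| + \aleph_0$, with $F$ relatively defined on $p\!\upharpoonright_{B'}(\C')$ by the original $\pi(x,y)$. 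The only delicate point is the initial choice of witness sets $p_\psi^\star, \pi_\psi^\star$: they must be fixed once and for all (by the axiom of choice), independently of $i$, so that the iterative closures remain well-defined and countable; the rest is bookkeeping.
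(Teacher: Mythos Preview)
Your proof is correct and follows essentially the same compactness-and-iterate strategy as the paper. The only organizational difference is that the paper, after assuming $\pi$ is closed under conjunction, builds for each starting formula $\varphi_0 \in \pi$ a single $\omega$-chain $\varphi_0, \varphi_1, \ldots \in \pi$ together with $\psi_0, \psi_1, \ldots \in p$ where $\psi_{n+1}(x)\wedge\psi_{n+1}(y)\wedge\psi_{n+1}(z)\wedge\varphi_{n+1}(x,y)\wedge\varphi_{n+1}(y,z)\vdash \varphi_n(x,z)\wedge\varphi_n(z,x)$ (handling symmetry and transitivity in one step, reflexivity being automatic), whereas you fix separate reflexivity/symmetry/transitivity witness families globally and then close each $\{\varphi_i\}$ under them; both produce the same countable local pieces and the same $B'$.
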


%%%Krzys: I think you did not take care of the symmetry and reflexivity of the constructed relation, so I modified a bit your proof. I also used implication of types and formulas in place of realizations, as it is shorter.
\begin{proof}
	%change2.8 named the type associated to $E$ as pi(x,y)
	%%%Krzys rev: I slightly modified the proof, as the extra assumption on $\pi$ is now dropped.
	%	Fix a partial type $\pi(x,y)$ relatively defining $E$ which consists of reflexive and symmetric formulas and is closed under conjunction. Let $\psi_0(x)$ be any formula in $p(x)$ and $\varphi_0(x,y)$ any formula in $\pi(x,y)$. 
	Fix a partial type $\pi(x,y)$ relatively defining $E$ on $p(\C')$. It clearly consists of reflexive formulas and without loss of generality it is closed under conjunction. Let $\psi_0(x)$ be any formula in $p(x)$ and $\varphi_0(x,y)$ any formula in $\pi(x,y)$. 
	Then the partial type $$p(x)\wedge p(y)\wedge p(z)\wedge \pi(x,y)\wedge \pi(y,z)$$ implies 
	%%%Krzys rev: I replaced $\psi_0(x)\wedge\psi_0(y)\wedge \psi_0(z) \wedge \varphi_0(x,z)\wedge \varphi_0(z,x)$ by $\varphi_0(x,z)\wedge \varphi_0(z,x)$, as this longer information is not needed.
	$\varphi_0(x,z)\wedge \varphi_0(z,x)$. By compactness, there are $\varphi_1(x,y)$ in $\pi(x,y)$ and $\psi_1(x)$ in $p(x)$ such that the formula $$\psi_1(x)\wedge\psi_1(y)\wedge \psi_1(z) \wedge \varphi_1(x,y)\wedge \varphi_1(y,z)$$ implies $\varphi_0(x,z) \wedge \varphi_0(z,x).$ Proceeding by induction, we construct a partial type 
	%%%%Adrian2: displayed the next set because it was breaking the line in some ugly way.
	$$\{\varphi_i(x,y): i<\omega\}$$ relatively defining an equivalence relation on $\bigcap_{i<\omega}\psi_i(\C')$. 
	%%%Krzys: I removed the next sentence, as it was not correct. I wrote something else.
	Let $B_{\varphi_0,\psi_0}$ be a countable set containing the parameters of all the constructed formulas $\varphi_i(x,y)$ and $\psi_i(x)$, $i < \omega$.
	%We can consider this equivalence relation as relatively type-definable over some countable set $B$ containing the parameters of all the $\psi_i(x)$. 
	Then, the partial type $\{\varphi_i(x,y): i<\omega\}$ clearly relatively defines over $B_{\varphi_0,\psi_0}$ an equivalence relation on $p\!\upharpoonright_{ B_{\varphi_0,\psi_0}}\!\!(\C')$. 
	%%%Krzys: I modified the next sentence to get what we claimed.
	%%%Krzys rev: I shortened the next sentence, as I removed directness from the statement.
	%	Applying this process separately to every $\varphi(x,y)$ in the partial type relatively defining $E$  and taking the intersections of any finitely many obtained equivalence relations gives us the desired directed family of equivalence relations.
	Applying this process separately to every $\varphi(x,y) \in \pi(x,y)$ yields the desired family of equivalence relations.	
\end{proof}

%Change 2.7d
%%%Krzys rev: I changed the statement so that it applies to the new version of 2.10 and also in the proof of Theorem 3.7.
\begin{cor}\label{existence of a small model set of parameters}
	%	Let $E$ be as in Proposition \ref{decomposition of equivalence relations}. Then, $E$ is the restriction to $p(\C')$ of an $\mathcal{M}$-type-definable equivalence relation on $p\!\upharpoonright_{\mathcal{M}}\!\!(\C')$ for some $\mathcal{M}\prec \C$ with $\lvert \mathcal{M} \rvert \leq 2^{\lvert T \rvert + \lvert A \rvert}+ \lvert B \rvert + \lvert x \rvert$ which is $\aleph_0$-saturated in $L_A$.
	Let $E$ and $\pi(x,y)$ be as in Proposition \ref{decomposition of equivalence relations}, where $B$ is $\C$-small. Then there is $\mathcal{M}\prec \C$ containing $B$ with $\lvert \mathcal{M} \rvert \leq 2^{\lvert T \rvert + \lvert A \rvert}+ \lvert B \rvert + \lvert \pi \rvert$ which is $\aleph_0$-saturated in $L_A$ and such that $\pi(x,y)$ relatively defines an equivalence relation on $p\!\upharpoonright_{\mathcal{M}}\!\!(\C')$.
\end{cor}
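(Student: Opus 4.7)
The plan is to obtain $\mathcal{M}$ by combining the ``moreover'' clause of Proposition~\ref{decomposition of equivalence relations} with a routine $\omega$-step construction of an $L_A$-$\aleph_0$-saturated elementary substructure of $\C$. The only subtlety is bookkeeping: making sure the cardinality bounds match what is claimed.

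First I would apply the moreover clause of Proposition~\ref{decomposition of equivalence relations} to the given partial type $\pi(x,y)$, obtaining a set $B'\subseteq \C$ with $|B'|\leq |\pi|+\aleph_0$ such that $\pi(x,y)$ relatively defines an equivalence relation on $p\!\upharpoonright_{B'}\!\!(\C')$. Put $\lambda:=2^{|T|+|A|}+|B|+|\pi|$. Since $|A|+|B|+|B'|\leq \lambda$, by downward L\"owenheim--Skolem applied inside $\C$ we may pick $M_0\prec \C$ with $A\cup B\cup B'\subseteq M_0$ and $|M_0|\leq \lambda$; padding $M_0$ with arbitrary further elements of $\C$ if necessary, we may arrange $|M_0|=\lambda$, so in particular $|M_0|\geq 2^{|T|+|A|}$.

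Next I would recursively build an elementary chain $M_0\prec M_1\prec\cdots\prec\C$ as follows: given $M_n$, for every finite $F\subseteq M_n$ and every $q\in S_1^{L_A}(F)$ include in the next stage a realization of $q$ (chosen in $\C$, which exists by the saturation of $\C$); then apply downward L\"owenheim--Skolem inside $\C$ to obtain $M_{n+1}\prec\C$ containing $M_n$ together with all such realizations. Since $|[M_n]^{<\omega}|=|M_n|$ and $|S_1^{L_A}(F)|\leq 2^{|T|+|A|}$ for every finite $F$, we have $|M_{n+1}|\leq |M_n|\cdot 2^{|T|+|A|}+|T|=\lambda$ (using $|M_n|=\lambda\geq 2^{|T|+|A|}$). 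Setting $\mathcal{M}:=\bigcup_{n<\omega}M_n$ gives $\mathcal{M}\prec\C$ with $|\mathcal{M}|\leq \aleph_0\cdot \lambda=\lambda$; and $\mathcal{M}$ is $\aleph_0$-saturated in $L_A$ because any finite $F\subseteq \mathcal{M}$ lies in some $M_n$, and every $q\in S_1^{L_A}(F)$ is realized in $M_{n+1}\subseteq \mathcal{M}$.

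Finally, since $B'\subseteq \mathcal{M}$, we have $p\!\upharpoonright_{\mathcal{M}}\!\!(\C')\subseteq p\!\upharpoonright_{B'}\!\!(\C')$, so
\[
\pi(\C',\C')\cap p\!\upharpoonright_{\mathcal{M}}\!\!(\C')^{2}=\bigl(\pi(\C',\C')\cap p\!\upharpoonright_{B'}\!\!(\C')^{2}\bigr)\cap p\!\upharpoonright_{\mathcal{M}}\!\!(\C')^{2},
\]
which is the restriction of an equivalence relation on $p\!\upharpoonright_{B'}\!\!(\C')$ to the subset $p\!\upharpoonright_{\mathcal{M}}\!\!(\C')$; in particular it is an equivalence relation on $p\!\upharpoonright_{\mathcal{M}}\!\!(\C')$. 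Thus $\pi(x,y)$ relatively defines an equivalence relation on $p\!\upharpoonright_{\mathcal{M}}\!\!(\C')$, as required. There is no substantive obstacle here: once Proposition~\ref{decomposition of equivalence relations} has delivered $B'$, the rest is a standard saturation construction plus a trivial ``restriction of an equivalence relation is an equivalence relation'' observation.
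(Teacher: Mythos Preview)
Your proof is correct and is precisely the spelling-out that the paper leaves implicit: the paper states this corollary without proof, treating it as immediate from the ``moreover'' clause of Proposition~\ref{decomposition of equivalence relations} together with the routine construction of an $\aleph_0$-saturated model in $L_A$ of the right size. Your bookkeeping of the cardinalities (including $A\subseteq\mathcal{M}$, which is needed for $\mathcal{M}$ to be an $L_A$-structure, and fits under the bound since $|A|\leq 2^{|T|+|A|}$) and the final restriction-of-an-equivalence-relation step are exactly what one expects.
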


%%%Krzys: I moved this remark just before Theorem 3.8, as it does not fit in here. 
%%%Krzys: The formulas (quotients) in Remark 2.7 look very ugly. Do something with that.!!!
%\begin{remark}\label{F/E}
%Let $E$ and $F$ be type-definable equivalence relations on a type-definable set $X$, where $E$ is finer than $F$. We can define an equivalence relation $\bigslant{F}{E}$ on $X/E$ given by $[a]_E \bigslant{F}{E} [b]_E$ if and only if there are $a',b'\in X$ such that $a'Ea$, $b'Eb$ and $a'Fb'$.
%\end{remark}

The following result is a criterion for when an equivalence relation on $p(\C')$ relatively type-definable over a sufficiently saturated $\C$-small model is the finest relatively type-definable equivalence relation over a $\C$-small $B\subseteq \C$ on $p(\C')$ with stable quotient.

\begin{comment}Change 2.9 rewrote the whole statement

%%%Krzys: Not ``stable relation '' but ``with stable quotient''.  Also, now it seems to me that saturation of $a_1$ is needed, so I added it. 
\begin{lema}\label{equivalence for being the finest}
Consider $\pi(x,y,z)$ and $a_0$ as in Lemma \ref{reduction to a small model}. Then, the equivalence relation $$E_{a_0}:=\pi(\mathfrak{C}',\mathfrak{C}',a_0)\cap p(\mathfrak{C}')^2$$ is the finest relatively type-definable equivalence relation (over a small subset of parameters in $\mathfrak{C}$) on $p(\mathfrak{C}')$ whose quotient is stable if and only if $$E_{a_0}':=\pi(\mathfrak{C}',\mathfrak{C}',a_0)\cap p\!\upharpoonright_{a_0}\!\!(\mathfrak{C}')^2$$ is an equivalence relation on $p\upharpoonright_{a_0}\!\!(\mathfrak{C}')$ with stable quotient and there is no partial type $\rho(x,y,t)$ over the empty set 
%(where $\lvert t \rvert \leq \lvert a_0\rvert^{\aleph_0 + \lvert A\rvert}$) and $a_1\subseteq \mathfrak{C}'$ 
%%%Krzys Feb 1: ''$|T|$'' in place of ``$\aleph_0$'' in the next line, as this is needed to have an $\aleph_0$-saturated model.
(where $\lvert t \rvert \leq 2^{|T| + \lvert A\rvert} + \lvert a_0\rvert$) and $a_1\subseteq \mathfrak{C}'$ enumerating a small 
%$(\aleph_0 + \lvert A\rvert)^+$-saturated 
$\aleph_0$-saturated model in $L_A$ containing $\mathcal{M}_0$ such that the partial type $\rho(x,y,a_1)$ defines an equivalence relation on $p\!\upharpoonright_{a_1}\!\!(\C')$, $$\rho(\C',\C',a_1)\cap p\!\upharpoonright_{a_1}\!\!(\C')^2\subsetneq\pi(\C',\C',a_0)\cap p\!\upharpoonright_{a_1}\!\!(\C')^2, $$ and $\bigslant{p\!\upharpoonright_{a_1}\!\!(\mathfrak{C}')}{\rho(\mathfrak{C}',\mathfrak{C}',a_1)\cap p\!\upharpoonright_{a_1}\!\!(\mathfrak{C}')^2}$ is stable. 
\end{lema}
\end{comment}

%change 2.9 rewrote the lemma
%%%Krzys rev: Since in Lemma 2.5 you removed the assumption that $\mathcal{M}$ is $\mathfrak{C}$-small (which is OK), here we have to add it. So I did it.
\begin{lema}\label{equivalence for being the finest}
	Let $\mathcal{M}$ and $\pi(x,y)$ be as in Lemma \ref{reduction to a small model}, and assume that $\mathcal{M}$ is $\mathfrak{C}$-small. Then $\pi$ relatively defines the finest relatively type-definable over a $\C$-small subset of $\C$ equivalence relation on $p(\C')$ with stable quotient if and only if it relatively defines the finest $\mathcal{M}'$-type-definable equivalence relation on $p\!\upharpoonright_{\mathcal{M}'}\!\!(\mathfrak{C}')$ with stable quotient for every $\mathcal{M}'\prec \C'$ with $\lvert \mathcal{M}'\rvert \leq 2^{\lvert T \rvert + \lvert A \rvert}+ \lvert \mathcal{M} \rvert $ that is $\aleph_0$-saturated in $L_A$ and contains $\mathcal{M}$.
\end{lema}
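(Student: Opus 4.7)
The plan is to prove both directions separately, using Lemma \ref{reduction to a small model} to transfer stability between $p(\C')$ and $p\!\upharpoonright_{\mathcal{M}'}\!\!(\C')$, strong heirs (Fact \ref{strong heir exists} and Lemma \ref{strong heir invariance}) to lift witnesses of strict refinement from a suitably saturated model back to $p(\C')$, and Proposition \ref{decomposition of equivalence relations} to decompose an arbitrary relatively type-definable equivalence relation into countably relatively defined components.

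For the direction $(\Rightarrow)$, assume $E:=\pi(\C',\C')\cap p(\C')^2$ is the finest relatively type-definable over a $\C$-small subset of $\C$ equivalence relation on $p(\C')$ with stable quotient, and fix $\mathcal{M}'$ as in the statement. Applying Lemma \ref{reduction to a small model} with $\mathcal{M}'$ in the role of $\mathcal{M}$ (legitimate because $\pi$ is over $\mathcal{M}\subseteq \mathcal{M}'$ and $\mathcal{M}'$ is $\aleph_0$-saturated in $L_A$), the relation $E':=\pi(\C',\C')\cap p\!\upharpoonright_{\mathcal{M}'}\!\!(\C')^2$ has stable quotient on $p\!\upharpoonright_{\mathcal{M}'}\!\!(\C')$. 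Suppose toward a contradiction that some $\mathcal{M}'$-type-definable $F\subsetneq E'$, relatively defined by $\rho(x,y)$, has stable quotient. Pick $(a,b)\in E'\setminus F$ and a formula $\rho_0\in\rho$ with $\neg\rho_0(a,b)$. By Fact \ref{strong heir exists} extend $\tp(a,b/\mathcal{M}')$ to a strong heir $q\in S(\C)$ over $A$; the marginals $q\!\upharpoonright x$ and $q\!\upharpoonright y$ are again strong heirs over $A$ of $p\!\upharpoonright_{\mathcal{M}'}$, which by Lemma \ref{strong heir invariance} must coincide with $p$. Hence any $(a',b')\models q$ lies in $p(\C')^2$ and satisfies $\pi(a',b')\wedge \neg\rho_0(a',b')$, so $F\cap p(\C')^2\subsetneq E$. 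This intersection has stable quotient by Lemma \ref{reduction to a small model} applied to $\rho$ and $\mathcal{M}'$, contradicting the minimality of $E$.

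For $(\Leftarrow)$, taking $\mathcal{M}'=\mathcal{M}$ in the hypothesis gives that $\pi$ relatively defines an equivalence relation on $p\!\upharpoonright_{\mathcal{M}}\!\!(\C')$ with stable quotient, so $E$ has stable quotient on $p(\C')$ by Lemma \ref{reduction to a small model}. Suppose some $B$-relatively type-definable $E^*\subsetneq E$ on $p(\C')$ (with $B\subseteq\C$ a $\C$-small set and $E^*$ relatively defined by $\pi^*$) has stable quotient. Use Proposition \ref{decomposition of equivalence relations} to write $E^*=\bigcap_{i\in I}E_i^*\cap p(\C')^2$ with each $E_i^*$ countably $B_i$-relatively defined on $p\!\upharpoonright_{B_i}\!\!(\C')$ for some countable $B_i$. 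Choose $(a,b)\in E\setminus E^*$; then $(a,b)\notin E_i^*$ for some $i$. Construct $\mathcal{M}^*\prec\C$ that is $\aleph_0$-saturated in $L_A$, contains $\mathcal{M}\cup B_i$, and satisfies $|\mathcal{M}^*|\leq 2^{|T|+|A|}+|\mathcal{M}|$, via the standard chain construction of saturated extensions. Then $F:=(\pi\wedge\pi_i^*)(\C',\C')\cap p\!\upharpoonright_{\mathcal{M}^*}\!\!(\C')^2$ is an $\mathcal{M}^*$-type-definable equivalence relation on $p\!\upharpoonright_{\mathcal{M}^*}\!\!(\C')$, strictly refines $E'':=\pi(\C',\C')\cap p\!\upharpoonright_{\mathcal{M}^*}\!\!(\C')^2$ as witnessed by $(a,b)$, and has stable quotient: indeed, $F\cap p(\C')^2 = E\cap(E_i^*\cap p(\C')^2)$ embeds type-definably via the diagonal into $p(\C')/E\times p(\C')/(E_i^*\cap p(\C')^2)$, a product of stable hyperdefinable sets (the second factor being a quotient of the stable $p(\C')/E^*$), so it is stable, and stability on $p\!\upharpoonright_{\mathcal{M}^*}\!\!(\C')$ follows from Lemma \ref{reduction to a small model}. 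This contradicts the hypothesis applied to $\mathcal{M}'=\mathcal{M}^*$.

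The most delicate point will be the strong heir extension of the $2$-type $\tp(a,b/\mathcal{M}')$ together with the verification that both of its marginals are forced to equal $p$; a secondary concern is arranging the cardinality bound on $\mathcal{M}^*$ in the backward direction, which is precisely why the constant $2^{|T|+|A|}+|\mathcal{M}|$ appears in the statement.
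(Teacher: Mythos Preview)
Your argument follows the paper's proof almost verbatim: both directions use Lemma~\ref{reduction to a small model} to transfer stability, the $(\Leftarrow)$ direction uses Proposition~\ref{decomposition of equivalence relations} to reduce to a countably relatively defined component and then builds a small $\aleph_0$-saturated model $\mathcal{M}^*\supseteq\mathcal{M}\cup B_i$ of the right size, and the $(\Rightarrow)$ direction uses a strong heir of the $2$-type $\tp(a,b/\mathcal{M}')$ together with Lemma~\ref{strong heir invariance} to push the witnessing pair back into $p(\C')^2$.

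There is one genuine gap in your $(\Rightarrow)$ direction. The statement only gives $\mathcal{M}'\prec\C'$, not $\mathcal{M}'\prec\C$. Both your invocation of Fact~\ref{strong heir exists} (extending $\tp(a,b/\mathcal{M}')$ to a type in $S(\C)$) and your second application of Lemma~\ref{reduction to a small model} (to $\rho$ and $\mathcal{M}'$) require $\mathcal{M}'\subseteq\C$; as written, neither step makes sense if $\mathcal{M}'$ contains points of $\C'\setminus\C$. The paper fills this in with one line: since $|\mathcal{M}'|\leq 2^{|T|+|A|}+|\mathcal{M}|<\kappa$, saturation of $\C$ lets you apply an automorphism of $\C'$ over $A$ to move $\mathcal{M}'$ into $\C$, and $A$-invariance of $p$ ensures nothing else changes. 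This is exactly why the cardinality bound on $\mathcal{M}'$ appears in the statement (not only for the $(\Leftarrow)$ direction as you suggest at the end). Once you insert this reduction, your proof is complete and coincides with the paper's.
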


\begin{proof}
	%%%Krzys: I changed the proof of the first implication, as not everything was written correctly.
	%change 2.9
	Let $E$ be the equivalence relation relatively defined by $\pi$ on $p(\C')$ and $E'$ be the equivalence relation relatively defined by $\pi$ on $p\!\upharpoonright_{\mathcal{M}}\!\!(\mathfrak{C}')$.

	($\Leftarrow$) By Lemma \ref{reduction to a small model}, the right hand side implies that $E$ has stable quotient. Assume that there exists $E_B$, a relatively type-definable equivalence relation on $p(\C')$ over some $\C$-small set of parameters $B\subseteq \C$ such that the quotient $p(\C')/E_B$ is stable and $E_B\subsetneq E$. 
	%%%Krzys rev: $E_i$ in place of $E_{B_i}$ below; also $E$ in place of $E_{a_0}$. More importantly, the intersection with $p(\C')^2$ was missed. I did some things around it.
	Take a presentation of $E_B$ as $\bigcap_{i\in I}E_i \cap p(\C')^2$ satisfying the conclusion of Proposition \ref{decomposition of equivalence relations}. Abusing notation, write $E_i$ for $E_i \cap p(\C')^2$. As $E_B\subsetneq E$, there exists some $i\in I$ such that  $$E\cap E_i\subsetneq E.$$ 
	%change 2.10b 
	%%%Krzys rev: ``we have that'' instead of ``then''
	Since $p(\C')/E_B$ is stable and $E_B\subseteq E\cap E_i$, we have that $p(\C')/E\cap E_i$ is stable. 
	%%%Krzys rev: $\mathcal{M}'$ instead of $\mathcal{M}_1$. I also explained what is $B_i$.
	Pick $B_i$ as in  Proposition \ref{decomposition of equivalence relations} and choose any $\mathcal{M}'\supseteq \mathcal{M} \cup B_i$
	%$(\aleph_0 + \lvert A\rvert)^+$-saturated 
	$\aleph_0$-saturated in $L_A$, contained in $\C$ and of size at most 
	%$\lvert a_0\rvert^{\aleph_0 + \lvert A \rvert}$. 
	%%%Krzys Feb 1: ''$|T|$'' in place of ``$\aleph_0$'' in the next line, as this is needed to have an $\aleph_0$-saturated model.
	$2^{|T| + \lvert A \rvert}+ \lvert \mathcal{M}\rvert$. 
	%%%Krzys rev: I added $B_i$.
	By the choice of $B_i$ and $E_i$, there is  a partial type $\delta(x,y)$ over $\mathcal{M}'$ relatively defining $E_i$ which also relatively defines an equivalence relation on $p \!\upharpoonright_{\mathcal{M}'}\!\!(\C')$. Let $\rho(x,y)$ be $\pi(x,y) \wedge \delta(x,y)$. Then $\rho(x,y)$ relatively defines an equivalence relation on $p\!\upharpoonright_{\mathcal{M}'}\!\!(\C')$ and $\bigslant{p(\C')}{ \rho(\C',\C') \cap p(\C')^2}$ is stable. Hence, applying Lemma \ref{reduction to a small model}, we obtain that the quotient  $$\bigslant{p\!\upharpoonright_{\mathcal{M}'}\!\!(\C')}{ \rho(\C',\C') \cap p\!\upharpoonright_{\mathcal{M}'}\!\!(\C')^2}.$$ is stable. Moreover, 
	$$\rho(\C',\C')\cap p\!\upharpoonright_{\mathcal{M}'}\!\!(\C')^2\subsetneq\pi(\C',\C')\cap p\!\upharpoonright_{\mathcal{M}'}\!\!(\C')^2.$$
	Thus, we have proved that the right hand side of the lemma fails.
	
	%$\Leftarrow$ Assume that there exists $E_B$, a relatively type-definable equivalence relation on $p(\C')$ over some small set of parameters $B\subset \C$ such that the quotient $p(\C')/E_B$ is stable and $E_B\subsetneq E_{a_0}$. By Proposition \ref{decomposition of equivalence relations}, we can rewrite $E_B$ as some intersection $\dcap_{i\in I}F_{B_i}$ such that each $F_{B_i}$ is a countable relatively type-definable equivalence relation on $p\upharpoonright_{B_i}(\C')$. Note that for each $i\in I$, the equivalence relation $F_{B_i}\cap p(\C')^2$ on $p(\C')$ has stable quotient because it is coarser than $E_B$.  Since $E_B\subsetneq E_{a_0}$, there exists some $i\in I$ such that $$E_{a_0}\cap (F_{B_i} \cap p(\C')^2)\subsetneq E_{a_0}$$ as equivalence relations on $p(\C')$. Whithout loss of generality, we can assume that $F_{B_i}$ is relatively-type-definable over some $(\lvert T\rvert + \lvert A\rvert)^+$-saturated model $a_1\supseteq a_0$ contained in $\C$ of size $2^{(\lvert T\rvert + \lvert A \rvert)}+\lvert a_0\rvert$. Since the quotient $\bigslant{p(\C')}{E_{a_0}\cap (F_{B_i}\cap p(\C')^2)} $ is stable, applying Lemma \ref{reduction to a small model}, we obtain that the quotient  $$\bigslant{p\upharpoonright_{a_1}(\C')}{(E'_{a_0}\cap p\upharpoonright_{a_1}(\C')^2)}\cap F_{B_i}$$ is stable. Which implies that the left hand side does not hold, witnessed by $a_1$.
	
	($\Rightarrow$) 
	%%%Krzys: I added the first sentence.
	By Lemma \ref{reduction to a small model}, the left hand side implies that $E'$ has stable quotient. 
	%%%Krzys rev: ``at most'' instead of ``less than''; moreover, rephrasing. I also changed $\mathcal{M}_1$ to $\mathcal{M}$'.
	Assume that the right hand side does not hold, witnessed by a model $\mathcal{M}'$ of size at most $ 2^{\lvert T \rvert + \lvert A \rvert}+ \lvert \mathcal{M} \rvert $ that is $\aleph_0$-saturated in $L_A$ and contains $\mathcal{M}$   %change 2.9 now is not a small model but a model of size 2^{T+A}+M. small model 
	and a partial type $\rho(x,y)$ over $\mathcal{M}'$. 
	%%%Krzys rev: $\mathcal{M}$ instead of $\mathcal{M}_0$.
	%%%Adrianrev2: changed Taking a realization of $\tp(\mathcal{M}'/\mathcal{M})$ if needed to what is written now
	By saturation of $\C$, we can assume that $\mathcal{M}'\subseteq \C$. 
	%%%Krzys: I referred to Lemma \ref{reduction to a small model}. 
	Hence,  by Lemma \ref{reduction to a small model}, the fact that the quotient $\bigslant{p\!\upharpoonright_{\mathcal{M}'}\!\!(\mathfrak{C}')}{\rho(\mathfrak{C}',\mathfrak{C}')\cap p\!\upharpoonright_{\mathcal{M}'}\!\!(\mathfrak{C}')^2}$ is stable implies that the quotient  $\bigslant{p(\mathfrak{C}')}{\rho(\mathfrak{C}',\mathfrak{C}')\cap p(\mathfrak{C}')^2}$ is stable. Let $b_1,b_2\in p\!\upharpoonright_{\mathcal{M}'}\!\!(\C')$ be elements witnessing $$\rho(\mathfrak{C}',\mathfrak{C}')\cap p\!\upharpoonright_{\mathcal{M}'}\!\!(\mathfrak{C}')^2 \subsetneq \pi(\mathfrak{C}',\mathfrak{C}')\cap p\!\upharpoonright_{\mathcal{M}'}\!\!(\mathfrak{C}')^2,$$
	%%%Krzys rev: I shortened using math symbols instead of words.
	%That is, the pair $(b_1,b_2)$ belongs to $\pi(\mathfrak{C}',\mathfrak{C}')$ but not to $\rho(\mathfrak{C}',\mathfrak{C}')$.
	that is, $(b_1,b_2) \in \pi(\mathfrak{C}',\mathfrak{C}') \setminus \rho(\mathfrak{C}',\mathfrak{C}')$.
	Let $q:=\tp\left(\bigslant{b_1,b_2}{\mathcal{M}'}\right)$ and let $q'\in S(\C)$ be a strong heir extension  over $A$ of $q$. 
	%%%Krzys: I added ``By Lemma \ref{strong heir invariance}''.
	By Lemma \ref{strong heir invariance}, any realization $(b_1',b_2')\in q'(\C')$ satisfies $b_1',b_2'\in p(\C')$, $(b_1',b_2')\in\pi(\mathfrak{C}',\mathfrak{C}')$, and $(b_1',b_2')\not \in\rho(\mathfrak{C}',\mathfrak{C}')$.
	%%%Adrian: I omitted details here because it is like in the proof of Lemma 2.5
	%%%Krzys: Minor corrections.
	Therefore,
	$$\rho(\mathfrak{C}',\mathfrak{C}')\cap p(\mathfrak{C}')^2 \subsetneq \pi(\mathfrak{C}',\mathfrak{C}')\cap p(\mathfrak{C}')^2,$$
	which contradicts the minimality of $E$.
\end{proof}

\begin{remark}
    Let $\mathcal{M}\prec \C$ be a small model containing $A$ and $\pi(x,y)$ a partial type over $\mathcal{M}$ relatively defining an equivalence relation on $p\!\upharpoonright_{\mathcal{M}}(\mathfrak{C}')$. Then, the quotient $$p\!\upharpoonright_{\mathcal{M}}(\mathfrak{C})/\pi(\C,\C)$$ is stable if and only if the quotient $$p\!\upharpoonright_{\mathcal{M}}(\mathfrak{C}')/\pi(\C',\C')$$ is stable.
\end{remark}
%%%Krzys: I moved and reformulated the old Lemma 3.8 to here and deduced it faster from the above results.
%change 2.11
%%%Krzys rev: Slight reordering. And I removed "and strongly $\kappa_1$-homogeneous" as in the introduction. I also removed that $\kappa_1$ is a strong limit cardinal.
Let $\C\prec \C_1\prec \C'$ be such that $\C_1$ is $\C'$-small and $\kappa_1$-saturated 
%and strongly $\kappa_1$-homogeneous with a strong limit cardinal $\kappa_1\geq\kappa$. 
with $\kappa_1\geq \kappa$. A set is {\em $\C_1$-small} if its cardinality is smaller than $\kappa_1$. Let $p_1(x)\in S(\C_1)$ be the unique $A$-invariant extension of $p(x)$.

%change 2.12
\begin{cor}\label{corollary: from C to C_1}
	Assume that $E$ is  the finest relatively type-definable over a $\C$-small subset of $\C$ equivalence relation on $p(\C')$ with stable quotient. Then $E\cap p_1(\C')^2$ is the finest relatively type-definable over a $\C_1$-small subset of $\C_1$ equivalence relation on $p_1(\C')$ with stable quotient.
\end{cor}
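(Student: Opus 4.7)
The plan is to verify the characterization of the finest relatively type-definable equivalence relation given by Lemma \ref{equivalence for being the finest}, transferring it from $\C$ to $\C_1$ by means of automorphisms of $\C_1$ that fix the parameters defining $E$.

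First I would fix a partial type $\pi(x,y)$ over a $\C$-small set $B\subseteq\C$ relatively defining $E$ on $p(\C')$. Since $p_1(\C')\subseteq p(\C')$, the same $\pi$ relatively defines $E\cap p_1(\C')^2$ on $p_1(\C')$, and its quotient embeds into the stable hyperdefinable set $p(\C')/E$, hence is itself stable. Moreover $B$ is $\C_1$-small, so $E\cap p_1(\C')^2$ is a legitimate candidate. Next I would fix, by Corollary \ref{existence of a small model set of parameters}, a $\C$-small model $\mathcal{M}_0\subseteq\C$ that is $\aleph_0$-saturated in $L_A$, contains $B$, and such that $\pi$ relatively defines an equivalence relation on $p\!\upharpoonright_{\mathcal{M}_0}(\C')=p_1\!\upharpoonright_{\mathcal{M}_0}(\C')$. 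By the $(\Leftarrow)$ direction of Lemma \ref{equivalence for being the finest}, applied with $\C_1,p_1,\pi,\mathcal{M}_0$ in place of $\C,p,\pi,\mathcal{M}$, to establish the corollary it suffices to check that for every $\mathcal{M}'\prec\C'$ of size at most $2^{|T|+|A|}+|\mathcal{M}_0|$ that is $\aleph_0$-saturated in $L_A$ and contains $\mathcal{M}_0$, the type $\pi$ relatively defines the finest $\mathcal{M}'$-type-definable equivalence relation on $p_1\!\upharpoonright_{\mathcal{M}'}(\C')$ with stable quotient.

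The heart of the argument is to transport a hypothetical strict refinement over $\C_1$ back to $\C$. Suppose for contradiction that some such $\mathcal{M}'$ carries a partial type $\rho$ over $\mathcal{M}'$ relatively defining an equivalence relation on $p_1\!\upharpoonright_{\mathcal{M}'}(\C')$ with stable quotient and strictly refining $\pi(\C',\C')\cap p_1\!\upharpoonright_{\mathcal{M}'}(\C')^2$; by the same saturation argument that appears inside the proof of Lemma \ref{equivalence for being the finest} (using here $\kappa_1$-saturation of $\C_1$) one may assume $\mathcal{M}'\subseteq\C_1$. Since $|\mathcal{M}'|<\kappa$, by $\kappa$-saturation of $\C$ together with strong $\kappa_1$-homogeneity of $\C_1$ there exists $\sigma\in\aut(\C_1/\mathcal{M}_0 A)$ with $\mathcal{M}'':=\sigma(\mathcal{M}')\subseteq\C$; crucially $\sigma(\pi)=\pi$ because $\sigma$ fixes $\mathcal{M}_0\supseteq B$. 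Extending $\sigma$ to $\tilde\sigma\in\aut(\C')$ with $\tilde\sigma\!\upharpoonright_{\C_1}=\sigma$, the automorphism $\tilde\sigma$ preserves $p_1(\C')$ (because $p_1$ is $A$-invariant, $\sigma$ fixes $A$, and $\sigma(\C_1)=\C_1$) and maps $p_1\!\upharpoonright_{\mathcal{M}'}(\C')$ onto $p_1\!\upharpoonright_{\mathcal{M}''}(\C')=p\!\upharpoonright_{\mathcal{M}''}(\C')$.

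Consequently $\sigma(\rho)$, a partial type over $\mathcal{M}''\subseteq\C$, relatively defines an equivalence relation on $p\!\upharpoonright_{\mathcal{M}''}(\C')$ with stable quotient that strictly refines $\pi(\C',\C')\cap p\!\upharpoonright_{\mathcal{M}''}(\C')^2$. This contradicts the $(\Rightarrow)$ direction of Lemma \ref{equivalence for being the finest} applied to $\C,p,\pi,\mathcal{M}_0$, which under the hypothesis that $E$ is the finest on $p(\C')$ forces $\pi$ to relatively define the finest $\mathcal{M}''$-type-definable equivalence relation on $p\!\upharpoonright_{\mathcal{M}''}(\C')$ with stable quotient. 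The main subtlety, and the reason for arranging $\sigma$ to fix $\mathcal{M}_0\supseteq B$ rather than merely $A$, is to guarantee $\sigma(\pi)=\pi$ so that the transported strict refinement $\sigma(\rho)$ is measured against the \emph{same} partial type $\pi$.
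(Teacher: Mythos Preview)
Your argument is correct in spirit, but it works harder than necessary and has one minor technical wrinkle. The paper's proof is a single observation: the right-hand side of Lemma~\ref{equivalence for being the finest} quantifies over models $\mathcal{M}'\prec\C'$ (not over submodels of $\C$), and since $p\!\upharpoonright_{\mathcal{M}'}=p_1\!\upharpoonright_{\mathcal{M}'}$ for every such $\mathcal{M}'$ (both are the restriction of the unique $A$-invariant global extension), that right-hand side is literally the same condition for the pair $(\C,p)$ as for $(\C_1,p_1)$. Hence the $(\Rightarrow)$ direction of the lemma applied to $\C$ and the $(\Leftarrow)$ direction applied to $\C_1$ give the conclusion immediately, with no automorphisms needed.

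Your transport of $\mathcal{M}'$ into $\C$ is therefore an unnecessary detour: the hypothetical bad $\mathcal{M}'\prec\C'$ already violates the right-hand side for $(\C,p)$ directly, wherever it sits. The technical wrinkle is that you invoke strong $\kappa_1$-homogeneity of $\C_1$, which the setup preceding the corollary does not assume (only $\kappa_1$-saturation is stated). This is easily repaired by taking $\tilde\sigma\in\aut(\C'/\mathcal{M}_0)$ directly with $\tilde\sigma(\mathcal{M}')\subseteq\C$, using $\kappa$-saturation of $\C$ and strong $\kappa'$-homogeneity of $\C'$; then $\tilde\sigma$ carries $p\!\upharpoonright_{\mathcal{M}'}(\C')$ to $p\!\upharpoonright_{\tilde\sigma(\mathcal{M}')}(\C')$ by $A$-invariance of the global extension, without any need for $\tilde\sigma$ to fix $\C_1$ setwise.
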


\begin{proof}
	%%%Krzys rev: $\mathfrak{C}$-small instead of small, and I reordered some words. Also, $\mathcal{M}$ instead of $\mathcal{M}_0$. I removed $E'$.
	Using Corollary %change 2.7 now I refer to the corollary instead of to proposition 2.6 
	\ref{existence of a small model set of parameters}, we can find a $\mathfrak{C}$-small  $\mathcal{M} \prec \C$ which is
	%$(\aleph_0 + \lvert A\rvert)^+$-saturated 
	$\aleph_0$-saturated in $L_A$ and a partial type $\pi(x,y)$ over $\mathcal{M}$ relatively defining $E$ and relatively defining an equivalence relation on $p\!\upharpoonright_{\mathcal{M}}\!\!(\C')$. By Lemma \ref{equivalence for being the finest}, the right hand side of the equivalence in Lemma \ref{equivalence for being the finest} holds. But this right hand side does not depend on the choice of $\C$, and so, again by Lemma \ref{equivalence for being the finest}, $E\cap p_1(\C')^2$ is the finest relatively type-definable over a $\C_1$-small subset of $\C_1$ equivalence relation on $p_1(\C')$ with stable quotient.
\end{proof}

%%%Krzys: I added the next comment and corollary.

However, there is no obvious transfer going in the opposite direction (i.e., from $\C_1$ to $\C$), as an application of %change 2.7 Again here  I refer to the corollary
Corollary \ref{existence of a small model set of parameters} for $p_1$ may produce a model  $\mathcal{M} \prec \C_1$ whose cardinality is bigger than the degree of saturation of $\C$, and then we cannot embed it into $\C$ via an automorphism. We have only the following corollary.

%change 2.13 Now I don't enumerate M_0 as a_0
%%%Krzys rev: I changed the statement as in suggestion 2.15(d). And I modifies the proof adequately.
\begin{cor}\label{corollary: from C_1 to C}
	Assume that $E$ is the finest relatively type-definable over a $\C_1$-small subset of $\C_1$ equivalence relation on $p_1(\C')$ with stable quotient, and suppose that $E$ is relatively defined by a type $\pi(x,y,B)$ over a $\C$-small set $B$. 
	%Pick, by %change 2.7 again here I cite the corollary Proposition  \ref{decomposition of equivalence relations}
	%Corollary \ref{existence of a small model set of parameters}, a $\C$-small $\mathcal{M}_0 \prec \C_1$ $\aleph_0$-saturated model  over $L_A$ such that there exists a partial type  $\pi(x,y)$ over $\mathcal{M}_0$ relatively defining $E$ and relatively defining an equivalence relation $E_{0}$ on $p\!\upharpoonright_{\mathcal{M}_0}\!\!(\C')$. Let $\sigma \in \aut(\C_1/A)$ be such that $\sigma(\mathcal{M}_0) \subseteq \C$. Then $\pi(\C',\C',\sigma(\mathcal{M}_0)) \cap p(\C')^2$ is the finest relatively type-definable over a $\C$-small subset of $\C$ equivalence relation on $p(\C')$ with stable quotient.
	Let $\sigma \in \aut(\C_1/A)$ be such that $\sigma[B] \subseteq \mathfrak{C}$. Then $\pi(\C',\C',\sigma[B]) \cap p(\C')^2$ is the finest relatively type-definable over a $\C$-small subset of $\C$ equivalence relation on $p(\C')$ with stable quotient.
\end{cor}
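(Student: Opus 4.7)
The plan is to apply Lemma \ref{equivalence for being the finest}, whose right-hand side phrases the ``finest'' property purely in terms of $\aleph_0$-saturated (in $L_A$) extensions inside $\C'$ of a base model $\mathcal{M}$; crucially, that right-hand side does not refer to $\C$ or $\C_1$ at all, so it transfers freely between the two levels. The role of $\sigma$ is merely to move the parameters $B$ of $\pi$ from $\C_1$ into $\C$, which is exactly what is needed to apply that lemma at the $\C$-level.

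First, using the strong $\kappa'$-homogeneity of $\C'$ together with the fact that $\sigma$ is an elementary map whose domain $\C_1$ has cardinality less than $\kappa'$, I will extend $\sigma$ to an automorphism $\tilde{\sigma} \in \aut(\C')$. This $\tilde{\sigma}$ fixes $A$ pointwise and $\C_1$ setwise, so it fixes $p_1$ (the unique $A$-invariant extension of $p$ to $S(\C_1)$) and $p$, and consequently it permutes both $p_1(\C')$ and $p(\C')$. In particular $E^{*} := \tilde{\sigma}[E] = \pi(\C',\C',\sigma[B]) \cap p_1(\C')^2$, and since $\tilde{\sigma}\!\upharpoonright_{\C_1} = \sigma$ permutes $\C_1$-small subsets of $\C_1$, a routine pullback argument (using that stability is preserved under automorphisms of $\C'$) shows that $E^{*}$ is again the finest relatively type-definable over a $\C_1$-small subset of $\C_1$ equivalence relation on $p_1(\C')$ with stable quotient.

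Next, since $\sigma[B] \subseteq \C$ is $\C$-small, I will apply Corollary \ref{existence of a small model set of parameters} inside $\C$ to $\pi(x,y,\sigma[B])$ to produce a $\C$-small $\mathcal{M} \prec \C$ which is $\aleph_0$-saturated in $L_A$, contains $\sigma[B]$, and on which $\pi(x,y,\sigma[B])$ relatively defines an equivalence relation on $p\!\upharpoonright_{\mathcal{M}}\!(\C')$. Because $\mathcal{M} \subseteq \C$, we have $p\!\upharpoonright_{\mathcal{M}} = p_1\!\upharpoonright_{\mathcal{M}}$, and $\mathcal{M}$ is in particular also $\C_1$-small; hence the same pair $(\mathcal{M}, \pi(x,y,\sigma[B]))$ fits the hypotheses of Lemma \ref{equivalence for being the finest} both at the $\C$-level and at the $\C_1$-level. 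Since $E^{*}$ is the finest at the $\C_1$-level, the $\C_1$-application of the lemma gives its right-hand-side condition; as that condition is independent of the choice of ambient monster model among $\C$ and $\C_1$, the $\C$-application of the lemma then yields that $\pi(\C',\C',\sigma[B]) \cap p(\C')^2$ is the finest relatively type-definable over a $\C$-small subset of $\C$ equivalence relation on $p(\C')$ with stable quotient. The only real subtlety is the bookkeeping of levels: verifying that $\tilde{\sigma}[E]$ inherits the ``finest'' property at the $\C_1$-level, and arranging the witness model $\mathcal{M}$ from Corollary \ref{existence of a small model set of parameters} to lie inside $\C$, which is precisely the reason $\sigma$ is introduced.
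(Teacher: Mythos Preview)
Your overall strategy is correct and coincides with the paper's: both proofs hinge on the observation that the right-hand side of Lemma~\ref{equivalence for being the finest} is phrased purely in terms of extensions $\mathcal{M}'\supseteq\mathcal{M}$ inside $\C'$ and hence transfers between the $\C_1$-level and the $\C$-level once a common witness model $\mathcal{M}\prec\C$ is found. The difference is only in how that witness is produced.

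Two issues, however. First, your parenthetical claim that $\tilde{\sigma}$ fixes $p$ and permutes $p(\C')$ is not justified: $\tilde{\sigma}$ extends $\sigma\in\aut(\C_1/A)$, which need not stabilize $\C$ setwise, so $\tilde{\sigma}(p)$ is a type over $\tilde{\sigma}[\C]$ rather than over $\C$. Fortunately you never actually use this; only the (correct) fact that $\tilde{\sigma}$ permutes $p_1(\C')$ is needed for $E^*$.

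Second, and more substantively, when you ``apply Corollary~\ref{existence of a small model set of parameters} inside $\C$ to $\pi(x,y,\sigma[B])$'', that corollary (read for $\C$ and $p$) has as hypothesis that $\pi(x,y,\sigma[B])$ relatively defines an equivalence relation on $p(\C')$. You have only established this on the smaller set $p_1(\C')$. The missing step is not automatic: to pass from $p_1(\C')$ to $p(\C')$ one can take a putative failure of transitivity (or symmetry) $a,b,c\in p(\C')$, form a strong heir over $A$ of $\tp(a,b,c/\C)$ to $\C_1$, and use Lemma~\ref{strong heir invariance} to land in $p_1(\C')$, contradicting the known equivalence relation there; the point is that $\sigma[B]\subseteq\C$ so the relevant instances of $\pi$ and $\neg\varphi$ are already over $\C$ and survive the heir. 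The paper sidesteps this entirely by applying Corollary~\ref{existence of a small model set of parameters} at the $\C_1$-level (to $E$ on $p_1(\C')$, where the hypothesis is given), obtaining $\mathcal{M}\prec\C_1$ containing $B$, and only \emph{then} modifying $\sigma$ outside $A\cup B$ so that $\sigma[\mathcal{M}]\subseteq\C$. Your route is salvageable with the heir argument above, but as written it has a gap precisely at the point where you invoke the corollary.
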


\begin{proof}
	By Corollary \ref{existence of a small model set of parameters} applied to $\C_1$ and $p_1$ in place of $\C$ and $p$, there is $\mathcal{M}\prec \C_1$ containing $B$ with $\lvert \mathcal{M} \rvert \leq 2^{\lvert T \rvert + \lvert A \rvert}+ \lvert B \rvert + \lvert x \rvert$ which is $\aleph_0$-saturated in $L_A$ and such that $\pi(x,y,B)$ relatively defines an equivalence relation on $p_1\!\upharpoonright_{\mathcal{M}}\!\!(\C')$. Since $\kappa >2^{\lvert T \rvert + \lvert A \rvert}+ \lvert B \rvert + \lvert x \rvert$, we can modify $\sigma $ outside $A \cup B$ so that $\sigma[\mathcal{M}] \subseteq \C$.
	
	By assumption and Lemma \ref{equivalence for being the finest}, the right hand side of that lemma holds for $p_1$ in place of $p$. Since $\sigma(p_1)=p_1$, it still holds for $p_1$ and $\sigma[\mathcal{M}]$ in place of $\mathcal{M}$. 
	Since this right hand side does not depend on $\C_1$ and we have $\sigma[\mathcal{M}] \subseteq \C$, it holds for $p$ and $\sigma[\mathcal{M}]$, so by Lemma \ref{equivalence for being the finest}, we get that  $\pi(\C',\C',\sigma[B]) \cap p(\C')^2$ is the finest relatively type-definable over a $\C$-small subset of $\C$ equivalence relation on $p(\C')$ with stable quotient.
\end{proof}

The following proposition and its proof was proposed by the referee.

%%%Krzys rev: I added ``equivalence relations'' at the end.
\begin{prop}
	The finest relatively type-definable over a $\C$-small subset of $\C$ equivalence relation on $p(\C')$ with stable quotient exists if and only if the finest $\C$-type-definable equivalence relation on $p(\C')$ with stable quotient is relatively type-definable over a $\C$-small subset of $\C$, and, in that case, both equivalence relations coincide.
\end{prop}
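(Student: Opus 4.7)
The plan is to denote by $E^{\textrm{full}}$ the finest $\C$-type-definable equivalence relation on $p(\C')$ with stable quotient, and by $E^{\textrm{st}}$ the finest relatively type-definable over a $\C$-small subset of $\C$ equivalence relation on $p(\C')$ with stable quotient (when it exists). First I would observe that $E^{\textrm{full}}$ always exists: it is the intersection of the (set-sized) family of all $\C$-type-definable equivalence relations on $p(\C')$ with stable quotient, and this family is closed under arbitrary intersection because stability of hyperdefinable sets is closed under products and type-definable subsets (\cite[Remark 1.4]{MR3796277}).

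For the easy direction ($\Leftarrow$), assuming $E^{\textrm{full}}$ is relatively type-definable over some $\C$-small $B\subseteq \C$, I note that every candidate $E$ for $E^{\textrm{st}}$ is in particular $\C$-type-definable with stable quotient, so $E^{\textrm{full}} \subseteq E$ by minimality of $E^{\textrm{full}}$. Hence $E^{\textrm{full}}$ is itself the finest such candidate, so $E^{\textrm{st}} = E^{\textrm{full}}$.

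For the main direction ($\Rightarrow$), I suppose $E^{\textrm{st}}$ exists. The inclusion $E^{\textrm{full}} \subseteq E^{\textrm{st}}$ is immediate, since $E^{\textrm{st}}$ is a $\C$-type-definable equivalence relation on $p(\C')$ with stable quotient. For the reverse inclusion, suppose for a contradiction that there is $(a,b) \in E^{\textrm{st}} \setminus E^{\textrm{full}}$. By definition of $E^{\textrm{full}}$ as an intersection, there is a $\C$-type-definable equivalence relation $E_0$ on $p(\C')$ with stable quotient and $(a,b) \notin E_0$. Applying Proposition \ref{decomposition of equivalence relations} to $E_0$ (with parameter set $B:=\C$), I decompose $E_0 = \bigcap_{i \in I} (E_i \cap p(\C')^2)$, where each $E_i$ is a countably relatively defined equivalence relation on $p\!\upharpoonright_{B_i}\!\!(\C')$ for some countable, hence $\C$-small, $B_i \subseteq \C$. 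Pick $i \in I$ with $(a,b) \notin E_i$.

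The key observation is then that $E_i \cap p(\C')^2$ has stable quotient. This follows because $E_0 \subseteq E_i \cap p(\C')^2$, and stability of a hyperdefinable set is preserved when the equivalence relation is enlarged: given an indiscernible sequence in $p(\C')/(E_i \cap p(\C')^2)$ witnessing instability, Fact \ref{fact: indisc representatives} produces an indiscernible sequence of representatives, which projects to an indiscernible sequence in $p(\C')/E_0$ whose types separate in the same way, contradicting stability of $p(\C')/E_0$. Consequently $E_i \cap p(\C')^2$ is itself a relatively type-definable over a $\C$-small subset equivalence relation on $p(\C')$ with stable quotient, so by minimality of $E^{\textrm{st}}$ we have $E^{\textrm{st}} \subseteq E_i \cap p(\C')^2$, contradicting $(a,b) \in E^{\textrm{st}} \setminus E_i$. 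Hence $E^{\textrm{st}} = E^{\textrm{full}}$, and in particular $E^{\textrm{full}}$ is relatively type-definable over a $\C$-small subset. The only nontrivial ingredients are Proposition \ref{decomposition of equivalence relations} and the preservation of stability under enlarging the equivalence relation; neither should be an obstacle.
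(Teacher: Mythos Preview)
Your argument is correct, and it takes a genuinely different route from the paper's proof. The paper shows $E^{\textrm{st}}\subseteq F$ by passing to a larger model $\C_1\succ\C$ with $\kappa_1>|\C|$: there, $F$ becomes relatively type-definable over a $\C_1$-small set, and Corollary~\ref{corollary: from C to C_1} guarantees that $E^{\textrm{st}}\cap p_1(\C')^2$ is still the finest such relation for $p_1$, forcing $\pi(x,y)\cup p_1(x)\cup p_1(y)\models\rho(x,y)$; a compactness and $A$-invariance argument then pushes this back down to $p$. Your approach stays entirely inside $\C$: you apply Proposition~\ref{decomposition of equivalence relations} with $B=\C$ to decompose any $\C$-type-definable $E_0$ with stable quotient into pieces each relatively type-definable over a countable (hence $\C$-small) set, observe that each piece is coarser than $E_0$ and therefore inherits stability of its quotient, and conclude by minimality of $E^{\textrm{st}}$. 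Your route is more elementary and self-contained, bypassing the transfer machinery (Lemmas~\ref{reduction to a small model}, \ref{equivalence for being the finest} and Corollary~\ref{corollary: from C to C_1}) entirely; the paper's route, on the other hand, illustrates how the transfer results developed earlier in the section can be deployed. One minor remark: your justification that $E_i\cap p(\C')^2$ has stable quotient is a special case of the general fact that stability of $X/E$ passes to $X/E'$ whenever $E\subseteq E'$, which follows directly from the definition without needing Fact~\ref{fact: indisc representatives}.
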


\begin{proof}
	Let $F$ be the finest $\C$-type-definable equivalence relation on $p(\C')$ with stable quotient. 
	%%%Krzys rev: ``equivalence relation'' was missed in the next sentence. Also, I prefer ``coarser'' than ``larger.	
	Every relatively type-definable over a $\C$-small subset of $\C$ equivalence relation on $p(\C')$ with stable quotient is coarser than $F$. Thus, if $F$ is relatively type-definable over a $\C$-small subset of $\C$, then it is the finest one with stable quotient. Conversely, suppose that the finest relatively type-definable over a $\C$-small subset of $\C$ equivalence relation on $p(\C')$ with stable quotient exists and denote it by $E$. As we have already pointed out, we have $F\subseteq E$. On the other hand, let $\pi(x,y)$ be a partial type over a $\C$-small subset of $\C$ relatively defining $E$ and $\rho(x,y)$ a partial type over $\C$ defining $F$. 
	%%%Krzys rev: $\kappa_1$ instead of $\kappa$, and slight reordering of the words.	And again, I removed "and strongly $\kappa_1$-homogeneous" and that ``\kappa_1$ i a strong limit cardinal.
	Pick $\C\prec \C_1\prec \C'$ such that $\C_1$ is $\kappa_1$-saturated 
	%and strongly $\kappa_1$-homogeneous with a strong limit cardinal $\kappa_1>\lvert \C\rvert$. 
	with $\kappa_1>\lvert \C\rvert$. 
	By Corollary \ref{corollary: from C to C_1}, $\pi(x,y)$ relatively defines the finest relatively type-definable over a $\C_1$-small subset of $\C_1$ equivalence relation on $p_1(\C')$ with stable quotient. 
	%%%Krzys rev: I added ``and $\kappa_1>\lvert \C\rvert$'' and ``over a $\C_1$-small subset of $\C_1$'', as it is needed to deduce the next sentence.
	Since $p_1(\C')\subseteq p(\C')$ and $\kappa_1>\lvert \C\rvert$, we have that $\rho(x,y)$ relatively defines, over a $\C_1$-small subset of $\C_1$, an equivalence relation on $p_1(\C')$ with stable quotient. Hence, $\pi(x,y)\cup p_1(x)\cup p_1(y)\models \rho(x,y)$. 
	%%%Krzys rev: ``Consider any'' instead of ``Take a''. Reordering in the next sentence and ``implied'' in place of ``in'', as we did not assume closedness under conjunction. And similarly in the following sentence.	
	Consider any formula $\phi(x,y,c_0)$ implied by $\rho(x,y)$, where $c_0 \in \C$. 
	%%%Krzys: Cnahged
	By compactness, there is a formula $\psi(x,c_1) \in p_1(x)$ and a formula $\Delta(x,y,c_2)$ implied $\pi(x,y)$ with $c_2 \in \C$ and such that $$\Delta(x,y,c_2)\wedge \psi(x,c_1)\wedge\psi(y,c_1)\models \varphi(x,y,c_0).$$ Now, take $c\in\C$ such that $\tp(c,c_0,c_2/A)=\tp(c_1,c_0,c_2/A)$. Then,  $\Delta(x,y,c_2)\wedge \psi(x,c)\wedge\psi(y,c)\models \varphi(x,y,c_0).$ On the other hand, by $A$-invariance of $p_1(x)$, we get $\psi(x,c)\in p(x)=p_1\!\upharpoonright_{\C}\!\!(x)$. Therefore, $\pi(x,y)\cup p(x)\cup p(y)\models \varphi(x,y,c_0)$. As $\varphi$ was arbitrary, we get  $\pi(x,y)\cup p(x)\cup p(y)\models \rho(x,y)$, so $E\subseteq F$, concluding $E=F$.
\end{proof}

\section{The main theorem}\label{section: main theorem}

The goal of this section is to prove the theorem stated in the introduction (see Theorem \ref{Est exists}). %As in the previous section, we work in a complete first-order theory $T$ and $\C,\C' \models T$ are monster models such that $\C$ is $\kappa$-saturated and $\C'$ is $\lvert \C\rvert^+$-saturated. During this section, $p(x)$ will always be a type in $S_x(\C)$ invariant over some small  
%%%Krzys: I changed the order below; also ``on'' in place of ``of''.
%$A\subseteq \C$ (i.e., $|A| < \kappa$), where $x$ is a small tuple. 

%change 3.1 removed the framework part
We use results on relatively type-definable subsets of the group of automorphisms of $\C'$ extracted from \cite{hrushovski2021order}. 
%%%Krzys: Changed next sentence.
%The next definition is from there.
%%%Krzys final: I added the reference to my paper with Pillay and Rzepecki (the idea of looking at relatively definable subsets of the monster model comes from there). I also added this paper to the references.
%%%Krzys rev: I added ``the automorphism group of''.
The following is Definition 2.14 of \cite{hrushovski2021order}, which extends the notion of a relatively definable subset of the automorphism group of the monster model from \cite[Appendix A]{KPR18}.

\begin{defin}\label{relatively-type-def}
	%%%Krzys: `` \emph{relatively $\alpha$-type-definable}'' in place of  ``\emph{$\alpha$-relatively-type-definable}'' just to follow the terminology of [HKP21].
	%%%Krzys: Finally, I decided use  Def 2.14(ii) of [HKP1], as we do not really use Def. 2.14(i).
	%	If $\alpha$ is a tuple of elements of $\C'$, by a  \emph{relatively $\alpha$-type-definable} subset of $\auto(\C')$, we mean a subset of the form $\{\sigma \in \auto(\C'): \C'\models \pi(\sigma(a),b)\}$ for some partial type $\pi(x,y)$, where $x$ and $y$ are short tuples of variables and $a$, $b$ are tuples from $\C'$ corresponding to $x$ and $y$, respectively, such that $a$ is a subtuple of $\alpha$.
	%%%Krzys rev: I removed parentheses around ``without parameters''.
	By a  {\em relatively type-definable} subset of $\auto(\C')$, we mean a subset of the form $\{ \sigma \in \aut(\C') : \C' \models \pi(\sigma(a), b))\}$ for some partial type $\pi(x, y)$ without parameters, 
	%%%Krzys rev: I removed the terminology ``short'' and I wrote $\C'$-small, as you suggested. Similarly in he paragraph below the definition.
	where $x$ and $y$ are $\C'$-small %change 3.2 in the original definition I think $\C'$-small makes more sense, and I think most of the results of this section hold for such tuples, of course not the main theorem when we need the length of x to be small with respect to $\C$.
	tuples of variables, and some $a$, $b$ corresponding tuples from $\C'$.
\end{defin}

%%%Krzys: I added missing commas  in some index sets. Also, ``subset'' instead of ``set''. Also, tuples instead of elements. ``In'' in place of ``During''.
%%%Krzys rev: ``given a partial type over the empty set $\pi(x,y,z)$'' replaced by ``given a partial type $\pi(x,y,z)$  over the empty set''.
In particular, given a partial type $\pi(x,y,z)$ over the empty set, a ($\C'$-small) set of parameters $A$ and ($\C'$-small) tuples $a,b,c$ in $\C'$ corresponding to $x,y,z$, respectively, we have a relatively type-definable subset of $\auto(\C')$ of the form $$A_{\pi(x;y,z);a;b,c}(\C'/A):=\{\sigma\in \auto(\C'/A): \C'\models \pi(\sigma(a);b,c)\}.$$
%%%Krzys rev: I removed ``just''. I also replaced $z_i$ by $z$ and $a_i$ by $c$.
In this section, if $A=\emptyset$, we will omit $(\C'/A)$, and when it is clear %change 3.3c enough what the type $\pi(x;y,z)$ is
how the variables are arranged, we will denote sets of the form $A_{\pi(x;y,z);a;a,c}(\C'/A)$ as $A_{\pi;a;c}(\C'/A)$.
%change 3.3b changed every to the new notation, those changes apply untill the proof of lemma 3.2

We use relatively type-definable sets of the group $\auto(\C')$ to prove the following:

%%%Krzys: I changed the assumption by eliminating the requirement that $(a_i)_{i<\omega}\subseteq \mathfrak{C}$, which is a step towards the elimination of the degree of saturation assumption from the proof of the main theorem.
\begin{lema}\label{IP condition}
	%	Let $a\in p(\mathfrak{C}')$ and $(a_i)_{i<\omega}\subseteq \mathfrak{C}$ be such that $a_0\underset{a}{\equiv}a_i$ for all $i<\omega $. 
	%%%Krzys rev: I added that the tuples $a_i$ are $\C'$-small.
	Let $a\in \mathfrak{C}'$ and a sequence $(a_i)_{i<\omega} \subseteq \mathfrak{C}'$ (of $\C'$-small tuples $a_i$) be such that $a_0\underset{a}{\equiv}a_i$ for all $i<\omega $ and $a \models
	p\!\upharpoonright_{a_{<\omega}}$.
	Let $\pi(x,y,z)$ be a partial type over the empty set such that for every $i<\omega$ the partial type $\pi(x,y,a_i)$ relatively defines an equivalence relation on $p\!\upharpoonright_{a_i}\!\!(\mathfrak{C'})$. Assume that there is a formula $\varphi(x,y,z)$ implied by $\pi(x,y,z)$ such that for every $i<\omega$ 
	%%%Krzys rev: I switched $i$ and $j$ in $\bigcap_{j\neq i}$.
	$$ \bigcap_{j\neq i}\pi(\C',\C',a_j)\cap (p\!\upharpoonright_{a_{<\omega}}\!\!(\C'))^2 \not\subseteq \varphi(\C',\C',a_i).$$ Then $T$ has IP.
\end{lema}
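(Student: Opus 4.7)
The plan is to construct, for each finite $S\subseteq\omega$, a pair $(b_S,c_S)\in p\!\upharpoonright_{a_{<\omega}}(\C')^2$ which is $\pi(\cdot,\cdot,a_j)$-equivalent exactly when $j\notin S$, and then extract IP for some formula in $\pi$ via a compactness and Ramsey argument. First, I would apply Fact~\ref{extracting indiscernibles} to the sequence $(a_i,b_i,c_i)_{i<\omega}$, where $(b_i,c_i)$ witness the hypothesis (so $\pi(b_i,c_i,a_j)$ for $i\neq j$ and $\neg\varphi(b_i,c_i,a_i)$), to replace it by an $Aa$-indiscernible sequence of the same EM-type; the diagonal properties are first-order and so survive. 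Relabel to assume the original sequence is itself $Aa$-indiscernible.

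Second, I would translate the configuration into the language of relatively type-definable subsets of $\aut(\C')$ from Definition~\ref{relatively-type-def}. Since $a,b_i,c_i$ all realize the same complete type $p\!\upharpoonright_{a_{<\omega}}$ over $Aa_{<\omega}$, strong $\kappa'$-homogeneity of $\C'$ gives $\sigma_i,\tau_i\in\aut(\C'/Aa_{<\omega})$ with $\sigma_i(a)=b_i$ and $\tau_i(a)=c_i$; set $\rho_i:=\tau_i^{-1}\sigma_i$ and $G_j:=A_{\pi;a;a_j}(\C'/Aa_{<\omega})$. The key observation is that $G_j$ is a subgroup of $\aut(\C'/Aa_{<\omega})$: reflexivity, symmetry and transitivity of $\pi(\cdot,\cdot,a_j)$ on $p\!\upharpoonright_{a_j}(\C')$, together with the fact that the relevant automorphisms fix $a_j$ (and hence preserve that equivalence relation setwise), give the three subgroup axioms. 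Applying $\tau_i^{-1}$ to the hypothesis, one checks that $\rho_i\in G_j$ iff $i\neq j$. Consequently, for each finite $S\subseteq\omega$ the product $\rho_S:=\prod_{i\in S}\rho_i$ (in any fixed order) satisfies $\rho_S\in G_j$ iff every factor lies in $G_j$, iff $j\notin S$, so the pair $(\rho_S(a),a)\in p\!\upharpoonright_{a_{<\omega}}(\C')^2$ is $\pi(\cdot,\cdot,a_j)$-equivalent exactly for $j\notin S$.

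Finally, for each such $S$ and each $j\in S$, $\neg\pi(\rho_S(a),a,a_j)$ is witnessed by some formula $\chi_{S,j}\in\pi$. A pigeonhole plus Ramsey argument along the $Aa$-indiscernible $(a_j)_j$, exploiting the consistency of arbitrary finite sign patterns provided by the $\rho_S$, produces a single $\chi\in\pi$ and an indiscernible sequence along which $\chi$ alternates infinitely, giving IP for $\chi$ and hence for $T$. The main obstacle I anticipate is precisely this last step, because a priori the witnessing formula $\chi_{S,j}$ depends on both $S$ and $j$; uniformizing it to a single formula in $\pi$ requires carefully combining the indiscernibility from Step~1 with compactness on the partial type $\pi$ (so that only finitely many formulas of $\pi$ suffice for each finite sign pattern, and one of them must work for all of them after extraction).
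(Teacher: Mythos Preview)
Your Step~2 is exactly the paper's setup: your $\rho_i$ play the role of the paper's $\sigma_i$, the observation that each $G_j$ is a subgroup is Claim~\ref{stabilizer}, and the computation $\rho_S\in G_j\iff j\notin S$ is the internal Claim in the paper's proof.

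The genuine gap is Step~3. You correctly flag it as the obstacle, but the proposed ``pigeonhole plus Ramsey'' resolution does not work: indiscernibility of $(a_i,b_i,c_i)$ over $Aa$ gives no handle on $\tp(\rho_S(a)/a,a_{<\omega})$, since $\rho_S(a)$ depends on non-canonical choices of automorphisms, and there is no pigeonhole on the formulas of $\pi$ that produces one $\chi$ valid for all finite $S$. The paper finds the uniformizing formula \emph{before} taking products, by a compactness argument for relatively type-definable subsets of $\aut(\C')$ (Claim~\ref{compactness}, relying on \cite[Corollary~4.8]{hrushovski2021order}): from the fact that $A_{\pi;a;a_0}(\C'/a_0)$ is a group equal to its own cube and contained in $A_{\varphi;a;a_0}(\C'/a_0)$, one extracts $\theta\in\pi$ with
\[
A_{\pi;a;a_0}(\C'/a_0)\cdot A_{\theta;a;a_0}(\C'/a_0)\cdot A_{\pi;a;a_0}(\C'/a_0)\subseteq A_{\varphi;a;a_0}(\C'/a_0);
\]
conjugation over $a$ (Claim~\ref{independence of the index}, using $a_0\equiv_a a_i$) transfers this to every $i$. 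Then for $i\in S$, if $\theta(\rho_S(a),a,a_i)$ held, writing $\rho_S=\rho_{S_0}\rho_i\rho_{S_1}$ with $\rho_{S_0},\rho_{S_1}\in G_i\subseteq A_{\pi;a;a_i}(\C'/a_i)$ would force $\rho_i\in A_{\varphi;a;a_i}(\C'/a_i)$, contradicting $\neg\varphi(\rho_i(a),a,a_i)$. So $\theta$ alone witnesses IP, with no Ramsey or extraction --- which also makes your Step~1 unnecessary (and Fact~\ref{extracting indiscernibles} would anyway need a sequence far longer than $\omega$).
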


%%%Krzys: ``on'' in place of ``about'', ``subsets'' in place of ``sets'', and ``observations'' in place of ``propositions''.
To prove this result, we need the following three observations on relatively type-definable subsets of $\auto(\C')$ of special kind.

%%%Krzys: I called the next three observations claims. Propositions should be reserved for important results interesting in their own rights (a bit less important than theorems). Also, I eliminated some assumptions, due to the change of the assumptions in the above lemma.
\begin{external claim}\label{stabilizer} 
	%Let $\pi(x,y,z)$, $a\in p(\C')$ and $(a_i)_{i<\omega}\subseteq \C$ be as in Lemma \ref{IP condition}, 
	Let $a$, $(a_i)_{i<\omega}$, and  $\pi(x,y,z)$ be as in Lemma \ref{IP condition}, and let $E_{a_i}$ be the equivalence relation on $p\!\upharpoonright_{a_i}\!\!(\C')$ relatively defined by $\pi(x,y,a_i)$. Then, for all $i<\omega$, $ A_{\pi;a;a_i}(\C'/a_{i})$ is the stabilizer of the class $[a]_{E_{a_i}}$ under the action of $\auto(\C'/a_{i})$, and 
	$ A_{\pi;a;a_i}(\C'/a_{<\omega})$ is the stabilizer of the class $[a]_{E_{a_i}}$ under the action of $\auto(\C'/a_{<\omega})$.
\end{external claim}

%%%Krzys: I slightly modified the proof.
\begin{proof}
	It is clear that $\auto(\C'/a_i)$ preserves both $p\!\upharpoonright_{a_i}\!\!(\C')$ and $E_{a_i}$.
	
	Let $\sigma\in A_{\pi;a;a_i}(\C'/a_i)$. By the definition of $A_{\pi;a;a_i}$, we have $\models \pi(\sigma(a),a,a_i)$. Hence, $\sigma(a)\in [a]_{E_{a_i}}$, and so $\sigma([a]_{E_{a_i}})= [a]_{E_{a_i}}$. Thus, we have proved that 
	$$ A_{\pi;a;a_i}(\C'/a_{i})\subseteq \stab_{\auto(\C'/a_{i})}([a]_{E_{a_i}}).$$
	
	Conversely, let $\sigma\in\stab_{\auto(\C'/a_{i})}([a]_{E_{a_i}})$. This implies $\sigma(a)E_{a_i}a$. Hence, $\models \pi(\sigma(a),a,a_i)$, and so $\sigma\in  A_{\pi;a;a_i}(\C'/a_i)$. Thus, $$\stab_{\auto(\C'/a_{i})}([a]_{E_{a_i}}) \subseteq A_{\pi;a;a_i}(\C'/a_{i}).$$
	
	The same proof works for $\stab_{\auto(\C'/a_{<\omega})}([a]_{E_{a_i}})$.
\end{proof}	
%	Let $\sigma\in \auto(\C'/a_{<\omega})\cap A_{\pi,a,a_i}$. Clearly, $\sigma(a)\in p\!\upharpoonright_{a_i}(\C') $ and, by definition of $A_{\pi,a,a_i}$, we have $\models \pi(\sigma(a),a,a_i)$. Hence, $\sigma(a)\in [a]_{E_{a_i}}$ and $$ \auto(\C'/a_{<\omega})\cap A_{\pi,a,a_i}\subseteq \stab_{\auto(\C'/a_{<\omega})}([a]_{E_{a_i}}).$$

%	Now let $\sigma\in\stab_{\auto(\C'/a_{<\omega})}([a]_{E_{a_i}})$, this implies $\sigma(a)E_{a_i}a$. Hence, $\models \pi(\sigma(a),a_i) $ and $$\stab_{\auto(\C'/a_{<\omega})}([a]_{E_{a_i}}) \subseteq \auto(\C'/a_{<\omega})\cap A_{\pi,a,a_i}.$$

%	The same proof works for $\stab_{\auto(\C'/a_{i})}([a]_{E_{a_i}})$.
%\end{proof}

%%%Krzys: Added missing commas in the first sentence  and eliminated assumptions.
\begin{external claim}\label{compactness}
	%	Let  $a_0\in\C$, $a\in p(\C')$, and $\pi(x,y,z)$ be as in Lemma \ref{IP condition}. 
	Let  $a$, $a_0$, and $\pi(x,y,z)$ be as in Lemma \ref{IP condition}. Then, for each formula $\varphi(x,y,z)$ implied by $\pi(x,y,z)$ there is a formula $\theta(x,y,z)$ implied by $\pi(x,y,z)$ such that 
	$$  A_{\pi;a;a_0}(\C'/a_0)\cdot A_{\theta;a;a_0}(\C'/a_0)\cdot A_{\pi;a;a_0}(\C'/a_0) \subseteq A_{\varphi;a;a_0}(\C'/a_0).$$
\end{external claim}

\begin{proof}
	%%%Krzys: Added commas in some indices. Also ``claim'' in place of ``lemma''.
	Let us consider the type $\pi'(x_1,x_2;y,z):=\pi(x_1,y,z)\cup \{x_2=z\}$. Then,  $$ A_{\pi;a;a_0}(\C'/a_0)=A_{\pi'(x_1,x_2;y,z); aa_0;a,a_0} .$$ Hence, by the previous claim, $A_{\pi'(x_1,x_2;y,z); aa_0;a,a_0}$ is a group, so it satisfies $$A_{\pi'(x_1,x_2;y,z); aa_0;a,a_0}^3=A_{\pi'(x_1,x_2;y,z); aa_0;a,a_0}.$$ For any formula $\varphi(x,y,z)$ implied by $\pi(x,y,z)$ we have $$A_{\pi'(x_1,x_2;y,z); aa_0;a,a_0}^3\subseteq A_{\varphi(x;y,z); a;a,a_0}.$$ 
	%%%Krzys: I added ``implied by $\pi(x,y,z)$''.	
	Applying compactness (\cite[Corollary 4.8]{hrushovski2021order}), for each $\varphi(x,y,z)$ implied by $\pi(x,y,z)$ there is some $\theta(x,y,z)$ implied by $\pi(x,y,z)$ such that $$ A_{\pi'(x_1,x_2;y,z); aa_0;a,a_0} \cdot A_{\{x_2=z\}\wedge\theta(x_1;y,z); aa_0;a,a_0} \cdot A_{\pi'(x_1,x_2;y,z); aa_0;a,a_0} \subseteq  A_{\varphi(x;y,z); a;a,a_0}.$$
	Finally, since every automorphism on the left hand side belongs to $\auto(\C'/a_0)$, we conclude that $$  A_{\pi;a;a_0}(\C'/a_0)\cdot A_{\theta;a;a_0}(\C'/a_0)\cdot A_{\pi;a;a_0}(\C'/a_0) \subseteq A_{\varphi;a;a_0}(\C'/a_0).$$
\end{proof}

%%%Krzys: Again eliminated assumptions. Also replaced $a_1$ by $a_i$.
\begin{external claim}\label{independence of the index}
	%	Let $\pi(x,y,z)$, $a_0,a_1\in \C$, and $a\in p(\C')$ be as in Lemma \ref{IP condition}. 
	Let $a$, $(a_i)_{i<\omega}$, and $\pi(x,y,z)$ be as in Lemma \ref{IP condition}. Then, for any formulas $\varphi(x,y,z)$ and $\theta(x,y,z)$ implied by $\pi(x,y,z)$, for every $i<\omega$: $$  A_{\pi;a;a_0}(\C'/a_0)\cdot A_{\theta;a;a_0}(\C'/a_0)\cdot A_{\pi;a;a_0}(\C'/a_0) \subseteq A_{\varphi;a;a_0}(\C'/a_0).$$
	if and only if 
	$$  A_{\pi;a;a_i}(\C'/a_i)\cdot A_{\theta;a;a_i}(\C'/a_i)\cdot A_{\pi;a;a_i}(\C'/a_i) \subseteq A_{\varphi;a;a_i}(\C'/a_i).$$
\end{external claim}

\begin{proof}
	Let $\tau\in\auto(\C'/a)$ be such that $\tau(a_0)=a_i$. The conjugation by $\tau$ \begin{align*}
		\auto(\C'/a_0)&\to \auto(\C'/a_i)\\
		\sigma\hspace{20pt} &\mapsto \hspace{10pt} \tau\sigma\tau^{-1}
	\end{align*}
	is a bijection whose inverse is the conjugation by $\tau^{-1}$. Moreover, $$ \models \pi(\tau\sigma\tau^{-1}(a),a,a_i)\iff \models \pi(\sigma\tau^{-1}(a),a,a_0)\iff \models\pi(\sigma(a),a,a_0).$$
	%%%Krzys: I shortened the rest of the proof.
	Analogous equivalences also hold for $\varphi$ and for $\theta$ in place of $\pi$. Hence, the desired equivalence follows by applying the conjugation by $\tau$.
	%	Similarly,
	%	$$ \models \varphi(\tau\sigma\tau^{-1}(a),a,a_1)\iff \models \varphi(\sigma\tau^{-1}(a),a,a_0)\iff \models\varphi(\sigma(a),a,a_0).$$
	%	and
	%	$$ \models \theta(\tau\sigma\tau^{-1}(a),a,a_1)\iff \models \theta(\sigma\tau^{-1}(a),a,a_0)\iff \models\theta(\sigma(a),a,a_0).$$	
\end{proof}

%%%Krzys: I changed the next sentence.
We are now ready to prove Lemma \ref{IP condition}.

\begin{proof} [{\bf Proof of Lemma \ref{IP condition}}]
	%%%Krzys: I replaced $i \ne j$ by $j \ne i$. Also, $p\!\upharpoonright_{a_{<\omega}}(\C')$ in place of $(p\!\upharpoonright_{a_{<\omega}}(\C'))^2$ in the second displayed formula.
	Note that for all $i<\omega$, using automorphisms of $\C'$ fixing $(a_i)_{i<\omega}$, we can reduce the condition $$ \bigcap_{j\neq i}\pi(\C',\C',a_j)\cap (p\!\upharpoonright_{a_{<\omega}}\!\!(\C'))^2\not\subseteq \varphi(\C',\C',a_i)$$ to $$ \bigcap_{j\neq i}\pi(\C',a,a_j)\cap p\!\upharpoonright_{a_{<\omega}}\!\!(\C')\not\subseteq \varphi(\C',a,a_i),$$
	%%%Krzys: Instead of ``Hence'' I wrote ``and then''.
	because, given a pair $(c,d)$ witnessing the former condition, there exists some \\ $\sigma\in\auto(\C'/a_{<\omega})$ such that $\sigma(d)=a$, and then the pair $(\sigma(c),a)$ witnesses the latter condition. Moreover, using the same approach, one can see that the latter condition can be expressed using relatively type-definable 
	%%%Krzys ``subsets'', and the rest of the sentence shortened.
	subsets of $\auto(\C')$ as $$A_{\bigwedge_{j\neq i}\pi(x;y,z_j);a;a,(a_j)_{j\neq i}}(\C'/a_{<\omega}) \not\subseteq A_{\varphi(x;y,z_i);a;a,a_i}.$$
	%%%Adrian: Take a pair $(b,a)$ witnessing the condition, since $b$ and $a$ have the same type over $(a_i)_{i<$\omega$}$, $b=\sigma(a)$ for some $\sigma in \auto(\C'/a_{<\omega})$. The tuple $(b,a)$ satisfy the condition above if and only if $\sigma$ belongs to the rel-type-def set of $\auto(\C'/\C)$ written below.
	
	For every $i<\omega$, choose some $$\sigma_i\in A_{\bigwedge_{j\neq i}\pi(x;y,z_j);a;a,(a_j)_{j\neq i}}(\C'/a_{<\omega}) \setminus A_{\varphi(x;y,z_i),a,a,a_i},$$ and let $\sigma_{I}$ denote the composition $\prod_{i\in I}\sigma_i$, for any finite $I\subseteq \omega$.
	
	%%%Krzys: I added `` implied by $\pi(x,y,z)$'', as it is needed in the proof of the claim below.
	By Claims \ref{compactness} and \ref{independence of the index}, there is a formula $\theta(x,y,z)$ implied by $\pi(x,y,z)$ such that for all $i<\omega$
	$$ A_{\pi;a;a_i}(\C'/a_i)\cdot A_{\theta;a;a_i}(\C'/a_i)\cdot A_{\pi;a;a_i}(\C'/a_i) \subseteq A_{\varphi;a;a_i}(\C'/a_i). $$ %$$ (\auto(\C'/a_i)\cap A_{\pi,a,a_i})\cdot (\auto(\C'/a_i)\cap A_{\theta,a,a_i})\cdot(\auto(\C'/a_i)\cap A_{\pi,a,a_i}) \subseteq \auto(\C'/a_i)\cap A_{\varphi,a,a_i}.$$

	%%%Krzys: I added \models, as you used it before.
	\begin{claim}
		For any finite $I\subseteq \omega$ $$\models \theta(\sigma_I(a),a,a_i)\iff i\notin I.$$
	\end{claim}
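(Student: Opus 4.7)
The plan is to prove both directions using the group-theoretic structure of the relatively type-definable stabilizers and the product inclusion coming from Claims~\ref{compactness} and \ref{independence of the index}. The key preliminary observation is that, by the choice of the $\sigma_j$'s, each $\sigma_j$ fixes $a_{<\omega}$ pointwise and satisfies $\models \pi(\sigma_j(a),a,a_k)$ for every $k\neq j$; hence for every $k\neq j$, $\sigma_j\in A_{\pi;a;a_k}(\C'/a_k)$, and by Claim~\ref{stabilizer} the latter set is the group $\stab_{\auto(\C'/a_k)}([a]_{E_{a_k}})$.

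For the direction ($\Leftarrow$), assume $i\notin I$. Then every $j\in I$ satisfies $j\neq i$, so by the observation above each $\sigma_j$ lies in the \emph{group} $A_{\pi;a;a_i}(\C'/a_i)$. Therefore the product $\sigma_I=\prod_{j\in I}\sigma_j$ also lies in $A_{\pi;a;a_i}(\C'/a_i)$, i.e.\ $\models\pi(\sigma_I(a),a,a_i)$, which implies $\models\theta(\sigma_I(a),a,a_i)$ since $\pi\models\theta$.

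For the direction ($\Rightarrow$), I argue by contrapositive: suppose $i\in I$. Split $I=I_1\sqcup\{i\}\sqcup I_2$ along the natural order of $\omega$, so that $\sigma_I=\sigma_{I_1}\,\sigma_i\,\sigma_{I_2}$ and hence $\sigma_i=\sigma_{I_1}^{-1}\,\sigma_I\,\sigma_{I_2}^{-1}$. Since $I_1,I_2\subseteq \omega\setminus\{i\}$, the same group argument as above yields $\sigma_{I_1},\sigma_{I_2}\in A_{\pi;a;a_i}(\C'/a_i)$, hence also their inverses. If we had $\models\theta(\sigma_I(a),a,a_i)$, then $\sigma_I\in A_{\theta;a;a_i}(\C'/a_i)$ (recall $\sigma_I$ fixes $a_i$), and combining Claims~\ref{compactness} and \ref{independence of the index} we would obtain
\[
\sigma_i \;=\; \sigma_{I_1}^{-1}\,\sigma_I\,\sigma_{I_2}^{-1} \;\in\; A_{\pi;a;a_i}(\C'/a_i)\cdot A_{\theta;a;a_i}(\C'/a_i)\cdot A_{\pi;a;a_i}(\C'/a_i)\;\subseteq\; A_{\varphi;a;a_i}(\C'/a_i),
\]
contradicting the choice of $\sigma_i\notin A_{\varphi;a;a_i}$. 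Thus $\not\models\theta(\sigma_I(a),a,a_i)$.

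The only real subtlety is the bookkeeping: one must be careful that each factor of the product $\sigma_{I_1}^{-1}\,\sigma_I\,\sigma_{I_2}^{-1}$ genuinely lies in $\aut(\C'/a_i)$ (so that the product inclusion from the group-chunk argument applies), which is automatic because every $\sigma_k$ fixes $a_{<\omega}$, and to use the right ``indexed'' version of the product inclusion — this is exactly where Claim~\ref{independence of the index} is needed to transport the inclusion obtained from Claim~\ref{compactness} for $a_0$ to the corresponding inclusion for $a_i$.
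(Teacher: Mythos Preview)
Your proof is correct and follows essentially the same approach as the paper: both directions use that $A_{\pi;a;a_i}$ is a group (Claim~\ref{stabilizer}) so that products of the $\sigma_j$ with $j\neq i$ stay inside it, and the contrapositive direction uses the decomposition $\sigma_i=\sigma_{I_1}^{-1}\sigma_I\sigma_{I_2}^{-1}$ together with the product inclusion from Claims~\ref{compactness} and~\ref{independence of the index}. The only cosmetic difference is that the paper works in $\aut(\C'/a_{<\omega})$ rather than $\aut(\C'/a_i)$, which is harmless since the former is contained in the latter.
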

	\begin{proof}[{Proof of claim}]
		%%%Krzys: Slight changes.
		Firstly, take $i\not\in I$. Then, for every $j\in I$, $\sigma_j$ belongs to the set $ A_{\pi;a;a_i}(\C'/a_{<\omega})$. By Claim \ref{stabilizer}, the set $ A_{\pi;a;a_i}(\C'/a_{<\omega})$ is a group, and so we get  $\sigma_{I}\in  A_{\pi;a;a_i}(\C'/a_{<\omega})$. Hence, $\theta(\sigma_I(a),a,a_i)$ holds.
		
		%%%Krzys: I wrote disjoint union.%change 3.10a	
		Now take $i\in I$ and write $I:=I_0\sqcup \{i\} \sqcup I_1$, where $I_0=\{j\in I : j<i\}$ and $I_1=\{j\in I : j>i\}$. For each $j\in I_0\cup I_1$ we have $\sigma_j\in \ A_{\pi;a;a_i}(\C'/a_{<\omega})$. Then, $\theta(\sigma_{I}(a),a,a_i)$ does not hold. Otherwise, $$ \sigma_I = \sigma_{I_0} \sigma_i \sigma_{I_1}\in  A_{\theta;a;a_i}(\C'/a_{<\omega}),$$
		which, by %Change 3.4 Claim \ref{stabilizer} and the statement before the claim
		%%%Krzys rev: I kept Claim \ref{stabilizer} here, as we use here that  $A_{\pi;a;a_{<\omega}}(\C'/a_i)$ is closed under inversion.
		Claim \ref{stabilizer} and	the choice of $\theta$, implies $$\sigma_i\in  A_{\varphi;a;a_i}(\C'/a_{<\omega}),$$ 
		a contradiction with our choice of $\sigma_i$. 
		%%%Adrian: just multiply by $\sigma_{I_0}^{-1}$ and $\sigma_{I_1}^{-1}$ on the corresponding side.
	\end{proof}
	The formula $\theta$ witnesses that $T$ has IP.
\end{proof}

When we write (NIP) in the statement of a result, it means that we assume that the theory $T$ has NIP.

%%%Krzys: I eliminated the assumption that $a_0$ is from $\C$.
\begin{lema}[NIP]\label{existence of an special index}
	%%%Krzys: I think that ``+ |x|'' was missed in the index of $\beth$, as we need to extract indiscernibles from the sequence (a_i,b_i,c_i)_{i<\lambda}$ consisting of tuples of size $|a_0| + 2|x|$. So I added |x|.
	%%%Krzys rev: I removed ``Let $p(x)\in S(\mathfrak{C})$ be an $A$-invariant type'' (as this is our global assumption), and I added that $z$ is $\C'$-small.
	%	Let $p(x)\in S(\mathfrak{C})$ be an $A$-invariant type, 
	Let $\pi(x,y,z)$ be a partial type over the empty set (with a $\C'$-small $z$), and let $a_0\subseteq \mathfrak{C}'$ be such
	%%%Adrian2: \pi(x,y,a_0) instead of \pi(\C',\C',a_0)
	that $\pi(x,y,a_0)$ relatively defines an equivalence relation on $p\!\upharpoonright_{a_0}\!\!(\mathfrak{C}')$. Then, for any $(a_i)_{i<\lambda}$, where $\lambda \geq \beth_{(2^{(\lvert a_0\rvert +\lvert x \rvert + \lvert T\rvert + \lvert A\rvert)})^+}$ and %change 3.5a
	$a_i\underset{A}{\equiv}a_0$ for all $i<\lambda$,  there exists $i<\lambda$ such that $$\bigcap_{j\neq i} \pi(\C',\C',a_j)\cap(p\!\upharpoonright_{a_{<\lambda}}\!\!(\C'))^2\subseteq\pi(\C',\C',a_i).$$
\end{lema}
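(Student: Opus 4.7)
The plan is to prove the contrapositive by deriving IP of $T$ from the failure of the conclusion, using Lemma \ref{IP condition}. Assume for contradiction that for every $i<\lambda$,
\[
\bigcap_{j\neq i}\pi(\C',\C',a_j)\cap (p\!\upharpoonright_{a_{<\lambda}}(\C'))^2\not\subseteq\pi(\C',\C',a_i).
\]
Each such failure is witnessed by a pair $(d_i,e_i)$ in the left-hand side together with some formula $\varphi_i(x,y,z)$ implied by $\pi$ such that $\neg\varphi_i(d_i,e_i,a_i)$. Since $|\pi|\leq |T|+|a_0|+|x|$ is much smaller than $\lambda$, a pigeonhole argument produces a subset of $\lambda$ of size $\lambda$ on which all $\varphi_i$ equal a single fixed $\varphi$; re-enumerate so that this holds for every index.

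Next, fix $a\models p\!\upharpoonright_{a_{<\lambda}}$, available by saturation of $\C'$, and for each $i$ pick, by strong homogeneity, $\sigma_i\in\aut(\C'/Aa_{<\lambda})$ sending $e_i$ to $a$. Set $d_i^{*}:=\sigma_i(d_i)$; since $\aut(\C'/Aa_{<\lambda})$ preserves $\pi(\C',\C',a_j)$ and $p\!\upharpoonright_{a_{<\lambda}}(\C')$, one still has $d_i^{*},a\models p\!\upharpoonright_{a_{<\lambda}}$, $(d_i^{*},a)\in\pi(\C',\C',a_j)$ for every $j\neq i$, and $\neg\varphi(d_i^{*},a,a_i)$. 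Then consider the $\lambda$-sequence of tuples $(a_i,d_i^{*})$, each of length $|a_0|+|x|$. The number of types in $|a_0|+|x|$ variables over $Aa$ is bounded by $2^{|T|+|A|+|a_0|+|x|}$, so the hypothesis $\lambda\geq\beth_{(2^{(|a_0|+|x|+|T|+|A|)})^+}$ matches exactly the bound required by Fact \ref{extracting indiscernibles}. Extract an $Aa$-indiscernible sequence $(a_i',d_i'')_{i<\omega}$ such that, for each $n$, some $i_0<\dots<i_{n-1}<\lambda$ yield
$(a_0',d_0'',\dots,a_{n-1}',d_{n-1}'')\equiv_{Aa}(a_{i_0},d_{i_0}^{*},\dots,a_{i_{n-1}},d_{i_{n-1}}^{*})$.

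It then remains to verify that $(a,(a_i')_{i<\omega},\pi,\varphi)$ satisfies the hypotheses of Lemma \ref{IP condition}. The $A$-indiscernibility yields $a_i'\equiv_A a_0$, hence $\pi(x,y,a_i')$ relatively defines an equivalence relation on $p\!\upharpoonright_{a_i'}(\C')$ by transporting the corresponding hypothesis at $a_0$ along an $A$-automorphism, and $a$-indiscernibility gives $a_0'\equiv_a a_i'$. The conditions $a\models p\!\upharpoonright_{a'_{<\omega}}$ and $d_i''\models p\!\upharpoonright_{a'_{<\omega}}$, as well as the finitary conditions $(d_i'',a)\in\pi(\C',\C',a_j')$ for $j\neq i$ (in both orderings of indices) and $\neg\varphi(d_i'',a,a_i')$, all transport back to the original $\lambda$-sequence, where they hold by construction, via the type equality from Fact \ref{extracting indiscernibles} (using $A$-invariance of $p$ to rewrite $p\!\upharpoonright$-membership along the automorphism implicit in the type equality). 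An application of Lemma \ref{IP condition} produces IP of $T$, contradicting NIP.

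The main technical obstacle is the bookkeeping in the extraction step: one must argue that the whole family of finitary conditions witnessing the failure of containment (positive membership in $\pi(\C',\C',a_j)$ for $j\neq i$, negative $\neg\varphi$ condition, and membership in $p\!\upharpoonright_{a'_{<\omega}}(\C')$) all pass from the large $\lambda$-sequence to the $\omega$-indiscernible subsequence in a way compatible with the hypotheses of Lemma \ref{IP condition}. The precise cardinal bound on $\lambda$ is dictated by the parameter space $Aa$ and tuple length $|a_0|+|x|$ appearing in Fact \ref{extracting indiscernibles}, which is why the specific expression $\beth_{(2^{(|a_0|+|x|+|T|+|A|)})^+}$ appears in the statement.
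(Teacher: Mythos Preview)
Your proof is correct and follows essentially the same route as the paper: assume failure at every index, extract an $A$-indiscernible $\omega$-sequence from the witnesses, and feed the result into Lemma \ref{IP condition} to obtain IP. The only differences are cosmetic reorderings---you pigeonhole for a single $\varphi$ before extracting (the paper gets this from indiscernibility afterward), and you fix $a\models p\!\upharpoonright_{a_{<\lambda}}$ and extract over $Aa$, whereas the paper extracts over $A$ first and then picks $a\models p\!\upharpoonright_{a'_{<\omega}}$; the paper's order has the minor advantage of not needing $\lambda$ to be $\C'$-small for the realization of $a$, but in the intended application this is automatic.
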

%change 3.5b there were too many i's in the displayed equations, even though it was formally correct the referee sugested that I change the free i's for k's
%%%Krzys rev: Instead of replacing $i$ by $k$, I replaced indices of the form $i<\lambda$ by $<\lambda$.
\begin{proof}
	Assume the conclusion does not hold. 
	%%%Krzys: $i$ in place of $j$.
	Then, for every $i<\lambda$ $$\bigcap_{j\neq i} \pi(\C',\C',a_j)\cap(p\upharpoonright_{a_{<\lambda}}\!\!(\C'))^2\not\subseteq\pi(\C',\C',a_i).$$ Take pairs $(b_i,c_i)_{i<\lambda}$ witnessing it. 
	%%%Krzys: I changed the next sentence. For the properties below, it is very important that the sequence was extracted from the given sequence of triplets.  I also added here that the sequence is from $\C'$, and removed the sentence that it is from $\C$ which was later below.
	%%%Krzys: Adrian, please add a reference for extracting indiscernibles including the precise length of the sequence which is needed.!!!
	%%%Adrian2: Added the book from Casanovas, I have seen in cited as the reference some other time and it also provides a proof.
	%%%Krzys final: I changed the reference as we discussed.
	Let $(a_i',b_i',c_i')_{i<\omega} \subseteq \C'$ be an $A$-indiscernible sequence obtained by extracting indiscernibles (see Fact \ref{extracting indiscernibles}) from the sequence $(a_i,b_i,c_i)_{i<\lambda}$.
	%(see \cite[Proposition 1.6]{casanovas_2011}).
	%%%Krzys: I added ``since $p$ is $A$-invariant''. This is needed in the first displayed formula below. I also expanded the last displayed formula.
	Then, since $p$ is $A$-invariant, for all $i<\omega$ the elements $(a_i',b_i',c_i')$ satisfy:
	\begin{align*}
		(b_i',c_i')&\in\bigcap_{j\neq i}\pi(\C',\C',a_j')\cap (p\!\upharpoonright_{a'_{<\omega}}\!\!(\C'))^2;\\
		(b_i',c_i')&\not\in\pi(\C',\C',a_i');\\
		a_i'&\equiv_{A}a_0' \equiv_A a_0.
	\end{align*} 
	%%%Adrian: This is just $A$-indiscernibility, that is why I did not justify a lot.
	%%%Krzys: I wrote the sequence explicitly.
	%We can also assume 	$a_i'\in \C$ for all $i<\omega$. 
	%change 3.6 c I explain whi the $A$ invarianve of p is needed.
	%%%Krzys rev: I took it between parentheses, and slightly simplified the end of the explanation. I also added the square of the set of realizations.
	(Note that the $A$-invariance of $p$, together with the property of being an extracted sequence, is used to ensure that $(b_i',c_i')$ belongs to $p\!\upharpoonright_{a'_{<n}}\!\!(\C')^2$ for each $n \in \omega$.) By the indiscernibility of the sequence $(a_i',b_i',c_i')_{i<\omega}$, there exists a formula $\varphi(x,y,z)$ implied by $\pi(x,y,z)$ such that for all $i<\omega$ $$(b_i',c_i')\not\in\varphi(\C',\C',a_i').$$
	%%%Krzys: I added ``and $p$ is $A$-invariant''. I also changed the choice of $a$, and removed that $a_i'$ are form $\C$.
	Take any $a \models p\!\upharpoonright_{a'_{<\omega}}$. Since $p$ is $A$-invariant, $a_i'\equiv_A a_j'$ implies  $a_i'\equiv_a a_j'$.
	%Note also that since all $a_i'$ belong to $\C$ and $p$ is $A$-invariant, $a_i'\equiv_A a_j'$ implies  $a_i'\equiv_a a_j'$ for any $a\in p(\C')$.
	%take $\sigma\in \auto(\C/A)$ st $\sigma(a_i)=a_j$ and $\sigma'$ its extension to $\auto(\C'/A)$. Hence $\sigma'(p(\C'))=p(\C')$, $\sigma'(a)$\equiv_{\C} a$, $\sigma'(a)\equiv_{Aa_j}a$ hence $aa_i\equiv_{A}aa_j$ 
	%%%Krzys: I added the next sentence.
	Moreover, since $a_i' \equiv_A a_0' \equiv_A a_0$, $\pi(x,y,a_0)$ relatively defines an equivalence relation on $p\!\upharpoonright_{a_0}\!\!(\mathfrak{C}')$, and $p$ is $A$-invariant, we get that $\pi(x,y,a_i')$ relatively defines an equivalence relation on $p\!\upharpoonright_{a_i'}\!\!(\mathfrak{C}')$ for all $i<\omega$.
	
	%%%Krzys:Modified the next sentence.
	Hence, the sequence $(a_i')_{i<\omega}$ together with $a$, $\pi(x,y,z)$, and $\varphi(x,y,z)$ satisfies the assumptions of Lemma \ref{IP condition}, and so we get IP, which is a contradiction.
	%Hence, by Lemma \ref{IP condition}, the sequence $(a_i',b_i',c_i')_{i<\omega}$ witnesses that $T$ has IP, which is a contradiction.
\end{proof}

%%%Krzys: The formulas (quotients) in Remark 3.7 look very ugly (I mean various spaces). Do something with that.!!!
%%%Adrian2: created a different command, the previous one adapted the size of the slant to the size of the contents, it created a lot of spaces, I also reduced the space even more with \!

%%%Krzys Feb 1: I simplified the definition of the equivalence relation below, as suggested by Ludomir.
%%%Krzys rev: I finally removed the next remark, as it is not needed. To explain all the details around the application of this remark in the next proof is almost the same as to explain this part of the proof without the terminology from this remark, which I did below.
%\begin{remark}\label{F/E}
%Let $E$ and $F$ be type-definable equivalence relations on a type-definable set $X$, where $E$ is finer than $F$ (i.e, $E \subseteq F$). We can define an equivalence relation $\bslant{F\!}{\!E}$ on $X/E$ by: $[a]_E \bslant{F\!}{\!E} [b]_E$ if and only if $aFb$.
%\end{remark}

The next theorem is the main result of this chapter and the thesis.

%%%Krzys: I decreased the degree of saturation by removing +. I incorporated the notation $E^{st}$ in the theorem.
%change 3.7 the degree off saturation is ... instead of ...-saturated
%%%Krzys rev: I recalled that $x$ is $\C$-small. I also kept the assumption that $p$ is $A$-invariant, as I thought that it is good to state explicitly all the the assumptions in the main theorem.
\begin{teor}[NIP]\label{Est exists}
	Let $p(x)\in S_x(\mathfrak{C})$ be an $A$-invariant type with a $\C$-small $x$. Assume that the degree of saturation of $\C$ is at least $\beth_{(\beth_{2}(\lvert x \rvert+\lvert T \rvert + \lvert A\rvert))^+}$. Then, there exists a finest equivalence relation $E^{\textrm{st}}$ on $p(\C')$ relatively type-definable over a $\C$-small set of parameters from $\C$ and with stable quotient $p(\C')/E^{\textrm{st}}$. 
\end{teor}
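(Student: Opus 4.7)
The approach adapts \cite{MR3796277} from type-definable groups to relatively type-definable equivalence relations on $p(\C')$, making essential use of the relatively type-definable subsets of $\aut(\C')$ machinery developed earlier in this chapter (Claims \ref{stabilizer}--\ref{independence of the index} and Lemma \ref{IP condition}) which culminates in Lemma \ref{existence of an special index}. Set $\lambda_0 := 2^{|T|+|A|+|x|}$ and $\mu := \beth_{(\beth_2(|T|+|A|+|x|))^+}$, so that $\kappa \geq \mu$ is precisely the cardinal bound needed to apply Lemma \ref{existence of an special index} to length-$\mu$ sequences of $\lambda_0$-tuples living inside $\C$. Let $\mathcal{P}$ be the set of pairs $(\pi(x,y,z), q(z))$ where $\pi$ is a countable partial type over $\emptyset$ with $|z| \leq \lambda_0$, $q \in S_z(A)$, and, for $a \models q$ (equivalently for all such $a$, by $A$-invariance of $p$), the partial type $\pi(x,y,a)$ relatively defines an equivalence relation on $p\!\upharpoonright_a(\C')$ with stable quotient. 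Then $|\mathcal{P}| \leq 2^{\lambda_0}$. For each $(\pi,q) \in \mathcal{P}$, fix an $A$-indiscernible sequence $(a^\alpha_{\pi,q})_{\alpha<\mu}$ of realizations of $q$ in $\C$ (e.g.\ a Morley sequence in some global $A$-invariant extension of $q$, obtained via strong heirs using Fact \ref{strong heir exists} and Lemma \ref{strong heir invariance}), and define
\[ E^{\textrm{st}} := \bigcap_{(\pi,q) \in \mathcal{P}} \bigcap_{\alpha<\mu} \pi(\C',\C', a^\alpha_{\pi,q}) \cap p(\C')^2. \]
The total parameter set has cardinality at most $|\mathcal{P}| \cdot \mu \cdot \lambda_0 < \kappa$, so $E^{\textrm{st}}$ is relatively type-definable over a $\C$-small subset of $\C$, and it is an equivalence relation as an intersection of such.

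For stability of the quotient, observe that each $p(\C')/(\pi(\C',\C',a^\alpha_{\pi,q}) \cap p(\C')^2)$ embeds as a type-definable subset of the stable hyperdefinable set $p\!\upharpoonright_{a^\alpha_{\pi,q}}(\C')/\pi(\C',\C',a^\alpha_{\pi,q})$, hence is itself stable. Since products of stable hyperdefinable sets are stable (Appendix) and stability descends to type-definable subsets, $p(\C')/E^{\textrm{st}}$ is stable via its diagonal embedding into the product $\prod_{(\pi,q),\alpha} p(\C')/(\pi(\C',\C',a^\alpha_{\pi,q}) \cap p(\C')^2)$.

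For the finest property, let $E$ be an arbitrary relatively type-definable over a $\C$-small $B \subseteq \C$ equivalence relation on $p(\C')$ with stable quotient. By Proposition \ref{decomposition of equivalence relations}, $E = \bigcap_{i \in I} E_i \cap p(\C')^2$ with each $E_i$ countably $B_i$-relatively defined, so it suffices to show $E^{\textrm{st}} \subseteq E_i \cap p(\C')^2$ for each $i$. Since $E \subseteq E_i \cap p(\C')^2$, the quotient $p(\C')/E_i$ is stable. Using Corollary \ref{existence of a small model set of parameters} pick a small $\aleph_0$-saturated-in-$L_A$ model $\mathcal{M}_i \supseteq B_i \cup A$ of size $\leq \lambda_0$ so that the defining countable type of $E_i$ also relatively defines an equivalence relation on $p\!\upharpoonright_{\mathcal{M}_i}(\C')$, which by Lemma \ref{reduction to a small model} has stable quotient. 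Writing this type as $\tilde\pi_i(x,y,c_i)$ with $c_i \in \C$ enumerating $\mathcal{M}_i$ and $\tilde\pi_i$ countable over $\emptyset$, the pair $(\tilde\pi_i, \tp(c_i/A))$ lies in $\mathcal{P}$, so, setting $a^\alpha := a^\alpha_{\tilde\pi_i,\tp(c_i/A)}$, we already have $E^{\textrm{st}} \subseteq \tilde\pi_i(\C',\C',a^\alpha) \cap p(\C')^2$ for every $\alpha<\mu$. The remaining step is to transfer this containment to $c_i$: apply Lemma \ref{existence of an special index} to the sequence $(a^\alpha)_{\alpha<\mu}$ enlarged by $c_i$ (all realizing $\tp(c_i/A)$), noting that since $p(\C')^2 \subseteq (p\!\upharpoonright_{a^{<\mu},c_i}(\C'))^2$ the Lemma's conclusion restricts to $p(\C')^2$. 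If the redundant index it produces is $c_i$ itself, one immediately obtains $\bigcap_\alpha \pi(\C',\C',a^\alpha) \cap p(\C')^2 \subseteq \pi(\C',\C',c_i)$ and hence $E^{\textrm{st}} \subseteq E_i \cap p(\C')^2$, as desired.

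The main obstacle is that Lemma \ref{existence of an special index} only guarantees that \emph{some} index in the enlarged sequence is redundant, not that $c_i$ specifically is. To force the redundant index to be $c_i$ (or to extract the same consequence when it is some $a^{\alpha_0}$) I would exploit the $A$-indiscernibility of $(a^\alpha)_{\alpha<\mu}$ and its large length $\mu$: either (i) extract from $(a^\alpha)_{\alpha<\mu}$ an $A$-indiscernible subsequence that remains $A$-indiscernible after appending $c_i$ (via Fact \ref{extracting indiscernibles} applied to tuples of the form $(a^\alpha, c_i)$), thereby restoring a symmetry which lets $c_i$ play the role of the redundant index; or (ii) iterate the Lemma, removing redundant $a^{\alpha_0}$'s one at a time (this does not change the intersection on $p(\C')^2$ by the conclusion of the Lemma itself) until $c_i$ is forced to be the redundant element. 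The NIP hypothesis enters exactly through Lemma \ref{existence of an special index} (which in turn relies on Lemma \ref{IP condition} and the stabilizer-type identifications in Claims \ref{stabilizer}--\ref{independence of the index}), and the large saturation assumption on $\C$ is calibrated precisely so that these sequences of length $\mu$ and their augmentations by enumerations of small $\aleph_0$-saturated-in-$L_A$ models fit inside $\C$ and satisfy the cardinal hypothesis of Lemma \ref{existence of an special index}.
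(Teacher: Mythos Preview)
Your explicit candidate for $E^{\textrm{st}}$ and its stable quotient (via products of stable hyperdefinable sets) are fine. The gap you flag is real and neither fix closes it; in fact, the gap means your candidate may simply fail to be the finest.

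For (ii): once Lemma \ref{existence of an special index} returns some $a^{\alpha_0}$ rather than $c_i$, removing $a^{\alpha_0}$ only makes the target ``$c_i$ is redundant'' \emph{harder}: the left-hand side $\bigcap_{s\in I_k\setminus\{c_i\}}\pi(\C',\C',s)\cap(p\!\upharpoonright_{I_k}(\C'))^2$ grows (fewer conjuncts, larger domain), so any witness to failure at step $0$ persists at every later step. Thus if $c_i$ is not redundant initially, it never is, and the iteration never terminates with $c_i$. Concretely, if all $\pi(\C',\C',a^\alpha)\cap p(\C')^2$ coincide (which nothing in your choice of Morley sequence rules out) while $\pi(\C',\C',c_i)\cap p(\C')^2$ is strictly finer, then every $a^\alpha$ is redundant at every stage, $c_i$ never is, and your $E^{\textrm{st}}$ is strictly coarser than $E_i$.

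For (i): extracting indiscernibles from the constant-second-coordinate pairs $(a^\alpha,c_i)$ does not place $c_i$ as an interchangeable term of the resulting sequence. Even if you could arrange $(a^{\alpha_\beta})_\beta{}^\frown(c_i)$ to be $A$-indiscernible, transferring redundancy from the index the Lemma hands you to $c_i$'s position requires an order-automorphism of the index set moving one to the other; with $c_i$ sitting at a distinguished endpoint of an ordinal-indexed sequence, no such automorphism exists.

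The paper's proof bypasses this by contradiction. A short reduction (its Claim) shows that $E^{\textrm{st}}$ exists unless some pair (countable $\pi$, $A$-type of a countable parameter tuple) admits a strictly descending chain $(E_{a_i})_{i<\nu}$. Assuming such a chain, one extracts an $A$-indiscernible sequence $(a'_i,b'_i,c'_i)_{i<\nu}$ carrying witnesses to the strict descents, applies Lemma \ref{existence of an special index} to get a redundant index $\beta$, and then performs the step absent from your outline: \emph{insert} an $\omega$-sequence $(d'_i,e'_i,f'_i)_{i<\omega}$ in place of the $\beta$-th term so that the enlarged configuration remains $A$-indiscernible, and use \emph{stability of the quotient by $E:=\bigcap_{\alpha\neq\beta}\pi(\C',\C',a'_\alpha)$} on $p\!\upharpoonright_{a'_{\alpha\neq\beta}}(\C')$ to force the symmetry $\tp(d'_j,[e'_i]_E,[f'_i]_E/B)=\tp(d'_i,[e'_j]_E,[f'_j]_E/B)$. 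Together with the redundancy of $\beta$, this upgrades the one-sided relations $\pi(e'_i,f'_i,d'_j)$ (for $j<i$, coming from the witnesses) to all $i\neq j$, which is exactly the input to Lemma \ref{IP condition}, yielding IP. Your argument uses stability only passively (the big product is stable); the \emph{active} use of stability of an intermediate quotient to symmetrize is the missing idea.
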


\begin{proof}
	Let $\nu:=\beth_{(\beth_{2}(\lvert x \rvert+\lvert T \rvert + \lvert A\rvert))^+}$.

	\begin{claim}
		%%%Krzys: ``with stable quotient'' instead of ``stable''. I also added that the sequence is in $\C$ and consists of countable tuples (which is needed later to keep track the degree of saturated needed in the theorem)
		%%%Krzys Feb 1: I added ``countable'' in the first sentence of the claim, as you suggested.
		If for every countable partial type $\pi(x,y,z)$ over the empty set and countable tuple $a_0$ from $\mathfrak{C}$ such that $\pi(x,y,a_0)$ relatively defines an equivalence relation $E_{a_0}$ on $p(\mathfrak{C}')$ with stable quotient there is no sequence $(a_i)_{i<\nu}$ of (countable) tuples $a_i$ in $\C$ such that for all $i<\nu$ we have $a_i\underset{A}{\equiv}a_0$ and  $\bigcap_{j<i}E_{a_j}\not\subseteq E_{a_i}$, then the theorem holds.
	\end{claim}

	\begin{proof}[Proof of claim]
		%%%Krzys: Minor corrections in the next few sentences.
		Consider an arbitrary collection $(E_i)_{i\in I}$ of %change3.9a relatively type-definable equivalence relations (over small subsets of $\C$) on $p(\C')$ with stable quotient
		%%%Krzys rev: ``on p(\C')$ instead of ``of p(\C')$''
	 equivalence relations on $p(\C')$ relatively type-definable over a $\C$-small subset of $\C$ and with stable quotients. Our goal is to prove that the intersection $\bigcap_{i\in I}E_i$ is a relatively type-definable over a $\C$-small subset of $\C$ equivalence relation on $p(\C')$ with stable quotient. 
		
		Using Proposition \ref{decomposition of equivalence relations}, we can write each $E_j$ as $\bigcap_{i\in I_j} F^i_j $, where each $F^i_j$ is %change3.9b a countably defined relatively type-definable (over a countable subset of $\C$) equivalence relation on $p(\C')$
		a type-definable equivalence relation on $p(\C')$ countably relatively definable over a countable subset of $\C$. Since the $F^i_j$'s are coarser than the corresponding $E_j$, each $F^i_j$ also has stable quotient. We can now write $$ \bigcap_{j\in I}E_j=\bigcap_{j\in I}\bigcap_{i\in I_j} F^i_j.$$
		%%%''types over $\emptyset$'' in place of ``types over $A$''. I also changed the notation. ALso, ``types over $A$'' in place of ``types'', and ``tuples'' instead of ``sets''.		
		Note that the number of possible countable types  %change 3.9c$\pi^i_j(x,y,z)$ over $\emptyset$
		%%%Krzys rev: It was not precise enough and in fact incorrect. I added ``over \emptyset'' and ``whose instances''. 
		over $\emptyset$ whose instances relatively define the $F^i_j$'s is bounded by $2^{\lvert x\rvert+\lvert T\rvert}$, and the set of types over $A$ of the countable tuples of parameters used in the relative definitions of the $F^i_j$'s is bounded by $2^{\lvert T \rvert + \lvert A \rvert}$.
		%%%Krzys: ``coincides with'' in place of `` is equivalent to''. I also added ``$=\nu$''. Then I added an explanation which allows us to remove the + from the degree of saturation.
		Hence, by the assumptions of the claim, the intersection $\bigcap_{j\in I}E_j$ coincides with an intersection $\bigcap_{k\in K}F^{i_k}_{j_k}$, where $\lvert K\rvert \leq 2^{\lvert T \rvert + \lvert A \rvert} \times  2^{\lvert T \rvert + \lvert x \rvert}\times \nu = \nu$. In fact, since $2^{\lvert T \rvert + |A|+ \lvert x \rvert}$ is strictly smaller than the cofinality of $\nu$, we can even get $|K| < \nu$.
		%%%Adrian: This is because we can regroup the intersection by possible $\Phi(x,y,z)$ and type over $A$ of the parameters. There are at most 2^{(\lvert T \rvert + \lvert A \rvert)} \times  2^{(\lvert T \rvert + \lvert x \rvert)} groups and inside each group, by the assumption of the claim, the intersection stabilizes in less than $\nu$ steps.
		Finally, by \cite[Remark 1.4]{MR3796277}, $\bigcap_{k\in K}F^{i_k}_{j_k}$ is a relatively type-definable over a $\C$-small subset of $\C$ (as $\C$ is $\nu$-saturated) equivalence relation on $p(\C')$ with stable quotient.
	\end{proof}
	
	%%%Krzys: I corrected the whole proof without marking changes, as there were too many of them. 
	Suppose the theorem fails. 
	%%%Krzys Feb 1: I added ``countable'' below. 
	By the claim, there exists a countable type $\pi(x,y,z)$ over $\emptyset$ and a countable tuple $a_0$ in $\C$ such that $\pi(x,y,a_0)$ relatively defines an equivalence relation on $p(\C')$ with $\bigslant{p(\C')}{\pi(\C',\C',a_0)\cap p(\C')^2}$ stable and there is $(a_i)_{i<\nu}\subseteq \C$ such that for all $i<\nu$, $a_i\equiv_{A}a_0$ and $\bigcap_{j<i}\pi(\C',\C',a_j)\cap p(\C')^2\not\subseteq \pi(\C',\C',a_i)$. 
	%By Proposition \ref{decomposition of equivalence relations} %change I added that here we are using the moreover part, we use it so the size of x does not matter
	%%%Krzys rev: The fact that |x|$ does not matter is not essential in this proof. The fact that using Prop. 2.6 we can keep the same partial type $\pi$ is essential. I referred to the new Corollary 2.7 (based on the moreover part of Prop. 2.6) in place of the moreover part of Prop 2.6.
	%(the moreover part), 
	By Corollary \ref{existence of a small model set of parameters}, enlarging $a_0$, we can assume that $a_0$ enumerates an $\aleph_0$-saturated model in $L_A$ of size at most 
	%%%Krzys Feb 1: I replaced ``$\alpeh_0$'' by ``$|T|$''. In this application of Prop 2.6, we use the ``moreover'' part, both the inequality and the fact the the resulting partial type is $\pi$ (note that we may clearly assume here that our $\pi$ consists of symmetric, reflexive formulas and is closed under conjunction). Do you agree with that?
	$2^{|T| + |A|}$ and $\pi(x,y,a_0)$ relatively defines an equivalence relation on $p\!\upharpoonright_{a_0}\!\!(\C')$; by Lemma \ref{reduction to a small model}, 
	%%%Krzys rev: ``a stable''
	this relation also yields a stable quotient %change 3.10b
	on $p\!\upharpoonright_{a_0}\!\!(\C')$.  
	
	Let $(b_i,c_i)_{i<\nu}$ be a sequence witnessing that $\bigcap_{j<i}\pi(\C',\C',a_j)\cap p(\C')^2\not\subseteq \pi(\C',\C',a_i)$. 
	%%%Krzys: Below I wrote that the sequence is from $\C'$, but, by our saturation assumption, it can be taken in $\C$. I did not write it, to avoid the saturation assumptions whenever possible.
	Let $(a_i',b_i',c_i')_{i< \nu} \subseteq \C'$ be an $A$-indiscernible sequence extracted from  $(a_i,b_i,c_i)_{i<\nu}$. Then, since $p$ is $A$-invariant, we get that for all $i<\nu$
	%%%Adrian2: in the next line a_\nu was not a subscript of \upharpoonright.
	$$(b_i',c_i') \in \left(\bigcap_{j<i}\pi(\C',\C',a'_j)\cap (p\!\upharpoonright_{ a'_{<\nu}}\!\!(\C'))^2\right)\setminus \pi(\C',\C',a'_i).$$
	%By extracting indiscernibles, we can assume that it is an $A$-indiscernible sequence contained in $\C$.
	%Let $\pi(x,y,z)$ and $a_0\in \C$ be such that $\pi(x,y,a_0)$ defines an equivalence relation on $p(\C')$ with $\bigslant{p(\C')}{\pi(\C',\C',a_0)\cap p(\C')^2}$ stable. We can assume that $a_0$ is saturated enough to apply Remark \ref{reduction to a small model}, hence $\pi(x,y,a_0)$ also defines an equivalence relation on $p\upharpoonright_{a_0}(\C')$ with stable quotient. Suppose that there is $(a_i)_{i<\nu}$ such that for all $i<\nu$, $a_i\equiv_{A}a_0$ and $\bigcap_{j<i}\pi(\C',\C',a_j)\cap p(\C')^2\not\subseteq \pi(\C',\C',a_i)$. Let $(a_i,b_i,c_i)_{i<\nu}$ be a sequence witnessing it. By extracting indiscernibles, we can assume that it is an $A$-indiscernible sequence. 
	Moreover, since $a_0' \equiv_A a_0$ %change 3.12 also added $A% invariance here, I did not explain why I use it above because it was already done previously.
	and $p$ is $A$-invariant, we get that $\pi(x,y,a_0')$ relatively defines an equivalence relation on $p\!\upharpoonright_{a_0'}(\C')$, and we also have $a_i' \equiv_A a_0'$ for all $i<\nu$. Therefore, by Lemma \ref{existence of an special index},
	%%%Adrian: here we reference lemma 3.6, that uses extracting indisc. once
	there exists some $\beta <\nu$ such that  %change3.13 changed \alpha<\nu in the subscript of a to make it less confusing
	$$(*)\;\;\;\;\;\; \bigcap_{\alpha \neq \beta} \pi(\C',\C',a_{\alpha}')\cap(p\!\upharpoonright_{a_{<\nu}'}\!\!(\C'))^2\subseteq\pi(\C',\C',a_{\beta}').$$ 
	
	%%%: Here, again I could chose this inserted sequence in $\C$, but I do not do that.
	In the sequence $(a_i',b_i',c_i')_{i< \nu}$, let us insert a sequence $(d'_i,e'_i,f'_i)_{i<\omega}$ from $\C'$ in place of the element
	%%%Adrian2: added ' in a b and c 
	$(a_\beta',b_{\beta}',c_{\beta}')$ so that the resulting sequence is still $A$-indiscernible. Then, since $p$ is $A$-invariant, for all $i<\omega$
	%We can extend the indiscernible sequence $(a_i,b_i,c_i)_{i<\nu}$ by substituting $(a_\beta,b_{\beta},c_{\beta})$ for a sequence $(a_i',b_i',c_i')_{i<\omega}$ in a way that everything is still $A$-indiscernible. Then, the new elements of the sequence satisfy the following:
	%$$(**)\;\;\;\;\;\; (e_i',f_i') \in \left(\bigcap_{j< i}\pi(\C',\C',d_j')\cap (p\!\upharpoonright_{a'_{<\nu}, d'_{<\omega}}(\C'))^2\right) \setminus\pi(\C',\C',d_i').$$
	$$(**)\;\;\;\;\;\; (e_i',f_i') \in \left(\bigcap_{j< i}\pi(\C',\C',d_j')\cap (p\!\upharpoonright_{a'_{\substack{\alpha<\nu\\\alpha \neq \beta}}, d'_{<\omega}}\!\!(\C'))^2\right) \setminus\pi(\C',\C',d_i').$$
	%	d_i'&\equiv_{A}d_0'.
	%Without loss of generality $a_i'\in \C$ for all $i<\omega$. Moreover, 
	Hence, due to the $A$-indiscernibility of the sequence $(d_i',e_i',f_i')_{i<\omega}$, there exists some formula $\varphi$ implied by $\pi$ such that for all $i<\omega$ we have $(e_i',f_i')\not\in\varphi(\C',\C',d_i')$. 
	
	%%%Krzys rev: I removed below  \equiv_A a_0, as it was redundant here.
	Moreover, since $d_i' \equiv_A a_0'$ and using the $A$-invariance of $p$, we get that $\pi(x,y,d_i')$ relatively defines an equivalence relation on $p\!\upharpoonright_{d_i'}\!\!(\C')$.
	
	Let us consider the set 
	$$X:=p\! \upharpoonright _{a'_{\substack{\alpha<\nu\\\alpha\neq \beta} }}\!\!(\C').$$ 
	By the above choices and $A$-invariance of $p$, the type $\bigcup_{\substack{\alpha<\nu\\\alpha\neq \beta} }\pi(x,y,a'_\alpha)$ relatively defines an equivalence relation $E$ on $X$ with stable quotient, and the sequence $(d_i',[e_i']_E,[f_i']_E)$  is indiscernible over $$B:=A\cup \{a'_\alpha: \alpha <\nu; \alpha \neq \beta\}.$$
	%
	%%%Krzys rev: I changed the part below until "We have shown that".
	Hence, 
	$$\tp\left(\bigslant{(d_j',[e_i']_E,[f_i']_E)}{B}\right)=\tp\left(\bigslant{(d_i',[e_j']_E,[f_j']_E)}{B}\right)$$
	for all $j<i$. 
	
	Let $E_i$ be the equivalence relation relatively defined by the partial type $\pi(x,y,d_i')$ on $p \!\upharpoonright _{a'_{\substack{\alpha<\nu\\\alpha\neq \beta} },d_i'}\!\!(\C')$. By $(**)$, $e_i' E_j f_i'$ for all $j<i$. Using this and the previous paragraph, we will deduce that $e_j' E_i f_j'$ for all $j<i$.
	%%%Adrianrev2: added Indeed
	%%%Krzysrev2: I kept the justification below as a new paragraph as in the previous version.
	
	Indeed, take any $j<i$. Since $\tp\left(\bigslant{(d_j',[e_i']_E,[f_i']_E)}{B}\right)=\tp\left(\bigslant{(d_i',[e_j']_E,[f_j']_E)}{B}\right)$, there is $\sigma \in \aut(\C'/B)$ such that $\sigma(d_j',[e_i']_E,[f_i']_E)=(d_i',[e_j']_E,[f_j']_E)$. Then, by the $A$-invariance of $p$, $\sigma[E_j]=E_i$. Thus, since $e_i' E_j f_i'$, we conclude that $\sigma(e_i') E_i \sigma(f_i')$.  On the other hand, by $(*)$ and $A$-invariance of $p$, we have $E \upharpoonright_{\dom(E_i)} \subseteq E_i$, which together with the fact that $e_j' E \sigma(e_i')$, $f_j' E \sigma(f_i')$, and $e_j',f_j',\sigma(e_i'),\sigma(f_i') \in \dom(E_i)$ gives us $e_j'E_i\sigma(e_i')$ and $f_j' E_i \sigma(f_i')$. Therefore, $e_j' E_i f_j'$, as required.

	%%%%%%%%%%%%%%%%%%%%%%%%%%%%%%%%%%%%%%%%%%%%%%%%%%%%%%%%%
	\begin{comment}
	Let $E_i$ be the equivalence relation relatively defined by the partial type $\pi(x,y,d_i')$ on $p \!\upharpoonright _{a'_{\substack{\alpha<\nu\\\alpha\neq \beta} },d_i'}\!\!(\C')$ and $E\!\!\upharpoonright_{p_{d_i'}}$ the restriction of  $E$ to $p \!\upharpoonright _{a'_{\substack{\alpha<\nu\\\alpha\neq \beta} },d_i'}\!\!(\C')$. 
	%%%Krzys: This \biglant is too big. It does not look nice. Do something with that.!!!
	%%%Adrian2: I used the same command as in remark 3.7, one can make it more compact adding \! in the same way.
	By $(**)$, the elements $[e_i']_{E\upharpoonright_{p_{d_j'}}}$ and $[f_i']_{E\upharpoonright_{p_{d_j'}}}$ are $\bslant{E_j}{E\!\!\upharpoonright_{p_{d_j'}}}$-related for all $j<i$ 
	%%%Krzys final: I replaced $i$ by $j$ in the parentheses, as here we use it for $j$.
	(recall Remark \ref{F/E}, and note that $E\!\!\upharpoonright_{p_{d_j'}} \subseteq E_j$ by $(*)$). This is coded in $$\tp(\bigslant{(d_j',[e_i']_E,[f_i']_E)}{B}).$$
	%%%Adrian: Explanation below
	%because we can express that condition as $$\exists y_1 \exists y_2 (y_1Eb_i' \wedge y_2Ec_i'\wedge p\upharpoonright_{a'_j}(y_1)\wedge p\upharpoonright_{a'_j}(y_2)\wedge \pi(y_1,y_2,a'_j)).$$
	Since $X/E$ is stable, we know that 
	$$\tp(\bigslant{(d_j',[e_i']_E,[f_i']_E)}{B})=\tp(\bigslant{(d_i',[e_j']_E,[f_j']_E)}{B}),$$ so we conclude that the elements $[e_j']_{E\upharpoonright_{p_{d_i'}}}$ and $[f_j']_{E\upharpoonright_{p_{d_i'}}}$ are $\bslant{E_i}{E\!\!\upharpoonright_{p_{d_i'}}}$-related for all $j<i$. 
	%%%Krzys final: I removed the reference to (*) here, as the inclusion is needed already in the previous sentence (and follows by automorphism from the inclusion in the parentheses which was deduced from (*)).
	%%%Krzys Feb 1: As I modified the definition of F/E$ in Remark 3.7, I removed below ``Since $E\!\!\upharpoonright_{p_{d_i'}} \subseteq E_i$, we conclude that'', as now it follows directly from the definition of $F/E$.
	%Since 
	%$E\!\!\upharpoonright_{p_{d_i'}} \subseteq E_i$, we conclude that 
	Hence, the elements $e_j'$ and $f_j'$ are $E_i$-related for all $j<i$. 
	\end{comment}
	%%%%%%%%%%%%%%%%%%%%%%%%%%%%%%%%%%%%%%%%%%%%%%%%%%%%%%%%%%%

	We have shown that the sequence $(d_i',e_i',f_i')$ satisfies: 
	\begin{align*}
		\pi(e_j',f_j',d_i') &\text{ for all } i\neq j; \hspace{3pt} i,j<\omega;\\
		\neg\varphi(e_i',f_i',d_i') &\text{ for all } i<\omega.
	\end{align*}
	%Since all $d_i'$ belong to $\C$ and have the same type over $A$, we get that $d_0'\equiv_a d_i'$ for any $i<\omega$ and $a\in p(\C')$. 
	Take any $a \models p\! \upharpoonright_{d'_{<\omega}}$. Since $d_i' \equiv_A d_0'$  for all $i<\omega$ %change 3.15 added the invariance of p
	and $p$ is $A$-invariant, we get that $d_i'\equiv_a d_0'$ for all $i<\omega$.
	Thus, the sequence $(d_i')_{i<\omega}$ satisfies the assumption of Lemma \ref{IP condition}, and so we get IP, a contradiction.
\end{proof}

We end this section with some comments on whether the large saturation condition in Theorem \ref{Est exists} is necessary or could be eliminated. 

%The specific sufficient degree of saturation $\nu$ was needed in the proof to find an $A$-indiscernible sequence $(a_i',b_i',c_i')_{i<\nu}$ inside $\C$ to which we could apply Lemma \ref{existence of an special index} and which we could then be extend by a sequence $(d_i',e_i',f_i')_{i<\omega}$ from $\C$ inserted in place of $(a_\beta', b_\beta',c_\beta')$; it was essential that  $(d_i',e_i',f_i')_{i<\omega}$ is from $\C$ in order to apply Lemma \ref{IP condition} to it. Since $|a_0| \leq 2^{\aleph_0 + |A|}$, in order to apply  Lemma \ref{existence of an special index}, we need to have that $\nu$ is at least $\beth_{(2^{2^{\aleph_0 + |A|} +\lvert x \rvert + |T|+|A|})^+}$, and so we need to assume that $\C$ is $(\beth_{(2^{2^{\aleph_0 + |A|} +\lvert x \rvert + |T|+|A|})^+})^+$-saturated. In the statement of the theorem, we used a bigger degree of saturation written in a more concise form.

%%%Krzys Feb 1: I replaced $\alpeh_0$ by $|T|$ in the next paragraph because of the same change in the proof of the main theorem. I also added ``$= \beth_{(2^{2^{|T| + |A|} +\lvert x \rvert})^+}$'', and I used this simpler form later.
Note that in the above proof, in order to extract indiscernibles from the sequence $(a_i,b_i,c_i)_{i<\lambda}$, we need to know that $\nu$ is at least  $\beth_{(2^{2^{|T| + |A|} +\lvert x \rvert + |T|+|A|})^+} = \beth_{(2^{2^{|T| + |A|} +\lvert x \rvert})^+}$. On the other hand, the proof of the claim requires that any number smaller than $\nu$ is smaller than the degree of saturation of $\C$. That is why the whole proof requires that $\C$ is at least $\beth_{(2^{2^{|T| + |A|} +\lvert x \rvert})^+}$-saturated. 
In the statement of the theorem, it is enough to assume that $\C$ is $\beth_{(2^{2^{|T| + |A|} +\lvert x \rvert})^+}$-saturated; we used a bigger  degree of saturation, which is notationally more concise.

Although our proof uses essentially the assumption on the degree of saturation, one could still try to transfer the existence of the finest relatively type-definable equivalence relation from big models to their elementary substructures. 

%change 3.17
%%%Krzys: Again, I removed "and strongly homogeneous".
Let $\C\prec\C_1\prec \C'$ be such that $\C_1$ is $\C'$-small and at least as saturated 
%and strongly homogeneous 
as $\C$, and let $p_1(x)\in S(\C_1)$ be the unique $A$-invariant extension of $p(x)$.

While Corollary \ref{corollary: from C to C_1} allows us to transfer the existence of the finest relatively type-definable over a $\C$-small subset of $\C$ equivalence relation on $p(\C')$ with stable quotient to the finest relatively type-definable over a $\C_1$-small subset of $\C_1$ equivalence relation on $p_1(\C')$, in order to eliminate the specific saturation assumption in Theorem \ref{Est exists}, we would need to have a transfer going in the other direction. In Corollary \ref{corollary: from C_1 to C}, we proved such a transfer but only under the additional assumption that the finest relatively type-definable  over a $\C_1$-small subset of $\C_1$ equivalence relation $E$ on $p_1(\C')$ is relatively type-definable over a $\C$-small subset. Therefore, the specific saturation assumption could be eliminated if we could answer positively the following question.

\begin{question}
	In the context of Theorem \ref{Est exists}, is $E^{\textrm{st}}$ always relatively type-definable over $A$?
\end{question}

In the examples studied in the next section, this turns out to be true. Also, in the context of type-definable groups studied in \cite{MR3796277}, $G^{\textrm{st}}$ is type-definable over the parameters over which $G$ is type-definable.

\section{Examples}\label{section examples}
We present two examples where $E^{\textrm{st}}$ is computed explicitly, the second example is based on Section \ref{section: main example}.%\cite[Section 4]{10.1215/00294527-2022-0023}. 
%%%Krzys: Added the next sentence.
%%%Krzys rev: $\C$-small
 In fact, in both examples, we give full classifications of all relatively type-definable over $\C$-small subsets of $\C$ equivalence relations on $p(\C')$, for suitably chosen $p \in S(\C)$.

\subsection*{Example 1}
Let our language be $L:=\{R_r(x,y),f_s(x): r\in\mathbb{Q}^+, s\in \mathbb{Q}\}$ and $T$ be the theory of 
%%%Krzys final: I changed the index set, as $r$ should be positive.
$(\mathbb{R},R_r,f_s)_{r\in\mathbb{Q}^+,s\in\mathbb{Q}}$, where $f_s(x):=x+s$ and $R_r(x,y)$ holds if and only if $0\leq y-x\leq r$.

%%%Krzys: I added $\mathbb{Q}_+$ and  $\mathbb{Q}_-$ in items 3 and 4 below. I also added there that $y \ne f_q(x)$.
%%%Krzys rev: I used the symbol $\sqcup$ for the disjoint union. I also put in parentheses the commet that you wrote later, adding disjointness twice)
We define the directed distance between two points as a function $$d:\C'\times\C'\to \mathbb{R}\sqcup \mathbb{Q}_+\sqcup\mathbb{Q}_-\sqcup \{\infty\}$$ 
(where $\mathbb{Q}_+$ and $\mathbb{Q}_-$ are disjoint copies of $\mathbb{Q}$ which are disjoint from $\mathbb{R}\sqcup \{\infty\}$) satisfying: 
\begin{align*}
	&d(x,y)=q\in\mathbb{Q} \iff y=f_q(x);\\
	%%%Krzys final: As above, I wrote that $s_1,s_2$ are positive.
	&d(x,y)=r\in \mathbb{R}^{+}\setminus\mathbb{Q} \iff \forall s_1,s_2\in \mathbb{Q}^+ \text{ such that } s_1<r<s_2, \neg R_{s_1}(x,y)\wedge R_{s_2}(x,y);\\
	&d(x,y)=q_+ \in \mathbb{Q}_+ \iff y\ne f_q(x) \text{ is  infinitesimally close to }f_q(x)\text{ on the right;}\\
	&d(x,y)=q_- \in  \mathbb{Q}_- \iff y \ne f_q(x) \text{ is  infinitesimally close to }f_q(x)\text{ on the left;}\\
	%%%Krzys: I wrote \lor instead of \wedge below.
	&d(x,y)=\infty \iff \neg(R_s(x,y)\lor R_s(y,x)) \text{ for all } s\in \mathbb{Q}^+.
\end{align*}
%change 4.2
%%%Krzys rev: I added "whenever $d(x,y)\in \mathbb{R}^+ \setminus \mathbb{Q}$".
We complete the definition of $d$ extending it symmetrically in the negative irrational case, i.e $d(y,x):=-d(x,y)$ whenever $d(x,y)\in \mathbb{R}^+ \setminus \mathbb{Q}$. 
%%%Krzys rev: I added the next sentence.
This clearly gives us a well defined function $d$.
%In this definition, $\mathbb{Q}_+$ and $\mathbb{Q}_-$ are disjoint copies of $\mathbb{Q}$ that are disjoint from $\mathbb{R}\sqcup \{\infty\}$.
%
%%%Krzys rev: The next sentence is new. Also, various corrections in the first two items below. Do we use anywhere items 2 and 3? I think not, so I removed them, but if I am wrong, we have to add them back.
Addition on $\mathbb{R}$ is extended to  $\mathbb{R}\sqcup \mathbb{Q}_+\sqcup\mathbb{Q}_-\sqcup \{\infty\}$ in the natural way, in particular:
\begin{itemize}
	\item $q'+q_+:=(q'+q)_+ \text{ and }q'+q_-:=(q'+q)_-$ for any $q,q' \in \mathbb{Q}$;
	% 	\item $-(q_+):=(-q)_- \text{ and } -(q_-):=(-q)_+$ for any $q \in \mathbb{Q}$;
	% 	\item $-\infty:=\infty;$
	\item $r+\infty:=\infty \text{ for any } r\in \mathbb{R}\cup \mathbb{Q}_+\cup\mathbb{Q}_-\cup \{\infty\}.$
\end{itemize}
%Clearly, this function is well defined.
%%%Krzys: I called it ``Lemma 4.1'', not ``Proposition 4.1''
\begin{lema}\label{properties distance}
	Properties of the distance:
	\begin{enumerate}
		\item $d(a,f_q(b))=q+d(a,b)$ and $d(f_q(a),b)=-q+d(a,b)$;
		\item For any distinct real numbers $r_1,r_2$, if $d(a,b)=r_1$ and $d(a,c)=r_2$, then $d(b,c)=r_2-r_1$;
		\item For any irrational $r$, if $d(a,b)=r$ and $d(b,c)=0_{\pm}$, then $d(a,c)=r$;
		\item For any irrational $r$, if $d(a,b)=r=d(a,c)$, then $d(b,c)=0_\pm$.
	\end{enumerate}
\end{lema}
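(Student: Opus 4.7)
The plan is to verify each property by unwinding the case analysis in the definition of $d$, using the basic identity $f_q \circ f_{q'} = f_{q+q'}$ and the first-order equivalence
$$R_s(a, f_q(b)) \iff R_s(f_{-q}(a), b),$$
which holds in $\mathbb{R}$ (both sides say ``$0 \leq b - a + q \leq s$'') and therefore in any model of $T$. The heuristic guiding everything is that $d(a,b)$ records the ``directed distance'' $b - a$ in the monster model, and the four properties are the obvious identities for such a directed distance; the work lies in translating them into the language $L$.

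For property (1), I would split on the value of $d(a,b)$. If $d(a,b) = q' \in \mathbb{Q}$, then $b = f_{q'}(a)$, so $f_q(b) = f_{q+q'}(a)$ and hence $d(a, f_q(b)) = q+q'$ by the rational case of the definition. If $d(a,b) = r \in \mathbb{R}^+ \setminus \mathbb{Q}$, then for all rationals $s_1 < r < s_2$ we have $\neg R_{s_1}(a,b) \wedge R_{s_2}(a,b)$; applying the shift equivalence to $R_{s'}(a, f_q(b))$ rewrites these constraints as the defining conditions for the correct case of $d(a, f_q(b))$ at the value $q + r$ (the irrational case when $q + r$ is positive irrational, reducing to the rational, $q_\pm$, negative-irrational, or $\infty$ case otherwise). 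The $q_\pm$ and $\infty$ cases are analogous. The second equality is symmetric.

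For property (2), with $d(a,b) = r_1 \neq r_2 = d(a,c)$, I would again split by rationality. If both $r_i$ are rational, then $b = f_{r_1}(a)$ and $c = f_{r_2}(a)$, so $c = f_{r_2 - r_1}(b)$ and $d(b,c) = r_2 - r_1$. In mixed or irrational-irrational cases, $d(a,b) = r_1$ pins down all relations $R_s(a,b)$ with rational $s$ bounded away from $r_1$, and similarly for $c$; combining these two families and using the shift identity to move the first coordinate from $a$ to $b$ produces precisely the defining conditions for $d(b,c) = r_2 - r_1$.

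For property (3), $d(b,c) = 0_\pm$ says that $c$ differs from $b$ by a positive or negative infinitesimal, so for every rational $s$ strictly separated from the irrational $r$ the relations $R_s(a,b)$ and $R_s(a,c)$ coincide, which are exactly the defining conditions for $d(a,c) = r$. For property (4), the hypothesis $d(a,b) = r = d(a,c)$ forces $b$ and $c$ to lie in the same ``monad cut'' above $a$, so any rational $s > 0$ satisfies either $R_s(b,c)$ and not $R_{s'}(b,c)$ for arbitrarily small $s' > 0$, or symmetrically $R_s(c,b)$; this exhibits $d(b,c)$ as $0_+$ or $0_-$ depending on the direction. The main obstacle is not conceptual but bookkeeping: keeping the sign analysis of property (1) disciplined and matching up the correct cases of the definition in property (2).
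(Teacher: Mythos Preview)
Your proposal is correct and follows essentially the same approach as the paper: a direct case analysis on the value of $d$, reducing everything to the behaviour of the relations $R_s$ under shifts and the sub-additivity $R_s(a,b)\wedge R_t(b,c)\Rightarrow R_{s+t}(a,c)$. The paper organises (2)--(4) around the latter identity rather than your shift equivalence, but the two are interchangeable; one small slip to fix is your phrasing of (4) --- the clause ``not $R_{s'}(b,c)$ for arbitrarily small $s'>0$'' is wrong (if $d(b,c)=0_+$ then $R_{s'}(b,c)$ holds for \emph{all} $s'>0$), and what you actually need is just $R_s(b,c)\vee R_s(c,b)$ for every rational $s>0$.
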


\begin{proof}
	$(1)$ follows from the definition of the distance.
	
	$(2)$ 
	%%%Krzys: I do not see why it is ``without loss of generality''? It seems that some other cases (e.g. r_2>0, r_1<0) should be similar, but do not formally follow from this case. Please correct it if needed. I changed this sentence eliminating wlog, but I did not check very carefully all the cases. Make sure that everything works.
	%	Since the rational case is covered in $(1)$, without loss of generality we assume that $0<r_1<r_2$ are irrationals. Let $q$ be any rational bigger than $r_2-r_1$.
	Since the rational case is covered in $(1)$, we can assume that $r_1,r_2$ are irrationals. Consider the case $0<r_1<r_2$; other cases are similar. Let $q$ be any rational bigger than $r_2-r_1$.
	%%%Krzys: I added that $q_1,q_2$ are rationals.
	%change4.3 the solution given by the referee was not correct I wrote what I showe you in the meeting
	We can write $q$ as $q_2-q_1$, where $q_1,q_2$ are rationals such that $q_1<r_1<r_2<q_2$. 
	%%%Krzys rev: I changed  "$R_{q}(a,b)$ holds for some $q\in (r_1,r_2)$ and", as it was senseless (as $q>r_2-r_1$ was already fixed).
	%Since $R_{q}(a,b)$ holds for some $q\in (r_1,r_2)$ and
	%%%Adrianrev2: added and $\neg R_{q'}(a,c)$
	Since  $R_{q'}(a,b)$ and $\neg R_{q'}(a,c)$ hold for some $q'\in \mathbb{Q}^+$ (for any $r_1<q'<r_2$) and $R_{q_1}(a,b)$ does not hold, $ R_{q_2-q_1}(b,c)$ has to hold; otherwise $R_{q_2}(a,c)$ would not hold, contradicting $d(a,c)=r_2$. Hence, $d(b,c)\leq q$
	
	%%%Krzys rev: I removed  "Hence, $d(b,c)\leq q$." below, as it is written above.
	Let now $q$ be any positive rational smaller than $r_2-r_1$. 
	%%%Krzys: I added that $q_1,q_2$ are rationals.	 
	We can write $q$ as $q_2-q_1$, where $q_1,q_2$ are rationals such that $r_1<q_1<q_2<r_2$. Since $R_{q_1}(a,b)$ holds, $R_{q_2-q_1}(b,c)$ cannot hold; otherwise, $R_{q_2}(a,c)$ would hold, contradicting $d(a,c)=r_2$. Hence, $d(b,c)\geq q$.
	%%%Adrian: I don't know if I should add more details on how one can take suck q_1 q_2. Also, maybe it is more clear to prove that if d(a,b)=r_1 and d(b,c)=r_2 then d(a,c)=r_1+r_2, but it is equivalent.
	
	%%%Krzys: Again, wlog is not clear for me. I think the other case is similar, but does not literally follow.	I changed it.
	%	$(3)$ Without loss of generality, we assume that $r>0$. We do the case $d(b,c)=0_+$; the case $d(b,c)=0_-$ is analogous.  Let $q$ be any rational bigger than $r$.
	$(3)$ Consider the case $r>0$ and $d(b,c)=0_+$; the other cases are analogous.  Let $q$ be any rational bigger than $r$.
	%%%Krzys: I added that $q_1,q_2$ are rationals.
	We can write $q$ as $q_1+q_2$, where  $q_1,q_2$ are rationals, $q_1>r$, and $q_2>0$. Then, $R_{q_1}(a,b)$ and $R_{q_2}(b,c)$ hold, hence so does $R_{q_1+q_2}(a,c)$. 
	%%%Krzys:	Starting a sentence from ``Which'' does look correct to me. I changed it.
	This implies that $d(a,c)\leq q$. Let now $q$ be any positive rational smaller than $r$. Then, $R_q(a,c)$ cannot hold; otherwise it would imply $R_q(a,b)$, a contradiction.
	
	%%%Krzys: Again, wlog is not so clear. I changed it.	
	$(4)$ 
	%Without loss of generality, we assume that $r>0$. 
	Consider the case $r>0$; the other case is similar. Consider any rationals $q_1$, $q_2$ satisfying $0<q_1<r<q_2$. Then, $R_{q_2-q_1}(f_{q_1}(a),b)\wedge R_{q_2-q_1}(f_{q_1}(a),c)$ holds, which imply $R_{q_2-q_1}(b,c)\vee  R_{q_2-q_1}(c,b)$. Since $q_2$ and $q_1$ were arbitrary, this means that $b$ and $c$ are infinitesimally close.
\end{proof}

%%%Krzys: It seems we only use the definition of $d$ and the first property above. Do you really use the other properties from the last proposition? Maybe it is good to make clear what we use.
%%%Adrian2: We only use property 1 and the definition to get that the distance implies the qf type, the other porperties are used to say that it is enough to look at pairs $(a_0,a_i)$ to determine the quantifier free type of a tuple $(a_0,a_1,...,a_n)$
%%%Krzys final: So I removed here ``With these properties'', as this is much simpler that the properties.
It is clear that the distance determines the quantifier-free type of a pair $(a,b)$. Since our language only contains unary and binary symbols, 
%%%Krzys: ``the elements''
the collection of distances between the elements of a given $n$-tuple determines its quantifier-free type.

%%%Krzys: ``Proposition 4.2'' instead of ``Lemma 4.2''.
\begin{prop}
	The theory $T$ has NIP and quantifier elimination.
\end{prop}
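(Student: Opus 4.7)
My plan is to prove the two claims separately. For NIP, I will show that $T$ is a reduct of the o-minimal theory of $(\mathbb{R}, +, \leq, 1)$ and hence NIP. Each rational $s = m/n$ lies in $\dcl(\emptyset)$ of $(\mathbb{R}, +, 1)$ via the formula $n\cdot s = m\cdot 1$, so the unary translation $f_s(x) = x + s$ is $\emptyset$-definable, and $R_r(x,y) \iff 0 \leq y-x \leq r$ is $\emptyset$-definable using $\leq$ and the definability of $r$. Thus $(\mathbb{R}, R_r, f_s)_{r,s}$ is a reduct of the o-minimal structure $(\mathbb{R}, +, \leq, 1)$, and since reducts of NIP theories are NIP, $T$ has NIP.

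For quantifier elimination, I plan to prove by induction on complexity that every primitive existential formula $(\exists y)\varphi(y, \bar x)$, with $\varphi$ a conjunction of atomic and negated atomic formulas, is $T$-equivalent to a quantifier-free formula. The key observation is that every $L$-term has the shape $f_{s_k}\circ\cdots\circ f_{s_1}(z) = z + q$ for a single variable $z$ and rational $q = s_1 + \cdots + s_k$. Hence every atomic subformula of $\varphi$ has one of the forms $R_r(t_1 + q_1,\, t_2 + q_2)$ or $t_1 + q_1 = t_2 + q_2$ with $t_1, t_2 \in \{y\} \cup \{x_1, \ldots, x_n\}$. Atomic formulas involving only $\bar x$ are already quantifier-free in $\bar x$, and formulas of the form $R_r(y + q_1, y + q_2)$ or $y + q_1 = y + q_2$ are absolute, depending only on whether $q_2 - q_1 \in [0, r]$ or $q_1 = q_2$. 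If $\varphi$ contains a positive equation $y + q_1 = x_i + q_2$, one eliminates $y$ by substituting $y = x_i + (q_2 - q_1)$.

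Otherwise every nontrivial constraint on $y$ has the form $R_r(y + q_1,\, x_i + q_2)^{\pm}$ or $R_r(x_i + q_1,\, y + q_2)^{\pm}$. By Lemma \ref{properties distance}(1), each such condition restricts the directed distance $d(y, x_i)$ to lie in a closed rational interval $[\alpha, \beta]$ (or its complement). Hence $(\exists y)\varphi$ is $T$-equivalent to the realizability of a finite system of interval constraints on the distances $d(y, x_1), \ldots, d(y, x_n)$. By an argument parallel to that of Proposition \ref{proposition: qe} (but simpler, as we have only unary translations and no binary addition), this realizability is $T$-equivalent to a Boolean combination of conditions of the form $R_r(x_i + q_1,\, x_j + q_2)$, which is quantifier-free in $\bar x$.

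The main obstacle will be the combinatorics of this case analysis: one must verify that the pairwise-compatibility conditions among the prescribed distances to $y$, expressed via $R_r$-statements about the $x_i + q$'s, are actually sufficient for global realizability. The high degree of homogeneity of $T$ — every translation $x \mapsto x + c$ (even for irrational $c$) is an automorphism of $(\mathbb{R}, R_r, f_s)$ — together with the composition rules for $d$ collected in Lemma \ref{properties distance}, ensures that consistent pairwise data assembles into a realizable tuple, so the elimination reduces to checking the appropriate triangle-type relations between the $d(x_i, x_j)$'s and the required intervals.
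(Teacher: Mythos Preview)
Your NIP argument is correct and identical to the paper's: $T$ is a reduct of the o-minimal theory of $(\mathbb{R},+,\leq,1)$.

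For quantifier elimination you take a genuinely different route from the paper. The paper uses a back-and-forth argument between $\aleph_0$-saturated models: given tuples with the same quantifier-free type and a new element $a_{n+1}$, it finds a matching $b_{n+1}$ by a case analysis on whether $a_{n+1}$ is at infinite distance from, a rational translate of, or at some other finite distance from the existing $a_i$'s, using the distance function $d$ and Lemma~\ref{properties distance}. This semantic approach avoids any explicit analysis of formulas.

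Your syntactic elimination plan is viable in principle, but there is a concrete gap in the final step. You assert that ``pairwise-compatibility conditions among the prescribed distances to $y$ \ldots\ are actually sufficient for global realizability,'' justifying this by homogeneity and Lemma~\ref{properties distance}. This is false once negated atomic constraints are present. For instance, take constraints $y\in[x_1,x_1+1]$, $y\notin[x_2,x_2+1]$, $y\notin[x_3,x_3+1]$ with $x_1=0$, $x_2=-\tfrac12$, $x_3=\tfrac12$: every pair of constraints is simultaneously satisfiable, yet the three together force $y\in[0,1]\setminus[-\tfrac12,\tfrac32]=\emptyset$. Helly's theorem in dimension one makes pairwise intersection sufficient for \emph{convex} constraints (positive $R_r$ literals), but complements of intervals destroy this. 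The correct qf condition must encode whether the positive interval is \emph{covered} by the negative ones, which involves the relative order of all endpoints $x_i+q$ simultaneously, not just pairs. This can be expressed by a disjunction over the finitely many possible orderings of the endpoints (each ordering being a conjunction of $R_r$-literals), but that is substantially more bookkeeping than your outline indicates, and neither homogeneity nor the triangle-type rules of Lemma~\ref{properties distance} shortcut it. The paper's back-and-forth argument sidesteps the entire issue.
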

\begin{proof}
	%%Krzys: ``a reduct'', not ``the reduct''. And I removed ``The theory''.
	$T$ has NIP, because it is a reduct of an o-minimal theory.
	
	%%%Krzys: I added ``argument''.
	We prove quantifier elimination using a back and forth argument. Let $\mathcal{M}$ and $\mathcal{N}$ be two $\aleph_0$-saturated models of $T$ and let $(a_1,\dots,a_n)$ and $(b_1,\dots,b_n)$ be tuples of elements of $\mathcal{M}$ and $\mathcal{N}$, respectively, satisfying the same quantifier free type. Choose a new element $a_{n+1}\in \mathcal{M}$. There are three cases:
	\begin{enumerate}
		\item $a_{n+1}$ is infinitely far from $a_1,\dots,a_n$;
		\item $a_{n+1}=f_q(a_i)$ for some $q\in \mathbb{Q}$ and $i=1,\dots,n$;
		%%%Krzys: I added ``(i.e., at finite distance)'', as being related was not defined. I also added ``equal to''.
		\item $a_{n+1}$ is related (i.e., at finite distance) to some of the $a_i$'s but is not equal to $f_q(a_i)$ for any $q\in \mathbb{Q}$ and $i=1,\dots,n$.
	\end{enumerate}
	
	In the first two cases, by $\aleph_0$-saturation, we can clearly choose $b_{n+1}\in\mathcal{N}$ such that $(a_1,\dots,a_{n+1})$ and $(b_1,\dots,b_{n+1})$ have the same quantifier-free type. Now, let us tackle the third case. 
	%By $\aleph_0$-saturation and the properties of the distance,
	%%%Adrian: If a_{n+1} is related to some $a_j$, it is infinitely far from any $a_i$ if and only if $a_j$ and $a_i$ are infinitely far from each other. So probably we dont even need aleph_0 saturation because $R_k(a_{n+1},a_j)$ implies that $a_{n+1}$ and $a_i$ are not related. 
	
	%%%Krzys: Yes, $\alpeh_0$-saturation is not needed for that, so I removed it. I added some explanation.
	%%%Krzys final: I added ``by'' before ``removing''.
	In the third case, by removing the elements of the sequence $(a_1,\dots,a_n)$ which are at infinite distance from $a_{n+1}$ as well as the corresponding elements of the sequence $(b_1,\dots,b_n)$, we may assume that no $a_i$ is infinitely far from $a_{n+1}$. Note also that for each $i<n$ there is at most one $q_i\in \mathbb{Q}$ such that $f_{q_i}(a_i)$ is infinitesimally close to $a_{n+1}$. 
	%%%Krzys: I changed the fragment below. It is necessary to consider the case $A = \emptyset$ separately!
	%Moreover, the set $A$ of such $f_{q_i}(a_i)$ is totally ordered (for example, by the relation $R_1(x,y)$). Consider the set $B:=\{f_{q_i}(b_i) : f_{q_i}(a_i)\in A \}$.
	Let $A$ be the set of all such $f_{q_i}(a_i)$'s. 
	
	%change 4.4 I tried accomodating the referee suggestions without writing so much
	
	%%%Krzys rev:Below I corrected a few typos and added a few commas.
	First, consider the case when $A \ne \emptyset$. Then $A$ is a finite set totally ordered  by the relation $R_1(x,y)$ and all elements in $A$ are infinitesimally close to each other and to $a_{n+1}$. Let $B:=\{f_{q_i}(b_i) : f_{q_i}(a_i)\in A \}$. Note that all the elements in $B$ are infinitesimally close to each other and that the map sending $f_{q_i}(a_i)$ to $f_{q_i}(b_i)$ is an $R_1$-order isomorphism.
	%%%Krzys: I slightly modified the next sentence.
	%Then, there exists $b_{n+1}$ with the same relative distance to every element in $B$ as $ a_{n+1}$ to the corresponding elements in $A$.
	Then, by density, there exists $b_{n+1}$ with the same $R_1$-relative position  to the elements in $B$ as $ a_{n+1}$ to the corresponding elements in $A$. 
	%%%Krzys rev: ``for each $i$ was not quite correct, as for some of the $i$'s we do not have any $f_q{i}}(a_i)$. I changed it to ``for $f_{q_i}(a_i) \in A$''.
	Hence, $d(b_{n+1},f_{q_i}(b_i))=d(a_{n+1},f_{q_i}(a_i))$ for each $f_{q_i}(a_i) \in A$, and, by Lemma \ref{properties distance}, this implies
	%%%Krzys: The equality of types as written before was senseless.
	%$$\qftp(\bigslant{b_{n+1}}{b_1,\dots,b_n})=\qftp(\bigslant{a_{n+1}}{a_1,\dots,a_n}).$$ 
	$$\qftp(b_1,\dots,b_n,b_{n+1})=\qftp(a_1,\dots,a_n, a_{n+1}).$$ 
	
	%%%Krzys: The next case is added.
	In the case when $A = \emptyset$, $d(a_i,a_{n+1})$ is irrational for every $i\leq n$. Pick $b_{n+1}$ so that $d(b_1,b_{n+1})=d(a_1,a_{n+1})$. Since $A=\emptyset$, by Lemma \ref{properties distance}(4), we get that $d(a_1,a_{n+1})\ne d(a_1,a_i)$ for all $1< i \leq n$. 
	%%%Krzys final. Can you move up the \qed so that it is at the end of the line with the formula at the end of the proof? Now it is on page 16, which is very ugly.
	%%%Krzys rev: I referred to Lemma 4.1 instead of to Lemma 4.1(2), as d(a_1,a_i) may be a rational (then we use 4.1(1)) or in Q_+ \cup Q_- (then we use 4.1(1) and 4.1(3)) or irrational (then we use 4.1(2)). Just because of this little change the formula below moved to the left. Can you move it back to the center?
	Hence, Lemma  \ref{properties distance} implies that\\
	\begin{minipage}[t]{0.15\linewidth}
		\hfill
	\end{minipage}\hfill
	\begin{minipage}[t]{0.7\linewidth}
		\begin{center}
			\vspace{1pt}
			$\qftp(b_1,\dots,b_n,b_{n+1})=\qftp(a_1,\dots,a_n, a_{n+1}).$
		\end{center}
	\end{minipage}\hfill
	\begin{minipage}[t]{0.15\linewidth}
		\vspace{1pt}
		\hfill\qedhere
	\end{minipage}
	%$$\qftp(b_1,\dots,b_n,b_{n+1})=\qftp(a_1,\dots,a_n, a_{n+1}).$$
	%%%Krzys: I removed the last sentence.
	%Clearly, finding $a_{n+2}\in\mathcal{M}$ satisfying the required conditions given some element $b_{n+2}\in \mathcal{N}$ is analogous.
\end{proof}

%change 4.5aLet $\C,\C'$ be monster models of $T$, where $\C'$ is also a monster model with respect to $\C$, and  
Let $p\in S_x(\C)$ be the %change 4.5b
%%%Krzys rev: I removed the redundant parentheses in the displayed formulas.
invariant (over $\emptyset$) complete global type determined by $$ \bigwedge_{c\in\C}\bigwedge_{n\in\omega}\neg R_n(x,c)\wedge \neg R_n(c,x).$$
We denote by $E(x,y)$ the equivalence relation on $\C'$ defined by $$\bigwedge_{r\in\mathbb{Q}^+}R_r(x,y)\vee R_r(y,x)$$
and by $E\!\upharpoonright_{p}$ the equivalence relation on $p(\C')$ relatively defined by the same partial type.

\begin{lema}
	The hyperdefinable set $\C'/E(\C',\C')$ is stable.
\end{lema}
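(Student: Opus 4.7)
The plan is to verify stability of $\C'/E$ through the characterization in Theorem \ref{equivteor}: since $T$ is NIP (as already established by the previous proposition), it suffices to check that every indiscernible sequence of elements of $\C'/E$ is totally indiscernible. I will fix such an indiscernible sequence $(c_i)_{i<\omega}$ in $\C'/E$ and, using Fact \ref{fact: indisc representatives}, pick representatives $a_i \in \C'$ so that $(a_i)_{i<\omega}$ is itself indiscernible in $\C'$.

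Because the language is only unary and binary, the quantifier elimination just proved reduces the quantifier-free type of any $n$-tuple in $\C'$ to the matrix of pairwise distances $d$. Consequently, the type of a pair $(c_i,c_j)$ is determined by the value $\pi(d(a_i,a_j))$, where $\pi \colon \mathbb{R} \sqcup \mathbb{Q}_+ \sqcup \mathbb{Q}_- \sqcup \{\infty\} \to \mathbb{R} \cup \{\infty\}$ is the natural collapse sending $q_\pm \mapsto q$ and being the identity on $\mathbb{R} \cup \{\infty\}$. Note that $a E b$ is equivalent to $\pi(d(a,b))=0$, so $\pi\circ d$ is exactly the mod-$E$ distance. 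Indiscernibility of $(c_i)$ then yields a fixed $r_0 \in \mathbb{R} \cup \{\infty\}$ with $\pi(d(a_i,a_j)) = r_0$ for all $i<j$.

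The heart of the argument will be the additivity relation $\pi(d(a,c)) = \pi(d(a,b)) + \pi(d(b,c))$ in $\mathbb{R} \cup \{\infty\}$ (with the convention $\infty + x = \infty$), valid for arbitrary $a,b,c \in \C'$. To prove it I would first invoke Lemma \ref{properties distance}(1) to translate $b$ (and separately $c$) within their respective $E$-classes by an appropriate rational so that both $d(a,b)$ and $d(b,c)$ land in $\mathbb{R}$; then parts (2) and (4) of Lemma \ref{properties distance}, applied with a judicious choice of vertex, compute $\pi(d(a,c))$. Substituting $a_1,a_2,a_3$ in the indiscernible sequence then gives $r_0 = 2r_0$ in $\mathbb{R} \cup \{\infty\}$, which forces $r_0 \in \{0,\infty\}$. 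If $r_0=0$, then every $a_i$ is $E$-equivalent to $a_0$ and $(c_i)$ is constant; if $r_0=\infty$, then every pair of distinct $a_i,a_j$ has $d(a_i,a_j)=\infty$, and by quantifier elimination the type of any finite subtuple depends only on the common $1$-type of the entries together with the (permutation-invariant) assertion that all pairwise distances are $\infty$, so $(a_i)$ is totally indiscernible in $\C'$ and hence $(c_i)$ is totally indiscernible in $\C'/E$. The main technical hurdle is the case analysis in the additivity step, where the $q_\pm$ values and their interactions with irrational distances need to be handled carefully; Lemma \ref{properties distance} has been designed precisely to make this work once one reduces to exact real representatives.
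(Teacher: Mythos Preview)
Your approach via total indiscernibility (item (4) of Theorem \ref{equivteor}) is correct and takes a genuinely different route from the paper.  The paper uses the type-counting criterion (item (5) of Corollary \ref{2.8}): it shows that for every $A\subseteq\C'$ with $|A|\le\mathfrak c$ one has $|S_{\C'/E}(A)|\le\mathfrak c$, by observing that any finite value $d(c,a)\ne\infty$ to a fixed $a\in A$ already pins down the $E$-class of $c$, leaving at most $\mathfrak c\cdot|A|+1$ possible types.  That argument is short and does not need NIP as an input.  Your argument is more geometric: indiscernibility of the representatives forces the common value $r_0:=\pi(d(a_i,a_j))$ to solve $r_0=2r_0$, whence $r_0\in\{0,\infty\}$, and in either case total indiscernibility is immediate from q.e.

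One small correction: the additivity $\pi(d(a,c))=\pi(d(a,b))+\pi(d(b,c))$ is \emph{not} valid for arbitrary $a,b,c$ --- take $a=c$ with $b$ at infinite distance from both.  It does hold whenever all three points lie in a single component (no pairwise distance equal to $\infty$), and this is all you actually use: in the case $r_0=\infty$ you do not need additivity, since indiscernibility of $(a_i)$ already gives $d(a_i,a_j)=\infty$ for all $i\ne j$ and q.e.\ finishes.  Also note that Theorem \ref{equivteor} requires NIP of the hyperdefinable set $\C'/E$, not merely of $T$; but the former follows from the latter (separating the two types witnessing IP by a first-order formula via compactness yields an IP formula in $T$), so your reduction is legitimate.
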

\begin{proof}
	By %\cite[Theorem 2.10]{10.1215/00294527-2022-0023}
 Theorem \ref{equivteor}, it is enough to prove that for any $A \subseteq \C'$ with $|A| \leq \mathfrak{c}$ we have $|S_{\C'/E}(A)| \leq \mathfrak{c}$. 
	
	%%%Krzys: ``in the same'' instead of ``on the same''.
	Clearly, the elements $c$ and $c'$ are in the same $E$-class if and only if $c=c'$ or $d(c,c')=0_\pm$. 
	%
	%%%Krzys: The final sentence of the proof was vague to me. I think you did not write something really simple which is used. So I rewrote the rest of the proof. Please correct it if necessary.
	%%%%%%%%%%%%%%%%%%%%%%%%%%%%%%%%%%%%%%%%%%%%%%%%%%%%%%%%%%%
	\begin{comment}
	If we look at $c/E$ and $d$ for some $c,d\in \C'$, exactly one of the three following holds:\begin{enumerate}
	\item Any $c'$ in the same $E$-class as c is infinitely far from $d$;
	\item Any $c'$ in the same $E$-class as $c$ is at some irrational distance $r$ from $d$;
	\item The element $f_q(d)$ belongs to the $E$-class of $c$ for some $q\in\mathbb{Q}$.
	\end{enumerate}
	Hence, by quantifier elimination and the properties of the distance, we have that   $|S_{\C'/E}(A)| \leq (\mathfrak{c}+1)\times\mathfrak{c}= \mathfrak{c}$.
	%%%Adrian: Here what is happening is that the type over $A$ of some hyper imaginary $c/E$ either is being infinitely far from every parameter in $A$, or $c$ is related tom something in $A$. If it is related to some $a\in A$, we have continuum many choices. The choice on how $c$ relates to $a$, by (2), (3) and the properties of the distance already fixes how $c$ is related to any other $a'\in A$. 
	\end{comment}
	%%%%%%%%%%%%%%%%%%%%%%%%%%%%%%%%%%%%%%%%%%%%%%%%%%%%%%%%%%%
	%%%Adrian 2 Feb: $d(c,a) =d(c',a)\neq \infty$ instead of $d(c,a) =d(c',a)$
	Note that whenever $d(c,a) =d(c',a)\neq \infty$, then $cEc'$. Therefore, specifying the distance $d(c,a) \ne \infty$ from $c$ to a given element $a \in A$ determines the class $[c]_E$. On the other hand, by q.e., the condition saying that $d(c,a)=\infty$ for all $a \in A$ determines $\tp(c/A)$. Therefore, $|S_{\C'/E}(A)| \leq \mathfrak{c}\times\mathfrak{c} +1= \mathfrak{c}$.
\end{proof}

%%%Krzys: I called it ``Proposition'' instead of ``Theorem''.
%%%Krzys rev: $\C$-small
\begin{prop}
	The only equivalence relations on $p(\C')$ relatively type-definable over a $\C$-small subset of $\C$ are equality, $E\!\upharpoonright_p$, and the total equivalence relation. 
\end{prop}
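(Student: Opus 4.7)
The plan is to exploit quantifier elimination and the fact that any realization of $p$ lies at infinite distance from every parameter of $\C$, in order to reduce the classification of relatively type-definable equivalence relations on $p(\C')$ to a classification of certain ``distance sets'', and then to classify those under the constraints of being an equivalence relation and of being type-definable.

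\textbf{Step 1 (reduction to $\emptyset$).} I will first show that for $a,b\in p(\C')$ and any $\C$-small $B\subseteq\C$, the type $\tp(ab/B)$ is determined by $d(a,b)$ alone. By quantifier elimination it suffices to check atomic formulas, and since each element of $B$ is at infinite distance from both $a$ and $b$, every atomic formula of the form $R_r(x,c)$, $R_r(c,x)$, or $x=f_s(c)$ with $c\in B$ is automatically decided, so the only remaining information is the binary data encoded in $d(a,b)$. Consequently, every subset of $p(\C')^2$ relatively type-definable over a $\C$-small subset of $\C$ already equals a subset relatively type-definable over $\emptyset$, and I may assume throughout that the equivalence relation $\sim$ in question is relatively $\emptyset$-type-definable.

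\textbf{Step 2 (parametrize by distance sets).} I will associate to $\sim$ the set
\[
D:=\{d(a,b):a,b\in p(\C'),\ a\sim b\}\subseteq \mathbb{R}\sqcup\mathbb{Q}_+\sqcup\mathbb{Q}_-\sqcup\{\infty\},
\]
and observe that $D$ must satisfy: (i) $0\in D$ (reflexivity); (ii) $D$ is closed under the natural negation of distances (symmetry); (iii) $D$ is closed under ``addition'' of distances in the sense of Lemma \ref{properties distance} (transitivity); (iv) $D$ is closed in the logic topology (type-definability). The correspondence $\sim\mapsto D$ is a bijection with the family of sets satisfying (i)--(iv).

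\textbf{Step 3 (purely infinitesimal case).} If $D\subseteq\{0,0_+,0_-\}$, then either $D=\{0\}$ (so $\sim$ is equality) or $D$ meets $\{0_+,0_-\}$; in the latter case symmetry (together with $-0_+=0_-$) forces $\{0_+,0_-\}\subseteq D$, so $D=\{0,0_+,0_-\}$. A short quantifier-elimination computation identifies $\{0,0_+,0_-\}$ as the distance set of $E\!\!\upharpoonright_p$, giving $\sim\,=\,E\!\!\upharpoonright_p$ in this case.

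\textbf{Step 4 (a non-infinitesimal element forces the total relation).} Suppose $D$ contains some $d_0$ whose standard part is nonzero, i.e.\ $d_0\in (\mathbb{R}\setminus\{0\})\cup\{q_\pm:q\in\mathbb{Q}\setminus\{0\}\}\cup\{\infty\}$. I claim that then $\infty\in D$. Granting this, for arbitrary $a,c\in p(\C')$ I will use saturation of $\C'$ to pick $b\in p(\C')$ infinitely far from the $\C'$-small set $\C\cup\{a,c\}$; then $d(a,b)=d(b,c)=\infty\in D$, so $a\sim b\sim c$, and transitivity gives $a\sim c$. Hence $\sim$ is the total equivalence relation.

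To prove $\infty\in D$, the case $d_0=\infty$ is trivial, so assume $|\operatorname{std}(d_0)|\geq\epsilon$ for some positive rational $\epsilon$. For each $n\in\mathbb{N}^+$, by saturation I build a chain $a_0,\ldots,a_n\in p(\C')$ with $d(a_i,a_{i+1})=d_0$; a short case analysis using Lemma \ref{properties distance} (rationals iterate via $f_{d_0}$, irrationals via Lemma \ref{properties distance}(2), infinitesimal-shift distances by direct summation) shows that $d(a_0,a_n)$ has standard part $n\cdot\operatorname{std}(d_0)$, so $|\operatorname{std}(d(a_0,a_n))|\geq n\epsilon\to\infty$. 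Transitivity yields $a_0\sim a_n$, so for every $N\in\mathbb{N}^+$ the partial type
\[
\pi(x,y)\cup p(x)\cup p(y)\cup\{\neg R_N(x,y)\wedge\neg R_N(y,x)\},
\]
where $\pi$ relatively type-defines $\sim$, is consistent. Saturation of $\C'$ then realizes the full intersection over all $N$, producing $a,b\in p(\C')$ with $a\sim b$ and $d(a,b)=\infty$, as required.

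The main obstacle I anticipate is the case-by-case verification in the subclaim of Step 4 that iteration really drives the standard part of $d(a_0,a_n)$ to infinity for each possible shape of $d_0$; this is a routine but slightly tedious application of each of the four items of Lemma \ref{properties distance} in turn.
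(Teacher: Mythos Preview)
Your proof is correct and takes essentially the same approach as the paper: both use quantifier elimination to reduce the type of a pair in $p(\C')$ over any $\C$-small set to the single invariant $d(a,b)$, then iterate a non-infinitesimal distance to reach $\infty$ (the paper does this via an automorphism $\sigma\in\aut(\C'/\C)$ with $b_k=\sigma^k(a)$, you via chains built by saturation), deduce totality from a pair at distance $\infty$ using a common far point, and finish the purely infinitesimal case by observing that all pairs with $d\in\{0_+,0_-\}$ have the same type over $\C$. Your distance-set abstraction $D$ is a mild repackaging of the paper's case split, not a genuinely different route.
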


\begin{proof}
	%%%Krzys rev: $\C$-small
	Let $F(x,y)$ be any equivalence relation on $p(\C')$ relatively type-definable over a $\C$-small subset of $\C$. Let $S_n(x,y):=R_n(x,y)\vee R_n(y,x)$. There are two cases.
	
	Case 1: There are $ a,b\in p(\C')$ such that  $aFb$ and $\models\bigwedge_{n\in\mathbb{N}}\neg S_n(a,b)$. For any $c,d\in p(\C')$ we can find $e\in p(\C')$ such that $\models\bigwedge_{n\in\mathbb{N}}\neg S_n(c,e) \wedge \bigwedge_{n\in\mathbb{N}}\neg S_n(d,e)$. 
	%%%Krzys: I added ``by q.e.''.
	Hence, by q.e., $$(d,e)\equiv_{\C}(a,b)\equiv_{\C}(c,e).$$ As $F$ is $\C$-invariant, we conclude that $cFd$. This implies that $F$ is the total relation.
	
	Case 2: For any $ a,b\in p(\C')$ with $aFb$ there exists $n\in \mathbb{N}$ such that $\models S_n(a,b)$. 
	
	First, we show that $aFb$ implies $aE\!\upharpoonright_pb$. Assume that it is not the case. Then there exists $m\in \mathbb{Q}^+$ such that $aFb$ and $\neg S_m(a,b)$. 
	%%%Krzys: Added next sentence. It is needed when we write ``We deduce'' below. The lack of this was one of the mistakes in Example 2.
	On the other hand, $S_n(a,b)$ for some $n \in \mathbb{N}$.
	%%%Krzys: ``Since $a$ and $b$ have the same type over $\C$'' replaced by ``Since $a \equiv_\C b$''.
	Since $a \equiv_\C b$, there is $\sigma\in\auto(\C'/\C)$ satisfying $\sigma(a)=b$. 
	%%%Krzys final: I replaced $n$ by $i$ in the next sentence, as $n$ is a already fixed. I also shortened this sentence.
	%Let us denote by $b_i$ the element $\sigma^i(a)$. 
	Let $b_i:=\sigma^i(a)$ for $i<\omega$.
	Clearly, $$(a,b)\equiv_{\C}(b,b_2)\equiv_{\C}(b_2,b_3)\equiv_{\C}\cdots . $$ 
	%%%Krzys: I added ``Hence'' and corrected a typo:  $S_n(a,b')$ instead of  $S_n(a,b)'$. Also ``the hypothesis'' in place of ``our hypothesis''.
	%%%Krzys final: I wrote ``$aFb_k$ and $\models \neg S_{km}(a,b_k)$.'' in place of ``$\models \neg S_{km}(a,b_k)$ and $aFb_k$''.
	We deduce that for all $k\in \mathbb{N}$, $aFb_k$ and $\models \neg S_{km}(a,b_k)$. Hence, by compactness, there exists $b'\in p(\C')$ such that $aFb'$ and $\models \neg S_n(a,b')$ for all $n\in \mathbb{N}$, contradicting the hypothesis of the second case.
	
	%%%Krzys: I removed that it is not total, as we do not use it here (in fact, it is automatic in the second case). I added ``by the last paragraph''. I also added ``distinct''. Finally, I added ``by q.e.''.
	Finally, if $F$ is not equality, there exist elements $a\neq b \in p(\C')$ such that $aFb$, and so $aE\!\upharpoonright_pb$ by the last paragraph. Take any distinct $c,d\in p(\C')$ satisfying $cE\!\upharpoonright_{p}d$. Then, by q.e., either $(a,b)\equiv_{\C}(c,d)$ or $(a,b)\equiv_{\C}(d,c)$. Both cases imply $cFd$, which means that $F$ and $E\!\upharpoonright_p$ are the same equivalence relation.
\end{proof}
Note that $p(\C')$ is not stable since for any $a\in p(\C')$, we have that the set $a+\mu\subset p(\C')$ is totally ordered by $R_1(x,y)$. Hence, we obtain the following:
%%%Adrian: take a\in p(\C') and a sequence $(c_i)_{i<omega}$ of infinitesimal of $\C'$ bigger than all infinitesimals in $\C$ such that $c_i$ is infinitesimal with respect to $c_{i+1}$. The sequence $(a+c_i)_{i<$\omega$}$ is $\C$-indiscernible but not totally $\C$-indiscernible. We can also take just some $c$ 'big' infinitesimal, and extract an indiscernible sequence from $(a+kc)_{k<\omega}$ using Ramsey+compactness and it won't be totally indiscernible.
%%%Adrian: Here probably $(a+kc)_{k<\omega}$ is already $\C$-indiscernible, because we only have the relations, but in the second example, having the group structure makes it not indiscernible (even over the emptyset).
%%%Krzys rev: $\C$-small
\begin{cor}
	The equivalence relation $E\!\upharpoonright_p$ is the finest equivalence relation on $p(\C')$ relatively type-definable over a $\C$-small set of parameters from $\C$ and  with stable quotient, 
	%%%Krzys: Added below.		
	that is $E^{\textrm{st}} = E\!\upharpoonright_p$.
\end{cor}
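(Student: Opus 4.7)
The plan is to combine the two preceding results with the observation noted before the corollary. By the classification proposition just proved, any equivalence relation on $p(\C')$ relatively type-definable over a $\C$-small set of parameters from $\C$ must be one of exactly three: equality, $E\!\upharpoonright_p$, or the total equivalence relation. The strategy is simply to rule out equality (as giving an unstable quotient) and to verify that $E\!\upharpoonright_p$ does give a stable quotient; then, since among stable candidates $E\!\upharpoonright_p$ is strictly finer than the total relation, it must be the finest.

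First I would observe that equality on $p(\C')$ yields $p(\C')$ itself, and by the remark immediately preceding the corollary, $p(\C')$ is not stable (witnessed by the set $a+\mu \subset p(\C')$ being totally ordered by $R_1$). Hence equality cannot serve as $E^{\textrm{st}}$. Next, I would check that $p(\C')/E\!\upharpoonright_p$ is stable: by the lemma above, the hyperdefinable set $\C'/E$ is stable, and $p(\C')/E\!\upharpoonright_p$ is naturally a type-definable subset of $\C'/E$, so by \cite[Remark 1.4]{MR3796277} (stability of hyperdefinable sets is preserved under type-definable subsets; cf.\ Proposition \ref{stable preserved product} in the appendix), $p(\C')/E\!\upharpoonright_p$ is also stable.

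Finally, since the total equivalence relation is trivially coarser than $E\!\upharpoonright_p$, and these three are the only candidates, $E\!\upharpoonright_p$ is indeed the finest relatively type-definable over a $\C$-small subset of $\C$ equivalence relation on $p(\C')$ with stable quotient. Thus $E^{\textrm{st}} = E\!\upharpoonright_p$. There is no real obstacle here: the work has all been done in the two preceding results, and the corollary is merely the bookkeeping step that identifies which of the three possibilities survives both the stability requirement and the minimality requirement. The only point to double-check is that one does not need to invoke the large saturation hypothesis of Theorem \ref{Est exists}, since here the complete classification of relatively type-definable equivalence relations makes the existence and identification of $E^{\textrm{st}}$ immediate.
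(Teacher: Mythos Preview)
Your proposal is correct and matches the paper's approach: the paper does not even give a separate proof environment for this corollary, simply writing ``Hence, we obtain the following'' after noting that $p(\C')$ is unstable, since the result is immediate from the classification proposition and the lemma that $\C'/E$ is stable. Your write-up is just an explicit unpacking of that one-line deduction, and your closing remark that the large saturation hypothesis of Theorem \ref{Est exists} is not needed here (because the full classification already gives existence directly) is a valid and worthwhile observation.
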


\subsection*{Example 2}
%%%Krzys: I added ``Section 4''.
%%%Krzys rev: I added $T$.
This example is based on Section \ref{section: main example}. %\cite[Section 4]{10.1215/00294527-2022-0023}.
We work in the language $L:=\{+,-,1,R_r(x,y): r\in\mathbb{Q}^+\}$ and our theory $T$ is $\Th((\mathbb{R},+,-,1,R_r(x,y))_{r\in\mathbb{Q}^+})$, where $\mathbb{R}\models R_r(x,y)$ if and only if $0\leq y-x\leq r$. 

From Proposition \ref{proposition: NIP and unstable} and Proposition \ref{proposition: qe} we obtain the following:
%The next result was proven in  \cite[Proposition 4.1, Proposition 4.8]{10.1215/00294527-2022-0023}.
%%%Krzys: I called it ``Fact'', not ``Lemma'' and moved it before the definition of $p$, as otherwise the definition of $p$ is not clear.
\begin{fact}
	The theory $T$ has $NIP$ and quantifier elimination.
\end{fact}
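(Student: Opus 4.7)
The plan is to reduce the fact to results already established in the excerpt, namely Proposition \ref{proposition: NIP and unstable} and Proposition \ref{proposition: qe}, together with the observation that the atomic relations $R_r$ for rational $r$ are quantifier-free interdefinable with the relation $R_1$ once the constant $1$ is available.

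First, for NIP, I would argue exactly as in the proof of Proposition \ref{proposition: NIP and unstable}: the structure $(\mathbb{R},+,-,1,R_r)_{r\in\mathbb{Q}^+}$ is a reduct of the o-minimal expansion $(\mathbb{R},+,\leq,1)$ of the ordered divisible abelian group of reals, since each $R_r(x,y)$ is defined by the o-minimal formula $0\leq y-x\leq r$ (here $r\in\mathbb{Q}^+$ is available as a term built from $+$, $-$ and $1$ using divisibility, but one can equivalently express $R_{p/q}(x,y)$ as $R_1(qx,qy\cdot\tfrac{1}{p})$ or more directly as $0\leq p(y-x)\leq q$). Since NIP is preserved under reducts, $T$ has NIP.

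Second, for quantifier elimination, I would observe that $T$ is essentially a definitional expansion of the theory $T'$ from Proposition \ref{proposition: qe} by the new relation symbols $R_r$, $r\in\mathbb{Q}^+\setminus\mathbb{N}^+$. Using properties (3) and (4) of Remark \ref{R}, for any positive rational $r=p/q$ we have
\[
R_{p/q}(x,y)\;\iff\; R_p(qx,qy)\;\iff\; \bigvee_{i=0}^{p-1} R_1(qx,qy-i),
\]
where the right-hand side is a quantifier-free $L'\cup\{1\}$-formula (since $qx,qy$ and $qy-i$ are built from $+$, $-$ and $1$). Hence every atomic $L$-formula is $T$-equivalent to a quantifier-free $L'\cup\{1\}$-formula, and Proposition \ref{proposition: qe} applied to $T'$ then yields that every $L$-formula is equivalent, modulo $T$, to a quantifier-free $L$-formula.

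The only minor obstacle is bookkeeping: one should check that the mutual definitions above really do respect the universally axiomatized fragment of $T$ (so that a definitional expansion argument applies cleanly), but this is routine since the defining equivalences hold in $(\mathbb{R},+,-,1)$ and hence in every model of $T$ by elementarity. No deep new ingredient beyond Remark \ref{R} and Proposition \ref{proposition: qe} is needed.
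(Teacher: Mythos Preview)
Your proposal is correct and matches the paper's approach exactly: the paper simply cites Proposition~\ref{proposition: NIP and unstable} and Proposition~\ref{proposition: qe} without further detail, and you have filled in precisely the routine bookkeeping (via Remark~\ref{R}) needed to pass from $R_r$ with $r\in\mathbb{N}^+$ to $r\in\mathbb{Q}^+$ and to see the structure as a reduct of an o-minimal one. One harmless slip: $R_{p/q}(x,y)$ unfolds as $0\leq q(y-x)\leq p$, not $0\leq p(y-x)\leq q$, and the aside ``$R_1(qx,qy\cdot\tfrac{1}{p})$'' should be dropped; neither affects the argument.
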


%%%Krzys: I added the next two sentences.
Without loss of generality, for convenience we can assume that $\C'$ is a reduct of a monster model of $\Th(\mathbb{R},+,-,1,\leq)$. So it makes sense to use $\leq$.
%change 4.6a As in the previous example, let $\C,\C'$ be monster models of $T$, where $\C'$ is also a monster model with respect to $\C$, and  l
Let $p\in S_x(\C)$ be the complete invariant (over $\emptyset$) global type determined by 
%%%Krzys final: I added r \in \mathbb{Q}^+.
$$ \{ \neg R_r(x,c)\wedge \neg R_r(c,x) : c\in \C, r \in \mathbb{Q}^+ \}.$$
%%%Adrian: before I had R_r(t(x),c), however, terms with only the variable x are ax+b for integers a and b, hence it is equivalent.

%%%Krzys: I introduced the notation below and used it in the proof below.
As in the previous example, let $S_r(x,y):= R_r(x,y) \vee R_r(y,x)$. We say that $x,y$ are {\em related} if $S_r(x,y)$ holds for some $r \in \mathbb{Q}^+$. 
%Note that while $\leq$ is not definable, it becomes definable when restricted the set of elements related to any given element.
%change 4.8 f I directly define $E$ on p(\C') instead of on $\C$
We denote by $E(x,y)$ the equivalence relation on $p(\C')$ relatively defined by $$\bigwedge_{r\in\mathbb{Q}^+}S_r(x,y).$$
%%%Krzys: I added the next sentence.
In other words, this is the relation on $p(\C')$ of lying in the same coset modulo the subgroup of all infinitesimals in $\C'$ which will be denoted by $\mu$.

%%%Krzys: I added the the next two types of relations needed in the theorem.
%%%Krzys rev: $\C$-small
Other possible relatively type-definable over a $\C$-small subset of $\C$ equivalence relations on $p(\C')$ are as follows. Take any $c \in \C$. Let $E_c$ be the equivalence relation on $p(\C')$ % change 4.8e with classes $ \{a, -a +c\}$, where $a$ ranges over $p(\C')$. 
given by $xE_c y$ if and only if $x=y$ or $x+y=c$.
It is clear that this is an equivalence relation on $p(\C')$ relatively defined by a type over $c$. We also have the equivalence relation $E_c^\mu$ given by  $ x E_c^\mu y$ if and only if $xEy$ 
%%%Krzys rev: $(x+y)$ instead of $x+y$.
or $(x+y)Ec$, %change 4.8 ewith classes $(a + \mu ) \cup (-a + c + \mu)$, where $a$ ranges over $p(\C')$
which is also relatively defined by a type over $c$.

%%%Krzys: If I am not mistaken, the relation you gave in the statement of the theorem below coincides $E|_p$, so it is incorrect. I added a discussion below, and corrected the statement of the theorem. 
%%%Krzys rev: $\C$-small
For any non-empty $\C$-small set $A$ of positive infinitesimals in $\C$ we will consider the equivalence relation $E_A$ on $p(\C')$ given as 
$$\bigwedge_{a \in A} \bigwedge_{n \in \mathbb{N}^+} |x-y| \leq \frac{1}{n} a.$$
Note that this relation is relatively type-definable over $A$ on $p(\C')$  in the original language $L$ by the following condition
$$\bigwedge_{a \in A} \bigwedge_{n \in \mathbb{N}^+} R_1(n(x-y),a) \vee R_1(n(y-x),a).$$

%%%Krzys: I added another class of relations below.
One can also combine the above examples to produce one more class of equivalence relations on $p(\C')$. 
%%%Krzys rev: $\C$-small
Take any $c \in \C$ and any non-empty $\C$-small set $A$ of positive infinitesimals in $\C$. Let $\mu_A$ be the infinitesimals in $\C'$ defined by  
%%%Adrian2: displayed because it was breaking the line in some ugly manner.
$$\bigwedge_{a \in A} \bigwedge_{n \in \mathbb{N}^+} |x| \leq \frac{1}{n} a.$$ 
%%%Krzys rev: $(x+y)$ instead of $x+y$.
Then we have the equivalence relation $E_{A,c}$ on $p(\C')$ given by $x E_{A,c} y$ if and only if $xE_Ay$ or $(x+y) E_A c$,
%%%Adrian2: displayed for the same reason
%%%Krzys Feb 1: I added ``by a type''.
%change 4.8 e $$(a + \mu_A ) \cup (-a + c + \mu_A),$$ 
which is clearly relatively defined on $p(\C')$ by a type over $Ac$.

%%%Krzys rev: $\C$-small twice
\begin{teor}\label{theorem: classification}
	The only equivalence relations on $p(\C')$ relatively type-definable over a $\C$-small subset of $\C$ are: the total equivalence relation, equality, $E$, the relations of the form $E_c$ or $E_c^\mu$ (where $c \in \C$), and the relations of the form $E_A$  or $E_{A,c}$ for any non-empty $\C$-small set $A$ of positive infinitesimals in $\C$ and any $c \in \C$. 
\end{teor}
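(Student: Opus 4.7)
The plan is to classify the relatively type-definable over a $\C$-small $B\subseteq\C$ equivalence relation $F$ by analyzing, via quantifier elimination and $\auto(\C'/B)$-invariance, the possible types $\tp(a,b/B)$ which can witness $aFb$ for $a,b\in p(\C')$. Since $p$ is symmetric and invariant, and since $2a=(a+b)+(a-b)$, the type $\tp(a,b/B)$ of a pair in $p(\C')^2$ is controlled (together with $\tp(a/B)=p\!\upharpoonright_B$) by the positions of $a-b$ and of $a+b$ relative to $B$. The case analysis therefore splits according to where these two elements sit: (i) far from $B$ (unrelated by any $R_r$ to any element of $B$), (ii) infinitesimally close to some element of $B$, or (iii) equal to an element of $B$.

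First I would treat the extreme cases. If $aFb$ with $a-b$ far from $B$, then, mimicking Case~1 of Example~1, given any $c,d\in p(\C')$ one can place $d'$ so that $(c,d')\equiv_B(a,b)$, hence $cFd'$; a symmetric choice and one application of transitivity force $cFd$, so $F$ is total. The case where $a+b$ is far from $B$ is handled analogously by applying the same argument to $a$ and $c-b$ for $c\in \C$ suitable. Thus for nontrivial $F\neq\{=\}$ we may assume every witnessing pair satisfies: both $a-b$ and $a+b$ are related to some element of $B$. Using $2a\in p(\C')$ and the identity $2a=(a+b)+(a-b)$, one shows that $a-b$ and $a+b$ cannot simultaneously be infinitesimally close to elements of $B$ (the sum would force $a$ itself to be close to an element of $B$, contradicting $a\models p$). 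This yields the fundamental dichotomy: each nontrivial witnessing pair satisfies either $aEb$ (the "difference" case) or $(a+b)Ec$ for some $c\in B\subseteq \C$ (the "sum" case).

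Next I would describe $F$ restricted to the difference case and to the sum case separately, and then combine. Restricted to pairs with $aEb$, the relation $F$ is controlled by a partial type over $B$ on $a-b\in\mu$; using that $F$ is an equivalence relation, $\auto(\C'/B)$-invariance, and the group structure on $\mu$, one identifies this partial type with the intersection $\bigcap_{e\in A,\,n\in\mathbb{N}^+}\{x:|x|\leq e/n\}$ for a uniquely determined (up to equivalence) set $A$ of positive infinitesimals from $\C$, giving $E_A$ (with $A=\emptyset$ corresponding to equality and the maximal choice giving $E$). For the sum case, a parallel analysis, together with the observation that the map $a\mapsto c-a$ is an involution on $p(\C')$ preserved by $\auto(\C'/Bc)$, produces the relations $E_c$ and, combined with an infinitesimal refinement on the difference side, $E_c^\mu$ and $E_{A,c}$. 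The remaining check is that these really exhaust the possibilities, i.e.\ that any $F$ falling under the dichotomy must contain one of the specific relations on the list and, by minimality of its orbit structure, must coincide with it.

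The main obstacle is the parametrization step for relations strictly finer than $E$: given a type-definable, symmetric, transitive, translation-invariant subset of $\mu$ (which is what $F$ restricts to inside a single $E$-class), one must recognize it as being cut out by conditions $|x|\leq e/n$ for a canonical set $A$ of positive infinitesimals of $\C$. This requires combining compactness with the fact that any symmetric $B$-type-definable subset of $\mu$ closed under addition and bounded by some element of $\mu\cap\C$ can be written in this form, and showing that different $A$'s (up to the natural equivalence of generating the same downward-closed cofinal family) produce genuinely different relations. Once this is settled, matching it against the sum-side analysis via the involution $a\mapsto c-a$ yields the full classification.
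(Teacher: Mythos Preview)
Your opening claim --- that $a-b$ far from $B$ forces $F$ to be total --- is false, and this breaks the entire first half of the argument. Consider $F=E_c^\mu$ with $c\in B$: take $a\in p(\C')$ and $b:=c-a$. Then $b\in p(\C')$, $a+b=c$, so $aFb$; but $a-b=2a-c$ is unrelated to every element of $\C$ (since $a\in p(\C')$), yet $F$ is certainly not total. The attempted mimicry of Example~1 fails because in Example~2 the type $\tp(a,b/B)$ is \emph{not} determined by $a-b$ alone: to find $d'$ with $(c,d')\equiv_B(a,b)$ you must also match $a+b$, $2a-b$, etc., and in the counterexample this forces $d'=c-c$ uniquely, so there is no room to link arbitrary $c,d$ by transitivity.

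The paper's Claim~1 handles this correctly by working with \emph{all} non-constant terms $t(a,b)=na+mb$ at once: Case~1 says that if some $aFb$ has every $t(a,b)$ unrelated to $\C$, then $\tp(a,b/\C)$ is the unique ``generic'' $2$-type and $F$ is total. The real work is Case~2, showing that if $F$ is not total and $F\not\subseteq E_{c_0}^\mu$ for every $c_0$, then such a generic pair can be manufactured. This requires a nontrivial iteration: starting from an $F$-pair with $a-b$ non-infinitesimal, one uses powers of an automorphism $\sigma$ with $\sigma(a)=b$ to push $a-b_k$ out of any given finite window, then repeats to kill the constraint on $a+b$, and finally --- since the pair might still satisfy $S_n(t_q(a,b'),c)$ for a single rational $q$ --- exploits that the iterates land at $q^k$, which are all distinct for $q\notin\{0,\pm1\}$, to escape by compactness. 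None of this mechanism appears in your outline, and your dichotomy (each pair satisfies $aEb$ or $(a+b)Ec$) is both weaker than what is needed (the paper gets a single $c$) and not actually established. Your description of the analysis of $F\cap E$ (your ``main obstacle'') is closer to the paper's Claim~2, but that claim also requires a careful case split and o-minimality arguments that go beyond ``group structure on~$\mu$''.
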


%%%Krzys: I added the following sentence.
In the proof below, by a %change4.9 changed non trivial for non constant in this proof
non-constant term $t(x,y)$ (in the language $L$) we mean an expression $nx +my +k$, where $m,n,k \in \mathbb{Z}$ and $m \ne 0$ or $n \ne 0$.

%%%Krzys: The proof was wrong. Already the proof of the first displayd formula in Case 2 was incorrect. Then another mistake was in the final part of Case 2 (we cannot assume $t>1$). When I started to read it, I did not know about it, so I marked various changes, but now I have to really change this proof, so I will stop making comments.
%%%Krzys: I used the notation $S_n$ throughout the proof, as in the previous example.
\begin{proof}
	%%%Krzys rev: $\C$-small
	Let $F(x,y)$ be an arbitrary equivalence relation on $p(\C')$ relatively type-definable over a $\C$-small subset of $\C$. 
	%%%Krzys: I recalled what finer means.
	%	Our first goal is to prove 
	
	\begin{claimnum}
		%Either $F$ is the total equivalence relation or $F$ is finer than $E\!\upharpoonright_p$ (i.e., $F \subseteq E\!\upharpoonright_p$). 
		Either $F$ is the total equivalence relation, or $F$ is finer than $E_c^\mu$ (i.e., $F \subseteq E_c^\mu$) for some $c \in \C$ .
	\end{claimnum}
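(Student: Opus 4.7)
Assuming $F$ is not total, the plan is to produce some $c \in \C$ with $F \subseteq E_c^\mu$. Enlarging the $\C$-small $B \subseteq \C$ over which $F$ is relatively type-defined, I may assume $B$ contains any finite list of elements of $\C$ I will need. By quantifier elimination and the fact that $a,b \in p$, the type $\tp(a,b/B)$ of any pair from $p(\C')^2$ is governed by the subgroup $S_{a,b}:=\{(n,m)\in\mathbb{Z}^2 : na+mb \text{ is at finite distance from some element of }\C\}$, which has rank at most $1$ (since $(1,0),(0,1) \notin S_{a,b}$), together with the ``standard part'' in $\C$ of the distinguished combination when $S_{a,b}$ is non-trivial. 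So $S_{a,b}$ is either $\{0\}$, or of the form $\mathbb{Z}(n_0,m_0)$ with $n_0 m_0 \neq 0$ and $\gcd(|n_0|,|m_0|)=1$.

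\textbf{Step 1 (generic pairs force totality).} I will first show that if $F$ contains some $(a_0,b_0)$ with $S_{a_0,b_0}=\{0\}$, then $F$ is total. Given any $c,d \in p(\C')$, I invoke $\kappa'$-saturation to realize the partial type over $B\cup\{c,d\}$ demanding $e \in p|_B$ and that each combination $nc+me$ and $ne+md$ (for $(n,m) \neq 0$) avoids a bounded tube around every element of $B$; this is clearly finitely satisfiable, as only finitely many such tubes need be avoided. Invariance of $F$ under $\aut(\C'/B)$ then yields $cFe$ and $eFd$, hence $cFd$; the argument parallels Case 1 of Example~1.

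\textbf{Step 2 (reduction to generic).} The crux of the proof will be the following reduction: if $F$ contains some $(a_0,b_0)\in p(\C')^2$ that lies outside every $E_c^\mu$ --- equivalently, $a_0 \not\approx b_0$ and $a_0+b_0$ is infinitely far from $\C$ --- then $F$ contains $(a_0,x^*)$ with $S_{a_0,x^*}=\{0\}$, forcing $F$ total by Step~1. Writing $S_{a_0,b_0}=\mathbb{Z}(n_0,m_0)$ with $n_0 a_0 + m_0 b_0 \approx \gamma \in \C$, I will transitively iterate $\aut(\C'/B)$-images of $(a_0,b_0)$ to produce a family $(b_k)_{k \in \mathbb{Z}}$ with $a_0 F b_k$ for all $k$, and then realize by $\kappa'$-saturation the partial type over $B\cup\{a_0\}$ consisting of $\pi_F(a_0,x)$ together with, for every $(n,m)\neq 0$, $c'\in\C$, $r\in\mathbb{Q}^+$, the assertion that $na_0+mx$ lies more than $r$ away from $c'$. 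Finite satisfiability is established by taking $x=b_k$ for $|k|$ sufficiently large relative to the finite list of parameters appearing.

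\textbf{Step 3 (uniform $c$) and the main obstacle.} Steps~1--2 imply that every pair in $F$ lies in $E_c^\mu$ for some $c=c_{a,b}$ depending on the pair. To promote this to a single $c$, I fix $(a_1,b_1)\in F$ with $a_1\not\approx b_1$ (if no such pair exists, $F\subseteq E$ and any $c$ works), set $c$ to be the standard part of $a_1+b_1$ (adjoining it to $B$), and assume toward contradiction that some $(a_2,b_2)\in F$ has $a_2+b_2 \approx c' \neq c$ (also adjoined to $B$). Since $\tp(a_1/B)=\tp(a_2/B)=p|_B$ and $b_1 \in p$, a compactness argument realising $\tp(a_1,b_1/B)\cup\{x_1 = a_2\}\cup\{x_2 \in p\}$ produces $b_1' \in p(\C')$ with $(a_2,b_1')\equiv_B (a_1,b_1)$, so $(a_2,b_1')\in F$ and $a_2+b_1'\approx c$. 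Transitivity yields $(b_2,b_1')\in F$ with $b_2-b_1'\approx c'-c\neq 0$ and $b_2+b_1'\approx c+c'-2a_2$ infinitely far from $\C$ (since $2a_2\in p$), placing $(b_2,b_1')$ squarely in the hypothesis of Step~2 and forcing $F$ total --- a contradiction. The main technical difficulty lies in the finite-satisfiability verification of Step~2 across \emph{all} exotic generators $(n_0,m_0)$, and especially in the degenerate case where $\gamma$ itself is an infinitesimal of $\C$, so the standard iterates $b_k$ remain in a bounded region of $\C$; there one must exploit the $\kappa$-saturation of $\C$ itself to pass to an iterate indexed by a non-standard $k \in \mathbb{Z}^{\C}$.
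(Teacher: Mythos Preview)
Your overall architecture matches the paper's: both prove that a ``fully generic'' pair (your $S_{a,b}=\{0\}$, the paper's Case~1) forces $F$ to be total, and then escalate an arbitrary pair outside every $E_c^\mu$ to a fully generic one by iterating an automorphism $\sigma\in\aut(\C'/\C)$ with $\sigma(a_0)=b_0$ and invoking compactness. The paper organizes the escalation as three successive stages (first arrange $a+b\in p$, then $a\pm b\in p$, then all $t_q(a,b)$ unrelated to $\C$); you attempt it as a single saturation argument. Your Step~3 (promoting pairwise containment in some $E_{c_{a,b}}^\mu$ to a uniform $c$) is essentially the paper's maneuver of combining two pairs with different $c$'s, just placed after rather than before the escalation. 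One correction throughout: you must take $\sigma\in\aut(\C'/\C)$, not $\aut(\C'/B)$, since you need $b_k\in p(\C')$ and $\sigma(\gamma)=\gamma$; this is what the paper does.

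The genuine gap is in Step~2. Your identified ``main obstacle'' --- $\gamma$ an infinitesimal of $\C$ --- is a phantom, and the proposed remedy of iterating to a ``non-standard $k\in\mathbb{Z}^{\C}$'' is not meaningful (one cannot apply $\sigma$ a non-standard number of times and still conclude $a_0 F b_k$). In fact, if $\gamma$ is infinitesimal then necessarily $|n_0|\neq|m_0|$: for $(n_0,m_0)=\pm(1,-1)$ one has $a_0-b_0\in\gamma+\mu\subseteq\mu$, contradicting $a_0\not\approx b_0$, and $(1,1)$ is excluded by $a_0+b_0\in p$. With $|q|:=|n_0/m_0|\neq 1$ the coefficient of $a_0$ in $n_ja_0+m_jb_k$ is $n_j+m_jq^k$, which vanishes for at most one $k$ per $j$; otherwise the expression lies in $p(\C')$ automatically. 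The case that actually needs care is $(n_0,m_0)=(1,-1)$ with $\gamma$ \emph{non}-infinitesimal: here $a_0-b_k\approx k\gamma$ and one must argue that $\{k:|n_jk\gamma-c'_j|\leq r_j\}$ is finite for each $j$ (splitting on whether $\gamma$ and $c'_j$ are finite or infinite). This is precisely the content of the paper's three-stage escalation, and is what your single-shot compactness argument must supply but does not.
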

	
	\begin{proof}[Proof of Claim] 
		We consider two cases.

		%%%Krzys: I added \bigwedge_{n \in \mathbb{Q}^+}. The second term in the disjunction could be removed, as both statements are equivalent by replacing t by -t, and c by -c. First I removed it, but then realized that it produces more terms at the end, so finally I left it as it is. I also added that it is for non-trivial terms. Then I expanded the rest of Case 1.
		Case 1: There are $ a$, $b\in p(\C')$ such that $aFb$ and $\models \bigwedge_{n \in \mathbb{Q}^+} \bigwedge_{c\in\C}\neg S_n(t(a,b),c)$
		%\vee R_n(c,t(a,b)))$ 
		for all non-constant terms $t(x,y)$. Take any $a',b'\in p(\C')$. %change 4.10c
		%%%Krzys rev: $|\C|^+$-saturation of $\C'$ instead of saturation.	 
		By compactness and $|\C|^+$-saturation of $\C'$, we can find $d'\in p(\C')$ such that $\models \bigwedge_{n \in \mathbb{Q}^+} \bigwedge_{c\in\C}\neg S_n(t(a',d'),c)$ and $\models \bigwedge_{n \in \mathbb{Q}^+} \bigwedge_{c\in\C}\neg S_n(t(b',d'),c)$ for all non-constant terms $t(x,y)$. Then, by q.e. and  %change 4.10d cited Remark 4.6 
  Remark \ref{R},
		%\cite[Remark 4.6]{10.1215/00294527-2022-0023},
		$(a',d')\equiv_{\C}(a,b)\equiv_{\C}(b',d')$. Since $F$ is $\C$-invariant, we conclude that $a'Fb'$, hence $F$ is the total equivalence relation.
		
		%%%Krzys: I added $n \in \mathbb{Q}^+$. I also added that the term is non-trivial. Also, $c\in \C$ in place of $c\in \C'$.  (The second term in the disjunction R_n(t(a,b),c)\vee R_n(c,t(a,b))$ could be removed by replacing t by -t, and c by -c, but I dot not do that, because it would cause problems at the end).
		Case 2: For any $ a$, $b\in p(\C')$ with $aFb$ there are $n \in \mathbb{Q}^+$, $c\in \C$, and a non-constant term $t(x,y)$ such that $\models S_n(t(a,b),c)$. Suppose that for every $c \in \C$, $F$ is not finer than  $E_c^\mu$. We will reach a contradiction, but this will require quite a bit of work.
		
		First, we claim that there are $a,b\in p(\C')$ such that 
		$$(*) \;\;\;\;\;\; aFb \;\;\textrm{and}\;\; \models \bigwedge_{q\in \mathbb{Q}^+ } \neg S_q(a,b) \;\; \textrm{and}\;\; a+b\in p(\C').%change 4.11b\models  \bigwedge_{q\in \mathbb{Q}^+ } \bigwedge_{c \in \C} \neg S_q(a, -b + c),
		$$ 
		%change 4.12 explained * with words
		%%%Krzys rev: I removed this explanation, as the meaning of the above formula is completely obvious.
		%i.e., $a$ and $b$ are $F$-related, their sum is also in $p(\C')$ and their difference is not in $\mathbb{R}+\mu$. 
		%change 4.12 rewrote this paragraph (the proof of *)as a proof by contradiction
		
		%%%Krzys rev: I changed the next sentence. We do not use here Remark 4.6 (definitely not item (4) of it). Remark 4.6 and q.e. is behind the fact that the formulas $\neq S_r(x,c)$ determine a complete type. Secondly, we do not use saturation here, but topological compactness of the intervals $[-r,r]$ on the real line. 
		%Firstly, note that by \cite[Remark 4.6 (4)]{10.1215/00294527-2022-0023} and saturation of $\C'$, we have $a\not\in p(\C')$ if and only if $a\in c+\mu$ for some $c\in \C$.
		Firstly, note that by (topological) compactness of the intervals $[-r,r]$, $r \in \mathbb{Q}^+$, we easily get that $a\not\in p(\C')$ if and only if $a\in c+\mu$ for some $c\in \C$.
		%%%Krzys rev: Not "the claim" but "$(*)$".
		Assume that $(*)$ does not hold, that is, for any $aFb$ we have $aE_c^\mu b$ for some $c\in \C$ or $S_n(a,b)\wedge \neg S_m(a,b)$ for some $m,n\in \mathbb{Q}^+$. Since $F$ is not contained in any $E_c^\mu$, either we get a pair $(a,b) \in F$ such that 
		%%%Krzys rev: I used $\wedge$ instead of "and". Also later. Also $m,n\in \mathbb{Q}^+$ instead of "positive rationals $m,n$".
		$S_m(a,b) \wedge \neg S_n(a,b)$ for some $m,n\in \mathbb{Q}^+$, or we get two pairs $(a,b), (a',b') \in F$ and elements $c,c' \in \C$ such that $c-c' \notin \mu$ and $a+b -c \in \mu$ and $a'+b'-c' \in \mu$. In this second case, applying an automorphism of $\C'$ over $\C$  mapping $a'$ to $a$, we may assume that $a'=a$, and so we get $F(b,b')$ and $b-b' \in c-c' + \mu$. Then $b+b' \in 2b' +c -c' + \mu$ is not related to any element of $\C$ (as $2b'$ is not related),  
		%%%Krzys rev: I changed the rest of this sentence, as it did not fit the modified version of the above part.
		%so $\models  \bigwedge_{q\in \mathbb{Q}^+ } \bigwedge_{c'' \in \C} \neg S_q(b, -b' + c'')$, and either $\models \bigwedge_{q\in \mathbb{Q}^+ } \neg S_q(b,b')$ (so we are done), or there are positive rationals $m,n$ such that $S_m(b,b')$ and $\neg S_n(b,b')$. 
		so $b+b' \in p(\C')$. Since we assumed that $(*)$ fails, we conclude that  $S_m(b,b') \wedge \neg S_n(b,b')$ for some $m,n\in \mathbb{Q}^+$. 
		In this way, the whole second case reduces to the first one, i.e. we have a pair $(a,b) \in F$ with  $S_m(a,b) \wedge \neg S_n(a,b)$ for some $m,n\in \mathbb{Q}^+$.
		
		%	Then, there are $a,b\in p(\C')$ such that $aFb$ and $\neg aE\!\upharpoonright_pb$. This means that there is some $q\in\mathbb Q^+$ such that $\models \neg S_q(a,b)$. 
		%%%Krzys: I added $b_1$ and changed the index in the diagram.
		Let $\sigma\in\auto(\C'/\C)$ be such that $\sigma(a)=b$; set $b_k:=\sigma^k(a)$. 
		%%%Krzys rev: "produced"
		We produced an infinite sequence 
		\begin{center}
			\begin{tikzpicture}
				\node at (0, 0)  {$a$};
				\node at (1, 0)  {$b_1$};
				\node at (2, 0)  {$b_2$};
				\node at (3, 0)  {$\cdots$};
				\draw[-stealth]   (0.1,-0.15) to[out=-60,in=-120](0.9,-0.15);
				\node at (0.5, -0.5)  {$\sigma$};
				\draw[-stealth]   (1.1,-0.15) to[out=-60,in=-120](1.9,-0.15);
				\node at (1.5, -0.5)  {$\sigma$};
				\draw[-stealth]   (2.1,-0.15) to[out=-60,in=-120](2.9,-0.15);
				\node at (2.5, -0.5)  {$\sigma$};
			\end{tikzpicture}
		\end{center}
		Then for all $k\in\mathbb{N}^+$, $aFb_k$ and $\models S_{km}(a,b_k)$ and $\models \neg S_{kn}(a,b_k)$. Since  $\models S_{km}(a,b_k)$ and $b_k$ is not related to anything in $\C$, we get $\bigwedge_{q\in \mathbb{Q}^+ } \bigwedge_{c \in \C} \neg S_q(a, -b_k + c)$, 
		%%%Krzys rev: I added  $a+b_k \in p(\C')$.
		that is $a+b_k \in p(\C')$.
		As we can use arbitrarily large $k$, by compactness (or rather $|\C|^+$-saturation of $\C'$), there exist $b$  such that $(a,b)$ satisfies $(*)$, a contradiction.
		%and hence, by compactness, $\exists b'\in p(\C')$ such that $aFb'$ and $$\models \bigwedge_{q\in \mathbb{Q}^+ } \neg S_q(a,b').$$ 
		%%%Krzys: The statement below should be proved, as it contains a non-obvious trick. So I added a proof. 
		%In a similar manner, we can produce 
		
		We will show now that there is $b'\in p(\C')$ such that %change 4.14 $$(**)\;\;\;\;\;\; aFb' \;\; \textrm{and}\;\; \models \bigwedge_{c\in\C} \bigwedge_{q\in \mathbb{Q}^+ } \neg S_q(a-b',c) \wedge \neg S_q(a+b',c).$$
		$$(**)\;\;\;\;\;\; aFb' \;\; \textrm{with}\;\; a+b',a-b'\in p(\C').$$
		%%%Either b' already satisfies it, or $a-b'$ is related to some infinite $c\in \C$. In the second case we use compactness, given a finite set of elements $A$, $a-b'$ can only be related to elements of the form $c+r$ where $r\in \R+\mu$. In general, $a-b'_k$ can only be related to elements of the form $kc+r$ where $r\in \R+\mu$. It is clear that after finitely many steps we find some element b'_n satisfying any finite set of $A$ formulas in our type.
		Namely, either $b':=b$ already satisfies it, or $a-b$ is related to some infinite $c\in \C$.  In the latter case, $a-b$ is related precisely to the elements from the set $c+\mathbb{R}+\mu$. 
		
		Let $\sigma\in\auto(\C'/\C)$ be such that $\sigma(a)=b$; set $b_k:=\sigma^k(a)$. 
		%%%Krzys rev: "We get" replace by "We have".
		We have
		
		\begin{center}
			\begin{tikzpicture}
				\node at (0, 0)  {$a$};
				\node at (1, 0)  {$b_1$};
				\node at (2, 0)  {$b_2$};
				\node at (3, 0)  {$\cdots$};
				\draw[-stealth]   (0.1,-0.15) to[out=-60,in=-120](0.9,-0.15);
				\node at (0.5, -0.5)  {$\sigma$};
				\draw[-stealth]   (1.1,-0.15) to[out=-60,in=-120](1.9,-0.15);
				\node at (1.5, -0.5)  {$\sigma$};
				\draw[-stealth]   (2.1,-0.15) to[out=-60,in=-120](2.9,-0.15);
				\node at (2.5, -0.5)  {$\sigma$};
			\end{tikzpicture}
		\end{center}
		Then $aFb_k$, and one easily checks that $a-b_k$ is related precisely to the elements from the set $kc + \R+\mu$, and so $a+b_k$ is not related to anything in $\C$. Since $c$ is infinite, the sets $kc + \mathbb{R} + \mu$ are pairwise disjoint for different $k$'s, and so we find the desired $b'$ using compactness (or rather $|\C|^+$-saturation of $\C'$).
		
		%%%Krzys: I changed the next sentence, as not the terms in $F$ but in the statement of Case 2 matter. I think we do not need to consider $b-ta$. There is no need to use that 1 is in the language. 
		%  Since we are working in a divisible group and $1$ belongs to our language, without loss of generality all the terms in our equivalence relation $F$ are of the from $a-tb$ for $t\in \mathbb{Q}$ (or $b-ta$).
		%%%Krzys final:  ``$t_q(x,y):=x-qy$  for $q\in \mathbb{Q}$ or $t_q(x,y):=qy$ for $q\in \mathbb{Q} \setminus\{0\}$'' in place of ``$t_q(x,y):=x-qy$ or $qy$ for $q\in \mathbb{Q}$ (which is non-zero in the last case)'' .
		%change 4.16 changed the terms as suggested but allowed n to be negative because I think it is necessary to cover all the possible terms, in the suggestion the referee said m,n>0
		Since we are working in a divisible group, %change 4.16a cited our paper
		using Remark \ref{R}, %\cite[Remark 4.6]{10.1215/00294527-2022-0023},
  we can replace the terms $t(x,y)$ in the statement of Case 2 by expressions $t_q(x,y):=nx-my$, where $q=\frac{n}{m}$ is the reduced fraction of $q$
		%%%Krzys rev: I added "(i.e. $\gcd(m,n)=1$ with $m>0$)". It is not true that we can take $m,n>0$ as the referee wrote.
		(i.e. $\gcd(m,n)=1$ with $m>0$). In particular, note that no term of the form $t(x,y)=nx$ or $t(x,y)=my$ can occur in Case 2 hypothesis, since that would contradict $a,b\in p(\C')$.
		%%%Krzys: Slightly modified to reflect the above change of notation. Also, ``for some'' in place of ``for any''.
		%change 4.17 rewrote using the new terms
		Notice that for each %$t_z(x,y):=x-zy$ and 
		$d \in p(\C')$  there exists at most one rational $q$ such that $$S_k(t_q(a,d),c)$$ holds for some $c\in \C$ and $k\in \mathbb{Q}^+$. 
		%%%Krzys rev: I added "(with reduced fractions $\frac{n}{m}$ and $\frac{n'}{m'}$, respectively)".
		For if there existed $q \ne q'\in\mathbb{Q}$ (with reduced fractions $\frac{n}{m}$ and $\frac{n'}{m'}$, respectively), $k,k' \in \mathbb{Q}^+$, and $c,c' \in \C$ such that 
		%%%Krzys rev: I used $q$ and $q'$ instead of the explicit fractions.
		$S_{k}(t_q(a,d),c)$ and $S_{k'}(t_{q'}(a,d),c')$, 
		%%%Krzys rev: $S_{n'k+nk'}((mn'-m'n)d,nc'-n'c)$ instead of $S_{n'k+nk'}((mn'-m'n)d,n'c-nc')$. Check it.
		this would imply $S_{n'k+nk'}((mn'-m'n)d,nc'-n'c)$, contradicting that $d \in p(\C')$ when $mn'-m'n\neq 0$. 
		%%%Krzys: I added the next sentence, as it is silently used below.	
		%change 4.17 This also shows that for every $q \in \mathbb{Q} \setminus \{0\}$, $qd$ is not related to anything in $\C$.

		%%%Krzys: I expanded the rest, as again, without having in mind a very clear picture of the whole proof, it is difficult to complete this proof.
		%By compactness, there exists $d\in p(\C')$ such that $aFd$ and $$\models \bigwedge_{c\in\C}\neg(R_n(t(a,d),c)\vee R_n(c,t(a,d)))$$ for all terms $t(x,y)$ in the language, contradicting the assumptions of the second case.
		%%%Adrian: Here we are using compactness in the following manner: Either b'' already satisfy what we want or otherwise assume t>1 w.l.o.g (because of the symmetry). In general, a-t^{k+1}b''_k is related to something in \C (we can compute it explicitly). By uniqueness of this parameter $t$, as it keeps growing, we can witness any finite set of formulas from the type if we take b''_k for k big enough. 
		We will show now that there exists $b''\in p(\C')$ such that 
		$$aFb''\;\; \textrm{and} \;\; \models \bigwedge_{q \in \mathbb{Q}} \bigwedge_{n \in \mathbb{Q}^+}\bigwedge_{c\in\C}\neg S_n(t_q(a,d),c),$$ %change 4.17 means we have to erase this too where $t_z(x,y):=x-zy$, 
		contradicting the assumption of Case 2.
		
		Namely, either $d:=b'$ does the job, or there are $q \in \mathbb{Q}$, $n \in \mathbb{Q}^+$, and $c \in \C$ such that  $S_n(t_q(a,b'),c)$. By the choice of $a$ and $b'$ satisfying $(**)$, we have that $q \notin\{-1,0,1\}$.
		
		%%%Krzys rev: "We have"
		Again, let $\sigma\in\auto(\C'/\C)$ be such that $\sigma(a)=b'$; set $b'_k:=\sigma^k(a)$. We have
		
		\begin{center}
			\begin{tikzpicture}
				\node at (0, 0)  {$a$};
				\node at (1, 0)  {$b_1'$};
				\node at (2, 0)  {$b_2'$};
				\node at (3, 0)  {$\cdots$};
				\draw[-stealth]   (0.1,-0.15) to[out=-60,in=-120](0.9,-0.15);
				\node at (0.5, -0.5)  {$\sigma$};
				\draw[-stealth]   (1.1,-0.15) to[out=-60,in=-120](1.9,-0.15);
				\node at (1.5, -0.5)  {$\sigma$};
				\draw[-stealth]   (2.1,-0.15) to[out=-60,in=-120](2.9,-0.15);
				\node at (2.5, -0.5)  {$\sigma$};
			\end{tikzpicture}
		\end{center}
		Then $aFb_k'$ for all $k \in \mathbb{N}^+$. On the other hand, applying powers of $\sigma$, we easily conclude that for every $k \in \mathbb{N}^+$, $t_{q^{k}}(a,b_k')$ is related to some element of $\C$. Hence, by an observation above, we get that for all rationals $r \ne q^k$, $t_r(a,b_k')$ is not related to anything in $\C$. Since $q \notin\{-1,0,1\}$, we know that $q,q^2,\dots$ are pairwise distinct.  So, by compactness, the desired $b''$ exists.
	\end{proof}
	
	%%%Krzys rev: $\C$-small
	\begin{claimnum}
		$F \cap E$ is either equality, or $ E$, or $E_A$ for some non-empty $\C$-small set $A$ of positive infinitesimals in $\C$.
	\end{claimnum}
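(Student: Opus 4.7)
The strategy is to reduce the study of $F \cap E$ on $p(\mathfrak{C}')$ to the study of a single $\mathfrak{C}$-invariant subgroup of $\mu$. For $\delta \in \mu$, say $\delta \in D$ if $aF(a+\delta)$ for some $a\in p(\mathfrak{C}')$. The first thing I would verify is that this definition is independent of the choice of $a$: given $a,a'\in p(\mathfrak{C}')$ and $\delta\in\mu$, quantifier elimination together with the fact that both $a$ and $a'$ are infinitely far from $\delta$ forces $\tp(a/\mathfrak{C}\delta)=\tp(a'/\mathfrak{C}\delta)=p\!\upharpoonright_{\mathfrak{C}\delta}$; an automorphism over $\mathfrak{C}\delta$ sending $a$ to $a'$ then transports $aF(a+\delta)$ to $a'F(a'+\delta)$.

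Using this, the routine checks show that $D$ is a $\mathfrak{C}$-invariant subgroup of $\mu$: closure under addition and inverses follow from transitivity and symmetry of $F$ (noting that $a+\delta\in p(\mathfrak{C}')$ whenever $\delta\in\mu$), and $\mathfrak{C}$-invariance of $D$ follows from $\mathfrak{C}$-invariance of both $F$ and $p$. Moreover, $D$ inherits relative type-definability over a $\mathfrak{C}$-small subset $B\subseteq \mathfrak{C}$ from $F$. By construction $F\cap E$ on $p(\mathfrak{C}')$ coincides with the relation $\{(x,y):y-x\in D\}$, so everything reduces to classifying the possible $D$.

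The remainder of the proof is a classification of such subgroups. If $D=\{0\}$ we get equality, and if $D=\mu$ we get $E$. Otherwise, set
$$A:=\{a\in \mathfrak{C}:a>0,\ a\in\mu,\ D\subseteq \mu_a\},\qquad \mu_a:=\bigcap_{n\in\mathbb{N}^+}\{\eta\in\mu:|\eta|\leq \tfrac{1}{n}a\},$$
so that $D\subseteq \mu_A:=\bigcap_{a\in A}\mu_a$, and the target is $D=E_A$-witnessing subgroup $\mu_A$, together with $A\ne\emptyset$. For $A\ne\emptyset$: if $A$ were empty, then for each positive $a\in \mathfrak{C}\cap\mu$ some $\eta\in D$ would satisfy $|n\eta|>|a|$ for some $n$; since $D$ is a subgroup and $\mathfrak{C}$-invariant, compactness combined with the relative type-definability of $D$ over the $\mathfrak{C}$-small set $B$ would force $D$ to intersect every $\mathfrak{C}$-orbit in $\mu$, hence $D=\mu$, contradicting our case assumption. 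For the converse inclusion $\mu_A\subseteq D$, take any $\eta\in\mu_A$; by quantifier elimination, two elements of $\mu(\mathfrak{C}')$ have the same type over $\mathfrak{C}$ precisely when they satisfy the same relations $R_r(0,m\cdot\,\_\,-c)$ for $r\in\mathbb{Q}^+$, $m\in\mathbb{Z}$, $c\in\mathfrak{C}$, and for elements of $\mu_A$ these types are controlled exactly by behavior relative to the elements of $A$. Using $\mathfrak{C}$-invariance of $D$ and the maximality built into the definition of $A$, one can produce an element $\eta'\in D$ with $\eta\equiv_\mathfrak{C}\eta'$, and hence $\eta\in D$.

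The main obstacle is the last step. Showing $\mu_A\subseteq D$ requires carefully leveraging that $A$ was chosen to be the maximal set of infinitesimals in $\mathfrak{C}$ dominating $D$, together with the interplay between quantifier elimination over $\mathfrak{C}$, integer scaling within the subgroup $D$, and the relative type-definability of $D$ over a $\mathfrak{C}$-small set. The parallel non-emptiness argument for $A$ is easier but uses the same ingredients. Once these are in place, the three cases $D=\{0\}$, $D=\mu_A$, $D=\mu$ translate directly into the statement $F\cap E\in\{{=},\,E_A,\,E\}$ claimed.
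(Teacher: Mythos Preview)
Your reduction of $F \cap E$ to the subgroup $D := \{\delta \in \mu(\C') : aF(a+\delta)\}$ is sound, and the verification that $D$ is a well-defined $\aut(\C'/\C)$-invariant subgroup is correct. However, the proposal is not a proof: you explicitly acknowledge that the crucial inclusion $\mu_A \subseteq D$ is ``the main obstacle'', and your sketch (produce $\eta' \in D$ with $\eta' \equiv_\C \eta$ using ``maximality of $A$'') gives no indication of how this is to be done. The argument for $A \neq \emptyset$ via ``compactness combined with the relative type-definability of $D$'' is likewise too vague to stand on its own, although it can be made rigorous. Finally, your $A$ is defined as the set of \emph{all} positive infinitesimals of $\C$ dominating $D$, which need not be $\C$-small; you would still need to pass to a $\C$-small cofinal subset.

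The paper avoids the subgroup detour and works directly with $F$. Crucially, it defines $A$ as a subset of the $\dcl$-closed parameter set $B$ over which $F$ is relatively defined (so $\C$-smallness is automatic and $1 \in A$ always), then splits into the cases $A = B$ and $A \subsetneq B$. The heart of the matter is Case 2: one picks $b \in B \setminus A$, finds $(x,y) \in F$ with $y - x > b/n$ for some $n$, and iterates an automorphism $\sigma \in \aut(\C'/\C)$ mapping $x$ to $y$ to obtain $(x, y_k) \in F$ with $y_k - x > kb/n$. Compactness then yields $(x',y') \in F$ with $y' - x'$ lying strictly above every element of $B^+ \setminus A^+$ and strictly below every element of $A^+$; a q.e.\ computation (using that $\dcl^*$ is given by rational affine combinations and that $x' \in p(\C')$) shows this cut determines a complete type over $B$, so any pair in this cut lies in $F$. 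A short triangle argument then gives $E_A \subseteq F$. This iteration-plus-compactness construction of a ``generic gap element'' is precisely the missing ingredient in your sketch.
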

	
	\begin{proof}[Proof of Claim]

		We may assume that $F \subseteq E $, and just work with $F$. Let $B$ be a $\C$-small $\dcl$-closed subset of $\C$ over which $F$ is relatively defined on $p(\C')$. Extending the notation from before the statement of Theorem \ref{theorem: classification}, for any $B' \subseteq B$ put $$E_{B'}: =\{(x,y) \in p(\C')^2: \bigwedge_{b \in B'^{+}} \bigwedge_{n \in \mathbb{N}^+} |y-x| \leq \frac{1}{n}b\},$$
		where %change 4.19 added b \geq 1
		$B'^{+}:= \{ b \in B': 0< b \leq 1\}$. Let $A := \bigcup\{B' \subseteq B: F \subseteq E_{B'}\}$. Then 
		$$F \subseteq \bigcap \{E_{B'}: B' \subseteq B \;\; \textrm{such that}\;\; F \subseteq E_{B'}\} = E_A,$$
		and, as $F \subseteq E $, we have that $1 \in A$.
		
		We will show that either $F$ is equality, or $F= E_A$. This will clearly complete the proof of the claim (note that if $A$ does not contain any positive infinitesimals, then $E_A = E$).
		%%%Adrian2: changed $E\ne =$ for $E$ is not the equality because it still fits in the line and I think I like it a bit more.
		Suppose $F$ is not the equality. It remains to show that $F \supseteq E_A$.
		
		Case 1: $A=B$. Pick any distinct $\alpha ,\beta \in p(\C')$ such that $\alpha F\beta $. 
		%Since $\alpha \equiv_\C - \alpha$, we can assume that $\alpha>0$. 
		%%%Adrian2: displated the next formula
		Then $$\bigwedge_{a \in A^+} |\alpha-\beta| \leq a.$$ Consider any $\alpha',\beta'  \in p(\C')$ with $\alpha'E_A\beta'$. Then either $\alpha'=\beta'$ (and so $\alpha'F\beta'$), or $\bigwedge_{a \in A^+} 0<|\beta'-\alpha'|\leq a$. In the latter case, it remains to show that $\alpha \beta \equiv_A \alpha' \beta'$  or $\alpha \beta \equiv_A \beta'\alpha'$ 
		%%%Adrian2: changed $F$ is $A$ invariant to $F$ is relatively typee-definable over $A$ (it is over $B$ but in this case B=A)
		(as then $\alpha'F\beta'$, since $F$ is relatively type-definable over $A$). Without loss of generality, $\beta > \alpha$ and $\beta'>\alpha'$; equivalently, $R_1(\alpha,\beta)$ and $R_1(\alpha',\beta')$ both hold. Since $\alpha \equiv_{\C} \alpha'$, we can assume that $\alpha=\alpha'$. It suffices to show that $$\{0<t-\alpha \leq a: a \in A^+\}$$ determines a complete type over $\dcl(A,\alpha)$. 
		%%%Krzys rev: I prefer the shorter version, so I changed it.
		By o-minimality %change 4.21a added property
		of $(\mathbb{R},+,-,1,\leq)$, this boils down to showing that there is no $b \in \dcl^*(A,\alpha)$ with $\bigwedge_{a \in A^+} \alpha<b\leq\alpha+a$, where $\dcl^*$ is computed in the language $\{+,-,1,\leq\}$. If there was such a $b$, then, by q.e. for the theory of divisible ordered abelian groups,  
		%%%Krzys final: I replaced $\beta$ by $\gamma$ below, as $\beta$ was already in the analysis of Case 1.
		it would be of the form $\gamma + q\alpha$ for some $\gamma \in A$ and $q \in \mathbb{Q}$, and we would have $\bigwedge_{a \in A^+} 0< \gamma +(q-1)\alpha \leq a$. If $q=1$, we get %change 4.21 
		$0<\gamma\leq \frac{1}{2}\gamma<\gamma$, a contradiction. If $q \ne 1$, we get that $\alpha$ is related to an element of $A$ which contradicts the fact that $\alpha \in p(\C')$.
		
		Case 2: $A \subsetneq B$. 
		%%%Krzys rev: I changed the next sentence.
		Take any $b\in B\setminus A$. 
		%%%Krzys rev: $F\nsubseteq E_{A\cup \{b\}}$ instead of $F\subsetneq E_{A\cup \{b\}}$. Also, I added "=E_A \cap A_b$; swapping $x$ and $y$ if necessary, we may assume that $y>x$".
		Then, by maximality of $A$, $F\nsubseteq E_{A\cup \{b\}}$, so there is $(x,y)\in F$ such that $(x,y)\notin  E_{A\cup \{b\}}=E_A \cap E_b$; swapping $x$ and $y$ if necessary, we may assume that $y>x$. As $F\subseteq E_A$, we have that $(x,y)\notin E_b$. %change 4.22 comment: he sais that b\in B^+ but that is not true, however, it is true that either b or -b is in B^+. Note that this change is important and affects the notation 
		%%%Krzys rev: I changed the part below to something very close to what the referee wrote.
		%In particular this implies that either $b$ or $-b$ belongs to $B^+$. We can assume that $b\in B^+$ and hence $\lvert y-x \rvert > \frac{1}{n}b$ for some $n\in \mathbb{N}$. 
		%%%Krzys rev: $\mathbb{N}^+$ twice instead of $\mathbb{N}$.
		In particular, this implies that $b \in B^+$ and $y-x > \frac{1}{n}b$ for some $n\in \mathbb{N}^+$. 
		Since $F\subseteq E_A$, we have that $\lvert y-x \rvert< \frac{1}{n}a$ for all $a\in A^+$ and $n\in \mathbb{N}^+$, concluding $\bigwedge_{a \in A^+} b<a$. 
		%change 4.22 Then we can find $b \in B^+$ such that $\bigwedge_{a \in A^+} b<a$. Consider any such $b$. Then, $F(x,y)$ holds for some $x,y \in p(\C')$ such that $y-x > \frac{1}{n}b$ for some $n \in \mathbb{N}^+$.  Indeed, otherwise $F \subseteq E_{A \cup \{b\}} \subsetneq E_A$, which contradicts the minimality of $E_A$ (note that $E_{A \cup \{b\}} \subsetneq E_A$ is witnessed by $(c,c+b)$ for any $c \in p(\C')$, as $A$ is closed under taking fractions).
		
		%%%Krzys rev: Corrected "set" and "We have".
		Let $\sigma \in\auto(\C'/\C)$ be such that $\sigma(x)=y$; set $y_k:=\sigma^{k}(x)$. We have
		
		\begin{center}
			\begin{tikzpicture}
				\node at (0, 0)  {$a$};
				\node at (1, 0)  {$y_1$};
				\node at (2, 0)  {$y_2$};
				\node at (3, 0)  {$\cdots$};
				\draw[-stealth]   (0.1,-0.15) to[out=-60,in=-120](0.9,-0.15);
				\node at (0.5, -0.5)  {$\sigma$};
				\draw[-stealth]   (1.1,-0.15) to[out=-60,in=-120](1.9,-0.15);
				\node at (1.5, -0.5)  {$\sigma$};
				\draw[-stealth]   (2.1,-0.15) to[out=-60,in=-120](2.9,-0.15);
				\node at (2.5, -0.5)  {$\sigma$};
			\end{tikzpicture}
		\end{center}
		We easily conclude that $F(x,y_k)$ and $y_k -x > \frac{k}{n}b$ for all $k$; in particular, $y_n -x>b$. By compactness (or rather $|\C|^{+}$-saturation of $\C'$), there exist $x',y' \in p(\C')$ such that $F(x',y')$ and:
		\begin{enumerate}%change 4.23(b) I decided not to remove 1 even when it is implied by $F$ because I think we want to make enphasis in the fact that only those 2 conditions is enough to guarantee that they have the same type over $B$.
			%%%Krzys rev: I removed the redundant parentheses.
			\item $\bigwedge_{a \in A^+} 0<y'-x'<a$;
			\item $\bigwedge_{b \in B^+\setminus A^+} b<y'-x'$.
		\end{enumerate}
		
		We will check now that whenever $x'',y'' \in p(\C')$ satisfy (1) and (2), then $x'y' \equiv_B x''y''$.
		For that, without loss of generality, we can assume that $x'=x''$. It remains to show that the partial type %change 4.24
		$$\pi(t/x'):= \{0<t-x'<a: a \in A^+\} \cup \{b<t-x': b \in B^+\setminus A^+ \}$$ 
		determines a complete type over $\dcl(B,x')$.  
		%%%Krzys rev: Removed "the" before "o-minimality".
		By o-minimality of $(\mathbb{R},+,-,1,\leq)$, this boils down to showing that there is no $c \in \dcl^*(B,x')$ realizing $\pi(t/x')$, where $\dcl^*$ is computed in the language $\{+,-,1,\leq\}$. If there was such a $c$, then, by q.e. for the theory of divisible ordered abelian groups,  it would be of the form $\beta + qx'$ for some $\beta \in B$ and $q \in \mathbb{Q}$, so %change 4.25
		$$ \bigwedge_{a\in A^+} \bigwedge_{b \in B^+\setminus A^+}  b<\beta +(q-1)x'<a.$$
		%change 4.26
		If $q=1$, we get $\bigwedge_{a \in A^+} 0<\beta<a$, so $\beta \in B^+\setminus A^+$, concluding $\beta < \beta$, a contradiction. 
		%%%Krzys rev: "we get that'' instea of "then".
		If $q \ne 1$, as $1\in A$, we get that $x'$ is related to an element of $B$, which contradicts the fact that $x' \in p(\C')$.
		
		%%%Krzys final: $E_A$ instead of $E_B$; this was an annoying mathematical typo :)
		%change 4.27
		Finally, consider any $(\alpha,\beta) \in E_A$, say with $\beta >\alpha$ so $0<\beta-\alpha<\frac{1}{2}a$ for all $a\in A^+$. 
		%%%Krzys rev: Removed "the", and "b<\alpha-\gamma" instead of (b<\alpha'-\gamma').
		Applying $\sigma \in \auto(\C'/\C)$ mapping $y'$ to $\alpha$, we obtain $\gamma:=\sigma(x')$ such that $\gamma F\alpha$ and $\bigwedge_{b \in B^+\setminus A^+} b<\alpha-\gamma$. Since $F\subseteq E_A$, we get  $b<\alpha -\gamma < \frac{1}{2}a$ for all $b\in B^+\setminus A^+$ and $a\in A^+$. Therefore, $b<\beta - \gamma < a$ for all $b\in B^+\setminus A^+$ and $a\in A^+$, %$(\gamma,\beta)$ also satisfies (1) and (2). Therefore, $\gamma\alpha \equiv_B x'y' \equiv_B \gamma\beta$. As $x'Fy'$, we conclude that $\alpha F \beta$, 
		%%%Krzys I gave some details below.
		%which completes the proof of the claim.
		So, by the previous paragraph, $\gamma \alpha \equiv_B x'y' \equiv_B \gamma\beta$. As $(x',y') \in F$, we conclude that $(\alpha,\beta) \in F$, which completes the proof of the claim.
	\end{proof}
	
	%change 4.28
	%%%Krzys rev: The argument suggested by the referee contains a serious mistake. Namely, "By saturation, there is $c\in \C$ such that $a+b\in c+\mu_A$." is not a correct reasoning. Since $\mu_A$ is of unbounded index in $\mu$, a compactness argument does not work here. Actually to the deduce the theorem from the claims is a more delicate issue that we missed before. Below is a proof.
	%By the above two claims, to conclude the proof of the theorem it suffices to show that if $E_A=E\cap F\subsetneq F \subseteq E_{c_0}^\mu$ for some $c_0$, then $F=E_{A,c}$ for some $c$. Take $(a,b)\in F\setminus E$. Then, $aE_{c_0}^\mu b$, so $a+b\in c_0+\mu$. By saturation, there is $c\in \C$ such that $a+b\in c+\mu_A$. For any $b'\in p(\C')$ with $a+b'\in c+\mu_A$, we get $bE_Ab'$, so $bFb'$, concluding $aFb'$. Therefore $E_{A,c}\subseteq F$. On the other hand, for any other $(a,b')\in F\setminus E$, $a+b\in c_0+\mu$ and $a+b'\in c'+\mu_A$ for some $c'\in \C$. Hence, $bFb'$ and $b-b'\in \mu$, so $b-b'\in\mu_A$ as $E\cap F\in E_A$. Thus, $c_0-c'\in b-b'+\mu_A=\mu_A$, concluding $(a,b')\in E_{A,c}$, so $F\subseteq E_{A,c}$.
	By the above two claims, in order to prove the theorem, it remains to consider the case when $E\cap F\subsetneq F \subseteq E_{c_0}^\mu$ for some $c_0 \in \C$.
	%By the first claim, we have $F \subseteq E_{c_0}^\mu$ for some $c_0 \in \C$.
	By the second claim, we have the following two cases.
	
	Case 1: $E \cap F$ is the equality. We will show that then $F =E_c$ for some $c \in \C$. Consider any $a \in p(\C')$. Since $F \ne \;=$, there exists $b \ne a$ such that $aFb$. Since $F \subseteq E_{c_0}^\mu$ and $E \cap F$ is the equality, we get that such a $b$ is unique: if $b' \ne a$ also satisfies $aFb'$, then $b,b' \in -a+c_0 + \mu$, so $b-b' \in \mu$, hence $b=b'$ because $bFb'$. 
	%This unique $b$ belongs to $\dcl(\C, a)$, so $c:=a+b \in \dcl(\C, a)$. As $T$ has q.e., $c=c' +qa$ for some $c' \in \C$ and $q \in \mathbb{Q}$. The $F$-class of $a$ is the set $\{a,-a+c\}$. Take $\sigma \in \auto(\C'/\C)$ with $\sigma(a)=-a+c$. Then $a=\sigma(-a +c)= a-c +\sigma(c)$, so $\sigma(c)=c$. Hence, $c'+qa= c'+q\sigma(a)=c'-qa +qc =c'-qa+qc'+q^2a$, and so $q(2-q)a =qc' \in \C$. So either $q=0$, and then $c=c' \in \C$, or $q=2$ and $c'=0$, but then $b=-a+c=a$, a contradiction. Applying automorphisms over $\C$, we get $F=E_c$.
	%%%Adrianrev2: added $\dcl$ coincides with the definable closure $\dcl^*$ computed in the language $\{+,-,1,\leq\}$ and its explanation
	This unique $b$ belongs to $\dcl(\C, a)$, so $g:=a+b-c_0 \in \dcl(\C,a) \cap \mu$.
	%%%Krzysrev2: I changed the next two sentences, as we do not need to talk about dcl^* here.
	Since $a$ is not related to any element of $\C$ and $\dcl$ is given by ``terms" with rational coefficients (which follows from q.e. for $T$),  we get that $g \in \C$.
	%Since $T$ has q.e., $a$ is not related to any element of $\C$ and $\dcl$ coincides with the definable closure $\dcl^*$ computed in the language $\{+,-,1,\leq\}$ (this follows from the fact that $\dcl^*$ is given by ``terms" with rational coefficients), we get that $g \in \C$. 
	Hence, $c:=a+b=c_0+g \in \C$. Applying automorphisms over $\C$, we get $F=E_c$.
	
	Case 2: $E \cap F=E$ or $E \cap F=E_A$ for some non-empty $\C$-small set $A$ of positive infinitesimals. Since $E=E_{\{1\}}$ (with the obvious extension of the definition of $E_A$), we can write $E \cap F=E_A$, where $A$ is either a non-empty $\C$-small set $A$ of positive infinitesimals or $A=\{1\}$. We will show that then $F =E_{A,c}$ for some $c \in \C$, where $E_{\{1\},c}:=E^\mu_c$. Extend the definition of $\mu_A$ via $\mu_{\{1\}}:=\mu$.
	
	%%%Adrianrev2: It used to say ' Since $E\cap F \ne E$' instead, I think it was a typo
	Consider any $a \in p(\C')$. Since $E\cap F \ne F$ and $F \subseteq E_{c_0}^\mu$, there exists $b\in p(\C')$ such that $(a,b) \in F \setminus E$ and $a+b = c_0 +g$ for some $g \in \mu$. As  $E \cap F=E_A$, we get that $\sigma(g)-g \in \mu_A$ for every $\sigma \in \auto(\C'/\C a)$. 
	
	%We claim that $g \in \dcl(\C, a) + \mu_A$. For that first note that $\dcl$ coincides with the definable closure $\dcl^*$ computed in the language $\{+,-,1,\leq\}$ (this follows from the fact that $\dcl^*$ is given by ``terms" with rational coefficients). Hence, if  $g \notin \dcl(\C, a) + \mu_A$, then $(g+\mu_A) \cap \dcl^*(\C, a) = \emptyset$, and so, by o-minimality of $(\mathbb{R},+,-,1,\leq)$, the orbit of $g$ under $\auto(\C'/\C a)$ is not contained in $g + \mu_A$, a contradiction.
	%We claim that $g \in \dcl(\C, a) + \mu_A$. 
	%Adrianrev2 moved this sentence %For that first note that $\dcl$ coincides with the definable closure $\dcl^*$ computed in the language $\{+,-,1,\leq\}$ (this follows from the fact that $\dcl^*$ is given by ``terms" with rational coefficients). 
	%%%Adrianrev2:modified the paragraph according to the suggestion of the referee
	%%%Krzysrev2: In the new version of the paragraph below, I think we have to use weak inequalities around $g$, so I did it.
	Since $\sigma(g)-g \in \mu_A$ for every $\sigma \in \auto(\C'/\C a)$, we conclude by o-minimality of $(\mathbb{R},+,-,1,\leq)$ that, for every $\alpha\in A$ and $\in \mathbb{N}^+$, there are $c_{\alpha,n}, d_{\alpha,n} \in \dcl^*(\C,a)$ such that $g-\frac{1}{n}\alpha<c_{\alpha,n}\leq g\leq d_{\alpha,n}<g+ \frac{1}{n}\alpha$, 
	%%%Krzysrev2: I added the rest of this sentence "where...".
	where  $\dcl^*$ is the definable closure computed in the language $\{+,-,1,\leq\}$ (which coincides with $\dcl$ as both closures are given by ``terms" with rational coefficients). 
	%%%Krzysrev2: I removed "using q.e. and" as it is redundant. Also "is given by" in place of "consists of". Also $\dcl^*$ in place of $\dcl$ (although they are the same).
	Since $a$ is not related to any element of $\C$ and for every $\alpha\in A$ and $n\in \mathbb{N}^+$ the elements $c_{\alpha,n}, d_{\alpha,n}$ are related to zero, using that $\dcl^*$ is given by  ``terms'' with rationals coefficients, we conclude that $c_{\alpha,n}$ and $d_{\alpha,n}$ belong to $\C$ for every $\alpha\in A$ and $n\in \mathbb{N}^+$. Since $A$ is $\C$-small, the set of all $c_{\alpha,n}$ and $d_{\alpha,n}$ is $\C$-small, and hence there is $e \in \C$ with $g-\frac{1}{n}\alpha< e <g + \frac{1}{n}\alpha$ for all $\alpha \in A$ and $n \in \mathbb{N}^+$. Then, $g\in e+\mu_A$ with $e\in \C$, concluding $a+b\in c+\mu_A$, where $c=c_0+e\in \C$, so $aE_{A,c}b$.
	%Then $e \in \dcl^*(\C,a) \cap (g + \mu_A)$, a contradiction.
	%By q.e. and the fact that $a$ is not related to any element of $\C$, there exists $q \in \mathbb{Q}$ such that for all $\alpha \in A$ and $n \in \mathbb{N}^+$ we have $c_{\alpha,n} = c_{\alpha,n}'+ qa$ and $d_{\alpha,n}=d_{\alpha,n}' +qa$ for some $c_{\alpha,n}',d_{\alpha,n}' \in \C$. Then $g-qa-\frac{1}{n}\alpha<c_{\alpha,n}'<g-qa<d_{\alpha,n}'<g -qa + \frac{1}{n}\alpha$. Since $A$ is $\C$-small, the set of all $c_{\alpha,n}'$ and $d_{\alpha,n}'$ is $\C$-small, and hence there is $e \in \C$ with $g-qa-\frac{1}{n}\alpha< e <g -qa + \frac{1}{n}\alpha$ for all $\alpha \in A$ and $n \in \mathbb{N}^+$. Then $e+qa \in \dcl^*(\C,a) \cap (g + \mu_A)$, a contradiction.
	%Since $g \in \dcl(\C, a) + \mu_A$, using q.e. and the fact that  $g \in \mu$ and $a$ is not related to any element of $\C$, we get that $g \in \C + \mu_A$, so 
	%$c_0+g=c+d$ and $b=-a + c+ d$, where $d \in \mu_A$ and $c\in \C$. 
	\begin{comment}	
	Take $\sigma \in \auto(\C'/\C)$ with $\sigma(a)=-a+c+d$. Then $\sigma(-a+c+d) = a +\sigma(c)-c + \sigma(d)-d=a+\sigma(d)-d$
	%Thus, $c_0+g=c+d$ and $b=-a + c+ d$, where $d \in \mu_A$ and $c=c'+qa$ for some $c' \in \C$ and $q \in \mathbb{Q}$. Take $\sigma \in \auto(\C'/\C)$ with $\sigma(a)=-a+c+d$. Then $\sigma(-a+c+d) = a +\sigma(c)-c + \sigma(d)-d$
	%\in a +\sigma(c)-c + \mu_A$ 
	is $F$-related to $a$. %Since $F \subseteq E_{c_0}^\mu$ and  $E \cap F=E_A$,  we get two subcases.
	%and $a +\sigma(c)-c + \sigma(d)-d+a$ is not comparable with $c_0$, hence $\sigma(c)-c \in \mu_A$
	\end{comment}
	%This yields $(a,b) \in E_{A,c}$.
	
	From the conclusion of the previous paragraph and the fact that $E \cap F=E_A$, we obtain $(a+\mu_A) \cup (-a+ c +\mu_A) \subseteq [a]_F$. By automorphisms over $\C$, the same is true for any other element of $p(\C')$ in place of $a$, so $E_{A,c} \subseteq F$. The opposite inclusion easily follows using the assumptions $F \subseteq E_{c_0}^\mu$ and $E \cap F=E_A$. Namely, using automorphisms over $\C$, it is enough to show that $[a]_F \subseteq [a]_{E_{A,c}}$. Consider any $b' \in [a]_F$. Since $(a,b) \in F \setminus E$ and  $F \subseteq E_{c_0}^\mu$, we have that $b' \in a + \mu$ or $b' \in b + \mu$. As $E \cap F=E_A$, we conclude that $b' \in a + \mu_A$ (and so $b'E_{A,c}a$) or $b' \in b+ \mu_A$ (and so $b'E_{A,c}b$ which together with $aE_{A,c}b$ implies $b'E_{A,c}a$).
\end{proof}

%%%Krzys rev: $\C$-small
\begin{cor}
	The equivalence relation $E$ is the finest equivalence relation on $p(\C')$ relatively type-definable over a $\C$-small set of parameters of $\C$ and with stable quotient, that is $E^{\textrm{st}}=E$
\end{cor}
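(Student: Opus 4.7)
The plan is to combine the classification obtained in Theorem \ref{theorem: classification} with (i) a verification that $p(\C')/E$ is stable and (ii) a check that each strictly finer relation on $p(\C')$ from the list has an unstable quotient.

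For the stability of $p(\C')/E$, I would reuse Proposition \ref{proposition: stable quotient} essentially verbatim, now restricting the counting to elements of $p(\C')$. Concretely, given a $\C$-small $B\subseteq \C'$, put $V_B:=\mathrm{Lin}_{\mathbb{Q}}(B\cup \mathbb{R})$ and $\widetilde V_B:=V_B+\mu$. Any two elements of $p(\C')\setminus \widetilde V_B$ satisfy the same collection of formulas of the form $nx\ne t$ and $\neg R_r(t,nx)$ with $t\in V_B$, and by quantifier elimination (Proposition \ref{proposition: qe}) and Remark \ref{R} this pins down the complete type over $B$. Hence $|S_{p(\C')/E}(B)|\leq |\widetilde V_B/\mu|+1\leq |V_B|+1$, and the equivalence (1)$\Leftrightarrow$(3) of Theorem \ref{equivteor} (together with NIP of $T$) gives stability of $p(\C')/E$.

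Next, I would use Theorem \ref{theorem: classification} to enumerate the relatively type-definable equivalence relations $F$ on $p(\C')$ that are strictly finer than $E$. The relations $E_c$, $E_c^\mu$ and $E_{A,c}$ each identify some $a\in p(\C')$ with a distinct element of $-a+c+\mu$ (or $-a+c+\mu_A$), which is never $E$-related to $a$ because $a$ is not related to any element of $\C$. So none of those relations sits inside $E$. The only candidates strictly below $E$ are therefore equality and the relations $E_A$ with $A$ a non-empty $\C$-small set of positive elements of $\C$ containing at least one positive infinitesimal $\alpha$.

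Finally, I show that each such $F$ has unstable quotient. Within any $E$-class $a_0+\mu$ contained in $p(\C')$, the ordering $R_1$ induces a dense divisible ordered abelian group structure on $\mu$ (coming from the reduct to $\{+,-,1,\leq\}$), and $\mu_A\subsetneq \mu$ in the $E_A$ case (trivially so in the equality case). Using saturation of $\C'$, I select a $\mathbb{Q}$-indexed sequence $(a_q)_{q\in \mathbb{Q}}\subseteq a_0+\mu\subseteq p(\C')$ such that the cosets $a_q+\mu_A$ are pairwise distinct and increasingly ordered by $R_1$; such a sequence exists because $\mu/\mu_A$ is a non-trivial dense divisible ordered abelian group. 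Any strictly increasing subsequence is $\emptyset$-indiscernible in $p(\C')/F$ and, via a formula of the form $R_s(x-y,\alpha)$ (with $\alpha\in A$ an infinitesimal, or $\alpha$ any positive element of $\mu$ in the equality case), witnesses the order property for $p(\C')/F$. Hence $p(\C')/F$ is unstable for every $F\subsetneq E$ in the classification, and combined with the stability of $p(\C')/E$ this shows $E^{\textrm{st}}=E$. The main technical point to be careful about is arranging the sequence $(a_q)_q$ so that it lies in $p(\C')$ and its image in $p(\C')/F$ is a genuine dense order; this reduces to checking that $\mu/\mu_A$ is densely ordered, which is immediate from divisibility.
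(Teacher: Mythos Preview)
Your overall strategy is the same as the paper's: show $p(\C')/E$ is stable, use Theorem \ref{theorem: classification} to see that the only relations strictly finer than $E$ are equality and the $E_A$'s, and then show each of these has unstable quotient. The first two steps are fine and essentially identical to the paper's argument (the paper simply cites Proposition \ref{proposition: stable quotient} for stability rather than redoing the counting).

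The gap is in your instability step. You assert that any strictly $R_1$-increasing sequence $(a_q)_{q\in\mathbb Q}\subseteq a_0+\mu$ with pairwise distinct $\mu_A$-cosets gives a $\emptyset$-indiscernible sequence in $p(\C')/F$. This is not true in general and is not what needs to be checked. By quantifier elimination, the $\emptyset$-type of $([a_{q_1}]_F,\dots,[a_{q_n}]_F)$ records, for instance, whether $a_{q_3}-a_{q_2}>a_{q_2}-a_{q_1}$ (this is visible via $R_1(0,(a_{q_3}-a_{q_2})-(a_{q_2}-a_{q_1}))$ and is $E_A$-invariant when the differences are not $\mu_A$-close), so merely being strictly increasing with distinct $\mu_A$-cosets does not pin down the type of an $n$-tuple. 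The density of $\mu/\mu_A$, which you flag as the delicate point, is actually irrelevant here.

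The paper handles this by first choosing a specific sequence $(a+kc)_{k<\omega}$ with $a\in p(\C')$ and $c$ an infinitesimal in $\C'$ larger than every infinitesimal of $\C$ (hence larger than every element of $A$), and then extracting a $\C$-indiscernible sequence via Ramsey and compactness. The uniform choice of $c$ guarantees that for any two $F$-classes arising from consecutive (or any increasing) pair in the original sequence, \emph{all} representatives $d,e$ satisfy $d<e$; this property is part of the EM-type over $\C$ and therefore passes to the extracted indiscernible sequence. Then $R_1(x,y)$ directly witnesses that the sequence of $F$-classes is not totally indiscernible over $\C$, hence $p(\C')/F$ is unstable by Theorem \ref{equivteor}(4). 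Your formula $R_s(x-y,\alpha)$ does not play this role cleanly; the paper's $R_1(x,y)$ does, precisely because the gap $c$ dominates $A$. Your argument can be repaired by inserting the extraction step and replacing the witnessing formula, at which point it becomes the paper's proof.
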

\begin{proof}
	The quotient $p(\C')/E$ is stable by Proposition \ref{proposition: stable quotient}. %\cite[Proposition 4.9]{10.1215/00294527-2022-0023}. 
	%%%Krzys rev: I removed parentheses, and wrote $\C$-small.
	Let $F$ be a relatively type-definable over a $\C$-small subset of $\C$ equivalence relation on $p(\C')$ strictly finer than $E$. 
	%%%Krzys final: I added the next sentence.
	%%%Krzys rev: $\C$-small
	By Theorem \ref{theorem: classification}, $F=E_A$ for some non-empty $\C$-small set $A$ of positive infinitesimals in $\C$. 
	%%%Krzys rev: I added "(There is also the case when $F$ is the equality, but then $p(\C')/F=p(\C')$ is clearly unstable.)".
	(There is also the case when $F$ is the equality, but then $p(\C')/F=p(\C')$ is clearly unstable.)
	%%%Krzys: Slightly changed	
	%	By the previous theorem, there is some infinitesimal $c\in \C'$ bigger than all infinitesimals in $\C$ such that for any $a\in p(\C')$ the elements $a$ and $a+c$ are not $F$-related.
	%%%Krzys final: I did not see now how the formula $R_1(x,y)$ works at the end of the proof just using that $a$ and $a+c$ are not $F$-related. So I changed the end of this paragraph.
	%Take any infinitesimal $c\in \C'$ bigger than all infinitesimals in $\C$. By the previous theorem, for any $a\in p(\C')$ the elements $a$ and $a+c$ are not $F$-related.
	%%%Krzys rev: I added ``and'' before ``such that'' to make it more clear. But now it seems to me that the next sentence was to weak to apply it in the next paragraph, as this sentence was a V-definable condition, so there is no reason to be preserved under finding an  indiscernible sequence  by Ramsey's theorem and compactness. So I changed it.
	% One easily concludes that for every $a,d,e\in p(\C')$ and infinitesimal $c\in \C'$ bigger than all infinitesimals in $\C$ and such that $d F a$ and $e F (a+c)$, we have $d<e$, and so $\neg R_1(e,d)$.
	
	Pick any $\alpha \in A$.
	It is easy to check that for every $a,d,e\in \C'$ and infinitesimal $c\in \C'$ bigger than all infinitesimals in $\C$ and such that $|d - a|\leq \frac{1}{2}\alpha$ and $|e -(a+c)|\leq\frac{1}{2}\alpha$, we have $d<e$, and so $\neg R_1(e,d)$. And note that ``$|x-y|\leq\frac{1}{2}\alpha$'' can be written as an $L_\alpha$-formula.

	%%%Adrian: Very similar to the previous example
	%%%Adrian:(Alternative proof) Take a sequence $(c_i)_{i<omega}$ of infinitesimal of $\C'$ bigger than all infinitesimals in $\C$ such that $c_i$ is infinitesimal with respect to $c_{i+1}$. The sequence $(a+c_i)_{i<$\omega$}$ is $\C$-indiscernible  but not totally-$\C$-indiscernible since the formula $R_1(x,y)$ witnesses that $$ \tp([a+c_i]_F,[a+ c_{i+1}]_F/ \C)\neq\tp([a+c_{i+1}]_F, [a+c_i]_F/ \C). $$
	%%%Adrian: To see that the sequence is $\C$-indiscernible, note that given a subtuple tuple, any operation you do with its component either belongs to p(\C') or it is some infinitesimal (it can be 0 only in trivial cases) and this only depends on which operation we are choosing. The more delicate case is the one where it is some infinitesimal, because it could be a positiver or a negative infinitesimal (bigger/smaller than all infinitesimals in $\C$ respectively). In that case, it only depends on the sign that the operation assigns to the highest index (+ka_{i_n} or -ka_{i_n})
	
	%%%Krzys final: I added the next sentence.
	Take any $a \in p(\C')$ and infinitesimal $c\in \C'$ bigger than all infinitesimals in $\C$.
	%%%Krzys rev: Instead of ``we extract a $\C$-indiscernible sequence $(a_i')_{i<\omega}$ from the sequence $(a+kc)_{k<\omega}$'' I wrote `` we find a $\C$-indiscernible sequence $(a_i')_{i<\omega}$ having the same Erenfeucht-Mostowski type as the sequence $(a+kc)_{k<\omega}$.'' This is important, as extracting is reserved for something slightly different that makes a difference here (since here it is not extracting, there was a problem with the application of the previous paragraph in the last version of the paper; it could be fixed choosing a suitable very long sequence here and then extracting, but I decided to change the previous paragraph instead))
	Using Ramsey's theorem and compactness, 
	%we extract a $\C$-indiscernible sequence $(a_i')_{i<\omega}$ from the sequence $(a+kc)_{k<\omega}$. 
	we find a $\C$-indiscernible sequence $(a_i')_{i<\omega}$ having the same Erenfeucht-Mostowski type as the sequence $(a+kc)_{k<\omega}$.
	Then, the sequence $([a'_i]_F)_{i<\omega}$ is $\C$-indiscernible but not totally $\C$-indiscernible, since the formula $R_1(x,y)$ witnesses that $$ \tp([a'_i]_F,[a'_{i+1}]_F/ \C)\neq\tp([a'_{i+1}]_F, [a'_i]_F/ \C). $$ 
	%%%Krzys: Changed the last sentence.
	%	 Which is a contradiction with indiscernibility by \cite[Theorem 2.10]{10.1215/00294527-2022-0023}
	Thus, $p(\C')/F$ is unstable.%change 4.29c by virtue of \cite[Theorem 2.10]{10.1215/00294527-2022-0023}.
	%%%Adrian: The original sequence does not work directly because (a,a+2c,a+8c,a+9c) does not have the same type as (a,a+2c,a+7c,a+9c). Even over the emptyset.
\end{proof}

\chapter{$n$-dependent continuous theories and hyperdefinable sets}\label{Chapter 5}
\section{Generalized indiscernibles}\label{section: modelling property}
Let $\mL'$ be a first-order language and $\mL$ be a continuous logic language. Unless specified otherwise, $T$ is a complete continuous $\mL$-theory with $\C\models T$ a monster model (i.e. $\kappa$-saturated and strongly $\kappa$-homogeneous for a strong limit cardinal $> |T|$) and $\I,\mathcal{J}$ are $\mL'$-structures. %All the tuples considered are of length strictly less than $\kappa$, similarly, all the sets considered are small (i.e. of size strictly less than $\kappa$).
As in Chapter \ref{Chapter 3}, we will use the formalism from \cite[Subsection 3.1]{hrushovski2021amenability} and \cite[Section 3]{hrushovski2021order} and adapt some results from \cite{MR2657678}.  In particular, by a {\em CL-formula over $A$} we mean a continuous function $\varphi \colon S_n(A)\to \mathbb{R}$.  If $\varphi$ is such a CL-formula, then for any $\bar b\in M^n$ (where $M \models T$) by $\varphi(\bar b)$ we mean $\varphi(\tp(\bar b/A))$.%; note that the range of every CL-formula is compact. So a CL-formula can be thought of as a function from $\C^n$ to $\mathbb{R}$ which factors through $S_n(A)$ via a continuous map $S_n(A) \to \mathbb{R}$.
%What are called {\em definable predicates}, in finitely many variables and without parameters,  in \cite{MR2657678} are precisely CL-formulas over $\emptyset$, but where the range is contained in $[0,1]$. In any case, a CL-formula can be added as a new CL-predicate and then it becomes a legitimate formula in the sense of continuous logic. 
In the sections concerning hyperdefinable sets, we allow the domain of a CL-formula to be an infinite Cartesian power of $\C$ (which is necessary to deal with $X/E$ in the case when $X\subseteq \C^\lambda$ where $\lambda$ is infinite).%, but still the results from \cite{MR2657678} 
%and \cite{MR2723787} 
%which we will be using are valid for such generalized continuous logic formulas.

In this section, we present natural adaptations of the concepts of generalized indiscernibles and the modeling property to continuous logic and give a characterization of the continuous modeling property in the form of a continuous logic counterpart of \cite[Theorem 2.10]{SCOW2021102891}.

The following idea first appeared in \cite[Definition VIII.2.4]{Shelah1982-SHECTA-5}.

\begin{defin}\label{defin: gen. indisc.}%[Generalized indisc]
    %Let $\mathcal{L}'$ be a first order language and $\mathcal{I}$ an $\mathcal{L}'$-structure. 
    Let $\II=(a_i: i\in \I)$ be an $\I$-indexed sequence, and let $A\subset \C$ be a small set of parameters.  We say that $\II$ is an $\I$-\emph{indexed indiscernible sequence over} $A$ if for all $n\in\omega$ and all sequences $i_1,\dots,i_n,j_1,\dots,j_n$ from $\I$ we have that 
    $$\qftp(i_1,\dots,i_n)=\qftp(j_1,\dots,j_n) \implies \tp({a}_{i_1}, \dots, {a}_{i_n}/A)=\tp({a}_{j_1}, \dots, {a}_{j_n}/A).$$
\end{defin}

We will refer to $\I$-indexed indiscernible sequences as $\I$-indiscernibles.

Next, we adapt the definition of \emph{locally based on} given in \cite{10.1215/00294527-3132797}. The first reference to this concept can be found in \cite{MZiegler}.

\begin{defin}[Locally based on] 
%Let $\I$ and $\mathcal{J}$ be $\mL'$-structures and 
Let $\II=(a_i:i\in \I)$ be an $\I$-indexed sequence. We say that a $\mathcal{J}$-indexed sequence $(b_j:j\in\mathcal{J})$ is \emph{locally based on $\II$} if for any finite set of $\mL$ formulas $\Delta$, any finite tuple $\overline{j}\subseteq \mathcal{J}$ and $\varepsilon>0$ there is $\overline{i}\subseteq \I$ such that:
\begin{enumerate}
    \item $\qftp(\overline{i})=\qftp(\overline{j})$ in the language $\mL'$.
    \item $\lvert \varphi(b_{\overline{j}})- \varphi(a_{\overline{i}}) \rvert\leq \varepsilon$ for all $\varphi\in \Delta$.
\end{enumerate}
\end{defin}

 The original definition presented in \cite[Definition 2.5]{10.1215/00294527-3132797} is the following:
 \begin{defin}[Classical definition of Locally based on]
%Let $\I$ and $\mathcal{J}$ be $\mL'$-structures and 
Let $\II=(a_i:i\in \I)$ be an $\I$-indexed sequence. We say that a $\mathcal{J}$-indexed sequence $(b_j:j\in\mathcal{J})$ is \emph{locally based on $\II$} if for any finite set of $\mL$ formulas $\Delta$ and any finite tuple $\overline{j}\subseteq \mathcal{J}$ there is $\overline{i}\subseteq \I$ such that:
\begin{enumerate}
    \item $\qftp(\overline{i})=\qftp(\overline{j})$.
    \item $\tp^{\Delta}(b_{\overline{j}})= \tp^{\Delta}(a_{\overline{i}})$.
\end{enumerate}
\end{defin}
 
 Note that if we tried to use this stronger version of the property, it is easy to show that even for $\I=(\mathbb{N},<)$ we can find a sequence for which there are no indiscernible sequences locally based on it. Consider for example the theory $\Th([0,1],d)$ where $d$ is the distance predicate and the sequence $(1/n)_{n<\omega}$.

The next definition is then the natural continuous counterpart of \cite[Definition 2.17]{SCOW20121624}.

\begin{defin}[Continuous Modeling property]\label{definition: CMP}
    %Fix an $\mL'$ structure $\I$. 
    Given a continuous theory $T$, we say that \emph{$\I$-indexed indiscernibles have the continuous modeling property in $T$} if given any $\I$-indexed sequence $\II=(a_i:i\in \I)$ in a monster model $\C$ of $T$ there exists an $\I$-indiscernible sequence $(b_i:i\in\I)$ in $\C$ locally based on $\II$. We say that $\I$ has the \emph{continuous modeling property} if $\I$-indexed indiscernibles have the continuous modeling property in every continuous theory.
\end{defin}

If a first-order structure $\I$ has the continuous modeling property then it has the modeling property. First, note that given a classical first-order theory $T$, there always exists a continuous logic theory $T'$ such that any model of $T'$  is a model of $T$ with the discrete metric. We show that the following holds:

\begin{prop} \label{CMP implies MP}
    Let $T$ be a first-order theory and let $T'$ be its continuous logic counterpart. Then $\I$ has the continuous modeling property in $T'$ if and only if $\I$ has the modeling property in $T$.
\end{prop}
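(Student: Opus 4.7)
The plan is to use that in any model of $T'$ the metric $d$ is discrete, which forces all atomic continuous predicates (inherited from the first-order signature of $T$) to take values in $\{0,1\}$. By induction on the construction of CL-formulas (connectives are continuous maps on $[0,1]^n$, and $\sup_y$/$\inf_y$ of a finitely-valued predicate is again finitely valued), I would establish the pivotal lemma: every CL-formula $\varphi(\bar x)$ over $\emptyset$, evaluated on tuples from a model of $T'$, takes only finitely many values $c_1,\dots,c_r \in [0,1]$, and for each $k$ the set $\{\bar a : \varphi(\bar a)=c_k\}$ is first-order definable by some $\mathcal{L}'$-formula $\psi_k(\bar x)$ of $T$. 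This bridge between CL-formulas over $T'$ and Boolean combinations of $T$-formulas is the technical heart of the proposition, and I expect it to be the main obstacle — one has to be careful with $\sup$ and $\inf$ over infinite models, but since the set of values attained is finite and hence closed, the supremum/infimum is actually a maximum/minimum taken in that finite set.

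For the direction (CMP in $T'$ $\Rightarrow$ MP in $T$), I would take a first-order sequence $(a_i)_{i\in \I}$ in $M\models T$, regard it as a sequence in a monster model of $T'$ (discretely metrized), and apply CMP to obtain an $\I$-indiscernible sequence $(b_i)_{i\in \I}$ CL-locally based on $(a_i)_{i\in\I}$. Each first-order $\mathcal{L}$-formula $\varphi$ can be viewed as a $\{0,1\}$-valued CL-formula. CL-indiscernibility then yields $\varphi(b_{\bar i})=\varphi(b_{\bar j})$ whenever $\qftp(\bar i)=\qftp(\bar j)$, which is exactly first-order indiscernibility. For local basing, given a finite first-order $\Delta$ and $\bar j$, apply CL-local basing to the $\{0,1\}$-valued versions of $\Delta$ with any $\varepsilon<1/2$; the inequality $|\varphi(b_{\bar j})-\varphi(a_{\bar i})|<1/2$ inside $\{0,1\}$ forces equality, giving $\mathrm{tp}^{\Delta}(b_{\bar j})=\mathrm{tp}^{\Delta}(a_{\bar i})$.

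For the converse direction (MP in $T$ $\Rightarrow$ CMP in $T'$), I would take a sequence $(a_i)_{i\in\I}$ in a monster model of $T'$ and treat it as a sequence in the first-order reduct, a model of $T$. Apply first-order MP to get an $\I$-indiscernible first-order sequence $(b_i)_{i\in\I}$ locally based on $(a_i)_{i\in\I}$; by saturation of the $T'$-monster I may assume $(b_i)$ lives there. To check CL-indiscernibility, fix a CL-formula $\varphi$ and $\qftp(\bar i)=\qftp(\bar j)$; use the pivotal lemma to write $\varphi=c_k$ as a first-order $\psi_k$ and conclude $\varphi(b_{\bar i})=\varphi(b_{\bar j})$ from first-order indiscernibility. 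For CL-local basing, given finite $\Delta=\{\varphi_1,\dots,\varphi_m\}$ of CL-formulas, replace $\Delta$ by the (finite) set $\Delta'$ of all the corresponding first-order formulas $\psi_{k,\ell}$; first-order local basing produces $\bar i$ with $\qftp(\bar i)=\qftp(\bar j)$ and $\mathrm{tp}^{\Delta'}(b_{\bar j})=\mathrm{tp}^{\Delta'}(a_{\bar i})$, whence $\varphi_k(b_{\bar j})=\varphi_k(a_{\bar i})$ exactly, so the $\varepsilon$-condition holds trivially for any $\varepsilon>0$.

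Once the pivotal finite-range lemma is in hand, both directions reduce to short translations between $\{0,1\}$-valued CL-formulas and first-order formulas; no delicate analytic argument is needed. The only subtlety to watch is that CL-types and first-order types agree on discretely metrized structures precisely because of the lemma, so everything — the form of indiscernibility on each side and the form of local basing on each side — is transferred in a single dictionary.
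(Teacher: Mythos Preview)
Your proof is correct but takes a different route from the paper in the backward direction. You establish a structural ``pivotal lemma'' (every syntactic CL-formula on a discretely metrized model takes only finitely many values, with first-order definable level sets) and use it to translate exactly between CL-formulas and finite sets of first-order formulas, obtaining literal equality $\varphi_k(b_{\bar j})=\varphi_k(a_{\bar i})$. The paper instead uses approximation: the first-order formulas generate a dense subalgebra $\mathcal{A}$ of the algebra of CL-formulas, so any CL-formula $f$ is uniformly $\varepsilon/2$-close to some $\varphi\in\mathcal{A}$, and a triangle-inequality estimate
\[
|f(a_{\bar j})-f(b_{\bar i})|\leq 2\|f-\varphi\|_\infty + |\varphi(a_{\bar j})-\varphi(b_{\bar i})|
\]
reduces CL-local basing to classical local basing for finitely many first-order formulas. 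Your approach buys an exact dictionary at the cost of the inductive lemma (the quantifier step being the only nontrivial case); the paper's approach is shorter and is robust enough to extend verbatim to definable predicates (uniform limits of formulas), where your finite-range lemma would no longer hold. For the forward direction and for the transfer of $\I$-indiscernibility, the two arguments essentially coincide.
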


\begin{proof}
    Clearly, If $\I$ has the continuous modeling property in $T'$ then it has the modeling property in $T$ since classical formulas are a subset of the $\{0,1\}$-valued continuous logic formulas.

    Assume now that $\I$ has the modeling property in $T$. Let $\II=(a_i:i\in \I)$ be any sequence in $\C\models T$. Since $\I$ has the modeling property, there is an $\I$-indiscernible sequence $(b_i:i\in \I)$ locally based on $\II$ (in the classical sense). We show that the sequence $(b_i:i\in \I)$ is locally based on $\II$ in our continuous logic sense. Note that first-order formulas generate a subalgebra $$\mathcal{A}=\bigcup_{n<\omega} \{ u(\varphi_1,\dots, \varphi_n):  \varphi_1,\dots, \varphi_n \text{ f.o. formulas}; u \text{ CL connective}\}$$ which is a dense subalgebra of the set of all continuous logic formulas. Hence, for each continuous logic formula $f(x)$ and $\varepsilon>0$ there is $\varphi(x)\in \mathcal{A}$ such that $\lvert f(x)-\varphi(x)\rvert \leq \varepsilon/2$. Thus, for any tuples 
    %%%Adrian: we have the following for arbitrary tuples i and j of the same lenght
    $\overline{i},\overline{j}\subseteq \I$ %such that $\qftp(\overline{i})=\qftp(\overline{j})$ 
    and tuples $a_{\overline{j}}, b_{\overline{i} }$ we have $$ \lvert f(a_{\overline{j}}) -f(b_{\overline{i}})\rvert \leq \lvert f(x)-\varphi(x)\rvert + \lvert \varphi(a_{\overline{j}}) -\varphi(b_{\overline{i}}) \rvert + \lvert f(x)-\varphi(x)\rvert \leq \lvert \varphi(a_{\overline{j}}) -\varphi(b_{\overline{i}}) \rvert +\varepsilon.$$
    Finally, note that by the definition of being locally based on (in the classical sense) for any $b_{\overline{i}}$ and finite $\Sigma \subset \mathcal{A}$, there is $\overline{j}$ with the same quantifier free type as $\overline{i}$ such that $ \varphi(b_{\overline{i}})=\varphi(a_{\overline{j}}) $ for every $\varphi\in \Sigma$. 
    %%%Adrian: for $\Delta$ a finite set of continuous formulas we take $\Sigma$ the set of first order formulas used to generate any of the $f \in \Delta$.
    Therefore, the sequence $(b_i:i\in \I)$ is locally based on $\II$ in the continuous sense.
\end{proof}

Next, we define two partial types that will be useful during this section. The first one is a generalization of the classical Ehrenfeucht-Mostowski type (EM-type for short). The second is a type whose realizations are exactly the $\I$-indiscernible sequences. They are based on \cite[Definitions 2.6 and 2.10]{SCOW20121624} respectively.

\begin{defin}%[Generalized EM] 
%$Let $\I$ be an  $\mL'$ structure and  l
Let $\II=(a_i:i\in \I)$ be an $\I$-indexed sequence. The \emph{$EM$-type of $\II$} is the set of all conditions $\varphi(x_{i_1}\dots,x_{i_n})=0$ such that $\varphi(a_{j_1}\dots,a_{j_n})=0$ holds for every $j_1,\dots,j_n\in \I$ with the same quantifier free type as $i_1,\dots,i_n$. That is \begin{align*}
    \emtp(\II)(x_i: i\in \I)=\{& \varphi(x_{i_1},\dots,x_{i_n})=0 :  \varphi \in \mL, i_1,\dots,i_n\in \I  \\ &\text{ and for any } j_1,\dots,j_n\in \I  \text{ such that } \\ &\qftp(j_1,\dots,j_n)=\qftp(i_1,\dots,i_n),   \models\varphi(a_{j_1}\dots,a_{j_n})=0 \}.
\end{align*}
\end{defin}

\begin{defin}%[Ind(I,L)]
    %Let $\I$ be an $\mL'$-structure and $\mL$ a continuous language.
    We define $\Ind(\I,\mL)$ as the following partial type:
    \begin{align*}
       \Ind(\I,\mL)(x_i:i\in \I):= \{& \varphi(x_{i_1},\dots,x_{i_n})= \varphi(x_{j_1},\dots,x_{j_n}):&\\
        & n<\omega, \overline{i}, \overline{j}\subseteq \I, \qftp(\overline{i})=\qftp(\overline{j}), \varphi(x_{i_1},\dots,x_{i_n})\in \mL\}.
    \end{align*}
\end{defin}

Finally, we define what it means for a partial type to be finitely satisfiable in a sequence. 

\begin{defin}[Finitely satisfiable] Let $\Gamma(x_i:i\in \I)$ be an $\mL$-type, and let $\II=(a_i:i\in \I)$ be an $\I$-indexed sequence. We say that $\Gamma$ is \emph{finitely satisfiable in $\II$} if for every finite $\Gamma_0\subseteq \Gamma^+$ and for every finite $A\subseteq \I$, there is $B\subseteq \I$, a bijection $f:A\to B$, and an enumeration $\overline{i}$ of $A$ such that:
$$ \qftp(\overline{i})=\qftp(f[\overline{i}])\text{ and } (a_{f(i)}:i\in A)\models \Gamma_0\upharpoonright \{x_i: i\in A\}. $$ 
Where $\Gamma^+:=\{ \varphi \leq 1/n: n<\omega ; (\varphi=0)\in \Gamma\}$.
\end{defin}

The following result gives a sufficient condition for the existence of an $\I$-indiscernible sequence locally based on $\II=(a_i:i\in \I)$.

\begin{lema}%[EMTP and based on] 
\label{EMTP, based on and Ind(I,L)}
Let $\mathcal{J} \supseteq \I$ be $\mL'$-structures with the same age and let $\II=(a_i:i\in \I)$ be an $\I$-indexed sequence. 
\begin{enumerate}
    \item  A $\mathcal{J}$-indexed sequence $\mathbf{J}=(b_j:j\in \mathcal{J})$ is locally based on $\II$ if and only if $\emtp(\mathbf{J})\supseteq \emtp(\II)$.
    \item If $\Ind(\I,\mL)$ is finitely satisfiable in $\II$, then there is an $\I$-indexed indiscernible sequence $\Tilde{\II}:=(b_i:i\in \mathcal{I})$ locally based on $\II$.
\end{enumerate}
\end{lema}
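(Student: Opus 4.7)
For part (1), the plan is a direct unfolding of the definitions. In the forward direction, I would fix a condition $\varphi(x_{\overline{i}})=0\in\emtp(\II)$; for any $\overline{j}\subseteq\mathcal{J}$ with $\qftp(\overline{j})=\qftp(\overline{i})$ and any $\varepsilon>0$, applying local basing with $\Delta=\{\varphi\}$ yields $\overline{i'}\subseteq \I$ such that $\qftp(\overline{i'})=\qftp(\overline{j})$ and $|\varphi(b_{\overline{j}})-\varphi(a_{\overline{i'}})|\leq \varepsilon$. Since $\varphi(a_{\overline{i'}})=0$ by the definition of $\emtp(\II)$ and $\varepsilon$ is arbitrary, $\varphi(b_{\overline{j}})=0$, i.e.\ $\varphi(x_{\overline{i}})=0\in\emtp(\mathbf{J})$.

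For the backward direction of (1), I argue by contraposition. If $\mathbf{J}$ is not locally based on $\II$, there exist a finite $\Delta$, a tuple $\overline{j}\subseteq\mathcal{J}$ and $\varepsilon>0$ such that every $\overline{i}\subseteq \I$ with $\qftp(\overline{i})=\qftp(\overline{j})$ satisfies $\max_{\varphi\in\Delta}|\varphi(b_{\overline{j}})-\varphi(a_{\overline{i}})|>\varepsilon$. Setting $r_\varphi:=\varphi(b_{\overline{j}})$ as a constant, I will build the CL-formula
$$\chi(x_{\overline{j}}):=\varepsilon \mathbin{\dot{-}} \max_{\varphi\in\Delta}|\varphi(x_{\overline{j}})-r_\varphi|,$$
which vanishes on every $a_{\overline{i}}$ with $\qftp(\overline{i})=\qftp(\overline{j})$ but takes value $\varepsilon$ on $b_{\overline{j}}$. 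Since $\I$ and $\mathcal{J}$ share their age, I can pick $\overline{i_0}\in \I$ with $\qftp(\overline{i_0})=\qftp(\overline{j})$; then $\chi(x_{\overline{i_0}})=0$ is a condition lying in $\emtp(\II)\setminus\emtp(\mathbf{J})$, contradicting $\emtp(\mathbf{J})\supseteq\emtp(\II)$.

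For part (2), I will apply (1) with $\mathcal{J}:=\I$ and realize the partial type $\Sigma:=\emtp(\II)\cup\Ind(\I,\mL)$ in a suitable monster: any such realization $(b_i:i\in\I)$ is $\I$-indexed indiscernible by $\Ind(\I,\mL)$ and locally based on $\II$ by (1). By continuous compactness it suffices to show that $\Sigma^+$ is finitely satisfiable. Given any finite $\Gamma_0\subseteq\Sigma^+$, let $A\subseteq\I$ be the finite set of indices it mentions and split $\Gamma_0=\Gamma_{\mathrm{ind}}\cup\Gamma_{\mathrm{em}}$. Finite satisfiability of $\Ind(\I,\mL)$ in $\II$ yields $B\subseteq\I$ and a qftp-preserving bijection $f:A\to B$ with $(a_{f(i)}:i\in A)\models\Gamma_{\mathrm{ind}}$. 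Each $\emtp$-condition in $\Gamma_{\mathrm{em}}$ has the form $\varphi(x_{\overline{i}})\leq 1/n$ coming from some $\varphi(x_{\overline{i}})=0\in\emtp(\II)$, and since $\qftp(f[\overline{i}])=\qftp(\overline{i})$ we get $\varphi(a_{f[\overline{i}]})=0\leq 1/n$ for free, so $x_i\mapsto a_{f(i)}$ realizes all of $\Gamma_0$ in $\C$.

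The main subtlety, as I see it, is the interplay between the exact equalities used to define $\emtp(\II)$ and $\Ind(\I,\mL)$ and the $1/n$-approximations that the continuous compactness theorem operates with via $\Sigma^+$; the definition of finite satisfiability is stated with $\Gamma^+$ precisely to accommodate this, which is why the transport of $\qftp$ by $f$ cleanly carries the $\emtp$-part of $\Gamma_0$ along. On the backward side of (1), the key device is the formula $\chi$, whose construction separates the two regimes ``$=0$ everywhere inside a qftp-class'' and ``$=\varepsilon$ at $b_{\overline{j}}$'' using only a truncated-subtraction connective and real constants, and this is the one spot where any real care seems to be needed.
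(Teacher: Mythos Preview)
Your proposal is correct and follows essentially the same approach as the paper: the paper's backward direction of (1) uses the formula $\psi(x):=\max_{\varphi\in\Delta}|\varphi(x)-\varphi(b_{\overline{j}})|$ and observes that the condition $\psi\geq\varepsilon$ lies in $\emtp(\II)\setminus\emtp(\mathbf{J})$, which is exactly your $\chi=\varepsilon\mathbin{\dot{-}}\psi$ written out explicitly as a $=0$ condition; for (2), the paper simply asserts that finite satisfiability of $\Ind(\I,\mL)$ in $\II$ yields satisfiability of $\Ind(\I,\mL)\cup\emtp(\II)$, and your argument supplies the details. Your use of the common-age hypothesis to pick $\overline{i_0}\in\I$ with $\qftp(\overline{i_0})=\qftp(\overline{j})$ is a point the paper leaves implicit.
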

\begin{proof}
    \begin{enumerate}
        \item Suppose $\mathbf{J}$ is locally based on $\II$. Fix $\varphi(x_{i_1},\dots,x_{i_n})=0\in \emtp_{\mL'}(\II)$ and let $\overline{i}=(i_1,\dots,i_n)$. If $ \varphi(x_{\overline{i}})=0$ is not in $\emtp(\mathbf{J})$, then $\varphi(b_{\overline{j}})\geq \varepsilon$ for some $\varepsilon>0$ and $\overline{j}\subseteq \mathcal{J}$ with the same quantifier free type as $\overline{i}$. By assumption, there is $\overline{i}'\subseteq \I$ satisfying the same quantifier free type as $\overline{j}$ and such that $\varphi(a_{\overline{i}'})\geq \varepsilon/2$, which contradicts $\varphi(x_{\overline{i}})\in \emtp(\II)$.

        Suppose now that $\emtp(\mathbf{J})\supseteq \emtp(\II)$. For a contradiction, assume that $\mathbf{J}=(b_j:j\in \mathcal{J})$ is not locally based on $\II$. That is, there is $\Delta \subseteq \mL$, $b_{\overline{j}}:=(b_{j_1},\dots,b_{j_n})$ from $\mathbf{J}$ and $\varepsilon>0$ such that there is no $\overline{i}\subseteq \I$ satisfying $\qftp(\overline{i})=\qftp(\overline{j})$ and $\lvert \varphi(b_{\overline{j}})- \varphi(a_{\overline{i}}) \rvert\leq \varepsilon$ for all $\varphi\in \Delta$. Let $\psi(x):=\max \{ \lvert  \varphi(x) - \varphi(b_{\overline{j}})\rvert : \varphi\in \Delta \}$, $\psi$ is a continuous logic formula and $\psi(b_{\overline{j}})=0$. By assumption, for any $\overline{i}\subseteq \I$ with the same quantifier free type as $\overline{j}$, $\psi(a_{\overline{i}})\geq \varepsilon$. Thus, $\psi(x)\geq \varepsilon \in \emtp(\II)$, which contradicts $\emtp(\mathbf{J})\supseteq \emtp(\II)$.
    
    \item Observe that if the type  $\Ind(\I,\mL)(x_i: i\in \I)$ %$\Gamma(x_i: i\in \I)$ 
     is finitely satisfiable in $\II$, then %$\Gamma\cup \emtp(\II)$ 
    $\Ind(\I,\mL) \cup \emtp(\II)$ is satisfiable. Let $\mathbf{J}\models \Ind(\I,\mL)\cup \emtp(\II)$. $\mathbf{J}$ is an $\I$-indiscernible sequence and is locally based on $\II$ by $(1)$.
    \end{enumerate}    
\end{proof}

We now prove the main result of this section. It is an extension of \cite[Theorem 2.10]{SCOW2021102891} to continuous logic. The reader can recall the definitions of the concepts appearing in Definition \ref{defin ERP}, Definition \ref{defin: arrow notation} and Definition \ref{defin: locally finite and age}.

\begin{teor}\label{CMP iff Ramsey} Let $\mL'$ be a first-order language %containing a distinguished binary relation symbol $\leq$ 
and let $\I$ be an infinite locally finite $\mL'$-structure%linearly ordered by $\leq$
. Then, the following are equivalent:
\begin{enumerate}
    \item $\age(\I)$ has ERP.% (see Definition \ref{defin ERP}).
    \item $\I$-indiscernibles have the continuous modeling property.
\end{enumerate}
\end{teor}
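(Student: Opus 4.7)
The proof splits naturally into the two implications.

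For $(2) \Rightarrow (1)$, I would reduce directly to the classical analog. By Proposition \ref{CMP implies MP}, applied to the discrete continuous lift $T'$ of an arbitrary classical first-order theory $T$, $\I$-indexed indiscernibles have the continuous modeling property in $T'$ if and only if they have the classical modeling property in $T$. Hence the continuous modeling property of $\I$ uniformly in all continuous theories yields the classical modeling property of $\I$ uniformly in all first-order theories, and the classical version of the present theorem (\cite[Theorem 2.10]{SCOW2021102891}) gives that $\age(\I)$ has ERP.

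For $(1) \Rightarrow (2)$, assume $\age(\I)$ has ERP. Fix a complete continuous theory $T$ with monster $\C$ and an $\I$-indexed sequence $\II = (a_i : i \in \I)$ in $\C$. By Lemma \ref{EMTP, based on and Ind(I,L)}(2) it suffices to check that $\Ind(\I,\mL)$ is finitely satisfiable in $\II$. Fix a finite $\Gamma_0 \subseteq \Ind(\I,\mL)^+$ and a finite $A \subseteq \I$; enlarging $A$, we may assume that $A$ is closed under taking substructures in $\I$ and contains every index appearing in $\Gamma_0$ (still finite by local finiteness). List the conditions in $\Gamma_0$ as $|\varphi_\ell(x_{\bar{i}_\ell}) - \varphi_\ell(x_{\bar{j}_\ell})| \leq 1/n_\ell$, $\ell = 1,\dots,k$, with $\qftp(\bar{i}_\ell) = \qftp(\bar{j}_\ell)$ and $\bar{i}_\ell, \bar{j}_\ell$ tuples from $A$. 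For each $\ell$ set $A_\ell := \langle \bar{i}_\ell \rangle_{\I} \subseteq A$; the partial isomorphism $\bar{i}_\ell \mapsto \bar{j}_\ell$ extends uniquely to an embedding $\iota_\ell : A_\ell \to A$. Define the discretized coloring
\[ \chi_\ell : \binom{\I}{A_\ell} \to \{0,1,\dots,n_\ell\}, \qquad \chi_\ell(g) := \min\bigl(n_\ell,\ \lfloor n_\ell\, \varphi_\ell(a_{g(\bar{i}_\ell)}) \rfloor\bigr). \]

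Now set $C_0 := A$ and successively invoke ERP to choose $C_\ell \in \age(\I)$ with $C_\ell \to (C_{\ell-1})^{A_\ell}_{n_\ell + 1}$; embed $C_k$ into $\I$. Working downward, apply the Ramsey relation $C_k \to (C_{k-1})^{A_k}_{n_k+1}$ to $\chi_k$ restricted to $\binom{C_k}{A_k}$ to obtain an embedding $C_{k-1} \hookrightarrow C_k$ on which $\chi_k$ is constant on all $A_k$-copies factoring through it; iterate for $\chi_{k-1},\dots,\chi_1$, noting that each stabilized coloring remains stabilized under further compositions with embeddings coming from smaller $C_m$'s. Composing all embeddings produces $h : A \to \I$ such that for every $\ell$ the coloring $\chi_\ell$ is constant on $h \circ \binom{A}{A_\ell}$. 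Applying this constancy to the two embeddings in $\binom{A}{A_\ell}$ given by the inclusion $A_\ell \hookrightarrow A$ and by $\iota_\ell$ forces $\lfloor n_\ell\, \varphi_\ell(a_{h(\bar{i}_\ell)}) \rfloor = \lfloor n_\ell\, \varphi_\ell(a_{h(\bar{j}_\ell)}) \rfloor$, hence $|\varphi_\ell(a_{h(\bar{i}_\ell)}) - \varphi_\ell(a_{h(\bar{j}_\ell)})| \leq 1/n_\ell$. Taking $B := h(A)$ and $f := h$ witnesses finite satisfiability of $\Gamma_0 \upharpoonright \{x_i : i \in A\}$.

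The main obstacle is organizing the iterated Ramsey argument so that all $k$ colorings are stabilized simultaneously; this requires the downward-peeling order and the observation that constancy on one layer persists under further compositions with embeddings from lower layers. The continuous-logic content is packaged entirely into the discretization $\chi_\ell$, which converts the $\varepsilon$-quantitative indiscernibility demand of $\Gamma_0$ into a finite Ramsey problem; apart from this discretization, the argument parallels the classical case.
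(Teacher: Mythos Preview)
Your proof is correct. For $(1)\Rightarrow(2)$ you follow essentially the same route as the paper: discretize the finitely many formula values into a finite coloring and run an iterated Ramsey argument over $\age(\I)$ to find an embedded copy of $A$ on which all the discretized colorings are simultaneously constant; the paper organizes the same idea by first handling the case of a single formula (interval coloring), then several formulas with a common domain structure (hypercube coloring), and then sketches the standard downward iteration for distinct domain structures, exactly as you spell out.

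For $(2)\Rightarrow(1)$ your route genuinely differs from the paper's. The paper argues directly: given a $k$-coloring of $\binom{\I}{A}$, expand $\I$ by predicates $R_1,\dots,R_k$ naming the color classes, apply the continuous modeling property to the tautological $\I$-indexed sequence $(i)_{i\in\I}$ inside this (discrete) theory, and read off a monochromatic copy of $B$ from the resulting indiscernible via the predicates $R_i$. You instead invoke Proposition~\ref{CMP implies MP} to pass from the continuous modeling property to the classical one and then cite Scow's classical theorem. Both are valid; your reduction is shorter and highlights that nothing new is happening in this direction beyond the classical case, while the paper's argument is self-contained and makes explicit that only the discrete instance of the continuous modeling property is needed.
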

\begin{proof} 
    $(1)\implies (2)$. Assume $\age(\I)$ has ERP and let $\II= (a_i)_{i\in \I}$ be any $\I$-indexed sequence. Our goal is to prove that there exists $\mathbf{J}=(b_i)_{i\in \I}$ locally based on $\II$. By Lemma \ref{EMTP, based on and Ind(I,L)}, it is enough to show that $\Ind(\I,\mL)$ is finitely satisfiable in $\II$.

    Let $\Gamma_0\subset \Ind(\I,\mL)^+$ be any finite subset. For some $K,M<\omega$  $$\Gamma_0=\{ \lvert \varphi(x_{\overline{i}_p})-\varphi(x_{\overline{j}_p})\rvert <\frac{1}{n_m}: \qftp(\overline{i}_p)=\qftp(\overline{j}_p), \varphi\in \Delta, p<K,m<M  \}.$$ $\Gamma_0$ involves finitely many formulas $\Delta:=\{ \varphi_0,\dots,\varphi_m\}$, finitely many tuples $\overline{i}_p$, $\overline{j}_p$ and finitely many rationals $\frac{1}{n_m}$. Without loss of generality, we may assume that the formulas  $\varphi\in \Delta$ (and their tuples of variables) are of the form $\varphi((x_g)_{g\in A})$, where $A\in \age(\I)$ is the $\mL'$-structure generated by the set of indices of the original tuple of variables. Let $B\in \age(\I)$ be the structure generated by all the %sets $A$ 
    coordinates of the tuples $\overline{i}_p$, $\overline{j}_p$ involved in $\Gamma_0$. It is enough to prove the existence of a copy $B'$ of $B$ such that for any $\varphi((x_g)_{g\in A})\in \Delta$ and $A',A''\subseteq B'$ copies of $A$,  $$\lvert \varphi((a_g)_{g\in A'}) - \varphi((a_g)_{g\in A''})\rvert \leq \frac{1}{n} $$ for some $n<\omega$ such that $1/n$ is smaller than any rational involved in $\Gamma_0$.

    If $\Delta$ involves only one formula $\varphi((x_g)_{g\in A})$, we proceed in the following manner: linearly order the set of intervals $\{ [\frac{i}{n},\frac{i+1}{n}]: i<n \}$ and define an $n$-coloring of the copies $A'$ of $A$ by coloring each $A'$ with the first interval that contains $\varphi((a_g)_{g\in A'})$. Since $\age(\I)$ is Ramsey, we can find a copy $B'$ of $B$ homogeneous with respect to the coloring.
    %%%Adrian: I don't know if it is better to say homogeneous or monochromatic. 
    Then, $(a_g)_{g\in B'}$ witness that $\Gamma_0$ is satisfied in $\II$. If $\Delta$ involves $k<\omega$ formulas $ \{ \varphi_i((x_g)_{g\in A_i}) : i<k\}$ and all the sets $A_i$ involved are isomorphic we can apply a similar trick, using as colors the hypercubes $$\{ [\frac{i_1}{n}, \frac{i_1+1}{n}]\times\cdots\times [\frac{i_k}{n}, \frac{i_k+1}{n}]: i_1,\dots,i_k<n \}.$$ 

    We claim that the latter is the only case we need to check. The proof is a standard argument in Ramsey theory which we sketch here for completeness
    
    Let $A_1\dots,A_m$ be structures in $\age(\I)$ and let $B\in \age(\I)$ embed every $A_i$ for $i<m$. Let $k_1,\dots,k_n$ be natural numbers and let $Z_n\in \age(\I)$ be such that $$Z_n\to (Z_{n-1})^{A_n}_{k_n}$$  for every $n<m$. We construct by induction a sequence of structures $Y_n\in \age(\I)$ for $0\leq n\leq m$.

    Case $n=0$: $Y_0=Z_m$

    Case $0<n<m$: By induction we have $Y_{n-1}\in \age(\I)$ isomorphic to $Z_{m-n+1}$. Color the copies of $A_{m-n+1}$ inside $Y_{n-1}$ with $k_{n-1}$ colors. By definition of $Z_{m-n+1}$, there is a copy $Y_n$ of $Z_{m-n}$ inside $Y_{n-1}$ such that all of the copies of $A_{m-n+1}$ contained in $Y_n$ have the same color.

Note that since $Y_n\subseteq Y_{n-1}$ for $0\leq n \leq m$ and all copies of $A_{m-n+1}$ inside $Y_n$ are of the same color, we have that $Y_n$ is homogeneous for copies of $A_j$ for all $m-n+1\leq j \leq m$. Therefore, $Y_m$ is homogeneous for all copies of $A_1,\dots,A_m$ and so it is the $B'$ we were looking for in the proof.

    $(2)\implies(1)$. %By Proposition \ref{CMP implies MP}, the continuous modelling property implies the classical modelling property. Hence, this follows from \cite[Theorem 2.10]{SCOW2021102891}.
    Let $A\subseteq B \in \age(\I)$ be arbitrary finite substructures of $\I$ and let  $\chi$ be a $k$-coloring of the embeddings of $A$ into $\I$. We expand $\I$ by adding a predicate $R_i$ for each fiber of the coloring. Let us denote this expanded structure by $\I'$ and this new language by $\mL$. Let $T$ be the $\mL$-theory of $\I'$. Since $\I$ has the modeling property in $T$, there is an $\I$-indiscernible sequence $(b_i)_{i\in \I}$ locally based on $(i)_{i\in \I}$. Using the definition of locally based on for $\Delta:=\{ R_1,\dots,R_k \}$ we can find and embedding $f$ from $B$ into $\I$ such that \begin{align*}
    \qftp_{\mL'}(B)&=\qftp_{\mL'}(f[B])\\ &\text{ and}\\
    \tp^{\Delta}((b_g)_{g\in B})&=\tp^{\Delta}( (f(g))_{g\in B}).
    \end{align*}
    This implies that $\chi\upharpoonright_{f\circ \binom{B}{A}}$ is constant.
\end{proof}

In light of the previous theorem we will not make a distinction between continuous or classical modeling property from now on.

%Later work by Meir and Papadopoulos showed that, in the classical context, it is not necessary that the language contains a predicate for a linear ordering of $\I$ (see \cite[Theorem 6.11]{meir2023practical}). The proof of this theorem and its preliminary results either hold verbatim in our context or follow from Proposition \ref{CMP implies MP}.

\begin{comment}

    \subsection{nadav and Aris results}

\begin{lema}[Nadav 6.4]
 Let $\I$ be an $\mL'$-structure with the embedding Ramsey property and assume that $\age(\I)$ is countable. Then $\I$ is $\mL'_{\infty}$-orderable.
\end{lema}

\begin{cor}[5.6 Nadav] Let $\I$ be an $\mL'$-structure with the modelling property. Then $\I$ is $\mL'_{\infty}$-orderable
\end{cor}

\begin{cor}[6.5 and 6.6 Nadav] Let $\mL'$ be a first order language and $\I$ an infinite locally finite $\mL'$-structure such that $\age(\I)$ is countable. Then $\I$ has the embedding Ramsey property if and only of it has the modelling property.
\end{cor}

\begin{teor}[6.11 Nadav] Let $\mL'$ be a first order language and $\I$ an infinite locally finite $\mL'$-structure. Then, the following are equivalent:
\begin{enumerate}
    \item $\I$ has the modeling property.
    \item $\I$ has the embedding Ramsey property.
\end{enumerate}
\end{teor}

\end{comment}

\section{Characterizing n-dependence through collapse of indiscernibles}\label{section:n-dep}
Let $T$ be a complete continuous $\mL$-theory with $\C\models T$ a monster model (i.e. $\kappa$-saturated and strongly $\kappa$-homogeneous for a strong limit cardinal $\kappa > |T|$). %All the tuples considered are of length strictly less than $\kappa$, similarly, all the sets considered are small (i.e. of size strictly less than $\kappa$).

 In this section, we study $n$-dependent continuous formulae and give an analogous result to \cite[Theorem 5.4]{Chernikov2014OnN}, or proof gives an alternative method of achieving this result in the classical context. 

The next few paragraphs contain basic facts about hypergraphs taken almost verbatim from \cite{Chernikov2014OnN}.

We work with three families of languages 
\begin{align*}
    \mL^n_{op}&=\{<, P_0(x),\dots, P_{n-1}(x)\},\\
    \mL^n_{og}&=\{<, R(x_0,\dots,x_{n-1})\},\\
    \mL^n_{opg}&=\{<, R(x_0,\dots,x_{n-1}), P_0(x),\dots, P_{n-1}(x)\}.
\end{align*}
  When $n<\omega$ is clear we will simply omit it. We consider the Ramsey classes of ordered $n$-uniform hypergraphs and ordered $n$-partite $n$-uniform hypergraphs.

An $\mL^n_{og}$-structure $(M,<,R)$ is an ordered $n$-uniform hypergraph if 
\begin{itemize}
    \item $(M,<)\models LO$
    \item $R(a_0,\dots,a_{n-1})$ implies that $a_0,\dots,a_{n-1}$ are different,
    \item the relation $R$ is symmetric.
\end{itemize}

An $\mL^n_{opg}$-structure $(M,<,R,P_0,\dots,P_{n-1})$ is an ordered $n$-partite $n$-uniform hypergraph if
\begin{itemize}
    \item $M$ is the disjoint union $P_0\sqcup \dots \sqcup P_{n-1}$ such that if $R(a_0,\dots,a_{n-1})$ then $P_i\cap \{a_0,\dots,a_{n-1}\}$ is a singleton for every $i<n$,
    \item the relation $R$ is symmetric,
    \item $M$ is linearly ordered by $<$ and $P_0<\cdots<P_{n-1}$.
\end{itemize}

The following fact was proven in \cite{NESETRIL1977289}, \cite{NESETRIL1983183} and independently in \cite{1a7d4f5c-0dcd-37ec-b761-69bf07cad14f} for the case of nonpartite hypergraphs and in \cite{Chernikov2014OnN} for the case of partite hypergraphs.

\begin{fact} \label{fact: ordered hypergraphs are ramsey}
Let $K$ be the set of all finite ordered $n$-partite $n$-uniform hypergraphs and $\Tilde{K}$ be the set of all finite ordered $n$-uniform hypergraphs. The classes $K$ and $\Tilde{K}$ have the embedding Ramsey property.
\end{fact}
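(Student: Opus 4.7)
The plan is to establish both statements by first handling the $n$-partite case via the classical partite construction, and then deducing the nonpartite case from it by a blow-up (``partite envelope'') argument. Throughout, I would work with the standard ``arrow'' notation $C \to (B)^A_r$ from Definition \ref{defin: arrow notation}. The base case $n=1$ is Ramsey's theorem for linear orders, which is what allows an inductive scheme.

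For the partite case (the class $K$), I would fix $A \subseteq B$ in $K$ and $r < \omega$ and construct $C \in K$ with $C \to (B)^A_r$ by an iterated partite amalgamation. The idea is to build a sequence of $n$-partite $n$-uniform ordered hypergraphs $C_0 \subseteq C_1 \subseteq \dots \subseteq C_N = C$, where $N$ is the number of copies of $A$ in $B$ restricted to the first $n-1$ parts (the ``transversals'' of $A$ in $P_0,\dots,P_{n-2}$). At each step, one ``amalgamates a new copy of $B$ along a distinguished transversal,'' thickening only the last part $P_{n-1}$; the order on the last part is extended so that the newly added vertices come after all previously added ones, which preserves the ordered partite structure. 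The crucial lemma (the partite lemma) is that if $C_i$ already satisfies a Ramsey property relative to the partial copies, then iterated Ramsey's theorem applied to the last coordinate (this is where classical Ramsey enters) guarantees that the amalgamated $C_{i+1}$ inherits a Ramsey property for one more transversal. After $N$ iterations, every copy of $A$ in $C$ lies inside a copy of $B$, and a pigeonhole over transversals produces a monochromatic copy of $B$.

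For the nonpartite case (the class $\tilde{K}$), I would reduce to the already established partite case. Given $A \subseteq B$ in $\tilde{K}$ and $r < \omega$, pick a linear order on $B$ and define $A^*$, $B^*$ to be the $n$-partite $n$-uniform ordered hypergraphs obtained by duplicating each vertex $n$ times and putting the $i$-th copy into part $P_i$; symmetry of $R$ ensures that the induced edges are well-defined. By the partite case, there exists $C^* \in K$ with $C^* \to (B^*)^{A^*}_r$. One then ``unfolds'' $C^*$ back into a (larger) ordered $n$-uniform hypergraph $C \in \tilde{K}$. A coloring of copies of $A$ in $C$ lifts canonically to a coloring of copies of $A^*$ in $C^*$; the monochromatic copy of $B^*$ in $C^*$ then projects back to a monochromatic copy of $B$ in $C$.

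The main obstacle will be the partite lemma itself: keeping track simultaneously of the order, the partition, the hypergraph edges, and the inductive bookkeeping of which transversals have been ``covered'' is notoriously delicate, and one must be careful that the amalgamation along a transversal does not create unintended edges between old vertices. I would follow the presentation of \cite{NESETRIL1977289, NESETRIL1983183} (or the partite-hypergraph variant in \cite{Chernikov2014OnN}) and phrase the amalgamation as a free amalgamation in the class of ordered $n$-partite $n$-uniform hypergraphs, which automatically avoids new edges. The step-down to the nonpartite case is in comparison routine, once one checks that embeddings between unfolded hypergraphs correspond bijectively to embeddings between their partite envelopes.
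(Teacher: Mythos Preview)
The paper does not prove this statement: it is recorded as a \emph{Fact} and attributed to Ne\v{s}et\v{r}il--R\"odl \cite{NESETRIL1977289,NESETRIL1983183} and Abramson--Harrington for the nonpartite case, and to \cite{Chernikov2014OnN} for the partite case. So there is no proof in the paper to compare against; your sketch of the partite construction for $K$ is broadly in the spirit of the cited references.

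That said, your reduction of the nonpartite case $\tilde K$ to the partite case $K$ has a gap. After applying the partite result you obtain some $C^* \in K$ with $C^* \to (B^*)^{A^*}_r$, but $C^*$ is an \emph{arbitrary} ordered $n$-partite hypergraph produced by the Ramsey machinery; there is no reason it should arise as the blow-up of any $C \in \tilde K$, so the ``unfolding'' step is undefined. The vertices in different parts of $C^*$ need not be paired up, and embeddings of $A^*$ into $C^*$ need not correspond to embeddings of $A$ into anything. In the literature the direction of reduction is the opposite (the partite case is deduced from the nonpartite one in \cite{Chernikov2014OnN}), or both are proved directly by the partite construction as in Ne\v{s}et\v{r}il--R\"odl.
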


We will denote by $G_{n,p}$ the Fraïssé limit of $K$ and by $G_n$ the Fraïssé limit of $\Tilde{K}$.

\begin{remark}
    The theories of $G_n$ and $G_{n,p}$ can be axiomatized in the following way:
    \begin{enumerate}
        \item[1.] $(M, <, R)\models \Th(G_n)$ if and only if
        \begin{itemize}
            \item $(M,<)\models DLO$,
            \item $(M, <, R)$ is an ordered $n$-uniform hypergraph,
            \item For every finite disjoint sets $A_0,A_1\subset M^{n-1}$ such that $A_0$ consists of tuples with pairwise distinct coordinates and $b_0<b_1\in M$, there is $b\in M$ such that $b_0<b<b_1$ and $R(b,a_{i,1},\dots,a_{i,n-1})$ holds for every $(a_{0,1},\dots,a_{0,n-1})\in A_0$ and $\neg R(b,a_{1,1},\dots,a_{1,n-1})$ holds for every $(a_{1,1},\dots,a_{1,n-1})\in A_1$.
        \end{itemize}
        \item[2.] $(M, <, R, P_0,\dots, P_n)\models \Th(G_{n,p})$ if and only if
        \begin{itemize}
            \item For every $i<n$, $P_i(M)\models DLO$,
            \item $(M, <, R, P_0,\dots, P_n)$ is an ordered $n$-partite $n$-uniform hypergraph,
            \item for every $j<n$, finite disjoint sets $A_0,A_1\subset \prod_{i\neq j} P_i(M)$ and $b_0<b_1\in P_j(M)$ there is $b\in P_j(M)$ such that $b_0<b<b_1$ and $R(b,a_{i,1},\dots,a_{i,n-1})$ holds for every $(a_{0,1},\dots,a_{0,n-1})\in A_0$ and $\neg R(b,a_{1,1},\dots,a_{1,n-1})$ holds for every $(a_{1,1},\dots,a_{1,n-1})\in A_1$.
        \end{itemize}
    \end{enumerate}
\end{remark}

Next, we define the $n$-independence property for continuous formulas. An equivalent definition was first formulated in \cite{Chernikov2020HypergraphRA} using the $VC_n$ dimension.

\begin{defin}[$n$-independent formula]
     We say that a formula $f(x,y_0,\dots,y_{n-1})$ \emph{ has the $n$-independence property}, $IP_n$ for short, if there exist $r<s\in\mathbb{R}$ and a sequence $(a_{0,i},\dots, a_{n-1,i})_{i< \omega}$ such that for every finite $w\subseteq \omega^n$ there exists $b_w$ such that 
     \begin{align*}
         f(b_w, a_{0,i_0},\dots, a_{n-1,i_{n-1}}  )\leq r &\iff (i_0,\dots, i_{n-1})\in w\\
         &and\\
         f(b_w, a_{0,i_0},\dots, a_{n-1,i_{n-1}}  )\geq s &\iff (i_0,\dots, i_{n-1})\notin w.
     \end{align*} We say that the $\mL$-theory $T$ is \emph{$n$-dependent}, or $NIP_n$, if no $\mL$-formula has $IP_n$. 
\end{defin}

The following remark is a collection of basic properties of $n$-dependent formulas.

\begin{remark} \label{naming parameters and dummy variables}
    \begin{enumerate}
        \item Naming parameters preserves $n$-dependence. If the $\mL(A)$-formula $f(x,y_0,\dots,y_{n-1},A)$ has $IP_n$ witnessed by $(a_{0,i},\dots, a_{n-1,i})_{i< \omega}$, then the $\mL$-formula $g(x,z_0,\dots,z_{n-1})$ has $IP_n$ witnessed by $(a_{0,i}A,\dots, a_{n-1,i}A)_{i< \omega}$ where $z_i=y_iw$ and $g(x,z_0,\dots,z_{n-1})=f(x,y_0,\dots,y_{n-1},w)$.
        \item Adding dummy variables preserves $n$-dependence. Namely, let $x\subset w$ and $y_i\subset z_i$ for all $i<n$. If $g(x,z_0,\dots,z_{n-1}):=f(x,y_0,\dots,y_{n-1})$ has $IP_n$, then so does $f(x,y_0,\dots,y_{n-1})$.
        \item Every $n$-dependent theory is $(n+1)$-dependent.
    \end{enumerate}
\end{remark}

Next, we define what it means for a continuous logic formula to encode a partite and a nonpartite hypergraph.

\begin{defin}[Encoding partite hypergraphs]
    We say that a formula $f(x_0,\dots,x_{n-1})$ \emph{encodes an $n$-partite $n$-uniform hypergraph} $(G,R,P_0,\dots,P_{n-1})$ if there is a $G$-indexed sequence $(a_g)_{g\in G}$ and $r<s\in\mathbb{R}$ satisfying 
    \begin{align*}
    f( a_{g_0},\dots, a_{g_{n-1}}  )\leq r &\iff  R(g_0,\dots, g_{n-1})\\
    &and\\
    f(a_{g_0},\dots, a_{g_{n-1}}  )\geq s &\iff \neg R(g_0,\dots, g_{n-1})
    \end{align*}
    for all $g_0,\dots,g_{n-1}\in P_0\times\cdots\times P_{n-1}$.
    We say that a formula $f(x_0,\dots,x_{n-1})$ \emph{ encodes $n$-partite $n$-uniform hypergraphs} if there exist $r<s\in\mathbb{R}$ such that $f(x_0,\dots,x_{n-1})$ encodes every finite $n$-partite $n$-uniform hypergraph using the same $r$ and $s$.
\end{defin}

\begin{defin}[Encoding hypergraphs] \label{def: encoding nonpartite}
    We say that a formula $f(x_0,\dots,x_{n-1})$ \emph{encodes an $n$-uniform hypergraph $(G,R)$} if there is a $G$-indexed sequence $(a_g)_{g\in G}$  and $r<s\in\mathbb{R}$ satisfying 
    \begin{align*}
        f( a_{g_0},\dots, a_{g_{n-1}}  )\leq r &\iff  R(g_0,\dots, g_{n-1})\\
        &and\\
        f(a_{g_0},\dots, a_{g_{n-1}}  )\geq s &\iff \neg R(g_0,\dots, g_{n-1})
    \end{align*}
    for all 
    %pairwise distinct
    $g_0,\dots,g_{n-1}\in G$.
    We say that a formula $f(x_0,\dots,x_{n-1})$ \emph{encodes $n$-uniform hypergraphs} if there exist $r<s\in\mathbb{R}$ such that $f(x_0,\dots,x_{n-1})$ encodes every finite $n$-uniform hypergraph using the same $r$ and $s$.
\end{defin}

Note that if a formula encodes $n$-uniform hypergraphs, then it encodes $n$-partite $n$-uniform hypergraphs. 
%%%Adrian: I don't know if it is worth to add the following comment: Note also that the exact values of $r$ and $s$ in the definitions are not important since given a formula $f(x_0,\dots,x_{n-1})$ encoding ($n$-partite) $n$-uniform hypergraphs using $r$ and $s$ we can produce a formula $f'(x_0,\dots,x_{n-1})$ encoding ($n$-partite) $n$-uniform hypergraphs using any $r'<s'\in \mathbb{R}$.

It is no surprise that the $n$-independence property and encoding $(n+1)$-partite $(n+1)$-uniform hypergraphs are equivalent also in our continuous context. The following result was first shown in \cite[Proposition 10.2]{Chernikov2020HypergraphRA}, we include a proof using the terminology defined in this section. 

\begin{prop} \label{IP_n iff encoding partite} Let $f(x,y_0,\dots,y_{n-1})$ be a formula. Then, the following are equivalent:
\begin{enumerate}
    \item $f$ has $IP_n$.
    \item $f$ encodes $(n+1)$-partite $(n+1)$-uniform hypergraphs.
    \item $f$ encodes $G_{n+1,p}$ as a partite hypergraph.
    \item $f$ encodes $G_{n+1,p}$ as a partite hypergraph witnessed by a $G_{n+1,p}$-indiscernible sequence.
\end{enumerate}
\end{prop}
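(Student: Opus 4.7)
My plan is to prove the cycle of implications $(1) \Rightarrow (2) \Rightarrow (3) \Rightarrow (4) \Rightarrow (1)$. The content of $(4) \Rightarrow (1)$ is that $G_{n+1,p}$ is rich enough to Fraïssé-realize every finite shattering configuration, $(1) \Rightarrow (2)$ is the converse bookkeeping, $(2) \Rightarrow (3)$ is compactness applied to the countable Fraïssé limit, and $(3) \Rightarrow (4)$ is the one where I actually use new technology, namely the modeling property via Theorem~\ref{CMP iff Ramsey} combined with Fact~\ref{fact: ordered hypergraphs are ramsey}.

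For $(1) \Rightarrow (2)$, I would fix reals $r < s$ witnessing $IP_n$ together with sequences $(a_{j,i})_{i<\omega}$ for $j<n$ and elements $b_w$ for every finite $w \subseteq \omega^n$. Given a finite $(n+1)$-partite $(n+1)$-uniform hypergraph $(G, R, P_0, \dots, P_n)$, I would choose injections $\iota_j \colon P_j \hookrightarrow \omega$ for $j = 1, \dots, n$ and, for each $p \in P_0$, set $w_p := \{(\iota_1(q_1), \dots, \iota_n(q_n)) : q_j \in P_j,\ R(p, q_1, \dots, q_n)\}$. Assigning $a_p := b_{w_p}$ for $p \in P_0$ and $a_q := a_{j-1, \iota_j(q)}$ for $q \in P_j$, $j \geq 1$, the $IP_n$ property delivers the required dichotomy with the same $r, s$. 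For $(2) \Rightarrow (3)$, since $G_{n+1,p}$ is countable, the conditions ``$f(x_{g_0}, \dots, x_{g_n}) \leq r$'' for edges and ``$f(x_{g_0}, \dots, x_{g_n}) \geq s$'' for non-edges form a partial type in variables indexed by $G_{n+1,p}$; each finite fragment pertains to a finite $(n+1)$-partite subhypergraph and is realized by $(2)$, so by $\kappa$-saturation of $\mathfrak{C}$ the whole partial type is realized.

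For $(3) \Rightarrow (4)$, given a $G_{n+1,p}$-indexed sequence $(a_g)_{g \in G_{n+1,p}}$ encoding $G_{n+1,p}$ with parameters $r < s$, I would invoke the modeling property: by Fact~\ref{fact: ordered hypergraphs are ramsey} the class of finite ordered $(n+1)$-partite $(n+1)$-uniform hypergraphs has $ERP$, which is the age of $G_{n+1,p}$, so Theorem~\ref{CMP iff Ramsey} gives a $G_{n+1,p}$-indiscernible sequence $(b_g)_{g \in G_{n+1,p}}$ locally based on $(a_g)$. I then need to check that $(b_g)$ still encodes $G_{n+1,p}$ with the same $r, s$. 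For any $g_0, \dots, g_n$ with $g_i \in P_i$ and any $\varepsilon > 0$, the ``locally based on'' property produces $g'_0, \dots, g'_n$ with the same quantifier-free $\mathcal{L}_{opg}$-type such that $|f(b_{g_0}, \dots, b_{g_n}) - f(a_{g'_0}, \dots, a_{g'_n})| \leq \varepsilon$; because quantifier-free equivalence preserves $R$, the inequality $f(a_{g'_0}, \dots, a_{g'_n}) \leq r$ (respectively $\geq s$) holds iff $R(g_0, \dots, g_n)$ (respectively $\neg R$), and letting $\varepsilon \to 0$ gives the same bounds for $(b_g)$.

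For $(4) \Rightarrow (1)$, I fix a $G_{n+1,p}$-indiscernible encoding $(b_g)_{g \in G_{n+1,p}}$ of $G_{n+1,p}$ with thresholds $r < s$. I would choose, for each $j = 1, \dots, n$, a countable sequence $(h_{j,i})_{i < \omega} \subseteq P_j$, and set $a_{j-1,i} := b_{h_{j,i}}$. For a given finite $w \subseteq \omega^n$, let $F \subseteq \omega$ be the finite set of indices appearing in $w$ and consider the $(n+1)$-partite $(n+1)$-uniform hypergraph on $\{h_{j,i} : j \in \{1, \dots, n\}, i \in F\} \cup \{*\}$ with $*$ a fresh vertex in part $P_0$ adjacent exactly to the tuples coming from $w$; this is a finite $(n+1)$-partite $(n+1)$-uniform hypergraph, hence lies in the age of $G_{n+1,p}$, so by Fraïssé homogeneity extends inside $G_{n+1,p}$, yielding $p_w \in P_0$ with $R(p_w, h_{1,i_1}, \dots, h_{n,i_n})$ iff $(i_1, \dots, i_n) \in w$. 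Setting $b_w := b_{p_w}$, the encoding gives the required $IP_n$ witness. The main conceptual step in the whole chain is $(3) \Rightarrow (4)$, since it is the only place where the Ramsey-theoretic input is nontrivially used; everything else is a matter of combining compactness, Fraïssé homogeneity, and the definition of $IP_n$ in the continuous setting.
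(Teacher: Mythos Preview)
Your proof follows essentially the same cycle as the paper's: $(1)\Rightarrow(2)$ by bookkeeping with the $IP_n$ witnesses, $(2)\Rightarrow(3)$ by compactness, $(3)\Rightarrow(4)$ via the continuous modeling property (Theorem~\ref{CMP iff Ramsey} plus Fact~\ref{fact: ordered hypergraphs are ramsey}), and $(4)\Rightarrow(1)$ using the randomness of $G_{n+1,p}$. Your treatment of $(3)\Rightarrow(4)$ is in fact more explicit than the paper's, which simply asserts it is ``easy to see'' that the locally-based sequence still encodes.

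There is one small gap in your $(4)\Rightarrow(1)$. Fra\"iss\'e homogeneity only produces a vertex $p_w \in P_0$ with the correct edge relation to the \emph{finitely many} vertices $\{h_{j,i} : i \in F\}$, whereas the definition of $IP_n$ requires $f(b_w, a_{0,i_0},\dots,a_{n-1,i_{n-1}}) \geq s$ for \emph{every} tuple $(i_0,\dots,i_{n-1}) \notin w$, including those with some index outside $F$. Your $p_w$ may well be $R$-related to tuples involving such outside vertices. The fix is routine and is exactly what the paper does: invoke compactness once more. For each finite $w$, the infinitely many encoding constraints on the desired $b_w$ form a partial type; any finite fragment involves only indices in some finite $F' \supseteq F$, and the randomness axiom for $G_{n+1,p}$ (or your homogeneity argument applied over $F'$) then realizes that fragment.
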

\begin{proof}
    $(1)\implies(2)$. Let $r<s\in \mathbb{R}$, $( a_{0,i},\dots, a_{n-1,i} )_{i<\omega}$ and $(b_w)_{w\subseteq\omega^n}$ witness that $f(x,y_0,\dots,y_{n-1})$ has $IP_n$. Let a finite $n+1$-uniform $n+1$-partite hypergraph $G$ be given. Without loss of generality, we may assume that $\lvert P_0(G)\rvert=\cdots=\lvert P_n(G)\rvert=k$. For every $g\in P_0(G)$ consider the set $w_g:=\{(g_1,\dots,g_n): G\models R(g,g_1,\dots,g_n)\}$. By identifying $P_m(G)$ with $\{ (m,i): i<k\}$ and the definition of $IP_n$, we can find $b_{w_g}$ satisfying $$ f(b_{w_g}, a_{g_1},\dots, a_{g_n}  )\leq r \iff (g_1,\dots, g_{n})\in w_g$$ and $$f(b_{w_g}, a_{g_1},\dots, a_{g_n}  )\geq s \iff (g_1,\dots, g_{n})\notin w_g.$$ Then, $(a_g)_{g\in G}$ witnesses that $f$ encodes $G$, where $a_g:=b_{w_g}$ for every $g\in P_0(G)$.
    
    $(2)\implies(3)$. Follows from compactness.
    
    $(3)\implies(4)$. Let $\II=(a_g)_{g\in G_{n+1,p}}$ witness that $f(x,y_0,\dots,y_{n-1})$ encodes $G_{n+1,p}$ as a partite hypergraph. By Theorem \ref{CMP iff Ramsey} and Fact \ref{fact: ordered hypergraphs are ramsey}, there exists a $G_{n+1,p}$-indiscernible sequence $(b_g)_{g\in G_{n+1,p}}$ locally based on $\II$. It is easy to see that $(b_g)_{g\in G_{n+1,p}}$ also witnesses that $f(x,y_0,\dots,y_{n-1})$ encodes $G_{n+1,p}$ as a partite hypergraph.
    
    $(4)\implies(1)$. Let $(a_g)_{g\in G_{n+1,p}}$ witness that $f(x,y_0,\dots,y_{n-1})$ encodes $G_{n+1,p}$ as a partite hypergraph. We write $$G_{n+1,p}=\{ (j,m): j\leq n; m<\omega \}.$$ Since $G_{n+1,p}$ is the random partite hypergraph, using compactness we conclude that for any  finite $w\subseteq \omega^n$ we can find $b_{w}$ such that
    \begin{align*}
        f(b_{w}, a_{1,i_1},\dots, a_{n,i_{n}}  )\leq r \iff (i_1,\dots, i_{n})\in w; \\
        f(b_{w}, a_{1,i_1},\dots, a_{n,i_{n}}  )\geq s \iff (i_1,\dots, i_{n})\notin w.
    \end{align*}
\end{proof}

As in \cite[Corollary 5.3]{Chernikov2014OnN}, from the fact that any permutation of the parts of the partition of $G_{n+1,p}$ is induced by an automorphism of $G_{n+1.p}$ treated as a pure hypergraph, we obtain the following as an easy corollary:

\begin{cor}\label{Corolary: preservation under permutation of variables}
    Let $f(x,y_0,\dots,y_{n-1})$ be a formula and $(w,z_0,\dots,z_{n-1} )$ be any permutation of $(x,y_0,\dots,y_{n-1})$. Then $g(w,z_0,\dots,z_{n-1} ):= f(x,y_0,\dots,y_{n-1})$ is $n$-dependent if and only if $f(x,y_0,\dots,y_{n-1})$ is $n$-dependent.
\end{cor}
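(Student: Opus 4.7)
The plan is to reduce everything to Proposition \ref{IP_n iff encoding partite}, which characterizes $IP_n$ of a formula via its encoding of $G_{n+1,p}$ as a partite hypergraph. Since a permutation and its inverse give symmetric hypotheses, it suffices to show that if $f$ has $IP_n$, then so does $g$.

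First I would fix notation for the permutation: let $\pi$ be the permutation of $\{0,1,\dots,n\}$ such that the $i$-th argument slot of $g$ coincides with the $\pi(i)$-th argument slot of $f$, so that $g(u_0,\dots,u_n) = f(u_{\pi^{-1}(0)},\dots,u_{\pi^{-1}(n)})$. Then I would invoke the key fact highlighted in the paper's hint: $G_{n+1,p}$ viewed as a pure $(n+1)$-uniform hypergraph does not distinguish the parts of its partition, and consequently any permutation $\pi$ of $\{0,\dots,n\}$ is induced by some hypergraph automorphism $\tau$ of $(G_{n+1,p},R)$ satisfying $\tau[P_i] = P_{\pi(i)}$ for every $i$. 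The existence of $\tau$ is the only substantive point; it follows by a standard back-and-forth on the Fraïssé limit, using universality and ultrahomogeneity of $G_{n+1,p}$ among the relevant finite structures (and noting that on a pure hypergraph the labels of the parts are arbitrary).

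Given $(a_g)_{g\in G_{n+1,p}}$ and $r<s$ witnessing that $f$ encodes $G_{n+1,p}$ as a partite hypergraph (obtained by applying Proposition \ref{IP_n iff encoding partite} to $f$), I would set $b_g := a_{\tau(g)}$ and verify that the same thresholds $r<s$ together with $(b_g)_{g\in G_{n+1,p}}$ witness the encoding for $g$. The verification is routine: for $h_j\in P_j$ we have
\[
g(b_{h_0},\dots,b_{h_n}) = f\bigl(a_{\tau(h_{\pi^{-1}(0)})},\dots,a_{\tau(h_{\pi^{-1}(n)})}\bigr),
\]
and since $\tau[P_{\pi^{-1}(j)}] = P_j$, each argument of $f$ lands in the correct part, so the encoding hypothesis for $f$ applies. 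Symmetry of $R$ together with $\tau$ being a hypergraph automorphism then gives that this value is $\leq r$ exactly when $R(h_0,\dots,h_n)$ holds and $\geq s$ exactly when it fails. A second application of Proposition \ref{IP_n iff encoding partite}, this time to $g$, yields that $g$ has $IP_n$. Beyond the back-and-forth existence of $\tau$, I expect no further obstacle.
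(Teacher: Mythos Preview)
Your proof is correct and follows essentially the same approach that the paper indicates: the paper does not spell out a proof but remarks that the corollary follows, as in \cite[Corollary 5.3]{Chernikov2014OnN}, from the fact that any permutation of the parts of $G_{n+1,p}$ is induced by an automorphism of $G_{n+1,p}$ as a pure hypergraph, together with Proposition \ref{IP_n iff encoding partite}. Your write-up simply makes this explicit.
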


A more involved proof which gives a bound on the $VC_n$ dimension can be found in \cite[Proposition 10.6]{Chernikov2020HypergraphRA}.

We cannot guarantee that an $n$-independent formula will encode $(n+1)$-uniform hypergraphs. However, we show that given a continuous theory $T$, $T$ has $IP_n$ if and only if there exists a formula encoding $(n+1)$-uniform hypergraphs. This generalizes the result in \cite[Lemma 2.2]{LASKOWSKI2003263} and allows us to give an alternative proof of \cite[Theorem 5.4]{Chernikov2014OnN} that avoids the mistake mentioned in the introduction. In private communications Chernikov, Palacín and Takeuchi provided a quick fix to the problem the original proof, it can be found after Counterexample \ref{counterexample}. In the proof of the next result, we will write $f(y_0,\dots,y_{n-1},x)$ instead of $f(x,y_0,\dots,y_{n-1})$ for convenience.

\begin{prop}\label{IP_n iff coding nonpartite} Let $T$ be a continuous logic theory. The following are equivalent:
\begin{enumerate}
    \item T has $IP_n$.
    \item There is a continuous logic formula encoding $(n+1)$-uniform hypergraphs.
    \item There is a continuous logic formula encoding $G_{n+1}$ as a hypergraph.
    \item There is a continuous logic formula encoding $G_{n+1}$ as a hypergraph witnessed by a $G_{n+1}$-indiscernible sequence.
\end{enumerate}
\end{prop}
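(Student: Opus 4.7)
The chain (2)~$\Rightarrow$~(3)~$\Rightarrow$~(4)~$\Rightarrow$~(1) parallels the corresponding chain in Proposition~\ref{IP_n iff encoding partite}. Namely, (2)~$\Rightarrow$~(3) is a standard compactness argument: for fixed uniform thresholds $r<s$ given by (2), the partial type in continuous logic asserting ``$f(x_{g_0},\ldots,x_{g_n})\leq r$'' for edges and ``$\geq s$'' for non-edges of $G_{n+1}$ is finitely satisfiable via the encoding of finite subhypergraphs of $G_{n+1}$, so is realized. For (3)~$\Rightarrow$~(4), the class $\tilde K$ of finite ordered $(n+1)$-uniform hypergraphs has the embedding Ramsey property (Fact~\ref{fact: ordered hypergraphs are ramsey}), so by Theorem~\ref{CMP iff Ramsey} there is a $G_{n+1}$-indiscernible sequence locally based on any sequence witnessing~(3); the encoding survives after replacing $r,s$ by $r':=r+\varepsilon$, $s':=s-\varepsilon$ for small $\varepsilon$, since ``locally based on'' controls the value of $f$ up to $\varepsilon$. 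For (4)~$\Rightarrow$~(1), any finite $(n+1)$-partite $(n+1)$-uniform hypergraph, viewed as a pure hypergraph on the disjoint union of its parts, embeds into $G_{n+1}$, so $f$ encodes every finite partite $(n+1)$-uniform hypergraph, and Proposition~\ref{IP_n iff encoding partite} gives $IP_n$.

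\textbf{Main step: (1)~$\Rightarrow$~(2).} Let $f(x,y_0,\ldots,y_{n-1})$ have $IP_n$, index its $n+1$ arguments as $0,1,\ldots,n$, and let $S_j$ denote the sort of the $j$-th argument. Introduce fresh variables $x_0,\ldots,x_n$ each of product sort $S_0\times\cdots\times S_n$, write $x_i=(z^i_0,\ldots,z^i_n)$, and set
\[
 g(x_0,\ldots,x_n):=f(z^0_0,z^1_1,\ldots,z^n_n),
\]
so that $g$ extracts the ``diagonal'' coordinate of each argument. Given any finite $(n+1)$-uniform hypergraph $H=(V,R_H)$, form an auxiliary $(n+1)$-partite $(n+1)$-uniform hypergraph $\tilde H$ on $\bigsqcup_{i\leq n}(\{i\}\times V)$ by declaring
\[
 R_{\tilde H}((0,v_0),\ldots,(n,v_n)) \iff v_0,\ldots,v_n \text{ are pairwise distinct and } R_H(v_0,\ldots,v_n).
\]
Proposition~\ref{IP_n iff encoding partite} applied to $f$ provides uniform thresholds $r<s$ (independent of $H$) together with, for each $H$, a partite encoding of $\tilde H$ by witnesses $(a^i_v)_{(i,v)\in\tilde H}$. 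Collapsing along the vertex index, set $c_v:=(a^0_v,a^1_v,\ldots,a^n_v)$. Then for every $v_0,\ldots,v_n\in V$,
\[
 g(c_{v_0},\ldots,c_{v_n})=f(a^0_{v_0},a^1_{v_1},\ldots,a^n_{v_n}),
\]
which by the partite encoding of $\tilde H$ is $\leq r$ iff the $v_i$ are pairwise distinct and $R_H(v_0,\ldots,v_n)$, and $\geq s$ otherwise. Repeating the same computation after any permutation $\pi$ of the arguments and using the symmetry of $R_H$, we find that $g(c_{v_{\pi(0)}},\ldots,c_{v_{\pi(n)}})$ returns the same value; and when $v_i=v_j$ for some $i\neq j$, the partite encoding forces $f\geq s$, matching the absence of an edge on repeated vertices in $H$. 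Hence $g$ encodes every finite $(n+1)$-uniform hypergraph via $(c_v)_{v\in V}$ with the common thresholds $r,s$, yielding (2).

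\textbf{Main obstacle.} The formula $g$ is not syntactically symmetric in $x_0,\ldots,x_n$, so the key point is that the tuples $(c_v)_{v\in V}$ evaluate symmetrically. This symmetry is secured not by $g$ itself but by the coherent choice of part-witnesses $a^0_v,\ldots,a^n_v$ indexed by a single vertex $v$ before collapse, combined with the symmetry of $R_H$ in $\tilde H$. The other delicate point is that the thresholds $r,s$ must be uniform across arbitrarily large $H$, which is essential to the definition of ``$g$ encodes $(n+1)$-uniform hypergraphs''; this uniformity is inherited directly from the analogous uniformity built into Proposition~\ref{IP_n iff encoding partite}.
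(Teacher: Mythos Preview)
Your proof is correct, and for the main implication $(1)\Rightarrow(2)$ it is genuinely different from the paper's argument.

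The paper constructs the \emph{symmetric} formula
\[
\psi_f(y^0_0\cdots y^{n-1}_0 x_0,\ldots,y^0_n\cdots y^{n-1}_n x_n)=\min_{\sigma\in\Sym(n)}f(y^0_{\sigma(0)},\ldots,y^{n-1}_{\sigma(n-1)},x_{\sigma(n)}),
\]
and then, given a finite hypergraph $\mathcal{H}$, builds witnesses $\tilde h_i=(g^0_i,\ldots,g^{n-1}_i,g^n_{c(i)})$ inside a $G_{n+1,p}$-indiscernible encoding, where the function $c$ is carefully designed so that among all permutations $\sigma$ in the minimum only $\sigma=\mathrm{Id}$ can give a value $\leq r$ on an ordered tuple. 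The work lies in this combinatorial control of $c$.

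Your approach bypasses both the symmetrization and the function $c$: you take the non-symmetric diagonal formula $g(x_0,\ldots,x_n)=f(z^0_0,z^1_1,\ldots,z^n_n)$, and instead push all the work into the auxiliary partite hypergraph $\tilde H$, whose edge relation already records the non-degeneracy condition (distinct $v_i$) and inherits the symmetry of $R_H$. The collapsed tuples $c_v$ then evaluate correctly on \emph{every} $(n+1)$-tuple, ordered or not, with the same uniform thresholds coming from Proposition~\ref{IP_n iff encoding partite}. This is a cleaner route to the bare statement.

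What the paper's approach buys is that the specific formula $\psi_f$ is symmetric and is reused later: Lemma~\ref{IP_n f and psi} shows $f$ has $IP_n$ iff $\psi_f$ does, and the family $\Psi^{n+1}_{\mathcal{F}_{X/E}}$ built from such $\psi_f$'s is exactly the family appearing in the collapsing-of-indiscernibles characterization for hyperdefinable sets. Your $g$ would not serve that purpose directly.

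One small remark on $(3)\Rightarrow(4)$: you do not actually need to weaken the thresholds to $r+\varepsilon$, $s-\varepsilon$. Since the locally-based-on property gives approximation within every $\varepsilon>0$ and the $G_{n+1}$-indiscernible sequence makes $f(b_{g_0},\ldots,b_{g_n})$ depend only on $\qftp(g_0,\ldots,g_n)$, letting $\varepsilon\to 0$ recovers the exact thresholds $r,s$.
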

    \begin{proof}
            $(1)\implies(2)$. Let $f(y_0,\dots,y_{n-1},x)$ be a formula with $IP_n$. We show that the symmetric formula $$ \psi(y^0_0y^1_0\cdots y^{n-1}_0x_0,\dots,y^0_ny^1_n\cdots y^{n-1}_nx_n)=\min_{\sigma\in Sym(n)} \{ f(y^0_{\sigma(0)},\dots ,y^{n-1}_{\sigma({n-1})},x_{\sigma(n)}) \}$$ encodes every finite $(n+1)$-uniform hypergraph. By Proposition \ref{IP_n iff encoding partite}, $f$ encodes $G_{n+1,p}$ as a partite hypergraph, which is witnessed by a $G_{n+1,p}$-indiscernible sequence $(a_g)_{g\in G_{n+1,p}}$ and some $r<s\in \mathbb{R}$. We enumerate the elements of $G_{n+1,p}$ as $$\{ g^i_m: i\leq n ; m<\omega \},$$ where the superscript indicates which part of the partition the element belongs to.

            Let an $(n+1)$-uniform finite hypergraph $\mathcal{H}=(H,R_H)$ be given, we write $H:=\{ h_i: i<k \}$ for some $k<\omega$. We construct $\Tilde{\mathcal{H}}=(\Tilde{H},R_{\Tilde{H}})$ an isomorphic copy of $\mathcal{H}$ consisting of elements $\Tilde{h}_i$ for $i<k$ of the form $( g^0_i,\dots,g^{n-1}_i,g^{n}_{c(i)} )$ and show that the formula $\psi$ encodes $\Tilde{\mathcal{H}}$ (and hence encodes $\mathcal{H}$), where the relation $R_{\Tilde{H}}$ and the function $c:\omega\to\omega$ are to be defined.

            We start by defining the function $c$. For every $i<\omega$, let $c(i)$  be the smallest $m<\omega$ such that for any $(j_0,\dots,j_{n-1})\in [k]^n$ $$ R_{G_{n+1,p}}(g^0_{j_0},\dots, g^{n-1}_{j_{n-1}}, g^n_m) \iff j_0<\cdots < j_{n-1}<i \wedge R_H(h_{j_0},\dots,h_{j_{n-1}},h_i).$$ Note that the existence of such $m$ is guaranteed by the fact that $G_{n+1,p}$ is the random partite hypergraph.

            The relation $R_{\Tilde{H}}$ is defined in the following manner: for $i_0<\cdots<i_n<k$ we set $$R_{\Tilde{H}}(\Tilde{h}_{i_0},\dots,\Tilde{h}_{i_n}) \iff R_{G_{n+1,p}}( g^0_{i_0}, g^1_{i_1},\dots,g^{n-1}_{i_{n-1}},g^n_{c(i_n)} ),$$ the rest of the cases are defined by symmetry of $R_{\Tilde{H}}$ and by declaring $\neg R_{\Tilde{H}}(\Tilde{h}_{i_0},\dots,\Tilde{h}_{i_n})$ whenever $i_{m_1}=i_{m_2}$ for some $m_1\neq m_2$. Note that by construction, we have $(H,R_{H})\cong(\Tilde{H},R_{\Tilde{H}})$.

            \begin{claim}
                The elements $b_{\Tilde{h}_i}:=(a_{g^0_{i}},\dots, a_{g^{n-1}_i},a_{ g^n_{c(i)} })$ for $i<k$ witness that $\psi$ encodes $\Tilde{\mathcal{H}}$ with the above $r<s$.
            \end{claim}
            \begin{proof}[Proof of claim]
                To ease the notation, for each $\sigma \in Sym(n)$ we write $$f_\sigma:=f(y^0_{\sigma(0)},\dots ,y^{n-1}_{\sigma({n-1})},x_{\sigma(n)}).$$ Our goal is the following: \begin{align*}
                    \psi( b_{\Tilde{h}_{i_0}},\dots, b_{\Tilde{h}_{i_n}})\leq r &\iff R_{\Tilde{H}}(\Tilde{h}_{i_0},\dots,\Tilde{h}_{i_n})\\
                    \psi( b_{\Tilde{h}_{i_0}},\dots, b_{\Tilde{h}_{i_n}})\geq s &\iff \neg R_{\Tilde{H}}(\Tilde{h}_{i_0},\dots,\Tilde{h}_{i_n})
                \end{align*}

                First, note that if $i_{m_1}=i_{m_2}$ for some $m_1,m_2<n$ then we have $\neg R_{\Tilde{H}}(\Tilde{h}_{i_0},\dots,\Tilde{h}_{i_n})$ by definition of $R_{\Tilde{H}}$ and $\psi( b_{\Tilde{h}_{i_0}},\dots b_{\Tilde{h}_{i_n}})\geq s$ by construction of the function $c$. Hence, we only need to prove the equivalences above in the case where all the $i$'s are pairwise distinct. We prove the first equivalence; the second one is easily deduced from it.
                
                Assume that $R_{\Tilde{H}}(\Tilde{h}_{i_0},\dots,\Tilde{h}_{i_n})$ holds. By symmetry, without loss of generality we may assume that $i_0<\cdots<i_n$. Then, by the definition of $R_{\Tilde{H}}$, this implies that $R_{ G_{n+1,p} }( g^0_{i_0},\dots, g^{n-1}_{i_{n-1}}, g^n_{c(i_n)}  )$ holds. Since the formula $f$ encodes $G_{n+1,p}$, this is equivalent to $f( a_{g^0_{i_0}},\dots, a_{g^{n-1}_{i_{n-1}}},a_{ g^n_{c(i_n)} } )\leq r$ . Hence, $\psi( b_{\Tilde{h}_{i_0}},\dots b_{\Tilde{h}_{i_n}})\leq r$.
                
                Assume $\psi( b_{\Tilde{h}_{i_0}},\dots b_{\Tilde{h}_{i_n}})\leq r$, again by symmetry, we may assume without loss of generality that $i_0<\cdots<i_n$. This implies that for some $\sigma\in Sym( n )$ we have $f_\sigma\leq r$. However, by construction of the function $c$, the only possibility is that $f_{Id}\leq r$, that is, $f( a_{g^0_{i_0}},\dots, a_{g^{n-1}_{i_{n-1}}},a_{ g^n_{c(i_n)} } )\leq r$.
                %%%Adrian: this is because for any other $\sigma$, the set of indices would not be in ascending order, hence does not satisfy the first part of the definition of the function c.
                Since the formula $f$ encodes $G_{n+1,p}$, this is equivalent to $R_{ G_{n+1,p} }( g^0_{i_0},\dots, g^{n-1}_{i_{n-1}}, g^n_{c(i_n)}  )$, which implies $R_{\Tilde{H}}(\Tilde{h}_{i_0},\dots,\Tilde{h}_{i_n})$ by definition of the relation $R_{\Tilde{H}}$.
            \end{proof}

            $(2)\implies(3)$. Follows from compactness.
            
            $(3)\implies(4)$. Let $\II=(a_g)_{g\in G_{n+1}}$ witness that $\psi(x_0,x_1,\dots,x_n)$ encodes $G_{n+1}$ as a hypergraph. By Theorem \ref{CMP iff Ramsey} and Fact \ref{fact: ordered hypergraphs are ramsey}, there exists a $G_{n+1}$-indiscernible sequence $(b_g)_{g\in G_{n+1}}$ locally based on $\II$. It is easy to see that $(b_g)_{g\in G_{n+1}}$ also witnesses that $\psi(x_0,x_1,\dots,x_n)$ encodes $G_{n+1}$ as a hypergraph.
            
            $(4)\implies(1)$. If a formula encodes $G_{n+1}$ as a hypergraph, then it also encodes $G_{n+1,p}$ as a partite hypergraph, which implies that the formula has $IP_n$ by Proposition \ref{IP_n iff encoding partite}.
    \end{proof}

To prove the main theorem of this section we need the next two facts about hypergraphs.

Let $(G_*,\mL_{o*},\mL_{g*})$ be either $(G_n,\{ <\}, \{<, R\})$ or $(G_{n,p},\mL^n_{op},\mL^n_{opg})$. 

Let $V\subset G_*$ be a finite set and $g_0,\dots,g_{n-1}, g'_0,\dots,g'_{n-1}\in G_*\setminus V$ such that $$R(g_0,\dots,g_{n-1})\notiff R(g'_0,\dots,g'_{n-1}).$$
By $W=g_0\dots g_{n-1}V$ we mean the set $\{ g_0,\dots,g_{n-1} \}\cup V$ with the inherited structure from $G_*$. Let $\mL_*$ be $\mL_{o*}$ or $\mL_{g*}$, and let $W=g_0\dots g_{n-1}V$, $W'=g'_0\dots g'_{n-1}V$, we write $W\cong_{\mL_*}W'$ if the map acting as identity on $V$ and sending $g_i$ to $g'_i$ for $i<n$ is an $\mL_*$ isomorphism. 

\begin{defin} Let $V\subset G_*$ be a finite set and $g_0,\dots,g_{n-1}, g'_0,\dots,g'_{n-1}\in G_*\setminus V$ be such that $$R(g_0,\dots,g_{n-1})\wedge \neg R(g'_0,\dots,g'_{n-1}).$$
We say that $W=g_0\dots g_{n-1}V$ is $V$-adjacent to $W'=g'_0\dots g'_{n-1}V$ if \begin{itemize}
    \item $W\cong_{L_{o*}} W'$,
    \item for every $k\leq n$, $\overline{v}\in V$ with $\lvert \overline{v}\rvert=k$ and $i_0,\dots,i_{n-k-1}<n$ $$ R(g_{i_0},\dots g_{i_{n-k-1}},\overline{v}) \iff  R(g'_{i_0},\dots g'_{i_{n-k-1}},\overline{v}).$$
\end{itemize}
$W$ is said to be adjacent to $W'$ if there is $V\subset W \cap W'$ such that $W$ is $V$-adjacent to $W'$. 
\end{defin}

The proofs of the next two results can be found in \cite[Lemma 5.6 and Lemma 5.7]{Chernikov2014OnN}, respectively.
\begin{fact} \label{ Sequence of adjacent graphs}
    Let $W,W'\subset G_*$ be subsets such that $W\cong_{L_{o*}} W'$. Then there is a sequence $W=W_0,W_1,\dots,W_k$ such that $W_{i+1}$ is adjacent to $W_i$ for every $i<k$ and $W_k\cong_{L_{g*}}W'$
\end{fact}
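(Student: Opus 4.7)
The plan is to induct on the number of $n$-element subsets of $W$ on which the hypergraph relation $R$ disagrees with the corresponding subset of $W'$ under a fixed $\mL_{o*}$-isomorphism, fixing one such discrepancy at a time via a single $V$-adjacent modification. What makes this work is the richness of $G_*$ as a Fra\"iss\'e limit: the explicit extension axioms for $\Th(G_*)$ recalled in the remark following Fact~\ref{fact: ordered hypergraphs are ramsey} let us realise inside $G_*$ a new vertex with any prescribed order position (and part, in the partite case) and with any prescribed pattern of $R$-relations to finitely many $(n-1)$-tuples over a fixed finite set.

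More concretely, fix an $\mL_{o*}$-isomorphism $\phi\colon W\to W'$ and (using symmetry of $R$) call an $n$-element subset $S\subseteq W$ \emph{discrepant} if $R(S)\notiff R(\phi(S))$. If nothing is discrepant then $\phi$ is already an $\mL_{g*}$-isomorphism and $k=0$ works. Otherwise pick any discrepant $S=\{g_0,\dots,g_{n-1}\}$ and set $V:=W\setminus S$; the goal of a single step is to build $g'_0,\dots,g'_{n-1}\in G_*\setminus V$ one at a time so that $W_1:=V\cup\{g'_0,\dots,g'_{n-1}\}$ is $V$-adjacent to $W$. Having already chosen $g'_0,\dots,g'_{j-1}$, I would invoke the extension axioms to select $g'_j$ sharing the order position (and, for $G_{n,p}$, the part) of $g_j$ relative to $V\cup\{g'_0,\dots,g'_{j-1}\}$, and whose $R$-relation with every $(n-1)$-subset $T$ of $V\cup\{g'_0,\dots,g'_{j-1}\}$ matches that of $g_j$ with the corresponding subset of $V\cup\{g_0,\dots,g_{j-1}\}$ under the identification $g_i\leftrightarrow g'_i$, with one exception: at the final step $j=n-1$ and for the unique $(n-1)$-subset $T=\{g'_0,\dots,g'_{n-2}\}$ disjoint from $V$, we instead prescribe the opposite value $\neg R(g_0,\dots,g_{n-1})$. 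The resulting $W_1$ realises precisely the two clauses of the definition of $V$-adjacency, and the bijection $\phi_1\colon W_1\to W'$ that agrees with $\phi$ on $V$ and sends $g'_j$ to $\phi(g_j)$ is an $\mL_{o*}$-isomorphism with exactly one fewer discrepancy than $\phi$. Iterating this finitely often produces the desired sequence $W=W_0, W_1,\dots,W_k$ terminating at a set $\mL_{g*}$-isomorphic to $W'$.

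The main obstacle is arranging each step so that one new vertex can simultaneously be placed in the correct order interval (and part) and be given a completely prescribed $R$-pattern with finitely many existing $(n-1)$-tuples. This is exactly the content of the axiomatisation of $\Th(G_*)$ recalled after Fact~\ref{fact: ordered hypergraphs are ramsey}: for any prescribed order interval (and part, for $G_{n,p}$) and any pair of finite disjoint sets $A_0,A_1$ of $(n-1)$-tuples over the existing vertices, a suitable vertex exists in $G_*$. Once this extension property is granted, checking that $W_1$ is genuinely $V$-adjacent to $W$ --- in particular that every $n$-edge involving at least one element of $V$ is preserved while the single ``all-$g'$'' edge is flipped --- reduces to routine bookkeeping directly from the construction.
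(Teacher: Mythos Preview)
The paper does not give its own proof of this fact; it is recorded as a Fact with the proof deferred to \cite[Lemma~5.6]{Chernikov2014OnN}. Your argument is correct and is essentially the one found there: induct on the number of $n$-element subsets on which the fixed $\mL_{o*}$-isomorphism disagrees about $R$, and repair one discrepancy at a time by invoking the extension axioms of $G_*$ to replace the $n$ involved vertices so that exactly the single ``all-new'' hyperedge is flipped while every hyperedge touching $V$ is preserved.
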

\begin{fact} \label{Isomorphic copy of random partite hypergraph}
    Let $V\subset G_*$ be a finite set and $g_0<\dots<g_{n-1} \in G_* \setminus V$ with $R(g_0,\dots,g_{n-1})$. Then there are infinite sets $X_0<\dots<X_{n-1}\subseteq G_*$ such that
    \begin{itemize}
        \item $(G';<;R)\cong (G_{n,p};<;R)$ where $G'=X_0\dots X_{n-1}$ (i.e. each $X_i$ correspond to the part $P_i$ of the partition),
        \item for any $g'_i\in X_i$ ($i<n$), either $W\cong_{\mL_{opg}} W'$ or $W$ is $V$-adjacent to $W'$, where $W=g_0\dots g_{n-1}V$ and $W'=g'_0\dots g'_{n-1}V$.
    \end{itemize}
\end{fact}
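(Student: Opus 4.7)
The plan is to build a specific countable structure $M \supseteq V$ and then embed it into $G_*$ by the Fraïssé extension property. Adjoin to $V$ countably many fresh elements organized in $n$ blocks $\widetilde{X}_0 < \widetilde{X}_1 < \dots < \widetilde{X}_{n-1}$, with each $\widetilde{X}_i$ placed densely inside the $V$-gap of $g_i$ (and inside the part $P_i$ in the partite case), in disjoint sub-intervals so that the block order agrees with $g_0 < \dots < g_{n-1}$; density of the underlying order makes this possible even if several $g_i$'s share a $V$-gap. The hypergraph relation $R$ on $M$ is specified by three mutually disjoint clauses: (i) on tuples entirely in $V$, keep the relation inherited from $G_*$; (ii) on tuples $(g'_{i_1},\dots,g'_{i_s},\bar v)$ with $1 \leq s < n$, pairwise distinct $i_j$, $g'_{i_j} \in \widetilde{X}_{i_j}$, and $\bar v \in V^{n-s}$, declare $R$ to hold iff $R(g_{i_1},\dots,g_{i_s},\bar v)$ holds in $G_*$; (iii) on tuples entirely in $\bigcup_i \widetilde{X}_i$, declare $(\bigcup_i \widetilde{X}_i;<,R;\widetilde{X}_0,\dots,\widetilde{X}_{n-1})$ isomorphic to $G_{n,p}$. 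Tuples with repeated entries always carry $\neg R$.

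The first point to verify is that $\age(M) \subseteq \age(G_*)$. Every finite substructure of $M$ is a finite ordered $n$-uniform hypergraph, or ordered $n$-partite $n$-uniform hypergraph when $G_* = G_{n,p}$: symmetry of $R$, distinctness of entries in $R$-tuples, and the partite constraint all survive since the three clauses live on disjoint classes of tuples. In the partite case, the blocks $\widetilde{X}_i$ were deliberately placed inside $P_i$, so the imposed partition on $\bigcup_i \widetilde{X}_i$ is compatible with the ambient partition extending to $V$.

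Next I appeal to the Fraïssé extension property of $G_*$, which follows from its $\omega$-categoricity and ultra-homogeneity: enumerate $M \setminus V = \{m_0, m_1, \dots\}$ and recursively realize in $G_*$ the quantifier-free type of each $m_k$ over $V$ together with the previously embedded elements; at each step the required finite extension lies in $\age(G_*)$, so the step succeeds. Setting $X_i$ to be the image of $\widetilde{X}_i$, each $X_i$ is infinite, $X_0 < \dots < X_{n-1}$, and the substructure $(\bigcup_i X_i;<,R;X_0,\dots,X_{n-1})$ is isomorphic to $G_{n,p}$ by construction. For any $g'_i \in X_i$, clause (ii) forces the interactions of $(g'_0,\dots,g'_{n-1})$ with $V$ (for every strict nonempty subset of indices) to match those of $(g_0,\dots,g_{n-1})$, while the lone relation $R(g'_0,\dots,g'_{n-1})$ may go either way; these two possibilities yield respectively $W \cong_{\mL_{opg}} W'$ and the $V$-adjacency of $W$ with $W'$.

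The main obstacle is the combinatorial bookkeeping: confirming that the three specifications of $R$ on $M$ are mutually consistent, that no symmetry, distinctness or partite constraint is violated, and that the construction of $\bigcup_i \widetilde{X}_i$ as a partite random hypergraph can indeed be carried out so that its interaction with $V$ via clause (ii) does not overdetermine any edge. Since clauses (ii) and (iii) act on disjoint classes of tuples (those hitting $V$ versus those disjoint from $V$), no conflict arises, and once this is checked, the Fraïssé machinery and the final verification of the two bullet points are routine.
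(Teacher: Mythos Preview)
The paper does not prove this statement; it records it as a fact and cites \cite[Lemma 5.7]{Chernikov2014OnN}. Your argument is correct and follows the natural Fra\"iss\'e-limit route: build the desired configuration abstractly over $V$ and then embed it into $G_*$ one point at a time using ultrahomogeneity and the fact that every finite piece lies in $\age(G_*)$.

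The only underspecification is that clause~(ii) does not say what happens on $n$-tuples mixing two or more new elements from the \emph{same} block $\widetilde{X}_i$ with elements of $V$; declaring $\neg R$ on such tuples (which is forced anyway in the partite case) completes the definition of $M$ and keeps $\age(M)\subseteq\age(G_*)$. This omission is harmless for the conclusion, since the second bullet only concerns transversals $(g'_0,\dots,g'_{n-1})$ with exactly one element per block.
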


We are now ready to prove the main theorem of the section.

\begin{teor} \label{n-dep and collapsing}
    Let $T$ be a complete continuous logic theory. The following are equivalent:
    \begin{enumerate}
        \item $T$ is $n$-dependent.
        \item Every $G_{n+1,p}$-indiscernible is $\mL_{op}$-indiscernible.
        \item Every $G_{n+1}$-indiscernible is order indiscernible.
    \end{enumerate}
\end{teor}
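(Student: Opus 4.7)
My plan is to prove the cycle of implications by treating the directions $(2) \Rightarrow (1)$ and $(3) \Rightarrow (1)$ as immediate contrapositives, and then $(1) \Rightarrow (2)$ and $(1) \Rightarrow (3)$ by a common adjacency-based argument. For $(2) \Rightarrow (1)$ and $(3) \Rightarrow (1)$, suppose that $T$ has $IP_n$. Proposition~\ref{IP_n iff encoding partite}(4) (resp. Proposition~\ref{IP_n iff coding nonpartite}(4)) provides a formula $f$ together with a $G_{n+1,p}$-indiscernible (resp. $G_{n+1}$-indiscernible) sequence $(a_g)$ encoding $G_{n+1,p}$ (resp. $G_{n+1}$) with separating thresholds $r<s$. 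Randomness of the Fra\"{\i}ss\'e limit then yields $(n+1)$-tuples sharing the same $\mL_{op}$-type (resp. the same $\{<\}$-type) on which $R$ holds on one tuple and fails on the other; the corresponding $f$-values straddle $[r,s]$, contradicting $\mL_{op}$-indiscernibility (resp. order indiscernibility). This closes the easy direction.

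For the substantive direction $(1) \Rightarrow (2)$, assume $T$ is $n$-dependent and let $(a_g)_{g \in G_{n+1,p}}$ be a $G_{n+1,p}$-indiscernible sequence that, for contradiction, is not $\mL_{op}$-indiscernible, witnessed by a formula $f$, tuples $\bar g \cong_{\mL_{op}} \bar g'$ and some $\varepsilon > 0$ with $|f(a_{\bar g}) - f(a_{\bar g'})| \geq \varepsilon$. By Fact~\ref{ Sequence of adjacent graphs} I connect $\bar g$ to some $W_k \cong_{\mL_{opg}} \bar g'$ through a finite chain of adjacent tuples; since $f(a_{W_k}) = f(a_{\bar g'})$ by $G_{n+1,p}$-indiscernibility, a pigeonhole pinpoints a single $V$-adjacent step $W = g_0\dots g_n V$ with $R(g_0,\dots,g_n)$ and $W' = g_0'\dots g_n' V$ with $\neg R$, whose $f$-values differ by at least $\delta := \varepsilon/k$. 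Applying Fact~\ref{Isomorphic copy of random partite hypergraph} produces infinite $X_0 < \dots < X_n$ inside $G_{n+1,p}$ whose union with parts $X_i$ is a copy of $G_{n+1,p}$ and such that any selection $h_i \in X_i$ gives $U = h_0\dots h_n V$ which is $\mL_{opg}$-isomorphic to $W$ when $R(\bar h)$ holds and $V$-adjacent to $W$ (hence $\mL_{opg}$-isomorphic to $W'$) when $\neg R(\bar h)$ holds. $G_{n+1,p}$-indiscernibility then forces $f(a_U,a_V)$ to take only two values, separated by $\delta$, depending on the edge status of $\bar h$; the formula $f(x_0,\dots,x_n,a_V)$ thus encodes the partite hypergraph $X_0 \cup \dots \cup X_n \cong G_{n+1,p}$ with fixed thresholds, so by Proposition~\ref{IP_n iff encoding partite} and Remark~\ref{naming parameters and dummy variables} the formula $f$ has $IP_n$, contradicting $(1)$.

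The implication $(1) \Rightarrow (3)$ is obtained by the same template in the nonpartite setting: a failure of order indiscernibility for a $G_{n+1}$-indiscernible sequence reduces via the nonpartite version of Fact~\ref{ Sequence of adjacent graphs} to a $V$-adjacent pair $W, W'$ inside $G_{n+1}$ with distinct $f$-values, and Fact~\ref{Isomorphic copy of random partite hypergraph} still yields a partite subcopy $X_0 < \dots < X_n$ (isomorphic to $G_{n+1,p}$) inside $G_{n+1}$ on which the same encoding argument runs. The main obstacle I anticipate is the $\varepsilon$-accounting in the $(1) \Rightarrow (2)$ and $(1) \Rightarrow (3)$ arguments: continuous logic requires propagating a definite gap $\delta > 0$ through the finite adjacency chain to a single step, and then verifying that the resulting parameterized formula encodes $G_{n+1,p}$ with fixed separating thresholds $r < s$ as demanded by Proposition~\ref{IP_n iff encoding partite} (choosing $r, s$ strictly between the two possible values of $f(a_U,a_V)$ and swapping $f$ with $-f$ if needed so that the $\leq r$ side matches $R$). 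Once this quantitative step is handled, the remainder transfers directly from the classical combinatorial framework.
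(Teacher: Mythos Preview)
Your proof is correct and follows essentially the same route as the paper: the easy directions via Propositions~\ref{IP_n iff encoding partite}(4) and~\ref{IP_n iff coding nonpartite}(4), and the hard directions by reducing to a single $V$-adjacent pair via Fact~\ref{ Sequence of adjacent graphs} and then using Fact~\ref{Isomorphic copy of random partite hypergraph} to exhibit a parameterized formula encoding $G_{n+1,p}$. Your explicit $\varepsilon/k$ pigeonhole step and the remark about swapping $f$ with $-f$ make precise what the paper's ``without loss of generality $W$ is $V$-adjacent to $W'$\,'' leaves implicit, but the substance is identical; one minor wording point is that the contradiction at the end of $(1)\Rightarrow(2)$ is that \emph{some} $\mL$-formula (obtained from $f(\cdot,a_V)$ via Remark~\ref{naming parameters and dummy variables}) has $IP_n$, not $f$ itself.
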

    \begin{proof}
            $(1)\implies(2)$. Let $(a_g)_{g\in G_{n+1,p}}$ be a $G_{n+1,p}$-indiscernible sequence which is not $\mL_{op}$-indiscernible. Then there are $\mL_{op}$-isomorphic $W,W'\subset G_{n+1}$ subsets of size $m$, a formula $f(x_0,\dots,x_{m-1})$ and $r<s\in \mathbb{R}$ such that $f( (a_g)_{g\in W}) \leq r$ and $f( (a_g)_{g\in W'})\geq s$ (where the elements $a_g$ are substituted for the variables $x_0,\dots,x_{m-1}$ according to the ordering on $W$ and $W'$). Without loss of generality, by Fact \ref{ Sequence of adjacent graphs}, we may assume that $W$ is $V$-adjacent to $W'$ for some subset $V$ such that $W=g_0\dots g_n V$, $W'=g'_0\dots g'_n V$, $R(g_0,\dots g_n)$ and $\neg R(g'_0,\dots g'_n)$. 

            Now we apply Fact \ref{Isomorphic copy of random partite hypergraph} to $V$ and $g_0,\dots,g_n$. This yields $G'\subseteq G_{n+1,p}$ such that for every $(h_0,\dots,h_n)\in \prod_{i\leq n} P_i(G')$ $$ R(h_{0},\dots,h_{n}) \iff h_0\dots h_n V\cong_{\mL_{opg}}W $$ and $$ \neg R(h_0,\dots,h_n) \iff h_0\dots h_nV\cong_{\mL_{opg}}W'.$$ Recall that the sequence $(a_g)_{g\in G_{n+1,p}}$ is $G_{n+1,p}$-indiscernible and let $f'$ be the formula defined by permuting the variables $(x_0,\dots,x_{m-1})$ of $f$ in such a way that the first $n+1$-variables are the ones corresponding to $h_0,\dots,h_n$ according to the ordering on the set $\{h_0,\dots,h_n\}\cup V$. Then,
            $$ f'(a_{h_0},\dots, a_{h_n},A)\leq r \iff R(h_{0},\dots,h_{n})$$
            and
            $$f'(a_{h_0},\dots, a_{h_n},A)\geq s \iff \neg R(h_{0},\dots,h_{n}),$$ where $A=(a_g)_{g\in V}$. Since $G'$ is isomorphic to $G_{n+1,p}$, by Proposition \ref{IP_n iff encoding partite}, the formula $f'(x,y_{0},\dots, y_{n-1},A)$ has $IP_n$ and hence, by Remark \ref{naming parameters and dummy variables} there is a continuous logic formula $g(x,z_{0},\dots, z_{n-1})$ which has $IP_n$.
            
            $(1)\implies(3)$. The proof is exactly as the proof of $(1)\implies (2)$.
            
            $(2)\implies(1)$. It follows from Proposition \ref{IP_n iff encoding partite}. If the formula $f$ encodes $G_{n+1,p}$ as a partite hypergraph witnessed by a $G_{n+1,p}$-indiscernible sequence $(a_g)_{g\in G_{n+1,p}}$, then $(a_g)_{g\in G_{n+1,p}}$ cannot be $\mL_{op}$-indiscernible.
            
            $(3)\implies(1)$ Follows from Proposition \ref{IP_n iff coding nonpartite}. If $T$ has $IP_n$, there is a continuous logic formula encoding $G_{n+1}$ as a hypergraph witnessed by a $G_{n+1}$-indiscernible sequence $(a_g)_{g\in G_{n+1}}$. Then $(a_g)_{g\in G_{n+1}}$ cannot be order-indiscernible.
    \end{proof}

We know explain the error in the proof of \cite[Theorem 5.4 $(3)\implies (2)$]{Chernikov2014OnN}

Without loss of generality we write $$G_{n+1,p}=\{ g^i_q: i<n; q\in \mathbb{Q}\},$$ where $g^i_q \in P_i(G_{n+1,p})$ and $g^i_q<g^i_p$ for all $q<p\in \mathbb{Q}$. We define the ordered $(n+1)$-uniform hypergraph $G^*_{n+1}$ as follows:
\begin{itemize}
    \item $G^*_{n+1}=\{ h_q: h_q=(g^0_q,\dots, g^n_q), q\in \mathbb{Q} \}$,
    \item $ R_{G^*_{n+1}}( h_{q_0},\dots, h_{q_n} ) \iff R_{G_{n+1,p}}( g^0_{q_0},\dots, g^n_{q_n} ) $ for $q_0<\dots<q_n$,
    \item $h_q<h_p \iff q<p$.  
\end{itemize}
Clearly, $G^*_{n+1}$ embeds every finite ordered $(n+1)$-uniform hypergraph.

Let $(a_g)_{g\in G_{n+1,p}}$ be a $G_{n+1,p}$-indiscernible sequence which is not $\mL_{op}$-indiscernible. For $h_q\in G^*_{n+1}$, let $b_{h_q}=( a_{g^0_q},\dots, a_{g^n_q} )$ and consider the $G^*_{n+1}$-indexed sequence $(b_h)_{h\in G^*_{n+1}}$. The following claim is made in the proof:
\begin{claim}
    Whenever $X\equiv_{<_{G^*_{n+1}},R_{G^*_{n+1}}}Y \subseteq G^*_{n+1}$, we have $$ \tp( (b_h)_{h\in X} )=\tp((b_h)_{h\in Y}) .$$
\end{claim}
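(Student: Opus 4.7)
The natural attempt to prove this claim would proceed by a transfer argument: given $X, Y \subseteq G^*_{n+1}$ with the same quantifier-free $\{<_{G^*_{n+1}}, R_{G^*_{n+1}}\}$-type, write $X = \{h_{q_1}, \dots, h_{q_k}\}$ and $Y = \{h_{p_1}, \dots, h_{p_k}\}$ with $q_1 < \dots < q_k$ and $p_1 < \dots < p_k$ (matched by the QF-isomorphism), and consider the ``lifted'' finite subsets
\[
\widehat X := \{g^i_{q_j} : i \leq n,\ 1 \leq j \leq k\},\qquad \widehat Y := \{g^i_{p_j} : i \leq n,\ 1 \leq j \leq k\}
\]
of $G_{n+1,p}$, together with the bijection $\widehat\phi : \widehat X \to \widehat Y$ defined by $g^i_{q_j}\mapsto g^i_{p_j}$. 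If one could show that $\widehat\phi$ preserves the quantifier-free $\mathcal L^{n+1}_{opg}$-type, then by $G_{n+1,p}$-indiscernibility of $(a_g)_{g\in G_{n+1,p}}$ one would get $\tp((a_g)_{g\in\widehat X}) = \tp((a_g)_{g\in\widehat Y})$, and regrouping the coordinates into the tuples $b_{h_{q_j}}$ and $b_{h_{p_j}}$ would yield the desired $\tp((b_h)_{h\in X}) = \tp((b_h)_{h\in Y})$.

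So the plan reduces to verifying that $\widehat\phi$ is a quantifier-free $\mathcal L^{n+1}_{opg}$-isomorphism between $\widehat X$ and $\widehat Y$. The part predicates are automatically preserved, since $\widehat\phi$ sends $P_i \cap \widehat X$ to $P_i \cap \widehat Y$. The linear order is also preserved: between parts this is automatic from $P_0 < \dots < P_n$, and within each part $P_i$ the restriction $q_j \mapsto p_j$ is order-preserving because $<$ is preserved between $X$ and $Y$ in $G^*_{n+1}$.

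The remaining task is to show that $\widehat\phi$ preserves $R_{G_{n+1,p}}$ on all transversal $(n+1)$-tuples $(g^0_{q_{j_0}}, g^1_{q_{j_1}}, \dots, g^n_{q_{j_n}})$, where the indices $j_0,\dots, j_n$ range independently over $\{1,\dots,k\}$. This is where the approach would run into its main obstacle, and in fact where the claim breaks. The hypothesis on $X$ and $Y$ only encodes the behaviour of $R_{G^*_{n+1}}$, and by construction $R_{G^*_{n+1}}(h_{q_{j_0}},\dots, h_{q_{j_n}})$ corresponds solely to the \emph{diagonal} relation $R_{G_{n+1,p}}(g^0_{q_{j_0}}, \dots, g^n_{q_{j_n}})$ evaluated along strictly increasing tuples of indices. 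Thus the hypothesis gives us no control whatsoever over the $R$-relation on genuinely non-diagonal transversals, for which $j_0, \dots, j_n$ fail to be strictly increasing, or even pairwise distinct in a way compatible with the diagonal.

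Hence, any attempt to push this proof through would have to either (a) extract from $G^*_{n+1}$-isomorphism enough information to determine $R_{G_{n+1,p}}$ on all transversals, or (b) weaken the conclusion so that only diagonal $R$-information is needed. Neither is available in general: one can cook up $X, Y$ isomorphic as ordered hypergraphs in $G^*_{n+1}$ whose expansions $\widehat X, \widehat Y$ differ in their non-diagonal $R$-pattern, and such configurations are dense in $G_{n+1,p}$ by the extension axioms of the random partite hypergraph. I expect this to be precisely the content of the counterexample referenced in Counterexample~\ref{counterexample}, and it is the reason the original argument does not establish $(3)\Rightarrow(2)$ as stated.
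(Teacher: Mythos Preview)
Your analysis is correct and aligns with the paper's treatment: the paper does not prove this claim but explicitly refutes it, and your diagnosis of why---that the $\{<_{G^*_{n+1}}, R_{G^*_{n+1}}\}$-type of $X$ only controls the diagonal transversals $R_{G_{n+1,p}}(g^0_{q_{j_0}},\dots,g^n_{q_{j_n}})$ with $j_0<\cdots<j_n$, leaving the off-diagonal ones unconstrained---is exactly the mechanism exploited in Counterexample~\ref{counterexample}. The paper's counterexample is a concrete instantiation of the obstruction you describe: taking the identity sequence $(g)_{g\in G_{n+1,p}}$ in $T=\Th(G_{n+1,p})$, it exhibits two-element sets $\{h_{q_0},h_{q_1}\}$ and $\{h_{q_0},h_{\tilde q}\}$ with the same $\{<,R_{G^*_{n+1}}\}$-type (vacuously for the relation, since $R_{G^*_{n+1}}$ has arity $n+1$) but differing on the off-diagonal edge $R_{G_{n+1,p}}(g^0_{q_0},\dots,g^{n-1}_{q_0},g^n_{-})$.
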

However, this claim is not true as shown by the following counterexample. 
\begin{counterexample}\label{counterexample}
    Consider the theory $T=\Th(G_{n+1,p})$ and the sequence $(g)_{g\in G_{n+1,p}}$ (This sequence corresponds to the sequence $(a_g)_{g\in G_{n+1,p}}$ of the claim, which is clearly $G_{n+1,p}$-indiscernible but not order indiscernible). Then, for $h_q=(g^0_q,\dots, g^n_q)$ we have $b_{h_q}:=(g^0_q,\dots,g^n_q)$. Let $X:=\{h_{q_0},h_{q_1} \}$ for some $q_0<q_1\in \mathbb{Q}$, by randomness, there is $q_0<\Tilde{q}\in \mathbb{Q}$ and $h_{\Tilde{q}}=(g^0_{\Tilde{q}},\dots,g^n_{\Tilde{q}})$ such that $$ R_{ G_{n+1,p}}(g^0_{q_0},\dots, ,g^{n-1}_{q_0}, g^n_{q_1})\iff \neg  R_{ G_{n+1,p}}(g^0_{q_0},g^1_{q_0},\dots,g^{n-1}_{q_0}, g^n_{\Tilde{q}}).$$
Thus, $h_{q_0},h_{q_1}\equiv_{<_{G^*_{n+1}},R_{G^*_{n+1}}}  h_{q_0},h_{\Tilde{q}}$ since: \begin{itemize}
    \item $\Th(G^*_{n+1})$ has quantifier elimination,
    \item $h_{q_0}<h_{q_1}$ and $h_{q_0}<h_{\Tilde{q}}$,
    \item $R_{G^*_{n+1}}$ has arity $n+1$.%, so it cant say anything about the pairs $(h_{q_0},h_{q_1})$ and $(h_{q_0},h_{\Tilde{q}})$. The only possibility is that we repeat at least two times $h_{q_0}$, $h_{q_1}$ or $h_{\Tilde{q}}$ (to complete to the appropiate arity) and then only $\neg R_{G^*_{n+1}}$ holds by definition.
\end{itemize}

We also have $\tp(b_{h_{q_0}},b_{h_{q_1}})\neq \tp(b_{h_{q_0}},b_{h_{\Tilde{q}}})$ since, by our choice of $\Tilde{q}$, $$ R_{ G_{n+1,p}}(g^0_{q_0},\dots, ,g^{n-1}_{q_0}, g^n_{q_1})\iff \neg  R_{ G_{n+1,p}}(g^0_{q_0},g^1_{q_0},\dots,g^{n-1}_{q_0}, g^n_{\Tilde{q}}).$$
\end{counterexample}

The following quick correction is due to Artem Chernikov, Daniel Palacín and Kota Takeuchi. If we substitute the claim above in the original proof by the following: Let $A\subset G_{n+1,p}$ be any finite substructure. Without loss of generality, we may assume that if $g^i_q,g^j_p\in A$ and $i<j$ then $q<p$. Let $A^*=\{ h_q: g^i_q\in A \text{ for some } i\}$ and let $\varphi^*((x^0_q,\dots,x^{n-1}_q)_{h_q\in A^*}):=\varphi((x^i_q)_{g^i_q\in A})$ for each formula $\varphi((x^i_q)_{g^i_q\in A})$. 
\begin{claim}
  Whenever $A^*\equiv_{<_{G^*_{n+1}},R_{G^*_{n+1}}}X \subseteq G^*_{n+1}$, we have $$ \tp_{\varphi^*}( (b_h)_{h\in A^*} )=\tp_{\varphi^*}((b_h)_{h\in X}) .$$
\end{claim}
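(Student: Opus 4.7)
My plan is to deduce the corrected claim from $G_{n+1,p}$-indiscernibility by producing an automorphism of $G_{n+1,p}$ that realizes on $A$ the given isomorphism between $A^*$ and $X$. Given $X \subseteq G^*_{n+1}$ with $A^* \equiv_{<,R_{G^*_{n+1}}} X$, the unique $\mL_{og}$-isomorphism $\pi : A^* \to X$ restricts to an order-preserving bijection $\tilde\pi$ between the underlying rational indices with $\pi(h_q) = h_{\tilde\pi(q)}$. I then define $\sigma_0 : A \to G_{n+1,p}$ by $\sigma_0(g^i_q) := g^i_{\tilde\pi(q)}$, verify that $\sigma_0$ is a partial $\mL_{opg}$-elementary map, and use ultrahomogeneity of the Fraïssé limit $G_{n+1,p}$ to extend it to $\sigma \in \aut(G_{n+1,p})$.

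The verification that $\sigma_0$ is a partial $\mL_{opg}$-isomorphism breaks into three points. Preservation of the parts $P_i$ is immediate from the definition of $\sigma_0$. For the order $<$, if $g^i_q, g^j_p \in A$ with $i \ne j$, then (after swapping) the WLOG hypothesis on $A$ gives $i<j$ and $q<p$, so both $g^i_q < g^j_p$ and $g^i_{\tilde\pi(q)} < g^j_{\tilde\pi(p)}$ hold because $P_i < P_j$ in $G_{n+1,p}$; and if $i=j$ then $h_q, h_p$ both lie in $A^*$, so $q<p \iff \tilde\pi(q)<\tilde\pi(p)$ because $\tilde\pi$ is order-preserving. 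For $R$, consider $g^{i_0}_{q_0},\dots,g^{i_n}_{q_n} \in A$: either the $i_k$ are not pairwise distinct and $R$ fails on both sides by the partite convention, or after reindexing $i_k=k$ the WLOG hypothesis on $A$ forces $q_0<\cdots<q_n$. In the latter case $R_{G_{n+1,p}}(g^0_{q_0},\dots,g^n_{q_n})$ is by construction of $G^*_{n+1}$ equivalent to $R_{G^*_{n+1}}(h_{q_0},\dots,h_{q_n})$, and since $\pi$ preserves $R_{G^*_{n+1}}$ this is further equivalent to $R_{G^*_{n+1}}(h_{\tilde\pi(q_0)},\dots,h_{\tilde\pi(q_n)})$, which unfolds to $R_{G_{n+1,p}}(g^0_{\tilde\pi(q_0)},\dots,g^n_{\tilde\pi(q_n)})$ as required.

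With such $\sigma \in \aut(G_{n+1,p})$ in hand, $G_{n+1,p}$-indiscernibility of $(a_g)_{g\in G_{n+1,p}}$ yields
$$\tp\bigl((a_{g^i_q})_{g^i_q \in A}\bigr) \;=\; \tp\bigl((a_{\sigma(g^i_q)})_{g^i_q \in A}\bigr) \;=\; \tp\bigl((a_{g^i_{\tilde\pi(q)}})_{g^i_q \in A}\bigr).$$
The decisive point is the definition of $\varphi^*$: since $\varphi^*$ only reads the coordinates $x^i_q$ with $g^i_q \in A$, plugging in $(b_h)_{h \in A^*}$ gives $\varphi((a_{g^i_q})_{g^i_q \in A})$, while plugging in $(b_h)_{h \in X}$ via the identification of variable-indices $h_q \in A^*$ with $\pi(h_q)=h_{\tilde\pi(q)} \in X$ gives $\varphi((a_{g^i_{\tilde\pi(q)}})_{g^i_q \in A})$. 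By the displayed equality these two values coincide for every $\varphi$, hence $\tp_{\varphi^*}((b_h)_{h\in A^*}) = \tp_{\varphi^*}((b_h)_{h\in X})$.

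I expect the main obstacle to be the $R$-clause in the partial elementarity check, which is precisely where the original claim failed. The alignment hypothesis on $A$ is used there in an essential way: without it, one cannot reorder a given $R$-tuple from $A$ into the canonical form $q_0<\cdots<q_n$ needed to invoke the defining equivalence between $R_{G_{n+1,p}}$ and $R_{G^*_{n+1}}$, and Counterexample \ref{counterexample} shows that this gap is not merely cosmetic — the troublesome $h_{\tilde q}$ there uses a relation of the form $R_{G_{n+1,p}}(g^0_{q_0},\dots,g^{n-1}_{q_0},g^n_{\tilde q})$ in which the part-indices are \emph{not} aligned with the $q$-indices. Restricting to formulas of the form $\varphi^*$, whose variables come from a WLOG-aligned $A$, is exactly what bypasses this pathology.
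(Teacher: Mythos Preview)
Your proof is correct. The paper itself does not give a proof of this corrected claim: it only records the statement as a fix communicated by Chernikov, Palac\'in, and Takeuchi and remarks that ``the proof goes through,'' so your argument supplies exactly the verification the paper omits. Your approach---pulling back the $\mL_{og}$-isomorphism $\pi:A^*\to X$ along the index map to a partial $\mL_{opg}$-isomorphism $\sigma_0$ of $G_{n+1,p}$, extending by ultrahomogeneity, and then invoking $G_{n+1,p}$-indiscernibility---is the natural one, and your analysis correctly isolates the alignment hypothesis on $A$ as the ingredient that makes the $R$-clause go through (in particular forcing $q_0<\cdots<q_n$ so that the defining equivalence of $R_{G^*_{n+1}}$ applies on both sides).
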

Then, the proof goes through. Our counterexample shows that one has to work with a restricted family of formulas $\varphi^*$.

%Consider the sequence $(g)_{g\in G_{n+1,p}}$. Then for $h_q=(g^0_q,\dots, g^n_q)$ we have $b_g:=(g^0_q,\dots,g^n_q)$. Let $X:=\{h_{q_0},h_{q_1} \}$ for some $q_0<q_1\in \mathbb{Q}$, by randomness, there is $q_0<\Tilde{q}\in \mathbb{Q}$ and $h_{\Tilde{q}}=(g^0_{\Tilde{q}},\dots,g^n_{\Tilde{q}})$ such that $$ R(g^0_q,\dots, g^n_q)\iff \neg  R(g^0_q,g^1_q,\dots,g^{n-1}_q, g^n_{\Tilde{q}}) .$$
%Then, $h_{q_0},h_{q_1}\equiv_{<,R}  h_{q_0},h_{\Tilde{q}}$ but  $\tp(b_{h_{q_0}},b_{h_{q_1}})\neq \tp(b_{h_{q_0}},b_{h_{\Tilde{q}}}) $.

The formula $\psi$ that we constructed in the proof of Proposition \ref{IP_n iff coding nonpartite} will be important throughout the rest of the chapter.

\begin{notation}\label{notation psi_f}
     Given a continuous logic formula $f(y_0,\dots,y_{n-1},x)$ we denote the symmetric formula constructed in the proof of Proposition \ref{IP_n iff coding nonpartite} as $\psi_f$. Namely: $$ \psi_f(y^0_0y^1_0\cdots y^{n-1}_0x_0,\dots,y^0_ny^1_n\cdots y^{n-1}_nx_n)=\min_{\sigma\in Sym(n)} \{ f(y^0_{\sigma(0)},\dots ,y^{n-1}_{\sigma({n-1})},x_{\sigma(n)}) \}.$$
\end{notation}

%The proof of $(1)\implies (2)$ in the previous lemma might be adapted to show that if the continuous formulas $\varphi(x,y_0,\dots,y_{n-1})$ and $\psi(x,y_0,\dots,y_{n-1})$ are $n$-dependent, then so are $\max\{ \varphi,\psi\}$ and $\min \{ \varphi,\psi\}$. It 
It turns out that $IP_n$ of the formulas $f$ and $\psi_f$ are equivalent. The implication $(2)\implies (1)$ in the next result can also be deduced from \cite[Proposition 10.4 and Proposition 10.6]{Chernikov2020HypergraphRA} since the set of connectives $\{\neg, \frac{1}{2}, \dot{-}\}$ is full (we can approximate every continuous formula uniformly by formulas constructed using only that set of connectives). However, we provide a direct proof with ideas that will be useful in the next section.
\begin{lema} \label{IP_n f and psi}
    Let $f(y_0,\dots,y_{n-1},x)$ be a continuous logic formula. Then, the following are equivalent:
    \begin{enumerate}
        \item $f$ has $IP_n$.
        \item $\psi_f$ has $IP_n$.
    \end{enumerate}
\end{lema}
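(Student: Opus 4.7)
The plan is to prove the two directions separately, with the harder direction being $(2)\Rightarrow(1)$.

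For $(1)\Rightarrow(2)$, the proof of Proposition \ref{IP_n iff coding nonpartite} already shows that if $f$ has $IP_n$ then $\psi_f$ encodes every finite $(n+1)$-uniform hypergraph with some fixed parameters $r<s$. Since every finite $(n+1)$-partite $(n+1)$-uniform hypergraph is in particular a finite $(n+1)$-uniform hypergraph (with a marked partition of its vertex set), $\psi_f$ encodes every finite $(n+1)$-partite $(n+1)$-uniform hypergraph with the same $r<s$, and hence $\psi_f$ has $IP_n$ by Proposition \ref{IP_n iff encoding partite}.

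For $(2)\Rightarrow(1)$, I will combine $G_{n+1,p}$-indiscernibility with a pigeonhole on the symmetrizing permutations. Suppose $\psi_f$ has $IP_n$; then by Proposition \ref{IP_n iff encoding partite} it encodes $G_{n+1,p}$, witnessed by a $G_{n+1,p}$-indiscernible sequence $(a_g)_{g\in G_{n+1,p}}$ with parameters $r<s$. Each $a_g$ is a tuple corresponding to one block $(y^0,\dots,y^{n-1},x)$ of variables of $\psi_f$; write its components as $(a_g)^0,\dots,(a_g)^{n-1},(a_g)^x$. For each permutation $\sigma$ of $\{0,1,\dots,n\}$ and each $\epsilon\in\{R,\neg R\}$, define
$$v^\epsilon_\sigma := f\bigl((a_{g_{\sigma(0)}})^0,\dots,(a_{g_{\sigma(n-1)}})^{n-1},(a_{g_{\sigma(n)}})^x\bigr),$$
where $(g_0,\dots,g_n)\in P_0\times\cdots\times P_n$ is any tuple with $R$-status $\epsilon$. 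By $G_{n+1,p}$-indiscernibility, together with the fact that the q.f.\ $\mL_{opg}$-type of the permuted tuple $(g_{\sigma(0)},\dots,g_{\sigma(n)})$ depends only on $\sigma$ and $\epsilon$, the values $v^\epsilon_\sigma$ are well-defined constants. From $\psi_f(a_{g_0},\dots,a_{g_n})=\min_\sigma v^\epsilon_\sigma$ and the encoding, we get $\min_\sigma v^R_\sigma\leq r$ and $v^{\neg R}_\sigma\geq s$ for every $\sigma$; in particular, pick any $\sigma_0$ with $v^R_{\sigma_0}\leq r$.

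Now define $(c_h)_{h\in G_{n+1,p}}$ by letting $c_h:=(a_h)^i$ when $h\in P_{\sigma_0(i)}$ for $i<n$, and $c_h:=(a_h)^x$ when $h\in P_{\sigma_0(n)}$. Substituting $h_i:=g_{\sigma_0(i)}$ in the definitions of $v^R_{\sigma_0}$ and $v^{\neg R}_{\sigma_0}$ shows that for every $(h_0,\dots,h_n)$ with $h_i\in P_{\sigma_0(i)}$ one has $f(c_{h_0},\dots,c_{h_n})=v^R_{\sigma_0}\leq r$ if $R(h_0,\dots,h_n)$ and $=v^{\neg R}_{\sigma_0}\geq s$ otherwise. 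Thus $f$ encodes $G_{n+1,p}$ under the relabeled partition $(P_{\sigma_0(0)},\dots,P_{\sigma_0(n)})$, which is universal for finite $(n+1)$-partite $(n+1)$-uniform hypergraphs since any such hypergraph can be relabeled by $\sigma_0^{-1}$ and then embedded into the original $G_{n+1,p}$. Hence $f$ encodes every finite $(n+1)$-partite $(n+1)$-uniform hypergraph with parameters $r<s$, and $f$ has $IP_n$ by Proposition \ref{IP_n iff encoding partite}.

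The main subtlety is simply ensuring that a \emph{uniform} witnessing permutation $\sigma_0$ can be chosen across all $R$-tuples; this is exactly what $G_{n+1,p}$-indiscernibility buys us, by turning the $v^\epsilon_\sigma$ into genuine constants rather than tuple-dependent quantities, so that the minimum in the definition of $\psi_f$ is attained at a fixed permutation.
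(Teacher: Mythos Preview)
Your proof is correct. The direction $(1)\Rightarrow(2)$ matches the paper's argument. For $(2)\Rightarrow(1)$, however, you take a genuinely different and somewhat more direct route than the paper.

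The paper's argument for $(2)\Rightarrow(1)$ does not insist on a $G_{n+1,p}$-\emph{indiscernible} witness; it takes any encoding witness $(b_g)_{g\in G_{n+1,p}}$, colors each edge $(g_0,\dots,g_n)$ by the first $\sigma$ for which $f_\sigma(b_{g_0},\dots,b_{g_n})\leq r$, and then invokes the embedding Ramsey property of $\age(G_{n+1,p})$ to find, for each finite partite hypergraph $H$, a monochromatic copy. A pigeonhole over the finitely many colors then yields a single $\sigma$ for which $f_\sigma$ (hence $f$, after stripping dummy variables and permuting) encodes $G_{n+1,p}$.

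Your approach instead starts from the indiscernible witness provided by Proposition~\ref{IP_n iff encoding partite}(4), and observes that indiscernibility forces each $f_\sigma$ to take a \emph{constant} value $v^\epsilon_\sigma$ on all tuples with a given $R$-status $\epsilon$; no edge-coloring or Ramsey step is needed at this stage, only the finite choice of a minimizing $\sigma_0$. The Ramsey content is still present, but it is packaged inside the passage to an indiscernible witness rather than applied explicitly. Your relabeling argument at the end (embedding an arbitrary finite $H$ with parts permuted by $\sigma_0^{-1}$) is correct and corresponds, in the paper's framework, to the combination of Remark~\ref{naming parameters and dummy variables}(2) and Corollary~\ref{Corolary: preservation under permutation of variables}. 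The net effect is the same: both proofs show that some $f_\sigma$ has $IP_n$, whence $f$ does.
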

\begin{proof}
        $(1)\implies (2)$. Follows from the proof of $(1)\implies (2)$ of Proposition \ref{IP_n iff coding nonpartite} and Proposition \ref{IP_n iff encoding partite}.
        
        $(2)\implies (1)$. First, recall that if a formula is $n$-dependent and we add dummy variables, then the new formula is still $n$-dependent and that $n$-dependence is preserved under permutations of variables (by Remark \ref{naming parameters and dummy variables} and Corollary \ref{Corolary: preservation under permutation of variables}). 
    
    We consider the formula $$f_\sigma(y^0_0y^1_0\cdots y^{n-1}_0x_0,\dots,y^0_ny^1_n\cdots y^{n-1}_nx_n)=f(y^0_{\sigma(0)},\dots ,y^{n-1}_{\sigma({n-1})},x_{\sigma(n)}).$$We have $\psi_f=\min_{\sigma\in Sym(n)} \{f_\sigma\}$.
    Let $(b_g)_{g\in G_{n+1,p}}$ be a witness that $\psi_f$ encodes $G_{n+1,p}$ as a partite hypergraph with some $r<s$, which exists by Proposition \ref{IP_n iff encoding partite} and the assumption that $\psi_f$ has $IP_n$. Fix a linear ordering of $Sym(n)$. First, note that if $\psi_f(b_{g_0},\dots,b_{g_n})\leq r$, then there at least one $\sigma\in Sym(n)$ such that $f_\sigma(b_{g_0},\dots,b_{g_n})\leq r$. Note also that since $\psi_f$ encodes $G_{n+1,p}$, there exists an edge $(g_0,\dots,g_n)$ in $G_{n+1,p}$ if and only if $\psi_f(b_{g_0},\dots,b_{g_n})\leq r$. We color the edge $(g_0,\dots,g_n)$ according to the $\sigma$ for which $f_\sigma(b_{g_0},\dots,b_{g_n})\leq r$ holds. Let $H$ be any finite ordered $(n+1)$-partite $(n+1)$-uniform hypergraph. Since $\age(G_{n+1,p})$ has ERP, we can find a monochromatic isomorphic copy of $H$ inside $G_{n+1,p}$. This implies that $f_\sigma$ encodes $H$ as a partite hypergraph with the same $r<s$ as above (where $\sigma$ is the color of the edges of this monochromatic copy of $H$). Since each finite $(n+1)$-partite $(n+1)$-uniform hypergraph is encoded by some $f_\sigma$, by compactness there is $f_\sigma$ encoding $G_{n+1,p}$ as a partite hypergraph. Thus, $f_\sigma$ has $IP_n$ which, by the previous paragraph, implies that $f(y_0,\dots,y_{n-1},x)$ has $IP_n$.
        %As in Proposition \ref{IP_n iff coding nonpartite}, to ease the notation, we will refer to each one of the formulas $f(y^0_{\sigma(0)},\dots ,y^{n-1}_{\sigma({n-1})},x_{\sigma(n)})$ simply as $f_\sigma$. 
        %Let $(b_g)_{g\in G_{n+1,p}}$ be a witness that $\psi_f$ encodes $G_{n+1,p}$, which exists by Proposition \ref{IP_n iff encoding partite}. We write $b_g=(a^0_g,\dots,a^n_g)$. Fix a linear ordering of $Sym(n)$ and color the edges of $G_{n+1,p}$ according to the first $\sigma$ such that $f_\sigma\leq r$ whenever $\psi_f(b_{g_0},\dots,b_{g_n})\leq r$. Let $H$ be any finite $(n+1)$-partite $(n+1)$-uniform hypergraph. Since $\age(G_{n+1,p})$ is a Ramsey class, we can find a monochromatic isomorphic copy of $H$ inside $G_{n+1,p}$. This implies that $f_\sigma$ encodes $H$. %we can assume by adding dummy variables to f_\sigma that it is a formula in the same variables as \psi_f, we know that if the formula with dummy variables codes H then so does the one without the dummy variables.
\end{proof}

\section{$n$-dependence for hyperdefinable sets}\label{section: hyperdef n-dep}
%SHould I define explicitly generalized indisc in the context?

Let $T$ be a complete, first-order theory, and $\C \models T$ a monster model (i.e. $\kappa$-saturated and strongly $\kappa$-homogeneous for a strong limit cardinal $> |T|$). Let $E$ be a $\emptyset$-type-definable equivalence relation on a $\emptyset$-type-definable subset $X$ of $\C^\lambda$ (or a product of sorts), where $\lambda< \kappa$.

%To deal with $X/E$ in full generality (where $X\subseteq \C^\lambda$ is $\emptyset$-$\bigwedge$-definable), we need to extend the context by allowing CL-formulas defined only on a $\emptyset$-$\bigwedge$-definable subset $Z$ of a possibly infinite Cartesian $\C^\mu$. Then in the above definition of complete $\phi$-types and $S_\phi(M)$, we use $a \in Z$. In this extended context, CL-formulas cannot be added as predicates in the usual continuous logic formalism, but the results from  \cite{MR2657678} which we are going to use go through working over sufficiently saturated models in place of arbitrary models.

We recall the family $\mathcal{F}_{X/E}$ defined in Section \ref{section: characterizations of stability}. $\mathcal{F}_{X/E}$ is the family  of all functions $f : X \times \mathfrak{C}^m \to \mathbb{R}$ which factor through $X/E\times\mathfrak{C}^m$ and can be extended to a continuous logic formula $\C^{\lambda} \times \C^m \to \mathbb{R}$ over $\emptyset$ (i.e. factors through a continuous function $S_{\lambda+m}(\emptyset))$, where $m$ ranges over $\omega$. 

%(Note that, by Tietze extension theorem, a function $f : X \times \C^m \to \mathbb{R}$ extends to a CL-formula over $\emptyset$ iff it factors through the type space $S_{X \times \C^m}(\emptyset)$ via a continuous function $S_{X \times \C^m}(\emptyset) \to \mathbb{R}$.)
%(We could call such functions {\em CL-formulas on $X \times \C^m$}.) 
%For $f\in \mathcal{F}_{X/E}$, a {\em complete $f$-type over $M$} is the function taking  $f(x,b)$ (for $b\in M^{|y|}$) to $f(a,b)$ for some fixed $a \in X$, and is denoted by $\tp_f(a/M)$. We get the space $S_f(M)$ of all complete $f$-types over $M$. A {\em complete $\mathcal{F}_{X/E}$-type over $M$} is 
%the tuple $(\tp_f(a/M))_{f \in  \mathcal{F}_{X/E}}$ for some $a \in X$, and we have the space  $S_{\mathcal{F}_{X/E}}(M)$ of all complete  $\mathcal{F}_{X/E}$-types over $M$.
%the union $\bigcup_{f \in \mathcal{F}_{X/E}} \tp_f(a/M)$ for some $a \in X$, and  $S_{\mathcal{F}_{X/E}}(M)$ is the space of all complete  $\mathcal{F}_{X/E}$-types over $M$.

Let $A \subset \C$ (be small). Recall that the complete types over $A$ of elements of $X/E$ can be defined as the $\aut(\C/A)$-orbits on $X/E$,  or the preimages of these orbits under the quotient map, or the partial types defining these preimages. The space of all such types is denoted by $S_{X/E}(A)$.

In this section we apply the results obtained in continuous logic to the context of hyperdefinable sets to obtain a counterpart to Theorem \ref{n-dep and collapsing}. 

 In Proposition \ref{proposition: F_X/E separates points},  %\cite[Proposition 2.1]{10.1215/00294527-2022-0023},
 we showed that the family of functions $\mathcal{F}_{X/E}$ separates points in $S_{X/E\times \C^m}(\emptyset)$. Namely,
\begin{fact}\label{diff type diff f value}
For any $a_1=a'_1/E$, $a_2=a'_2/E$ in $X/E$ and $b_1,b_2\in \mathfrak{C}^m$ 
		$$\tp(a_1,b_1)\neq \tp(a_2,b_2)\iff (\exists f\in \mathcal{F}_{X/E})(f(a'_1,b_1)\neq f(a'_2,b_2)) $$
\end{fact}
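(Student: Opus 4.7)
The statement is essentially a restatement of Proposition \ref{proposition: F_X/E separates points} (with $a_i = a_i'/E$ playing the role of elements of $X/E$ and $b_i \in \C^m$), which was already proved earlier in the thesis. Thus my plan is simply to invoke Proposition \ref{proposition: F_X/E separates points}. For completeness, I will sketch how one would reprove it directly using the same strategy, as this will also clarify the role of $\mathcal{F}_{X/E}$.

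The $(\Leftarrow)$ direction is immediate: if $f \in \mathcal{F}_{X/E}$ satisfies $f(a_1',b_1) = r_1 \ne r_2 = f(a_2',b_2)$, then the preimages $f^{-1}(r_1)$ and $f^{-1}(r_2)$ are $\emptyset$-type-definable, disjoint, and invariant under $E \times \{=\}$ on the first coordinate. Hence $(a_1',b_1)$ and $(a_2',b_2)$ cannot be $\aut(\C)$-conjugate modulo $E$ in the first coordinate, which means $\tp(a_1,b_1) \ne \tp(a_2,b_2)$.

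For $(\Rightarrow)$, the plan is to define the $\emptyset$-type-definable equivalence relation $E'$ on $X \times \C^m$ by
$$(x_1,y_1) E' (x_2,y_2) \iff (x_1/E, y_1) \equiv (x_2/E, y_2).$$
Since $E'$ is bounded, the quotient $(X \times \C^m)/E'$ carries the logic topology making it a compact Hausdorff (hence $T_{3+\frac12}$) space. The hypothesis $\tp(a_1,b_1) \ne \tp(a_2,b_2)$ gives us two distinct points of this quotient, which can therefore be separated by a continuous map $h \colon (X \times \C^m)/E' \to \mathbb{R}$. Composing $h$ with the quotient map $\pi_{E'}$ produces a function $f = h \circ \pi_{E'} \colon X \times \C^m \to \mathbb{R}$ that factors through $X/E \times \C^m$, separates $(a_1',b_1)$ from $(a_2',b_2)$, and (by Tietze's extension theorem together with the characterization of $\mathcal{F}_{X/E}$ as those functions factoring through the type space via a continuous map) lies in $\mathcal{F}_{X/E}$.

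There is essentially no obstacle here: the argument is the same as in Proposition \ref{proposition: F_X/E separates points}, and in practice the one-line proof is simply a reference to that earlier result.
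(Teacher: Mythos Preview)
Your proposal is correct and matches the paper's approach exactly: the paper simply states this fact as a restatement of Proposition \ref{proposition: F_X/E separates points}, and your sketch of the argument reproduces that proposition's proof verbatim.
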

This allows us to work with elements of $X/E$ as real elements if we restrict ourselves to functions from the family $\mathcal{F}_{X/E}$. Hence, we introduce the following notation:

\begin{notation} Let $\Delta$ be a set of (continuous) formulas in variables $(x_i)_{i<\lambda}$ all from the same product of sorts. We say that a sequence $(a_i)_{i\in \I}$ of elements from the appropriate product of sorts is $\I$-indiscernible with respect to $\Delta$ if for any tuples $i_1,\dots,i_n,j_1,\dots,j_n\in \I$ we have that %$\tp^\Delta(\overline{a}_{i_1}, \dots, \overline{a}_{i_n})$ is not trivial and
    $$\qftp(i_1,\dots,i_n)=\qftp(j_1,\dots,j_n) \implies \tp^\Delta(a_{i_1}, \dots, a_{i_n})=\tp^\Delta(a_{j_1}, \dots, a_{j_n}),$$
    where the tuples $a_i$ are substituted for the variables of the formulas from $\Delta$.
\end{notation}

We define generalised indiscernible sequences of hyperimaginaries exactly as we did in Definition \ref{defin: gen. indisc.}.
\begin{defin}%[Generalized indisc]
    %Let $\mathcal{L}'$ be a first order language and $\mathcal{I}$ an $\mathcal{L}'$-structure. 
    Let $\II=(a_i: i\in \I)$ be an $\I$-indexed sequence of hyperimaginaries (maybe of different sorts), and let $A\subset \C$ be a small set of parameters.  We say that $\II$ is an $\I$-\emph{indexed indiscernible sequence over} $A$ if for all $n\in\omega$ and all sequences $i_1,\dots,i_n,j_1,\dots,j_n$ from $\I$ we have that 
    $$\qftp(i_1,\dots,i_n)=\qftp(j_1,\dots,j_n) \implies \tp({a}_{i_1}, \dots, {a}_{i_n}/A)=\tp({a}_{j_1}, \dots, {a}_{j_n}/A).$$
\end{defin}

By Fact \ref{diff type diff f value}, a sequence of hyperimaginaries $(a_i/E)_{i\in \I}$ is $\I$-indiscernible if and only if the sequence $(a_i)_{i\in \I}$ is $\I$-indiscernible with respect to the family of functions $f: X^n\to \R$ that factor through $(X/E)^n$ and can be extended to a continuous formula $f:\C^{n\lambda}\to \R$ over $\emptyset$ where $n$ ranges over $\omega$.

Next, we define the $n$-independence property for hyperdefinable sets. 

\begin{defin}\label{Definition: hyperim IP_n}
A hyperdefinable set $X/E$ has the \emph{$n$-independence property}, $IP_n$ for short, if for some $m<\omega$ there exist two complete types $p,q\in S_{X/E\times \C^m}(\emptyset)$ with $p\neq q$ and a sequence $(a_{0,i},\dots, a_{n-1,i})_{i< \omega}$ such that for every finite $w\subset \omega^n$ there exists $b_w\in X/E$ satisfying $$\tp(b_w, a_{0,i_0},\dots, a_{n-1,i_{n-1}}  )=p \iff (i_0,\dots, i_{n-1})\in w$$ $$\tp(b_w, a_{0,i_0},\dots, a_{n-1,i_{n-1}}  )=q \iff (i_0,\dots, i_{n-1})\notin w. $$ 
\end{defin}

Note that $1$-dependent hyperdefinable sets are exactly the hyperdefinable sets with $NIP$ (see Definition \ref{nip hyperdef}) by Lemma \ref{lemma: equivalences NIP}.%\cite[Lemma 2.12]{10.1215/00294527-2022-0023}. 

We can easily modify our definition of $n$-independence to suit functions in $\mathcal{F}_{X/E}$.

\begin{defin}\label{defin: n-dep function/E}
     We say that $f(x,y_0,\dots,y_{n-1})\in \mathcal{F}_{X/E}$ \emph{ has the $n$-independence property}, $IP_n$ for short, if there exist $r<s\in\mathbb{R}$ and a sequence $(a_{0,i},\dots, a_{n-1,i})_{i< \omega}$ from anywhere such that for every finite $w\subseteq \omega^n$ there exists $b_w\in X$ satisfying 
     \begin{align*}
         f(b_w, a_{0,i_0},\dots, a_{n-1,i_{n-1}}  )\leq r &\iff (i_0,\dots, i_{n-1})\in w\\
         &and\\
         f(b_w, a_{0,i_0},\dots, a_{n-1,i_{n-1}}  )\geq s &\iff (i_0,\dots, i_{n-1})\notin w.
     \end{align*} %We say that $X/E$ is \emph{$n$-dependent}, or $NIP_n$, if no formula in $\mathcal{F}_{X/E}$ has $IP_n$. 
\end{defin}

%%%%%%%%%%%%%%%%%%%%%%%%%%%%%%%%%%%%%%%%%%%%%
%%%Adrian: Should I write the definition explicitly?
%%%%%%%%%%%%%%%%%%%%%%%%%%%%%%%%%%%%%%%%%%%%%%%%

Similarly, we can define what it means for a function in $\mathcal{F}_{X/E}$ to encode ($n$-partite) $n$-uniform hypergraphs.
\begin{defin}
    We say that  $f(x,y_1,\dots,y_{n-1})\in \mathcal{F}_{X/E}$ \emph{encodes a $n$-partite $n$-uniform hypergraph} $(G,R,P_0,\dots,P_{n-1})$ if there are a $G$-indexed sequence $(a_g)_{g\in G}$ with $a_g\in X$ for every $g\in P_0(G)$ and $r<s\in\mathbb{R}$ satisfying 
    \begin{align*}
    f( a_{g_0},\dots, a_{g_{n-1}}  )\leq r &\iff  R(g_0,\dots, g_{n-1})\\
    &and\\
    f(a_{g_0},\dots, a_{g_{n-1}}  )\geq s &\iff \neg R(g_0,\dots, g_{n-1})
    \end{align*}
    for all $g_0,\dots,g_{n-1}\in P_0\times\cdots\times P_{n-1}$.
    We say that $f(x_0,\dots,x_{n-1})\in \mathcal{F}_{X/E}$ \emph{ encodes $n$-partite $n$-uniform hypergraphs} if there exist $r<s\in\mathbb{R}$ such that $f(x,y_1,\dots,y_{n-1})$ encodes every finite $n$-partite $n$-uniform hypergraph using the same $r$ and $s$.
\end{defin}

The proof of the following fact is exactly as in the case of a general continuous formula.

\begin{prop} \label{Hyperim: IP_n iff encoding partite} Let $f(x,y_0,\dots,y_{n-1})\in \mathcal{F}_{X/E}$. Then, the following are equivalent:
\begin{enumerate}
    \item $f$ has $IP_n$.
    \item $f$ encodes $(n+1)$-partite $(n+1)$-uniform hypergraphs.
    \item $f$ encodes $G_{n+1,p}$ as a partite hypergraph.
    \item $f$ encodes $G_{n+1,p}$ as a partite hypergraph witnessed by a $G_{n+1,p}$-indiscernible sequence.
\end{enumerate}
\end{prop}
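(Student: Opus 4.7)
The strategy is to run the proof of Proposition \ref{IP_n iff encoding partite} verbatim, only taking care that whenever we need to substitute an element for the distinguished variable $x$ of $f(x,y_0,\dots,y_{n-1})$, that element lies in $X$. Since the set $X$ is $\emptyset$-type-definable, being in $X$ is preserved by all the operations used: naming finitely many elements, invoking compactness, and extracting indiscernibles via the modeling property.

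For $(1)\Rightarrow(2)$, given $r<s$, a sequence $(a_{0,i},\dots,a_{n-1,i})_{i<\omega}$ and elements $b_w\in X$ witnessing $IP_n$ as in Definition \ref{defin: n-dep function/E}, one copies the construction from the proof of Proposition \ref{IP_n iff encoding partite}: for a finite $(n+1)$-partite $(n+1)$-uniform hypergraph $H$ with parts of equal size $k$, identify $P_m(H)$ with $\{(m,i):i<k\}$; for $g\in P_0(H)$ set $w_g:=\{(g_1,\dots,g_n):H\models R(g,g_1,\dots,g_n)\}$, define $a_g:=b_{w_g}\in X$, and for $g=(j,i)\in P_j(H)$ with $j\geq 1$ put $a_g:=a_{j-1,i}$. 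This encodes $H$ using the same $r<s$. The implication $(2)\Rightarrow(3)$ is a direct compactness argument: the partial type expressing that a $G_{n+1,p}$-indexed sequence encodes $G_{n+1,p}$ as a partite hypergraph via $f$ with its $P_0$-part landing in $X$ is finitely satisfiable by (2) applied to finite partite subhypergraphs of $G_{n+1,p}$.

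For $(3)\Rightarrow(4)$, let $\II=(a_g)_{g\in G_{n+1,p}}$ witness (3). By Fact \ref{fact: ordered hypergraphs are ramsey}, $\age(G_{n+1,p})$ has ERP, hence by Theorem \ref{CMP iff Ramsey} together with Proposition \ref{CMP implies MP}, $G_{n+1,p}$-indiscernibles have the modeling property in the first-order setting we are working in. Thus there is a $G_{n+1,p}$-indiscernible sequence $(b_g)_{g\in G_{n+1,p}}$ locally based on $\II$. Since $X$ is type-definable by a collection of first-order formulas and every $a_g$ for $g\in P_0(G_{n+1,p})$ satisfies them, local basedness (applied to each such formula, using that the quantifier-free type of a single index records whether it lies in $P_0$) forces $b_g\in X$ for $g\in P_0(G_{n+1,p})$. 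Moreover, local basedness applied to the continuous formulas $r\dotminus f$ and $f\dotminus s$ (or to any first-order approximations of the conditions $f\leq r$ and $f\geq s$) transfers the encoding thresholds from $\II$ to $(b_g)_{g\in G_{n+1,p}}$.

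Finally, $(4)\Rightarrow(1)$ is an immediate consequence of randomness of $G_{n+1,p}$: enumerating $G_{n+1,p}=\{g_{j,m}:j\leq n,\,m<\omega\}$ with $g_{j,m}\in P_j$ and $g_{j,m}<g_{j,m'}$ iff $m<m'$, the axioms of the random $(n{+}1)$-partite $(n{+}1)$-uniform hypergraph (and compactness, if one wants to realize arbitrary $w\subseteq \omega^n$ in one go) give, for each finite $w\subseteq \omega^n$, some $g\in P_0(G_{n+1,p})$ with $R(g,g_{1,i_1},\dots,g_{n,i_n})\iff(i_1,\dots,i_n)\in w$; setting $b_w:=a_g\in X$ and $a_{j,i}:=a_{g_{j+1,i}}$ for $j=0,\dots,n-1$, the encoding inequalities become exactly the defining inequalities of $IP_n$ for $f$ with the same $r,s$. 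The only genuine subtlety in the argument is the verification in step $(3)\Rightarrow(4)$ that extracting a $G_{n+1,p}$-indiscernible preserves membership of the $P_0$-part in $X$, which I expect to be the main point to write out carefully.
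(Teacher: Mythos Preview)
Your proposal is correct and matches the paper's approach exactly: the paper simply states that the proof is ``exactly as in the case of a general continuous formula'' (i.e., Proposition \ref{IP_n iff encoding partite}), without spelling out any of the steps. Your expanded treatment, in particular the care you take in $(3)\Rightarrow(4)$ to check that the extracted $G_{n+1,p}$-indiscernible keeps its $P_0$-part in the type-definable set $X$, is more detailed than what the paper provides, but the underlying argument is identical.
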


The following lemma allows us to understand $IP_n$ of a hyperdefinable set $X/E$ through the family of functions $\mathcal{F}_{X/E}$.

\begin{lema}\label{IP_n hyperdef. set iif formulas}
    $X/E$ has $IP_n$ if and only if some $f(x,y_0,\dots,y_{n-1})\in \mathcal{F}_{X/E}$ has $IP_n$.
\end{lema}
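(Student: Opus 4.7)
For the forward direction, I would argue directly from the fact that $\mathcal{F}_{X/E}$ separates points of $S_{X/E\times\C^m}(\emptyset)$ (Proposition \ref{proposition: F_X/E separates points}). Suppose $X/E$ has $IP_n$ witnessed by distinct $p,q\in S_{X/E\times\C^m}(\emptyset)$, a sequence $(a_{0,i},\dots,a_{n-1,i})_{i<\omega}$ and a family $(b_w)_{w\subseteq\omega^n\text{ finite}}$. Since any $f\in\mathcal{F}_{X/E}$ factors through the relevant type space, it takes a single real value on each type. Applying Proposition \ref{proposition: F_X/E separates points} to $p\neq q$ yields some $f\in\mathcal{F}_{X/E}$ with distinct values $v_p\neq v_q$ on realizations of $p$ and $q$; pick $r<s$ in the open interval between $v_p$ and $v_q$. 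After possibly swapping the roles of $p$ and $q$ (and then replacing each $w$ by its complement in the bookkeeping of the $b_w$'s), the very same sequence and family $(b_w)$ witness that $f$ has $IP_n$ in the sense of Definition \ref{defin: n-dep function/E}.

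For the reverse direction, assume $f(x,y_0,\dots,y_{n-1})\in\mathcal{F}_{X/E}$ has $IP_n$. Apply Proposition \ref{Hyperim: IP_n iff encoding partite}: $f$ encodes $G_{n+1,p}$ as a partite hypergraph, witnessed by a $G_{n+1,p}$-indiscernible sequence $(a_g)_{g\in G_{n+1,p}}$ (with $a_g\in X$ for $g\in P_0$ and $a_g$ of the sort of $y_{j-1}$ for $g\in P_j$, $j\geq 1$) and some $r<s\in\R$. By $G_{n+1,p}$-indiscernibility, all edges $(g_0,\dots,g_n)\in P_0\times\cdots\times P_n$ with $R(g_0,\dots,g_n)$ realize the same complete type $p\in S_{X/E\times\C^m}(\emptyset)$ over $\emptyset$, and all non-edges realize the same complete type $q$; moreover $p\neq q$ because the $f$-values are $\leq r$ on the former and $\geq s$ on the latter.

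To produce the witness required by Definition \ref{Definition: hyperim IP_n}, I would fix an $\omega$-enumeration $(g_{j,i})_{i<\omega}$ of the first $\omega$ elements of each $P_j$ ($1\leq j\leq n$) and set $a_{j-1,i}:=a_{g_{j,i}}$. Given a finite $w\subseteq\omega^n$, the extension axioms of the Fraïssé limit $G_{n+1,p}$ (the random ordered $(n+1)$-partite $(n+1)$-uniform hypergraph) provide some $g_0\in P_0$ such that $R(g_0,g_{1,i_0},\dots,g_{n,i_{n-1}})$ holds if and only if $(i_0,\dots,i_{n-1})\in w$ for the finitely many relevant tuples; then $b_w:=a_{g_0}/E\in X/E$ realizes $p$ on indices in $w$ and $q$ on indices outside $w$, as required.

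The statement is essentially a direct translation between the two points of view on $IP_n$, so no serious technical obstacle is expected. The only point requiring a little care is the bookkeeping that matches the $(n+1)$-partite structure $P_0,\dots,P_n$ with the variable layout $(x,y_0,\dots,y_{n-1})$ of $f$, and the invocation of the randomness of $G_{n+1,p}$ to realize an arbitrary finite $w$; both are standard once the encoding result (Proposition \ref{Hyperim: IP_n iff encoding partite}) has been set up.
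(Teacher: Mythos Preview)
Your approach is essentially the paper's: the forward direction via Proposition~\ref{proposition: F_X/E separates points} and the reverse via Proposition~\ref{Hyperim: IP_n iff encoding partite} plus randomness of $G_{n+1,p}$ are exactly what the paper does.

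There is one genuine gap in your reverse direction. The extension axioms of $G_{n+1,p}$ only control \emph{finitely many} edge relations at once, so for a given finite $w\subseteq\omega^n$ they yield $g_0\in P_0$ whose edge pattern agrees with $w$ on a prescribed finite set of tuples---but Definition~\ref{Definition: hyperim IP_n} requires $b_w$ to realize $q$ on \emph{every} tuple outside $w$, which is an infinite family of conditions. Your sentence ``$R(\dots)$ holds if and only if $(i_0,\dots,i_{n-1})\in w$ for the finitely many relevant tuples; then $b_w:=a_{g_0}/E$ realizes $p$ on indices in $w$ and $q$ on indices outside $w$'' jumps from finitary control to the full iff without justification. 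The paper is explicit here: it works with finite disjoint $s_0,s_1\subseteq\omega^n$ and one-sided implications only, obtains the types $p,q$ from $G_{n+1,p}$-indiscernibility, and then invokes compactness to produce the $b_w$ required by the definition. (The paper's commented-out remark even notes that a first draft fell into precisely this trap.) A routine compactness argument closes the gap in your write-up.
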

\begin{proof}
    Assume that $X/E$ has $IP_n$. Take witnesses $p$ and $q$ from Definition \ref{Definition: hyperim IP_n}. Then, by Fact \ref{diff type diff f value}, there exists $f(x,y_0,\dots,y_{n-1})\in\mathcal{F}_{X/E}$ and $r<s$ such that $f(x,y_0,\dots,y_{n-1})\leq r \in p $ and $f(x,y_0,\dots,y_{n-1})\geq s \in q$. The elements witnessing $IP_n$ for $X/E$ also  witness that $f(x,y_0,\dots,y_{n-1})$ has $IP_n$.

 Assume now that some $f(x,y_0,\dots,y_{n-1})\in\mathcal{F}_{X/E}$ has $IP_n$. By Proposition \ref{Hyperim: IP_n iff encoding partite}, the function $f$ encodes $G_{n+1,p}$ as a partite hypergraph witnessed by a $G_{n+1,p}$-indiscernible sequence $(a_g)_{g\in G_{n+1,p}}$ and some $r<s\in \R$. By enumerating each part $P_i(G_{n+1,p})$ of the partition, we write $$G_{n+1,p}=\{ (j,m): j\leq n; m<\omega \},$$ where the first element of the tuple indicates the part of the partition and the second one the index in the enumeration. Then, since $G_{n+1,p}$ is the random partite hypergraph, for any finite disjoints $$w_0,w_1\subseteq \prod_{i=1}^n P_i(G_{n+1,p}),$$ we can find $g\in P_0(G_{n+1,p})$ such that there is an edge containing $g$ and any element of $w_0$ and there is no edge containing $g$ and any element of $w_1$. This implies that for any  finite disjoint $s_0,s_1\subseteq \omega^n$ we can find $b_{s_0,s_1}=a_g\in X$ for some $g\in P_0(G_{n+1,p})$ such that
    \begin{align*}
        (i_1,\dots, i_{n})\in s_0\implies f(b_{s_0,s_1}, a_{1,i_1},\dots, a_{n,i_{n}}  )\leq r,\\
       (i_1,\dots, i_{n})\in s_1 \implies f(b_{s_0,s_1}, a_{1,i_1},\dots, a_{n,i_{n}}  )\geq s .
    \end{align*}
    Moreover, by $G_{n+1,p}$-indiscernibility, there exist two distinct complete types $p,q\in S_{X/E\times \C^m}(\emptyset)$ such that %for any finite disjoint $s_0,s_1 \subseteq \omega^n$, we can find $b_{s_0,s_1}\in X/E$ satisfying
    \begin{align*}
        (i_1,\dots, i_{n})\in s_0 \implies \tp(b_{s_0,s_1}/E, a_{1,i_1},\dots, a_{n,i_{n}}  )=p; \\
        (i_1,\dots, i_{n})\in s_1 \implies \tp(b_{s_0,s_1}/E, a_{1,i_1},\dots, a_{n,i_{n}}  )=q.
    \end{align*}

\begin{comment}
    I modified the proof, this was the original. the problem is that we cannot take two finite disjoint subsets because we are working with iff's, we need to cover the entire \omega^n with every choice.

    Assume now that some formula $f(x,y_0,\dots,y_{n-1})\in\mathcal{F}_{X/E}$ has $IP_n$. By Proposition \ref{Hyperim: IP_n iff encoding partite}, the formula $f$ codes $G_{n+1,p}$ witnessed by a $G_{n+1,p}$-indiscernible sequence $(a_g)_{g\in G_{n+1,p}}$. We write $$G_{n+1,p}=\{ (j,m): j\leq n; m<\omega \}.$$ Then, by randomness of $G_{n+1,p}$, for any disjoint finite $s_0,s_1 \subseteq \omega^n$ we can find $b_{s_0,s_1}\in X/E$ such that
    \begin{align*}
        f(b_{s_0,s_1}, a_{1,i_1},\dots, a_{n,i_{n-1}}  )\leq r \iff (i_1,\dots, i_{n})\in s_0; \\
        f(b_{s_0,s_1}, a_{1,i_1},\dots, a_{n,i_{n-1}}  )\geq s \iff (i_1,\dots, i_{n})\in s_1.
    \end{align*}
    Moreover, by $G_{n+1,p}$-indiscernibility, there exist two distinct complete types $p,q\in S_{X/E\times \C^m}(\emptyset)$ such that %for any finite disjoint $s_0,s_1 \subseteq \omega^n$, we can find $b_{s_0,s_1}\in X/E$ satisfying
    \begin{align*}
        \tp(b_{s_0,s_1}, a_{1,i_1},\dots, a_{n,i_{n}}  )=p \iff (i_1,\dots, i_{n})\in s_0; \\
        \tp(b_{s_0,s_1}, a_{1,i_1},\dots, a_{n,i_{n}}  )=q \iff (i_1,\dots, i_{n})\in s_1.
    \end{align*}
\end{comment}
    By compactness, it follows that $X/E$ has $IP_n$.
    %One may prove the right to left implication without using Proposition \ref{Hyperim: IP_n iff encoding partite} by instead showing that the sequence $(a_{0,i},\dots, a_{n-1,i})_{i< \omega}$ in the definition of $IP_n$ for $f$ might be taken $\mL^n_{op}$-indiscernible.
    %%%Adrian: This is done by defining IP_n as 'certain types are satisfiable' instead of saying that 'there exists b_w such that ...' and the using $\mL_op$-indiscernibility + compactness
\end{proof}

 \begin{notation}\label{notation PSI}
     For each $f(x,y_0,\dots, y_n)\in \mathcal{F}_{X/E}$, let $f'(y_0,\dots, y_n,x):=f(x,y_0,\dots, y_n)$. We denote by $\Psi^{n+1}_{ \mathcal{F}_{X/E} }$ the set containing all functions $$ \psi_{f'}(y^0_0y^1_0\cdots y^{n-1}_0x_0,\dots,y^0_ny^1_n\cdots y^{n-1}_nx_n)=\min_{\sigma\in Sym(n)} \{ f'(y^0_{\sigma(0)},\dots ,y^{n-1}_{\sigma({n-1})},x_{\sigma(n)}) \}$$ for $f(x,y_0,\dots, y_n)\in \mathcal{F}_{X/E}$ (here $\psi_{f'}$ is constructed as in Notation \ref{notation psi_f}). $\Psi_{ \mathcal{F}_{X/E} }$ is the union of all $\Psi^{n+1}_{ \mathcal{F}_{X/E} }$ for $n<\omega$.
 \end{notation}

 The next definition is the natural counterpart of Definition \ref{def: encoding nonpartite} for functions of the family $\Psi_{ \mathcal{F}_{X/E} }$.

% Let $f(x,y_0,\dots, y_n)\in \mathcal{F}_{X/E}$. We denote by $\psi_f$ the formula constructed using $f$ as in Proposition \ref{IP_n iff coding nonpartite}. Then, the set $\Psi^{n+1}_{ \mathcal{F}_{X/E} }$ is the set containing all formulas $\psi_f$ for $f(x,y_0,\dots, y_n)\in \mathcal{F}_{X/E}$. The next definition is the natural counterpart of Definition \ref{def: encoding nonpartite} for formulas of the family $\Psi_{ \mathcal{F}_{X/E} }$.\color{black}

\begin{defin}
    We say that $\psi(x_0,\dots,x_{n-1})\in \Psi^{n}_{ \mathcal{F}_{X/E} } $ \emph{encodes an $n$-uniform hypergraph $(G,R)$} if there is a $G$-indexed sequence $(a_g)_{g\in G}$ in $\C^m\times X$ (for some fixed $m<\omega$)  and $r<s\in\mathbb{R}$ satisfying 
    \begin{align*}
        \psi( a_{g_0},\dots, a_{g_{n-1}}  )\leq r &\iff  R(g_0,\dots, g_{n-1})\\
        &and\\
        \psi(a_{g_0},\dots, a_{g_{n-1}}  )\geq s &\iff \neg R(g_0,\dots, g_{n-1})
    \end{align*}
    for all 
    %pairwise distinct
    $g_0,\dots,g_{n-1}\in G$.
    We say that $\psi(x_0,\dots,x_{n-1})$ \emph{encodes $n$-uniform hypergraphs} if there exist $r<s\in\mathbb{R}$ such that $\psi(x_0,\dots,x_{n-1})$ encodes every finite $n$-uniform hypergraph using the same $r$ and $s$.
\end{defin}

As in the general continuous case, we have an equivalence between $IP_n$ of the hyperdefinable set $X/E$ and the existence of some function coding $(n+1)$-uniform hypergraphs.

\begin{prop}\label{Hyperim: IP_n iff coding nonpartite} %Let $T$ be a complete continuous logic theory. 
The following are equivalent:
\begin{enumerate}
    \item $X/E$ has $IP_n$.
    \item There is a function in $\Psi_{\mathcal{F}_{X/E}}$ encoding $(n+1)$-uniform hypergraphs.
    \item There is a function in $\Psi_{\mathcal{F}_{X/E}}$ encoding $G_{n+1}$ as a hypergraph.
    \item There is a function in $\Psi_{\mathcal{F}_{X/E}}$ encoding $G_{n+1}$ as a hypergraph witnessed by a $G_{n+1}$-indiscernible sequence.
\end{enumerate}
\end{prop}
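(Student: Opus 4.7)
The plan is to mirror the proof of Proposition \ref{IP_n iff coding nonpartite} but work throughout with functions in $\mathcal{F}_{X/E}$ and $\Psi_{\mathcal{F}_{X/E}}$, using Lemma \ref{IP_n hyperdef. set iif formulas} as the bridge between the hyperdefinable notion of $IP_n$ and the formula-level notion of Definition \ref{defin: n-dep function/E}. The key observation is that the whole continuous-logic argument goes through unchanged as soon as the functions involved respect the distinguished sort $X$ in the last coordinate, and this is exactly what the definitions of $\mathcal{F}_{X/E}$ and $\Psi^{n+1}_{\mathcal{F}_{X/E}}$ are designed to preserve under the operations (permutation of variables, $\min_{\sigma}$, adding dummy variables) that appear in Section \ref{section:n-dep}.

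For $(1)\Rightarrow(2)$ I would apply Lemma \ref{IP_n hyperdef. set iif formulas} to obtain some $f(x,y_0,\dots,y_{n-1})\in \mathcal{F}_{X/E}$ with $IP_n$, form the associated $\psi_{f'}\in \Psi^{n+1}_{\mathcal{F}_{X/E}}$ as in Notation \ref{notation PSI}, and then repeat verbatim the combinatorial construction from $(1)\Rightarrow(2)$ of Proposition \ref{IP_n iff coding nonpartite}. Proposition \ref{Hyperim: IP_n iff encoding partite} supplies a $G_{n+1,p}$-indiscernible witness $(a_g)_{g\in G_{n+1,p}}$ for $f$ with parameters $r<s$, and one then mimics the definition of the function $c$ and the copy $\widetilde{\mathcal H}$ to produce witnesses $b_{\tilde h_i}=(a_{g^0_i},\dots,a_{g^{n-1}_i},a_{g^n_{c(i)}})$. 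The last coordinate of each $b_{\tilde h_i}$ lies in $X$ and the preceding coordinates in the appropriate $\C^{m_j}$-sorts, so $\psi_{f'}$ encodes every finite $(n{+}1)$-uniform hypergraph using those fixed $r<s$, as required.

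Step $(2)\Rightarrow(3)$ is a straightforward compactness argument. For $(3)\Rightarrow(4)$ I would invoke Theorem \ref{CMP iff Ramsey} together with Fact \ref{fact: ordered hypergraphs are ramsey}: starting from a witnessing $G_{n+1}$-indexed sequence in $\C^m\times X$, the continuous modelling property yields a $G_{n+1}$-indiscernible sequence locally based on it. Because the predicates cutting out $X$ and the sorts $\C^{m_j}$ are $\emptyset$-type-definable and the local basis respects each continuous formula up to $\varepsilon$, the new sequence again lies in $\C^m\times X$ and still encodes $G_{n+1}$ with the same pair $r<s$ (by a standard $\varepsilon/2$ argument). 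For $(4)\Rightarrow(1)$ I would first pick $n{+}1$ pairwise disjoint infinite subsets of $G_{n+1}$ and treat them as the parts of a partite hypergraph to see that $\psi_{f'}$ encodes $G_{n+1,p}$ as a partite hypergraph. Then I would run the colouring argument from $(2)\Rightarrow(1)$ of Lemma \ref{IP_n f and psi}: colour each edge of $G_{n+1,p}$ by the (least) $\sigma\in\mathrm{Sym}(n)$ realising the minimum in $\psi_{f'}$ at that edge, and apply ERP for ordered partite $(n{+}1)$-uniform hypergraphs plus compactness to conclude that some $f'_\sigma$ encodes $G_{n+1,p}$ as a partite hypergraph. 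By Proposition \ref{Hyperim: IP_n iff encoding partite}, $f'_\sigma$ has $IP_n$ (in the sense of Definition \ref{defin: n-dep function/E}); since $f'_\sigma$ is a variable-permutation of $f$, so does $f$, and then Lemma \ref{IP_n hyperdef. set iif formulas} forces $X/E$ to have $IP_n$.

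The main conceptual obstacle is not in the hypergraph-Ramsey combinatorics, which are imported from Section \ref{section:n-dep}, but in verifying that every construction keeps us inside $\mathcal{F}_{X/E}$ and $\Psi_{\mathcal{F}_{X/E}}$ (so that the ``$X$-coordinate'' is preserved) and in checking that the modelling-property extraction in $(3)\Rightarrow(4)$ respects the type-definable subset $X$. Both points are routine but need to be made explicit, since encoding in the hyperdefinable context is restricted to tuples whose distinguished coordinate is genuinely in $X$, not merely in the ambient $\C^\lambda$.
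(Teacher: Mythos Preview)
Your proposal is correct and tracks the paper closely through $(1)\Rightarrow(2)\Rightarrow(3)\Rightarrow(4)$. The one place where you diverge is $(4)\Rightarrow(1)$, and there the paper's route is a bit cleaner and your sketch has a small slip worth flagging.

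The paper does \emph{not} first pass to a copy of $G_{n+1,p}$ inside $G_{n+1}$; it colours the edges of $G_{n+1}$ itself by the least $\sigma$ realising the minimum in $\psi_{f'}$, uses ERP for $\age(G_{n+1})$ to find a monochromatic copy of an arbitrary finite ordered partite $(n{+}1)$-hypergraph $H$ (viewed as a non-partite hypergraph), and then reads off witnesses directly for $f'$ (not $f'_\sigma$): for an edge $h_0<\dots<h_n$ of that copy one has $f'(a^0_{h_{\sigma(0)}},\dots,a^{n-1}_{h_{\sigma(n-1)}},a^n_{h_{\sigma(n)}})\le r$, and since the last coordinate $a^n_{h_{\sigma(n)}}$ of every $b_g$ lies in $X$, this already shows $f'\in\mathcal{F}_{X/E}$ encodes $H$ as a partite hypergraph. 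Compactness then gives that $f'$ encodes $G_{n+1,p}$, so Proposition~\ref{Hyperim: IP_n iff encoding partite} applies to $f'$ itself.

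Your variant via $G_{n+1,p}$ also works, but the sentence ``$f'_\sigma$ is a variable-permutation of $f$, so $f$ has $IP_n$'' is not literally correct and Proposition~\ref{Hyperim: IP_n iff encoding partite} does not apply to $f'_\sigma$ (which is a function of $n{+}1$ big variables in $\C^m\times X$, not a member of $\mathcal{F}_{X/E}$). Once the dummy coordinates are stripped, what remains is $f'$ itself with its genuine arguments drawn from the parts $P_{\sigma(0)},\dots,P_{\sigma(n)}$; to finish you still need to relabel parts (using that any permutation of the parts of $G_{n+1,p}$ is induced by a hypergraph automorphism) so that the $x$-slot sits over $P_0$, and only then invoke Proposition~\ref{Hyperim: IP_n iff encoding partite}. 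Also, ``pick $n{+}1$ pairwise disjoint infinite subsets'' should read ``choose an embedded copy of $G_{n+1,p}$ inside $G_{n+1}$''. With these adjustments your argument is complete; the paper's direct colouring of $G_{n+1}$ simply avoids the detour and the relabelling step.
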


\begin{proof}
    $(1)\implies (2)$. By Lemma \ref{IP_n hyperdef. set iif formulas} and Proposition \ref{Hyperim: IP_n iff coding nonpartite}, there exists a function $f(x,y_0,\dots,y_{n-1})\in\mathcal{F}_{X/E}$ encoding $G_{n+1,p}$ as a partite hypergraph. Consider the function $f'(y_0,\dots,y_{n-1},x):=f(x,y_0,\dots,y_{n-1})$, following the proof of Proposition \ref{IP_n iff coding nonpartite}, we see that $\psi_{f'}\in \Psi_{\mathcal{F}_{X/E}}$ encodes $G_{n+1}$. 
    
    $(2)\implies (3)$ follows by compactness.
    
    $(3) \implies (4)$. Follows from the proof of $(3)\implies (4)$ of Proposition \ref{IP_n iff coding nonpartite}.%\ref{IP_n f and psi}.
    %Let $\II=(a_g)_{g\in G_{n+1}}$ witness that $\psi_f(x_0,x_1,\dots,x_n)$ encodes $G_{n+1}$ as a hypergraph. Since $(n+1)$-uniform hypergraphs are a Ramsey class, there exists a sequence $(b_g)_{g\in G_{n+1}}$ locally based on $\II$. It is easy to see that $(b_g)_{g\in G_{n+1}}$ also witnesses that $\psi_f(x_0,x_1,\dots,x_n)$ encodes $G_{n+1}$ as a hypergraph.
    
    %follows from the fact that we can extract a $G_{n+1}$-indiscernible sequence locally based in the $G_{n+1}$-indexed sequence witnessing that the formula $\psi_f$ codes $G_{n+1}$.

    $(4)\implies (1)$ We slightly modify the proof of Lemma \ref{IP_n f and psi}. Let $(b_g)_{g\in G_{n+1}}$ be a witness that $\psi_f$ encodes $G_{n+1}$ as a hypergraph. Note that for every $g\in G_{n+1}$, $b_g=(a^0_g,\dots, a^n_g)$ with $a^n_g\in X$. Fix a linear ordering of $Sym(n)$ and color the edges of $G_{n+1}$ according to the first $\sigma$ such that $f(a^0_{g_{\sigma(0)}},\dots,a^n_{g_{\sigma(n)}})\leq r$ whenever $\psi_f(b_{g_0},\dots,b_{g_n})\leq r$. Let $H=\{ h^i_m: i\leq n, m\leq k \}$ be any finite ordered $(n+1)$-partite $(n+1)$-uniform hypergraph. Since $\age(G_{n+1})$ has ERP, we can find a monochromatic isomorphic copy of $H$ inside $G_{n+1}$ (as a non partite hypergraph). This implies that the function $f(y_0,\dots,y_{n-1},x)$ encodes $H$ as a partite hypergraph, witnessed by the elements $\{ a^i_{h^{\sigma(i)}_m}: i\leq n, m\leq k \},$ where $\sigma$ is the color of the monochromatic copy of $H$. By compactness, $f'(x,y_0,\dots,y_{n-1}):=f(y_0,\dots,y_{n-1},x)$ encodes $G_{n+1,p}$ as a partite hypergraph. Therefore, since the function $f'(x,y_0,\dots,y_{n-1})$ is in $\mathcal{F}_{X/E}$, by Proposition \ref{Hyperim: IP_n iff encoding partite}, $X/E$ has $IP_n$.
    %Since each finite $(n+1)$-partite $(n+1)$-uniform hypergraph is encoded by some $f_\sigma$, by compactness there is $f_\sigma$ encoding $G_{n+1,p}$. Thus $f_\sigma$ has $IP_n$ and so, by the previous paragraph, we conclude that $f(y_0,\dots,y_{n-1},x)$ has $IP_n$.
\end{proof}

We finish the section with a characterization of $n$-dependent hyperdefinable sets analogous to the one in Theorem \ref{n-dep and collapsing}. Recall the definition of the family $\Psi^{n+1}_{\mathcal{F}_{X/E}}$ from Notation \ref{notation PSI}.

%%%Adrian: I don't know if I should repeat the red part in the theorem since I already stated several tymes what the set $\Psi^{n+1}_{\mathcal{F}_{X/E}}$ is.
\begin{teor}
The following are equivalent:
    \begin{enumerate}
        \item $X/E$ is $n$-dependent.
        \item Every $G_{n+1,p}$-indiscernible $(a_g)_{g\in G_{n+1,p}}$ where for every $g\in P_0(G_{n+1,p})$ we have $a_g\in X/E$, is $\mL_{op}$-indiscernible.
        \item For every $m\in \mathbb{N}$, every $G_{n+1}$-indiscernible with respect to $\Psi^{n+1}_{\mathcal{F}_{X/E}}$ sequence of elements of $\C^m\times X$  is order indiscernible with respect to $ \Psi^{n+1}_{\mathcal{F}_{X/E}}$.
    \end{enumerate}
\end{teor}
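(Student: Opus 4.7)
The plan is to adapt the proof of Theorem \ref{n-dep and collapsing} to the hyperimaginary setting, using the translation between hyperdefinable $IP_n$ and the existence of functions in $\mathcal{F}_{X/E}$ or $\Psi^{n+1}_{\mathcal{F}_{X/E}}$ with $IP_n$ established in Lemma \ref{IP_n hyperdef. set iif formulas} and Proposition \ref{Hyperim: IP_n iff coding nonpartite}. The two nontrivial implications (1)$\Leftrightarrow$(2) and (1)$\Leftrightarrow$(3) will be handled symmetrically, one using partite and the other using nonpartite hypergraphs, so the argument essentially splits into ``collapse of partite indiscernibles $\Leftrightarrow$ no $f \in \mathcal{F}_{X/E}$ encodes $G_{n+1,p}$'' and ``collapse of $\Psi^{n+1}_{\mathcal{F}_{X/E}}$-indiscernibles over $\C^m \times X$ $\Leftrightarrow$ no $\psi \in \Psi^{n+1}_{\mathcal{F}_{X/E}}$ encodes $G_{n+1}$.''

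For (1)$\Rightarrow$(2), I will argue by contrapositive. Suppose $(a_g)_{g \in G_{n+1,p}}$ is $G_{n+1,p}$-indiscernible with $a_g \in X/E$ for $g \in P_0(G_{n+1,p})$, but fails to be $\mL_{op}$-indiscernible. By Fact \ref{diff type diff f value}, the type difference between two $\mL_{op}$-isomorphic finite tuples is witnessed by some function $f \in \mathcal{F}_{X/E}$ at some level $r<s$. Using Fact \ref{ Sequence of adjacent graphs}, I reduce to the case in which the two tuples $W,W'$ are $V$-adjacent for some $V$, so that one contains an edge $(g_0,\ldots,g_n)$ with $R$ and the other contains $(g_0',\ldots,g_n')$ without $R$, while everything else coincides. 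Then Fact \ref{Isomorphic copy of random partite hypergraph} provides infinite $X_0 < \cdots < X_n$ inside $G_{n+1,p}$ on which the behaviour of $R$ is completely controlled by whether the relevant tuple is $\mL_{opg}$-isomorphic to $W$ or $V$-adjacent to $W'$. Combining this with a permutation of the variables of $f$ (Corollary \ref{Corolary: preservation under permutation of variables} and Remark \ref{naming parameters and dummy variables}) produces a function in $\mathcal{F}_{X/E}$ (after absorbing $(a_g)_{g \in V}$ as parameters) encoding $G_{n+1,p}$ as a partite hypergraph. By Proposition \ref{Hyperim: IP_n iff encoding partite} this function has $IP_n$, and by Lemma \ref{IP_n hyperdef. set iif formulas}, $X/E$ has $IP_n$.

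For (2)$\Rightarrow$(1), again by contrapositive: if $X/E$ has $IP_n$, then by Lemma \ref{IP_n hyperdef. set iif formulas} and Proposition \ref{Hyperim: IP_n iff encoding partite}, some $f \in \mathcal{F}_{X/E}$ encodes $G_{n+1,p}$ as a partite hypergraph, witnessed by a $G_{n+1,p}$-indiscernible sequence $(a_g)_{g \in G_{n+1,p}}$ with $a_g \in X/E$ for $g \in P_0(G_{n+1,p})$. Since the value of $f$ on an $\mL_{op}$-isomorphic copy of a tuple will depend on whether $R$ holds, this sequence cannot be $\mL_{op}$-indiscernible. The equivalence (1)$\Leftrightarrow$(3) will go through in the same way, but working with $\Psi^{n+1}_{\mathcal{F}_{X/E}}$ and Proposition \ref{Hyperim: IP_n iff coding nonpartite} in place of $\mathcal{F}_{X/E}$ and Proposition \ref{Hyperim: IP_n iff encoding partite}, and replacing $(G_{n+1,p}, \mL_{op}, \mL_{opg})$ by $(G_{n+1}, \{<\}, \{<,R\})$; note that because $\Psi^{n+1}_{\mathcal{F}_{X/E}}$ consists of symmetric functions, this is precisely the setting where the nonpartite Ramsey structure is the right one.

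The main obstacle I foresee is bookkeeping in the (1)$\Rightarrow$(2) direction, specifically arranging the permutation of variables and the absorption of $(a_g)_{g\in V}$ as parameters so that the resulting function still lies in $\mathcal{F}_{X/E}$ with the distinguished $X$-variable in the correct slot. This is exactly where the analogue in the purely continuous setting used Corollary \ref{Corolary: preservation under permutation of variables} and Remark \ref{naming parameters and dummy variables}; here one has to be careful that the $X/E$-slot is preserved under the permutation, which is why the statement of (3) must be phrased in terms of indiscernibility with respect to $\Psi^{n+1}_{\mathcal{F}_{X/E}}$ rather than the larger family $\mathcal{F}_{(\C^m \times X/E)^{n+1}}$, as already noted after the statement and illustrated by Example \ref{example optimal}.
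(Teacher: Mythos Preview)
Your approach is correct and matches the paper's proof closely. Two points deserve comment.

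First, the obstacle you flag for (1)$\Rightarrow$(2) is exactly the subtlety the paper addresses explicitly: after absorbing the parameters $A=(a'_g)_{g\in V}$ into the $z$-variables as in Remark \ref{naming parameters and dummy variables}(1), the resulting function may take values on infinite tuples (since $V$ can meet $P_0(G_{n+1,p})$, contributing components from $X\subseteq\C^\lambda$), and so need not literally lie in $\mathcal{F}_{X/E}$. The paper's resolution is that this function is nevertheless a uniform limit of functions from $\mathcal{F}_{X/E}$, so one can pick a nearby $g'\in\mathcal{F}_{X/E}$ that still witnesses $IP_n$. You identify the issue but stop short of stating this approximation step; it is the one genuinely new ingredient beyond the continuous-logic proof.

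Second, for (1)$\Rightarrow$(3) the paper's argument is simpler than the route you sketch. Since every $\psi_f\in\Psi^{n+1}_{\mathcal{F}_{X/E}}$ has exactly $n+1$ (block) variables, any failure of order-indiscernibility is already witnessed by two $(n+1)$-element subsets $W,W'\subset G_{n+1}$, so in the adjacency reduction one necessarily has $V=\emptyset$. Combined with the symmetry of $\psi_f$ and $G_{n+1}$-indiscernibility, this immediately gives that $\psi_f$ encodes all of $G_{n+1}$, and Proposition \ref{Hyperim: IP_n iff coding nonpartite} finishes. There is no need to invoke Fact \ref{Isomorphic copy of random partite hypergraph} or to unwind back through a partite copy; your route would work, but it is more circuitous.
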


\begin{proof}
            $ (1)\implies (2)$. Let $(a_g)_{g\in G_{n+1,p}}$ be a $G_{n+1,p}$-indiscernible sequence, where for every $g\in P_0(G_{n+1,p})$ we have $a_g\in X/E$, which is not $\mL_{op}$-indiscernible and let $(a'_g)_{g\in G_{n+1,p}}$ be a sequence of representatives (note that we are only choosing representatives for $a_g$ where $g\in P_0(G_{n+1,p})$). Then, by Lemma \ref{diff type diff f value}, there are $\mL_{op}$-isomorphic $W,W'\subset G_{n+1}$ substructures of size $m$, a function $f(x_0,\dots,x_{m-1})$ and $r<s\in \mathbb{R}$ such that $f( (a'_g)_{g\in W}) \leq r$ and $f( (a'_g)_{g\in W'})\geq s$ (where the elements $a'_g$ are substituted for the variables $x_0,\dots,x_{m-1}$ according to the ordering on $W$ and $W'$). Without loss of generality, by Fact \ref{ Sequence of adjacent graphs}, we may assume that $W$ is $V$-adjacent to $W'$ for some subset $V$ such that $W=g_0\dots g_n V$, $W'=g'_0\dots g'_n V$, $R(g_0,\dots g_n)$ and $\neg R(g'_0,\dots g'_n)$. 
             As in the proof of $(1) \implies (2)$ of Theorem \ref{n-dep and collapsing}, we apply Fact \ref{Isomorphic copy of random partite hypergraph} to $V$ and $g_0,\dots,g_n$. This yields $G'\subseteq G_{n+1,p}$ such that for every $(h_0,\dots,h_n)\in \prod_{i\leq n} P_i(G')$ $$ R(h_{0},\dots,h_{n}) \iff h_0\dots h_n V\cong_{\mL_{opg}}W $$ and $$ \neg R(h_0,\dots,h_n) \iff h_0\dots h_nV\cong_{\mL_{opg}}W'.$$ Recall that the sequence $(a_g)_{g\in G_{n+1,p}}$ is $G_{n+1,p}$-indiscernible and let $f'$ be the formula defined by permuting the variables $(x_0,\dots,x_{m-1})$ of $f$ in such a way that the first $n+1$-variables are the ones corresponding to $h_0,\dots,h_n$ according to the ordering on the set $\{h_0,\dots,h_n\}\cup V$. Then,
            $$ f'(a_{h_0},\dots, a_{h_n},A)\leq r \iff R(h_{0},\dots,h_{n})$$
            and
            $$f'(a_{h_0},\dots, a_{h_n},A)\geq s \iff \neg R(h_{0},\dots,h_{n}),$$ where $A=(a_g)_{g\in V}$. Since $G'$ is isomorphic to $G_{n+1,p}$, by Proposition \ref{IP_n iff encoding partite}, the formula $f'(x,y_{0},\dots, y_{n-1},A)$ has $IP_n$, where $A=(a'_g)_{g\in V}$. The tuple $A$ is contained in some product (with repetition) of $X$ and $\C$. Thus, when performing the change of variables done in Remark \ref{naming parameters and dummy variables} (1) we might end with a function $g(x,z_1,\dots,z_n)$ with infinite tuples of variables, each of which corresponds to tuples of elements from $\C$ and $X$.  However, since this new function $g(x,z_1\dots,z_n)$ is the uniform limit of functions from the family $\mathcal{F}_{X/E}$, we might find a suitable formula $g'\in \mathcal{F}_{X/E}$ witnessing $IP_n$. 
            
            %Following as in the proof of $(1) \implies (2)$ of Theorem \ref{n-dep and collapsing}, we arrive to the conclusion that $f'(x,y_0,\dots,y_n,A)$ has $IP_n$, where $A=(a'_g)_{g\in V}$. The tuple $A$ is contained in some product (with repetition) of $X$ and $\C$. Hence, the function $g$ obtained at the end of the proof of this implication in Theorem \ref{n-dep and collapsing} possibly has several infinite tuples of variables each of which corresponds to $X$. Thus, when performing the change of variables done in Remark \ref{naming parameters and dummy variables} (1) we might end with infinite tuples of variables. However, since this new function $g(x,z_1\dots,z_n)$ is the uniform limit of functions from the family $\mathcal{F}_{X/E}$, we might find a suitable formula $g'\in \mathcal{F}_{X/E}$ witnessing $IP_n$.
            
            $(1)\implies (3)$. %TERMINAR HACER ESTA DEMOSTRACION.  
            Let $(a_g)_{g\in G_{n+1}}$ be a $G_{n+1}$-indiscernible with respect to $\Psi^{n+1}_{\mathcal{F}_{X/E}}$ sequence which is not order indiscernible with respect to $\Psi^{n+1}_{\mathcal{F}_{X/E}}$. Then there are $W,W'\subset G_{n+1}$ subsets of size $n+1$, a function $\psi_f(x_0,\dots,x_{n})$ and $r<s\in \mathbb{R}$ such that $\psi_f( (a_g)_{g\in W}) \leq r$ and $\psi_f( (a_g)_{g\in W'})\geq s$. By Fact \ref{ Sequence of adjacent graphs} and the fact that $G_{n+1}$ is self-complementary, %we may assume that $W$ is $V$-adjacent to $W'$ for some subset $V$ such that $W=g_0\dots g_n V$, $W'=g'_0\dots g'_n V$, $R(g_0,\dots g_n)$ and $\neg R(g'_0,\dots g'_n)$. In fact, $V=\emptyset$.
            we may assume $W=g_0\dots g_n$, $W'=g'_0\dots g'_n$, $R(g_0,\dots g_n)$ and $\neg R(g'_0,\dots g'_n)$.

            %Now we apply the fact \ref{Isomorphic copy of random partite hypergraph} to $V=\emptyset$ and $g_0,\dots,g_n$. This yields $G'\subseteq G_{n+1,p}$ such that for every $i\in \{0,n\}$ and $h_i\in P_i(G')$ $$ R(h_{0},\dots,h_{n}) \iff h_0\dots h_n \cong_{\mL_{opg}}W $$ and $$ \neg R(h_0,\dots,h_n) \iff h_0\dots h_n\cong_{\mL_{opg}}W'.$$ 
            By $G_{n+1}$-indiscernibility of $(a_g)_{g\in G_{n+1}}$ with respect to  $\Psi^{n+1}_{\mathcal{F}_{X/E}}$ and by symmetry of the relation $R$ and $\psi_f$, this implies that 
            $$ \psi_f(a_{h_0},\dots, a_{h_n})\leq r \iff R(h_{0},\dots,h_{n})$$
            and
            $$\psi_f(a_{h_0},\dots, a_{h_n})\geq s \iff \neg R(h_{0},\dots,h_{n}).$$ By Proposition \ref{Hyperim: IP_n iff coding nonpartite}, the set $X/E$ has $IP_n$.

            $(2)\implies(1)$ It follows from Proposition \ref{Hyperim: IP_n iff encoding partite}. If the function $f\in \mathcal{F}_{X/E}$ encodes $G_{n+1,p}$ witnessed by a $G_{n+1,p}$-indiscernible sequence $(a_g)_{g\in G_{n+1,p}}$, then $(a_g)_{g\in G_{n+1,p}}$ cannot be $\mL_{op}$-indiscernible.
            
            $(3)\implies(1)$ It follows from Proposition \ref{Hyperim: IP_n iff coding nonpartite}. If $T$ has $IP_n$, there is a function $\psi_f\in \Psi^{n+1}_{\mathcal{F}_{X/E}}$ encoding $G_{n+1}$ witnessed by a $G_{n+1}$-indiscernible sequence $(a_g)_{g\in G_{n+1}}$. Then, $(a_g)_{g\in G_{n+1}}$ cannot be order-indiscernible with respect to $ \Psi^{n+1}_{\mathcal{F}_{X/E}}$.
\end{proof}
Note that the results of this section easily generalise to deal with $n$-dependence of imaginary sorts in continuous logic. We finish the chapter with an example illustrating that, in general, the theorem above is optimal. Namely, for an $n$-dependent hyperdefinable set $X/E$ and $n'>n$ there might be a $G_{n+1}$-indiscernible sequence of elements of $\C^m\times X$ for some $m<\omega$ which are not order indiscernible with respect to $\Psi^{n'+1}_{\mathcal{F}_{X/E}}$ or with respect to more general families of functions from $\mathcal{F}_{(\C^m \times X/E)^{n+1}}$.
\begin{example}\label{example optimal}
    Let $\mathcal{N}$ be a monster model of a NIP theory and $\mathcal{R}$ a monster model of the theory of random ordered graphs. We consider the structure $\mathcal{N}\sqcup \mathcal{R}$ i.e. the structure with disjoint sorts for $\mathcal{N}$ and $\mathcal{R}$ with no interaction between the sorts. Let $X=\mathcal{N}$ and $E$ be the equality relation. Clearly, $X/E$ has NIP.

    \begin{claim}
        Let $n\geq 1$. The sequence $(a_g)_{g\in G_2}:=(\overbrace{g,\dots,g}^m,n_0)_{g\in G_2}$, where $n_0$ is a fixed element of $\mathcal{N}$,  is $G_2$-indiscernible but it is not order indiscernible with respect to the family of formulas $f(x_0,\dots,x_n)$ where each $x_i$ is a tuple $(x_i^0,\dots,x_i^m)$ of variables of length $m+1$ whose last coordinate corresponds to $X$.
    \end{claim}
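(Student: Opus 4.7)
The plan is to verify the two assertions in turn, deriving $G_2$-indiscernibility from quantifier elimination in the theory of the random ordered graph together with the disjointness of the sorts of $\mathcal{N}\sqcup\mathcal{R}$, and then exhibiting a concrete first-order formula that looks at the edge relation on the first coordinates of the $x_i$'s to witness the failure of order indiscernibility.

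For the first part, I would take any two $k$-tuples $(g_1,\dots,g_k)$ and $(g_1',\dots,g_k')$ from $G_2$ sharing the same quantifier-free type in $\{<,R\}$. Since $G_2$ is the Fraïssé limit of the class of finite ordered graphs (Fact \ref{fact: ordered hypergraphs are ramsey}), its theory is $\aleph_0$-categorical and homogeneous, hence admits quantifier elimination in $\{<,R\}$; so the tuples realize the same complete type in $\mathcal{R}$. Because $\mathcal{N}$ and $\mathcal{R}$ live on disjoint sorts with no interaction, types of mixed tuples in $\mathcal{N}\sqcup\mathcal{R}$ factor as the conjunction of their $\mathcal{R}$-part and their $\mathcal{N}$-part (any pair of partial automorphisms, one on each sort, glue to an automorphism of the disjoint union). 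As each $a_{g_i}$ is simply $m$ copies of $g_i$ together with the fixed constant $n_0$, the full type $\tp(a_{g_1},\dots,a_{g_k})$ is determined by $\tp_{\mathcal{R}}(g_1,\dots,g_k)$ and hence equals $\tp(a_{g_1'},\dots,a_{g_k'})$, giving $G_2$-indiscernibility.

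For the second part, fix $n\geq 1$ and exploit the randomness of $G_2$ to choose $g_0<g_1<\dots<g_n$ with $R(g_0,g_1)$ and $g_0'<g_1'<\dots<g_n'$ with $\neg R(g_0',g_1')$; both tuples are strictly increasing and so realize the same order type in $\{<\}$. The first-order (hence continuous, $\{0,1\}$-valued) formula $f(x_0,\dots,x_n)$ expressing ``$R(x_0^0, x_1^0)$'', where $x_i^0$ denotes the first coordinate of the $(m+1)$-tuple $x_i$ and lies in the $\mathcal{R}$-sort, then separates $(a_{g_0},\dots,a_{g_n})$ from $(a_{g_0'},\dots,a_{g_n'})$, proving the failure of order indiscernibility with respect to the stated family.

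The only real point to emphasize, which is what makes the example illuminating rather than difficult, is that the witnessing formula $f$ deliberately ignores the last coordinate (the $X$-coordinate $n_0$) of the tuples $x_i$ and is not a symmetrization under $Sym(n)$ of any member of $\mathcal{F}_{X/E}$; hence $f$ belongs to the larger family $\mathcal{F}_{(\C^m\times X/E)^{n+1}}$ but lies outside $\Psi^{n+1}_{\mathcal{F}_{X/E}}$, consistent with Theorem \ref{ndep an collapsing hyperim intro}. The main obstacle, if any, is purely expository: one must argue that the family of $(n+1)$-variable formulas on tuples of length $m+1$ is strictly larger than $\Psi^{n+1}_{\mathcal{F}_{X/E}}$, after which the separation supplied by $f$ is immediate.
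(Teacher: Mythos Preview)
Your proposal is correct and follows essentially the same approach as the paper: the paper also uses the formula $f(x_0,\dots,x_n):=R(x_0^0,x_1^0)$ to witness failure of order indiscernibility, though it chooses the two tuples more economically by fixing $g_1<\dots<g_n$ and only replacing $g_0$ by some $g_0'<g_0$ with $\neg R(g_0',g_1)$. The paper does not spell out the $G_2$-indiscernibility argument at all (treating it as obvious), whereas you give a clean justification via quantifier elimination and the disjointness of sorts; your final paragraph on why $f\notin\Psi^{n+1}_{\mathcal{F}_{X/E}}$ is useful context but not needed for the claim itself.
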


    \begin{proof}[Proof of the first claim]
    Let $g_0'<g_0<g_1<\dots<g_n\in G_2$ be such that $R(g_0,g_1)$ and $\neg R(g_0',g_1)$ and let $f(x_0,\dots,x_n):= R(x_0^0,x_1^0)$. Clearly, the tuples $(g_0,g_1,\dots,g_n)$ and $(g'_0,g_1,\dots,g_n)$ have the same order type but $f(a_{g_0},\dots,a_{g_n})\neq f(a_{g'_0},\dots,a_{g_n}) $.
    \end{proof}

    \begin{claim}
        For $n'>1$, the sequence $(a_g)_{g\in G_2}:=(\overbrace{g,\dots,g}^{n'},n_0)_{g\in G_2}$, where $n_0$ is a fixed element of $\mathcal{N}$, is $G_2$-indiscernible but it is not order indiscernible with respect to the family of functions $\Psi^{n'+1}_{\mathcal{F}_{X/E}}$.
    \end{claim}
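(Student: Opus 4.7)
The plan is to construct an explicit function $\psi \in \Psi^{n'+1}_{\mathcal{F}_{X/E}}$ that reads off the edge relation in $\mathcal{R}$ from $(n'+1)$-tuples of the sequence, and then to exploit the randomness of $\mathcal{R}$ to exhibit two ordered $(n'+1)$-tuples of $G_2$ distinguished by $\psi$ while sharing the same order type. The hypothesis $n' > 1$ will be used precisely to ensure that the function has enough room in its $y$-slots to mention two distinct vertices of $\mathcal{R}$ simultaneously.

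For the $G_2$-indiscernibility of $(a_g)_{g \in G_2}$, the argument will use that the sorts of $\mathcal{N}$ and $\mathcal{R}$ are disjoint with no interaction, so the type of $(a_g)_{g \in W}$ in $\C = \mathcal{N} \sqcup \mathcal{R}$ factors as the type in $\mathcal{R}$ of the tuple obtained by repeating each $g \in W$ exactly $n'$ times, together with the constant tuple $(n_0, \dots, n_0)$ in $\mathcal{N}$. If $W, W' \subseteq G_2$ share the same qf-type in $\mL_{og}$, their $\mathcal{R}$-parts will share the same qf-type in $\mL_{og}$, and hence the same full type by quantifier elimination for the theory of random ordered graphs, while the $\mathcal{N}$-parts are identical. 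It will follow that $(a_g)_{g\in W} \equiv (a_g)_{g\in W'}$.

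Next I would construct $\psi$ as follows. Set $f(x, y_0, y_1, \dots, y_{n'-1}) := R(y_0, y_1)$, where $y_0, y_1$ are variables in the sort of $\mathcal{R}$ and $R$ is interpreted as its $\{0,1\}$-valued characteristic function. As a first-order $\emptyset$-formula it counts as a CL-formula, and it factors trivially through $X/E \times \C^{n'}$, so $f \in \mathcal{F}_{X/E}$. Put $f'(y_0, \dots, y_{n'-1}, x) := f(x, y_0, \dots, y_{n'-1})$ and compute $\psi_{f'}$ on the tuples $a_{g_i} = (g_i, \dots, g_i, n_0)$: every $y$-coordinate at position $i$ equals $g_i$, so
$$\psi_{f'}(a_{g_0}, \dots, a_{g_{n'}}) \;=\; \min_{\sigma \in \Sym(n')} R(g_{\sigma(0)}, g_{\sigma(1)}) \;=\; \min_{\substack{i \ne j \\ i,j \in \{0,\dots,n'\}}} R(g_i, g_j).$$
By symmetry of $R$, this value will equal $1$ iff $\{g_0, \dots, g_{n'}\}$ spans a clique in $\mathcal{R}$, and $0$ otherwise.

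To conclude, since $\mathcal{R}$ is a monster model of the theory of random ordered graphs and $n' + 1 \geq 3$, one can find $g_0 < \dots < g_{n'}$ inducing a clique and $g'_0 < \dots < g'_{n'}$ containing at least one non-edge; then $\psi_{f'}$ will evaluate to $1$ on $(a_{g_0}, \dots, a_{g_{n'}})$ and to $0$ on $(a_{g'_0}, \dots, a_{g'_{n'}})$, while these two tuples have identical qf-type in $\mL_o = \{<\}$. The main step requiring care will be the computation in the third paragraph: the indexing convention of $\Psi^{n'+1}_{\mathcal{F}_{X/E}}$ (with $\sigma$ permuting $\{0, 1, \dots, n'\}$, so that $(\sigma(0), \sigma(1))$ ranges over all ordered pairs of distinct elements) must be applied so that the $\min$ really produces the clique predicate. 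Everything else is essentially routine.
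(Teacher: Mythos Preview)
Your proposal is correct and follows essentially the same approach as the paper: you take $f(x,y_0,\dots,y_{n'-1}):=R(y_0,y_1)$ and observe that on the given sequence $\psi_{f'}$ computes the clique indicator, then separate a clique tuple from a non-clique tuple of the same order type. The paper carries this out only for $n'=2$ with a specific choice of vertices (additionally imposing $R(g'_0,g_2)$, which is not actually needed), whereas you treat general $n'$ and supply the $G_2$-indiscernibility argument via quantifier elimination for the random ordered graph and the independence of the two sorts, which the paper leaves implicit.
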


    \begin{proof}[Proof of the second claim]
        We show it for $n'=2$. %and $m=2$. 
        Let $f(y,z,x):=R(y,z)$. Then the formula $$\psi_f(y_0z_0x_0,y_1z_1x_1,y_2z_2y_2):=\min_{\sigma\in Sym(2)} f(y_{\sigma(0)},z_{\sigma(1)},x_{\sigma(2)})$$ belongs to $\Psi^{3}_{\mathcal{F}_{X/E}}$. However, taking  $g_0'<g_0<g_1<g_2$ such that \begin{itemize}
            \item $R(g_0,g_1)$, $R(g_0,g_2)$, $R(g_1,g_2)$
            \item $R(g'_0,g_2)$ and  $\neg R(g'_0,g_1)$
        \end{itemize}
        we have that the tuples $(g_0,g_1,g_2)$ and $(g_0',g_1,g_2)$ are order-isomorphic and $$\psi_f(g_0g_0n_0,g_1g_1n_0,g_2g_2n_0)\neq \psi_f(g'_0g'_0n_0,g_1g_1n_0,g_2g_2n_0).$$
    \end{proof}
\end{example}

\chapter{Topological dynamics}\label{Chapter 6}
We present the framework for this chapter. Let $T$ be a complete first-order theory %change 2.1c %%%Krzys rev: I removed "and strongly $\kappa$-homogeneous" but I left that it is a strong limit cardinal, as we use it in the proof of the claim in the main theorem.
	of infinite models in a language $\mL$. Let $\C\prec\C'$ be models of  $T$ which are sufficiently saturated and strongly homogeneous. The precise degrees of saturation needed for particular sections or results will be given in the relevant places. Recall that we say that a set is $\C$-small if its cardinality is $\C$-small i.e. smaller than the saturation degree of $\C$. $X$ will denote an $\emptyset$-type-definable subset of $\C^\lambda$ (or a product of $\lambda$ sorts).%, where $\lambda< \kappa$ (from the definition of $\C$).

Unless specified otherwise, we denote by $r$ the restriction map $r \colon S_X(\C') \to S_X(\C)$.

\section{Preliminaries}\label{section: prelim top dyn}

In this section we introduce the necessary machinery of definability patterns language and the definability patterns structure on $S_X(\C)$ that will be used throughout the rest of the chapter. The results here are based on Krzysztof Krupi\'nski's lecture on topological dynamics in model theory, which is an alternative approach to Hrushovski's ``infinitary core" inspired by Simon's seminar notes on \cite{hrushovski2022definability}. We will assume that $\C$ is at least $\aleph_0$-saturated and strongly $\aleph_0$-homogeneous.

Recall the definition of content and of the order $\leq^c$ from Definition \ref{defin: content} and \ref{defin: content order}.

\begin{defin}[Infinitary definability patterns structure on $S_X(\C)$]
For any $n$-tuple of formulas $\overline{\varphi}$ consisting of $\varphi_1(x,y),\dots,\varphi_n(x,y)\in \mL$ and $q(y)\in S_y(\emptyset)$, we define $R_{\overline{\varphi},q}$ on $S_X(\C)^n$ by $$R_{\overline{\varphi},q}(\overline{p}) \iff (\varphi_1(x,y),\dots,\varphi_n(x,y),q)\notin c(\overline{p}), $$ where $\overline{p}=(p_1,\dots,p_n)$,
    i.e.,  there is no  $b\models q$  such that  $\varphi_1(x,b)\in p_1\wedge\cdots\wedge \varphi_n(x,b)\in p_n$. The \emph{infinitary definability patterns structure on} $S_X(\C)$ consists of all such relations $R_{\overline{\varphi},q}$.  We denote by $\End(S_X(\C))$ the semigroup of endomorphisms of $S_X(\C)$ with the infinitary definable patterns structure.
\end{defin}

During this section, we also consider the structure on $S_X(\C)$ given by the flow $(\aut(\C),S_X(\C)).$ Recall that we denote by $E(S_X(\C))$ the Ellis semigroup of this flow (see Definition \ref{defin: ellis semigroup}).

\begin{lema}\label{S_X is homogeneous}
    We have the following:
    \begin{itemize}
        \item $\End(S_X(\C))=E(S_X(\C))$,
        \item $S_X(\C)$ is homogeneous in the sense that any partial morphism between substructures of $S_X(\C)$ (i.e., any structure preserving map $f:A\to B$ where $A,B\subseteq S_X(\C)$) extends to an endomorphism of $S_X(\C)$.
    \end{itemize}
\end{lema}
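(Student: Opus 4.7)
The plan is to derive both assertions uniformly from Fact \ref{content and ellis semigroup}, which characterizes membership in $E(S_X(\C))$ via inclusion of contents. The starting observation is that a map $\eta \colon S_X(\C) \to S_X(\C)$ preserves every relation $R_{\overline{\varphi},q}$ if and only if $c(\eta(\overline{p})) \subseteq c(\overline{p})$ for every finite tuple $\overline{p} = (p_1,\dots,p_n) \in S_X(\C)^n$. This is immediate from unwinding definitions: $R_{\overline{\varphi},q}(\overline{p})$ is by definition the statement $(\overline{\varphi},q) \notin c(\overline{p})$, and preserving all such non-membership statements is the contrapositive of content inclusion.

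For the first bullet, I would show the two inclusions separately. The inclusion $E(S_X(\C)) \subseteq \End(S_X(\C))$ is immediate: given $\eta \in E(S_X(\C))$ and any finite tuple $\overline{p}$, Fact \ref{content and ellis semigroup} applied to $\overline{p}$ and $\eta(\overline{p})$ yields $c(\eta(\overline{p})) \subseteq c(\overline{p})$, so $\eta$ preserves every $R_{\overline{\varphi},q}$ by the observation above. For the reverse inclusion, suppose $\eta \in \End(S_X(\C))$. Then $c(\eta(\overline{p})) \subseteq c(\overline{p})$ for every finite tuple $\overline{p}$, and Fact \ref{content and ellis semigroup} provides, for each such $\overline{p}$, some $\eta_{\overline{p}} \in E(S_X(\C))$ with $\eta_{\overline{p}}(p_i) = \eta(p_i)$ for all $p_i \in \overline{p}$. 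The sets
\[
F_{\overline{p}} := \{\zeta \in E(S_X(\C)) : \zeta(p_i) = \eta(p_i)\ \text{for all}\ p_i \in \overline{p}\}
\]
are closed in $E(S_X(\C))$ and enjoy the finite intersection property, since the concatenation of finitely many finite tuples is again a finite tuple. By compactness of $E(S_X(\C))$, their intersection is nonempty, and any element of it coincides pointwise with $\eta$, so $\eta \in E(S_X(\C))$.

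For the homogeneity statement, the same compactness trick works verbatim. If $f \colon A \to B$ is a partial morphism between substructures of $S_X(\C)$, then $f$ preserves every $R_{\overline{\varphi},q}$, so by the opening observation $c(f(\overline{p})) \subseteq c(\overline{p})$ for every finite tuple $\overline{p} \in A^{<\omega}$. Fact \ref{content and ellis semigroup} then produces, for each such $\overline{p}$, an element $\eta_{\overline{p}} \in E(S_X(\C))$ agreeing with $f$ on $\overline{p}$. The closed sets $G_{\overline{p}} := \{\zeta \in E(S_X(\C)) : \zeta(p_i) = f(p_i)\ \text{for all}\ p_i \in \overline{p}\}$ again have the FIP, so their intersection is nonempty by compactness of $E(S_X(\C))$, and any element of it is an extension of $f$ lying in $E(S_X(\C)) = \End(S_X(\C))$.

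There is no real obstacle beyond organizing the content bookkeeping; the only point requiring a moment's care is verifying that $E(S_X(\C))$ is genuinely compact (closed in $S_X(\C)^{S_X(\C)}$), so that the finite-intersection argument in both parts goes through. Both assertions are thus essentially repackagings of Fact \ref{content and ellis semigroup} together with the observation that preservation of the $R_{\overline{\varphi},q}$-structure is literally the syntactic form of the content-inclusion condition.
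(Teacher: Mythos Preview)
Your proposal is correct and follows essentially the same approach as the paper's proof: both invoke Fact \ref{content and ellis semigroup} to translate preservation of the $R_{\overline{\varphi},q}$ relations into content inclusion, and then use compactness of $E(S_X(\C))$ to pass from agreement on finite tuples to a global element. The paper's proof is simply a terse two-line pointer to these two ingredients, and you have correctly unpacked the details.
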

\begin{proof}
    $\End(S_X(\C))\supseteq E(S_X(\C))$ follows from the right to left implication of Fact \ref{content and ellis semigroup}.

    The inclusion $\End(S_X(\C))\subseteq E(S_X(\C))$ and homogeneity follow from the left to right implication of Fact \ref{content and ellis semigroup} and compactness of $E(S_X(\C))$.
\end{proof}
\begin{prop}\label{group isomorphism delta}
    Let $\mathcal{M}\unlhd E(S_X(\C))$ be a minimal left ideal and $u\in\mathcal{J}(\mathcal{M})$. Let $\overline{\mathcal{J}}:=\Image(u)\subseteq S_X(\C)$. Then the map $$\delta: u\mathcal{M}\to \aut(\overline{\mathcal{J}}) $$ given by $\delta(\eta):=\eta\!\!\upharpoonright_{\overline{\mathcal{J}}}$ is a group isomorphism, where $\aut(\overline{\mathcal{J}})$ is the group of automorphisms of $\overline{\mathcal{J}}$ in the infinitary patterns language.
\end{prop}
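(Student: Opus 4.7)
The plan is to prove the four requisite properties of $\delta$ (well-definedness, homomorphism, injectivity, surjectivity) by combining three ingredients: the identity $\End(S_X(\C))=E(S_X(\C))$ together with the homogeneity of $S_X(\C)$ from Lemma \ref{S_X is homogeneous}, the group structure on $u\mathcal{M}$ from Fact \ref{Ellis theorem}, and the monomorphism from Remark \ref{restrictio to image is monomorphism}. A preliminary observation I would record is that since $u$ is idempotent, $\overline{\mathcal{J}}=\Image(u)=\{p\in S_X(\C):u(p)=p\}$.

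First I would verify that $\delta$ is well-defined, i.e. that $\eta\!\!\upharpoonright_{\overline{\mathcal{J}}}\in \aut(\overline{\mathcal{J}})$ for every $\eta\in u\mathcal{M}$. Since $u$ is the identity of the group $u\mathcal{M}$, we have $u\eta=\eta$, so for $p\in\overline{\mathcal{J}}$, $u(\eta(p))=(u\eta)(p)=\eta(p)$; hence $\eta(\overline{\mathcal{J}})\subseteq\overline{\mathcal{J}}$. Writing $\eta^{-1}$ for the group inverse in $u\mathcal{M}$, the identity $\eta^{-1}\eta=u=\mathrm{id}_{\overline{\mathcal{J}}}$ on $\overline{\mathcal{J}}$ shows that $\eta\!\!\upharpoonright_{\overline{\mathcal{J}}}$ is a bijection with inverse $\eta^{-1}\!\!\upharpoonright_{\overline{\mathcal{J}}}$, and both $\eta,\eta^{-1}\in E(S_X(\C))=\End(S_X(\C))$ preserve the infinitary definability patterns structure. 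Next, the homomorphism property is immediate from $\eta_2(\overline{\mathcal{J}})\subseteq\overline{\mathcal{J}}$, while injectivity is a direct consequence of Remark \ref{restrictio to image is monomorphism}.

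The main obstacle is surjectivity, and this is where the homogeneity of $S_X(\C)$ enters in a crucial way. Given $\sigma\in\aut(\overline{\mathcal{J}})$, I view $\sigma$ as a partial morphism between substructures of $S_X(\C)$ and apply Lemma \ref{S_X is homogeneous} to extend it to some $\tilde{\sigma}\in\End(S_X(\C))=E(S_X(\C))$. I then set $\eta:=u\tilde{\sigma}u$; since $\mathcal{M}$ is a left ideal and $u\in\mathcal{M}$, $\tilde{\sigma}u\in\mathcal{M}$, whence $\eta\in E(S_X(\C))\cdot\mathcal{M}\subseteq\mathcal{M}$, and $u\eta=u^2\tilde{\sigma}u=u\tilde{\sigma}u=\eta$, so $\eta\in u\mathcal{M}$. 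Finally, for $p\in\overline{\mathcal{J}}$, $\eta(p)=u\tilde{\sigma}(u(p))=u(\tilde{\sigma}(p))=u(\sigma(p))=\sigma(p)$, the last equality using $\sigma(p)\in\overline{\mathcal{J}}$. Hence $\delta(\eta)=\sigma$, concluding the argument. The genuinely non-trivial content behind the proof sits in Lemma \ref{S_X is homogeneous}, which is itself built upon Fact \ref{content and ellis semigroup} and compactness of $E(S_X(\C))$.
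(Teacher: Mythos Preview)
Your proof is correct and follows essentially the same approach as the paper's. The paper's proof is terser---it just says well-definedness follows from the group structure on $u\mathcal{M}$ and Lemma \ref{S_X is homogeneous}, injectivity from $u\mathcal{M}u=u\mathcal{M}$, and surjectivity from homogeneity---but your expansion (especially the explicit construction $\eta:=u\tilde{\sigma}u$ for surjectivity) is exactly how one would unpack those assertions.
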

\begin{proof}
    %From Fact \ref{content and ellis semigroup} it follows that $\delta$ take values in $\aut(\overline{\mathcal{J}})$
    Since $u\mathcal{M}$ is a group, we easily get that $\delta$ takes values in $\Sym(\overline{\mathcal{J}})$. The fact that $\delta$ takes values in $\aut(\overline{\mathcal{J}})$ follows from Lemma \ref{S_X is homogeneous}. Clearly, the map $\delta$ is a homomorphism. Injectivity follows from the fact that $u\mathcal{M}u=u\mathcal{M}$ and surjectivity follows by homogeneity of $S_X(\C)$.
\end{proof}

\begin{prop}\label{isomorphic image idempotent}
    For any minimal left ideals $\mathcal{M}, \mathcal{N}$ of $E(S_X(\C))$ and idempotents $u\in \mathcal{M}$ and $v\in \mathcal{N}$, $\Image(u)\cong\Image(v)$ as the infinitary definability patterns structure.
\end{prop}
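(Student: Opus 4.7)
My plan is to reduce the general case to the same-ideal case, which I handle directly using the algebraic identities from Fact~\ref{Ellis theorem}. I first treat $u,u'\in\mathcal{J}(\mathcal{M})$: applying the rule $su=s$ (for $s\in\mathcal{M}$ and $u\in\mathcal{J}(\mathcal{M})$) from Fact~\ref{Ellis theorem} twice yields $uu'=u$ and $u'u=u'$. The restrictions $u'\!\!\upharpoonright_{\Image(u)}\colon \Image(u)\to \Image(u')$ and $u\!\!\upharpoonright_{\Image(u')}\colon \Image(u')\to \Image(u)$ preserve the infinitary definability patterns structure by Lemma~\ref{S_X is homogeneous}, and they are mutually inverse: for $x\in \Image(u)$, $u(u'(x))=(uu')(x)=u(x)=x$, and symmetrically for $y\in \Image(u')$. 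Being structure-preserving bijections with structure-preserving inverses, they are structure isomorphisms, so $\Image(u)\cong \Image(u')$.

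For the general case, with possibly $\mathcal{M}\neq \mathcal{N}$, I aim to produce an idempotent $v'\in\mathcal{J}(\mathcal{N})$ ``equivalent'' to $u$ in the sense that $uv'=v'$ and $v'u=u$. The set $\mathcal{N}u$ is a non-empty left ideal of $E(S_X(\C))$ contained in the minimal left ideal $\mathcal{M}$, hence $\mathcal{N}u=\mathcal{M}$, and in particular $u=s_0u$ for some $s_0\in\mathcal{N}$. Thus
\[K:=\{s\in\mathcal{N}:su=u\}\]
is non-empty; it is also closed (since $s\mapsto su$ is continuous in the Ellis semigroup) and a sub-semigroup (if $su=u=tu$, then $(st)u=s(tu)=su=u$). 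As a non-empty closed sub-semigroup of the compact semigroup $\mathcal{N}$, by the Ellis--Numakura theorem $K$ contains an idempotent $w$. Setting $v':=uw$, I verify: $v'\in\mathcal{N}$ by left-ideality of $\mathcal{N}$, $v'$ is idempotent since $(uw)(uw)=u(wu)w=u\cdot u\cdot w=uw$ using $wu=u$, $v'u=uwu=u\cdot u=u$, and $uv'=u^2w=uw=v'$. So $v'\in\mathcal{J}(\mathcal{N})$ is equivalent to $u$.

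These two identities force $\Image(u)=\Image(v')$ as subsets, hence as substructures: $v'u=u$ gives $v'(y)=y$ for every $y\in \Image(u)$, so $\Image(u)\subseteq \Image(v')$, and dually $uv'=v'$ yields the reverse inclusion. Combining with the same-ideal case applied to $v,v'\in\mathcal{J}(\mathcal{N})$ gives $\Image(v)\cong \Image(v')=\Image(u)$, completing the proof. The main obstacle I anticipate is producing the equivalent idempotent $v'$; the construction $v':=uw$ with $w\in K$ works precisely because the defining condition $wu=u$ of $K$ is what enables the cancellation in $(uw)^2=u(wu)w=uw$, and once $v'$ is in hand everything else is a direct computation.
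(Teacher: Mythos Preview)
Your proof is correct and follows essentially the same strategy as the paper: reduce to the same-ideal case by producing an idempotent in one minimal ideal that is equivalent to the given idempotent in the other (forcing equal images), then handle the same-ideal case via the identity $su=s$ from Fact~\ref{Ellis theorem} to see that $u\!\!\upharpoonright_{\Image(u')}$ and $u'\!\!\upharpoonright_{\Image(u)}$ are mutually inverse structure-preserving maps. The only difference is cosmetic: the paper simply invokes ``the Ellis theorem'' for the existence of the equivalent idempotent (a standard fact, though not actually among the items listed in Fact~\ref{Ellis theorem}), whereas you construct it explicitly by applying Ellis--Numakura to the closed sub-semigroup $K=\{s\in\mathcal{N}:su=u\}$ and setting $v':=uw$; your version is therefore a bit more self-contained relative to what is stated in the paper.
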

    \begin{proof}
        By the Ellis theorem (see Fact \ref{Ellis theorem}), there is an idempotent $u'\in \mathcal{M}$ such that $vu'=u'$ and $u'v=v$. Then, $\Image(u')=\Image(v)$, so we can assume $\mathcal{M}=\mathcal{N}$ without loss of generality. Then, $uv=u$ and $vu=v$, and so the maps\begin{align*}
            u\!\!\upharpoonright_{\Image(v)}: &\Image(v)\to \Image (u)\\
            &\text{ and }\\
            v\!\!\upharpoonright_{\Image(u)}: &\Image(u)\to \Image (v)
        \end{align*} are mutual inverses. Hence, $\Image(v)\cong \Image (u)$ by Lemma \ref{S_X is homogeneous}.
    \end{proof}

By Proposition \ref{isomorphic image idempotent}, up to isomorphism, both $\overline{\mathcal{J}}=\Image(u)$ and $\aut(\overline{\mathcal{J}})$ do not depend on the choice of the minimal left ideal $\mathcal{M}$ and idempotent $u\in\mathcal{M}$.

\begin{defin}[ipp-topology]
    The \emph{ipp-topology} on $\aut(\overline{\mathcal{J}})$ is given by the subbasis of closed sets consisting of $$\{f\in \aut(\overline{\mathcal{J}}): R_{\overline{\varphi},r}(f(p_1),\dots, f(p_m), p_{m+1},\dots, p_n ) \} $$ for any $\varphi_1(x,y),\dots,\varphi_n(x,y)\in \mL$, $r\in S_y(\emptyset)$ and $p_1,\dots,p_m,p_{m+1},\dots,p_n\in \overline{\mathcal{J}} $.
\end{defin}

The proof of the following fact can be found in Appendix \ref{AppendixB}.

\begin{fact}\label{delta is homeomorphism}
    The map  $$\delta: u\mathcal{M}\to \aut(\overline{\mathcal{J}}) $$ from Proposition \ref{group isomorphism delta}  is a homeomorphism when $u\mathcal{M}$ is equipped with the $\tau$-topology and $\aut(\overline{\mathcal{J}})$ with the ipp-topology.
\end{fact}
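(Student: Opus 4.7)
The plan is to exploit that $\delta$ is already an abstract group isomorphism (Proposition \ref{group isomorphism delta}), so only the topological matching is at stake. The strategy is to verify the two directions of continuity by translating between $\tau$-convergence (expressed as limits of nets $g_i\eta_i$ with $g_i\in\aut(\C)$ satisfying $g_i\to u$) and ipp-convergence (expressed as preservation of content-style existential conditions on finite tuples from $\overline{\mathcal{J}}$). The bridge between the two languages is the idempotency $u(p)=p$ for every $p\in\overline{\mathcal{J}}$, which permits us to transport any ``witness'' for $\eta$ along the net $g_i\to u$ back to witnesses for $\eta_i$.

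For continuity of $\delta$, I would take a subbasic ipp-closed set $C=\{f:R_{\overline{\varphi},r}(f(p_1),\dots,f(p_m),p_{m+1},\dots,p_n)\}$ and show that $\delta^{-1}(C)$ is $\tau$-closed. Let $\eta\in cl_\tau(\delta^{-1}(C))$, so $\eta=\lim_i g_i\eta_i$ in $E(S_X(\C))$ with $g_i\in\aut(\C)$, $g_i\to u$, and $\eta_i\in\delta^{-1}(C)$. Suppose toward contradiction that $\eta\notin\delta^{-1}(C)$, witnessed by some $b\in\C$ with $b\models r$, $\varphi_l(x,b)\in\eta(p_l)$ for $l\leq m$, and $\varphi_j(x,b)\in p_j$ for $m<j\leq n$. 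Put $b_i:=g_i^{-1}(b)$; then $b_i\models r$ since $r$ is over $\emptyset$. Using pointwise convergence $g_i\eta_i(p_l)\to\eta(p_l)$ and $g_i(p_j)\to u(p_j)=p_j$ (the latter because $p_j\in\Image(u)$), the clopen conditions $[\varphi_l(x,b)]$ and $[\varphi_j(x,b)]$ are eventually satisfied, yielding $\varphi_l(x,b_i)\in\eta_i(p_l)$ and $\varphi_j(x,b_i)\in p_j$ for all large $i$. This contradicts $\eta_i\in\delta^{-1}(C)$, so $\delta^{-1}(C)$ is $\tau$-closed.

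For continuity of $\delta^{-1}$, my plan is to derive it from a content-based characterization of $\tau$-closure inside $u\mathcal{M}$ that is precisely the one defining the ipp-topology: an element $\eta\in u\mathcal{M}$ lies in $cl_\tau(F)$ if and only if for every tuple $(\overline{\varphi},r,p_1,\dots,p_n,m)$ for which the corresponding existential content condition holds at $(\eta(p_1),\dots,\eta(p_m),p_{m+1},\dots,p_n)$, the analogous condition holds at $(\eta'(p_1),\dots,\eta'(p_m),p_{m+1},\dots,p_n)$ for some $\eta'\in F$. The right-to-left direction is dual to the argument above: approximate $u$ by automorphisms of $\C$ and transport existential witnesses back along $g_i^{-1}$. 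The left-to-right direction uses that $u$ lies in the closure of $\aut(\C)$ inside $E(S_X(\C))$, together with the homogeneity of $S_X(\C)$ (Lemma \ref{S_X is homogeneous}), to assemble, from finitely many existential witnesses satisfied by $\eta$, an element $\eta'\in F$ satisfying the same ones. Once this equivalence is established, $\delta$ identifies $\tau$-closed sets with ipp-closed sets, completing the proof.

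The main obstacle is this second direction: producing, from a given finite packet of ipp-conditions holding at $\eta$, a coherent net in $u\circ F$ converging to $\eta$. Concretely, one must simultaneously choose approximating automorphisms $g_i\to u$ that control finitely many $p_l\in\overline{\mathcal{J}}$ (via $u(p_l)=p_l$) and choose matching $\eta_i\in F$ so that $g_i\eta_i\to\eta$ on the relevant finite sets of types; the witnesses $b$ prescribed over $\emptyset$ must simultaneously be pulled through all the $g_i$. I expect this to require a careful saturation-and-compactness argument interleaving Fact \ref{content and ellis semigroup} (to turn content inclusions into Ellis-semigroup elements) with homogeneity of $S_X(\C)$ to produce the required $\eta'\in F$ at the finitary level.
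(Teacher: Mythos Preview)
Your argument for continuity of $\delta$ is correct and essentially identical to the paper's: pull a witness $b$ for $\neg R_{\bar\varphi,r}$ back along $g_i^{-1}$ and use $g_i\eta_i\to\eta$, $g_i\to u$, $u(p_j)=p_j$ to contradict $\eta_i\in\delta^{-1}(C)$.

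For continuity of $\delta^{-1}$, however, your proposal is only a plan, and the tools you reach for (Fact~\ref{content and ellis semigroup}, homogeneity of $S_X(\C)$) are not the right ones. Your proposed ``content-based characterization of $cl_\tau$'' is just a restatement of the conclusion (namely $cl_\tau(F)=\delta^{-1}(cl_{\mathrm{ipp}}(\delta[F]))$), so neither direction of it is ``dual'' to what you have already done; the substantive direction is exactly the one you flag as the obstacle, and nothing in your sketch produces the required net.

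The paper's construction is direct and does not pass through content inclusions. Given $\delta(\rho_j)\to\delta(\eta)$, one indexes not just by subbasic open neighborhoods $U_{\bar\varphi,\bar p,\bar q,r}$ of $\delta(\eta)$ but by such neighborhoods together with a \emph{fixed} witness $b\models r$ for $\delta(\eta)\in U_{\bar\varphi,\bar p,\bar q,r}$; this set $I$ is directed under extension of the data (with $b\subseteq b'$). For each $i\in I$ pick $\rho_{j_i}$ with $\delta(\rho_{j_i})\in U_i$, get its witness $b_i\models r$, and choose $\sigma_i\in\aut(\C)$ with $\sigma_i(b_i)=b$. The single act of aligning witnesses does two things at once: since every formula $\psi(x,c)\in q$ with $q\in\overline{\mathcal J}$ arises as some $\varphi_{m+l}(x,b)$ in an element of $I$, the $\bar q$-conditions force $\sigma_i\!\upharpoonright_{\overline{\mathcal J}}\to\mathrm{Id}$, hence $\sigma_i u\to u$; and the $\bar p$-conditions force $\sigma_i\rho_{j_i}(p)\to\eta(p)$ for all $p\in\overline{\mathcal J}$, hence $\sigma_i u\rho_{j_i}\to\eta$. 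One then invokes an auxiliary lemma (proved in the paper) that $cl_\tau(A)$ consists of limits of nets $\eta_i a_i$ with $\eta_i\in\mathcal M$, $a_i\in A$, $\eta_i\to u$, to conclude $\eta\in cl_\tau(\{\rho_j\})$.

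The point you are missing is that the approximating automorphisms are not chosen first and then asked to ``control'' types and witnesses; they are \emph{determined} by the witnesses, and this automatically yields the required convergences. No Ellis-semigroup or homogeneity machinery is needed.
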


\begin{defin}
    A subset $Q\subseteq S_X(\C)$ is \emph{ip-minimal} (from infinitary patterns minimal) if any morphism $f:Q\to S_X(\C)$ is an isomorphism onto $\Image(f)$.
\end{defin}

\begin{prop}\label{equivalences ip minimal}
    Let $\overline{p}$ be an enumeration of $S_X(\C)$ and $\overline{q}=\eta \overline{p}$ (coordinate-wise) for some $\eta\in E(S_X(\C))$. Then the following are equivalent:
    \begin{enumerate}
        \item $\overline{q}$ is $\leq^c$ minimal in $E(S_X(\C))\overline{p}$, where $\overline{q}'\leq^c \overline{q}''$ means $$(q'_{i_1},\dots q'_{i_n})\leq^c (q''_{i_1},\dots q''_{i_n})$$ for any finite sets of indices $i_1<\dots<i_n$, or equivalently, $$ \overline{q}'\leq^c \overline{q}''\iff \overline{q}'\in E(S_X(\C))\overline{q}''  .$$
        \item The coordinates of $\overline{q}$ form an ip-minimal subset $Q$.
        \item There is $\mathcal{M}\unlhd E(S_X(\C))$ a minimal left ideal and $\eta_0\in\mathcal{M}$ such that $\overline{q}=\eta_0\overline{p}$.
        \item $\eta$ belongs to a minimal left ideal of $E(S_X(\C))$.
    \end{enumerate}
\end{prop}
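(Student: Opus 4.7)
The plan is to argue cyclically $(1)\Rightarrow(4)\Rightarrow(3)\Rightarrow(2)\Rightarrow(1)$, with the two main tools being Fact \ref{content and ellis semigroup} (rephrased as: $\overline{q}'\leq^c\overline{q}''$ if and only if $\overline{q}'\in E(S_X(\C))\overline{q}''$, which is exactly the equivalent reformulation given in (1)) and Lemma \ref{S_X is homogeneous} (every partial morphism extends to some element of $E(S_X(\C))$, and $\End(S_X(\C))=E(S_X(\C))$). Throughout, I use that $\overline{p}$ enumerates $S_X(\C)$, so two elements of $E(S_X(\C))$ coincide iff they agree on the coordinates of $\overline{p}$.

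For $(1)\Rightarrow(4)$ I would fix an arbitrary minimal left ideal $\mathcal{N}\unlhd E(S_X(\C))$ (it exists by Fact \ref{Ellis theorem}) and pick any $\eta'\in\mathcal{N}$. Then $\eta'\eta\in\mathcal{N}$, and $\overline{q}':=\eta'\overline{q}$ satisfies $\overline{q}'\leq^c\overline{q}$. By $(1)$ we also have $\overline{q}\leq^c\overline{q}'$, so there is $\eta''\in E(S_X(\C))$ with $\eta''\overline{q}'=\overline{q}$, which forces $\eta=\eta''\eta'\eta$; since $\eta'\eta\in\mathcal{N}$ and $\mathcal{N}$ is a left ideal, $\eta\in\mathcal{N}$. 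The implication $(4)\Rightarrow(3)$ is trivial by taking $\eta_0:=\eta$.

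For $(3)\Rightarrow(2)$, assuming $\overline{q}=\eta_0\overline{p}$ with $\eta_0\in\mathcal{M}$ minimal, set $Q:=\Image(\eta_0)$. Given any morphism $f\colon Q\to S_X(\C)$, Lemma \ref{S_X is homogeneous} lets me extend it to $\tilde f\in E(S_X(\C))$, so $\tilde f\eta_0\in\mathcal{M}$. By minimality $\mathcal{M}=E(S_X(\C))\tilde f\eta_0$, so some $\eta'$ gives $\eta'\tilde f\eta_0=\eta_0$; thus $\eta'\!\!\upharpoonright_{f(Q)}$ is a one-sided inverse to $f$ which is itself (a restriction of) a morphism, yielding an isomorphism of $Q$ with $f(Q)$. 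For $(2)\Rightarrow(1)$, given $\overline{q}'=\eta'\overline{q}\leq^c\overline{q}$, the restriction $\eta'\!\!\upharpoonright_Q\colon Q\to S_X(\C)$ is a morphism; by ip-minimality it is an isomorphism onto its image, so its inverse is also a morphism which extends via Lemma \ref{S_X is homogeneous} to some $\eta''\in E(S_X(\C))$ with $\eta''\overline{q}'=\overline{q}$, giving $\overline{q}\leq^c\overline{q}'$.

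I expect the conceptually delicate step to be the $(2)\Rightarrow(1)$ and $(3)\Rightarrow(2)$ halves, because both hinge on the precise meaning of ``morphism'' in the infinitary definability patterns language and on the fact that a morphism defined only on a subset of $S_X(\C)$ can be inverted via homogeneity rather than on the nose; one has to be careful that the extension $\tilde f$ of $f\colon Q\to S_X(\C)$ genuinely lies in $E(S_X(\C))$ and that the one-sided inverse produced really witnesses isomorphism of $Q$ with $f(Q)$, which is exactly what the stated homogeneity lemma guarantees. The equivalence ``$\overline{q}'\leq^c\overline{q}''\iff\overline{q}'\in E(S_X(\C))\overline{q}''$'' asserted inside (1) is an immediate consequence of Fact \ref{content and ellis semigroup} applied coordinate by coordinate, and I would record it as a short preliminary observation before entering the cycle.
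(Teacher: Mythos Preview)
Your cycle $(1)\Rightarrow(4)\Rightarrow(3)\Rightarrow(2)\Rightarrow(1)$ and the arguments for $(4)\Rightarrow(3)$, $(3)\Rightarrow(2)$, and $(2)\Rightarrow(1)$ are correct and essentially the same as the paper's (which proves $(1)\Leftrightarrow(2)$ and $(1)\Rightarrow(4)\Rightarrow(3)\Rightarrow(1)$, but with the same mechanisms via Lemma~\ref{S_X is homogeneous} and Fact~\ref{content and ellis semigroup}).

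There is, however, a genuine error in your $(1)\Rightarrow(4)$. You pick a minimal left ideal $\mathcal{N}$ and $\eta'\in\mathcal{N}$, then assert $\eta'\eta\in\mathcal{N}$. This does not follow: $\mathcal{N}$ being a \emph{left} ideal means $E(S_X(\C))\,\mathcal{N}\subseteq\mathcal{N}$, not $\mathcal{N}\,E(S_X(\C))\subseteq\mathcal{N}$. Worse, your final conclusion $\eta\in\mathcal{N}$ is false in general, since distinct minimal left ideals are disjoint and $\eta$ cannot be forced into an arbitrarily pre-chosen one.

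The fix is easy and is essentially what the paper does. Your computation $\eta=\eta''\eta'\eta$ actually goes through for \emph{every} $\eta'\in E(S_X(\C))$ (you used nothing about $\eta'$ beyond $\eta'\overline{q}\leq^c\overline{q}$, which is automatic), and since $\overline{p}$ enumerates $S_X(\C)$ this gives $\eta\in E(S_X(\C))\eta'\eta$ for all $\eta'$; hence $E(S_X(\C))\eta$ is itself a minimal left ideal. Alternatively, if you want to keep your chosen $\eta'\in\mathcal{N}$, observe that $\eta''\eta'\in\mathcal{N}$ (this direction \emph{is} the left-ideal property), so $\eta=(\eta''\eta')\eta\in\mathcal{N}\eta$, and then check that $\mathcal{N}\eta$ is a minimal left ideal: for any $\nu\in\mathcal{N}$ one has $E(S_X(\C))(\nu\eta)=(E(S_X(\C))\nu)\eta=\mathcal{N}\eta$ by minimality of $\mathcal{N}$.
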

\begin{proof}
    $(1)\implies (2)$. Take any $f:Q\to S_X(\C)$, Lemma \ref{S_X is homogeneous} implies that $f$ can be extended to $\tilde{f}:S_X(\C)\to S_X(\C)$ and such $\tilde{f}$ can be seen as $\eta\in E(S_X(\C))$. Hence, we have $\eta\overline{q}\leq^c \overline{q}$. By minimality of $\overline{q}$, there is $\eta'\in E(S_X(\C))$ such that $\eta'\eta \overline{q}=\overline{q}$. Thus, $f$ is an isomorphism to its image.

    $(2)\implies (1)$. Take any $\overline{q}'\in E(S_X(\C))\overline{p}$ such that $\overline{q}'\leq^c \overline{q}$, that is, $\overline{q}'=\eta \overline{q}$ for some $\eta\in E(S_X(\C))$. Such $\eta$ can be seen as morphism $f:Q\to S_X(\C)$, hence it is an isomorphism to its image and has an inverse $f': \Image(f)\to Q$. By Lemma \ref{S_X is homogeneous}, $f'$ can be extended to $\eta'\in E(S_X(\C))$ satisfying $\eta'\overline{q}'=\overline{q}$. This implies that $\overline{q}\leq^c\overline{q}'$.

    $(1)\implies (4)$ Consider any $\eta' \in E(S_X(\C))$. Then, $\eta'\overline{q}\leq^c \overline{q}$ and $\eta'\overline{q}=\eta'\eta\overline{p}$. By $(1)$, $\overline{q}\leq^c\eta'\overline{q}$, so there is $\eta''\in E(S_X(\C))$ such that $\eta'' \eta' \overline{q}=\overline{q}$, that is $\eta''\eta'\eta \overline{p}=\eta\overline{p}$. Since $\overline{p}$ is an enumeration of $S_X(\C)$, we get $\eta''\eta'\eta=\eta$ as elements of $E(S_X(\C))$. Hence, Fact \ref{Ellis theorem}  implies that $E(S_X(\C))\eta$ is a minimal left ideal.

    $(4)\implies (3)$. Trivial.

    $(3)\implies (1)$. Let $\overline{q}=\eta_0\overline{p}$, where $\eta_0$ belongs to a minimal left ideal $\mathcal{M}$. Consider any $\overline{q}'\in E(S_X(\C))$ such that $\overline{q}'\leq^c\overline{q}$. Then, $$\overline{q}'=\eta'\overline{q}=\eta'\eta_0\overline{p}$$ for some $\eta'\in E(S_X(\C))$. Since $E(S_X(\C))\eta_0$ is a minimal left ideal, there is $\eta''\in E(S_X(\C))$ such that $\eta''\eta'\eta_0=\eta_0$. Thus $\eta''\overline{q}'=\eta''\eta'\eta_0\overline{p}=\eta_0\overline{p}=\overline{q}$. Therefore, $\overline{q}\leq^c\overline{q}'$.
\end{proof}

\begin{cor} \label{ip-minimal exists}
    There exists an ip-minimal $Q\subseteq S_X(\C)$ with a morphism $$g:S_X(\C)\to Q.$$
\end{cor}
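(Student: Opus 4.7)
The plan is to read the corollary directly off Ellis' theorem combined with the equivalences already established in Proposition \ref{equivalences ip minimal}. The key identification is that $E(S_X(\C)) = \End(S_X(\C))$ (Lemma \ref{S_X is homogeneous}), so every element of the Ellis semigroup is literally a morphism of $S_X(\C)$ as an infinitary definability patterns structure; in particular, the image of such a morphism is a substructure of $S_X(\C)$.

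First, I would invoke Ellis' theorem (Fact \ref{Ellis theorem}) to pick a minimal left ideal $\mathcal{M}\unlhd E(S_X(\C))$ together with an idempotent $u\in\mathcal{J}(\mathcal{M})$. Set
\[
Q:=\Image(u)\subseteq S_X(\C),
\]
and let $g:S_X(\C)\to Q$ be the map induced by $u$. Since $u\in E(S_X(\C))=\End(S_X(\C))$, the map $g$ is a morphism, so we only have to verify that $Q$ is ip-minimal.

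For ip-minimality, fix an enumeration $\overline{p}$ of $S_X(\C)$ and consider $\overline{q}:=u\,\overline{p}$ (coordinate-wise). The coordinates of $\overline{q}$ are exactly the elements of $Q=\Image(u)$. Since $u\in\mathcal{M}$ is an element of a minimal left ideal of $E(S_X(\C))$, condition (3) of Proposition \ref{equivalences ip minimal} is satisfied (take $\eta_0:=u$); applying the implication $(3)\Rightarrow(2)$ of that proposition yields that the coordinates of $\overline{q}$, i.e.\ $Q$, form an ip-minimal subset of $S_X(\C)$.

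There is essentially no obstacle here: the whole argument is a direct application of results already proved. The only point worth double-checking is the compatibility of the two viewpoints — that the image of an Ellis-semigroup element equals its image as an endomorphism of the infinitary patterns structure — but this is immediate from the identification $E(S_X(\C))=\End(S_X(\C))$ given by Lemma \ref{S_X is homogeneous}.
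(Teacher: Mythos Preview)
Your proof is correct and follows essentially the same approach as the paper: pick an element of a minimal left ideal of $E(S_X(\C))$, take its image as $Q$, and invoke Proposition \ref{equivalences ip minimal} for ip-minimality. The only cosmetic difference is that the paper uses an arbitrary $\eta\in\mathcal{M}$ rather than specifically an idempotent, but this changes nothing in the argument.
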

\begin{proof}
    Let $\overline{p}$ be an enumeration of $S_X(\C)$, $\mathcal{M}\unlhd E(S_X(\C)))$ a minimal left ideal and $\eta \in \mathcal{M}$. Then, $Q:=\eta[S_X(\C)]$ and $g:=\eta$ satisfy the requirements by Proposition \ref{equivalences ip minimal}.
\end{proof}

The next remark follows from Proposition \ref{equivalences ip minimal}, since for any element $\eta$ in a minimal left ideal $\mathcal{N}\unlhd E(S_X(\C))$ we can find an idempotent $v\in \eta \mathcal{N}$ which satisfies $\Image(v)=\Image(\eta)$.
\begin{remark}\label{Remark Q is the core}
    If the conditions of Proposition \ref{equivalences ip minimal} hold, then the ip-minimal set $Q$ of Proposition \ref{equivalences ip minimal} $(2)$ satisfies $Q\cong \overline{\mathcal{J}}$.
\end{remark}

\begin{lema}
    A subset $Q\subseteq S_X(\C)$ is ip-minimal if and only if every finite $Q_0\subseteq Q$ is ip-minimal. In particular, the union of a chain of ip-minimal subsets of $S_X(\C)$ is ip-minimal.
\end{lema}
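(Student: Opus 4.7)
The plan is to prove the equivalence by establishing the two directions separately, each resting on a single key feature of the setup, and then to derive the ``in particular'' clause as a direct corollary.

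For the forward direction, suppose $Q$ is ip-minimal and let $Q_0 \subseteq Q$ be finite. Given any morphism $f \colon Q_0 \to S_X(\C)$, homogeneity of $S_X(\C)$ (Lemma \ref{S_X is homogeneous}) lets us extend $f$ to an endomorphism $\tilde f \in \End(S_X(\C))$, whose restriction $\tilde f \!\!\upharpoonright_Q$ is a morphism from $Q$ into $S_X(\C)$. By the ip-minimality of $Q$, this restriction is an isomorphism onto its image; restricting further to $Q_0$ yields that $f$ itself is an isomorphism onto its image.

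For the converse, assume every finite $Q_0 \subseteq Q$ is ip-minimal and let $f \colon Q \to S_X(\C)$ be a morphism. We must show that $f$ is injective and that $f^{-1} \colon f[Q] \to Q$ is also a morphism, i.e., preserves every relation $R_{\overline{\varphi},q}$. Each such condition is witnessed by a finite tuple: injectivity by a pair of points, and preservation of $R_{\overline{\varphi},q}$ by a finite tuple in $f[Q]$. Any such finite witness lies in the image of some finite $Q_0 \subseteq Q$; applying ip-minimality of $Q_0$ to the morphism $f\!\!\upharpoonright_{Q_0}$ shows that no failure can occur. Thus $f$ is an isomorphism onto its image, and $Q$ is ip-minimal.

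The ``in particular'' statement follows at once: if $(Q_\alpha)_\alpha$ is a chain of ip-minimal subsets and $Q = \bigcup_\alpha Q_\alpha$, then any finite $Q_0 \subseteq Q$ is already contained in some $Q_\alpha$ by directedness. The forward direction, applied to $Q_\alpha$, gives that $Q_0$ is ip-minimal, and then the reverse direction applied to $Q$ yields the conclusion. The argument is essentially bookkeeping; the only two non-trivial ingredients are the homogeneity of $S_X(\C)$ (used in the forward direction) and the fact that the infinitary patterns language consists only of finitary relations (used in the reverse direction). There is no serious obstacle beyond keeping track of which direction needs which feature.
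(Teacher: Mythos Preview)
Your proof is correct and follows essentially the same approach as the paper, which gives only a one-line proof citing homogeneity of $S_X(\C)$. You have simply spelled out the details, and in fact your identification that the reverse direction relies on the finitary arity of the relations (rather than on homogeneity) is more precise than the paper's brief justification.
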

\begin{proof}
    It follows by Lemma \ref{S_X is homogeneous}, in particular by homogeneity of $S_X(\C)$.
\end{proof}

\begin{remark}
    By the previous lemma and Zorn's lemma, there exists some ip-minimal $I_\C\subseteq S_X(\C)$ maximal with respect to the inclusion.
\end{remark}

\begin{lema}\label{morphism are surjective}
    Let $f:I_\C\to K$ be a morphism, where $K$ is an ip-minimal set. Then $f$ is surjective and is therefore an isomorphism. 
\end{lema}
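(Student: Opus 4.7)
The plan is to use two features together: the definition of ip-minimality (every morphism out is an isomorphism onto its image), and the \emph{maximality} of $I_\C$ among ip-minimal subsets of $S_X(\C)$. The homogeneity of $S_X(\C)$ from Lemma \ref{S_X is homogeneous} will be the tool that lets these two features interact.

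First I would observe that $f$ is automatically an isomorphism onto $f[I_\C]$. Indeed, $f$ is a morphism from $I_\C$ into $S_X(\C)$, so by ip-minimality of $I_\C$ it is an isomorphism onto its image. In particular $f[I_\C]\subseteq K$ is ip-minimal, and $f^{-1}\colon f[I_\C]\to I_\C$ is a well-defined partial morphism of $S_X(\C)$. By homogeneity of $S_X(\C)$ (Lemma \ref{S_X is homogeneous}) this partial morphism extends to some $g\in\End(S_X(\C))$.

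Now I would look at $g\!\upharpoonright_K$. Since $K$ is ip-minimal, $g\!\upharpoonright_K$ is an isomorphism onto $g[K]$; in particular $g[K]$ is ip-minimal (ip-minimality is preserved under isomorphism: if $h\colon K\to K'$ is an isomorphism and $\psi\colon K'\to S_X(\C)$ is any morphism, then $\psi\circ h$ is an isomorphism onto its image, hence so is $\psi$). Moreover, $g\!\upharpoonright_{f[I_\C]}$ agrees with $f^{-1}$ and so $g[f[I_\C]]=I_\C$; thus $g[K]\supseteq I_\C$, and $g[K]$ is an ip-minimal subset of $S_X(\C)$ containing $I_\C$. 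By maximality of $I_\C$ this forces $g[K]=I_\C$.

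The final step is to read off surjectivity of $f$. We have $g\!\upharpoonright_K\colon K\to I_\C$ bijective, and already $g\!\upharpoonright_{f[I_\C]}\colon f[I_\C]\to I_\C$ is bijective (it is literally $f^{-1}$). Injectivity of $g\!\upharpoonright_K$ together with equality of images then gives $f[I_\C]=K$: if $q\in K$, pick $p\in I_\C$ with $g(f(p))=p=g(q)$, and injectivity of $g$ on $K$ yields $q=f(p)\in f[I_\C]$. Hence $f$ is surjective, and combined with the first paragraph it is an isomorphism onto $K$. The only place where anything subtle happens is the maximality step; the rest is a mechanical unpacking of ip-minimality plus homogeneity, so I do not expect a genuine obstacle, but I would be careful to verify once more that ip-minimality transfers along isomorphisms in the infinitary definability patterns structure before invoking maximality of $I_\C$.
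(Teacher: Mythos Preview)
Your proof is correct and follows essentially the same route as the paper's: invert $f$ on its image, extend to an endomorphism via homogeneity, use ip-minimality of $K$ to see the restriction is an isomorphism onto its image, then invoke maximality of $I_\C$ to force that image to equal $I_\C$, and read off $f[I_\C]=K$ from injectivity. The paper's argument is slightly terser (it does not pause to verify that ip-minimality transfers along isomorphisms), but the logical skeleton is identical.
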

\begin{proof}
    Let $I':=f[I_\C]\subseteq K$. By ip-minimality of $I_\C$, the map $f:I_\C\to I'$ is an isomorphism in the infinitary patterns language. Let $g:I'\to I_\C$ be the inverse of $f$. By Lemma \ref{S_X is homogeneous}, there exists $\overline{g}\in \End(S_X(\C))$ extending $g$. Let $K':=\overline{g}[K]$. Since $K$ is ip-minimal, $\overline{g}\!\!\upharpoonright_K: K\to K'$ is an isomorphism (in the infinitary patterns language) and $K'$ is also ip-minimal. By maximality of $I_\C$, we have $K'=I_\C$. Therefore, by injectivity of $\overline{g}\!\!\upharpoonright_K$, $I'=K$. Thus, $f:I_\C\to K$ is onto.
\end{proof}
\begin{cor}\label{unique ip-minimal}
    There is a unique (up to isomorphism) ip-minimal subset $K\subseteq S_X(\C)$ with a morphism $S_X(\C)\to K$. It is the ip-minimal set $I_\C$ described above. Moreover, there is a retraction $S_X(\C)\to I_\C$. 
\end{cor}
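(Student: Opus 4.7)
The plan is to combine the existence result of Corollary \ref{ip-minimal exists} with the rigidity result of Lemma \ref{morphism are surjective}, applied to the maximal ip-minimal set $I_\C$.

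For uniqueness, suppose $K \subseteq S_X(\C)$ is ip-minimal and admits a morphism $h \colon S_X(\C) \to K$. I would restrict $h$ to $I_\C$ to obtain a morphism $h \upharpoonright_{I_\C} \colon I_\C \to K$. Since $K$ is ip-minimal, Lemma \ref{morphism are surjective} applies and forces $h \upharpoonright_{I_\C}$ to be surjective, hence an isomorphism of $I_\C$ onto $K$ in the infinitary patterns language. This proves that any such $K$ is isomorphic to $I_\C$, and in particular shows that $I_\C$ itself admits a morphism from $S_X(\C)$ (once we exhibit one, which is the content of the second part).

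For the existence of a retraction, I would start from Corollary \ref{ip-minimal exists}, which provides an ip-minimal $Q \subseteq S_X(\C)$ together with a morphism $g \colon S_X(\C) \to Q$. Applying Lemma \ref{morphism are surjective} to the restriction $g \upharpoonright_{I_\C} \colon I_\C \to Q$ (recall $Q$ is ip-minimal), we conclude that $g \upharpoonright_{I_\C}$ is an isomorphism of $I_\C$ onto $Q$. Let $g^{-1} \colon Q \to I_\C$ denote its inverse; this is again an isomorphism in the infinitary patterns language. The composition $\rho := g^{-1} \circ g \colon S_X(\C) \to I_\C$ is then a morphism (a composition of morphisms), and for any $p \in I_\C$ we have $\rho(p) = g^{-1}(g(p)) = p$ because $g^{-1}$ is the inverse of $g \upharpoonright_{I_\C}$. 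Hence $\rho$ is the desired retraction, and incidentally supplies the morphism $S_X(\C) \to I_\C$ needed to complete the uniqueness argument.

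The argument is essentially a packaging of earlier results, so no substantive obstacle remains; the only point to be careful about is that $g^{-1}$ is a priori only defined on $Q$, but since $g$ takes values in $Q$ the composition $g^{-1} \circ g$ is well defined on all of $S_X(\C)$ without needing to invoke homogeneity to extend $g^{-1}$.
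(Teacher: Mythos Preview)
Your proof is correct and follows essentially the same approach as the paper: restrict a given morphism to $I_\C$, apply Lemma \ref{morphism are surjective} to obtain an isomorphism, and compose with its inverse to produce the retraction. Your presentation is in fact slightly more explicit than the paper's, which appeals to ``the first part'' to justify the existence of a morphism $S_X(\C)\to I_\C$ before building the retraction, whereas you construct it directly as $g^{-1}\circ g$.
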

\begin{proof}
    Let $g: S_X(\C)\to K$ be a morphism and $K$ an ip-minimal subset (which exists by Corollary \ref{ip-minimal exists}). By Lemma \ref{morphism are surjective}, the map $g\!\!\upharpoonright_{I_\C}:I_\C\to K$ is an isomorphism, which proves the uniqueness. 

    For the moreover part, take a morphism $g: S_X(\C)\to I_\C$ (which exists by  the first part). Then $g\!\!\upharpoonright_{I_\C}: I_\C \to I_\C$ is an isomorphism, and so  $f=:(g\!\!\upharpoonright_{I_\C})^{-1}\circ g$ is a retraction from $S_X(\C)$ to $I_\C$. %let $K$ be an ip-minimal subset with a morphism $g: S_X(\C)\to K$ (which exists by Corollary \ref{ip-minimal exists}). Then, the map $f:(g\!\!\upharpoonright_{I_\C})^{-1}\circ g$ is a retraction.
\end{proof}

By Corollary \ref{unique ip-minimal} and Remark \ref{Remark Q is the core}, we can assume that $I_\C=\overline{\mathcal{J}}$.

\begin{lema}\label{there is a section}
    Let $\C'\succ \C$. Then the restriction $$r:S_X(\C')\to S_X(\C)$$ is a morphism  of infinitary definability patterns structures and has a section $$s:S_X(\C)\to S_X(\C')$$ which is a morphism of infinitary definability patterns structures.
\end{lema}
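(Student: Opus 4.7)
The plan has two parts. First, I would verify directly that the restriction map $r \colon S_X(\C') \to S_X(\C)$ preserves the infinitary definability patterns structure. Given $p_1, \dots, p_n \in S_X(\C')$ with $R_{\overline{\varphi}, q}(p_1, \dots, p_n)$, suppose for contradiction that $R_{\overline{\varphi}, q}(r(p_1), \dots, r(p_n))$ fails in $S_X(\C)$. Then there is $b \in \C$ realizing $q$ with $\varphi_i(x, b) \in r(p_i)$ for each $i$. But $b \in \C \subseteq \C'$ and $r(p_i) \subseteq p_i$, so the same $b$ witnesses the failure of $R_{\overline{\varphi}, q}$ in $S_X(\C')$, a contradiction.

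Second, and more substantially, I would construct the section $s$ using strong heirs. Enumerate $S_X(\C) = (p_\alpha)_{\alpha < \mu}$, introduce a fresh variable $x_\alpha$ for each $\alpha$, and consider the partial type over $\C$
\[
\Sigma(x) := \bigcup_{\alpha < \mu} p_\alpha(x_\alpha)
\]
in the long tuple $x = (x_\alpha)_{\alpha < \mu}$. Complete $\Sigma$ to a type $\tilde{\Sigma}(x) \in S_x(\C)$ and, using that $\C$ is $\aleph_0$-saturated, invoke (the long-tuple version of) Fact \ref{strong heir exists} to obtain a strong heir extension $\Sigma'(x) \in S_x(\C')$ of $\tilde{\Sigma}$ over $\emptyset$. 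Realize $\Sigma'$ in a bigger monster model by a tuple $(a_\alpha)_{\alpha < \mu}$ and set $s(p_\alpha) := \tp(a_\alpha/\C')$.

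The map $s$ is a section of $r$: since $a_\alpha \models p_\alpha$ and $p_\alpha$ is complete over $\C$, we have $r(s(p_\alpha)) = \tp(a_\alpha/\C) = p_\alpha$. To see that $s$ preserves the patterns structure, assume $R_{\overline{\varphi}, q}(p_{\alpha_1}, \dots, p_{\alpha_n})$ holds in $S_X(\C)$ and, toward a contradiction, that it fails for $(s(p_{\alpha_1}), \dots, s(p_{\alpha_n}))$ in $S_X(\C')$. Then there is $b \in \C'$ realizing $q$ with $\models \bigwedge_i \varphi_i(a_{\alpha_i}, b)$, so the formula $\psi(x_{\alpha_1}, \dots, x_{\alpha_n}; b) := \bigwedge_i \varphi_i(x_{\alpha_i}, b)$ lies in $\Sigma'$. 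The strong heir property (taking the finite parameter set $m = \emptyset$) yields $b' \in \C$ with $b' \equiv_{\emptyset} b$ and $\psi(x_{\alpha_1}, \dots, x_{\alpha_n}; b') \in \tilde{\Sigma}$. Since $\tilde{\Sigma}$ extends $\Sigma$, any realization $(c_\alpha)_\alpha \models \tilde{\Sigma}$ satisfies $c_{\alpha_i} \models p_{\alpha_i}$, so $\varphi_i(x, b') \in p_{\alpha_i}$ for each $i$; combined with $b' \models q$, this contradicts $R_{\overline{\varphi}, q}(p_{\alpha_1}, \dots, p_{\alpha_n})$.

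The main obstacle is ensuring that Fact \ref{strong heir exists} applies to the (potentially very long) tuple $x$ indexed by $S_X(\C)$. The standard proof, which builds the strong heir coordinate by coordinate via coheirs over $\C$ and uses compactness together with $\aleph_0$-saturation of $\C$ to realize the required parameters, goes through without change for tuples of arbitrary length, but this observation should be recorded explicitly. Everything else amounts to routine bookkeeping with contents and complete types.
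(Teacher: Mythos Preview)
Your approach is essentially the same as the paper's: enumerate $S_X(\C)$, take a strong heir extension over $\emptyset$ of the joint type of a system of realizations, and define $s$ via the coordinate types of that extension. The paper is terser---it records that contents are actually preserved (i.e., $c(p_{i_1},\dots,p_{i_n})=c(s(p_{i_1}),\dots,s(p_{i_n}))$, so $s$ is an isomorphism onto its image) and does not spell out the verification for $r$---but your more detailed unpacking of the strong heir property and your explicit check that $r$ preserves the relations are correct and add nothing new methodologically.
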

\begin{proof}
    Let $\overline{p}=(p_i)_{i<\mu}$ be an enumeration of $S_X(\C)$ and $(a_i)_{i<\mu}$ be a realization of $\overline{p}$ and let the type $\tp ((a'_i)_{i<\mu}/\C')=:p'$ be a strong heir extension of $p$ (see Definition \ref{strong heirs}) in the language $\mL$. Note that in particular, for any $n<\omega$ and $i_1,\dots i_n<\mu$ we have $c(p_{i_1},\dots,p_n)=c(p'_{i_1},\dots,p'_n)$, where $p_i'=\tp (a'_i/\C')$. Hence, the map $s: S_X(\C)\to S_X(\C')$ given by $s(p_i)=\tp(a_i'/\C')$ is a morphism between the infinitary definability patterns structures.
\end{proof}
\begin{teor}\label{absoluteness of the core}
    Up to isomorphism in the definability patterns language, $\overline{\mathcal{J}}$ does not depend on the choice of the $\aleph_0$-saturated, strongly $\aleph_0$-homogeneous model for which it is computed.
\end{teor}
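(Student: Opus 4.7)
The plan is to transfer the characterization of $\overline{\mathcal{J}}$ as the unique maximal ip-minimal subset of $S_X(\C)$ (Corollary \ref{unique ip-minimal} combined with Remark \ref{Remark Q is the core}) across models by using the restriction map $r \colon S_X(\C') \to S_X(\C)$ together with a section $s \colon S_X(\C) \to S_X(\C')$, both of which are morphisms of infinitary definability patterns structures by Lemma \ref{there is a section}. Let $I_\C$ and $I_{\C'}$ denote the (unique, up to isomorphism) ip-minimal sets in $S_X(\C)$ and $S_X(\C')$ furnished by Corollary \ref{unique ip-minimal}, which are isomorphic to $\overline{\mathcal{J}}_\C$ and $\overline{\mathcal{J}}_{\C'}$ respectively. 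The goal is to show $I_\C \cong I_{\C'}$ in the infinitary definability patterns language.

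First I would push $I_{\C'}$ down through $r$. Since $r$ is a morphism and $I_{\C'}$ is ip-minimal, the restriction $r\!\!\upharpoonright_{I_{\C'}} \colon I_{\C'} \to r[I_{\C'}]$ is an isomorphism onto its image by the very definition of ip-minimality, and hence $r[I_{\C'}]$ is an ip-minimal subset of $S_X(\C)$. To invoke the uniqueness part of Corollary \ref{unique ip-minimal}, I need to exhibit a morphism $S_X(\C) \to r[I_{\C'}]$. For that I would use the moreover part of Corollary \ref{unique ip-minimal} applied to $\C'$: there is a retraction $\rho' \colon S_X(\C') \to I_{\C'}$ which is a morphism. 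Composing, $r \circ \rho' \circ s \colon S_X(\C) \to r[I_{\C'}]$ is a morphism of infinitary definability patterns structures.

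By the uniqueness statement of Corollary \ref{unique ip-minimal}, the ip-minimal set $r[I_{\C'}]$ together with this morphism must be isomorphic to $I_\C$. Combining with the isomorphism $I_{\C'} \cong r[I_{\C'}]$ from the first step, we conclude $I_{\C'} \cong I_\C$, and hence $\overline{\mathcal{J}}_{\C'} \cong \overline{\mathcal{J}}_\C$, as required.

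There is really no serious obstacle here; the proof is essentially bookkeeping once the two morphisms $r$ and $s$ are in hand. The only point that requires mild care is verifying that $r \circ \rho' \circ s$ does land in $r[I_{\C'}]$ (immediate from $\rho'[S_X(\C')] = I_{\C'}$) and that each of the three maps preserves the infinitary definability patterns structure, both of which are already recorded in Lemma \ref{there is a section} and Corollary \ref{unique ip-minimal}.
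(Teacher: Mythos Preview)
Your proof is correct and essentially the same as the paper's: both reduce to $\C\prec\C'$, use the morphisms $r$ and $s$ from Lemma \ref{there is a section} together with the retractions from Corollary \ref{unique ip-minimal}, and finish via the uniqueness of the ip-minimal target. The only cosmetic difference is that you invoke the uniqueness clause of Corollary \ref{unique ip-minimal} directly, whereas the paper writes out two maps $h_1=f_\C\circ r\!\!\upharpoonright_{I_{\C'}}$ and $h_2=f_{\C'}\circ s\!\!\upharpoonright_{I_\C}$ and checks both compositions are isomorphisms via Lemma \ref{morphism are surjective}.
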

\begin{proof}
    It is enough to show that for $\aleph_0$-saturated, strongly $\aleph_0$-homogeneous models  $\C\prec \C'$, $I_\C\cong I_{\C'}$, where $I_{\C'}$ is defined for $\C'$ the same way as $I_\C$ was defined for $\C$. We have the following maps:
    \begin{itemize}
        \item The restriction morphism $r: S_X(\C') \to S_X(\C)$;
        \item Section $s:S_X(\C)\to S_X(\C') $ given by Lemma \ref{there is a section};
        \item The retraction $f_\C: S_X(\C)\to I_\C$ given by Corollary \ref{unique ip-minimal};
        \item The retraction $f_{\C'}: S_X(\C')\to I_{\C'}$ given by Corollary \ref{unique ip-minimal}.
    \end{itemize}
    Then, the maps $h_1:=f_\C\circ ( r\!\!\upharpoonright_{I_{\C'}} ): I_{\C'}\to I_\C$ and $h_2:=f_{\C'}\circ ( s\!\!\upharpoonright_{I_\C} ): I_{\C}\to I_{\C'}$ are morphisms. Hence, the maps $h_2\circ h_1:I_{\C'}\to I_{\C'} $ and $h_1 \circ h_2: I_{\C}\to I_{\C}$ are isomorphisms by Lemma \ref{morphism are surjective}.  The first thing implies that $h_1$ is an isomorphism onto its image and the second one that $\Image(h_1)=I_\C$. Therefore, $h_1$ is an isomorphism.
    %This implies that $h_1$ is an isomorphism onto its image. Finally, since $h_1$ and $h_2$ are injective and $h_2[\Image(h_1)]=I_{\C'}=\Image(h_2)$, we have that $h_1$ is surjective. Hence, $h_1$ is an isomorphism.
\end{proof}

One can show that $\overline{\mathcal{J}}$ is precisely Hrushovski's infinitary core (but localized to $X$) considered in \cite[Appendix A]{hrushovski2022definability}); however, we will not use this approach in this chapter.

    \begin{cor}\label{absoluteness of the ellis group}
        The Ellis group of the flow $(\aut(\C),S_X(\C))$ does not depend (as a topological group) on the choice of $\C$ as long as it is $\aleph_0$-saturated and strongly $\aleph_0$-homogeneous.
    \end{cor}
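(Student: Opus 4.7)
The plan is to combine the three main structural results of the section: the algebraic identification of the Ellis group with the automorphism group of $\overline{\mathcal{J}}$, the topological refinement of that identification, and the absoluteness of $\overline{\mathcal{J}}$ itself.

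First, I would fix an $\aleph_0$-saturated, strongly $\aleph_0$-homogeneous $\C \models T$, together with a minimal left ideal $\mathcal{M}\unlhd E(S_X(\C))$ and an idempotent $u\in\mathcal{J}(\mathcal{M})$, so that by the standard Ellis machinery the Ellis group is $u\mathcal{M}$ with its $\tau$-topology. By Proposition \ref{group isomorphism delta} and Fact \ref{delta is homeomorphism}, the restriction map $\delta: u\mathcal{M}\to \aut(\overline{\mathcal{J}})$, $\eta\mapsto \eta\!\!\upharpoonright_{\overline{\mathcal{J}}}$, is both a group isomorphism and a homeomorphism when $\aut(\overline{\mathcal{J}})$ carries the ipp-topology. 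Thus the topological group structure of the Ellis group is entirely encoded by $(\overline{\mathcal{J}}, \textrm{infinitary patterns})$.

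Next, I would invoke Theorem \ref{absoluteness of the core}, which states that $\overline{\mathcal{J}}$ (identified with the maximal ip-minimal $I_\C\subseteq S_X(\C)$ as in Remark \ref{Remark Q is the core} and Corollary \ref{unique ip-minimal}) is absolute: for any two $\aleph_0$-saturated, strongly $\aleph_0$-homogeneous $\C, \C'$, there is an isomorphism $h: \overline{\mathcal{J}}_{\C}\to \overline{\mathcal{J}}_{\C'}$ in the infinitary definability patterns language. Conjugation by $h$ induces a group isomorphism $\Phi_h: \aut(\overline{\mathcal{J}}_{\C})\to \aut(\overline{\mathcal{J}}_{\C'})$, $\sigma\mapsto h\sigma h^{-1}$. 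Since the subbasic closed sets of the ipp-topology are defined purely in terms of the patterns relations $R_{\overline{\varphi},q}$ on tuples from $\overline{\mathcal{J}}$, and $h$ preserves every such relation, $\Phi_h$ maps the subbasis on the left to the subbasis on the right bijectively; hence $\Phi_h$ is a homeomorphism in the ipp-topologies.

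Combining the two identifications, we obtain the chain of topological group isomorphisms
\[
u\mathcal{M} \;\xrightarrow{\;\delta\;}\; \aut(\overline{\mathcal{J}}_{\C}) \;\xrightarrow{\;\Phi_h\;}\; \aut(\overline{\mathcal{J}}_{\C'}) \;\xrightarrow{\;\delta'^{-1}\;}\; u'\mathcal{M}',
\]
where $u'\mathcal{M}'$ is the Ellis group computed for $\C'$. This yields the claim. The whole argument is essentially bookkeeping: the content was proved in Theorem \ref{absoluteness of the core} and Fact \ref{delta is homeomorphism}, and the only point that needs a word of justification is that the group isomorphism induced by $h$ is ipp-continuous, which is immediate from the fact that the ipp-topology is defined intrinsically from the patterns structure.
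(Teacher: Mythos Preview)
The proposal is correct and follows exactly the approach indicated by the paper, which simply cites Proposition \ref{group isomorphism delta}, Fact \ref{delta is homeomorphism}, and Theorem \ref{absoluteness of the core}. You have just unpacked these three ingredients and made explicit the one point the paper leaves implicit, namely that the conjugation map $\Phi_h$ is an ipp-homeomorphism because the ipp-topology is defined intrinsically from the patterns relations.
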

    \begin{proof}
    It follows from Proposition \ref{group isomorphism delta}, Fact \ref{delta is homeomorphism} and Theorem \ref{absoluteness of the core}.
\end{proof}

We finish the section by introducing several equivalence relations which we will study through the chapter.
    
Let $\fwap\subset S_X(\C)\times S_X(\C)$ be the finest closed $\aut(\C)$-invariant equivalence relation on $S_X(\C)$ such that the flow $( \aut(\C), S_X(\C)/\fwap )$ is WAP, and let $\fwapp\subset S_X(\C')\times S_X(\C')$ be the finest closed $\aut(\C')$-invariant equivalence relation on $S_X(\C')$ such that the flow $( \aut(\C'), S_X(\C')/\fwapp )$ is WAP. These equivalence relations always exist due to general topological dynamics reasons. Namely, for a general $G$-flow $(G,X)$, there exists a correspondence between closed $G$-invariant equivalence relations on $X$ and closed unital $G$-subalgebras of $C(X)$, stablished by the following fact:

\begin{fact}
    Let $(G,X)$ be a flow and and $\mathcal{A}$ be a closed unital $G$-subalgebra of $C(X)$. Define an equivalence relation on $X$ by $x\sim y$ if $f(x)=f(y)$ for all $f\in \mathcal{A}$, and let $Y=X/\sim$. Then:
    \begin{itemize}
        \item Every $f\in\mathcal{A}$ factors uniquely thought the quotient map $\pi:X\to Y$ as $f=f_Y\circ \pi$ and he quotient topology is the minimal topology under which every such $f_Y$ is continuous.
        \item The quotient topology is compact and Hausdorff, and $\sim$ is a closed $G$-invariant equivalence relation.
    \end{itemize}

    Conversely, let $(G,Y)$ be a flow and $\pi: X\to Y$ a topological quotient map and a $G$-flow epimorphism. Then $Y$ can be obtained as a quotient flow of $X$ using the closed $G$-subalgebra $\mathcal{A}=\{ f\circ \pi: f\in C(Y) \}$
\end{fact}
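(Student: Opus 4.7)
The plan is to verify the two directions separately. For the forward direction, start from a closed unital $G$-subalgebra $\mathcal{A} \subseteq C(X)$ and define $x \sim y$ iff $f(x) = f(y)$ for all $f \in \mathcal{A}$. This is manifestly an equivalence relation, and it is closed as the intersection $\bigcap_{f \in \mathcal{A}} \{(x,y) : f(x) = f(y)\}$ of closed subsets of $X \times X$. The $G$-invariance of $\sim$ follows from that of $\mathcal{A}$: if $x \sim y$, then for any $g \in G$ and $f \in \mathcal{A}$ we have $f(gx) = (g^{-1}f)(x) = (g^{-1}f)(y) = f(gy)$, using that $g^{-1}f \in \mathcal{A}$.

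For each $f \in \mathcal{A}$, the formula $f_Y([x]) := f(x)$ gives a well-defined map $f_Y : Y \to \mathbb{R}$ with $f = f_Y \circ \pi$, uniquely determined since $\pi$ is surjective. I would then check that $Y$, equipped with the quotient topology, is Hausdorff: if $[x] \neq [y]$, some $f \in \mathcal{A}$ distinguishes them, so disjoint preimages of small open intervals of $\mathbb{R}$ under $f_Y$ separate $[x]$ from $[y]$ (the map $f_Y$ is continuous in the quotient topology because $f = f_Y \circ \pi$ is continuous and $\pi$ is a quotient map). Compactness of $Y$ is automatic from compactness of $X$. For the claim that the quotient topology is the minimal one making every $f_Y$ continuous, I would invoke Stone--Weierstrass on the compact Hausdorff space $Y$: the unital algebra generated by the $f_Y$'s separates points and hence is dense in $C(Y)$, so the initial topology induced by this family already coincides with the original compact Hausdorff topology on $Y$.

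For the converse, let $\pi : X \to Y$ be a topological quotient map and a $G$-flow epimorphism, and put $\mathcal{A} = \{f \circ \pi : f \in C(Y)\}$. Since $Y$ is compact Hausdorff and $\pi$ is surjective, the pullback $f \mapsto f \circ \pi$ is an isometric unital algebra embedding $C(Y) \hookrightarrow C(X)$, so $\mathcal{A}$ is a closed unital subalgebra; $G$-invariance follows from the $G$-equivariance of $\pi$. The associated equivalence relation $\sim'$ satisfies $x \sim' y$ iff $f(\pi(x)) = f(\pi(y))$ for all $f \in C(Y)$, which by the point-separating property of $C(Y)$ on the compact Hausdorff space $Y$ is equivalent to $\pi(x) = \pi(y)$. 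Thus $\pi$ induces a continuous bijection $X/\sim' \to Y$, and since both spaces are compact Hausdorff, it is a homeomorphism and a $G$-flow isomorphism, recovering $Y$ as the quotient flow associated to $\mathcal{A}$.

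The step I expect to require the most care is verifying that the quotient topology on $Y = X/\sim$ in the forward direction agrees with the initial topology induced by the family $\{f_Y : f \in \mathcal{A}\}$: this is where Stone--Weierstrass enters and where compactness of $X$ is used essentially. Everything else is a direct bookkeeping verification of the algebraic, topological, and dynamical axioms.
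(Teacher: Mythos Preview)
Your proposal is correct and follows essentially the same outline as the paper's proof, with one notable methodological difference in the forward direction. For the equality of the quotient topology $\mathcal{T}_1$ and the initial topology $\mathcal{T}_2$ induced by the $f_Y$'s, you invoke Stone--Weierstrass to show the $f_Y$'s are dense in $C(Y,\mathcal{T}_1)$ and then recover $\mathcal{T}_1$ as the initial topology. The paper takes a more elementary route: it observes directly that $\mathcal{T}_2 \subseteq \mathcal{T}_1$ (each $f_Y$ is $\mathcal{T}_1$-continuous), that $\mathcal{T}_1$ is compact (as a quotient of a compact space), and that $\mathcal{T}_2$ is Hausdorff (the $f_Y$'s separate points), and concludes $\mathcal{T}_1 = \mathcal{T}_2$ from the standard fact that a compact topology refining a Hausdorff one must coincide with it. This avoids Stone--Weierstrass entirely and is the argument you might prefer, since it is precisely the ``compact bijecting onto Hausdorff'' trick you already use in the converse direction. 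A minor further difference: you show $\sim$ is closed directly as an intersection of closed sets, whereas the paper deduces it a posteriori from Hausdorffness of the quotient; both are fine.
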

\begin{proof}
    The fact that every $f\in\mathcal{A}$ factors uniquely through the quotient map follows by construction of $\sim$. Let $\mathcal{T}_1$ be the quotient topology on $Y$ and $\mathcal{T}_2$ the minimal topology on which every $f_Y$ is continuous. We show that $\mathcal{T}_1$ refines $\mathcal{T}_2$. Let $V\subseteq \R$ be an open set and $f\in\mathcal{A}$, the set $U=f_Y^{-1}(V)$ is an open set of $\mathcal{T}_2$. Since $\pi^{-1}(V)=f^{-1}(V)\subseteq X$ is open, $U\in\mathcal{T}_1$. Note that $\mathcal{T}_1$ is a compact topology (it is the quotient topology on a compact set) and that $\mathcal{T}_2$ is Hausdorff (any distinct $y_1,y_2\in Y$ can be separated by some $f_Y$), and so $\mathcal{T}_1$=$\mathcal{T}_2$. Finally, since the quotient topology is Hausdorff, the equivalence relation $\sim$ is closed by \cite[Fact 2.7]{rzepecki2018bounded}, and clearly $G$-invariant.

    For the converse, the space $Y'=X/\sim$ constructed in the above manner can be identified with $Y$. As before, the original topology on $Y$ refines the quotient topology, so they must coincide.  Note also that $\mathcal{A}=\{ f\circ \pi: f\in C(Y) \}$ is a closed unital $G$-subalgebra of $C(X)$.
\end{proof}

By Fact \ref{wap closed unital}, the set of all WAP functions of $C(S_X(\C))$ (denoted $WAP(S_X(\C)$) is a closed unital $\aut(\C)$-subalgebra. Then, the equivalence relation $\fwap$ on $S_X(\C)$ given by $$p\fwap q \iff \forall f\in \textrm{WAP}(S_X(\C))[f(p)=f(q)]$$ is closed and $\aut(\C)$-invariant,  and by \ref{separating points: WAP}, the flow $(\aut(\C),S_X(\C)/\fwap)$ is WAP. The same is true for $\fwapp$. It is easy to see that they are the finest such equivalence relations, since the quotient by any strictly finer equivalence relation on $S_X(\C)$ will need a non-WAP function to separate points.%WAP functions of $C(S_X(\C))$ are a closed unital subalgebra of $C(S_X(\C))$, and the WAP functions of $C(S_X(\C'))$ are a closed unital subalgebra of $C(S_X(\C'))$.

Similarly, let $\fta\subset S_X(\C)\times S_X(\C)$ be the finest closed $\aut(\C)$-invariant equivalence relation on $S_X(\C)$ such that the flow $( \aut(\C), S_X(\C)/\fta )$ is tame, and let $\ftaa\subset S_X(\C')\times S_X(\C')$ be the finest closed $\aut(\C')$-invariant equivalence relation on $S_X(\C')$ such that the flow $( \aut(\C'), S_X(\C')/\ftaa )$ is tame.  As in the WAP case, these equivalence relations exist because of general topological dynamics reasons, this time using Facts \ref{tame closed unital} and \ref{separating points: tame} instead.

%Finally, we denote by $E^{\textrm{st}}_{\emptyset}$ the finest $\emptyset$-type-definable equivalence relation on $X$ with stable quotient, and  by $E^{\textrm{NIP}}_{\emptyset}$ the finest $\emptyset$-type-definable equivalence relation on $X$ with NIP quotient. The existence of this equivalence relations is justified in Remark \ref{finest eq exist for fixed param} from the appendix.

By Remark \ref{finest eq exist for fixed param}, one easily gets

\begin{remark}\label{remark: pist and piNIP}
Given an $\emptyset$-type-definable $X\subseteq \C^\lambda$, there exists a type $\pi^{\textrm{st}}(x, y)$ over the empty set with $|x|=\lambda$ which for every $(\aleph_0 + \lambda)^+$-saturated
model $\C$ defines the finest $\emptyset$-type-definable equivalence relation
on $X$ with stable quotient; we will denote this relation by $E^{\textrm{st}}_\emptyset$. Similarly, there exists a type $\pi^{\textrm{NIP}}(x, y)$ over the empty set with $|x|=\lambda$ which for every
$(\aleph_0 +\lambda)^+$-saturated model $\C$ defines the finest $\emptyset$-type-definable equivalence relation
on $X$ with NIP quotient; we will denote this relation by $E^{\textrm{NIP}}_\emptyset$.
\end{remark}
Even if $\C$ is not sufficiently saturated,
then by $E^{\textrm{st}}_\emptyset$ we could mean  $\pi^{\textrm{st}}(X(\C),X(\C))$. However, we do not need to talk about
it, as we will work with the equivalence relation $\tilde{E}^{\textrm{st}}_\emptyset$ on $S_X(\C)$ which is defined by
$$p \tilde{E}^{\textrm{st}}_\emptyset q \iff (\exists a \models p, b \models q)( \pi^{\textrm{st}}(a,b)),$$
where $a,b$ are taken in a big monster model.
Similarly for NIP, we are interested in the relation $\tilde{E}^{\textrm{NIP}}_\emptyset$ on $S_X(\C)$ defined by
$$p \tilde{E}^{\textrm{NIP}}_\emptyset q \iff (\exists a \models p, b \models q)( \pi^{\textrm{NIP}}(a,b)).$$
By  $E'^{\textrm{st}}_\emptyset$, $E'^{\textrm{NIP}}_\emptyset$, $\tilde{E}'^{\textrm{st}}_\emptyset$, and $\tilde{E}'^{\textrm{NIP}}_\emptyset$ we denote the relations defined as above but working with $\C'$ in place of $\C$ (where $\C'$ is another model of $T$).

\section{Ellis groups of compatible quotients are isomorphic}\label{section: compatible quotients}

We introduce some conditions guaranteeing that, for a closed $\aut(\C)$-invariant equivalence relation $F$ on $S_X(\C)$, the Ellis group of the quotient flow $(\aut(\C),S_X(\C)/F)$ is independent of the choice of $\C$ as long as it is $\aleph_0$-saturated and strongly $\aleph_0$-homogeneous. Hence, for this section we will assume that $\C$ satisfies only those saturation assumptions; and similarly for $\C'$.

\begin{defin}
    Let $F'$ be a closed, $\aut(\C')$-invariant equivalence relation defined on $S_X(\C')$, and $F$ a closed, $\aut(\C)$-invariant equivalence relation defined on $S_X(\C)$. We say that $F'$ and $F$ are compatible if $r[F']=F$ (i.e., $\{ (r(a),r(b)): (a,b)\in F' \}=F)$, where $r: S_X(\C')\to S_X(\C)$ is the restriction map.
\end{defin}

\begin{teor}\label{compatible implies absolute}
    If $F'$ and $F$ are compatible equivalence relations respectively on $S_X(\C')$ and $S_X(\C)$, then the Ellis group of the flow $(\aut(\C'),S_X(\C')/F')$ is topologically isomorphic to the Ellis group of the flow $(\aut(\C),S_X(\C)/F)$.
\end{teor}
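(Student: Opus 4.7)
The plan is to apply Fact \ref{epimorphism semigroup to group} to the quotient maps $\pi_F\colon S_X(\C) \to S_X(\C)/F$ and $\pi_{F'} \colon S_X(\C') \to S_X(\C')/F'$. These are $\aut(\C)$- and $\aut(\C')$-equivariant continuous surjections, so Fact \ref{epimorphism semigroup to group} yields group epimorphisms $(\pi_F)_*\colon u\mathcal{M} \to u_F\mathcal{M}_F$ and $(\pi_{F'})_*\colon u'\mathcal{M}' \to u'_{F'}\mathcal{M}'_{F'}$ that are quotient maps in the $\tau$-topologies. Since by Corollary \ref{absoluteness of the ellis group} the topological groups $u\mathcal{M}$ and $u'\mathcal{M}'$ are already isomorphic, the theorem will follow once the kernels $K_F := \ker((\pi_F)_*)$ and $K_{F'} := \ker((\pi_{F'})_*)$ are matched under a concrete isomorphism $u\mathcal{M} \cong u'\mathcal{M}'$.

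Next, I would describe these kernels through the infinitary core. Unfolding the formula $(\pi_F)_*(\eta)(\pi_F(p)) = \pi_F(\eta(p))$ from Fact \ref{epimorphism semigroup to group}, membership in $K_F$ amounts to $\eta(p)\,F\,u(p)$ for every $p \in S_X(\C)$; using the Ellis identities $\eta u = \eta$, $u\eta = \eta$ together with $u\!\!\upharpoonright_{I_\C} = \mathrm{id}$ (Fact \ref{Ellis theorem} and Corollary \ref{unique ip-minimal}), this collapses to $K_F = \{\eta \in u\mathcal{M} : \eta(p)\,F\,p \text{ for all } p \in I_\C\}$, and analogously for $K_{F'}$. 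Transporting through the topological group isomorphism $\delta \colon u\mathcal{M} \to \aut(I_\C)$ of Proposition \ref{group isomorphism delta} and Fact \ref{delta is homeomorphism}, $K_F$ becomes the subgroup $\widetilde K_F := \{\varphi \in \aut(I_\C) : \varphi(p)\,F\,p \text{ for all } p \in I_\C\}$, and likewise for $\widetilde K_{F'} \subseteq \aut(I_{\C'})$.

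To conclude I would invoke the explicit isomorphism $h_1 = f_\C \circ r\!\!\upharpoonright_{I_{\C'}} \colon I_{\C'} \to I_\C$ constructed in the proof of Theorem \ref{absoluteness of the core}; conjugation by $h_1$ is a topological group isomorphism $\aut(I_\C) \to \aut(I_{\C'})$, and the problem reduces to the key claim that $h_1$ restricts to a bijection between $F' \cap I_{\C'}^2$ and $F \cap I_\C^2$, that is, $q\,F'\,q' \Leftrightarrow h_1(q)\,F\,h_1(q')$ for all $q,q' \in I_{\C'}$. The forward implication is straightforward: if $q\,F'\,q'$, then $r(q)\,F\,r(q')$ by $r[F'] \subseteq F$, and since $f_\C \in E(S_X(\C))$ is a pointwise limit of elements of $\aut(\C)$ while $F$ is closed and $\aut(\C)$-invariant, $f_\C$ preserves $F$, whence $h_1(q)\,F\,h_1(q')$.

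The reverse implication is the main obstacle. The natural attempt is to use the other half of compatibility $r[F'] \supseteq F$: given $a\,F\,b$ in $I_\C$, lift to a pair $(p,q) \in F'$ with $r(p)=a$, $r(q)=b$, and retract through $f_{\C'}$ to an $F'$-related pair $(p',q')$ in $I_{\C'}$; one would then like to conclude that $(p',q') = (h_1^{-1}(a), h_1^{-1}(b))$, which, unwinding the definition of $h_1$, amounts to the commutation $r \circ f_{\C'} = f_\C \circ r$. Arranging such a compatible pair of retractions is the technical heart of the argument; my plan is to build them from a single morphism $g \in E(S_X(\C))$ lying in a minimal left ideal together with its lift $g' := s \circ g \circ r \in E(S_X(\C'))$, where $s \colon S_X(\C) \to S_X(\C')$ is the section from Lemma \ref{there is a section}. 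Since $r \circ s = \mathrm{id}$, one immediately gets $r \circ g' = g \circ r$; the image $s[I_\C] = g'[S_X(\C')]$ is ip-minimal and hence, by Lemma \ref{morphism are surjective} and Corollary \ref{unique ip-minimal}, can be identified with $I_{\C'}$, so $f_{\C'}$ may be taken as the retraction built from $g'$. With this compatible choice of retractions the relevant square involving $r$ commutes and the backward direction closes.
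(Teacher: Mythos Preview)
Your proposal is correct and takes essentially the same route as the paper: both choose $s[I_\C]$ as the core on the $\C'$ side via the morphism $\eta = s\circ u\circ r \in E(S_X(\C'))$ (your $g'$ with $g=u$), and compare kernels through the isomorphism $r\!\!\upharpoonright_{s[I_\C]}\colon s[I_\C]\to I_\C$. The only difference is cosmetic: for the backward inclusion the paper lifts an $F$-pair to an $F'$-pair $(s_1,s_2)$ and uses the content inequality $c(s_1,s_2)\supseteq c(\sigma(p),p)$ to find some element of $E(S_X(\C'))$ moving it into $s[I_\C]$, whereas you apply the retraction $g'$ directly---this is the same map, since $\eta$ is already idempotent (as $r\circ s=\mathrm{id}$), a point the paper does not make explicit.
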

\begin{proof}
   Let $(p_i)_{i<\mu}$ be an enumeration of $S_X(\C)$ and $(a_i)_{i<\mu}$ be a sequence of realizations. Consider the type $p:=\tp((a_i)_{i<\mu}/\C)$ and let $\tp ((a'_i)_{i<\mu}/\C')=:p'$ be a strong heir extension of $p$.  For each $i<\mu$ we denote $\tp(a_i'/\C')$ by $p_i'$.

    Let $s: S_X(\C)\to S_X(\C')$ be the function given by $s(p_i)=p_i'$. The function $s$ is a section of the restriction map $r$ and since $p'$ is a strong heir extension of $p$, $s$ is an isomorphism to its image in the infinitary patterns language.

    Choose and idempotent $u \in \mathcal{M}$, where $\mathcal{M}$ is a minimal left ideal of $E(S_X(\C))$. Then $I_{\C}:=\Image(u)$ is ip-minimal by Proposition \ref{equivalences ip minimal}. Then, by Proposition \ref{group isomorphism delta}, there is an isomorphism $\delta$ from $u\mathcal{M}$ to $\aut(I_{\C})$. 
%$s \!\! \upharpoonright_{I_{\C}}$ is an isomorphism by Lemma 16 (or rather the more precise information in item (8) above). 
    The fact that $s \!\! \upharpoonright_{I_{\C}} \colon I_{\C} \to s[I_{\C}]$ is an isomorphism follows from the definition of strong heirs and infinitary definability patterns structures. 
    On the other hand, by Corollary \ref{unique ip-minimal}, let $I_{\C'}$ be the unique up to isomorphism ip-minimal subset of $S_X(\C')$ for which there is a morphism from $S_X(\C')$ to $I_{\C'}$. By Theorem \ref{absoluteness of the core}, we have $I_{\C'} \cong I_{\C}$ and therefore $I_{\C'} \cong s[I_{\C}]$. Hence, $s[I_{\C}]$ is ip-minimal in $S_X(\C')$.  
    By Lemma \ref{morphism are surjective}, the morphism $\eta:=s \circ u \circ r \colon S_X(\C') \to s[I_{\C}]$ is surjective. Thus, by Lemma \ref{S_X is homogeneous}, $\eta \in E(S_X(\C'))$. Using Proposition \ref{equivalences ip minimal}, we conclude that $\eta$ is in some minimal left ideal $\mathcal{M'}$ of $E(S_X(\C'))$. Finally, taking an idempotent $u' \in \eta\mathcal{M}'$, we get $\Image(u')=\Image(\eta)= s[I_{\C}]$.
    
    %Let $u\in \mathcal{J}(\mathcal{M})$ be an idempotent where $\mathcal{M}\unlhd E(S_X(\C))$ is a minimal ideal. Then, $I_\C:=Im(u)$ is the infinitary core of $T$ and $u\mathcal{M}\cong \aut(I_\C)$ in the infinitary patterns language(see \cite[Appendix A, Corollary A7]{hrushovski2022definability}). The map $s\!\!\upharpoonright_{I_\C}: I_\C \to s[I_\C]$ is also an isomorphism in the infinitary patterns language and hence $s[I_\C]$ is also the infinitary core. Moreover, we have a surjection $$s\circ u\circ r: S_X(\C') \twoheadrightarrow  s[I_\C] .$$
    %Therefore, there is $\mathcal{M}'\unlhd E(S_X(\C'))$, $u'\in \mathcal{J}(\mathcal{M}')$ such that $Im(u')=s[I_\C]$ and $u'\mathcal{M}'\cong \aut(\C')$.

    The map $\pi: S_X(\C')\to S_X(\C')/F' $ induces the following functions:

      \begin{minipage}[c]{0.8\textwidth}
              \hfill
        \begin{tikzpicture}\label{diagram pi}
        \matrix (m) [matrix of math nodes,row sep=3em,column sep=4em,minimum width=2em]
  {
     \Tilde{\pi}:u'\mathcal{M}' & \Tilde{\pi}(u')\Tilde{\pi}(\mathcal{M}') \\
     \Tilde{\Tilde{\pi}}:\aut(s[I_\C]) & \Tilde{\pi}(u')\Tilde{\pi}(\mathcal{M}')\!\!\upharpoonright_{\pi[s[I_\C]]} \subseteq Sym(\pi[s[I_\C]]) \\};
  \path[-stealth]
    (m-1-1) edge [->>] node [left] {$\cong$} (m-2-1)
            edge [->>] %node [below] {$\mathcal{B}_t$} 
            (m-1-2)
    (m-2-1.east|-m-2-2) edge [->>] %node [below] {$\mathcal{B}_T$}
            %node [above] {$\exists$} 
            (m-2-2)
    (m-1-2) edge [->>]  node [right] {$\cong$} (m-2-2);
            %edge [dashed,-] (m-2-1);
    \end{tikzpicture}
  \end{minipage}
  \begin{minipage}[c]{0.1\textwidth}
    \hfill (1) 
  \end{minipage}

    where:
    \begin{itemize}
        \item $\aut(s[I_\C])$ is the group of automorphisms in the infinitary definability patterns language.
        \item The map $\Tilde{\pi}$ is given by Fact \ref{epimorphism semigroup to group}.
        \item The isomorphism on the left is given by Proposition \ref{group isomorphism delta} and the fact that $s[I_\C]=\Image(u')$.
        \item The isomorphism on the right is given by Fact \ref{restrictio to image is monomorphism}.
        \item The map $\Tilde{\Tilde{\pi}}:\aut(s[I_\C]) \to Sym(\pi[s[I_\C]])$ is given by $\Tilde{\Tilde{\pi}}(\sigma)(\pi(x)):=\pi(\sigma(x))$ (which is the composition of the inverse of the left side isomorphism, the map $\tilde{\pi}$ and the right side isomorphism).
    \end{itemize}

    Similarly, $\rho: S_X(\C)\to S_X(\C)/F $ induces the following functions:
    
      \begin{minipage}[c]{0.8\textwidth}
              \hfill
        \begin{tikzpicture} \label{diagram rho}
        \matrix (m) [matrix of math nodes,row sep=3em,column sep=4em,minimum width=2em]
  {
     \Tilde{\rho}:u\mathcal{M} & \Tilde{\rho}(u)\Tilde{\rho}(\mathcal{M}) \\
     \Tilde{\Tilde{\rho}}:\aut(I_\C) & \Tilde{\rho}(u)\Tilde{\rho}(\mathcal{M})\!\!\upharpoonright_{\rho[I_\C]}\subseteq Sym(\rho[I_\C]) \\};
  \path[-stealth]
    (m-1-1) edge [->>] node [left] {$\cong$} (m-2-1)
            edge [->>] %node [below] {$\mathcal{B}_t$} 
            (m-1-2)
    (m-2-1.east|-m-2-2) edge [->>] %node [below] {$\mathcal{B}_T$}
            %node [above] {$\exists$} 
            (m-2-2)
    (m-1-2) edge [->>]  node [right] {$\cong$} (m-2-2);
            %edge [dashed,-] (m-2-1);
    \end{tikzpicture}
  \end{minipage}
  \begin{minipage}[c]{0.1\textwidth}
    \hfill (2) 
  \end{minipage}
  
 where:
 \begin{itemize}
     \item  $\aut(I_\C)$ is the group of automorphisms in the infinitary definability patterns language.
     \item The map $\Tilde{\rho}$ is given by Fact \ref{epimorphism semigroup to group}.
    \item The isomorphism on the left is given by Proposition \ref{group isomorphism delta}.
    \item The isomorphism on the right is given by Fact \ref{restrictio to image is monomorphism}.
    \item The map $\Tilde{\Tilde{\rho}}:\aut(I_\C) \to Sym(\rho[I_\C])$ is given by $\Tilde{\Tilde{\rho}}(\sigma)(\rho(x)):=\rho(\sigma(x))$ (which is the composition of the inverse of the left side isomorphism, the map $\tilde{\rho}$ and the right side isomorphism).
 \end{itemize}

 Since the map $r:s[I_\C]\to I_\C$ is an isomorphism in the infinitary patterns language, it induces an isomorphism $$\overline{r}: \aut(s[I_\C])\to \aut(I_\C)$$ given by $\overline{r}(\sigma)(r(p)):=r(\sigma(p))$. Our goal is then to prove that there exists an isomorphism $f$ such that the diagram below commutes:
   \begin{center}
        \begin{tikzpicture}
        \matrix (m) [matrix of math nodes,row sep=3em,column sep=4em,minimum width=2em]
  {
     \aut(s[I_\C]) & \Tilde{\pi}(u')\Tilde{\pi}(\mathcal{M}')\!\!\upharpoonright_{\pi[s[I_\C]]} \\
     \aut(I_\C) & \Tilde{\rho}(u')\Tilde{\rho}(\mathcal{M}')\!\!\upharpoonright_{\rho[I_\C]} \\};
  \path[-stealth]
    (m-1-1) edge [->>] node [left] {$\cong$} node [right] {$\overline{r}$} (m-2-1)
            edge [->>] node [above] {$\Tilde{\Tilde{\pi}}$} 
            (m-1-2)
    (m-2-1.east|-m-2-2) edge [->>]  node [above] {$\Tilde{\Tilde{\rho}}$}
            %node [above] {$\exists$} 
            (m-2-2)
    (m-1-2) edge [->>] [dashed] node [right] {$\exists f$ } 
    (m-2-2);
            %edge [dashed,-] (m-2-1);
    \end{tikzpicture}
\end{center}

We first show that a function $f$ such that the above diagram commutes exists. It is enough to show $\ker(\Tilde{\Tilde{\pi}})\subseteq \ker(\Tilde{\Tilde{\rho}}\circ \overline{r})$. Note that $\sigma\in \ker(\Tilde{\Tilde{\pi}})$ if and only if for every $p\in s[I_\C]$ we have that $\sigma(p)F' p$. Take an arbitrary $\sigma\in \ker(\Tilde{\Tilde{\pi}})$. By compatibility of $F'$ and F, $\sigma(p)F' p$ implies $ r(\sigma(p))F r(p)$. Hence, for every $p\in s[I_\C]$ we have $\overline{r}(\sigma)(r(p))F r(p)$. Therefore, $\overline{r}(\sigma)$ is in $\ker(\Tilde{\Tilde{\rho}})$ and $\sigma$ is in $\ker(\Tilde{\Tilde{\rho}}\circ \overline{r})$.

To see that $f$ is an isomorphism, it is enough to show that $\ker(\Tilde{\Tilde{\pi}})\supseteq \ker(\Tilde{\Tilde{\rho}}\circ \overline{r}).$ Take an arbitrary $\sigma\in \ker(\Tilde{\Tilde{\rho}}\circ \overline{r})$. Then, for every $p\in s[I_\C]$ we have that $r(\sigma(p))F r(p)$. \begin{claim}
    $r(\sigma(p))F r(p)$ implies $\sigma(p)F' p$.
\end{claim}
\begin{proof}[Proof of claim]
    By compatibility of $F'$ and $F$, there are $s_1,s_2\in S_X(\C')$ such that $r(s_1)=r(\sigma(p))$, $r(s_2)=r(p)$ and $s_1F' s_2$. At the same time, since $p,\sigma(p)\in s[I_\C]$, there are $i,j<\mu$ such that \begin{align*}
       \sigma(p)=p_i' &\wedge p=p_j';\\
        r(\sigma(p))=p_i &\wedge r(p)=p_j.        
    \end{align*}
    Hence, $c(s_1,s_2)\supseteq c(r(s_1),r(s_2))=c(r(\sigma(p)),p)=c(\sigma(p),p)$. Thus, there is $\eta\in E(S_X(\C'))$ such that $$\eta(s_1,s_2)=(\sigma(p),p).$$
    Therefore, since $F'$ is $\aut(\C')$-invariant and closed, $\sigma(p)F' p$.
\end{proof}
By the claim and the above description of $\ker(\Tilde{\Tilde{\pi}})$, we get that $\sigma\in \ker(\Tilde{\Tilde{\pi}})$.

Moreover, $f$ is a homeomorphism. To see this, note the following:
\begin{itemize}
    \item $\tilde{\tilde{\pi}}$ and $\tilde{\tilde{\rho}}$ are topological quotient maps (with the typologies on the right hand sides of the diagram induced by the $\tau$-topologies via the right vertical isomorphisms)
    \item $\bar r$ is a topological isomorphism.
\end{itemize}
The fact that  $\tilde{\tilde{\pi}}$ and $\tilde{\tilde{\rho}}$ are topological quotient maps follows from the fact that in their corresponding diagrams \hyperref[diagram pi]{(1)} and \hyperref[diagram rho]{(2)}, the upper horizontal maps are topological quotient maps by Fact \ref{epimorphism semigroup to group} and the left vertical maps are topological isomorphisms by Fact \ref{delta is homeomorphism}. The fact that $\bar r$ is a homeomorphism follows trivially by the definition of the ipp-topology and the fact that $\bar r$ is induced by an isomorphism of infinitary definability patterns structures.
\end{proof}

\section{Applications to tame, stable, NIP and WAP context}\label{section: nip stable tame wap}

In this section, we present several equivalence relations that are compatible, allowing us to use Theorem \ref{compatible implies absolute} to obtain absoluteness of their Ellis groups. For the notation used in this section, see Remark \ref{remark: pist and piNIP} and the comments following it. %We will assume that $\C$ is $\aleph_0$-saturated and strongly $\aleph_0$-homogeneous.

The following holds without any saturation assumptions on $\C$:
\begin{prop}
The equivalence relations $\tilde{E'}^{\textrm{st}}_\emptyset$ and $\tilde{E}^{\textrm{st}}_\emptyset$  are compatible.
\end{prop}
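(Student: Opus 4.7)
The plan is to verify both inclusions defining compatibility, namely $r[\tilde{E'}^{\textrm{st}}_\emptyset]\subseteq \tilde{E}^{\textrm{st}}_\emptyset$ and $\tilde{E}^{\textrm{st}}_\emptyset\subseteq r[\tilde{E'}^{\textrm{st}}_\emptyset]$, directly from the definition of $\tilde{E}^{\textrm{st}}_\emptyset$ recalled in Remark~\ref{remark: pist and piNIP}. The crucial observation driving both directions is that $\pi^{\textrm{st}}(x,y)$ is a partial type over $\emptyset$, so whether it holds on a pair of tuples depends only on their type over $\emptyset$ and transports freely between monster models; none of the minimality properties of $\pi^{\textrm{st}}$ need to be invoked.

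For the inclusion $r[\tilde{E'}^{\textrm{st}}_\emptyset]\subseteq \tilde{E}^{\textrm{st}}_\emptyset$, I would start from $(p',q')\in \tilde{E'}^{\textrm{st}}_\emptyset$ and unfold the definition to produce $a\models p'$ and $b\models q'$ in a big monster $\C^{**}\succ \C'$ with $\models \pi^{\textrm{st}}(a,b)$. Since $\C\prec \C'$, the same tuples $a,b$ realise $r(p')$ and $r(q')$ respectively in $\C^{**}$, and so they directly certify $(r(p'),r(q'))\in \tilde{E}^{\textrm{st}}_\emptyset$.

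For the reverse inclusion, given $(p,q)\in \tilde{E}^{\textrm{st}}_\emptyset$, the definition says that the partial type $p(x)\cup q(y)\cup \pi^{\textrm{st}}(x,y)$, whose parameters lie in $\C$, is consistent. Because $\C\prec \C'$, I can realise it in any sufficiently large elementary extension $\C^{**}$ of $\C'$, obtaining $a,b\in \C^{**}$ with $\tp(a/\C)=p$, $\tp(b/\C)=q$ and $\models \pi^{\textrm{st}}(a,b)$. Setting $p':=\tp(a/\C')\in S_X(\C')$ and $q':=\tp(b/\C')\in S_X(\C')$ (note $a,b\in X$ as $p,q\in S_X(\C)$), one has $r(p')=p$ and $r(q')=q$, while $a,b$ themselves witness $(p',q')\in \tilde{E'}^{\textrm{st}}_\emptyset$, giving $(p,q)\in r[\tilde{E'}^{\textrm{st}}_\emptyset]$.

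There is no serious obstacle in the argument: it is a direct unfolding of the definitions, and the only point requiring attention is that a $\pi^{\textrm{st}}$-witness in any monster is automatically a witness in any monster into which the relevant parameters embed elementarily. This is exactly the reason why the statement requires no saturation hypothesis on $\C$ or $\C'$. The same sketch will adapt verbatim to the NIP counterpart by substituting $\pi^{\textrm{NIP}}$ for $\pi^{\textrm{st}}$.
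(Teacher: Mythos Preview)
Your proposal is correct. The $(\subseteq)$ direction coincides with the paper's argument. For the $(\supseteq)$ direction the paper takes a slightly more involved route: it picks realizations $a\models p$, $b\models q$ with $\pi^{\textrm{st}}(a,b)$, then passes to an \emph{heir extension} $\tp(a'b'/\C')$ of $\tp(ab/\C)$, and verifies via a compactness argument (pulling a hypothetical failure back from $\C'$ to $\C$ using the heir property) that the resulting $p':=\tp(a'/\C')$ and $q':=\tp(b'/\C')$ are $\tilde{E'}^{\textrm{st}}_\emptyset$-related. Your argument bypasses this machinery entirely by realizing the consistent partial type $p(x)\cup q(y)\cup \pi^{\textrm{st}}(x,y)$ directly inside a monster $\C^{**}\succ \C'$ and using those very realizations as witnesses over $\C'$. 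Since $\pi^{\textrm{st}}$ is over $\emptyset$, any extension of $\tp(ab/\C)$ to $\C'$ already carries $\pi^{\textrm{st}}$ along, so the heir detour in the paper is not actually needed; your route is more elementary and, as you observe, transfers verbatim to the NIP case.
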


\begin{proof}
Let $\pi(x,y)$ be a partial type over $\emptyset$ (closed under conjunction) defining $E^{\textrm{st}}_\emptyset$. The same type defines $E'^{\textrm{st}}_\emptyset$. Recall also that $\tilde{E}^{\textrm{st}}_\emptyset$ is the equivalence relation on $S_X(\C)$ given by $$p \tilde{E}^{\textrm{st}}_\emptyset q \iff (\exists a \models p,  b \models q) (\pi(a,b)),$$ and $\tilde{E'}^{\textrm{st}}_\emptyset$ is the equivalence relation on $S_X(\C')$ given by $$p' \tilde{E'}^{\textrm{st}}_\emptyset q' \iff (\exists a' \models p',  b' \models q') (\pi(a',b')).$$

The goal is to prove that $r[\tilde{E'}^{\textrm{st}}_\emptyset] = \tilde{E}^{\textrm{st}}_\emptyset$.

$(\subseteq)$ Consider any $p',q' \in S_X(\C')$ with $p' \tilde{E'}^{\textrm{st}}_\emptyset  q'$. Then there are $a' \models p'$ and $b' \models q'$ such that $\pi(a',b')$. Hence, $a' \models r(p')$ and $b' \models r(b')$, and we get $r(p') \tilde{E}^{\textrm{st}}_\emptyset r(q')$.

$(\supseteq)$ Consider any $p,q \in S_X(\C)$ with $p \tilde{E}^{\textrm{st}}_\emptyset q$. The goal is to find some extensions $p',q' \in S_X(\C')$ of $p$ and $q$ respectively, satisfying $p' \tilde{E'}^{\textrm{st}}_\emptyset q'$. 

Take $a \models p$ and $b \models q$ such that $\pi(a,b)$. Let $\tp(a'b'/\C')$ be an heir extension of $\tp(ab/\C)$. We claim that $p':=\tp(a'/\C')$ and $q':=\tp(b'/\C)$ do the job. If not, then, by compactness, there are a formula $\varphi(x,y) \in \pi(x,y)$ and formulas $\psi_1(x,c') \in p'$ and $\psi_2(x,c') \in q'$ for which there are no $a''$ and $b''$ such that $\psi_1(a'',c') \wedge \psi_2(b'',c') \wedge \varphi(a'',b'')$ (here $\psi_i(x,x')$ is a formula without parameters and $c'$ is a tuple from $\C'$). Then 
$$\psi_1(x,c') \wedge \psi_2(y,c') \wedge \neg (\exists z)(\exists t)(\psi_1(z,c') \wedge \psi_2(t,c') \wedge \varphi(z,t)) \in \tp(a'b'/\C').$$
Since $\tp(a'b'/\C')$ is an heir extension of $\tp(ab/\C)$, there is $c \in \C$ such that 
$$\psi_1(x,c) \wedge \psi_2(y,c) \wedge \neg (\exists z)(\exists t)(\psi_1(z,c) \wedge \psi_2(t,c) \wedge \varphi(z,t)) \in \tp(ab/\C).$$
Taking $z:=a$ and $t:=b$, we get a contradiction with the fact that $\pi(a,b)$.
\end{proof}

From the previous proposition and Theorem \ref{compatible implies absolute}, we get the following immediate corollary (note the saturation assumption).

\begin{cor}
    The Ellis group of $S_X(\C)/\tilde{E}^{\textrm{st}}_\emptyset$ (treated as a topological group with the $\tau$-topology) does not depend on the choice of $\C$ as long as $\C$ is at least $\aleph_0$-saturated and strongly $\aleph_0$-homogeneous.
\end{cor}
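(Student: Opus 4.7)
The plan is to deduce this corollary directly from the preceding proposition together with Theorem~\ref{compatible implies absolute}, using a standard ``common extension'' trick to compare two different monster models.

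First, I would take any two models $\C_1$ and $\C_2$ satisfying the saturation and homogeneity assumptions. To compare the Ellis groups of $S_X(\C_1)/\tilde{E}^{\textrm{st}}_\emptyset$ and $S_X(\C_2)/\tilde{E}^{\textrm{st}}_\emptyset$, I would introduce an auxiliary model $\C' \succ \C_1, \C_2$ which is sufficiently saturated and strongly homogeneous (in particular, $\aleph_0$-saturated and strongly $\aleph_0$-homogeneous, as required by Theorem~\ref{compatible implies absolute}). Such a $\C'$ exists by standard model-theoretic constructions (e.g. any common elementary extension which is sufficiently saturated).

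Next, I would apply the preceding proposition twice: once to the pair $\C_1 \prec \C'$ and once to the pair $\C_2 \prec \C'$. Each application yields that the restriction map sends $\tilde{E}'^{\textrm{st}}_\emptyset$ (the relation computed in $\C'$) onto the corresponding $\tilde{E}^{\textrm{st}}_\emptyset$ computed in the smaller model. That is, $\tilde{E}'^{\textrm{st}}_\emptyset$ is compatible with $\tilde{E}^{\textrm{st}}_\emptyset$ viewed over $\C_1$, and also with $\tilde{E}^{\textrm{st}}_\emptyset$ viewed over $\C_2$.

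Finally, Theorem~\ref{compatible implies absolute} applied to each of these compatibility relations gives a topological isomorphism between the Ellis group of $(\aut(\C'), S_X(\C')/\tilde{E}'^{\textrm{st}}_\emptyset)$ and the Ellis group of each of $(\aut(\C_i), S_X(\C_i)/\tilde{E}^{\textrm{st}}_\emptyset)$ for $i=1,2$. Composing these two isomorphisms (one of them inverted) yields a topological isomorphism between the two Ellis groups over $\C_1$ and $\C_2$, as desired. This transitivity argument is the standard device for turning a ``compatibility with a common extension'' statement into genuine independence of the monster model. No serious obstacle is expected: the only point to check is that the auxiliary $\C'$ satisfies the hypotheses of both the proposition (no saturation needed) and the theorem ($\aleph_0$-saturation and strong $\aleph_0$-homogeneity), which is automatic.
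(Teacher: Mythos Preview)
Your proposal is correct and follows exactly the approach the paper intends: the paper simply states that the corollary follows ``immediately'' from the preceding proposition and Theorem~\ref{compatible implies absolute}, and your common-extension argument is precisely the standard unpacking of that one-line deduction. The only thing you add is the explicit passage through an auxiliary $\C'$ dominating both $\C_1$ and $\C_2$, which is implicit in the paper's formulation but necessary since Theorem~\ref{compatible implies absolute} is stated for a pair $\C\prec\C'$.
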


Similarly, the following holds:

\begin{prop}
The equivalence relations $\tilde{E'}^{\textrm{NIP}}_\emptyset$ and $\tilde{E}^{\textrm{NIP}}_\emptyset$  are compatible.
\end{prop}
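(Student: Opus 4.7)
The proof I would give is essentially a verbatim copy of the proof for the stable case, since the argument never uses stability as such---only that the relation is defined by a partial type over $\emptyset$ that is uniform across models. Let me sketch it.

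First, by Remark \ref{remark: pist and piNIP}, fix a type $\pi^{\textrm{NIP}}(x,y)$ over $\emptyset$ (closed under conjunction) whose realizations in any sufficiently saturated model define $E^{\textrm{NIP}}_\emptyset$; the same type defines $E'^{\textrm{NIP}}_\emptyset$ in $\C'$. Recall that $\tilde{E}^{\textrm{NIP}}_\emptyset$ is given by $p\,\tilde{E}^{\textrm{NIP}}_\emptyset\,q$ iff there exist $a\models p$, $b\models q$ (in a large monster) with $\pi^{\textrm{NIP}}(a,b)$, and analogously for $\tilde{E'}^{\textrm{NIP}}_\emptyset$. The goal is to show $r[\tilde{E'}^{\textrm{NIP}}_\emptyset]=\tilde{E}^{\textrm{NIP}}_\emptyset$.

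The inclusion $r[\tilde{E'}^{\textrm{NIP}}_\emptyset]\subseteq \tilde{E}^{\textrm{NIP}}_\emptyset$ is immediate: if $p'\,\tilde{E'}^{\textrm{NIP}}_\emptyset\,q'$ is witnessed by $a'\models p'$, $b'\models q'$ with $\pi^{\textrm{NIP}}(a',b')$, then the same $a',b'$ realize $r(p')$ and $r(q')$ respectively and witness $r(p')\,\tilde{E}^{\textrm{NIP}}_\emptyset\,r(q')$.

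For the reverse inclusion, given $p\,\tilde{E}^{\textrm{NIP}}_\emptyset\,q$ via $a\models p$, $b\models q$ with $\pi^{\textrm{NIP}}(a,b)$, I would let $\tp(a'b'/\C')$ be an heir extension of $\tp(ab/\C)$ and set $p':=\tp(a'/\C')$, $q':=\tp(b'/\C')$. These are extensions of $p$ and $q$, so it suffices to show $p'\,\tilde{E'}^{\textrm{NIP}}_\emptyset\,q'$. If not, then by compactness there are a formula $\varphi(x,y)\in\pi^{\textrm{NIP}}(x,y)$ and formulas $\psi_1(x,c'),\psi_2(y,c')\in\tp(a'b'/\C')$ (with $c'$ from $\C'$) such that
\[
\psi_1(x,c')\wedge\psi_2(y,c')\wedge\neg(\exists z\exists t)(\psi_1(z,c')\wedge\psi_2(t,c')\wedge\varphi(z,t))\in\tp(a'b'/\C').
\]
Applying the heir property, we find $c\in\C$ such that the analogous formula lies in $\tp(ab/\C)$, which on taking $z:=a$, $t:=b$ contradicts $\pi^{\textrm{NIP}}(a,b)$.

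The argument is formally identical to the stable case; the only conceptual point worth noting is that no property of NIP (analogous to the role stability plays elsewhere) is actually needed here---the compatibility holds for any equivalence relation on $S_X$ induced by a $\emptyset$-type-definable equivalence relation on $X$ defined uniformly across models. Thus there is no real obstacle, and the proof above should go through unchanged.
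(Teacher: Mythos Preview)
Your proposal is correct and matches the paper's approach exactly: the paper states this proposition with only the phrase ``Similarly, the following holds,'' i.e., it is left to the reader to repeat verbatim the heir-extension argument from the stable case. Your observation that the argument uses nothing about stability or NIP per se---only that the relation is induced by a fixed $\emptyset$-type uniformly across models---is precisely the point.
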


\begin{cor}
    The Ellis group of $S_X(\C)/\tilde{E}^{\textrm{NIP}}_\emptyset$ (treated as a topological group with the $\tau$-topology) does not depend on the choice of $\C$ as long as $\C$ is at least $\aleph_0$-saturated and strongly $\aleph_0$-homogeneous.
\end{cor}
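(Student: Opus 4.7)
The plan is to mimic the proof of the stable analogue verbatim, since the argument there never actually invokes stability of the quotient: it only uses that $\pi^{\textrm{st}}$ is a partial type over $\emptyset$ closed under conjunction which relatively defines the finest equivalence relation simultaneously in $\C$ and in $\C'$. I would first invoke Remark~\ref{remark: pist and piNIP} to fix a partial type $\pi^{\textrm{NIP}}(x,y)$ over $\emptyset$, closed under conjunction, that relatively defines both $E^{\textrm{NIP}}_\emptyset$ in $\C$ and $E'^{\textrm{NIP}}_\emptyset$ in $\C'$; then $\tilde E^{\textrm{NIP}}_\emptyset$ and $\tilde E'^{\textrm{NIP}}_\emptyset$ are the induced closed equivalence relations on $S_X(\C)$ and $S_X(\C')$ respectively, defined by the existence of representatives satisfying $\pi^{\textrm{NIP}}$.

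The inclusion $r[\tilde E'^{\textrm{NIP}}_\emptyset] \subseteq \tilde E^{\textrm{NIP}}_\emptyset$ is immediate: if $p'\tilde E'^{\textrm{NIP}}_\emptyset q'$ is witnessed by $a'\models p'$ and $b'\models q'$ with $\pi^{\textrm{NIP}}(a',b')$, then the same pair witnesses $r(p')\tilde E^{\textrm{NIP}}_\emptyset r(q')$, since $a'\models r(p')$ and $b'\models r(q')$.

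For the reverse inclusion $r[\tilde E'^{\textrm{NIP}}_\emptyset]\supseteq \tilde E^{\textrm{NIP}}_\emptyset$, I would take $p,q\in S_X(\C)$ with $p\tilde E^{\textrm{NIP}}_\emptyset q$, realized by $a\models p$, $b\models q$ satisfying $\pi^{\textrm{NIP}}(a,b)$, and lift this to $\C'$ using heirs. Concretely, choose $(a',b')$ such that $\tp(a'b'/\C')$ is an heir extension of $\tp(ab/\C)$, and set $p' := \tp(a'/\C')$, $q' := \tp(b'/\C')$. Then $p'$ and $q'$ extend $p$ and $q$, so it suffices to verify that $\pi^{\textrm{NIP}}(a',b')$ still holds. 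If it fails, then by compactness there are a formula $\varphi(x,y)\in\pi^{\textrm{NIP}}$ and formulas $\psi_1(x,c'),\psi_2(y,c')\in p',q'$ (where $c'\in\C'$) such that
\[
\psi_1(x,c')\wedge\psi_2(y,c')\wedge\neg(\exists z\, t)\bigl(\psi_1(z,c')\wedge\psi_2(t,c')\wedge\varphi(z,t)\bigr)\in \tp(a'b'/\C').
\]
By the heir property, one can pull $c'$ back to some $c\in\C$ so that the analogous formula lies in $\tp(ab/\C)$; taking $z:=a$, $t:=b$ then contradicts $\pi^{\textrm{NIP}}(a,b)$.

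The argument is purely syntactic, so there is no real obstacle here: the only facts used are that $\pi^{\textrm{NIP}}$ is a partial type over $\emptyset$ with the same instances defining the relevant equivalence relations in both $\C$ and $\C'$ (ensured by Remark~\ref{remark: pist and piNIP}) and the standard existence of heir extensions of types over $\C$ to $\C'$. In particular NIP of the quotient plays no role in the compatibility argument itself; it is only used implicitly to guarantee that $\pi^{\textrm{NIP}}$ is well defined across models.
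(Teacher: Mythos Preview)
Your proposal is correct and matches the paper's approach exactly: the paper simply states ``Similarly, the following holds'' for the NIP compatibility proposition, and your argument supplies precisely the verbatim adaptation of the stable-case heir-extension proof. The only thing to add for the corollary itself is the final sentence invoking Theorem~\ref{compatible implies absolute} once compatibility is established, just as in the stable case.
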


Next, we will show that the same is true for the equivalence relations $\fwapp$ and $\fwap$ described in the preliminaries of this chapter. However, this time we will need $\C$ and $\C'$ to be at least $(\aleph_0+\lambda)^+$-saturated and strongly $(\aleph_0+\lambda)^+$-homogeneous (where lambda is such that $X\subseteq \C^\lambda$). In particular, if $\lambda$ is finite, then $\aleph_1$-saturated and strongly $\aleph_1$-homogeneous is enough. 
\begin{remark}
    For the proof of the next theorem, without loss of generality we will assume that $\C$ is $\C'$-small. This is because if the theorem holds under those assumptions, we can take a monster model $\C'' \succ \C'$ in which both $\C$ and $\C'$ are small, and apply the result to the pairs $\C'' \succ \C'$ and $\C'' \succ \C$. Namely, for $r_1\colon S_X(\C') \to S_X(\C)$, $r_2 \colon S_X(\C'') \to S_X(\C')$, and $r_3 \colon S_X(\C'') \to S_X(\C)$ being the restriction maps, the theorem yields $r_2[F_{\textrm{WAP}}'']=\fwapp$ and $r_3[F_{\textrm{WAP}}'']=\fwap$. As $r_1[r_2[F_{\textrm{WAP}}'']]=r_3[F_{\textrm{WAP}}'']$, we conclude that $r_1[\fwapp] =\fwap$.
\end{remark}

\begin{teor}\label{fwapp and fwap}
The equivalence relations $\fwapp$ and $\fwap$ are compatible as long as $\C$ and $\C'$ are at least $(\aleph_0+\lambda)^+$-saturated and strongly $(\aleph_0+\lambda)^+$-homogeneous.
\end{teor}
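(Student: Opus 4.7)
By the remark preceding the theorem I will assume $\C$ is $\C'$-small; in particular $\C'$ is $|\C|^+$-saturated and strongly $|\C|^+$-homogeneous. The WAP and tame cases are formally parallel, with Fact \ref{double limit} and Fact \ref{wap closed unital} replaced respectively by Fact \ref{l_1 and indep seq} and Fact \ref{tame closed unital} in the tame case; I sketch the WAP case. The proof splits into the two inclusions $r[\fwapp]\subseteq \fwap$ and $\fwap\subseteq r[\fwapp]$.

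For $r[\fwapp]\subseteq \fwap$, I will show that the pullback $r^*\colon C(S_X(\C))\to C(S_X(\C'))$, $f\mapsto f\circ r$, carries WAP functions to WAP functions. Write $G:=\{\sigma'\in \aut(\C'): \sigma'(\C)=\C\}$ for the setwise stabiliser of $\C$, which surjects onto $\aut(\C)$ by strong $|\C|^+$-homogeneity; for $\sigma'\in G$ with $\sigma:=\sigma'\!\!\upharpoonright_\C$ one has $\sigma'\cdot (f\circ r) = (\sigma f)\circ r$, so the $G$-orbit of $f\circ r$ is the $r^*$-image of the $\aut(\C)$-orbit of $f$ and is therefore weakly relatively compact since $r^*$ is a weak-weak continuous isometric embedding. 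A strong-homogeneity approximation argument then shows that the full $\aut(\C')$-orbit of $f\circ r$ lies in the weak closure of its $G$-orbit: given $\sigma'\in \aut(\C')$, finitely many $p'_1,\ldots,p'_k\in S_X(\C')$, and $\varepsilon>0$, one uses uniform continuity of $f$ on the compact $S_X(\C)$ together with strong homogeneity to produce $\tau'\in G$ such that $|f(r(\sigma'^{-1}p'_j))-f(r(\tau'^{-1}p'_j))|<\varepsilon$ for each $j$. Thus $f\circ r$ is WAP. Since WAP functions separate $\fwap$-classes by Facts \ref{wap closed unital} and \ref{separating points: WAP}, this yields the inclusion: for $(p',q')\in \fwapp$ and WAP $f$ on $S_X(\C)$ one has $f(r(p'))=f(r(q'))$, whence $(r(p'),r(q'))\in \fwap$.

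For the reverse inclusion $\fwap\subseteq r[\fwapp]$, set $R:=r[\fwapp]$; I will show that $R$ is a closed $\aut(\C)$-invariant equivalence relation on $S_X(\C)$ whose quotient flow is WAP, so that minimality of $\fwap$ forces $\fwap\subseteq R$. Closedness is automatic from compactness of $S_X(\C')$ and continuity of $r$; $\aut(\C)$-invariance follows by lifting $\sigma\in \aut(\C)$ to $\sigma'\in G$ and using $\aut(\C')$-invariance of $\fwapp$ together with $r\circ \sigma' = \sigma\circ r$; and WAP-ness of $S_X(\C)/R$ follows because this is a continuous $\aut(\C)$-equivariant quotient of the WAP flow $S_X(\C')/\fwapp$, since quotients of WAP flows are WAP (pullback of continuous functions along a surjection is a weak-weak homeomorphism onto its image). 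The principal obstacle is transitivity of $R$: given $(p,q),(q,s)\in R$ with witnesses $(p'_1,q'_1),(q'_2,s'_2)\in \fwapp$ where in general $q'_1\neq q'_2$, one must combine the two witnesses into a single pair in $\fwapp$ projecting to $(p,s)$ under $r$. My plan is to invoke the section $\widetilde s\colon S_X(\C)\to S_X(\C')$ coming from strong heir extensions (Lemma \ref{there is a section}), which is a section of $r$ and a morphism of infinitary definability patterns structures, and to establish the key compatibility $p\fwap q\Longrightarrow \widetilde s(p)\fwapp \widetilde s(q)$. This compatibility, the technical heart of the proof, will be established by combining the fact that $\widetilde s$ preserves the content-theoretic data of Section \ref{section: prelim top dyn} controlling the Ellis semigroup with the characterisation of $\fwap$ via pullbacks of WAP functions obtained in the first inclusion. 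Granting the compatibility, $\widetilde s$ descends to a well-defined map $S_X(\C)/\fwap\to S_X(\C')/\fwapp$, and for any $(p,q)\in \fwap$ the pair $(\widetilde s(p),\widetilde s(q))\in \fwapp$ directly witnesses $(p,q)\in R$; both transitivity of $R$ and the inclusion $\fwap\subseteq R$ follow at once.
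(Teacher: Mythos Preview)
Your Part~1 argument (for $r[\fwapp]\subseteq\fwap$) contains a genuine error: the claim that every $\sigma'\cdot(f\circ r)$ lies in the pointwise closure of the $G$-orbit of $f\circ r$ is false. In the theory of an infinite set, take $c\in\C$, $c'\in\C'\setminus\C$, $\sigma'\in\aut(\C')$ with $\sigma'(c)=c'$, let $f$ be the characteristic function of the isolated point $\tp(c/\C)$, and put $p':=\tp(c'/\C')$. Then $\sigma'\cdot(f\circ r)(p')=f(\tp(c/\C))=1$, whereas for any $\tau'\in G$ (with $\tau:=\tau'\!\!\upharpoonright_\C$) one has $r(\tau'^{-1}p')=\tau^{-1}r(p')$, and $r(p')$ is the unique non-realized $1$-type over $\C$, which is $\aut(\C)$-invariant; hence $\tau'\cdot(f\circ r)(p')=0$ for all $\tau'\in G$, and no approximation is possible even at this single point. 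The conclusion that $f$ WAP implies $f\circ r$ WAP is true (it is equivalent to the inclusion), but it needs real work: the paper shows instead that $S_X(\C')/F$ is WAP for $F:=\bigcap_{\sigma\in\aut(\C')}\sigma(r^{-1}[\fwap])$, arguing by contradiction with the double-limit criterion, using $(\aleph_0+\lambda)^+$-saturation and homogeneity to move the countable witness set $N=\{c'_m,\sigma_n^{-1}(c'_m)\}$ into $\C$ and to arrange $\sigma_n\!\!\upharpoonright_\C\in\aut(\C)$, then transferring via unique $N$-invariant (strong heir) extensions and Tietze.

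In Part~2 your outline is close to the paper's, and your observation that $S_X(\C)/R$ is WAP as a $G$-equivariant image of $S_X(\C')/\fwapp$ is correct. But the ``key compatibility'' is stated for $\fwap$ when what is needed is the version for $R$, and your proposed justification is circular: Part~1 only says pullbacks $f\circ r$ of WAP functions are WAP, not that every WAP $g$ on $S_X(\C')$ arises this way, so you cannot conclude $g(\widetilde s(p))=g(\widetilde s(q))$ for arbitrary WAP $g$ from $p\,\fwap\,q$. For transitivity of $R$ you only need $(p,q)\in R\Rightarrow(\widetilde s(p),\widetilde s(q))\in\fwapp$, and this follows from content alone: if $(\alpha,\beta)\in\fwapp$ witnesses $(p,q)\in R$, then $c(\widetilde s(p),\widetilde s(q))=c(p,q)=c(r(\alpha),r(\beta))\subseteq c(\alpha,\beta)$, so by Fact~\ref{content and ellis semigroup} some $\eta\in E(S_X(\C'))$ carries $(\alpha,\beta)$ to $(\widetilde s(p),\widetilde s(q))$, which therefore lies in the closed $\aut(\C')$-invariant relation $\fwapp$. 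This is precisely the paper's argument. With transitivity established, the inclusion $\fwap\subseteq R$ then comes from minimality of $\fwap$; the direct route via compatibility for $\fwap$ that you sketch at the end is not available without already knowing the theorem.
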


\begin{proof}
    We first prove that $\fwap\subseteq r[\fwapp]$. It suffices to show that $ r[\fwapp]$ is a closed $\aut(\C)$-invariant equivalence relation on $S_X(\C)$ with WAP quotient.
    \begin{itemize}
        \item Closedness is clear.
        \item Equivalence relation: Take $a=r(\alpha)$, $b=r(\beta)=r(\beta')$ and $c=r(\gamma)$ where $\alpha \fwapp \beta$ and $\beta' \fwapp \gamma$. Let $(p_i)_{i<\mu}$ be an enumeration of $S_X(\C)$ and $(a_i)_{i<\mu}$ be a sequence of realizations. Consider the type $p:=\tp((a_i)_{i<\mu}/\C)$ and let $\tp ((a'_i)_{i<\mu}/\C')=:p'$ be a strong heir extension of $p$ (see Definition \ref{strong heirs}). For each $i<\mu$ we denote $\tp(a_i'/\C')$ by $p_i'$. Obviously, $a=p_{i_1}$, $b=p_{i_2}$ and $c=p_{i_3}$ for some $i_1,i_2,i_3<\mu$.
        \begin{claim}
            $p'_{i_1}\fwapp p'_{i_2} \fwapp p'_{i_3}$.
        \end{claim}
        \begin{proof}[Proof of claim]
            Clearly, $r(p'_i)=p_i$ for all $i<\mu$. Since $p'$ is a strong heir of $p$ we have 
            \begin{align*}
                c(p'_{i_1},p'_{i_2})&=c(p_{i_1},p_{i_2})\subseteq c(\alpha,\beta)\\
                c(p'_{i_2},p'_{i_3})&=c(p_{i_2},p_{i_3})\subseteq c(\beta',\gamma).
            \end{align*}
            Thus, by Fact \ref{content and ellis semigroup}, \begin{align*}
                \exists \eta_1 \in E(S_X(\C'))&[ \eta_1 (\alpha, \beta)=(p'_{i_1},p'_{i_2})]\\
                \exists \eta_2 \in E(S_X(\C'))&[ \eta_2 (\beta', \gamma)=(p'_{i_2},p'_{i_3})].
            \end{align*}
            Since the relation $\fwapp$ is $\aut(\C')$-invariant, closed, and $ \alpha \fwapp \beta \wedge \beta'\fwapp \gamma $, we conclude that $p'_{i_1}\fwapp p'_{i_2} \fwapp p'_{i_3}$.
        \end{proof}
        By the claim, $p'_{1_1}\fwapp p'_{i_3}$, so $a=r(p'_{i_1}) r[\fwapp] r(p'_{i_3})=c$.
        \item $\aut(\C)$-invariant: Take an arbitrary $\sigma\in \aut(\C)$ and extend it to $\sigma'\in \aut(\C')$. Consider any $a,b\in S_X(\C)$ such that $a r[\fwapp] b$ and let $\alpha$, $\beta \in S_X(\C')$ be such that $r(\alpha)=a$, $r(\beta)=b$ and $\alpha \fwapp \beta$. Then, $$ \sigma(a)=\sigma(r(\alpha))=r(\sigma'(\alpha))r[\fwapp] r(\sigma'(\beta))=\sigma(r(\beta))=\sigma(b). $$
        \item $S_X(\C)/r[\fwapp]$ is WAP: Assume for a contradiction that there is a function $f \in C ( S_X(\C)/r[\fwapp] )$ which is not WAP. That is, there is a net $(\sigma_i)_{i\in \I}\subseteq \aut( \C)$ such that the functions $\sigma_if$ converge pointwise to some function $g\notin C ( S_X(\C)/r[\fwapp] )$. Note that $r^{-1}[r[\fwapp]]\supseteq \fwapp$ is a closed $\aut(\C'/\{ \C \})$-invariant equivalence relation on $S_X(\C')$ and $r$ induces a homeomorphism $$\Tilde{r}: \bslant{S_X(\C')}{r^{-1}[r[\fwapp]]}\to \bslant{S_X(\C)}{r[\fwapp]} $$ satisfying $$ \Tilde{r}\left(\sigma'\left(\bslant{p}{r^{-1}[r[\fwapp]]}\right) \right)=\sigma'\!\!\upharpoonright_\C\left(\bslant{r(p)}{r[\fwapp]}\right) $$ for all $\sigma'\in\aut(\C'/\{ \C\})$. In particular, we have a homomorphism
        $$ \left( \aut(\C'/\{ \C \}), \bslant{ S_X(\C')} {r^{-1}[r[\fwapp]]} \right) \to \left(\aut(\C), \bslant {S_X(\C)}{r[\fwapp]} \right) $$ given by $$ \sigma'\left( \bslant{p}{r^{-1}[r[\fwapp]]} \right)=\sigma'\!\!\upharpoonright_\C\left( \bslant{r(p)}{r[\fwapp]} \right). $$

        For every $i\in \I$ choose an extension $\sigma'_i\in \aut(\C'/\{ \C \})$ of $\sigma_i$. The above homeomorphism together with $f$ induce a function $f'\in C ( S( \C')/ r^{-1}[r[\fwapp]] )$ given by $$f'\left( \bslant{p}{r^{-1}[r[\fwapp]]} \right)=f\left( \bslant{r(p)}{r[\fwapp]} \right). $$ By construction, the net $(\sigma'_if')_{i\in I}$ converges pointwise to a function $$g'\notin C ( S( \C')/ r^{-1}[r[\fwapp]] ).$$ Hence, the flow $$ \left( \aut(\C'/\{ \C \}), \bslant{ S_X(\C')} {r^{-1}[r[\fwapp]]} \right)$$  is not WAP, which implies that $(\aut(\C'), S_X(\C')/\fwapp) $ is not WAP (because WAP is preserved under decreasing the acting group and under taking quotients of flows), a contradiction.
    \end{itemize}

    Now, we prove $\fwap\supseteq r[\fwapp]$. This is equivalent to $r^{-1}[\fwap]\supseteq \fwapp$. Note that $r^{-1}[\fwap]$ is a closed equivalence relation on $S_X(\C')$ but it might not be $\aut(\C')$-invariant. To solve it, we consider the equivalence relation $$ F:= \bigcap_{\sigma \in \aut(\C')} \sigma(r^{-1}[\fwap] ). $$ Then, it is enough to show that $F\supseteq \fwapp$ which is equivalent to the flow $$(\aut(\C'),  S_X(\C')/F)$$ being WAP.

    Assume for a contradiction that there is $f\in C(S_X(\C')/F)$ which is not WAP. Let $\overline{f}:S_X(\C')\to \R$ be given by $\overline{f}=f\circ \pi_F$, where $\pi_F:S_X(\C')\to S_X(\C')/F$ is the quotient map. Then, $\overline{f}\in C( S_X(\C'))$ and it is not a WAP function. By Fact \ref{double limit}, there are $(\sigma_n)_{n<\omega}\subseteq \aut(\C')$ and $(c'_m )_{m<\omega}\subset X(\C')$ such that \begin{equation}\label{dlp f}
        \lim_n\lim_m \sigma_n\overline{f} (\tp(c'_m/\C')) \neq \lim_m\lim_n \sigma_n\overline{f} (\tp(c'_m/\C'))
    \end{equation} where both limits exist. Note that here we are using that the types over $\C'$ of the elements of $X(\C')$ form a dense subset of $S_X(\C')$, which uses that $X$ lives on $\C'$-small (even $\C$-small) tuples.

    Consider the set $N:=\{ c'_m: m<\omega \}\cup \{ \sigma_n^{-1}(c'_m): m,n<\omega \}.$ By $(\aleph_0+\lambda)^+$-saturation of $\C$, we may assume that $N\subset \C$ (just find $\sigma\in\aut(\C)$ such that $\sigma[N]\subset \C$ and replace $N$ by $\sigma[N]$, $c'_m$ by $\sigma(c'_m)$ and $\sigma_n$ by $\sigma\circ \sigma_n$). Note that it might happen that none of the restrictions $\sigma_n\!\!\upharpoonright_\C$ is an automorphism of $\C$. However, for every $n<\omega$, by strong $(\aleph_0+\lambda)^+$-homogeneity of $\C$ (and strong $\lvert \C\rvert^+$-homogeneity of $\C')$, we can replace $\sigma_n$ by $\sigma'_n\in \aut(\C')$ so that $\sigma_n'^{-1}$ extends  $\sigma_n^{-1}\!\!\upharpoonright_{\{c'_m: m<\omega \}}$ and the restriction of $\sigma'_n$ to $\C$ is in $\aut(\C)$, so we may assume that, for every $n<\omega$, the restriction $\sigma_n\!\!\upharpoonright_\C$ is an element of $\aut(\C)$.

    Let $\mathcal{H}:= \{ p\in S_X(\C): p \text{ is invariant over } N \}$; we enumerate $\mathcal{H}= (p_i)_{i<\mu}$ and choose $a_i\models p_i$ for every $i<\mu$. Consider the type $p=\tp((a_i)_{i<\mu}/\C)$  and let $\tp ((a'_i)_{i<\mu}/\C')=:p'$ be a strong heir extension of $p$ in the language $\mL_N$ (it exists by $\aleph_0$-saturation of $\C$ in the language $\mL_N$ which follows from $\aleph_1$-saturation of $\C$ in the language $\mL$). We denote $\tp(a_i'/\C')$ by $p_i'$ for each $i<\mu$. Since $\tp(a_i/\C)$ is $N$-invariant, $p_i'$ is the unique $N$-invariant extension of $p_i$ to $\C'$. Hence, the set $\mathcal{H}':=\{p_i'\}_{i<\mu}$ is precisely the set of all types in $S_X(\C')$ invariant over $N$, so it is closed in $S_X(\C')$. 

    Now, we define $h:\mathcal{H} \to \R$ by $h(p_i):=\overline{f}(p_i')$. The function $h$ belongs to $C(\mathcal{H})$ since for each closed interval $I\subseteq \R$ we have that $h^{-1}[I]=r[\overline{f}^{-1}[I] \cap \mathcal{H}']$ is a closed subset of $S_X(\C)$. Note that for each $c'\in N$ and $i<\mu$, if $p_i=\tp(c'/\C)$ then $p_i'=\tp(c'/\C')$ and so \begin{equation}\label{6.2}
        \sigma_n(\overline{f}( \tp( c'_m/\C') ) = \overline{f}(\tp(\sigma_n^{-1}(c'_m)/\C'))=h(\tp(\sigma_n^{-1}(c'_m)/\C))= \sigma_n(h( \tp( c'_m/\C) ).
    \end{equation} Using \ref{dlp f} and the fact above we deduce:
    \begin{equation}\label{6.3}
        \lim_n\lim_m \sigma_n h (\tp(c'_m/\C)) \neq \lim_m\lim_n \sigma_n h (\tp(c'_m/\C))
    \end{equation} where both limits exist.

    \begin{claim}
        For $p_i,p_j \in\mathcal{H}$, if $p_i \fwap p_j$ then $h(p_i)=h(p_j)$.
    \end{claim}
    \begin{proof}[Proof of claim]
        We show that $p_i' F p_j'$. Choose an arbitrary $\sigma'\in\aut(\C')$. We have that $$ c(p_i,p_j)=c(p_i',p_j')=c(\sigma'(p_i'),\sigma'(p_j'))\supseteq c(r(\sigma'(p_i')),r(\sigma'(p_j'))) .$$ Hence, there is $\eta\in E(S_X(\C))$ such that $\eta(p_i,p_j)=(r(\sigma'(p_i')),r(\sigma'(p_j')))$, which implies that $ r(\sigma'(p_i')) \fwap r(\sigma'(p_j')) $ (because $\fwap$ is $\aut(\C)$-invariant and closed). We then have $\sigma'(p_i') r^{-1}[\fwap]\sigma'(p_j')$, and since $\sigma'$ was arbitrary, we conclude that $p_i' F p_j'$. Therefore, since $\overline{f}=f\circ\pi_F$, we obtain that $\overline{f}(p'_i)=\overline{f}(p'_j)$, so $h(p_i)=h(p_j)$.
    \end{proof}

    Clearly, $\mathcal{H}/\fwap$ is a closed subset of $S_X(\C)/\fwap$ and, by the claim, $h=g\circ \rho$ for some $g\in C(\mathcal{H}/\fwap )$ and $\rho: \mathcal{H} \to \mathcal{H}/\fwap$ the quotient map. Tietze's extension theorem yields a function $\overline{g}\in C(S_X(\C)/\fwap)$ extending $g$. By construction, this function satisfies \begin{equation}\label{6.4}
        \lim_n\lim_m \sigma_n\overline{g} \left(\bslant{\tp(c'_m/\C)}{\fwap}\right) \neq \lim_m\lim_n \sigma_n\overline{g} \left(\bslant{\tp(c'_m/\C)}{\fwap}\right)
    \end{equation} where both limits exist, which by Fact \ref{double limit} contradicts the fact that $S_X(\C)/\fwap$ is a WAP flow.
\end{proof}

From the previous theorem and Theorem \ref{compatible implies absolute}, the following corollary follows immediately.

\begin{cor}
    The Ellis group of $S_X(\C)/\fwap$ (treated as a topological group with the $\tau$-topology) does not depend on the choice of $\C$ as long as $\C$ is at least $(\aleph_0+\lambda)^+$-saturated and strongly $(\aleph_0+\lambda)^+$-homogeneous.
\end{cor}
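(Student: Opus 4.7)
The plan is to deduce this corollary directly from Theorem \ref{fwapp and fwap} and Theorem \ref{compatible implies absolute}, using the standard ``common extension'' trick. So suppose we are given two models $\C_1$ and $\C_2$ of $T$, both $(\aleph_0+\lambda)^+$-saturated and strongly $(\aleph_0+\lambda)^+$-homogeneous; I want to show that the Ellis groups of the flows $(\aut(\C_i), S_X(\C_i)/F^i_{\textrm{WAP}})$ are topologically isomorphic, where $F^i_{\textrm{WAP}}$ denotes the finest closed $\aut(\C_i)$-invariant equivalence relation on $S_X(\C_i)$ with WAP quotient.

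First, choose a joint elementary extension $\C'\succ \C_1,\C_2$ which is $\kappa$-saturated and strongly $\kappa$-homogeneous for some $\kappa > |\C_1|+|\C_2|$; in particular $\C'$ satisfies the hypothesis of Theorem \ref{fwapp and fwap} relative to each $\C_i$, and both $\C_1,\C_2$ are $\C'$-small. Let $F'_{\textrm{WAP}}$ be the finest closed $\aut(\C')$-invariant equivalence relation on $S_X(\C')$ with WAP quotient, and let $r_i\colon S_X(\C')\to S_X(\C_i)$ denote the restriction maps for $i=1,2$.

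Next, I apply Theorem \ref{fwapp and fwap} to each of the pairs $(\C',\C_1)$ and $(\C',\C_2)$ separately. This yields that $F'_{\textrm{WAP}}$ is compatible with $F^1_{\textrm{WAP}}$ (via $r_1$) and with $F^2_{\textrm{WAP}}$ (via $r_2$); that is, $r_i[F'_{\textrm{WAP}}]=F^i_{\textrm{WAP}}$ for $i=1,2$. Now Theorem \ref{compatible implies absolute} applies to each pair and gives topological isomorphisms of Ellis groups
\[
\mathrm{EG}\bigl(\aut(\C'), S_X(\C')/F'_{\textrm{WAP}}\bigr) \;\cong\; \mathrm{EG}\bigl(\aut(\C_i), S_X(\C_i)/F^i_{\textrm{WAP}}\bigr)
\]
for $i=1,2$ (here $\mathrm{EG}$ denotes the Ellis group with its $\tau$-topology). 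Composing the isomorphism for $i=1$ with the inverse of the one for $i=2$ gives the desired topological isomorphism between the Ellis groups associated to $\C_1$ and $\C_2$.

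There is really no serious obstacle in this proof: once the two theorems are in hand, the argument is the standard ``transitivity via a common enlargement'' reduction. The only technical point to verify is the legitimacy of invoking Theorem \ref{fwapp and fwap} in this form, which requires that $\C'$ be $(\aleph_0+\lambda)^+$-saturated and strongly $(\aleph_0+\lambda)^+$-homogeneous relative to each $\C_i$, and that $\C_i$ be $\C'$-small; both conditions are arranged by our choice of $\C'$, so this is automatic. No new compatibility computations are needed, and no saturation assumption on $\C_1$ or $\C_2$ beyond the one stated is used.
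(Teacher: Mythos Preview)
Your argument is correct and is exactly the intended one: the paper simply says the corollary ``follows immediately'' from Theorem \ref{fwapp and fwap} and Theorem \ref{compatible implies absolute}, and your common-extension reduction is the standard way to unpack that. There is nothing to add.
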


Similarly, and under the same saturation assumptions, for the equivalence relations $\ftaa$ and $\fta$ we have the following:

\begin{teor}\label{ftaa and fta}
The equivalence relations $\ftaa$ and $\fta$ are compatible as long as $\C$ and $\C'$ are at least $(\aleph_0+\lambda)^+$-saturated and strongly $(\aleph_0+\lambda)^+$-homogeneous.
\end{teor}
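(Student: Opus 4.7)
The plan is to mimic the structure of the proof of Theorem \ref{fwapp and fwap}, replacing the double limit characterization of WAP (Fact \ref{double limit}) by the independent-sequence / $\ell_1$-sequence characterization of tameness (Definition \ref{defin: indep functons} and Fact \ref{l_1 and indep seq}). As in the WAP argument, we may assume $\C$ is $\C'$-small (reducing to a bigger monster $\C'' \succ \C'$ otherwise).

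For the inclusion $\fta \subseteq r[\ftaa]$, I would verify that $r[\ftaa]$ is a closed, $\aut(\C)$-invariant equivalence relation on $S_X(\C)$ with tame quotient. Closedness, transitivity via strong heirs, and $\aut(\C)$-invariance all carry over verbatim from the WAP proof, as those steps use only the Ellis semigroup and content machinery (Fact \ref{content and ellis semigroup}) together with the assumption that $\ftaa$ is a closed $\aut(\C')$-invariant equivalence relation, not any specific property of WAP. For the tameness of the quotient, I would suppose for contradiction that $(\aut(\C), S_X(\C)/r[\ftaa])$ admits an independent sequence of functions arising from $\{gf : g \in \aut(\C)\}$ for some $f \in C(S_X(\C)/r[\ftaa])$ (by Fact \ref{separating points: tame} combined with the definition of tame flow). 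Then, exactly as in the WAP case, I would use the homeomorphism
$$\widetilde{r}\colon S_X(\C')/r^{-1}[r[\ftaa]] \to S_X(\C)/r[\ftaa]$$
induced by the restriction to pull $f$ back to $f' \in C(S_X(\C')/r^{-1}[r[\ftaa]])$, extend each $\sigma_n \in \aut(\C)$ to $\sigma_n' \in \aut(\C'/\{\C\})$, and observe that the pulled-back sequence $(\sigma_n' f')_n$ is still independent in the sense of Definition \ref{defin: indep functons}. This would contradict tameness of $(\aut(\C'), S_X(\C')/\ftaa)$ via Fact \ref{l_1 and indep seq}, since independence is inherited by quotients and subgroups of the acting group.

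For the inclusion $\fta \supseteq r[\ftaa]$, as in the WAP proof, I would introduce
$$F := \bigcap_{\sigma \in \aut(\C')} \sigma(r^{-1}[\fta]),$$
which is a closed, $\aut(\C')$-invariant equivalence relation on $S_X(\C')$, and reduce to showing that the flow $(\aut(\C'), S_X(\C')/F)$ is tame. Supposing otherwise, Fact \ref{l_1 and indep seq} and Fact \ref{separating points: tame} would give a function $f \in C(S_X(\C')/F)$ whose $\aut(\C')$-orbit contains an independent subsequence. Writing $\overline{f} := f \circ \pi_F \in C(S_X(\C'))$, I would obtain $(\sigma_n)_{n<\omega}\subseteq \aut(\C')$, reals $r<s$, and elements $c'_m \in X(\C')$ such that, for all finite disjoint $P,M \subseteq \omega$,
$$\bigcap_{n \in P} (\sigma_n\overline{f})^{-1}(-\infty,r) \cap \bigcap_{n \in M}(\sigma_n\overline{f})^{-1}(s,\infty)$$
meets $\{\tp(c'_m/\C') : m < \omega\}$ (using density of realized types, which is valid since $X$ lives on $\C'$-small tuples). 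Here I use $(\aleph_0+\lambda)^+$-saturation to transport the countable set $N := \{c'_m\}\cup\{\sigma_n^{-1}(c'_m)\}$ into $\C$, and strong homogeneity to arrange $\sigma_n\!\!\upharpoonright_\C \in \aut(\C)$.

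From this point I would follow the WAP proof: enumerate the $N$-invariant types $\mathcal{H} = (p_i)_{i<\mu} \subseteq S_X(\C)$, take a strong heir extension in $\mathcal{L}_N$ to produce $\mathcal{H}' = (p_i')_{i<\mu}$, which is closed in $S_X(\C')$, and define $h \in C(\mathcal{H})$ by $h(p_i) := \overline{f}(p_i')$; the analog of the key claim is that $p_i \fta p_j$ implies $p_i' F p_j'$ (by the same content/Ellis argument together with $\aut(\C')$-invariance and closedness of $\fta$), hence $h$ factors through $\mathcal{H}/\fta$ and extends by Tietze to $\overline{g} \in C(S_X(\C)/\fta)$. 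Using the identity analogous to (\ref{6.2}), namely $\sigma_n(\overline{f}(\tp(c'_m/\C'))) = \sigma_n(h(\tp(c'_m/\C)))$, the independence inequalities transfer verbatim to the orbit $(\sigma_n\overline{g})_n$ acting on $\{\tp(c'_m/\C)/\fta : m<\omega\}$, contradicting the tameness of $(\aut(\C), S_X(\C)/\fta)$ via Fact \ref{l_1 and indep seq}.

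The main obstacle is verifying that the independence property in Definition \ref{defin: indep functons} behaves well under the various pullbacks and extensions used above. Unlike the double limit property, which is a single equation relating iterated limits, independence is a family of simultaneous inclusions indexed by finite partitions of $\omega$. I expect this to go through because independence is preserved under strict composition with continuous quotient maps and under restriction of the acting group, and the dense-subset version of Definition \ref{defin: indep functons} lets us work with the specific sequence $(\tp(c'_m/\C'))_m$; but care will be needed when verifying that the transported sequence $(\tp(c'_m/\C))_m$ still witnesses independence for $(\sigma_n \overline{g})_n$, which hinges on the relation $\overline{f}(p_i') = h(p_i)$ holding on the tuples arising from $N \subseteq \C$ and on the Tietze extension preserving these specific values.
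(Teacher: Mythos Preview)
Your proposal is correct and follows essentially the same approach as the paper, which explicitly mimics the WAP argument with independent sequences in place of double limits; the paper indexes the witnesses as $c'_{P,M}$ directly by finite disjoint pairs $(P,M)$ rather than via a single sequence $(c'_m)_m$, but this is only a cosmetic reindexing of a countable family. One minor slip: in your key claim you should invoke $\aut(\C)$-invariance of $\fta$ (not $\aut(\C')$-invariance), since $\fta$ is a relation on $S_X(\C)$.
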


\begin{proof}
    We first prove that $\fta\subseteq r[\ftaa]$. It suffices to show that $ r[\ftaa]$ is a closed $\aut(\C)$-invariant equivalence relation on $S_X(\C)$ with tame quotient. The fact that $r[\ftaa]$ is a closed $\aut(\C)$-invariant equivalence relation on $S_X(\C)$ follows by the same arguments as in the WAP context (see Theorem \ref{fwapp and fwap}).
    
    To show that $S_X(\C)/r[\ftaa]$ is tame, suppose for a contradiction that there is a function $f \in C ( S_X(\C)/r[\ftaa] )$ which is not tame. That is, there is a sequence $(\sigma_i)_{i<\omega}\subset\aut(\C)$ such that $(\sigma_i f)_{i<\omega}$ is an independent sequence. 
    Then we apply the corresponding part of the proof of Theorem \ref{fwapp and fwap}, replacing ``WAP" by ``tame" and noticing that by construction the sequence $(\sigma'_if')_{i<\omega}$ is independent, which leads to a contradiction.

    Now we prove $\fta\supseteq r[\ftaa]$. This is equivalent to $r^{-1}[\fta]\supseteq \ftaa$. We define a closed, $\aut(\C')$-invariant equivalence relation $$ F:= \bigcap_{\sigma \in \aut(\C')} \sigma(r^{-1}[\fta] ). $$ Then, it is enough to show that $F\supseteq \ftaa$ which is equivalent to the flow $$(\aut(\C'),  S_X(\C')/F)$$ being tame.

    Assume for a contradiction that there is $f\in C(S_X(\C')/F)$ which is not tame. Let $\overline{f}:S_X(\C')\to \R$ be given by $\overline{f}=f\circ \pi_F$. Then, $\overline{f}\in C( S_X(\C'))$ and it is not a tame function. That is, there exist $r<s\in \R$, $(\sigma_i)_{i<\omega}\subset\aut(\C')$ and $\{c'_{P,M}: P,M\subset_{fin} \omega \text{ disjoint} \}\subset X(\C')$ such that for any finite disjoint $P,M\subset \omega$ \begin{equation}\label{indep f}
        \models \{ \sigma_i\overline{f}(\tp(c'_{P,M}/\C'))<r: i\in P\} \cup \{\sigma_i\overline{f}(\tp(c'_{P,M}/\C'))>s: i\in M \}.
    \end{equation}
    The fact that we can choose $c'_{P,M}\in X(\C')$ follows from the fact that the types over $\C'$ of elements of $X(\C')$ form a dense subset of $S_X(\C')$ and the second part of Defnition \ref{defin: indep functons}.
    %By Grothendieck, there are $(\sigma_n)_{n<\omega}\subseteq \aut(\C)$ and $(c'_m )_{m<\omega}\subset \C'$ such that $$ \lim_n\lim_m \sigma_n\overline{f} (\tp(c'_m/\C')) \neq \lim_m\lim_n \sigma_n\overline{f} (\tp(c'_m/\C')) $$ where both limits exist.

    Consider the set $$N:=\{c'_{P,M}: P,M\subset_{fin} \omega \text{ disjoint} \}\cup \{\sigma_i^{-1}(c'_{P,M}): i<\omega, P,M\subset_{fin} \omega \text{ disjoint} \}.$$ By $(\aleph_0+\lambda)^+$-saturation of $\C$, applying an automorphism of $\C'$, we may assume that $N\subset \C$. Note that it might happen that none of the restrictions $\sigma_n\!\!\upharpoonright_\C$ is an automorphism of $\C$. However, for every $n<\omega$, by strong $(\aleph_0+\lambda)^+$-homogeneity of $\C$ (and strong $\lvert \C \rvert^{+}$-homogeneity of $\C'$), we can replace $\sigma_n$ by $\sigma'_n\in \aut(\C')$ so that $\sigma_n'^{-1}$ extends $\sigma_n^{-1}\!\!\upharpoonright_{\{c'_{P,M}: P,M\subset_{fin} \omega \text{ disjoint}\}}$ and the restriction of $\sigma'_n$ to $\C$ is in $\aut(\C)$, so we may assume that, for every $n<\omega$, $\sigma_n\!\!\upharpoonright_\C$ is an element of $\aut(\C)$.

    Let $\mathcal{H}:= \{ p\in S_X(\C): p \text{ is invariant over } N \}$, we enumerate $\mathcal{H}=( p_i)_{i<\mu}$ and choose $a_i\models p_i$ for every $i<\mu$.  Consider the type $p=\tp((a_i)_{i<\mu}/\C)$ and let $\tp ((a'_i)_{i<\mu}/\C')=:p'$ be a strong heir extension of $p$ in the language $\mL_N$, we denote $\tp(a_i'/\C')$ by $p_i'$ for each $i<\mu$. Since $\tp(a_i/\C)$ is $N$-invariant, $p_i'$ is the unique $N$-invariant extension of $p_i$ to $\C'$. Hence, the set $\mathcal{H}':=(p_i')_{i<\mu}$ is closed in $S_X(\C')$. 

    Then we apply the corresponding part of the proof of Theorem \ref{fwapp and fwap}, where the formulas \ref{6.2}, \ref{6.3} and \ref{6.4} are replaced by 
    $$ \sigma_n(\overline{f}( \tp( c'_{P,M}/\C') ) = \overline{f}(\tp(\sigma_n^{-1}(c'_{P,M})/\C'))=h(\tp(\sigma_n^{-1}(c'_{P,M})/\C))= \sigma_n(h( \tp( c'_{P,M}/\C) ),$$
    $$\models \{ \sigma_ih(\tp(c'_{P,M}/\C))<r: i\in P\} \cup \{\sigma_ih(\tp(c'_{P,M}/\C))>s: i\in M \}$$ and $$\models \{ \sigma_i\overline{g}(\tp(c'_{P,M}/\C))<r: i\in P\} \cup \{\sigma_i\overline{g}(\tp(c'_{P,M}/\C))>s: i\in M \},$$ respectively.
\end{proof}

Again, from the previous theorem and Theorem \ref{compatible implies absolute}, the following corollary follows immediately.

\begin{cor}
    The Ellis group of $S_X(\C)/\fta$ (treated as a topological group with the $\tau$-topology) does not depend on the choice of $\C$ as long as $\C$ is at least $(\aleph_0+\lambda)^+$-saturated and strongly $(\aleph_0+\lambda)^+$-homogeneous.
\end{cor}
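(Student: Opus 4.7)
The plan is to deduce this corollary directly from the two preceding theorems in the same way the analogous WAP corollary is obtained. The statement asserts absoluteness, so first I would fix two models $\C_1, \C_2$ each $(\aleph_0+\lambda)^+$-saturated and strongly $(\aleph_0+\lambda)^+$-homogeneous, and reduce to comparing them via a common larger monster model. Namely, I would pick a monster $\C''$ sufficiently saturated so that both $\C_1$ and $\C_2$ are $\C''$-small and embed as elementary substructures, and show it suffices to compare the Ellis group of $S_X(\C_i)/F_{\textrm{Tame}}^{(i)}$ with that of $S_X(\C'')/F_{\textrm{Tame}}^{(\C'')}$ for $i=1,2$.

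Next I would apply Theorem \ref{ftaa and fta} to each pair $\C'' \succ \C_i$: this gives that the restriction maps $r_i \colon S_X(\C'') \to S_X(\C_i)$ satisfy $r_i[F_{\textrm{Tame}}^{(\C'')}] = F_{\textrm{Tame}}^{(i)}$, i.e.\ these relations are compatible in the sense required. With compatibility in hand, I would then invoke Theorem \ref{compatible implies absolute} applied to each pair $(F_{\textrm{Tame}}^{(\C'')}, F_{\textrm{Tame}}^{(i)})$, yielding topological group isomorphisms between the Ellis group of $(\aut(\C''), S_X(\C'')/F_{\textrm{Tame}}^{(\C'')})$ (with the $\tau$-topology) and the Ellis group of $(\aut(\C_i), S_X(\C_i)/F_{\textrm{Tame}}^{(i)})$ for $i=1,2$. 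Composing these two isomorphisms gives the desired topological isomorphism of Ellis groups attached to $\C_1$ and $\C_2$.

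There is essentially no obstacle: both ingredients are already established in the excerpt. The only minor point of care is the passage to a common overmodel $\C''$, needed because Theorem \ref{ftaa and fta} is phrased for a pair $\C \prec \C'$ rather than two arbitrary models, but this is exactly the reduction already used in the remark preceding Theorem \ref{fwapp and fwap} for the WAP case. Thus the argument is a direct transcription of the proof of the corresponding WAP corollary, substituting Theorem \ref{ftaa and fta} in place of Theorem \ref{fwapp and fwap}.
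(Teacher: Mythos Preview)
Your proposal is correct and takes essentially the same approach as the paper, which simply states that the corollary follows immediately from Theorem~\ref{ftaa and fta} (compatibility of $\ftaa$ and $\fta$) together with Theorem~\ref{compatible implies absolute}. Your explicit passage through a common overmodel $\C''$ to compare two arbitrary models is the standard reduction implicit in the paper's one-line justification.
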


\section{Stable vs WAP, and NIP vs tame}

In this last section, we will study the relation between the equivalence relations $\fta$ and $\fwap$ and the finest $\emptyset$-type-definable equivalence relations on $X$ with NIP and stable quotients, respectively. Assume that $\C$ is $(\aleph_0+\lambda)$-saturated and strongly $(\aleph_0+\lambda)$-homogeneous (where $X\subseteq \C^\lambda)$.

Let $E$ be a $\emptyset$-type-definable equivalence relation on $X$. By \ref{2.4} we know that the quotient $X/E$ is stable if and only if every $f\in \mathcal{F}_{X/E}$ is stable. The connection between stable theories and WAP flows is well known (see \cite{ben2016weakly}). This connection is still true for the hyperdefinable set $X/E$.

\begin{prop}\label{wap iff stable formula}
    Let $f(x,y)\in \mathcal{F}_{X/E}$, and let $b\in \C^{\lvert y \rvert}$. We denote by $f_b$ the function $f_b:S_{X/E}(\C)\to \R$ given by $f_b(p)=f(a,b)$ for any $a/E\models p$. Then the following are equivalent:
    \begin{enumerate}
        \item $f(x,y)$ is stable.
        \item For all $b\in \C^{\lvert y \rvert}$ the function $f_b$ is WAP.
    \end{enumerate}
\end{prop}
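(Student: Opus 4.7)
The proof will pair the double–limit criterion for WAP (Fact \ref{double limit}) with the Ramsey--indiscernible characterization of stability of $f\in\mathcal{F}_{X/E}$ recalled right after Corollary \ref{2.4}: $f$ is stable iff for every indiscernible sequence $(a_i,b_i)_{i<\omega}$ with $a_i\in X$ one has $f(a_i,b_j)=f(a_j,b_i)$ for all (equivalently, some) $i<j$. A preliminary observation used in both directions: because $f$ extends to a CL-formula over $\emptyset$, the function $f$ is $\aut(\C)$-invariant on its two arguments, so for $\sigma\in\aut(\C)$, $a\in X$, and $p=\tp(a/E\,/\,\C)$,
\[
\sigma f_b(p)=f_b(\sigma^{-1}p)=f(\sigma^{-1}(a),b)=f(a,\sigma(b)).
\]
Moreover, by sufficient saturation of $\C$, the set $X_0:=\{\tp(a/E\,/\,\C):a\in X\}$ is dense in $S_{X/E}(\C)$, so Fact \ref{double limit} can be tested on $X_0$.

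For $(1)\Rightarrow(2)$, I fix $b$ and argue by contradiction. Suppose $f_b$ is not WAP. Fact \ref{double limit} supplies sequences $(\sigma_n)_{n<\omega}\subset\aut(\C)$ and $(a_m)_{m<\omega}\subset X$ such that, writing $p_m:=\tp(a_m/E\,/\,\C)$ and $b_n:=\sigma_n(b)$, both iterated limits of $\sigma_n f_b(p_m)=f(a_m,b_n)$ exist but differ by some $\delta>0$. Applying Ramsey's theorem twice (first to stabilise the value of $f(a_m,b_n)$ for $m<n$, then for $m>n$, up to $\delta/3$) and passing to subsequences, I can assume those stabilised values differ by at least $\delta/2$. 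Extracting a $\emptyset$-indiscernible sequence from $(a_m,b_m)_{m<\omega}$ via the standard Ramsey + compactness argument (as in Fact \ref{extracting indiscernibles}), I obtain an indiscernible sequence $(a'_i,b'_i)_{i<\omega}$ with $a'_i\in X$ realising the same EM-type, so that there are constants $r\neq s$ with $f(a'_i,b'_j)=r$ for $i<j$ and $f(a'_i,b'_j)=s$ for $j<i$. This directly contradicts stability of $f$.

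For $(2)\Rightarrow(1)$, suppose $f$ is not stable. By the Ramsey characterization above and saturation, there is an indiscernible sequence $(a_i,b_i)_{i<\omega}$ in $\C$ with $a_i\in X$ and two distinct values $r:=f(a_0,b_1)\neq f(a_1,b_0)=:s$. Put $b:=b_0$. By strong $(\aleph_0+\lambda)^+$-homogeneity of $\C$ (applied to the elementary map $b_0\mapsto b_n$), I pick $\sigma_n\in\aut(\C)$ with $\sigma_n(b_0)=b_n$ for each $n<\omega$, and set $p_m:=\tp(a_m/E\,/\,\C)$. The preliminary identity gives $\sigma_n f_b(p_m)=f(a_m,b_n)$, and indiscernibility yields
\[
\lim_{n}\lim_{m}\sigma_n f_b(p_m)=s\neq r=\lim_{m}\lim_{n}\sigma_n f_b(p_m),
\]
so by Fact \ref{double limit} the function $f_b$ is not WAP, contradicting (2).

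The main (and only mildly delicate) step is the Ramsey extraction in direction $(1)\Rightarrow(2)$: one must pass from an arbitrary grid $f(a_m,b_n)$ with unequal iterated limits to an honest indiscernible sequence $(a'_i,b'_i)$ displaying the stable/unstable pattern. This is a by-now standard two-step argument (Ramsey on the off-diagonal entries to stabilise their values up to $\delta/3$, then indiscernible extraction via Fact \ref{extracting indiscernibles}), and is the only place where some care is needed; the converse direction is essentially a direct translation of indiscernibility into the iterated-limit formulation of WAP.
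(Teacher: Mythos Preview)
Your proof is correct and follows essentially the same strategy as the paper: both directions pair Grothendieck's double-limit criterion with the order-property characterization of instability, and your $(2)\Rightarrow(1)$ is virtually identical to the paper's. The only difference is in $(1)\Rightarrow(2)$: the paper skips the indiscernible extraction entirely and, after obtaining the grid with unequal iterated limits, simply picks a diagonal subsequence $(a_i',b_i')$ with $f(a_i',b_j')>s$ for $i>j$ and $f(a_i',b_j')<r$ for $i<j$, which already witnesses instability via the original $\varepsilon$-order-property definition of a stable $f\in\mathcal F_{X/E}$ (no indiscernibility required). Your extra Ramsey-plus-extraction step is correct but unnecessary here.
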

\begin{proof}
    $(1)\implies (2)$ If the function $f_b$ is not WAP, by Fact \ref{double limit}, there is a sequence $(a_n)_{n<\omega}\subset X$ and a sequence of automorphisms $(\sigma_m)_{m<\omega}\subset \aut(\C)$ such that $$\lim_m\lim_n f(a_n,\sigma_m(b))\neq \lim_n\lim_m f(a_n,\sigma_m(b)) .$$ Note that we are using that the realized types are dense in $S_X(\C)$.
    Assume, without loss of generality that for some $r<s\in \R$ we have $$\lim_m\lim_n f(a_n,\sigma_m(b))>s \text{ and } \lim_n\lim_m f(a_n,\sigma_m(b))<r.$$
    Let us denote $\sigma_m(b)$ by $b_m$. It is clear that we can choose a subsequence $(a_i',b_i')_{i<\omega}$ from $(a_n,b_n)_{n<\omega}$ such that $f(a'_i,b'_j)>s$ whenever $i>j$ and $f(a'_i,b'_j)<r$ whenever $i<j$. That is, the sequence $(a_i',b_i')_{i<\omega}$ witnesses unstability of the formula $f(x,y)$. 

    $(2)\implies (1)$ If $f(x,y)$ is unstable, we can find an indiscernible sequence $(a_i,b_i)_{i<\omega}$ with $a_i\in X$ and $b_i\in \C^{\lvert y \rvert}$ such that $f(a_i,b_j)\neq f(a_j,b_i)$ for some (all) $i<j$, By indiscernibility, for each $i<\omega$ there is $\sigma_i \in \aut(\C)$ such that $\sigma_i(b_0)=b_i$ and there exists $r<s\in \R$ such that $$f(a_i,b_j)=r<s=f(a_j,b_i)$$ for all $i<j$. Hence $f_{b_0}$ is not a WAP function.
\end{proof}

\begin{cor}\label{cor: stable iff wap}
    The flow $(\aut(\C), S_{X/E}(\C))$ is WAP if and only if $X/E$ is stable.
\end{cor}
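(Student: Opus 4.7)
The plan is to combine the point-separating property of the family $\mathcal{F}_{X/E}$ (Proposition \ref{proposition: F_X/E separates points}) with the characterization of stability of $X/E$ through stability of functions in $\mathcal{F}_{X/E}$ (Corollary \ref{2.4}) and the WAP/stability correspondence at the level of single formulas (Proposition \ref{wap iff stable formula}). The argument then collapses to an application of Fact \ref{separating points: WAP}.

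More concretely, I would first observe that the family $\mathcal{A}:=\{f_b : f(x,y)\in \mathcal{F}_{X/E},\ b\in\C^{|y|}\}$ separates points in $S_{X/E}(\C)$. Indeed, given two distinct types $p,q\in S_{X/E}(\C)$ realized by $a_1/E$, $a_2/E$ respectively, by homogeneity of $\C$ there exist $b_1,b_2\in\C^{|y|}$ with $b_1=b_2=:b$ such that $\tp(a_1/E,b)\neq \tp(a_2/E,b)$, so by Proposition \ref{proposition: F_X/E separates points} there is $f\in\mathcal{F}_{X/E}$ with $f_b(p)=f(a_1',b)\neq f(a_2',b)=f_b(q)$ (for any representatives $a_1',a_2'$ of $a_1/E$, $a_2/E$). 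Each $f_b$ is continuous on $S_{X/E}(\C)$ by construction, since $f$ factors through $X/E\times\C^m$ and the map $S_{X/E}(\C)\to S_{X/E\times\{b\}}(\C)$ is continuous.

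Assuming $X/E$ is stable, by Corollary \ref{2.4} every $f\in\mathcal{F}_{X/E}$ is stable, so by Proposition \ref{wap iff stable formula} every $f_b\in \mathcal{A}$ is WAP. Since $\mathcal{A}$ separates points in $S_{X/E}(\C)$, Fact \ref{separating points: WAP} yields that $(\aut(\C),S_{X/E}(\C))$ is WAP. Conversely, if $(\aut(\C),S_{X/E}(\C))$ is WAP, then in particular every $f_b\in \mathcal{A}$ is WAP (as each $f_b\in C(S_{X/E}(\C))$), so by Proposition \ref{wap iff stable formula} every $f\in\mathcal{F}_{X/E}$ is stable, and hence $X/E$ is stable by Corollary \ref{2.4}.

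The only point requiring a little care is verifying that $\mathcal{A}$ is really a subfamily of $C(S_{X/E}(\C))$ that separates points; this uses only that $\mathcal{F}_{X/E}$ factors through $X/E$ (so $f_b$ is well-defined on $S_{X/E}(\C)$) and Proposition \ref{proposition: F_X/E separates points} together with $\aut(\C)$-homogeneity. No further obstacle is anticipated, as all of the genuine work has already been done in Section \ref{section: characterizations of stability} and in Proposition \ref{wap iff stable formula}.
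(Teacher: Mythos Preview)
Your proposal is correct and follows essentially the same approach as the paper: both directions combine Corollary \ref{2.4}, Proposition \ref{wap iff stable formula}, and Fact \ref{separating points: WAP}, with Proposition \ref{proposition: F_X/E separates points} used to verify that the family $\{f_b\}$ separates points in $S_{X/E}(\C)$. The only cosmetic difference is that the paper omits the explicit justification of point-separation (it just cites Proposition \ref{proposition: F_X/E separates points}), while you spell it out; your appeal to ``homogeneity of $\C$'' is unnecessary there, since distinctness of $p,q\in S_{X/E}(\C)$ directly gives a tuple $b\in\C$ with $\tp(a_1/E,b)\neq\tp(a_2/E,b)$.
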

\begin{proof}
    $(\Longrightarrow)$ By assumption, for any $f(x,y)\in\mathcal{F}_{X/E}$ and $b\in\C^{\lvert y\rvert}$, $f_b$ is WAP, so $f(x,y)$ is stable by Proposition \ref{wap iff stable formula}. Hence, $X/E$ is stable by Corollary \ref{2.4}.
    
    $(\Longleftarrow)$ By assumption and Corollary \ref{2.4}, every function $f(x,y)\in\mathcal{F}_{X/E}$ is stable, so for any $b\in \C^{\lvert y \rvert}$ the function $f_b$ is WAP by Proposition \ref{wap iff stable formula}. Thus, since by Proposition \ref{proposition: F_X/E separates points} the family of functions $\{ f_b: f\in \mathcal{F}_{X/E},b\in \C^{\lvert y \rvert}  \}$ separates points in $S_{X/E}(\C)$, we conclude that $(\aut(\C), S_{X/E}(\C))$ is WAP by Fact \ref{separating points: WAP}.
\end{proof}

%Note that given a $\emptyset$-type-definable equivalence relation $E$ on $\C$  we can define a $\emptyset$-type-definable equivalence relation $\Tilde{E}$ on $S_X(\C)$ given by $$ p\Tilde{E}q\iff \exists a\models p, b\models q \left( \tp(\bslant{a/E}{\C})=\tp(\bslant{b/E}{\C}) \right)$$ and $\Tilde{E}$ satisfies that $S_{\C/E}(\C)\cong S_\C(\C)/\Tilde{E}$.

Similarly, by Lemma \ref{IP_n hyperdef. set iif formulas} we know that the quotient $X/E$ has NIP if and only if every $f\in \mathcal{F}_{X/E}$ has NIP (see Definition \ref{defin: n-dep function/E} for $n=1$). The connection between NIP theories and tame flows is well known, it was first noticed independently in \cite{definably_Amenable_nip}, \cite{ibarlucia2016dynamical} and \cite{Khanaki2020-KHASTN} and further developed in \cite{Krupinski2018GaloisGA}. This connection is still true for the hyperdefinable set $X/E$.

\begin{prop}\label{tame iff nip formula}
    Let $f(x,y)\in \mathcal{F}_{X/E}$, and let $b\in \C^{\lvert y \rvert}$. We denote by $f_b$ the function $f_b:S_{X/E}(\C)\to \R$ given by $f_b(p)=f(a,b)$ for any $a/E\models p$. Then the following are equivalent:
    \begin{enumerate}
        \item $f(x,y)$ has NIP.
        \item For all $b\in \C^{\lvert y \rvert}$ the function $f_b$ is tame.
    \end{enumerate}
\end{prop}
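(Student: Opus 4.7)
The plan is to adapt the proof of Proposition \ref{wap iff stable formula} to the NIP--tame setting, running the parallel two directions by contrapositive.

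For $(1)\Rightarrow(2)$, I would assume that $f_b$ is not tame for some $b\in\C^{|y|}$ and deduce that $f(x,y)$ has IP. By the definition of tameness, there exist $r'<s'$, automorphisms $(g_i)_{i<\omega}\subseteq \aut(\C)$, and, for every finite disjoint $P,M\subseteq\omega$, a type in $S_{X/E}(\C)$ witnessing the independence pattern. Using that realized types are dense in $S_{X/E}(\C)$ together with the second clause in Definition \ref{defin: indep functons}, I may take this witness to be of the form $\tp(c_{P,M}/E/\C)$ with $c_{P,M}\in X$. Because $f$ is an $\emptyset$-definable CL-formula and $g_i$ is an automorphism,
$$g_if_b(\tp(c_{P,M}/E/\C))=f_b(\tp(g_i^{-1}(c_{P,M})/E/\C))=f(c_{P,M},g_i(b)),$$
so setting $a_i:=g_i(b)$ yields a sequence $(a_i)_{i<\omega}\subseteq\C^{|y|}$ such that for every finite disjoint $P,M\subseteq\omega$ there is $c_{P,M}\in X$ with $f(c_{P,M},a_i)<r'$ for $i\in P$ and $f(c_{P,M},a_i)>s'$ for $i\in M$. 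Finally, for each finite $w\subseteq\omega$ the partial type $\{x\in X\}\cup\{f(x,a_i)\leq r':i\in w\}\cup\{f(x,a_i)\geq s':i\in\omega\setminus w\}$ over $\C$ is finitely satisfiable by these $c_{P,M}$ (take $P:=w$ and $M$ to be any finite subset of $\omega\setminus w$), so by saturation of $\C$ it has a realization $b_w\in X$. The resulting data witness that $f$ has IP (with the same $r',s'$), contradicting $(1)$.

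For $(2)\Rightarrow(1)$, I would assume $f$ has IP, witnessed by $(a_i)_{i<\omega}$, reals $r<s$, and elements $b_w\in X$ for each finite $w\subseteq\omega$. A standard Ramsey-plus-compactness argument (as in Fact \ref{extracting indiscernibles}) allows me to replace $(a_i)$ by an indiscernible sequence satisfying the same EM-type, hence still witnessing IP: the property ``for every $w\subseteq\{0,\dots,n-1\}$ there exists $b\in X$ with the prescribed inequalities'' is part of the EM-type. WLOG $(a_i)$ is indiscernible, so by strong homogeneity of $\C$ there exist $g_i\in\aut(\C)$ with $g_i(a_0)=a_i$; put $b:=a_0$. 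Then for each finite $w\subseteq\omega$, letting $p_w:=\tp(b_w/E/\C)\in S_{X/E}(\C)$, the computation in the previous paragraph gives $g_if_b(p_w)=f(b_w,a_i)\leq r$ for $i\in w$ and $\geq s$ for $i\in\omega\setminus w$. Choosing $r<r'<s'<s$, for any finite disjoint $P,M\subseteq\omega$ the type $p_{P}$ satisfies $g_if_b(p_P)<r'$ for $i\in P$ and $g_if_b(p_P)>s'$ for $i\in M$, so $(g_if_b)_{i<\omega}$ is an independent sequence in $C(S_{X/E}(\C))$ and $f_b$ is not tame.

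The routine verifications are the translation $g_if_b(\tp(c/E/\C))=f(c,g_i(b))$ (which uses only $\emptyset$-invariance of $f$) and the compactness step in the first direction (which only needs $\aleph_1$-saturation, available because $\C$ is $(\aleph_0+\lambda)^+$-saturated). The one point requiring a little care is the extraction of indiscernibles in the second direction while preserving the IP configuration; this is standard and parallels the analogous step in classical model theory. As a corollary, combining this proposition with Lemma \ref{IP_n hyperdef. set iif formulas}, Proposition \ref{proposition: F_X/E separates points}, and Fact \ref{separating points: tame}, one obtains that $(\aut(\C),S_{X/E}(\C))$ is tame if and only if $X/E$ has NIP, exactly in parallel with Corollary \ref{cor: stable iff wap}.
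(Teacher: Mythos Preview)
Your proof is correct and follows the same overall contrapositive strategy as the paper. The $(1)\Rightarrow(2)$ direction is essentially identical: the paper simply says ``the sequence of functions $(f(x,\sigma_i(b)))_{i<\omega}$ is independent on $X$, so $f$ has IP by compactness,'' and you have spelled out this compactness step in detail.

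For $(2)\Rightarrow(1)$ the paper takes a slightly different route: it invokes Lemma~\ref{even odd nip} to obtain a single $a\in X$ and an indiscernible sequence $(b_i)_{i<\omega}$ with $f(a,b_i)<r$ iff $i$ is even (and $>s$ iff $i$ is odd), and then asserts that $(f(x,b_i))_{i<\omega}$ is independent, implicitly using indiscernibility to realize every finite alternation pattern via an automorphism of the sequence. You bypass that lemma by extracting an indiscernible from the $a_i$'s and keeping the entire family of shattering witnesses $b_w$; the single type $p_P=\tp(b_P/E/\C)$ already certifies the independence condition for the pair $(P,M)$. Both arguments rest on the same indiscernibility-plus-homogeneity mechanism; yours is a bit more direct in that it avoids the auxiliary even/odd reformulation, while the paper's version packages that step into a reusable lemma.
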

\begin{proof}
    $(1)\implies (2)$. If $f_b$ is not tame for some $b\in \C^{\lvert y \rvert}$, then there is a sequence $(\sigma_i)_{i<\omega}\subset \aut(\C)$ such that the sequence of functions $(f(x,\sigma_i(b)))_{i<\omega}$ is independent on $X$, so $f$ has IP by compactness. %we construct a witness for $IP$ using the independent family $(f(x,\sigma_i(b)))_{i<\omega}$ and compactness.

    $(2)\implies (1)$. If $f(x,y)$ has $IP$, then we can find an element $a\in X$, an indiscernible sequence $(b_i)_{i<\omega}\subset \C^{\lvert y \rvert}$ and $r<s\in \mathbb{R}$ such that $f(a,b_i)<r$ if and only if $i$ is even and $f(a,b_i)>s$ if and only if $i$ is odd (see Lemma \ref{even odd nip}). By indiscernibility, for each $i<\omega$ there is $\sigma_i \in \aut(\C)$ such that $\sigma_i(b_0)=b_i$. Hence, $f_{b_0}$ is not a tame function because the sequence $(f(x,b_i))_{i<\omega}$ is independent.
\end{proof}

\begin{cor}\label{cor: nip iff tame}
    The flow $(\aut(\C), S_{X/E}(\C))$ is tame if and only if $X/E$ is NIP.
\end{cor}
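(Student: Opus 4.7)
The plan is to mimic the proof of Corollary \ref{cor: stable iff wap} verbatim, swapping every ingredient for its NIP/tame analog. The structural skeleton is identical: relate the flow-theoretic property to tameness of the individual functions $f_b$ via Proposition \ref{tame iff nip formula}, relate tameness of every $f_b$ to NIP of $f$ via the same proposition, and relate NIP of every $f\in\mathcal{F}_{X/E}$ to NIP of $X/E$ via Lemma \ref{IP_n hyperdef. set iif formulas}. The separating-points step, which in the WAP case used Fact \ref{separating points: WAP}, will instead use Fact \ref{separating points: tame}, and the separating family itself is the same one supplied by Proposition \ref{proposition: F_X/E separates points}.

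First I would handle the forward direction: assume $(\aut(\C),S_{X/E}(\C))$ is tame. Then every continuous function on $S_{X/E}(\C)$ is tame, so in particular $f_b$ is tame for every $f\in\mathcal{F}_{X/E}$ and every $b\in \C^{|y|}$. By Proposition \ref{tame iff nip formula}, $f$ has NIP. Since this holds for every $f\in\mathcal{F}_{X/E}$, Lemma \ref{IP_n hyperdef. set iif formulas} (applied with $n=1$, noting that $1$-dependence coincides with NIP in the sense of Definition \ref{nip hyperdef}, as observed right after that definition) gives that $X/E$ has NIP.

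For the converse, assume $X/E$ has NIP. By Lemma \ref{IP_n hyperdef. set iif formulas}, every $f(x,y)\in\mathcal{F}_{X/E}$ has NIP, and then by Proposition \ref{tame iff nip formula} the function $f_b$ is tame for every $b\in \C^{|y|}$. Proposition \ref{proposition: F_X/E separates points} ensures that the family $\{f_b : f\in\mathcal{F}_{X/E},\, b\in\C^{|y|}\}$ separates points of $S_{X/E}(\C)$, so Fact \ref{separating points: tame} yields that the flow $(\aut(\C),S_{X/E}(\C))$ is tame.

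I do not anticipate a real obstacle here: all the machinery has been established in the previous two propositions and the preliminaries, and this corollary is purely an assembly. The only minor point worth double-checking is that ``$X/E$ has NIP'' in the sense of Definition \ref{nip hyperdef} matches ``every $f\in \mathcal{F}_{X/E}$ has NIP'' via Lemma \ref{IP_n hyperdef. set iif formulas}, but this correspondence is exactly what Lemma \ref{IP_n hyperdef. set iif formulas} gives for $n=1$, together with the remark identifying $1$-dependence with the NIP of Definition \ref{nip hyperdef}.
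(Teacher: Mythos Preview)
Your proposal is correct and essentially identical to the paper's proof: both directions invoke Proposition~\ref{tame iff nip formula} to pass between NIP of $f$ and tameness of each $f_b$, Lemma~\ref{IP_n hyperdef. set iif formulas} (with $n=1$) to pass between NIP of $X/E$ and NIP of every $f\in\mathcal{F}_{X/E}$, and for the backward direction Proposition~\ref{proposition: F_X/E separates points} together with Fact~\ref{separating points: tame} to conclude tameness of the flow.
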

\begin{proof}
    $(\Longrightarrow)$ By assumption, for any $f(x,y)\in\mathcal{F}_{X/E}$ and $b\in\C^{\lvert y\rvert}$, $f_b$ is tame, so $f(x,y)$ has NIP by Proposition \ref{tame iff nip formula}. Hence, $X/E$ has NIP by Lemma \ref{IP_n hyperdef. set iif formulas}.
    
    $(\Longleftarrow)$ By assumption and Lemma \ref{IP_n hyperdef. set iif formulas}, every function $f(x,y)\in\mathcal{F}_{X/E}$ has NIP, so for any $b\in \C^{\lvert y \rvert}$ the function $f_b$ is tame by Proposition \ref{tame iff nip formula}. Thus, since by Proposition \ref{proposition: F_X/E separates points} the family of functions $\{ f_b: f\in \mathcal{F}_{X/E},b\in \C^{\lvert y \rvert}  \}$ separates points in $S_{X/E}(\C)$, we conclude that $(\aut(\C), S_{X/E}(\C))$ is tame by Fact \ref{separating points: tame}.
    %$(2)\iff (3)$ holds by Proposition \ref{2.3}, Lemma \ref{IP_n hyperdef. set iif formulas} and Fact \ref{separating points: tame} since these results imply that the family $\{ f_b: f\in \mathcal{F}_{X/E},b\in \C^{\lvert y \rvert}  \}$ separates points and every function contained in it is tame.
\end{proof}

%Recall that $E^{\textrm{st}}_{\emptyset}$ is the finest $\emptyset$-type-definable equivalence relation on $\C$ with stable quotient, and $E^{\textrm{NIP}}_{\emptyset}$ is the finest $\emptyset$-type-definable equivalence relation on $\C$ with NIP quotient (which always exist).
Below we will use the notation introduced in Remark \ref{remark: pist and piNIP} and the comments following it. Note that, while a $\emptyset$-type-definable equivalence relation on $X$ induces a closed $\aut(\C)$-invariant equivalence relation on $S_X(\C)$, the converse is not true. Namely, not every closed $\aut(\C)$-invariant equivalence relation on $S_X(\C)$ is created this way. In the case of $\Tilde{E}^{\textrm{WAP}}_{\emptyset}$ and $\Tilde{E}^{\textrm{NIP}}_{\emptyset}$, by Corollaries \ref{cor: stable iff wap} and \ref{cor: nip iff tame}, we get that $S_X(\C)/\Tilde{E}^{\textrm{st}}_{\emptyset}$ is a WAP flow and $S_X(\C)/\Tilde{E}^{\textrm{NIP}}_{\emptyset}$ is a tame flow. However, the next proposition shows that $\Tilde{E}^{\textrm{st}}_{\emptyset}$ is may not be equal to $\fwap$ and $\Tilde{E}^{\textrm{NIP}}_{\emptyset}$  may not be equal to $\fta$.

\begin{prop}\label{fwap strictly finer}
    Assume that $X$ is an $\emptyset$-type-definable subset of $\C^\lambda$, and $\C$ is $(\aleph_0+\lambda)$-saturated and strongly $(\aleph_0+\lambda)$-homogeneous.
    \begin{enumerate}
        \item If $X$ is unstable, then $\fwap\subsetneq \Tilde{E}^{\textrm{st}}_{\emptyset}$.
        \item If $X$ has IP, then $\fta\subsetneq \Tilde{E}^{\textrm{NIP}}_{\emptyset}$.
    \end{enumerate}
\end{prop}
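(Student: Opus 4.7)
The forward inclusions in both parts are routine: by definition $X/E^{\textrm{st}}_\emptyset$ is stable and $X/E^{\textrm{NIP}}_\emptyset$ has NIP, so Corollaries \ref{cor: stable iff wap} and \ref{cor: nip iff tame} give that the flows $(\aut(\C),S_{X/E^{\textrm{st}}_\emptyset}(\C))$ and $(\aut(\C),S_{X/E^{\textrm{NIP}}_\emptyset}(\C))$ are WAP and tame, respectively. Under the natural $\aut(\C)$-flow isomorphism $S_{X/E}(\C)\cong S_X(\C)/\tilde E$ recalled at the end of Section 2.5, this says that $\tilde E^{\textrm{st}}_\emptyset$ and $\tilde E^{\textrm{NIP}}_\emptyset$ are themselves closed $\aut(\C)$-invariant equivalence relations on $S_X(\C)$ with WAP and tame quotients, so minimality of $\fwap$ and $\fta$ yields $\fwap\subseteq\tilde E^{\textrm{st}}_\emptyset$ and $\fta\subseteq\tilde E^{\textrm{NIP}}_\emptyset$.

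For strictness in (1), the plan is to exhibit two distinct points of $S_X(\C)$ that lie in the same $\tilde E^{\textrm{st}}_\emptyset$-class but are separated by a WAP function on $S_X(\C)$. Since $X$ is unstable, the equality relation on $X$ is not an $\emptyset$-type-definable equivalence relation with stable quotient, so $E^{\textrm{st}}_\emptyset$ must be strictly coarser than equality on $X$. By $(\aleph_0+\lambda)^+$-saturation of $\C$ there exist $a\neq b\in X(\C)$ with $\pi^{\textrm{st}}(a,b)$. Setting $p_a:=\tp(a/\C)$ and $p_b:=\tp(b/\C)$, we have $p_a\neq p_b$ while $p_a\,\tilde E^{\textrm{st}}_\emptyset\, p_b$ (witnessed by $a,b$ themselves).

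To separate $p_a$ and $p_b$ by a WAP function, fix a coordinate $i<\lambda$ with $a_i\neq b_i$ and consider the $\{0,1\}$-valued $\emptyset$-definable function $\phi_i(x,y):=\mathbf{1}_{x_i=y_i}$. Since $\{x_i=y_i\}$ is clopen in the Stone space, $\phi_i$ extends to a CL-formula over $\emptyset$ on $\C^\lambda\times\C^\lambda$ and hence lies in $\mathcal{F}_X$; moreover it is clearly stable. Proposition \ref{wap iff stable formula} then gives that the function $(\phi_i)_b\colon S_X(\C)\to\{0,1\}$, $p\mapsto\mathbf{1}_{(a_p)_i=b_i}$, is WAP. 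Its value at $p_a$ is $0$ and at $p_b$ is $1$. Since $\fwap$ may be characterised as the equivalence relation on $S_X(\C)$ identifying $p$ and $q$ exactly when $h(p)=h(q)$ for every WAP $h\in C(S_X(\C))$, this forces $p_a\not\fwap p_b$, while $p_a\,\tilde E^{\textrm{st}}_\emptyset\, p_b$, yielding the strict inclusion $\fwap\subsetneq\tilde E^{\textrm{st}}_\emptyset$.

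Part (2) proceeds by the entirely parallel argument with ``stable'' replaced by ``NIP'' and ``WAP'' by ``tame'': $X$ having IP forces $E^{\textrm{NIP}}_\emptyset$ to be strictly coarser than equality, producing $a\neq b\in X(\C)$ with $a\,E^{\textrm{NIP}}_\emptyset\, b$; the single-coordinate equality formula $\phi_i(x,y)=\mathbf{1}_{x_i=y_i}$ at an index $i$ where $a_i\neq b_i$ is NIP, so Proposition \ref{tame iff nip formula} gives a tame function on $S_X(\C)$ separating $p_a$ from $p_b$, hence $p_a\not\fta p_b$ while $p_a\,\tilde E^{\textrm{NIP}}_\emptyset\, p_b$. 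There is no real obstacle in the argument; the only point that must be checked is that the single-coordinate equality formula is genuinely in $\mathcal{F}_X$, which is immediate from clopenness of $\{x_i=y_i\}$ in the type space.
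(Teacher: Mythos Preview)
Your argument is correct and rests on the same idea as the paper's: the equality relation is stable (hence NIP), so it produces WAP (hence tame) functions on $S_X(\C)$ that separate any two distinct realized types, while $\tilde{E}^{\textrm{st}}_\emptyset$ (respectively $\tilde{E}^{\textrm{NIP}}_\emptyset$) must glue some realized types. The paper packages this globally, defining $E$ on $S_X(\C)$ by $pEq \iff p|_{=} = q|_{=}$ and observing that $S_X(\C)/E$ embeds as a subflow of the $\aut(\C)$-flow on types in the empty language, which is WAP since that theory is stable; you instead work pointwise, exhibiting a single stable formula whose associated function separates $p_a$ from $p_b$. Both routes are equally short.

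Two minor imprecisions to clean up. First, you invoke ``$(\aleph_0+\lambda)^+$-saturation'', but the hypothesis only gives $(\aleph_0+\lambda)$-saturation; this weaker assumption already suffices to realize the consistent $\emptyset$-type $\pi^{\textrm{st}}(x,y)\cup\{x\neq y\}\cup\{x,y\in X\}$ in $\C$ by the usual coordinate-by-coordinate induction. Second, as written $\phi_i(x,y)=\mathbf{1}_{x_i=y_i}$ has $y$ a $\lambda$-tuple, which does not literally lie in $\mathcal{F}_X$ (where the second argument must be a finite tuple); replace it by $\psi(x,z):=\mathbf{1}_{x_i=z}$ with a single variable $z$ and plug in $z=b_i$, so that Proposition~\ref{wap iff stable formula} applies directly.
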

\begin{proof}
    The inclusions follow from the above observations that $S_X(\C)/\Tilde{E}^{\textrm{st}}_{\emptyset}$ is WAP and $S_X(\C)/\Tilde{E}^{\textrm{NIP}}_{\emptyset}$ is tame. It remains to show that $\fwap \neq \Tilde{E}^{\textrm{st}}_{\emptyset} $ and $\fta\neq \Tilde{E}^{\textrm{NIP}}_{\emptyset}$. We will prove the first thing; the proof of the second one is analogous.

Since $X$ is unstable, $E^{\textrm{st}}_\emptyset \ne  \;\,=$, so  $\tilde{E}^{\textrm{st}}_\emptyset$ glues some types in $S_X(\C)$ which are realized in $\C$.

On the other hand, define a closed equivalence relation $E$ on $S_X(\C)$ by $$pEq \iff p_= = q_=,$$ where $p_=$ and $q_=$ denote the restrictions of $p$ and $q$ to the empty language (so we allow only the equality relation). Let $S^=_{X}(\C)$ be the collection of all global types in the empty language of the elements from $X(\C')$, where $\C' \succ \C$ is a monster model of the original theory in which $\C$ is small. Then $S^=_{X}(\C) = \{\tp(a/\C)_=: a \in X\} \cup \{\textrm{the unique non-realized type}\}$ is a closed subset of $S^=_{\C}(\C)$ invariant under $\aut(\C)$. We also see that $S_X(\C)/E \cong S^=_{X}(\C)$ as $\aut(\C)$-flows.
As the theory of $\C$ in the empty language is stable, by Corollary \ref{cor: stable iff wap}, we get that $(\Sym(\C), S^=_{\C}(\C))$ is WAP. Hence, since WAP  is closed under decreasing the acting group and under taking subflows, $(\aut(\C), S^=_{X}(\C))$ is also WAP, and so is $(\aut(\C),S_X(\C)/E)$. Therefore, $\fwap \subseteq E$. Thus, since $E$ does not glue any realized types in $S_X(\C)$, neither does $\fwap$.

By the conclusions of the last two paragraphs, we conclude that $\fwap \ne \tilde{E}^{\textrm{st}}_\emptyset$.
\end{proof}

Although $\fwap$ and $\fta$ are almost always strictly finer than $\Tilde{E}^{\textrm{st}}_\emptyset$ and $\Tilde{E}^{\textrm{NIP}}_\emptyset$, the following question and its analog for the tame case remain open:

\begin{question}
    Are the Ellis groups of the flows $$(\aut(\C), S_X(\C)/\fwap)$$ and $$(\aut(\C), S_X(\C)/\Tilde{E}^{\textrm{st}}_\emptyset)$$ isomorphic?
\end{question}

Proposition \ref{fwap strictly finer} justifies our interest in $\fwap$ and $\fta$, because it suggests that the quotients by these equivalence relations should capture more information about the theory in question than the quotients by $\Tilde{E}^{\textrm{st}}_\emptyset$ and $\Tilde{E}^{\textrm{NIP}}_\emptyset$ while maintaining similar good properties (having in mind that WAP is a dynamical version of stability and tameness a dynamical version of NIP.

\appendix
\chapter{Products of stable and NIP hyperdefinable sets}
We prove that the properties of stability and NIP for hyperdefinable sets are preserved under (possibly infinite) Cartesian products and taking type-definable subsets. %We consider two $\emptyset$-type-definable sets $X'$ and $Y'$ and two $\emptyset$-type-definable equivalence relations $E$ and $F$ on $X'$ and $Y'$, respectively. We denote by $X$ the hyperdefinable set $X'/E$ and by $Y$ the hyperdefinable set $Y'/F$.

\begin{remark} \label{finest eq exist for fixed param}
    The above is enough to guarantee that for a fixed set of parameters $A$, an arbitrary intersection of $A$-type-definable equivalence relations $(E_i)_{i<\mu}$ with stable (respectively NIP) quotient on a type-definable set $X$ is an equivalence relation with stable (respectively NIP) quotient space. Moreover, the finest $A$-type-definable equivalence relation on $X$ with stable (respectively NIP) quotient always exists.
\end{remark}
\begin{proof}[Proof of the Remark]
     The hyperdefinable set $$\bslant{X}{\bigcap_{i<\mu}E_i}$$ can be naturally identified with a type-definable subset of $$\prod_{i<\mu}X/E_i.$$
     
     For the moreover part, consider the $A$-type-definable equivalence relation on $X$ defined as the intersection of all $A$-type-definable equivalence relations on $X$ with stable quotient.
\end{proof}

Clearly, taking type-definable subsets preserves both stability and NIP. Moreover, it is enough to have preservation of stability (respectively NIP) under products of two hyperdefinable sets since this case easily implies the general case.

For the remainder of the appendix, we consider two hyperdefinable sets $X/E$ and $Y/F$ where $X,Y\subset \C^\lambda$.

First, we prove it for stability. This was first stated in \cite[Remark 1.4]{MR3796277}, the proof we present comes from Anand Pillay.

\begin{prop}\label{stable preserved product}
    Let $X/E$ and $Y/F$ by stable hyperdefinable sets. Then, $X/E\times Y/F$ is a stable hyperdefinable set.
\end{prop}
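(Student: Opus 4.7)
The plan is to work through the type-counting characterization of stability recorded in Corollary \ref{2.8}, which says that a hyperdefinable set is stable if and only if for every $\mu$ of the form $\mu = \mu^{|T|+\lambda}$ and every model $M \models T$ with $|M| \leq \mu$ one has at most $\mu$ types over $M$ of elements of that hyperdefinable set. I will fix such a $\mu$ witnessing this simultaneously for $X/E$ and $Y/F$ (possible because they share the same $|T|$ and $\lambda$) and show that $|S_{X/E \times Y/F}(M)| \leq \mu$ for every $M$ of cardinality at most $\mu$; Corollary \ref{2.8} will then yield stability of the product.

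To bound $|S_{X/E \times Y/F}(M)|$ I will use the continuous projection $\Phi \colon S_{X/E \times Y/F}(M) \to S_{X/E}(M)$ sending $\tp([a]_E,[b]_F/M)$ to $\tp([a]_E/M)$. Its image has cardinality at most $|S_{X/E}(M)| \leq \mu$, so it suffices to bound the size of each fibre by $\mu$. Fix $p = \tp([a]_E/M)$: transporting along automorphisms over $M$, every element of $\Phi^{-1}(p)$ has a representative of the form $([a]_E,[b']_F)$, and two such representatives yield the same type over $M$ precisely when some $\sigma \in \aut(\C/M[a]_E)$ identifies their second coordinates. Thus the fibre is in bijection with the space of $\aut(\C/M[a]_E)$-orbits on $Y/F$.

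The remaining step is to bound that orbit space, and here the only slightly delicate point arises: $[a]_E$ is hyperimaginary, while the counting hypothesis on $Y/F$ is stated over real parameter sets. The way around this is the elementary containment $\aut(\C/Ma) \subseteq \aut(\C/M[a]_E)$ for any real representative $a$ of $[a]_E$, which forces $\aut(\C/M[a]_E)$-orbits to be unions of $\aut(\C/Ma)$-orbits; hence the number of $\aut(\C/M[a]_E)$-orbits on $Y/F$ is at most $|S_{Y/F}(Ma)|$. Since $|Ma| \leq |M| + \lambda \leq \mu$ (the inequality $\lambda \leq \mu$ being built into $\mu = \mu^{|T|+\lambda}$), stability of $Y/F$ gives $|S_{Y/F}(Ma)| \leq \mu$, and I will conclude $|S_{X/E \times Y/F}(M)| \leq \mu \cdot \mu = \mu$.

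The main obstacle is precisely this hyperimaginary-versus-real parameter issue; once it is handled by the orbit containment above, the rest is a straightforward fibre-counting estimate plus an invocation of Corollary \ref{2.8}.
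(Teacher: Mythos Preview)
Your argument is correct and takes a genuinely different route from the paper's. The paper argues directly from the indiscernible-sequence definition: assuming $X/E \times Y/F$ is unstable, it takes an indiscernible witness $(a_i,b_i,c_i)_{i\in\I}$ with $a_i\in X/E$ and $b_i\in Y/F$, uses stability of $X/E$ to show that $(c_j)_{j\neq 0}$ is indiscernible over $a_0$, observes that $\tp(a_0,b_0,c_j)$ differs on the two sides of $0$, and from this manufactures (via compactness and extracting indiscernibles) a sequence contradicting stability of $Y/F$. Your type-counting approach via Corollary \ref{2.8}(6) is considerably shorter once that characterization is available, and in fact mirrors the paper's own application of Corollary \ref{2.8}(6) immediately following that corollary (to show $G^{\textrm{st}}$ has no proper stable quotients). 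The paper's direct argument, by contrast, is self-contained relative to Definition \ref{def stability intro} and does not depend on the continuous-logic input behind Corollary \ref{2.8}.

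One small point to tighten: $Ma$ is not a model, so Corollary \ref{2.8}(6) does not literally apply to it. You should enlarge $Ma$ to a model $N\succ \C$ with $|N|\leq \mu$ (possible by L\"owenheim--Skolem since $|Ma|+|T|\leq \mu$) and use that the restriction map $S_{Y/F}(N)\to S_{Y/F}(Ma)$ is surjective, giving $|S_{Y/F}(Ma)|\leq |S_{Y/F}(N)|\leq \mu$.
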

\begin{proof}
    Suppose the conclusion does not hold. Let $(a_i,b_i,c_i)_{i<\omega}$ be an indiscernible sequence witnessing unstability of $X/E\times Y/F$. That is, for all $i,j<\omega$ $a_i\in X/E$, $b_i\in Y/F$, and $\tp(a_i,b_i,c_j)\neq \tp(a_j,b_j,c_i)$ for all $i\neq j$. Without loss of generality we may extend $\omega$ to a sufficiently big (to be able to extract indiscernibles) dense linear order $\I$ without endpoints.
    \begin{claim}
        $(c_j)_{j\neq 0}$ is indiscernible over $a_0$.
    \end{claim}
    \begin{proof}
        Note that it is enough to show that for any $i_1<\dots< i_n<0<i'<j_1< \dots < j_m$ $$c_{i_1},\dots c_{i_n},c_{j_1},\dots,c_{j_m} \equiv_{a_0}c_{i_1},\dots c_{i_{n-1}},c_{i'},c_{j_1},\dots,c_{j_m}.$$ Note also that the sequence $(a_i,c_i)_{i\in (i_{n-1},j_1)}$ is indiscernible over $$K:=\{c_{i_k}: k=1,\dots,n-1 \}\cup \{c_{j_k}: k=1,\dots,m \}.$$
        Suppose that the conclusion does not hold, this implies that for some setting as above, $$(c_{i_n},a_0)\not\equiv_K (c_{i'},a_0).$$ However, by indiscernibility of the original sequence, $(c_{i_n},a_0)\equiv_K (c_{0},a_{i'})$. Therefore, the sequence $(a_i,c_iK)_{i\in (i_{n-1},j_1)}$ contradicts the stability of $X/E$.
    \end{proof}
    \begin{claim}
        $\tp(a_0,b_0,c_j)$ is constant for $j>0$, $\tp(a_0,b_0,c_j)$ is constant for $j<0$, and $\tp(a_0,b_0,c_1)\neq \tp(a_0,b_0,c_{-1})$.
    \end{claim}
    \begin{proof}
        The fact that it is constant follows from the indiscernibility of the original sequence $(a_i,b_i,c_i)_{i\in \I}$. Moreover, we have $$\tp(a_0,b_0,c_{-1})\neq \tp(a_{-1},b_{-1},c_0)=\tp(a_0,b_0,c_{1}).$$ 
    \end{proof}

    From the claims it follows that for each $k\in\I$ and $i_1<\dots< i_n<k<j_1< \dots < j_n$ all distinct from $0$ there exists $b'_k\in Y/F$ such that \begin{align*}
    b'_kc_{i_1}\equiv_{a_0}\cdots\equiv_{a_0} b'_kc_{i_n}\equiv_{a_0}b_0c_{-1}\\
    b'_kc_{j_1}\equiv_{a_0}\cdots\equiv_{a_0} b'_kc_{j_n}\equiv_{a_0}b_0c_{1}.
\end{align*}
Thus, by compactness and extracting indiscernibles (see Fact \ref{extracting indiscernibles}), there is a sequence $(b''_i,c'_i)_{i<\omega}$ which is indiscernible over $a_0$ and $$\tp(b''_i,c'_j/a_0)\neq tp(b''_j,c'_i/a_0)$$ for all $i<j$, contradicting stability of $Y/F$.
\end{proof}

The next characterization of functions of the family $\mathcal{F}_{X/E}$ with NIP easily follows from Proposition \ref{Hyperim: IP_n iff encoding partite} and compactness.

\begin{lema}\label{even odd nip}
    For any $f(x,y)\in  \mathcal{F}_{X/E}$ the following are equivalent:
    \begin{enumerate}
        \item $f$ has IP.
        \item There exists an indiscernible sequence $(b_i)_{i<\omega}$, $a\in X$ and $r<s\in \R$ such that $$ f(a,b_i)<r\iff i \text{ is even} $$ $$f(a,b_i)>s\iff i \text{ is odd.}$$
    \end{enumerate}
\end{lema}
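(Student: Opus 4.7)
The plan is to prove the two directions separately, using Proposition \ref{Hyperim: IP_n iff encoding partite} (specialized to $n=1$) for $(1) \Rightarrow (2)$ and a direct indiscernibility-and-automorphism argument for $(2) \Rightarrow (1)$.

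For $(2) \Rightarrow (1)$: I would proceed directly from the definition of $IP$ (Definition \ref{defin: n-dep function/E} with $n=1$). Given $(b_i)_{i<\omega}$ indiscernible, $a \in X$, and $r < s$ satisfying the alternating condition, fix an arbitrary finite $w \subseteq \omega$. Pick any order-preserving injection $\tau\colon \omega \to \omega$ with the property that $\tau(i)$ is even iff $i \in w$ (this is clearly possible). By indiscernibility of $(b_i)$, the map $b_i \mapsto b_{\tau(i)}$ extends to some $\sigma \in \aut(\C)$, and since $X$ is $\emptyset$-type-definable, $a_w := \sigma^{-1}(a)$ belongs to $X$. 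Then $f(a_w, b_i) = f(a, b_{\tau(i)})$ is strictly less than $r$ iff $\tau(i)$ is even iff $i \in w$, and strictly greater than $s$ iff $i \notin w$. Since the strict inequalities imply the weak ones ($\leq r$, $\geq s$), the sequence $(b_i)_{i<\omega}$ together with the family $\{a_w : w \subseteq \omega \text{ finite}\}$ witnesses $IP$ for $f$.

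For $(1) \Rightarrow (2)$: I would invoke the equivalence of items $(1)$ and $(4)$ of Proposition \ref{Hyperim: IP_n iff encoding partite} with $n=1$. This yields a $G_{2,p}$-indiscernible sequence $(c_g)_{g \in G_{2,p}}$ (with $c_g \in X$ for $g \in P_0(G_{2,p})$) and reals $r_0 < s_0$ such that $f(c_{g_0}, c_{g_1}) \leq r_0$ iff $R(g_0, g_1)$ and $f(c_{g_0}, c_{g_1}) \geq s_0$ iff $\neg R(g_0, g_1)$, for every $g_0 \in P_0$ and $g_1 \in P_1$. Fix any $g^* \in P_0$ and set $a := c_{g^*} \in X$. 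Since $G_{2,p}$ is the Fraïssé limit of finite ordered bipartite graphs, with $R$ random across the partition, I can choose a strictly increasing sequence $(g_i)_{i<\omega}$ in $P_1$ such that $R(g^*, g_i)$ holds iff $i$ is even. Set $b_i := c_{g_i}$. Observing that any two increasing tuples from $P_1$ have the same quantifier-free $\mathcal{L}_{opg}$-type (the relation $R$ never holds on tuples lying entirely in $P_1$, and the partition predicates are determined), the $G_{2,p}$-indiscernibility of $(c_g)$ restricted to $P_1$ makes $(b_i)_{i<\omega}$ order-indiscernible. The encoding property then gives $f(a, b_i) \leq r_0$ iff $i$ even and $f(a, b_i) \geq s_0$ iff $i$ odd, and finally choosing any $r, s$ with $r_0 < r < s < s_0$ converts these into the strict inequalities demanded in (2).

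No serious obstacle is expected: the $(2) \Rightarrow (1)$ direction is pure indiscernibility, and the $(1) \Rightarrow (2)$ direction is handed to us by Proposition \ref{Hyperim: IP_n iff encoding partite} once we identify $G_{2,p}$-indiscernibility restricted to $P_1$ with order-indiscernibility; the only micro-point to verify is the passage from the weak inequalities $\leq r_0, \geq s_0$ (furnished by the encoding) to the strict ones $<r, >s$ required in the statement, which is handled by separating the thresholds with intermediate constants.
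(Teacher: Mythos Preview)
Your proof is correct and follows essentially the route the paper indicates: the paper simply asserts that the lemma ``easily follows from Proposition \ref{Hyperim: IP_n iff encoding partite} and compactness'' without spelling out the details, and your argument for $(1)\Rightarrow(2)$ via the $G_{2,p}$-indiscernible encoding together with your direct indiscernibility/automorphism argument for $(2)\Rightarrow(1)$ are exactly the kind of unpacking intended. The only cosmetic difference is that for $(2)\Rightarrow(1)$ you bypass compactness by working directly with finite $w$ and automorphisms, which is perfectly fine.
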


\begin{prop}\label{NIP formulas have limit}
    For every $f(x,y)\in\mathcal{F}_{X/E}$ and for every (infinite) linear order  $\I$ without maximal element, $f(x,y)$ has NIP if and only if for every indiscernible sequence $(b_i)_{i\in\I}$ and $a\in X$ there is $L\in\Image(f)\subseteq [r_1,r_2]$ (for some $r_1,r_2\in \R$) such that for every $\varepsilon>0$ there is $\I_0\subset \I$ an end segment satisfying $$ \lvert f(a,b_i)-L\rvert \leq \varepsilon$$ for all $i\in\I_0$ (i.e., $(f(a,b_i))_{i\in\I_0}$ converges to $L$).
\end{prop}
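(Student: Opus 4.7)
The plan is to prove the two directions separately, via contrapositives in both cases, and in both cases to reduce to the alternation criterion provided by Lemma \ref{even odd nip}: namely, $f$ has IP if and only if some indiscernible sequence $(b_i)_{i<\omega}$ together with some $a\in X$ and reals $r<s$ satisfies $f(a,b_i)<r\iff i$ is even and $f(a,b_i)>s\iff i$ is odd.

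For the direction ``convergence implies NIP'', I would argue the contrapositive. Assume $f$ has IP; by Lemma \ref{even odd nip} pick an $\omega$-indexed indiscernible $(b_i)_{i<\omega}$, an element $a\in X$, and reals $r<s$ realizing the alternation. Since $\I$ is infinite and has no maximum, its cofinality is at least $\aleph_0$, so I can partition $\I=S_0\sqcup S_1$ into two disjoint cofinal subsets. Consider the partial type in variables $(x_i)_{i\in \I}$ (over $a$) consisting of (i) indiscernibility conditions $\varphi(x_{i_1},\dots,x_{i_n})\leftrightarrow\varphi(x_{j_1},\dots,x_{j_n})$ for all increasing $\I$-tuples of the same length and all $\mL$-formulas $\varphi$, (ii) $f(a,x_i)<r$ for $i\in S_0$, and (iii) $f(a,x_i)>s$ for $i\in S_1$. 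Finite satisfiability reduces, given indices $i_1<\cdots<i_n$ in $\I$ with a prescribed pattern, to choosing $k_1<\cdots<k_n$ in $\omega$ with $k_j$ of the right parity; this is possible since both parities are infinite in $\omega$, and any finite increasing subtuple of $(b_j)_{j<\omega}$ automatically satisfies the relevant indiscernibility instances. Realizing this partial type in a sufficiently saturated model yields an $\I$-indexed indiscernible $(b'_i)_{i\in \I}$, and since both $S_0$ and $S_1$ are cofinal, no end segment can satisfy $|f(a,b'_i)-L|\leq (s-r)/4$ for any single $L$: whichever side of $(r+s)/2$ one picks $L$ on, the cofinal set on the other side produces values at distance exceeding $(s-r)/4$ from $L$.

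For the direction ``NIP implies convergence'', again I would argue by contrapositive. Suppose $(f(a,b_i))_{i\in \I}$ does not converge on any end segment for some indiscernible $(b_i)_{i\in \I}$ and some $a\in X$. The function $f$ extends to a continuous function on the compact type space $S_{X\times\C^m}(\emptyset)$, so its image is contained in a compact interval $[r_1,r_2]$. I would call $L\in[r_1,r_2]$ a cluster point if $\{i\in \I:|f(a,b_i)-L|<\delta\}$ is cofinal in $\I$ for every $\delta>0$; cluster points exist by taking any element of the nonempty decreasing intersection $\bigcap_{i_0\in \I}\overline{\{f(a,b_i):i\geq i_0\}}$. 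The key dichotomy is that if there were only one cluster point $L$, then for every $\varepsilon>0$ the set $\{i:|f(a,b_i)-L|\geq\varepsilon\}$ would fail to be cofinal: were it cofinal, its values would lie in the compact set $[r_1,L-\varepsilon]\cup[L+\varepsilon,r_2]$, and a tail-closure argument inside this compact set would produce a further cluster point of the original sequence distinct from $L$. Hence uniqueness would force convergence to $L$ on an end segment, contradicting our assumption. Therefore two distinct cluster points $L_1<L_2$ exist. With $\delta=(L_2-L_1)/4$, the sets $A=\{i:f(a,b_i)<L_1+\delta\}$ and $B=\{i:f(a,b_i)>L_2-\delta\}$ are both cofinal, so I can inductively pick $i_1<j_1<i_2<j_2<\cdots$ with $i_k\in A$ and $j_k\in B$. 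The subsequence $(b_{i_1},b_{j_1},b_{i_2},b_{j_2},\dots)$ is indiscernible and, relabelled as $(c_n)_{n<\omega}$, satisfies $f(a,c_n)<L_1+\delta$ for $n$ even and $f(a,c_n)>L_2-\delta$ for $n$ odd, with $L_1+\delta<L_2-\delta$. Lemma \ref{even odd nip} then yields IP.

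The main obstacle, as I see it, is the compactness argument in the $(\Leftarrow)$ direction: stretching the $\omega$-indexed alternating sequence into an $\I$-indexed indiscernible sequence that still witnesses the alternation along two cofinal sets. The bookkeeping required to check finite satisfiability must simultaneously respect $\I$-indiscernibility and the prescribed sign pattern on the given indices, which is what forces the cofinal splitting of $\I$ and the parity-matching in $\omega$. The minor technicality that the claimed $L$ lies in $\Image(f)$ rather than merely in $[r_1,r_2]$ is handled by noting that $\Image(f)$ is the continuous image of a compact type space, hence closed, so any cluster point of values of $f$ belongs to it.
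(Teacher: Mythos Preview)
Your argument is correct and follows the same overall strategy as the paper's proof: both directions are reduced, via contrapositive, to the alternation criterion of Lemma~\ref{even odd nip}. The only notable difference is in the $(\Leftarrow)$ direction. The paper extends the $\omega$-indexed alternating witness by compactness to an $\I$-indexed indiscernible sequence in which \emph{successive} terms alternate across $r$ and $s$, and then derives a contradiction with convergence; this tacitly assumes every $i\in\I$ has a successor, which need not hold (e.g.\ $\I=\mathbb{Q}$). Your device of splitting $\I$ into two disjoint cofinal pieces $S_0,S_1$ and realizing the alternation along them is cleaner and works for arbitrary $\I$ without maximum. In $(\Rightarrow)$, your two-cluster-point formulation and the paper's single-accumulation-point-plus-nonconvergence formulation are equivalent ways of extracting an alternating $\omega$-subsequence. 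One small technical point: in your compactness step the conditions $f(a,x_i)<r$ and $f(a,x_i)>s$ are open rather than type-definable, so to invoke compactness cleanly you should use the closed conditions $f(a,x_i)\leq r$ and $f(a,x_i)\geq s$; these are still finitely satisfied by the strict witnesses from Lemma~\ref{even odd nip} and still block convergence since $r<s$.
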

\begin{proof}
    
$(\Leftarrow)$ : Suppose $f(x,y)$ has IP. Let $(b_{i})_{i<\omega}$, $a\in X$, and $r< s \in[r_1,r_2]$ be such that \begin{align*}
    f(a,b_i)<r\iff i \text{ is even},\\
    f(a,b_i)>s\iff i \text{ is odd},
\end{align*}
(which exists by Lemma \ref{even odd nip}).

By compactness, we may extend the indiscernible sequence $(b_{i})_{i<\omega}$ to a new indiscernible sequence $(b'_{i})_{i\in \I}$ such that for any $i\in I$ if $f(a,b'_i)<r$ then $f(a,b'_{i+1})>s$ and vice versa.
By assumption, there is some $L$ to which $(f(a, b'_i))_{i\in \I}$ converges. % as $i$ goes to $\infty$. 
Let $0<\epsilon<\frac{s-r}{2}$. So there is an end segment $\I_0\subseteq \I$ such that for all $i\in \I_0$, $\left|f\left(a, b'_i\right)-L\right| \leq \epsilon$. Then, $$\left|f\left(a, b'_{i+1}\right)-L\right| \geq\left|f\left(a, b'_{i+1}\right)-f\left(a, b'_i\right)\right|-\left|f\left(a, b'_i\right)-L\right| \geq(s-r)-\epsilon \geq \frac{s-r}{2}>\epsilon.$$ Which is a contradiction.

$(\Rightarrow)$ : Let $(b_i: i \in\I)$ be an indiscernible sequence and $a\in X$, and suppose the conclusion does not hold for $(b_i: i \in\I)$ and $a$. That is, for every $L$ there is some $\epsilon>0$ such that for every  end segment $\I_{0} \subseteq \I$, there is $i \in \I_{0}$ such that $\left|f\left(a, b_i\right)-L\right|>\epsilon$.

%$\I$ does not have a maximal element (otherwise, it would converge to that). So ithout loss of generality, let $\I=\omega$. 
Since $\left\{f\left(a, b_i\right) \mid i\in \I \right\} \subset[r_1,r_2]$ and is infinite, it must have some accumulation point $L_{0}$. That is, for any $\epsilon >0$, for cofinally many $i\in \I$, we have $\left|f\left(a, b_i\right)-L_{0}\right| \leq \epsilon$.

Since $\left(f\left(a, b_i\right)\right)_{i\in \I}$ does not converge, there is $\epsilon>0$ such that for every end segment $\I_0\subseteq \I$, there is $j\in \I_0$ such that $\left|f\left(a, b_j\right)-L_{0}\right|>\epsilon$ and since \(L_{0}\) is an accumulation point, there are cofinally many $i\in \I_0$ for which we have \(\mid f\left(a, b_i\right)-\) $L_{0} \left\lvert\, \leq \frac{\epsilon}{2}\right.$

%There are infinitely many $j<\omega$ such that $\left|f\left(a_{j}, b\right)-L_{0}\right|>\epsilon$. In particular, 
Note that there must be either cofinally many $j\in \I$ such that $f\left(a, b_j\right)>L_{0}+\epsilon$ or cofinally many such that $f\left(a, b_j\right)<L_{0}-\epsilon$. We prove the result for the former case, the latter is analogous. Let $r=L_{0}+\frac{\epsilon}{2}$ and $s=L_{0}+\epsilon$. %In the former case, let $r=L_{0}+\frac{\epsilon}{2}$ and $s=L_{0}+\epsilon$. In the latter, let $r=L_{0}-\epsilon$ and $s=L_{0}-\frac{\epsilon}{2}$.

%Then we will define a subsequence \(\left(c_{i} \mid i<\omega\right)\) of \(\left(a_{i} \mid i<\omega\right)\) which will be indiscernible. 
We now construct an indiscernible sequence $(c_i)_{i<\omega}$ which, together with $a$, will witness that $f(x,y)$ has IP.
Let $c_{0}=b_{i}$ for some $b_{i}$ such that $f(a,b_i) \leq r$. This is possible since there are cofinally many \(b_{i}\) within \(\frac{\epsilon}{2}\) of \(L_{0}\). Let \(c_{1}=a_{j}\) with $j>i$ be such that $f(a,c_{1}) \geq s$. Similarly, this is possible since there are cofinally many \(j\in \I\) with \(b_{j}\) such that $ f\left(a, b_j\right)-L_{0}>\epsilon$. Iterating this process infinitely many times, we get a subsequence $(c_i)_{i<\omega}$ of $(b_i)_{i\in \I}$ which is indiscernible, %Continue this process, choosing \(c_{n}\) to be \(a_{i_{n}}\) such that if \(c_{n-1}=a_{i_{n-1}}\), then \(i_{n}>i_{n-1}\). This gives us an indiscernible sequence \(\left(c_{i} \mid i<\omega\right) . 
$f\left(a,c_{i}\right) \leq r$ if and only if \(i\) is even, and $f\left(a,c_{i}\right) \geq s$ if and only if \(i\) is odd. Thus, this sequence is as required (by Lemma \ref{even odd nip}).
\end{proof}

Using the previous results for functions of the family $\mathcal{F}_{X/E}$, we prove the following:

\begin{prop}\label{type stabilizes NIP}
    Let $X/E$ be a hyperdefinable set with $X\subseteq \C^\lambda$. If $X/E$ has NIP, for any indiscernible sequence $(b_i)_{i<(\lvert T\rvert +\lambda)^+}$ (of tuples from $\C$ of length at most $\lambda$) and any $a/E\in X/E$ there exists $\alpha < (\lvert T\rvert +\lambda)^+$ such that $(b_i)_{\alpha<i <(\lvert T\rvert +\lambda)^+}$ is indiscernible over $a/E$.
\end{prop}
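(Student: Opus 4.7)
The plan is to reduce the statement to a uniform stabilization result for a family of first-order formulas and then bound the alternation of each such formula by exploiting the NIP of $X/E$ via Definition~\ref{nip hyperdef}.

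For the reduction, $(b_i)_{\alpha<i<(\lvert T\rvert+\lambda)^+}$ is indiscernible over $a/E$ iff for every $n<\omega$ and every first-order formula $\psi(y_1,\dots,y_n,\bar z)$ over $\emptyset$, with $\bar z$ a finite tuple of variables to be matched against a finite subtuple of the coordinates of $a$, the $\{0,1\}$-valued function
$$\chi_\psi(\vec i) := \bigl[\exists a'\,(E(a,a')\wedge \psi(b_{\vec i}, a'|_{\bar z}))\bigr]$$
is constant on $\vec i\in[\{i>\alpha\}]^n_<$. There are at most $|T|+\lambda$ such pairs $(\psi,\bar z)$. So if I find, for each of them, some $\alpha_\psi<\kappa:=(|T|+\lambda)^+$ beyond which $\chi_\psi$ is constant, then $\alpha:=\sup_\psi \alpha_\psi<\kappa$ by regularity of $\kappa$ and witnesses the proposition.

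The technical heart is an \emph{alternation sublemma}: for each first-order $\psi(\bar y,\bar z)$ as above there exists $N_\psi\in\omega$ such that for every $\emptyset$-indiscernible sequence $(\bar c_k)_{k<\omega}$ of $\bar y$-length tuples and every $d\in X/E$, the sequence $\Delta_\psi(\bar c_k,d):=[\exists d'\,(E(d,d')\wedge \psi(\bar c_k, d'|_{\bar z}))]$ has at most $N_\psi$ alternations in $k$. I would prove this by contradiction: unbounded alternation plus compactness yields a $\emptyset$-indiscernible sequence $(\bar c_k)_{k<\mu}$ of length $\mu:=\beth_{(2^{|T|+\lambda})^+}$ and $d\in X/E$ with $\Delta_\psi(\bar c_k,d)\iff k$ is even. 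Apply Fact~\ref{extracting indiscernibles} to the pair-block sequence $((\bar c_{2k},\bar c_{2k+1}))_{k<\mu/2}$ over $\{d\}$ to extract an $\omega$-indexed pair-block subsequence $((\bar c'_{2k},\bar c'_{2k+1}))_{k<\omega}$ that is indiscernible over $d$ and whose pair EM-type over $\emptyset$ is inherited from the original. Because the source flat sequence was $\emptyset$-indiscernible, every flat $m$-tuple from the extracted sequence matches, via pair-block EM-type preservation, an $m$-tuple of the original and hence has the canonical EM-type; thus the flat sequence $(\bar c'_l)_{l<\omega}$ is $\emptyset$-indiscernible. The parity-alternation of $\Delta_\psi$ is preserved, so $\tp(d,\bar c'_0)\neq\tp(d,\bar c'_1)$, while pair-block indiscernibility over $d$ is exactly $\emptyset$-indiscernibility of $((d,\bar c'_{2k},\bar c'_{2k+1}))_{k<\omega}$. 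This is a witness to the failure of NIP of $X/E$ per Definition~\ref{nip hyperdef}, a contradiction.

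With the sublemma in hand, fix a formula $\psi$ of arity $n+|\bar z|$ and suppose $\chi_\psi$ is not eventually constant on $[\{i>\alpha\}]^n_<$ for any $\alpha<\kappa$. Using this cofinal failure and regularity of $\kappa$, I would inductively choose $\omega$-many pairs of disjoint increasing $n$-tuples $\vec i_0<\vec j_0<\vec i_1<\vec j_1<\cdots$ in $\kappa$ with $\chi_\psi(\vec i_k)=1$ and $\chi_\psi(\vec j_k)=0$. Setting $\bar c_{2k}:=b_{\vec i_k}$ and $\bar c_{2k+1}:=b_{\vec j_k}$ produces an $\omega$-sequence of $n\lambda$-tuples drawn from disjoint increasing blocks of $(b_i)_{i<\kappa}$, hence $\emptyset$-indiscernible, yet with $\Delta_\psi(\bar c_k,a/E)$ alternating by parity forever, contradicting the sublemma; this yields the required $\alpha_\psi<\kappa$. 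The main obstacle is the extraction step inside the sublemma: one must ensure that the compactness-produced sequence has length large enough (at least $\beth_{(2^{|T|+\lambda})^+}$, using $|S_{2\lambda}(d)|\leq 2^{|T|+\lambda}$) for Fact~\ref{extracting indiscernibles} to apply, and then verify carefully that flat $\emptyset$-indiscernibility of the extracted sequence coexists with pair-block indiscernibility over $d$ and the alternating pattern, all three being needed to match exactly the shape of the IP-configuration in Definition~\ref{nip hyperdef}.
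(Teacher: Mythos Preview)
Your approach differs substantially from the paper's, and the compactness step in your sublemma has a real gap. The condition $\Delta_\psi(\bar c_k,d)=0$, i.e.\ $\neg\exists d'\bigl(E(d,d')\wedge\psi(\bar c_k,d'|_{\bar z})\bigr)$, is only \emph{co}-type-definable in $(\bar c_k,d)$: since $E$ is a partial type, the positive condition $\Delta_\psi=1$ is type-definable (it expresses consistency of a partial type), but its negation is open. So the ``type'' whose realization you seek by compactness contains, for each odd $k$, an open condition, and finite satisfiability does not produce a realization of length $\mu$. You can repair this: working directly from $(b_i)_{i<\kappa}$, build $(\bar c_k)_{k<\kappa}$ (regularity of $\kappa$ lets you go this far); for each odd $k$ choose a single formula $\theta_k$ implied by $E$ with $\neg\exists d'(\theta_k(a,d')\wedge\psi(\bar c_k,d'))$; since there are at most $|T|+\lambda$ such $\theta$'s and $\kappa=(|T|+\lambda)^+$ is regular, pass to a cofinal set of pairs with constant $\theta$. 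Now both alternation conditions are closed and compactness extends to length $\mu$. As written, however, this pigeonhole step is missing.

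The paper avoids the whole issue. Instead of first-order $\psi$ and the asymmetric $\Delta_\psi$, it uses Proposition~\ref{proposition: F_X/E separates points} to witness each failure of indiscernibility by a function $f\in\mathcal{F}_{X/E}$ and rationals $r<s$; the conditions $f(a,\bar b)\leq r$ and $f(a,\bar b)\geq s$ are \emph{both} closed. A pigeonhole over the $(|T|+\lambda)$-sized dense subfamily and $\mathbb{Q}^2$ yields a single $(f,r,s)$ working cofinally, from which one builds an $\omega$-long block sequence witnessing IP for $f$ via Lemma~\ref{even odd nip}, hence IP for $X/E$ by Lemma~\ref{IP_n hyperdef. set iif formulas}. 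No extraction over the $\lambda$-tuple $a$, hence no need for a $\beth$-long sequence, is required.
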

\begin{proof}
    Assume the conclusion does not hold. Then, by Proposition \ref{proposition: F_X/E separates points}, for every $\alpha <(\lvert T\rvert +\lambda)^+$ we can find a function $f_\alpha(x,y_1,\dots,y_{k(\alpha)})\in \mathcal{F}_{X/E\times\C^{k(\alpha)\lambda}}$, two tuples of indices indices $\alpha<i_1<\dots<i_{k(\alpha)}<(\lvert T\rvert +\lambda)^+$ and $\alpha<j_1<\dots<j_{k(\alpha)}<(\lvert T\rvert +\lambda)^+$, and $r_\alpha<s_\alpha\in \mathbb{Q}$ satisfying $$\models f_\alpha(a,b_{i_1},\dots,b_{i_{k(\alpha)}})\leq r_\alpha \wedge f_\alpha(a,b_{j_1},\dots,b_{j_{k(\alpha)}})\geq s_\alpha.$$
    Moreover, the functions $f_\alpha$ can be chosen from a dense subset of $\mathcal{F}_{X/E }$ of cardinality $\lvert T \rvert +\lambda$ (see the proof of Corollary \ref{corollary: 2.4} for a detailed justification). Therefore, there is some function $f(x,y_1,\dots,y_k)$ and $r<s\in \mathbb{Q}$ such that $f_\alpha=f$, $r_\alpha=r$ and $s_\alpha=s$ for cofinally many values of $\alpha$. Then, we can construct inductively a sequence $\II=(i_1^l,\dots,i_k^l)_{l<\omega}$ such that $i_1^l<\dots<i_k^l<i_1^{l+1}$ for all $l<\omega$ and \begin{itemize}
        \item $\models f(a,b_{i^l_1},\dots,b_{i^l_{k}})<r$ if and only if $l$ is even,
        \item $\models f(a,b_{i^l_1},\dots,b_{i^l_{k}})>s$ if and only if $l$ is odd.
    \end{itemize}
    As the sequence $(b_{i_1^l},\dots, b_{i_k^l})_{l<\omega}$ is indiscernible, this implies that the function $f(x,y_1,\dots,y_k)$ has IP. By Lemma \ref{IP_n hyperdef. set iif formulas}, this is a contradiction with the assumption that $X/E$ has NIP.
\end{proof}
\begin{cor}\label{nip preserved product}
    Let $X/E$ and $Y/F$ be hyperdefinable sets with $NIP$. Then $X/E\times Y/F$ has NIP.
\end{cor}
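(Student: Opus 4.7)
My plan is to argue by contradiction, the main tool being Proposition \ref{type stabilizes NIP}. I will assume that $X/E\times Y/F$ has IP and derive a contradiction from the NIP hypotheses on $X/E$ and $Y/F$. Applying Lemma \ref{lemma: equivalences NIP} with the hyperdefinable set $X/E$ of that lemma replaced by $X/E\times Y/F$, I obtain two distinct types $p,q$ and an indiscernible sequence $(b_i)_{i<\omega}$ shattered by $(p,q)$. Since $p\ne q$ is witnessed by a single first-order formula involving only finitely many real coordinates of the $b$-variable, projecting each $b_i$ to the relevant finite subtuple preserves both indiscernibility and the shattering, so I may assume the $b_i$ are finite tuples, hence of length at most $\lambda$.

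Next I will set $\kappa:=(|T|+\lambda)^+$ and extend $(b_i)_{i<\omega}$ by compactness to an indiscernible sequence $(b_i)_{i<\kappa}$ with the same EM-type. Because shattering of every finite subsequence depends only on this EM-type, another compactness argument produces, for each $I\subseteq\kappa$, some $d_I=(d_{I,1},d_{I,2})\in X/E\times Y/F$ with $\tp(d_I,b_i)=p$ for $i\in I$ and $\tp(d_I,b_i)=q$ for $i\notin I$. By regularity of $\kappa$ I will fix one such $I$ so that both $I$ and $\kappa\setminus I$ are cofinal in $\kappa$ (for instance, separating successor ordinals above limits by parity).

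Now I will apply Proposition \ref{type stabilizes NIP} twice. Using NIP of $X/E$, the first application to $d_{I,1}$ and $(b_i)_{i<\kappa}$ produces $\alpha_1<\kappa$ such that $(b_i)_{\alpha_1<i<\kappa}$ is indiscernible over $d_{I,1}$. Naming a representative $a_1$ of $d_{I,1}$ as constants yields an expansion $T_{a_1}$ in which $Y/F$ still has NIP (any IP witness in $T_{a_1}$ yields one in $T$ by absorbing $a_1$ into the indiscernible sequence), $(b_i)_{\alpha_1<i<\kappa}$ is indiscernible over $\emptyset$, and its length remains $\kappa=(|T_{a_1}|+\lambda)^+$. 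The second application, to $d_{I,2}\in Y/F$ inside $T_{a_1}$, then gives $\alpha_1<\alpha_2<\kappa$ such that $(b_i)_{\alpha_2<i<\kappa}$ is indiscernible over $d_I$ in $T$. This forces $\tp(d_I,b_i)$ to be constant on the tail $i>\alpha_2$, while cofinality of $I$ and of $\kappa\setminus I$ produces $i,j>\alpha_2$ with $i\in I$ and $j\notin I$, so $p=\tp(d_I,b_i)=\tp(d_I,b_j)=q$, the desired contradiction. The most delicate points will be the reduction to $b_i$ of length at most $\lambda$ and the transfer of NIP of $Y/F$ to $T_{a_1}$; both are routine but need to be checked, and a secondary bookkeeping issue is ensuring that after the first truncation the sequence is still long enough for the second application of Proposition \ref{type stabilizes NIP}.
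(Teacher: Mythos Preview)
Your proof is correct and uses essentially the same core idea as the paper: apply Proposition \ref{type stabilizes NIP} twice, the second time after passing to an expansion by a representative of the first coordinate. The paper packages this as a direct argument (arbitrary indiscernible sequence and arbitrary pair, then concludes via Proposition \ref{NIP formulas have limit} and Lemma \ref{IP_n hyperdef. set iif formulas}), while you package it as a contradiction from a specific shattering witness; the difference is cosmetic. Your version is in fact more explicit than the paper's terse ``applying Proposition \ref{type stabilizes NIP} twice'' about the expansion step and the cardinal bookkeeping.

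One point to tighten: after the first application you obtain that $(b_i)_{\alpha_1<i<\kappa}$ is indiscernible over the \emph{hyperimaginary} $d_{I,1}$, but naming an arbitrary representative $a_1$ does not in general make the sequence indiscernible over $a_1$ as a real tuple. You need to invoke Fact \ref{fact: indisc representatives} here: since the $b_i$ are real, they are their own representatives, and the fact produces a specific representative $a_1$ of $d_{I,1}$ over which the tail remains indiscernible. With that choice of $a_1$, your expansion $T_{a_1}$ works exactly as you describe. This is the same hidden step the paper glosses over, so it is not a flaw in your strategy, just a citation to add.
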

\begin{proof}
    Let $(c_i)_{i<(\lvert T\rvert +\lambda)^+}$ be an arbitrary indiscernible sequence (of tuples from $\C$) and $(a,/E,b/F)$ an arbitrary pair from $X/E\times Y/F$. By applying Proposition \ref{type stabilizes NIP} twice, we get that there is $\alpha < (\lvert T\rvert +\lambda)^+$ such that $(c_i)_{\alpha <i<(\lvert T\rvert +\lambda)^+}$ is indiscernible over $(a/E,b/F)$.  Thus, the type of $(a/E,b/F,c_i)_{\alpha <i<(\lvert T\rvert +\lambda)^+}$ is constant. Since $a/E$, $b/F$ and the sequence $(c_i)_{i<(\lvert T\rvert +\lambda)^+}$ were arbitrary, Proposition \ref{NIP formulas have limit} implies that every $f(x,y,z)\in \mathcal{F}_{X/E\times Y/F}$ has NIP. Therefore, $X/E\times Y/F$ has NIP by Lemma \ref{IP_n hyperdef. set iif formulas}.
\end{proof}
\chapter{A proof of Fact \ref{delta is homeomorphism}}\label{AppendixB}
To prove Fact \ref{delta is homeomorphism}, we need an auxiliary lemma. This result originally appeared in the first arXiv version of \cite{Krupiski2019RamseyTA}. We include a proof here for completeness.

\begin{lema}\label{tau closure equivalence}
    For any flow $(G,X)$, and $A\subseteq u\mathcal{M}$, the $\tau$-closure $\cl_\tau(A)$ can be described as the set of all limits contained in $u\mathcal{M}$ of nets $\eta_i a_i$ such that $\eta_i\in\mathcal{M}$, $a_i\in A$ and $\lim_i \eta_i =u$
\end{lema}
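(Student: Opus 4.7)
The plan is to prove both inclusions between $\cl_\tau(A) = u\mathcal{M} \cap (u \circ A)$ and the set $K$ of all limits in $u\mathcal{M}$ of nets $\eta_i a_i$ with $\eta_i \in \mathcal{M}$, $a_i \in A$, and $\lim_i \eta_i = u$. Two basic facts will underlie the whole argument: (a) in $E(X)$, right multiplication $\sigma \mapsto \sigma\tau$ is continuous for each fixed $\tau$ (an immediate consequence of the definition $(\eta\sigma)(x) = \eta(\sigma(x))$ together with the pointwise topology on $X^X$); and (b) every $a \in u\mathcal{M}$ satisfies $ua = a$, since $a = ub$ for some $b \in \mathcal{M}$ and $u$ is idempotent.

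For the easy inclusion $\cl_\tau(A) \subseteq K$, I would start with $b = \lim_i g_i a_i \in u\mathcal{M}$, where $g_i \in G$, $a_i \in A$, and $\lim_i g_i = u$, and replace the $g_i$'s by $\eta_i := \pi_{g_i} u$. Each $\eta_i$ lies in $\mathcal{M}$ because $\mathcal{M}$ is a left ideal of $E(X)$. By fact (a), $\pi_{g_i} \to u$ implies $\eta_i = \pi_{g_i} u \to u \cdot u = u$. And by fact (b), $\eta_i a_i = \pi_{g_i} (u a_i) = \pi_{g_i} a_i = g_i a_i$, so $\eta_i a_i \to b$. Thus $b \in K$. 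No limiting manoeuvre is required beyond this algebraic substitution.

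The harder direction is $K \subseteq \cl_\tau(A)$. Here $b = \lim_i \eta_i a_i \in u\mathcal{M}$ is given and one must realize $b$ as $\lim_j g_j a_j'$ with $g_j \in G$. The essential input is that $E(X)$ is, by definition, the pointwise closure of $\{\pi_g : g \in G\}$, so each $\eta_i$ is a pointwise limit of a net $(\pi_{g^{(i)}_k})_{k \in K_i}$ with $g^{(i)}_k \in G$; fact (a) then gives $\pi_{g^{(i)}_k} a_i \to \eta_i a_i$ in $k$. The main obstacle is collapsing the resulting double net into a single net over $G \times A$ that converges in the right way. The cleanest way to do this, which I would adopt, is to index by pairs $(U,V)$ where $U$ is a neighbourhood of $u$ and $V$ a neighbourhood of $b$ in $E(X)$, ordered by reverse inclusion: first pick $i$ large enough that both $\eta_i \in U$ and $\eta_i a_i \in V$, then pick $k$ large enough that $\pi_{g^{(i)}_k} \in U$ and $\pi_{g^{(i)}_k} a_i \in V$, and finally set $g_{(U,V)} := g^{(i)}_k$ and $a_{(U,V)}' := a_i$. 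The resulting net satisfies $\lim g_{(U,V)} = u$ and $\lim g_{(U,V)} a_{(U,V)}' = b$, which together with the hypothesis $b \in u\mathcal{M}$ places $b$ in $u\mathcal{M} \cap (u \circ A) = \cl_\tau(A)$.
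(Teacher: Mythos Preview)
Your proof is correct and follows essentially the same approach as the paper's: both directions use the substitution $\eta_i = \pi_{g_i}u$ (exploiting $ua_i=a_i$ and left continuity) for $\cl_\tau(A)\subseteq K$, and an approximation-plus-diagonal-net argument for $K\subseteq\cl_\tau(A)$. The paper leaves the diagonal construction implicit (``one can find a subnet $(a_j')_j$ of $(a_i)_i$ and a net $(g_j)_j\subseteq G$\dots''), whereas you spell it out via the neighbourhood-pair indexing, but the underlying idea is the same.
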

\begin{proof}
    Consider $a \in \cl_{\tau}(A)$. Then, by the definition of the $\tau$-topology, there are nets $(g_i)_i \subseteq G$ and $(a_i)_i \subseteq A$ such that $\lim_i g_i = u$ and $\lim_i g_i a_i = a$. Note that $u a_i = a_i$, as $a_i \in A \subseteq u\mathcal{M}$. Put $\eta_i := g_i u \in \mathcal{M}$ for all $i$. By left continuity, we have that $\lim_i \eta_i = \lim_i g_i u = (\lim_i g_i) u = u u = u$. Furthermore, $\lim_i \eta_i a_i = \lim_i g_i u a_i = \lim_i g_i a_i = a$.

Conversely, consider any $a \in u\mathcal{M}$ for which there are nets $(\eta_i)_i \subseteq \mathcal{M}$ and $(a_i)_i \subseteq A$ such that $\lim_i \eta_i = u$ and $\lim_i \eta_i a_i = a$. Since each $\eta_i$ can be approximated by elements of $G$ and the semigroup operation is left continuous, one can find a subnet $(a_j')_j$ of $(a_i)_i$ and a net $(g_j)_j \subseteq G$ such that $\lim_j g_j = u$ and $\lim_j g_j a_j' = a$, which means that $a \in \cl_{\tau}(A)$.
\end{proof}

\begin{proof}[\textbf{Proof of Fact \ref{delta is homeomorphism}}]
We first show that $\delta$ is continuous. Consider an arbitrary closed set $F_{\bar{\varphi}, \bar{p},\bar{q}, r}$ and take $\eta \in \cl_\tau(\delta^{-1}[F_{\bar{\varphi}, \bar{p},\bar{q}, r}])$. Choose some nets $(\sigma_i)$ in $\aut(\C)$ and $(\eta_i)$ in $\delta^{-1}[F_{\bar{\varphi}, \bar{p},\bar{q}, r}]$ such that $\lim \sigma_i = u$ and $\lim \sigma_i\eta_i= \eta$. 

Suppose for a contradiction that $ \eta \notin \delta^{-1}[F_{\bar{\varphi}, \bar{p},\bar{q}, r}]$, i.e. $\neg R_{\bar{\varphi}, r}(\eta(\bar{p}),q)$. Then there is $b \models r(y)$ such that $$\varphi_1(x,y) \in \eta(p_1), \dots, \varphi_m(x,b) \in \eta(p_m)$$ and $$\varphi_{m+1}(x,b) \in q_1, \dots, \varphi_{m+n}(x,b) \in q_{n}.$$ Since $\lim \sigma_i = u$ and $\lim \sigma_i\eta_i= \eta$, there exists a natural number $N$ such that for all $i>N$ we have $\neg R_{\bar{\varphi}, r}(\eta_i(\bar{p}),\bar{q})$, a contradiction with the choice of $\eta_i$.

Next, we show the continuity of $\delta^{-1}$. Consider an arbitrary $\eta \in u\mathcal{M}$ and a basis of open neighborhoods of $\delta(\eta)$ consisting of all open neighborhoods of $\delta(\eta)$ of the form $U_{\bar{\varphi}, \bar{p},\bar{q}, r}$, where $U_{\bar{\varphi}, \bar{p},\bar{q}, r}$ is the complement of $F_{\bar{\varphi}, \bar{p},\bar{q}, r}$. Let $I$ consist of all tuples $(\bar{\varphi}, \bar{p},\bar{q}, r, b)$, where the tuples $(\bar{\varphi}, \bar{p},\bar{q}, r)$ have the form described above and $b$ is any tuple realizing $r$ and such that
$$\varphi_1(x,b) \in \delta(\eta)(p_1),\dots, \varphi_m(x,b) \in \delta(\eta)(p_m),
\varphi_{m+1}(x,b) \in q_1,\dots, \varphi_{m+n}(x,b) \in q_n.$$

Order $I$ by:
$(\bar{\varphi}(x,y), \bar{p},\bar{q}, r(y),b) \leq (\bar{\varphi}'(x,z), \bar{p}',\bar{q}', r'(z),b')$ if $y \subseteq z$, $r' |_y=r$, $b'|_y=b$, $m:=|\bar{p}| \leq |\bar{p}'|=:m'$ and $n:=|\bar{q}| \leq |\bar {q}'|=:n'$, and there exists an injection $$\sigma \colon \{1,\dots,m+n\} \to \{1,\dots,m'+n'\}$$ such that $\sigma[\{1,\dots,m\}] \subseteq \{1,\dots,m'\}$, $\sigma[\{m+1,\dots,m+n\}] \subseteq \{m'+1,\dots,m'+n'\}$ and:
\begin{enumerate}
    \item $\varphi_j = \varphi'_{\sigma(j)}$  for all $j \leq m+n$,
    \item $p_j = p'_{\sigma(j)}$ for all $j \leq m$,
    \item $q_{m+j}=q'_{\sigma(m+j)}$ for all $j \leq n$.
\end{enumerate}
It is easy to check that $(I, \leq)$ is a directed set. For $i =(\bar{\varphi}, \bar{p},\bar{q}, r, b) \in I$ by $U_i$ we mean $U_{\bar{\varphi}, \bar{p},\bar{q}, r}$, clearly $i \leq i'$ implies $U_i' \subseteq U_i$.

It suffices to show that for any net $(\eta_k)_{k \in K}$ in $u\mathcal{M}$ such that $\lim_k \delta(\eta_k) = \delta(\eta)$ we have $\tau-\lim_k \eta_k =\eta$. For that it is enough to show that for any subnet $(\rho_j)_{j \in J}$ of $(\eta_k)_{k \in K}$ we have that $\eta$ is in the $\tau$-closure of $(\rho_j)_{j \in J}$.

Since $\lim_j \delta(\rho_j) = \delta(\eta)$ for every $i \in I$ there exists $j_i \in J$ such that $\delta(\rho_{j_i}) \in U_i$. Writing $i=(\bar{\varphi}, \bar{p},\bar{q}, r,b)$, we have $b_i \models r$ such that 
%$$(*)\;\;\;\;\;\;\; \varphi_1(x,b_i) \in \delta(\rho_{j_i})(p_1), \dots,  \varphi_m(x,b_i) \in \delta(\rho_{j_i})(p_m), 
%\varphi_{m+1}(x,b_i) \in q_1, \dots , \varphi_{m+n}(x,b_i) \in q_n.$$
\begin{equation}\tag{$\star$}
\begin{split}
    \varphi_1(x,b_i) \in \delta(\rho_{j_i})(p_1), \dots,  \varphi_m(x,b_i) \in \delta(\rho_{j_i})(p_m), \\
\varphi_{m+1}(x,b_i) \in q_1, \dots , \varphi_{m+n}(x,b_i) \in q_n.
\end{split}
\end{equation}

%\begin{align*}\tag{*}
%    &\varphi_1(x,b_i) \in \delta(\rho_{j_i})(p_1), \dots,  \varphi_m(x,b_i) \in \delta(\rho_{j_i})(p_m)\\
%    &\varphi_{m+1}(x,b_i) \in q_1, \dots , \varphi_{m+n}(x,b_i) \in q_n.
%\end{align*} 

On the other hand, by the definition of $I$, we have 
\begin{equation}\tag{$\star\star$}
    \begin{split}
        \varphi_1(x,b) \in \delta(\eta)(p_1),\dots, \varphi_m(x,b) \in \delta(\eta)(p_m),\\
        \varphi_{m+1}(x,b) \in q_1,\dots, \varphi_{m+n}(x,b) \in q_n.
    \end{split}
\end{equation}
%$$(**)\;\;\;\;\;\;\;\; \varphi_1(x,b) \in \delta(\eta)(p_1),\dots, \varphi_m(x,b) \in \delta(\eta)(p_m),
%\varphi_{m+1}(x,b) \in q_1,\dots, %\varphi_{m+n}(x,b) \in q_n$$

For each $i\in I$, choose $\sigma_i \in \aut(\C)$ such that $\sigma(b_i)=b$. % (it exists as $b$ and $b_i$ realizes $r$).
Then, by $(\star)$ we have $\varphi_{m+1}(x,b) \in \sigma_i(q_1), \dots , \varphi_{m+n}(x,b) \in \sigma_i(q_n)$.

Since for varying $i \in I$ the tuples $(q_1,\dots,q_n)$ range over all finite tuples from $\bar{\mathcal{J}}$ and the tuples $(\varphi_{m+1}(x,b),\dots, \varphi_{m+n}(x,b))$ over all possible tuples of formulas belonging, respectively, to $q_1,\dots,q_n$, we see that $\lim_i \sigma_i |_{\bar{\mathcal{J}}} = Id_{\bar{\mathcal{J}}}$. So $\lim_i \pi_{\sigma_i} \circ u = u$ and clearly $\pi_{\sigma_i} \circ u \in \mathcal{M}$. %(Here $\circ$ denotes the composition of functions; formally it should be $\pi_{\sigma_i}$ in palce of $\sigma_i$; bellow I may skip $\circ$.)

Also by $(\star)$, 
$\varphi_1(x,b) \in \sigma_i (\rho_{j_i} (p_1)), \dots,  \varphi_m(x,b) \in \sigma_i (\rho_{j_i}(p_m))$. Again, since for varying $i \in I$ the tuples $(p_1,\dots,p_m)$ range over all finite tuples from $\bar{\mathcal{J}}$ and the tuples $(\varphi_1(x,b),\dots, \varphi_m(x,b))$ over all possible tuples of formulas belonging, respectively, to $\eta(p_1),\dots,\eta(p_m)$, we see that $\lim_i (\pi_{\sigma_i} \circ \rho_{j_i}) |_{\bar{\mathcal{J}}} = \eta |_{\bar{\mathcal{J}}}$.
Since $\rho_{j_i} = \rho_{j_i}u=u\rho_{j_i}$ and $\eta=\eta u$, we get $\lim_i \sigma_i u \rho_{j_i} = \eta$.

Hence, by the previous two paragraphs and Lemma \ref{tau closure equivalence}, we conclude that $\eta$ is in the $\tau$-closure of $(\rho_j)_{j \in J}$ as required.
    
\end{proof}
\singlespacing
%\nocite{*}
\printbibliography

\end{document}